\newcommand{\teruhisa}[1]{{\color{green} \sf  [#1]}}
\newcommand{\ildar}[1]{{\color{red} \sf  [#1]}}
\theoremstyle{plain}
\newtheorem{thm}{Theorem}[section]
\newtheorem{prop}[thm]{Proposition}
\theoremstyle{definition}
\newtheorem{defn}[thm]{Definition}
\theoremstyle{plain}
\newtheorem{lem}[thm]{Lemma}
\newtheorem{cor}[thm]{Corollary}
\newtheorem{rem}[thm]{Remark}
\newtheorem{cons}[thm]{Construction}
\newtheorem*{notation*}{Notation}
\DeclareSymbolFontAlphabet{\mathbb}{AMSb} 
\DeclareSymbolFontAlphabet{\mathbbl}{bbold}
\newcommand{\Prism}{{\mathlarger{\mathbbl{\Delta}}}}
\DeclareMathOperator{\Spec}{Spec}
\DeclareMathOperator{\id}{id}
\DeclareMathOperator{\Hom}{Hom}
\DeclareMathOperator{\Mod}{Mod}
\DeclareMathOperator{\Spf}{Spf}
\DeclareMathOperator{\Spa}{Spa}
\DeclareMathOperator{\dR}{dR}
\DeclareMathOperator{\Kos}{Kos}
\DeclareMathOperator{\et}{\acute{e}t}
\DeclareMathOperator{\Et}{\acute{E}T}
\DeclareMathOperator{\Inf}{inf}
\DeclareMathOperator{\proet}{pro\acute{e}t}
\DeclareMathOperator{\Zar}{Zar} 
\DeclareMathOperator{\profet}{prof\acute{e}t}
\DeclareMathOperator{\cont}{cont}
\DeclareMathOperator{\fet}{f\acute{e}t}
\DeclareMathOperator{\Shom}{\mathscr{H}\text{\kern -3pt {\calligra\large om}}\,}
\DeclareFontFamily{U}{mathc}{}
\DeclareFontShape{U}{mathc}{m}{it}%
{<->s*[1.03] mathc10}{}
\DeclareMathAlphabet{\mathcal}{U}{mathc}{m}{it}
\newcommand{\bA}{\mathbb{A}}
\newcommand{\bF}{\mathbb{F}}
\newcommand{\bL}{\mathbb{L}}
\newcommand{\bQ}{\mathbb{Q}}
\newcommand{\bZ}{\mathbb{Z}}
\newcommand{\cC}{\mathcal{C}}
\newcommand{\cI}{\mathcal{I}}
\newcommand{\cO}{\mathcal{O}}
\newcommand{\cU}{\mathcal{U}}
\newcommand{\fm}{\mathfrak{m}}
\newcommand{\fp}{\mathfrak{p}}
\newcommand{\fU}{\mathfrak{U}}
\newcommand{\fV}{\mathfrak{V}}
\newcommand{\fX}{\mathfrak{X}}
\newcommand{\fY}{\mathfrak{Y}}
\newcommand{\fZ}{\mathfrak{Z}}
\newcommand{\sA}{\mathscr{A}}
\newcommand{\sB}{\mathscr{B}}
\newcommand{\sC}{\mathscr{C}}
\newcommand{\sE}{\mathscr{E}}
\newcommand{\sF}{\mathscr{F}}
\newcommand{\sG}{\mathscr{G}}
\newcommand{\sI}{\mathscr{I}}
\newcommand{\ol}{\overline}
\newcommand{\ul}{\underline}
\newcommand{\wh}{\widehat}
\newcommand{\wt}{\widetilde}
\newcommand{\ola}{\overleftarrow}
\newcommand{\ora}{\overrightarrow}
\begin{document}

\title{Relative $A_{\Inf}$-cohomology}
\author{Ildar Gaisin and Teruhisa Koshikawa}
\date{}
\maketitle

\begin{abstract}
    We construct a relative version of the $A_{\Inf}$-cohomology theory developed by Bhatt-Morrow-Scholze and relate it to the prismatic theory of Bhatt-Scholze. The construction relies on the fiber product of topoi. As an application we show that there is an étale comparison of the $q$-crystalline pushforward after inverting $\mu \in A_{\Inf}$.
\end{abstract}

\tableofcontents

\newpage

\section{Introduction}

The goal of this paper is to construct and study an $A_{\Inf}$-cohomology theory associated to a \emph{morphism} of $p$-adic formal schemes. The absolute case was initiated by Bhatt-Morrow-Scholze \cite{BhMorSch}. However extending their theory to the relative setting is not a straightforward procedure and requires the consideration of an intermediary topos where the décalage functor operates. We recall the setup and (main) results of \cite{BhMorSch}. Let $C$ be a complete algebraically closed nonarchimedean extension of $\bQ_p$ with ring of integers $\cO$ and residue field $k$. Let $\fX$ be a proper smooth formal scheme over $\cO$. As usual, let $\cO^{\flat}$ be the tilt of $\cO$ and $A_{\Inf} := W(\cO^{\flat})$ be one of the period rings of Fontaine. Then the authors of \cite{BhMorSch} construct an object
$A\Omega_{\fX}$
which lives in $D^{\geq 0}(\fX_{\Zar}, A_{\Inf})$. The main result (cf. Theorem 1.8 in loc.cit.) is that the $A_{\Inf}$-cohomology complex
$R\Gamma_{A_{\Inf}}(\fX) := R\Gamma(\fX_{\Zar}, A\Omega_{\fX})$ \emph{interpolates} 
\begin{enumerate}
    \item the absolute crystalline cohomology of the mod $p$ reduction of $\fX$ and
    \item the integral $p$-adic étale cohomology of the rigid-analytic fiber of $\fX$.
\end{enumerate}
In particular, $R\Gamma_{A_{\Inf}}(\fX)$ also interpolates the de Rham cohomology of $\fX$ and the crystalline cohomology of the special fiber of $\fX$. 

Originally the construction of $A\Omega_{\fX}$ in \cite{BhMorSch} lacked the framework of a site. This was addressed and solved in \cite{Prisms} with the definitions of the $q$-crystalline and prismatic sites. Since the introduction of the $q$-crystalline site it is more natural to directly compare $A_{\Inf}$-cohomology to $q$-crystalline cohomology (or to prismatic cohomology, depending on taste), and to refine (1) as  (cf. Theorem 17.2 in loc.cit.)
\begin{enumerate}[(1)']
    \item $R\Gamma_{A_{\Inf}}(\fX)$ is isomorphic to the $q$-crystalline cohomology of $\fX$.
\end{enumerate}
This is indeed a refinement as the absolute crystalline cohomology is the base change of the $q$-crystalline cohomology (cf. Theorem 1.8(5) and Theorem 16.14 in loc.cit.). 
From this viewpoint, (2) may be regarded as a form of the \'etale comparison for the $q$-crystalline cohomology (or the prismatic cohomology).

Let us mention at this point, generalizing the work of \cite{BhMorSch}, \v{C}esnavi\v{c}ius and one of the authors \cite{SemistabAinfcoh} were able to drop the smoothness assumption in $\fX$ and work with a semistable model\footnote{In this situation the $C$ is less general and the Zariski topology is not fine enough to treat the semistable case, so $\fX_{\Zar}$ is replaced by $\fX_{\et}$. 
}. The generalization of (1)' to the semistable setting was solved in \cite{KoshTerui} (cf. Theorem 8.1 in loc.cit.). 

In this paper we deal with the analogues of (1)' and (2) in the \emph{relative} setting under the smoothness assumption. The setup is as follows. Let $f \colon \fX \to \fY$ be a proper smooth morphism of flat $p$-adic formal schemes over $\cO$. The flatness assumption is natural as $A_{\Inf}$-cohomology theory only remembers information coming from the generic fiber. There should be some assumptions on $\fY$ so that the adic generic fibers $X$ and $Y$ of $\fX$ and $\fY$ respectively, can be defined, but let us ignore this point for now. We construct an object $A\Omega_{\fX/\fY}$ (depending on $f$) and study the induced \emph{relative} $A_{\Inf}$-cohomology complex $R\Gamma_{A_{\Inf}}(\fX/\fY)$. Before we discuss the construction of these objects (including where they \emph{should} live), let us make what information $R\Gamma_{A_{\Inf}}(\fX/\fY)$ should capture more precise. For (2), as integral coefficients (e.g. $\bZ_p$) are not a good coefficient theory for the étale topology, it is more convenient to work in the pro-étale setting. So the relative version of (2) becomes
\begin{enumerate}[$(2)_{\text{rel}}$]
    \item $R\Gamma_{A_{\Inf}}(\fX/\fY)$ \emph{captures} $Rf_{\eta, \proet *}\wh{\bZ}_p$,
\end{enumerate}
where $f_{\eta} \colon X \to Y$ is the induced morphism on the adic generic fibers, and $\wh{\bZ}_p := \varprojlim \bZ/p^n$ as sheaves on the pro-étale site. For (1)' and ignoring the notion of a $q$-PD pair for the moment, \cite{Prisms} construct a (big, global) site $q-\textnormal{CRYS}(\fX/A_{\Inf})$ equipped with a structure sheaf $\cO_{q-\textnormal{CRYS}}$  (idem. for $\fY$). The morphism $f$ induces a cocontinuous functor of sites
\[
f_{q-\textnormal{CRYS}} \colon q-\textnormal{CRYS}(\fX/A_{\Inf}) \to q-\textnormal{CRYS}(\fY/A_{\Inf})
\]
and so the relative version of (1)' becomes
\begin{enumerate}[(1)'$_{\text{rel}}$]
    \item $R\Gamma_{A_{\Inf}}(\fX/\fY)$ \emph{captures} $Rf_{q-\textnormal{CRYS}*}\cO_{q-\textnormal{CRYS}}$.
\end{enumerate}

Let us now briefly review the construction of $A\Omega_{\fX}$ (in the setting of \cite{BhMorSch}), keeping in mind that it should be a complex whose cohomology interpolates between (1) and (2). On the site $X_{\proet}$, there is the sheaf $W(\wh{\cO}^{+, \flat}_X)$ defined as the usual Fontaine period ring on affinoid perfectoid objects in $X_{\proet}$ (cf. \cite[Definition 6.1]{SchpHrig}). For technical reasons one works with the derived $p$-adic completion of $W(\wh{\cO}^{+, \flat}_X)$, denoted by $\bA_{\Inf,X}$. There is a natural morphism of sites
\[
\nu_X \colon X_{\proet} \to \fX_{\Zar}
\]
and an initial approximation for $A\Omega_{\fX}$ is the pushforward $R\nu_{X*}\bA_{\Inf,X}$. For an explanation of how to arrive at this approximation we refer the reader to \cite[\S 1.4]{BhMorSch}. As a consequence of the primitive version of a comparison theorem (cf. \cite[Theorem 1.3]{SchpHrig}), this complex \emph{almost} captures the integral $p$-adic étale cohomology of $X$ in a technical sense. One way to obtain a result on the nose is to invert a single element in $A_{\Inf}$ whose image in $W(k)$ is zero and this is essentially the content of \cite[Theorem 1.8(iv)]{BhMorSch}. Regarding the ($q$-)crystalline side, it will be important to modify $R\nu_{X*}\bA_{\Inf,X}$. To see why, let us review the ``local" part of the argument of (1)/(1)'. The local computations are made in the case when $\fX$ has a description by tori-coordinates and can be summarised by the following schematic:
\begin{equation*} \label{eq:localcompdia}
 \xymatrix{
 R\nu_{X*}\bA_{\Inf,X} \ar@{<~>}[rr]^-{\text{Faltings'}}_-{\text{almost purity}}  && \text{(continuous) group cohomology} \ar@{<~>}[rr]^-{\text{décalage}}_-{\text{functor}}  && \text{$q$-de Rham complex.}
 } 
\end{equation*}
In this case, the upshot is that there is an explicit pro-(finite étale) affinoid perfectoid cover of the generic fiber (by repeatedly taking degree $p$ Kummer extensions) and Faltings' almost purity theorem means that $R\nu_{X*}\bA_{\Inf,X}$ can be \emph{almost} computed by the Koszul complex associated to the Galois group acting on the explicit cover. On the other hand an explicit (Koszul-like) complex calculating $q$-crystalline cohomology (in the context of (1)') is the $q$-de Rham complex\footnote{
In the context of (1), as one is  interested in the (less refined) absolute crystalline cohomology the base change of the $q$-de Rham complex to $A_{\textnormal{crys}}$ was considered \cite[Corollary 12.5]{BhMorSch}.}. In order to compare the Koszul complex almost representing $R\nu_{X*}\bA_{\Inf,X}$ and the similarly-shaped $q$-de Rham complex one must kill the \emph{non-integral} contribution\footnote{Also coined as the ``junk torsion" in \cite{BhMorSch}.} of the Koszul complex. To achieve this, we need a special element of $A_{\Inf}$. Fix a compatible system of primitive $p$-power roots of unity $\xi_{p^r} \in \cO$. The system $(1, \xi_p, \xi_{p^2}, \ldots)$ defines elements $\epsilon \in \cO^{\flat}$, $\mu = [\epsilon] - 1 \in A_{\Inf}$, and let $L\eta_{\mu}$ be the associated décalage functor. Then, in the case $\fX$ is a torus, for instance, taking cohomology of the previous \emph{almost} approximation and applying $L\eta_{\mu}$ gives an honest quasi-isomorphism:
\[
L\eta_{\mu} R\Gamma_{\cont} (\bZ_p, A_{\Inf} \{x^{\pm 1}\}) \to L\eta_{\mu} R\Gamma (X_{\proet}, \bA_{\Inf, X}),
\]
with the source identifying with the $q$-de Rham complex. This is one way of motivating the construction of
\[
A\Omega_{\fX} := L\eta_{\mu}R\nu_{X*}\bA_{\Inf,X}
\]
and the passage from $R\nu_{X*}\bA_{\Inf,X} \rightsquigarrow A\Omega_{\fX}$  is the main novelty of the approach of Bhatt-Morrow-Scholze. Let us also mention that several key inputs (albeit in a different setting) mentioned so far stretch back to works of Faltings \cite{FaltpadHodg, Faltalmoetalext}.

Let us now return to the relative situation. To get an idea of where $A\Omega_{\fX/\fY}$ should live, say ?, the following two observations are key:

\begin{enumerate}[(a)]
    \item Since the (relative) cohomology of $A\Omega_{\fX/\fY}$ should capture $Rf_{\eta, \proet *}\wh{\bZ}_p$, one idea is that there should be a map $? \to Y_{\proet}$.
    \item By taking modulo $q-1$, the $q$-de Rham complex identifies with the usual de Rham complex. So we see that the (relative) cohomology of $A\Omega_{\fX/\fY}$ should capture the usual de Rham complex $\Omega^{\bullet}_{\fX/\fY}$. As $\Omega^{\bullet}_{\fX/\fY}$ lives in $D(\fX_{\Zar})$, there should be a map $? \to \fX_{\Zar}$. 
\end{enumerate}

So far (a)-(b) give us two maps $? \to Y_{\proet}$ and $? \to \fX_{\Zar}$. Moreover as $A\Omega_{\fX/\fY}$ should in principle only depend on $f: \fX \to \fY$, these two maps should be compatible with their projections to $\fY_{\Zar}$. Finally it goes without saying that $A\Omega_{\fX/\fY}$ should be some kind of relative cohomology of $\bA_{\Inf,X}$ coming from $X_{\proet}$. So there should be a map $X_{\proet} \to ?$. In summary there should be a commutative diagram (of topoi):
\begin{equation} \label{eq:diagmotiva}
 \xymatrix{
 X_{\proet} \ar@{->}[rrd]^{\nu_{X}} \ar@{->}[ddr]_{f_{\eta, \proet}} \ar@{->}[rd]|-{\nu_f} & &  \\
 & ? \ar@{->}[d]^{p} \ar@{->}[r] & \fX_{\Zar} \ar@{->}[d]  \\ 
  & Y_{\proet} \ar@{->}[r]  & \fY_{\Zar}.
 } 
\end{equation}
The shape of diagram \eqref{eq:diagmotiva} means that one candidate for ? is some kind of \emph{product} of the topoi $Y_{\proet}$, $\fX_{\Zar}$ and $\fY_{\Zar}$. Indeed such a notion exists and is expounded in \cite[Exposé XI]{TravdeGabb}. Moreover we choose to work with étale sites (although the same arguments work with the Zariski sites), so we take $? := Y_{\proet} \times_{\fY_{\et}} \fX_{\et}$. This leads to the following definition
\[
A\Omega_{\fX/\fY} := L\eta_{\mu}R\nu_{f*}\bA_{\Inf,X}.
\]
We caution the reader that in general, we can only prove that $A\Omega_{\fX/\fY}$ is \emph{almost} derived $p$-adically complete.

Having established where the décalage functor should operate, we define relative $A_{\Inf}$-cohomology as
\[
R\Gamma_{A_{\Inf}}(\fX/\fY) := Rp_{*}A\Omega_{\fX/\fY}. 
\] 
We can now state our first comparison result:
\begin{thm}[Comparison with the pro-étale pushforward] \label{thm:comparisononee}
Let $\fX$ and $\fY$ be flat $p$-adic formal schemes locally of finite type over $\cO$, and let $f \colon \fX \to \fY$ be a proper smooth morphism. Then 
    \[R\Gamma_{A_{\Inf}}(\fX/\fY) \otimes_{A_{\Inf}}^{\bL} A_{\Inf}[\tfrac{1}{\mu}] \cong R\Gamma_{A_{\Inf}}(\fX/\fY)^{\wedge} \otimes_{A_{\Inf}}^{\bL} A_{\Inf}[\tfrac{1}{\mu}] \cong Rf_{\eta, \proet*}\wh{\bZ}_p \otimes_{\wh{\bZ}_p}^{\bL} \bA_{\Inf,Y}[\tfrac{1}{\mu}],
    \]
    where $R\Gamma_{A_{\Inf}}(\fX/\fY)^{\wedge}$ means the derived $p$-adic completion of $R\Gamma_{A_{\Inf}}(\fX/\fY)$.
\end{thm}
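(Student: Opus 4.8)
The plan is to peel off the two purely formal ingredients — the collapse of the décalage functor after inverting $\mu$, and the comparison of $R\Gamma_{A_{\Inf}}(\fX/\fY)$ with its derived $p$-completion — and thereby reduce the theorem to a \emph{relative} primitive comparison statement on $X_{\proet}$, which carries the genuine content.

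First I would record that, by construction, $R\Gamma_{A_{\Inf}}(\fX/\fY)=Rp_*L\eta_\mu R\nu_{f*}\bA_{\Inf,X}$ and that the composite $p\circ\nu_f$ coincides with $f_{\eta,\proet}\colon X_{\proet}\to Y_{\proet}$, by the commutativity of diagram \eqref{eq:diagmotiva}. Since $\mu$ is a non-zero-divisor, the natural transformation $L\eta_\mu\to\id$ becomes an equivalence after $-\otimes^{\bL}_{A_{\Inf}}A_{\Inf}[\tfrac{1}{\mu}]$; moreover $A_{\Inf}[\tfrac{1}{\mu}]$ is the filtered colimit of $A_{\Inf}\xrightarrow{\mu}A_{\Inf}\xrightarrow{\mu}\cdots$, so the pushforwards $Rp_*$ and $Rf_{\eta,\proet*}$ commute with $-\otimes^{\bL}_{A_{\Inf}}A_{\Inf}[\tfrac{1}{\mu}]$, their cohomological amplitudes being bounded by properness of $f$. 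Combining these facts,
\[
R\Gamma_{A_{\Inf}}(\fX/\fY)\otimes^{\bL}_{A_{\Inf}}A_{\Inf}[\tfrac{1}{\mu}]\;\cong\;\bigl(Rp_*R\nu_{f*}\bA_{\Inf,X}\bigr)\otimes^{\bL}_{A_{\Inf}}A_{\Inf}[\tfrac{1}{\mu}]\;\cong\;Rf_{\eta,\proet*}\bA_{\Inf,X}\otimes^{\bL}_{A_{\Inf}}A_{\Inf}[\tfrac{1}{\mu}].
\]
For the first isomorphism in the statement I would use that $A\Omega_{\fX/\fY}$ is almost derived $p$-complete (as noted in the excerpt), hence so is $R\Gamma_{A_{\Inf}}(\fX/\fY)=Rp_*A\Omega_{\fX/\fY}$ since $Rp_*$ is bounded; therefore the fibre of the map to the derived $p$-completion is annihilated by $W(\fm^\flat)$, and any such complex vanishes after inverting $\mu$.

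It remains to establish the relative primitive comparison
\[
Rf_{\eta,\proet*}\bA_{\Inf,X}\otimes^{\bL}_{A_{\Inf}}A_{\Inf}[\tfrac{1}{\mu}]\;\cong\;Rf_{\eta,\proet*}\wh{\bZ}_p\otimes^{\bL}_{\wh{\bZ}_p}\bA_{\Inf,Y}[\tfrac{1}{\mu}].
\]
The natural map is the composite
\[
Rf_{\eta,\proet*}\wh{\bZ}_p\otimes^{\bL}_{\wh{\bZ}_p}\bA_{\Inf,Y}\to Rf_{\eta,\proet*}\bigl(\wh{\bZ}_p\otimes^{\bL}_{\wh{\bZ}_p}f_\eta^{-1}\bA_{\Inf,Y}\bigr)=Rf_{\eta,\proet*}f_\eta^{-1}\bA_{\Inf,Y}\to Rf_{\eta,\proet*}\bA_{\Inf,X},
\]
the first arrow being the projection morphism and the second induced by the canonical map $f_\eta^{-1}\bA_{\Inf,Y}\to\bA_{\Inf,X}$ of period sheaves. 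To see it becomes an isomorphism after inverting $\mu$, I would check it on stalks at geometric points $\ol y$ of $Y_{\proet}$, which may be taken to lie over affinoid perfectoids. By proper base change one has $(Rf_{\eta,\proet*}\wh{\bZ}_p)_{\ol y}\cong R\Gamma_{\et}(X_{\ol y},\bZ_p)$, a perfect $\bZ_p$-complex, so that tensoring with $\bA_{\Inf,Y}$ commutes with passing to the stalk at $\ol y$; and by a pro-étale proper base change for $\bA_{\Inf}$-coefficients one has $(Rf_{\eta,\proet*}\bA_{\Inf,X})_{\ol y}\cong R\Gamma(X_{\ol y,\proet},\bA_{\Inf})$, where $X_{\ol y}$ is the proper smooth rigid-analytic fibre over the complete algebraically closed perfectoid field $\wh{\kappa(\ol y)}$. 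On stalks the map becomes the absolute primitive comparison $R\Gamma_{\et}(X_{\ol y},\bZ_p)\otimes_{\bZ_p}A_{\Inf}(\ol y)\to R\Gamma(X_{\ol y,\proet},\bA_{\Inf})$, which is an isomorphism after inverting $\mu$ by \cite[Theorem 1.3]{SchpHrig} (equivalently \cite[Theorem 1.8(iv)]{BhMorSch}).

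The main obstacle is precisely the pro-étale proper base change for $\bA_{\Inf,X}$ invoked above: one must show that forming $Rf_{\eta,\proet*}\bA_{\Inf,X}$ commutes with base change to a geometric fibre of $Y$. Working modulo $p^n$, I would reduce this, via Faltings' almost purity and Scholze's primitive comparison, to Huber's proper base change for $\bZ/p^n$-coefficients in the étale topology, up to an error annihilated by $W(\fm^\flat)$; the delicate points are to control these almost-zero errors uniformly in $n$ before passing to the limit, to verify that the geometric fibres $X_{\ol y}$ are genuine proper smooth adic spaces over perfectoid fields so that the absolute theory of \cite{BhMorSch} applies fibrewise, and finally to confirm that inverting $\mu$ really does annihilate the accumulated $W(\fm^\flat)$-torsion — ingredients in the spirit of \cite{BhMorSch}, but requiring care in the ambient topos $Y_{\proet}\times_{\fY_{\et}}\fX_{\et}$ where $\nu_f$ and $p$ live.
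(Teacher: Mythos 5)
Your outer layer matches the paper's: Lemma \ref{lem:pretalecompas} is your collapse of $L\eta_{\mu}$ after inverting $\mu$, and the comparison map you write down (projection formula composed with $f_{\eta}^{-1}\bA_{\Inf,Y}\to\bA_{\Inf,X}$) is exactly the map of Proposition \ref{prop:exrn517shco}. But the two hard points are precisely the ones you defer. First, your ``relative primitive comparison'' is to be checked stalkwise via a pro-étale proper base change for $\bA_{\Inf,X}$-coefficients, which you yourself label the main obstacle and do not prove; the paper never argues fibrewise --- the mod-$p^{n}$ relative almost isomorphism \eqref{eq:takdsjdf} is quoted directly from the proof of \cite[Theorem 8.8]{SchpHrig}, so no new base-change theorem is needed. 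Second, the genuinely new difficulty in the relative setting, which the paper isolates, is showing that $Rf_{\eta,\proet*}\wh{\bZ}_p\otimes^{\bL}_{\wh{\bZ}_p}\bA_{\Inf,Y}$ is derived $p$-adically complete, so that the mod-$p^{n}$ almost isomorphisms can be integrated (Lemma \ref{lem:critkillms} and Proposition \ref{prop:exrn517shco}). This is where the lisse/adic-lisse theory of \S\ref{sec:lisfdjkl} and the local constancy of $R^{i}f_{\eta,\et*}\bF_p$ \cite[Theorem 10.5.1]{SchWeinsberkelec} enter, and it is the sole reason for the ``locally of finite type'' hypothesis --- a hypothesis your argument never uses. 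In your stalkwise formulation the same issue reappears as the commutation of the stalk functor with the derived limits defining $Rf_{\eta,\proet*}\wh{\bZ}_p$ and $Rf_{\eta,\proet*}\bA_{\Inf,X}$, which fails without exactly this kind of local perfectness; your sketch supplies no mechanism for it, so the heart of the theorem is left open.

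For the first isomorphism you invoke almost derived $p$-completeness of $A\Omega_{\fX/\fY}$. In the paper this is Corollary \ref{cor:jfsdmderhamd}, available only after the local analysis and the Hodge--Tate and de Rham specializations of \S\ref{sec:localanal}--\S\ref{sec:TheHodhTaspe}; using it here is not circular, but it is far heavier than necessary, and it only gives a cone killed by $[\fm^{\flat}]$, so you would still need the completeness argument of \cite[Lemma 3.17]{SemistabAinfcoh} to upgrade to $W(\fm^{\flat})$-torsion before inverting $\mu$ (note $\mu\notin[\fm^{\flat}]$, so mere $[\fm^{\flat}]$-torsion need not die after inverting $\mu$). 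The paper's Lemma \ref{lem:imprlem3.10} avoids all of this: the truncation-wise $\mu$-power bounds on the cone of $A\Omega_{\fX/\fY}\to R\nu_{f*}\bA_{\Inf,X}$ survive derived $p$-completion, so the completed and uncompleted objects agree after inverting $\mu$ with no input from later sections. Finally, the commutation of $Rp_{*}$ with $-\otimes_{A_{\Inf}}A_{\Inf}[\tfrac{1}{\mu}]$ is justified in the paper by coherence of $p$ (Proposition \ref{prop:ousiDalge}(4)) together with \cite[Exposé VI, Théorème 5.1]{SGA4}, not by the boundedness-of-amplitude claim you assert without proof.
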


The proof of Theorem \ref{thm:comparisononee} follows the same lines as its absolute analogue and the relevant relative comparison result was already proved by Scholze (cf. the morphism \eqref{eq:takdsjdf} which was proved to be an \emph{almost} isomorphism in the the proof of {\cite[Theorem 8.8]{SchpHrig}}). The additional technical difficulty present in the relative case is proving that $Rf_{\eta, \proet*}\wh{\bZ}_p \otimes_{\wh{\bZ}_p}^{\bL} \bA_{\Inf,Y}$ is derived $p$-adically complete. In the absolute case (cf. the proof of \cite[Theorem 5.7]{BhMorSch}) the underlying objects are modules and integral étale cohomology is a perfect complex of $\bZ_p$-modules. It turns out that $Rf_{\eta, \proet*}\wh{\bZ}_p$ has enough finiteness properties so that the tensor product can still be controlled. It is based on the local constancy of $R^{i}f_{\eta, \et*}\bF_p$ proved in \cite[Theorem 10.5.1]{SchWeinsberkelec}. This is also the source of the ``locally of finite type" assumption on $\fX$ and $\fY$ in Theorem \ref{thm:comparisononee}. 

Without further mention, for the rest of the introduction we work in the setting of Theorem \ref{thm:comparisononee}. Although in the body of the paper we work with more general $\fX$ and $\fY$, the main obstacles are already present in the current setting. 

We now push on with the comparison with $Rf_{q-\textnormal{CRYS}*}\cO_{q-\textnormal{CRYS}}$. First note that $R\Gamma_{A_{\Inf}}(\fX/\fY)$ is a complex of $W(\wh{\cO}^{+, \flat}_Y)$-modules living in $Y_{\proet}$ and this immediately raises the question of \emph{where} this comparison should take place. Recall that there is the  site $q-\textnormal{CRYS}(\fY/A_{\Inf})$ and the object $Rf_{q-\textnormal{CRYS}*}\cO_{q-\textnormal{CRYS}}$ lives in the associated topos $(\fY/A_{\Inf})_{q-\textnormal{CRYS}}$. We construct a morphism of topoi
\begin{equation} \label{eq:introprovsqcry}
\mu_{\proet}: Y_{\proet} \to (\fY/A_{\Inf})_{q-\textnormal{CRYS}},
\end{equation}
which sends an affinoid perfectoid object to its pro-infinitesimal thickening given by Fontaine's period ring (a $q$-PD pair). A similar morphism is used in the work of \cite{MorrTsuj} when they relate relative Breuil-Kisin-Fargues modules with prismatic crystals (cf. proof of Theorem 5.15 in loc.cit.). It is along $\mu_{\proet}$ that we make the comparison of $R\Gamma_{A_{\Inf}}(\fX/\fY)$ and $Rf_{q-\textnormal{CRYS}*}\cO_{q-\textnormal{CRYS}}$. The next theorem represents the bulk of our work.

\begin{thm}[Comparison with the $q$-crystalline pushforward] \label{thm:introcomparqcrystal}
There is a canonical functorial morphism 
\begin{equation} \label{eq:compmorAOMeqOMe12} 
\mu_{\proet}^{-1}Rf_{q-\textnormal{CRYS}*}\cO_{q-\textnormal{CRYS}} \to R\Gamma_{A_{\Inf}}(\fX/\fY)
\end{equation}
in $D(Y_{\proet}, W(\wh{\cO}^{+, \flat}_Y))$ compatible with the Frobenius such that the cohomology sheaves of the cone of \eqref{eq:compmorAOMeqOMe12} 
 are killed by $[\fm^{\flat}] \subset A_{\Inf}$ (where $\fm^{\flat} \subset \cO^{\flat}$ is the maximal ideal).
\end{thm}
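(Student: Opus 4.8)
The plan is to construct the morphism \eqref{eq:compmorAOMeqOMe12} and verify the almost isomorphism claim by reducing to an explicit local computation, exactly parallel to the strategy of \cite{BhMorSch} and \cite{Prisms} but carried out along $\mu_{\proet}$ and over the base $\fY$. First I would construct the morphism. By the universal property of the $q$-crystalline site, the sheaf $\bA_{\Inf,X}$ together with its $q$-PD structure on affinoid perfectoid objects of $X_{\proet}$ gives a map of ringed topoi from $X_{\proet}$ to $(\fX/A_{\Inf})_{q\textnormal{-CRYS}}$, compatibly with $\mu_{\proet}$ for $\fX$ and for $\fY$ via $f$; pulling back $\cO_{q\textnormal{-CRYS}}$ and using adjunction yields a natural map $\mu_{\proet,\fX}^{-1}\cO_{q\textnormal{-CRYS},\fX}\to \bA_{\Inf,X}$ in $D(X_{\proet})$. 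Applying $Rf_{q\textnormal{-CRYS}*}$ on the $q$-crystalline side, base change along $\mu_{\proet}$ (which holds up to the allowed almost error, or on the nose since $\nu_f$ factors the relevant projections as in diagram \eqref{eq:diagmotiva}), and then $Rp_*$, together with the fact that $L\eta_\mu$ commutes with the relevant pushforwards after inverting nothing but only up to a controlled error, produces the morphism \eqref{eq:compmorAOMeqOMe12}. The $L\eta_\mu$ has to be inserted using that $R\nu_{f*}\bA_{\Inf,X}$ receives the $q$-crystalline complex and that the latter is already ``$\mu$-adapted'' (its cohomology has bounded $\mu$-torsion), so that the canonical map factors through $L\eta_\mu R\nu_{f*}\bA_{\Inf,X}=A\Omega_{\fX/\fY}$.

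Next I would check the cone is killed by $[\fm^\flat]$. Since everything is local on $\fY_{\et}$ and on $\fX_{\et}$, and both sides commute with the relevant localizations, I can assume $\fY=\Spf R_0$ is a small affine with an étale map to a torus, and likewise that $\fX\to\fY$ has relative tori-coordinates, so that $\fX=\Spf R$ with $R$ obtained from $R_0$ by an étale map from $R_0\langle T_1^{\pm},\dots,T_d^{\pm}\rangle$. On the $q$-crystalline side, $Rf_{q\textnormal{-CRYS}*}\cO_{q\textnormal{-CRYS}}$ is then computed, after pulling back along $\mu_{\proet}$, by a relative $q$-de Rham complex $q\textnormal{-}\Omega^\bullet_{R/R_0}$ with coefficients in $\bA_{\Inf,Y}$ (this is the content of \cite[\S16]{Prisms} in the relative form, combined with the description of $\mu_{\proet}$ sending affinoid perfectoids to their period-ring thickenings). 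On the pro-étale side, over the explicit pro-(finite étale) affinoid perfectoid cover $\wt X\to X$ obtained by adjoining $p$-power roots of the $T_i$, Faltings' almost purity and the relative primitive comparison (the map \eqref{eq:takdsjdf} of \cite[Theorem 8.8]{SchpHrig}) identify $R\nu_{f*}\bA_{\Inf,X}$, almost and relative to $\bA_{\Inf,Y}$ on $Y_{\proet}$, with the continuous Koszul complex of $\bZ_p^d=\Gal(\wt X/X)$ acting on $\bA_{\Inf,Y}\langle T_i^{\pm 1/p^\infty}\rangle$. Applying $L\eta_\mu$ to this Koszul complex kills the junk torsion and yields precisely $q\textnormal{-}\Omega^\bullet_{R/R_0}$ by the same $L\eta_\mu$-of-a-Koszul-complex lemma as in \cite[\S7--9]{BhMorSch}, now read over $\bA_{\Inf,Y}$. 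Comparing the two identifications shows \eqref{eq:compmorAOMeqOMe12} is, locally, the almost isomorphism coming from almost purity, so its cone is annihilated by $[\fm^\flat]$; a standard glueing argument (the two sides are sheaves, the cone is computed locally, and $[\fm^\flat]$-annihilation is a local condition) upgrades this to the global statement. Frobenius compatibility is built into each identification, since the comparison of \cite{SchpHrig}, the $q$-de Rham Frobenius of \cite{Prisms}, and $L\eta_\mu$ are all Frobenius-equivariant.

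The main obstacle I expect is \emph{not} the local computation itself but the bookkeeping needed to make sense of all objects on the product topos $Y_{\proet}\times_{\fY_{\et}}\fX_{\et}$ and to descend the almost isomorphism: one must know that $\mu_{\proet}$ interacts well with $Rf_{q\textnormal{-CRYS}*}$ (a base-change statement for the two ways of pushing forward from the ``generic fiber'' of the $q$-crystalline site), that $Rp_*$ of an $[\fm^\flat]$-almost-zero complex on the product topos is $[\fm^\flat]$-almost zero (clear, since $[\fm^\flat]$ acts on the sheaves), and — the genuinely delicate point — that the various pushforwards commute with the explicit localizations used to reduce to the torus case, which in the relative setting requires the finiteness inputs cited for Theorem \ref{thm:comparisononee} (local constancy of $R^if_{\eta,\et*}\bF_p$) to control $R\nu_{f*}\bA_{\Inf,X}$. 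Once these compatibilities are in place, the proof is the relative echo of \cite[\S9--12]{BhMorSch} together with \cite[\S16--17]{Prisms}.
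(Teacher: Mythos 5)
Your outline reproduces the general philosophy (local Koszul/$q$-de Rham computation, almost purity, $L\eta_\mu$ killing the junk torsion), but it leaves unaddressed precisely the two points where the relative case genuinely differs from \cite{BhMorSch} and \cite[\S 16--17]{Prisms}, and those are where the paper's proof actually lives. The first is the base-change step you yourself flag as ``the main obstacle'': the paper never proves directly that $\mu_{\proet}$ commutes with $Rf_{q-\textnormal{CRYS}*}$, and your parenthetical ``(which holds up to the allowed almost error, or on the nose \dots)'' is an assertion, not an argument. Instead the paper introduces the oriented product $(\fY/A_{\Inf})_{q-\textnormal{CRYS}} \ola{\times}_{\fY_{\et}} \fX_{\et}$, defines $q\Omega_{\fX/\fY}$ there, proves $\mu^{-1}q\Omega_{\fX/\fY} \to A\Omega_{\fX/\fY}$ has cone killed by $W(\fm^{\flat})$ (Theorem \ref{thm:AOmegvsqCrys}), and obtains \eqref{eq:compmorAOMeqOMe12} as a base-change map composed with $Rp_{*}$ of that comparison. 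Controlling the cone of the base-change map is the bulk of the work: one needs almost derived $[p]_q$-completeness of both sides (Corollary \ref{cor:objecxiadcom}, resting on the delicate presheaf-versus-sheaf analysis of $A\Omega_{\fX/\fY}$, and Lemma \ref{lem:pqderiomqcr}, resting on perfectness of the $q$-crystalline pushforward, Proposition \ref{prop:perfectnessqcrco}), then reduction mod $[p]_q$, the Hodge--Tate comparisons on both sides (Corollary \ref{cor:HTforqcrysde}, Theorem \ref{thm:HTspcssd}), two separate base-change results for the K\"ahler differentials (Proposition \ref{prop:bcqcrst} and Corollary \ref{cor:pcomverorbascha}), and finally almost derived Nakayama (Lemma \ref{lem:almostderNaka}). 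None of this machinery, nor a substitute for it, appears in your proposal.

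The second gap is canonicity. You choose relative tori-coordinates locally, identify both sides with explicit complexes, and invoke ``a standard glueing argument,'' but the local identifications depend on the chosen framing, so gluing does not by itself yield a canonical functorial morphism; moreover, in the relative setting the derived sections of $A\Omega_{\fX/\fY}$ agree with the group-cohomology Koszul complexes only up to $W(\fm^{\flat})$-ambiguity unless $(R_\infty[\tfrac1p])^{\circ}=R_\infty$, which is why the paper works with the presheaf version and with the ``all possible coordinates'' device: very small algebras, the canonical surjection $P=\bA_{\Inf}(S_\infty)[\{t_s\}_{s\in R^{\times}}]^{\wedge}\twoheadrightarrow R\wh{\otimes}_S S_\infty$, its $q$-PD envelope, and completed colimits of Koszul complexes over finite $\Sigma\subset R^{\times}$ (Remark \ref{rem:extenthem4.12}, Steps 1--3 of the proof of Theorem \ref{thm:AOmegvsqCrys}). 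Finally, a smaller inaccuracy: the local constancy of $R^{i}f_{\eta,\et*}\bF_p$ is not an input to this theorem (it is used only for the \'etale comparison, Theorem \ref{thm:comparisononee}); what is actually needed from properness here is perfectness of $R\Gamma_{q-\textnormal{CRYS}}(\fX/\fY)$ via the Hodge--Tate comparison for prismatic cohomology, and the properness hypotheses in the two base-change propositions.
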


We now discuss the proof of Theorem \ref{thm:introcomparqcrystal}. The starting point is constructing the $q$-crystalline analogue of the complex $A\Omega_{\fX/\fY}$. As a first attempt it is natural to consider the product of topoi $(\fY/A_{\Inf})_{q-\textnormal{CRYS}} \times_{\fY_{\et}} \fX_{\et}$ and for the ``$q$-analogue" of $\nu_f$ (coming from the universal property of the product of topoi):
\[
\nu_{f}^{q} \colon (\fX/A_{\Inf})_{q-\textnormal{CRYS}} \to (\fY/A_{\Inf})_{q-\textnormal{CRYS}} \times_{\fY_{\et}} \fX_{\et}, 
\]
one defines
\[
q\Omega_{\fX/\fY} := R\nu_{f*}^{q}\cO_{q-\textnormal{CRYS}}.
\]
The issue is that the underlying site of $(\fY/A_{\Inf})_{q-\textnormal{CRYS}} \times_{\fY_{\et}} \fX_{\et}$ is difficult to describe due to the first projection $(\fY/A_{\Inf})_{q-\textnormal{CRYS}} \to \fY_{\et}$ essentially coming from a \emph{cocontinuous} functor of sites. In particular it is difficult to describe the (local) sections of $q\Omega_{\fX/\fY}$. The fix is to work with the \emph{oriented} product $(\fY/A_{\Inf})_{q-\textnormal{CRYS}} \ola{\times}_{\fY_{\et}} \fX_{\et}$, which, at least heuristically, takes into account the asymmetric nature of the two projections to $\fY_{\et}$. We make this precise, describing an explicit site for $(\fY/A_{\Inf})_{q-\textnormal{CRYS}} \ola{\times}_{\fY_{\et}} \fX_{\et}$ allowing us to study $q\Omega_{\fX/\fY}$ (now defined with the oriented product in place of the usual product in the target of $\nu_{f}^{q}$). 

With the correct setup in place, the next step in the proof of Theorem \ref{thm:introcomparqcrystal} is to compare both $q\Omega_{\fX/\fY}$ and $A\Omega_{\fX/\fY}$. The morphism of topoi \eqref{eq:introprovsqcry} extends to a morphism between the relevant products
\[
\mu \colon Y_{\proet} \times_{\fY_{\et}} \fX_{\et} \to (\fY/A_{\Inf})_{q-\textnormal{CRYS}} \ola{\times}_{\fY_{\et}} \fX_{\et}
\]
and the comparison between $q\Omega_{\fX/\fY}$ and $A\Omega_{\fX/\fY}$ takes the form:
\begin{thm} \label{thm:introqOmegvsAinome}
There is a canonical functorial isomorphism 
\begin{equation} \label{eq:compmorAOMeqOMed}
\mu^{-1}q\Omega_{\fX/\fY} \xrightarrow{\sim} A\Omega_{\fX/\fY}
\end{equation}
in the relevant derived category compatible with the Frobenius.
\end{thm}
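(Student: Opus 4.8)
The plan is to reduce the comparison isomorphism \eqref{eq:compmorAOMeqOMed} to a local statement over an affine base, where both sides are computed explicitly by $q$-de Rham–type complexes, and then glue. First I would unwind the definitions: $A\Omega_{\fX/\fY} = L\eta_\mu R\nu_{f*}\bA_{\Inf,X}$ lives in the derived category of $Y_{\proet}\times_{\fY_{\et}}\fX_{\et}$, while $q\Omega_{\fX/\fY} = R\nu^q_{f*}\cO_{q\text{-}\mathrm{CRYS}}$ lives on the oriented product $(\fY/A_{\Inf})_{q\text{-}\mathrm{CRYS}}\overset{\leftarrow}{\times}_{\fY_{\et}}\fX_{\et}$, and the morphism $\mu$ on products is induced by $\mu_{\proet}\colon Y_{\proet}\to(\fY/A_{\Inf})_{q\text{-}\mathrm{CRYS}}$ on the first factor together with the identity on $\fX_{\et}$. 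The point is that, since both products are fibered over $\fY_{\et}$ and $\mu$ is the identity on the $\fX_{\et}$-factor, pulling back along $\mu$ commutes with the projection to $\fX_{\et}$; so it suffices to check that \eqref{eq:compmorAOMeqOMed} becomes an isomorphism after restricting to a cofinal system of objects, namely to $(V, U)$ where $V\to Y$ is affinoid perfectoid and $U\to\fX$ is an affine open carrying étale coordinates (a framing $\fX\times_{\fY}\Spf R\to\Spf R\langle t_1^{\pm1},\dots,t_d^{\pm1}\rangle$, or a smooth analogue).

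The key step is then the explicit local computation. On such a chart, the absolute story of \cite{BhMorSch} and \cite{Prisms} gives a Koszul/$q$-de Rham complex: the sections of $A\Omega_{\fX/\fY}$ over $(V,U)$ are computed by $L\eta_\mu$ applied to the continuous cohomology of $\bZ_p^d$ (the Galois group of the pro-Kummer cover) acting on $\bA_{\Inf}(V)\langle t_1^{\pm1},\dots\rangle^\wedge$, and this identifies with the $q$-de Rham complex $q\Omega_{R_V/\bA_{\Inf}(V)}^{\bullet}$ over the $q$-PD pair attached to $V$. On the other side, by the description of the oriented product site and by \cite{Prisms} (computation of $q$-crystalline cohomology via the $q$-de Rham complex on a framed smooth algebra), the sections of $q\Omega_{\fX/\fY}$ over the same object are again computed by the very same $q$-de Rham complex, now built inside the $q$-crystalline site using the $q$-PD envelope of the chart relative to the $q$-PD pair $(\bA_{\Inf}(V),\dots)$ assigned to $V$ by $\mu_{\proet}$. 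The morphism \eqref{eq:compmorAOMeqOMed} is, by construction, the identification of these two incarnations of the $q$-de Rham complex; one checks it is the canonical one and that it intertwines the Frobenius, which on both sides is the standard $\phi$ on $q$-de Rham complexes (sending $q\mapsto q^p$, $t_i\mapsto t_i^p$). Independence of the chart and gluing over $\fX$ follow from the functoriality of the $q$-de Rham comparison in \cite{Prisms} together with the fact that both $R\nu_{f*}$ and $R\nu^q_{f*}$ can be computed by a Čech–Alexander / hypercover argument over the chosen affine cover of $\fX$.

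The main obstacle I anticipate is \emph{not} the local identification per se but ensuring that $\mu^{-1}$ interacts correctly with $L\eta_\mu$ and with the pushforwards $R\nu_{f*}$, $R\nu^q_{f*}$ on the level of topoi: one needs that restricting along $\mu$ to the cofinal objects $(V,U)$ genuinely computes the stalk/sections of $\mu^{-1}q\Omega_{\fX/\fY}$, which requires knowing that such $(V,U)$ form a basis of the oriented product site pulled back to $Y_{\proet}\times_{\fY_{\et}}\fX_{\et}$, and that the relevant cohomologies vanish in higher degrees on these objects (an almost-acyclicity input, of Faltings–Scholze type, for $\bA_{\Inf,X}$ on affinoid perfectoids, and the analogous acyclicity for $\cO_{q\text{-}\mathrm{CRYS}}$ on the $q$-PD envelopes). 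Concretely, the care is in checking that $\mu_{\proet}$ sends the standard basis of $(\fY/A_{\Inf})_{q\text{-}\mathrm{CRYS}}$ (the $q$-PD pairs $(A,I)$ over affine opens of $\fY$) to objects whose preimages in $Y_{\proet}$ are covered by affinoid perfectoids in a way compatible with the $\fX_{\et}$-factor — this is exactly where the \emph{oriented} product is needed, since it records that the projection $(\fY/A_{\Inf})_{q\text{-}\mathrm{CRYS}}\to\fY_{\et}$ comes from a cocontinuous (not continuous) functor. Once this bookkeeping is set up, the isomorphism and its Frobenius-compatibility are formal consequences of the classical $q$-de Rham comparison, applied relatively.
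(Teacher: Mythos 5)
Your overall skeleton (compute both sides locally by $q$-de Rham--type complexes and glue) is indeed the shape of the paper's argument, but there is a genuine gap at the step you dismiss as "independence of the chart and gluing follow from functoriality of the $q$-de Rham comparison." The identification of the local sections of $A\Omega_{\fX/\fY}$, and of $q\Omega_{\fX/\fY}$, with a $q$-de Rham complex depends on a chosen framing, and the comparison of \cite{Prisms} for a fixed framing only gives isomorphisms in the derived category object-by-object; it does not by itself produce a map of complexes of presheaves that is functorial in the object $(V,U)$, which is what you need in order to sheafify and get a morphism $\mu^{-1}q\Omega_{\fX/\fY}\to A\Omega_{\fX/\fY}$ in the derived category of the product topos. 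The paper (following \cite[Theorem 17.2]{Prisms} and \cite[\S 12]{BhMorSch}) resolves this by the ``all possible coordinates'' device: it restricts to \emph{very small} good objects, takes the free $\delta$-algebra $P=\bA_{\Inf}(S_\infty)[\{t_s\}_{s\in R^\times}]^{\wedge}$ on \emph{all} units of $R$ with its $q$-PD envelope $D_{J,q}(P)$, and maps the resulting Koszul complex to the completed colimit over all finite $\Sigma\subset R^{\times}$ of group-cohomology Koszul complexes $\eta_{q-1}\Kos_c(\bA_{\Inf}(R_{\Sigma,\infty});\{\sigma_s-1\})$. Moreover, the resulting map is \emph{not} tautologically "the identification of two incarnations of the same complex": it is a map between two a priori different complexes, and proving it is a quasi-isomorphism requires comparing the Hodge--Tate specializations of both sides (Corollary \ref{cor:HTforqcrys} and Theorems \ref{thm:HTspcssd}, \ref{thm:regdeRhamse}) and then applying (almost) derived Nakayama for $[p]_q$-completeness (Lemma \ref{lem:almostderNaka}); your proposal contains no substitute for this verification.

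A second, related gap concerns the passage from almost statements to the honest isomorphism claimed in the theorem. In the relative setting the sections of $A\Omega_{\fX/\fY}$ over good objects agree with the group-cohomology/presheaf computation only up to $W(\fm^{\flat})$-ambiguity in general (Corollary \ref{cor:objecxiadcom}), and the removal of "almost" requires knowing that the relevant continuous-cohomology groups have no nonzero almost-zero elements; this rests on the rigidity statement $(R_\infty[\tfrac1p])^{\circ}=R_\infty$, which is where the hypothesis that $\fY$ (hence $Y$) is locally of finite type enters (Lemmas \ref{lem:rigspacirplus}, \ref{lem:topfinflasir}, and Propositions \ref{prop:mtorsioncoho}, \ref{prop:relAinfsitf}). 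Your proposal flags "almost-acyclicity" as an input but never explains how the almost-ness is eliminated, so as written it would at best yield a morphism with cone killed by $W(\fm^{\flat})$, not the isomorphism in the statement.
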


The proof of Theorem \ref{thm:introqOmegvsAinome} follows the same lines as its absolute analogue \cite[Theorem 17.2]{Prisms}. One key ingredient is the comparison of $A\Omega_{\fX/\fY}$ with the de Rham complex. However, in the relative situation, one has to work harder in order to compare the cohomological Koszul complex coming from continuous group cohomology with the (local) sections of $A\Omega_{\fX/\fY}$. In particular we show that affinoid perfectoid objects over $Y$ are ``rigid", guaranteeing that the relevant continuous group cohomology groups have no non-zero \emph{almost}-zero elements. 

Returning to the proof of Theorem \ref{thm:introcomparqcrystal}, the morphism \eqref{eq:compmorAOMeqOMe12} is then obtained from pushing forward \eqref{eq:compmorAOMeqOMed} to the pro-étale site $Y_{\proet}$, composed with a base change morphism. This base change morphism involves both products of topoi and is difficult to study on its own. However, after reducing coefficients from $A_{\Inf}$ to $\cO$, the situation becomes more manageable. In particular, one has Hodge-Tate comparisons for both $q\Omega_{\fX/\fY}$ and $A\Omega_{\fX/\fY}$ and it suffices to study two different base change morphisms, one for each of the product topoi. These two base change morphisms both involve the sheaf of K\"{a}hler differentials on $\fX_{\et}$. Both base change morphisms turn out to be \emph{almost} isomorphisms and this, in short, ends the proof of Theorem \ref{thm:introcomparqcrystal}. Along the way we prove that the source of \eqref{eq:compmorAOMeqOMe12} is a perfect complex of $W(\wh{\cO}^{+, \flat}_Y)$-modules. As we are unable to do the same for $R\Gamma_{A_{\Inf}}(\fX/\fY)$, we cannot speculate whether \eqref{eq:compmorAOMeqOMe12} can be improved to an actual isomorphism.

We now discuss some applications of our comparison results: Theorems \ref{thm:comparisononee}-\ref{thm:introcomparqcrystal}. One immediate consequence is that relative $q$-crystalline/prismatic cohomology recovers the étale pushforward after inverting $\mu$:

\begin{thm} \label{thm:introqcrysvset}
There is a quasi-isomorphism
 \begin{equation} \label{eq:introqcrysvsetal}
(\mu_{\proet}^{-1}Rf_{q-\textnormal{CRYS}*}\cO_{q-\textnormal{CRYS}})^{\wedge} \otimes_{A_{\Inf}}^{\bL} A_{\Inf}[\tfrac{1}{\mu}] \cong Rf_{\eta, \proet*}\wh{\bZ}_p \otimes_{\wh{\bZ}_p}^{\bL} \bA_{\Inf,Y}[\tfrac{1}{\mu}],
\end{equation}
where $(\mu_{\proet}^{-1}Rf_{q-\textnormal{CRYS}*}\cO_{q-\textnormal{CRYS}})^{\wedge}$ denotes the derived $p$-adic completion of $\mu_{\proet}^{-1}Rf_{q-\textnormal{CRYS}*}\cO_{q-\textnormal{CRYS}}$.
\end{thm}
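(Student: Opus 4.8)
The plan is to deduce Theorem \ref{thm:introqcrysvset} by combining the two comparison results already at our disposal: Theorem \ref{thm:comparisononee} on one side and Theorem \ref{thm:introcomparqcrystal} on the other. First I would take the morphism \eqref{eq:compmorAOMeqOMe12}, namely
\[
\mu_{\proet}^{-1}Rf_{q-\textnormal{CRYS}*}\cO_{q-\textnormal{CRYS}} \to R\Gamma_{A_{\Inf}}(\fX/\fY),
\]
and observe that the cone $\cC$ has cohomology sheaves killed by $[\fm^{\flat}]$. The key point is that $[\fm^{\flat}] \cdot A_{\Inf}[\tfrac{1}{\mu}] = A_{\Inf}[\tfrac{1}{\mu}]$: indeed $\mu \in [\fm^{\flat}] A_{\Inf}$ up to a unit (more precisely $\mu$ lies in the ideal generated by $[\fm^{\flat}]$, since $\epsilon - 1 \in \fm^{\flat}$), so once $\mu$ is inverted, multiplication by $[\fm^{\flat}]$ becomes the unit ideal. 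Hence $\cC \otimes^{\bL}_{A_{\Inf}} A_{\Inf}[\tfrac{1}{\mu}] = 0$, and therefore
\[
\bigl(\mu_{\proet}^{-1}Rf_{q-\textnormal{CRYS}*}\cO_{q-\textnormal{CRYS}}\bigr) \otimes^{\bL}_{A_{\Inf}} A_{\Inf}[\tfrac{1}{\mu}] \xrightarrow{\ \sim\ } R\Gamma_{A_{\Inf}}(\fX/\fY) \otimes^{\bL}_{A_{\Inf}} A_{\Inf}[\tfrac{1}{\mu}].
\]

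Next I would pass to derived $p$-adic completions. Derived $p$-completion commutes with $-\otimes^{\bL}_{A_{\Inf}} A_{\Inf}[\tfrac{1}{\mu}]$ only after a further $p$-completion, so the cleanest route is: apply $(-)^{\wedge}$ first, invert $\mu$ second. Since an almost-zero cone stays almost-zero after derived $p$-completion (the functor is a left-exact-ish additive functor and $[\fm^{\flat}]$-divisibility is preserved termwise up to the usual almost-mathematics bookkeeping), we get that the completed map
\[
\bigl(\mu_{\proet}^{-1}Rf_{q-\textnormal{CRYS}*}\cO_{q-\textnormal{CRYS}}\bigr)^{\wedge} \to R\Gamma_{A_{\Inf}}(\fX/\fY)^{\wedge}
\]
again has $[\fm^{\flat}]$-almost-zero cone, hence becomes an isomorphism after $\otimes^{\bL}_{A_{\Inf}} A_{\Inf}[\tfrac{1}{\mu}]$. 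Then I invoke the second isomorphism in Theorem \ref{thm:comparisononee}, namely
\[
R\Gamma_{A_{\Inf}}(\fX/\fY)^{\wedge} \otimes^{\bL}_{A_{\Inf}} A_{\Inf}[\tfrac{1}{\mu}] \cong Rf_{\eta,\proet*}\wh{\bZ}_p \otimes^{\bL}_{\wh{\bZ}_p} \bA_{\Inf,Y}[\tfrac{1}{\mu}],
\]
and concatenate to obtain \eqref{eq:introqcrysvsetal}. Functoriality in $f$ is inherited from that of both input theorems.

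The main obstacle — really the only nontrivial point — is the careful handling of the interaction between derived $p$-adic completion, the localization $A_{\Inf} \to A_{\Inf}[\tfrac{1}{\mu}]$, and almost mathematics, on complexes that are only \emph{almost} derived $p$-complete (as flagged right after the definition of $A\Omega_{\fX/\fY}$). One has to make sure that "cone killed by $[\fm^{\flat}]$" is a robust enough statement to survive completion: concretely, if $\cC$ has all cohomology sheaves annihilated by $[\fm^{\flat}]$, then for each $g \in \fm^{\flat}$ multiplication by $[g]$ is the zero map on $\cH^i(\cC)$, so $[g]^2$ kills $\cC$ in the derived sense (one factor to kill $\cH^i$, built into an octahedron), and this property is clearly preserved by the exact triangle defining $(-)^{\wedge}$, namely $\cC^{\wedge} = R\lim_n (\cC \otimes^{\bL}_{\bZ} \bZ/p^n)$; each $\cC \otimes^{\bL} \bZ/p^n$ still has $[g]$-divisible... more precisely $[g]^2$-killed cohomology, and $R\lim$ of such is $[g]^2$-killed up to one more power from the $\lim^1$-term, so $[g]^4$ kills $\cC^{\wedge}$. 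Since $\mu \in ([g] : g \in \fm^{\flat})$, inverting $\mu$ kills any $[\fm^{\flat}]$-power-torsion complex, giving the vanishing we need. I would spell this almost-mathematics lemma out once as a short preliminary, then the theorem follows formally by splicing the two isomorphisms above; no further geometric input is required beyond Theorems \ref{thm:comparisononee} and \ref{thm:introcomparqcrystal}.
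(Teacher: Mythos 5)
Your overall architecture (splice the comparison map of Theorem \ref{thm:introcomparqcrystal} with the completed half of Theorem \ref{thm:comparisononee}) is the same as the paper's, but the justification for the crucial step --- that a cone with cohomology killed by $[\fm^{\flat}]$ dies after applying $-\otimes^{\bL}_{A_{\Inf}}A_{\Inf}[\tfrac{1}{\mu}]$ --- rests on a false claim. You assert that $\mu$ lies in the ideal generated by $[\fm^{\flat}]$, equivalently that $[\fm^{\flat}]\cdot A_{\Inf}[\tfrac{1}{\mu}]$ is the unit ideal, equivalently that some $[g]$ with $g\in\fm^{\flat}\setminus\{0\}$ divides some power of $\mu$. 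This is not true: $\mu=[\epsilon]-1$ lies in $W(\fm^{\flat})$, which is only the $p$-adic \emph{closure} of the ideal $([\fm^{\flat}])$, and no nonzero Teichm\"uller lift divides any $\mu^{N}$. (One way to see this: for the Gauss valuations $v_r$ on $A_{\Inf}$ one has $v_r([g])=r\,v(g)$, linear in $r$, whereas $\varphi(\mu)=\tilde{\xi}\mu$ with $\tilde{\xi}\equiv p \bmod \mu$ forces $v_{pr}(\mu)=v_r(\mu)+1$ for large $r$, so $v_r(\mu)$ grows only logarithmically; hence $[g]\mid\mu^{N}$ is impossible. Concretely, $A_{\Inf}/([\fm^{\flat}])$ is killed by $[\fm^{\flat}]$ but does not vanish after inverting $\mu$.) So your uncompleted identification of $\mu_{\proet}^{-1}Rf_{q-\textnormal{CRYS}*}\cO_{q-\textnormal{CRYS}}\otimes^{\bL}A_{\Inf}[\tfrac{1}{\mu}]$ with $R\Gamma_{A_{\Inf}}(\fX/\fY)\otimes^{\bL}A_{\Inf}[\tfrac{1}{\mu}]$ is unjustified (and, fortunately, not needed for the statement), and the same false ingredient reappears when you invert $\mu$ after completion. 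A secondary, cosmetic issue: ``$[g]^2$ kills $\cC$'' is wrong for complexes with cohomology in more than two degrees; the idempotence of $\fm^{\flat}$ is what makes the almost bookkeeping work, not power counting.

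The missing ingredient is exactly the completeness-based upgrade the paper uses: after derived $p$-adic completion, the cone of the completed comparison map is derived $p$-adically complete with cohomology killed by $[\fm^{\flat}]$, and then \cite[Lemma 3.17]{SemistabAinfcoh} (cf.\ Step 3 of Lemma \ref{lem:critkillms}) upgrades $[\fm^{\flat}]$-torsion to $W(\fm^{\flat})$-torsion; since $\mu\in W(\fm^{\flat})$, the completed cone is killed by $\mu$ and hence vanishes after $-\otimes^{\bL}_{A_{\Inf}}A_{\Inf}[\tfrac{1}{\mu}]$. Replacing your ideal-membership argument by this lemma, and then invoking the completed isomorphism $R\Gamma_{A_{\Inf}}(\fX/\fY)^{\wedge}\otimes^{\bL}_{A_{\Inf}}A_{\Inf}[\tfrac{1}{\mu}]\cong Rf_{\eta,\proet*}\wh{\bZ}_p\otimes^{\bL}_{\wh{\bZ}_p}\bA_{\Inf,Y}[\tfrac{1}{\mu}]$ from Theorem \ref{thm:comparisononee}, recovers the paper's proof of Theorem \ref{thm:qrysvsetale}.
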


To the best of our knowledge, this seems to be a version of an étale comparison which does not follow directly from \cite[Theorem 1.8(4) and Theorem 1.16(4)]{Prisms} and is possibly one advantage of the theory constructed in this paper over prismatic cohomology. However the statement obtained by base changing \eqref{eq:introqcrysvsetal} to a statement over $W(C^{\flat})$ is closer to loc.cit. due to \cite[Proposition 3.6]{BhattSchopriFcr}.

Another application is by imitating the strategy used in \cite{AbbGors}, we are able to prove a \emph{relative} Hodge-Tate spectral sequence:

\begin{thm} \label{thm:introspectseque}
Suppose that for any $i,j \geq 0$, $R^{i}f_{\eta*}\Omega^{j}_{X/Y}$ is locally free of finite type over $\cO_{Y}$.
Then there is an $E_2$-spectral sequence
\[
E_2^{i,j} = R^{i}f_{\eta*}\Omega^{j}_{X/Y} \otimes_{\cO_{Y}} \wh{\cO}_{Y}(-j) \implies R^{i+j}f_{\eta, \proet *}\wh{\bZ}_p \otimes_{\wh{\bZ}_p} \wh{\cO}_{Y}
\]
and it degenerates.
\end{thm}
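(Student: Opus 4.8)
The plan is to follow the strategy of Abbes--Gros \cite{AbbGors}: build the spectral sequence from the \emph{conjugate} (Hodge--Tate) filtration on the Hodge--Tate specialization of $A\Omega_{\fX/\fY}$, push this filtered complex down from the product of topoi to $Y_{\proet}$, identify the graded pieces with the coherent direct images $R^{i}f_{\eta*}\Omega^{j}_{X/Y}$ (this is where the local-freeness hypothesis enters), and then deduce $E_2$-degeneration from the absolute case applied fibrewise. Concretely, let $\theta\colon A_{\Inf}\to\cO$ be Fontaine's map, $\tilde\theta=\theta\circ\phi^{-1}$, and set $\wt\Omega_{\fX/\fY}:=A\Omega_{\fX/\fY}\otimes^{\bL}_{A_{\Inf},\tilde\theta}\cO$, a complex on $Y_{\proet}\times_{\fY_{\et}}\fX_{\et}$. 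The Hodge--Tate comparison for $A\Omega_{\fX/\fY}$ --- established in the course of proving Theorem \ref{thm:introcomparqcrystal}, where Hodge--Tate comparisons for both $q\Omega_{\fX/\fY}$ and $A\Omega_{\fX/\fY}$ are invoked --- should equip $\wt\Omega_{\fX/\fY}$ with a finite increasing (conjugate) filtration whose $j$-th graded piece is $\pr_{\fX}^{*}\Omega^{j}_{\fX/\fY}\{-j\}[-j]$, where $\pr_{\fX}\colon Y_{\proet}\times_{\fY_{\et}}\fX_{\et}\to\fX_{\et}$ denotes the projection.

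Applying $Rp_{*}$ along $p\colon Y_{\proet}\times_{\fY_{\et}}\fX_{\et}\to Y_{\proet}$ and passing to the adic generic fibres (inverting $p$, so that $\wh{\cO}^{+}_{Y}$ becomes $\wh{\cO}_{Y}$ and the Breuil--Kisin twist becomes the Tate twist) gives a filtered complex on $Y_{\proet}$ whose $j$-th graded piece is $Rp_{*}(\pr_{\fX}^{*}\Omega^{j}_{X/Y})\otimes\wh{\cO}_{Y}(-j)[-j]$. The crucial input is then a base-change isomorphism $Rp_{*}\circ\pr_{\fX}^{*}\simeq\nu_{Y}^{*}\circ Rf_{\eta*}$ for these product topoi (with $\nu_{Y}\colon Y_{\proet}\to\fY_{\et}$) --- exactly the kind of base-change morphism analyzed, and described as difficult, in the proof of Theorem \ref{thm:introcomparqcrystal} --- which should hold because, under the hypothesis, each $R^{i}f_{\eta*}\Omega^{j}_{X/Y}$ is locally free, so its formation commutes with base change. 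One thereby obtains a spectral sequence
\[
E_2^{i,j}=R^{i}f_{\eta*}\Omega^{j}_{X/Y}\otimes_{\cO_{Y}}\wh{\cO}_{Y}(-j)\ \Longrightarrow\ \cH^{i+j}\bigl(Rp_{*}\wt\Omega_{\fX/\fY}[\tfrac{1}{p}]\bigr).
\]

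It remains to identify the abutment with $R^{i+j}f_{\eta,\proet*}\wh{\bZ}_p\otimes_{\wh{\bZ}_p}\wh{\cO}_{Y}$. Here one specializes Theorem \ref{thm:comparisononee} along $\tilde\theta$. The point is that, although $\mu\in\ker\theta$, one has $\tilde\theta(\mu)=\xi_p-1\neq0$ (the image of $\phi^{-1}(\mu)=[\epsilon^{1/p}]-1$ under $\theta$), and inverting $\xi_p-1$ inverts $p$; hence $\bA_{\Inf,Y}[\tfrac{1}{\mu}]\otimes^{\bL}_{A_{\Inf},\tilde\theta}\cO\simeq\wh{\cO}_{Y}$, and applying $-\otimes^{\bL}_{A_{\Inf},\tilde\theta}\cO$ to the chain of isomorphisms of Theorem \ref{thm:comparisononee} should yield
\[
Rp_{*}\wt\Omega_{\fX/\fY}[\tfrac{1}{p}]\ \simeq\ Rf_{\eta,\proet*}\wh{\bZ}_p\otimes^{\bL}_{\wh{\bZ}_p}\wh{\cO}_{Y},
\]
which, since $\wh{\cO}_{Y}$ is a sheaf of $\bQ_p$-algebras on the generic fibre (so no $p$-power torsion survives), gives the desired description of the abutment. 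Alternatively one may run this step with $\mu_{\proet}^{-1}Rf_{q-\textnormal{CRYS}*}\cO_{q-\textnormal{CRYS}}$ in place of $R\Gamma_{A_{\Inf}}(\fX/\fY)$, using Theorems \ref{thm:introcomparqcrystal}, \ref{thm:introqOmegvsAinome} and \ref{thm:introqcrysvset} together with the perfectness of the $q$-crystalline side. The main obstacle of the whole argument lies precisely in making this and the preceding step rigorous: interchanging the $\tilde\theta$-specialization with the derived $p$-completion and with $Rp_{*}$, and --- more seriously --- proving the base-change isomorphism $Rp_{*}\pr_{\fX}^{*}\simeq\nu_{Y}^{*}Rf_{\eta*}$ on the product of topoi, where the asymmetric/oriented nature of the projections makes the sections hard to control.

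Finally, degeneration, again following \cite{AbbGors}. For a geometric point $\bar y\to Y$ the fibre $X_{\bar y}$ is a proper smooth rigid space over a complete algebraically closed nonarchimedean field, and the spectral sequence above restricts to the absolute Hodge--Tate spectral sequence of $X_{\bar y}$, which degenerates at $E_2$ by \cite{BhMorSch}. Since the $E_2$-terms are locally free with base-change-compatible formation (local-freeness hypothesis plus cohomology and base change, together with proper base change for $p$), and since the differentials $d_r$ are morphisms of locally free sheaves that vanish after restriction to every $\bar y$, they vanish identically; hence $E_2=E_\infty$. Equivalently one can finish by the rank count $\sum_{i+j=n}\rk E_2^{i,j}=\sum_{i+j=n}\dim H^{i}(X_{\bar y},\Omega^{j})=\dim H^{n}_{\dR}(X_{\bar y})=\dim_{\bQ_p}H^{n}_{\et}(X_{\bar y},\bQ_p)$, which matches the rank of the abutment by proper base change for the local system $R^{n}f_{\eta,\proet*}\wh{\bZ}_p$ and the relative Hodge--de Rham degeneration available in characteristic $0$. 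In particular the abutment $R^{n}f_{\eta,\proet*}\wh{\bZ}_p\otimes_{\wh{\bZ}_p}\wh{\cO}_{Y}$ is locally free, and the conjugate filtration splits off the stated graded pieces $R^{i}f_{\eta*}\Omega^{j}_{X/Y}\otimes_{\cO_{Y}}\wh{\cO}_{Y}(-j)$ with $i+j=n$.
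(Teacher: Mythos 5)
Your outline follows essentially the same route as the paper's proof of Theorem \ref{thm:RelHTSSoincorf}: the spectral sequence is the Cartan--Leray/conjugate-filtration spectral sequence for $Rp_*$ applied to the Hodge--Tate specialization $\wt\Omega_{\fX/\fY}$ (Theorems \ref{thm:almosishodsa} and \ref{thm:HTspcssd}), the $E_2$-terms are identified via a base change on the product of topoi, the abutment is identified by specializing the \'etale comparison (Theorem \ref{thm:padcohse}) along $\theta\circ\varphi^{-1}$, where $\mu\mapsto\zeta_p-1$ becomes invertible after inverting $p$, and degeneration is reduced to the absolute case of Bhatt--Morrow--Scholze by a rank count using that $Rf_{\eta,\proet*}\wh{\bZ}_p$ is (adic) lisse (Lemma \ref{lem:procomsisasli}).

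Two points where your sketch deviates from what actually has to be (and is) proven. First, the base change you single out as the crux is not of the form $Rp_*\pr_{\fX}^*\simeq\nu_Y^*Rf_{\eta*}$ and it is not a consequence of the local-freeness hypothesis: the statement available is $L\nu_Y^*Rf_*\sF\to Rp_*Lq^*\sF$ with $f_*$ the pushforward of \emph{formal schemes} $\fX_{\et}\to\fY_{\et}$, it is only an \emph{almost} isomorphism (cone killed by $\fm$), and it is proved by \v{C}ech-theoretic almost acyclicity of $\wh\cO^+_D$ on good objects of the fiber product (Proposition \ref{prop:bcrsmopn}, Corollary \ref{cor:pcomverorbascha}) --- i.e., by the local analysis of Sections \ref{sec:localanal}--\ref{sec:TheHodhTaspe}, with no freeness assumption on the pushforwards. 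Knowing that $R^if_{\eta*}\Omega^j_{X/Y}$ is locally free does not by itself let you compute $Rp_*$ of a pullback on the product topos. The comparison with the rigid-analytic pushforward $Rf_{\eta*}$ is then a separate step, carried out only after inverting $p$ via perfectness of $Rf_*\sF$ and Kiehl finiteness (Lemma \ref{lem:cohthmforadr}); the local-freeness hypothesis enters only at the very end, to collapse $H^i(Rf_{\eta*}\Omega^j\otimes^{\bL}\wh\cO_Y(-j))$ to the stated $E_2$-term and to run degeneration. Second, your first degeneration argument (``differentials vanish at every geometric point, hence identically'') needs a justification on $Y_{\proet}$; the paper makes this precise via the rank count you also propose, evaluated on an affinoid perfectoid cover and using that the perfectoid rings of sections are reduced, so that localizing at minimal primes (which are fields) forces $d_2=0$. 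With these corrections your proposal matches the paper's argument.
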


In their work, Abbes-Gros consider the ``\emph{topos de Faltings relatif}" (cf. \S 3.4 in loc.cit.) which plays a similar role to that of $Y_{\proet} \times_{\fY_{\et}} \fX_{\et}$. The spectral sequences established in \cite[Corollary 2.2.4]{CaScGenSV} and \cite[Théorème 1.4.5]{AbbGors}, together with Theorem \ref{thm:introspectseque} give now a priori, three different Hodge-Tate filtrations on the rational $p$-adic étale higher direct images, albeit in different mutually non-exclusive settings. The generic analogue $Y_{\proet} \times_{Y_{\et}} X_{\et}$ of our product of topoi can potentially be used to check the compatibility of the filtrations by Caraiani-Scholze and that of Theorem \ref{thm:introspectseque}. This would also involve checking the compatibility of the relative de Rham comparison in \cite[Theorem 8.8]{SchpHrig} and the one hypothetically implied by relative $A_{\Inf}$-cohomology theory, which we decided not to work out. One approach is to generalize the argument of \cite[Theorem 13.1]{BhMorSch} using $Y_{\proet} \times_{Y_{\et}} X_{\et}$. 

Finally regarding torsion in the relative étale pushforward given that the de Rham pushforwards are locally free, we refer the reader to Theorem \ref{thm:torsionbigerindic}. We give a proof which is based on the relative $A_{\Inf}$-cohomology theory (in particular Theorem \ref{thm:introqcrysvset}), however as noted by Takeshi Tsuji, the result essentially follows from the absolute case (cf. Remark \ref{rem:indivaTsjma}). We are not sure how to obtain a degree-wise result like in \cite[Theorem 14.5(ii)]{BhMorSch}.

\section*{Outline}

We briefly explain the content of the different sections. In Section \ref{sec:secontion2premlim} we describe a basis for the fiber product $Y_{\proet} \times_{\fY_{\et}} \fX_{\et}$. In Section \ref{sec:section3relapadcomp} we establish some properties of lisse $\wh{\bZ}_p$-sheaves on the pro-étale site in the context of complexes, and use this to prove Theorem \ref{thm:comparisononee}. In Section \ref{sec:localanal} we perform the local computations relating continuous group cohomology and \emph{framed} sections of $A\Omega_{\fX/\fY}$. In Section \ref{sec:TheHodhTaspe} we use these local computations to relate $A\Omega_{\fX/\fY}$ to the relative de Rham complex $\Omega_{\fX/\fY}^{\bullet}$. In Section \ref{$q$-crystalline sitewed} we introduce the $q$-crystalline analogue $(\fY/A_{\Inf})_{q-\textnormal{CRYS}} \ola{\times}_{\fY_{\et}} \fX_{\et}$ of $Y_{\proet} \times_{\fY_{\et}} \fX_{\et}$, and together with the results in Section \ref{sec:TheHodhTaspe}, use this to prove Theorem \ref{thm:introcomparqcrystal}. Finally, in Section \ref{sec:sec7applica}, we prove some consequences including Theorems \ref{thm:introqcrysvset}-\ref{thm:introspectseque}.

\section*{Acknowledgements}
The second author found the approach to relative $A_{\Inf}$-cohomology via the fiber product of topoi after conversations with Hiroki Kato during the ``$p$-adic cohomology and arithmetic geometry 2018'' conference at Tohoku University, before we learned the work of Abbes-Gros. We thank him for the inspiration. We thank Ahmed Abbes, Bhargav Bhatt, Lars Hesselholt, Naoki Imai, Jean-Stefan Koskivirta, and Takeshi Tsuji for helpful discussions and communications. The first author would like to thank the University of Tokyo for its support during this project.
This work was supported by JSPS KAKENHI Grant Number 20K14284. 

\section*{Additional notation}
Given $C$ and $\cO$ as in the introduction, let $\fm \subset \cO$ be its maximal ideal. We let $W(-)$ denote the $p$-typical Witt vectors of a (not necessarily unital) ring so that $A_{\Inf} := W(\cO^{\flat})$ is Fontaine's ring. For instance we often consider the non-unital ring $W(\fm^{\flat})$. The ring $A_{\Inf}$ comes equipped with the Frobenius $\varphi \colon A_{\Inf} \xrightarrow{\sim} A_{\Inf}$ and de Rham specialization map $\theta \colon A_{\Inf} \twoheadrightarrow \cO$. Its kernel $\ker(\theta) \subset A_{\Inf}$ is generated by a single element $\xi := \tfrac{[\epsilon] - 1}{[\epsilon^{1/p}]-1} \in A_{\Inf}$ and we denote $\tilde{\xi} := \varphi(\xi)$. For $X$ a locally noetherian adic space over $\Spa(\bQ_p, \bZ_p)$ we use the pro-étale site $X_{\proet}$ constructed in \cite{SchpHrig}. We adopt the notation for the various structure sheaves\footnote{Strictly speaking $\bA_{\Inf,X}$ is a complex.} on $X_{\proet}$ given in \cite[Definition 5.4]{BhMorSch}.

\section{Preliminaries} \label{sec:secontion2premlim}

The main goal of this section is to review the construction of the fiber product of topoi given in \cite[Exposé XI]{TravdeGabb} (cf. \cite[Chapter VI]{AbGrosTsuj}), relevant to our context. We also describe a suitable basis for the underlying site, which will facilitate extending the local computations performed in \S \ref{sec:localanal} to sheaf-theoretic results such as the Hodge-Tate specialization in \S \ref{sec:TheHodhTaspe}. The main result is Proposition \ref{prop:goodobjformbas}. We only urge the reader to understand the definition of relative $A_{\Inf}$-cohomology (cf. Definition \ref{def:relAOmegacomplex}) and skip the remaining contents of this section upon first viewing.

\subsection{Product of topoi: $Y_{ \proet}\times_{\fY_{\et}}\fX_{\et}$} \label{sec:prodoftopo}
In this section, unless otherwise stated, $\fX$ and $\fY$ will be any $p$-adic formal schemes of type (S)(b) over $\cO$, as in \cite[\S 1.9]{HubEtadic}. This means that for all $y \in \fY$  there exists an open affine neighborhood $\fU \subseteq \fY$ of $y$ such that the topology on $\cO_{\fY}(\fU)$ is $p$-adic and $\cO_{\fY}(\fU)[p^{-1}]$ is strongly noetherian (idem. for $\fX$). Let $f \colon \fX \to \fY$ be any $\Spf(\cO)$-morphism. Note that in this situation the adic generic fibers $X$ and $Y$ of $\fX$ and $\fY$, respectively, are locally noetherian adic spaces. In particular the framework established in \cite[\S 3]{SchpHrig} applies. We begin by recalling the construction of the fiber product of topoi $Y_{ \proet}\times_{\fY_{\et}}\fX_{\et}$ following the treatment in \cite[Exposé XI]{TravdeGabb}. As explained in the introduction, this will be a crucial ingredient for our definition of relative $A_{\Inf}$-cohomology (cf. Definition \ref{def:relAOmegacomplex}). We  begin with a simple reality check on the objects involved.

\begin{lem} \label{lem:finprolimandmorpsic}
\begin{enumerate}
    \item The sites $Y_{ \proet}$, $\fY_{\et}$ and $\fX_{\et}$ admit finite projective limits.
    \item The natural functors $\fY_{\et} \to Y_{ \proet}$ (obtained by taking the adic generic fiber) and $\fY_{\et} \to \fX_{\et}$ commute with finite projective limits and induce morphisms of sites 
   $\nu_Y \colon Y_{ \proet} \to \fY_{\et}$ and $f_{\et} \colon \fX_{\et} \to \fY_{\et}$.
\end{enumerate} 
\end{lem}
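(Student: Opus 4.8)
The plan is to treat the two assertions in turn, using the general formalism that a site with finite projective limits maps to another via a morphism of sites as soon as one has a finite-limit-preserving, continuous functor in the opposite direction (see e.g.\ SGA 4 or the standard references). First I would observe that each of the three categories in question has a terminal object: for $\fY_{\et}$ and $\fX_{\et}$ this is $\fY$ (resp.\ $\fX$) itself, since an étale morphism of formal schemes with target $\fY$ is an object of $\fY_{\et}$; for $Y_{\proet}$ the terminal object is the final pro-object associated to $Y$, which exists because $Y$ is a locally noetherian adic space and $Y_{\proet}$ is constructed in \cite[\S 3]{SchpHrig} precisely to contain $Y$ (viewed as a constant pro-system) as its final object. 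Having terminal objects, it remains to produce fibre products in each category. For $\fY_{\et}$ and $\fX_{\et}$ this is classical: the fibre product of étale formal schemes over $\fY$ (resp.\ $\fX$) is again étale, so one just takes the fibre product of formal schemes. For $Y_{\proet}$ I would invoke \cite[Proposition 3.12]{SchpHrig} (and the surrounding discussion), which shows that $Y_{\proet}$ admits fibre products and that they are computed ``levelwise'' on pro-étale presentations; combined with the terminal object this gives all finite projective limits. This settles (1).

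For (2), the two functors in question are the adic-generic-fibre functor $\fU \mapsto \fU_{\eta}$, regarded as a functor $\fY_{\et} \to Y_{\proet}$ (composing the generic fibre $Y_{\et} \to Y_{\proet}$ with the étale-to-pro-étale inclusion), and the base-change functor $\fV \mapsto \fV \times_{\fY} \fX$, regarded as $\fY_{\et} \to \fX_{\et}$. In each case I would check three things: (i) the functor preserves finite projective limits; (ii) it is continuous, i.e.\ sends covering families to covering families and is compatible with the fibre products appearing in covers; (iii) therefore it induces a morphism of sites in the direction indicated (recall the direction of a morphism of sites is opposite to that of the underlying continuous functor). For $\fY_{\et} \to \fX_{\et}$, all of (i)--(iii) are standard properties of the base-change functor along $f$: fibre products and the terminal object are preserved because $(-)\times_{\fY}\fX$ is a right adjoint on the relevant categories (or simply by the universal property of fibre products of formal schemes), and étale covers pull back to étale covers. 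For $\fY_{\et} \to Y_{\proet}$, the key point is that taking adic generic fibres sends étale morphisms of formal schemes of type (S)(b) to étale morphisms of the (locally noetherian) adic generic fibres and sends étale covers to étale covers, hence to pro-étale covers; preservation of finite limits follows because forming the adic generic fibre commutes with fibre products of such formal schemes (this uses that $\fY$ is of type (S)(b), so that the relevant Tate rings behave well) and sends $\fY$ to $Y$. The resulting morphisms of sites are exactly $\nu_Y \colon Y_{\proet} \to \fY_{\et}$ and $f_{\et}\colon \fX_{\et}\to\fY_{\et}$.

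The step I expect to require the most care is the continuity and finite-limit-preservation of the generic-fibre functor $\fY_{\et}\to Y_{\proet}$, i.e.\ the parts of (2) concerning $\nu_Y$. The subtlety is that one is crossing from the world of $p$-adic formal schemes to the world of adic spaces, and one must make sure that (a) an étale cover of $\fY$ by formal schemes really does induce a cover of $Y$ in the pro-étale topology (not merely an étale one — though here it suffices that étale covers are pro-étale covers), and (b) fibre products of formal schemes of type (S)(b) have adic generic fibres computing the fibre product in adic spaces, which is where the type (S)(b) hypothesis (strong noetherianness of $\cO_{\fY}(\fU)[p^{-1}]$, $p$-adic topology) is genuinely used — without it the adic generic fibre need not even be a reasonable adic space. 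I would cite \cite[\S 1.9]{HubEtadic} and \cite[\S 3]{SchpHrig} for these compatibilities rather than reprove them, as the excerpt explicitly places us in a setting where ``the framework established in \cite[\S 3]{SchpHrig} applies.'' The remaining verifications are routine once these compatibilities are in hand.
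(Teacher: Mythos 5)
Your plan is correct, but it is organized differently from the paper's proof, and it is worth noting what each route buys. For (1) on the formal étale sites you argue directly with formal schemes (terminal object $\fY$, fibre products of étale formal schemes via stability and cancellation of étaleness), whereas the paper first invokes the equivalence $\fY_{\et} \cong \fY_{\mathrm{red},\et}$ of \cite[\S 3.5]{HubEtadic} and then quotes the classical statement for schemes (\cite[{Tag 04AS}]{stacks-project}); your direct argument is fine but requires you to actually verify the formal-scheme-level assertions, which the reduction to the underlying reduced scheme lets one avoid. For the pro-étale site the relevant citation is \cite[Lemma 3.10(vii)]{SchpHrig} (finite projective limits exist in $Y_{\proet}$) rather than Proposition 3.12, which concerns quasi-compactness and coherence — a harmless slip, since the substance of the claim is the same. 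For (2), the step you rightly flag as delicate (continuity and finite-limit preservation of $\fY_{\et} \to Y_{\proet}$) is handled in the paper not by a direct verification across the formal/adic divide but by factoring the functor as $\fY_{\et} \to Y_{\et} \to Y_{\proet}$: the second arrow is the standard projection $\nu \colon Y_{\proet} \to Y_{\et}$, and the first induces a morphism of sites $Y_{\et} \to \fY_{\et}$ by \cite[Lemma 3.5.1(ii)]{HubEtadic}, which is exactly the packaged statement that étale covers of $\fY$ yield covers of $Y$ and that the generic fibre commutes with the relevant fibre products (this is where the type (S)(b) hypothesis enters, as you anticipated). Your broad citation of \cite[\S 1.9]{HubEtadic} and \cite[\S 3]{SchpHrig} points at the right sources, but the factorization through $Y_{\et}$ plus Huber's Lemma 3.5.1(ii) is the cleaner way to discharge the points (a) and (b) you list, since it avoids re-proving anything about pro-étale covers directly. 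The treatment of $f_{\et}$ via preservation of the terminal object and fibre products (or the right-adjoint observation) agrees with the paper, which passes through \cite[{Tag 04I0}]{stacks-project} after the reduction to reduced schemes.
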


\begin{proof}
\begin{enumerate}
\item Firstly $Y_{ \proet}$ admits finite projective limits due to \cite[Lemma 3.10(vii)]{SchpHrig}. By \cite[\S3.5]{HubEtadic}, there is an equivalence of sites $\fY_{\et} \cong \fY_{\text{red}, \et}$ (idem. for $\fX_{\et}$), where $\fY_{\text{red}} = (\fY, \cO_{\fY}/\sI)$ is the reduced scheme underlying $\fY$ (here $\sI$ is the vanishing sheaf of ideals of $\cO_{\fY}$). Now $\fY_{\text{red}, \et}$ has a final object (namely $\fY_{\text{red}}$) and fiber products exist. Thus by \cite[{Tag 04AS}, Lemma 4.18.4(1) $\iff$ (3)]{stacks-project} $\fY_{\et}$ (idem. for $\fX_{\et}$) admits finite projective limits.
\item Both functors preserve final objects and commute with fiber products. Therefore they also commute with finite projective limits. With the identification $\fY_{\et} \cong \fY_{\text{red}, \et}$ (idem. for $\fX_{\et}$) from (1), we get a morphism of sites $\fX_{\et} \to \fY_{\et}$ by \cite[{Tag 04I0}]{stacks-project}. There is a natural projection $\nu \colon Y_{ \proet} \to Y_{ \et}$, so it suffices to show that the natural functor $\fY_{\et} \to Y_{ \et}$ induces a morphism of sites $Y_{ \et} \to \fY_{\et}$. This is \cite[Lemma 3.5.1(ii)]{HubEtadic}.
\end{enumerate}
\end{proof}

Lemma \ref{lem:finprolimandmorpsic} guarantees that we can use the formalism in \cite[Exposé XI]{TravdeGabb} to describe the underlying site for our product (cf. \S 1.1 in loc.cit.). This will not be the case for the $q$-crystalline (oriented) product which we deal with in Section \ref{$q$-crystalline sitewed}. Let $D$ be the following site:
\begin{enumerate}[(i)]
    \item As a category, the objects of $D$ are     \emph{commutative diagrams}
\[
 \xymatrix{
 W \ar@{->}[r] \ar@{->}[d] & \fU \ar@{->}[d] & \fV \ar@{->}[l] \ar@{->}[d]  \\ 
 Y \ar@{->}[r] & \fY  & \ar@{->}[l] \fX
 } 
\]
where $\fU \to \fY$ and $\fV \to \fX$ are étale, $W \to Y$ is pro-étale, and $W \to \fU$ (resp. $Y \to \fY$) denotes the morphism $W \to U$ (resp. $\id \colon Y \to Y$) of objects in $Y_{ \proet}$ where $U$ is the adic generic fiber of $\fU$. We explain the \emph{commutativity} condition. We demand the right-most square to be a commutative diagram of formal schemes. For the left-most square we demand that the diagram in $Y_{ \proet}$:
\[
 \xymatrix{
 W \ar@{->}[r] \ar@{->}[d] & U \ar@{->}[d]   \\ 
 Y \ar@{=}[r] & Y 
 } 
\]
is commutative. For brevity we denote such an object by $(W \to \fU \leftarrow \fV)$.

Given two such objects $(W_1 \to \fU_1 \leftarrow \fV_1)$ and $(W_2 \to \fU_2 \leftarrow \fV_2)$, a morphism from $(W_1 \to \fU_1 \leftarrow \fV_1)$ to $(W_2 \to \fU_2 \leftarrow \fV_2)$ is the datum of three morphisms $W_1 \to W_2$, $\fU_1 \to \fU_2$ and $\fV_1 \to \fV_2$ such the diagrams
\[
 \xymatrix{
 W_1 \ar@{->}[r] \ar@{->}[d] & U_1 \ar@{->}[d]   \\ 
 W_2 \ar@{->}[r] & U_2,
 }
 \hspace{10mm}
 \xymatrix{
 \fU_1 \ar@{->}[d] & \fV_1 \ar@{->}[d] \ar@{->}[l]   \\ 
 \fU_2  & \fV_2 \ar@{->}[l]
 }
\]
are commutative (here $U_1 \to U_2$ is the induced morphism on the adic generic fibers of $\fU_1 \to \fU_2$). For brevity, we will denote such a morphism by the following diagram:
\[
 \xymatrix{
 W_1 \ar@{->}[r] \ar@{->}[d] & \fU_1 \ar@{->}[d] & \fV_2 \ar@{->}[l] \ar@{->}[d]  \\ 
 W_2 \ar@{->}[r] & \fU_2  & \ar@{->}[l] \fV_2.
 } 
\]
\item We equip $D$ with the topology generated by coverings
\[
\left\{ (W_i \to \fU_i \leftarrow \fV_i) \to (W \to \fU \leftarrow \fV) \right\}_{i \in I}
\]
of the following types:
\begin{enumerate}
    \item $\fU_i=\fU, \fV_i = \fV \text{ }$ for all $i \in I$ and $(W_i \to W)_{i \in I}$ is a covering in $Y_{\proet}$, 
    \item $W_i=W, \fU_i = \fU \text{ }$ for all $i \in I$ and $(\fV_i \to \fV)_{i \in I}$ is a covering in $\fX_{\et}$, 
    \item $I$ is a singleton and a commutative diagram
        \[
 \xymatrix{
 W' \ar@{->}[r] \ar@{=}[d] & \fU' \ar@{->}[d] & \fV' = \fU' \times_\fU \fV \ar@{->}[l] \ar@{->}[d]  \\ 
 W \ar@{->}[r] & \fU  & \ar@{->}[l] \fV, 
 } 
\]
\item $I$ is a singleton and a commutative diagram
        \[
 \xymatrix{
 W' = W \times_U U' \ar@{->}[r] \ar@{->}[d] & \fU' \ar@{->}[d] & \fV' \ar@{->}[l] \ar@{=}[d]  \\ 
 W \ar@{->}[r] & \fU  & \ar@{->}[l] \fV.
 } 
\]
\end{enumerate}
\end{enumerate}

\begin{notation*}
We will refer to coverings of the above by ``type (a)", ``type (b)", ``type (c)" and ``type (d)", respectively.
\end{notation*}

Let $\wt{D}$ be the topos of sheaves of sets in $D$. The following is an instance of \cite[Exposé XI, Définition 3.3]{TravdeGabb}.
\begin{defn}
The topos $\wt{D}$ is called the fiber product of $\wt{Y_{ \proet}}$ and $\wt{\fX_{\et}}$ above $\wt{\fY_{\et}}$, which we denote (via an abuse of language\footnote{So as to avoid confusion, we will continue to denote the associated site by $D$. When the context is clear we will drop the tilde from $\wt{Y_{ \proet}}$, $\wt{\fX_{\et}}$ and $\wt{\fY_{\et}}$ to ease notation.}) by
$Y_{ \proet}\times_{\fY_{\et}}\fX_{\et}$.
\end{defn}

One can verify the sheaf condition on $D$ as follows.

\begin{lem}[{\cite[Exposé XI, \S 3.1]{TravdeGabb}}] \label{lem:covrshecond}
Let $\sF$ be a presheaf on $D$. For $\sF$ to be a sheaf, it suffices to verify the usual conditions of exactness for coverings of type (a) and (b), and for all covering families $Z' \to Z$ of type (c) and (d), $\sF(Z) \to \sF(Z')$ is an isomorphism.
\end{lem}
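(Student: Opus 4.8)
The strategy is to reduce the sheaf condition for an arbitrary covering in $D$ to the four generating types (a)--(d), and then observe that types (c) and (d) are special: they are coverings by a single object which is a \emph{pullback}, so the sheaf condition for them degenerates to the assertion that $\sF(Z) \to \sF(Z')$ is an isomorphism (as there is nothing to glue — the fiber product $Z'\times_Z Z'$ is again $Z'$). First I would recall that a presheaf is a sheaf for a topology generated by a pretopology (family of coverings closed under the usual axioms, or at least cofinal among all coverings) if and only if it satisfies the sheaf condition for each covering in that generating family; this is the standard reduction, e.g.\ \cite[Tag 00ZC]{stacks-project} or \cite[Exp.\ II]{SGA4}. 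So it suffices to check the sheaf axiom for coverings of types (a), (b), (c), (d).

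For types (a) and (b), the covering is ``all in one factor'': in type (a) we have $\fU_i = \fU$, $\fV_i = \fV$ fixed and $(W_i \to W)$ a covering in $Y_{\proet}$, and symmetrically for type (b). Here the sheaf condition is literally the usual equalizer diagram
\[
\sF(W \to \fU \leftarrow \fV) \longrightarrow \prod_i \sF(W_i \to \fU \leftarrow \fV) \rightrightarrows \prod_{i,j}\sF(W_i\times_W W_j \to \fU \leftarrow \fV),
\]
using that fiber products of the covering objects in $D$ are computed factor-wise (the $\fU$- and $\fV$-components stay constant, the $W$-component is the fiber product in $Y_{\proet}$, which exists by Lemma~\ref{lem:finprolimandmorpsic}(1)). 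So I would phrase: ``the exactness condition for type (a) coverings is exactly the sheaf condition, and similarly for (b)'', which is what the lemma asserts. For types (c) and (d), the covering family has a single member $Z' \to Z$; the $\check{\mathrm C}$ech nerve of a one-element cover by a monomorphism-like base change collapses, so $\check H^0$ of $\sF$ on this cover is just $\mathrm{Eq}(\sF(Z') \rightrightarrows \sF(Z'\times_Z Z'))$, and since $Z'$ is defined as a fiber product ($\fV' = \fU'\times_\fU \fV$ in type (c), $W' = W\times_U U'$ in type (d)), one checks $Z'\times_Z Z' \cong Z'$ with both projections the identity, so the equalizer is just $\sF(Z')$. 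Hence the sheaf condition for such a cover reads precisely ``$\sF(Z)\to\sF(Z')$ is an isomorphism''.

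The final point is that these four types really do generate the topology in the sense needed for the reduction — i.e.\ every covering in $D$ admits a refinement obtained by composing coverings of types (a)--(d), and the sheaf condition is stable under composition and refinement of coverings. This is where I expect the only genuine bookkeeping: given an arbitrary covering $\{(W_i\to\fU_i\leftarrow\fV_i)\to(W\to\fU\leftarrow\fV)\}$, factor each leg through the base changes $W\times_U U_i' \to \fU_i \leftarrow \fU_i\times_\fU \fV$ (types (c), (d)) and then cover what remains by (a)- and (b)-type families; the compatibility of the sheaf axiom with such factorizations is the content of \cite[Exp.\ II, Lemme~1.3]{SGA4} (the ``sheaf for a cover refined by sheaf-covers'' lemma). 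I would cite \cite[Expos\'e XI, \S 3.1]{TravdeGabb} for the precise form of this argument in the fiber-product-of-topoi setting, since the generating families there are defined exactly so that this reduction works. The main obstacle, such as it is, is purely organizational: making sure the fiber products of objects of $D$ along the various maps are computed as claimed (factorwise for (a),(b); as honest pullbacks for (c),(d)), which follows from Lemma~\ref{lem:finprolimandmorpsic} together with the explicit description of morphisms in $D$ given above.
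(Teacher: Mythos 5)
Your overall route (reduce to the generating families, treat (a)/(b) as the usual equalizer condition with fiber products computed componentwise, treat (c)/(d) as singleton covers) is the standard one and is essentially what the cited source does; note the paper itself offers no proof beyond the reference to \cite[Exposé XI, \S 3.1]{TravdeGabb}. However, your treatment of types (c) and (d) rests on a false computation. You claim that $Z' \times_Z Z' \cong Z'$ ``with both projections the identity.'' Fiber products in $D$ are computed componentwise, so for a type (c) cover $Z' = (W \to \fU' \leftarrow \fU'\times_\fU \fV)$ of $Z = (W \to \fU \leftarrow \fV)$ one gets
\[
Z'\times_Z Z' \;=\; \bigl(W \to \fU'\times_\fU \fU' \leftarrow (\fU'\times_\fU\fU')\times_\fU \fV\bigr),
\]
and nothing in the definition of a type (c) covering forces $\fU' \to \fU$ to be a monomorphism (it is merely a morphism in $\fY_{\et}$, e.g.\ a degree-$p$ finite étale map is allowed), so $\fU'\times_\fU\fU' \ne \fU'$ in general; the same failure occurs for type (d) with $U' \to U$. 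Consequently your assertion that the sheaf condition for such a cover ``reads precisely'' the isomorphism condition is not justified, and in fact the two conditions are not equivalent in general for a presheaf.

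The good news is that the lemma only asserts sufficiency, and for that direction you do not need the nerve to collapse: if $\sF(Z)\to\sF(Z')$ is bijective, then every element of $\sF(Z')$ lies in the image of $\sF(Z)$, hence automatically equalizes the two maps $\sF(Z')\rightrightarrows\sF(Z'\times_Z Z')$ (both composites $Z'\times_Z Z'\rightrightarrows Z'\to Z$ agree), so $\sF(Z)\to \mathrm{Eq}\bigl(\sF(Z')\rightrightarrows\sF(Z'\times_Z Z')\bigr)$ is an isomorphism and the Čech sheaf condition for the singleton sieve holds. Replacing your collapse argument by this one-line observation repairs the step. You should also make the first reduction honest: the four families do not form a pretopology (composites of mixed types are not again of one of the types), so you need either the stability of each type under base change in $D$ (which does hold, and should be checked: pulling back a type (c) cover along $(W_1\to\fU_1\leftarrow\fV_1)\to Z$ gives the type (c) cover with $\fU_1' = \fU_1\times_\fU\fU'$) together with the standard lemma on topologies generated by base-change-stable families, or the composition/refinement lemmas from SGA~4, Exposé~II, which you cite but do not verify apply. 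As written, the appeal to ``the sheaf condition is stable under composition and refinement'' is too loose to carry that step on its own.
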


By construction there are natural projections of topoi
\[
p \colon Y_{ \proet}\times_{\fY_{\et}}\fX_{\et} \rightarrow Y_{\proet} \text{ and } q \colon Y_{ \proet}\times_{\fY_{\et}}\fX_{\et} \rightarrow \fX_{\et}, 
\]
which are given by
\[
p^{-1}(W) = (W \to \fY \leftarrow \fX) \text{ and } q^{-1}(\fV) = (Y \to \fY \leftarrow \fV),
\]
for $W$ an object in $Y_{\proet}$ and $\fV$ an object in $\fX_{\et}$. Moreover the diagram of topoi 
\[
 \xymatrix{
 & Y_{ \proet}\times_{\fY_{\et}}\fX_{\et} \ar@{->}[ld]_{p} \ar@{->}[rd]^{q} &  \\
 Y_{ \proet} \ar@{->}[rd]^{\nu_Y} && \fX_{\et} \ar@{->}[ld]_{f} \\
 & \fY_{\et} & 
 } 
\]
admits a canonical 2-isomorphism
\begin{equation} \label{eq:cantwoisom}
\varepsilon \colon  fq \to \nu_{Y}p
\end{equation}
given by an isomorphism of functors $(fq)_{*} \to (\nu_{Y}p)_{*}$ defined as follows: for $\sF$ a sheaf on $D$ and $\fV$ an object in $\fY_{\et}$ (with generic fiber $V$),
\[
\varepsilon \colon ((fq)_{*}\sF)(\fV) \to ((\nu_{Y}p)_{*}\sF)(\fV)
\]
is the composition
\[
\sF(Y \to \fY \leftarrow \fV \times_{\fY} \fX) \to \sF(V \to \fV \leftarrow \fV \times_{\fY} \fX) \to \sF(V \to \fY \leftarrow \fX)
\]
where the first isomorphism is induced by a covering of type (d) and the second isomorphism is induced by a covering of type (c). In the above diagram, we hope the reader does not find confusing in continuing to use $f$ and $\nu_Y$ to denote the induced morphisms of topoi. We can now state the universal property that $Y_{ \proet}\times_{\fY_{\et}}\fX_{\et}$ satisfies:

\begin{thm}[{\cite[Exposé XI, Théorème 3.2]{TravdeGabb}}] \label{thm:universprofibprodto}
Let $T$ be any topos equipped with morphisms $a \colon T \to Y_{ \proet}$ and $b \colon T \to \fX_{\et}$ and an isomorphism $t \colon fb \xrightarrow{\sim} \nu_{Y}a$. Then there exists a triple $(h \colon T \to Y_{ \proet}\times_{\fY_{\et}}\fX_{\et}, \alpha \colon ph \xrightarrow{\sim } a, \beta \colon qh \xrightarrow{\sim} b)$, unique up to unique isomorphism, such the composition
\[
fb \xrightarrow{\beta^{-1}} fqh \xrightarrow{\varepsilon} \nu_{Y}ph \xrightarrow{\alpha} \nu_{Y}a
\]
is equal to $t$. Moreover for $Z = (W \to \fU \leftarrow \fV)$ an object in $D$,
\[
h^{-1}Z = a^{-1}W \times_{b^{-1}(\fU \times_{\fY} \fX)} b^{-1}\fV,
\]
where $a^{-1}W \to b^{-1}(\fU \times_{\fY} \fX)$ is the composition $a^{-1}W \to a^{-1}U \xrightarrow{t} b^{-1}(\fU \times_{\fY} \fX)$, where $U$ is the generic fiber of $\fU$.
\end{thm}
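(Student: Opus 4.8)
The quickest route is simply to appeal to the general formalism of fibre products of topoi, \cite[Exposé XI, Théorème 3.2]{TravdeGabb}: its hypotheses are precisely the three assertions of Lemma \ref{lem:finprolimandmorpsic}, namely that $Y_{\proet}$, $\fX_{\et}$ and $\fY_{\et}$ have finite inverse limits and that the connecting functors $\fY_{\et}\to Y_{\proet}$ and $\fY_{\et}\to\fX_{\et}$ commute with them. The plan below indicates how one would reprove the statement directly, which also serves to fix the explicit formula for $h^{-1}$.

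First I would record that $D$ itself has finite inverse limits: the terminal object is $(Y\to\fY\leftarrow\fX)$, and a fibre product over $(W_0\to\fU_0\leftarrow\fV_0)$ is computed componentwise as $(W_1\times_{W_0}W_2\to\fU_1\times_{\fU_0}\fU_2\leftarrow\fV_1\times_{\fV_0}\fV_2)$, the compatibility squares remaining commutative because the adic-generic-fibre functor commutes with fibre products (Lemma \ref{lem:finprolimandmorpsic}). Then I would define a functor $u\colon D\to T$ by
\[
u \colon Z = (W \to \fU \leftarrow \fV)\ \longmapsto\ a^{-1}W \times_{b^{-1}(\fU\times_\fY\fX)} b^{-1}\fV,
\]
where the map $a^{-1}W\to b^{-1}(\fU\times_\fY\fX)$ is the composite $a^{-1}W\to a^{-1}U\xrightarrow{t}b^{-1}(\fU\times_\fY\fX)$; this makes sense because $(\nu_Y a)^{-1}\fU=a^{-1}U$ (with $U$ the generic fibre of $\fU$) and $(fb)^{-1}\fU=b^{-1}(\fU\times_\fY\fX)$, so the datum $t\colon fb\xrightarrow{\sim}\nu_Y a$ supplies the required isomorphism. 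Being assembled from the left-exact functors $a^{-1}$, $b^{-1}$, the isomorphism $t$, and finite limits in the topos $T$, the functor $u$ is left exact.

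The core step is to check that $u$ is continuous, i.e.\ carries the four generating covering families to epimorphic families in $T$. For a cover of type (a) (resp.\ (b)) the family $a^{-1}W_i\to a^{-1}W$ (resp.\ $b^{-1}\fV_i\to b^{-1}\fV$) is jointly epimorphic because $a^{-1}$ (resp.\ $b^{-1}$) is the inverse image of a morphism of topoi, and jointly epimorphic families are stable under the base change occurring in $u$. For type (c) one uses the identity $\fU'\times_\fU\fV=(\fU'\times_\fY\fX)\times_{\fU\times_\fY\fX}\fV$ together with the left-exactness of $b^{-1}$ to see that $u$ sends the covering to an isomorphism; for type (d) one uses $a^{-1}(W\times_U U')=a^{-1}W\times_{a^{-1}U}a^{-1}U'$ and transports along $t$ to the same effect. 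With continuity in hand, the standard correspondence between morphisms of topoi $T\to\wt D$ and continuous left-exact functors $D\to T$ (cf.\ \cite[Exposé XI]{TravdeGabb}) yields a morphism $h$ whose inverse image extends $u$. Evaluating $u$ on $p^{-1}W=(W\to\fY\leftarrow\fX)$ and on $q^{-1}\fV=(Y\to\fY\leftarrow\fV)$ collapses the fibre product — because $b^{-1}\fX$ and $a^{-1}Y$ are terminal in $T$ — to $a^{-1}W$ and $b^{-1}\fV$, producing $\alpha\colon ph\xrightarrow{\sim}a$ and $\beta\colon qh\xrightarrow{\sim}b$; and unwinding the definition of $\varepsilon$, which is built out of exactly the type-(c) and type-(d) covers that $u$ inverts, shows that the pentagon collapses to the original isomorphism $t$.

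For uniqueness I would note that every object of $D$ is the fibre product $p^{-1}W\times_{q^{-1}(\fU\times_\fY\fX)}q^{-1}\fV$ in $D$ (the map $p^{-1}W\to q^{-1}(\fU\times_\fY\fX)$ coming from $W\to U$ and the identification $q^{-1}(\fU\times_\fY\fX)=(fq)^{-1}\fU=(\nu_Y p)^{-1}\fU$), so that $h^{-1}$ is forced on representables by $\alpha$, $\beta$ and left-exactness, the connecting map being pinned down by the required compatibility with $t$; since representables generate $\wt D$ under colimits, $h$ is unique up to unique isomorphism. I expect the only genuinely delicate points to be the bookkeeping for the type-(c) and type-(d) covers (and, under the hood, invoking the topos-theoretic recognition principle correctly, especially for uniqueness); everything else is formal once Lemma \ref{lem:finprolimandmorpsic} is available.
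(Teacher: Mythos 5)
Your first route is exactly the paper's: the theorem is not proved there but quoted directly from \cite[Exposé XI, Théorème 3.2]{TravdeGabb}, its applicability being guaranteed by Lemma \ref{lem:finprolimandmorpsic} (finite projective limits and the morphisms of sites $\nu_Y$ and $f_{\et}$), so your appeal to the general formalism matches the paper's treatment. The additional direct sketch (the left-exact continuous functor $u$, inversion of type-(c)/(d) covers, and the decomposition of representables as $p^{-1}W\times_{q^{-1}(\fU\times_{\fY}\fX)}q^{-1}\fV$ for uniqueness) is a sound reconstruction of the cited proof but is not needed for the paper's purposes.
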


Applying Theoem \ref{thm:universprofibprodto} to the topos $X_{ \proet}$ equipped with the natural morphisms $f_{\eta} \colon X_{ \proet} \to Y_{ \proet}$ and $\nu_X \colon X_{ \proet} \to \fX_{\et}$ gives a morphism of topoi
\begin{equation} \label{eq:definovf}
(\nu_f^{-1},\nu_{f*})  \colon X_{\proet} \rightarrow Y_{ \proet}\times_{\fY_{\et}}\fX_{\et},
\end{equation}
where 
\begin{equation} \label{eq:pullbaobobjec}
\nu_f^{-1}(W \to \fU \leftarrow \fV) = (W \times_Y X) \times_{(U \times_Y X)} \times V = W \times_U V.
\end{equation}

\begin{defn}\label{def:comtensostrushea}
We define the completed integral structure sheaf on $D$ by the $p$-adically completed tensor product 
$$q^{-1}\cO_{\fX,\et} \wh{\otimes}_{p^{-1}\nu_{Y}^{-1}\cO_{\fY, \et}} p^{-1}\wh{\cO}^{+}_Y$$
and denote it by $\wh{\cO}^+_D$. 
\end{defn}

\begin{rem}
A word of explanation for the definition of $\wh{\cO}_{D}^{+}$. The morphism $p^{-1}\nu_{Y}^{-1}\cO_{\fY, \et} \to p^{-1}\wh{\cO}^{+}_Y$ comes via the natural map of sheaves of rings $\nu_{Y}^{-1}\cO_{\fY, \et} \to \wh{\cO}^{+}_{Y}$. By \eqref{eq:cantwoisom}, there is a canonical isomorphism $p^{-1}\nu_{Y}^{-1}\cO_{\fY, \et} \xrightarrow{\sim} q^{-1}f^{-1}\cO_{\fY, \et}$. One then composes the latter isomorphism with the morphism $q^{-1}f^{-1}\cO_{\fY, \et} \to q^{-1}\cO_{\fX, \et}$ (coming from the natural map of sheaves of rings $f^{-1}\cO_{\fY, \et} \to \cO_{\fX, \et}$). This procedure produces the morphism $p^{-1}\nu_{Y}^{-1}\cO_{\fY, \et} \to q^{-1}\cO_{\fX, \et}$.
\end{rem}

By construction the projections $p$ and $q$ extend to morphisms of ringed topoi:
\begin{equation} \label{eq:morringtop}
(p, p^{\sharp}) \colon (Y_{ \proet}\times_{\fY_{\et}}\fX_{\et}, \wh{\cO}^+_D) \rightarrow (Y_{\proet}, \wh{\cO}^{+}_{Y}) \text{ and }(q, q^{\sharp}) \colon (Y_{ \proet}\times_{\fY_{\et}}\fX_{\et}, \wh{\cO}^+_D) \rightarrow (\fX_{\et}, \cO_{\fX, \et}).
\end{equation}
We delay extending $\nu_f$ to a morphism of ringed topoi to \S\ref{subsec:complfsdas} (cf. Proposition \ref{lem:morringtopnuf}).

\begin{prop} \label{prop:ousiDalge}
\begin{enumerate}
    \item The family of objects of the form $Z:= (\varprojlim_{i \in I} \Spa(S_{i}, S_{i}^{+}) \to \Spf(S) \leftarrow \Spf(R))$ is generating for $D$. Moreover the family is stable under fiber products.
    \item All objects as in (1) are quasi-compact.
    \item All objects as in (1) are coherent and the topos $Y_{ \proet}\times_{\fY_{\et}}\fX_{\et}$ is algebraic (cf. 
    \cite[Exposé VI, Définition 2.3]{SGA4}).
    \item Assume that $f \colon \fX \to \fY$ is qcqs. Then the projection morphism $p \colon Y_{ \proet}\times_{\fY_{\et}}\fX_{\et} \rightarrow Y_{\proet}$ is a coherent morphism (cf. \cite[Exposé VI, Définition 3.1]{SGA4}) of algebraic topoi.
\end{enumerate}
\end{prop}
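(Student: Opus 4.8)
The plan is to reduce the four assertions to the description of an explicit generating family, together with the formalism of algebraic topoi from \cite[Exposé VI]{SGA4} and the finiteness properties of $Y_{\proet}$ from \cite[\S3]{SchpHrig}. For (1), the point is that an arbitrary object $(W\to\fU\leftarrow\fV)$ of $D$ is covered by objects of the stated form by combining the four covering types: a type (b) covering reduces $\fV$ to an affine $\Spf(R)$; type (c) and (d) coverings, using that $\Spf(R)$ is then quasi-compact, reduce $\fU$ to an affine $\Spf(S)$; and a type (a) covering by a pro-étale presentation $\varprojlim_i\Spa(S_i,S_i^+)\to W$ (which exists by \cite[\S3]{SchpHrig}) brings the left slot into shape. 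Equivalently, by Theorem~\ref{thm:universprofibprodto} (applied to $T = Y_{\proet}\times_{\fY_\et}\fX_\et$ with $a=p$, $b=q$) such an object represents the sheaf $p^{-1}W\times_{q^{-1}(\fU\times_\fY\fX)}q^{-1}\fV$, so the objects in question are exactly the ``boxes'' built from the generating family $\{\varprojlim_i\Spa(S_i,S_i^+)\}$ of $Y_{\proet}$ and the affine étale formal schemes over $\fX$ and $\fY$, which generate the fibre product of topoi by the general theory of \cite[Exposé XI]{TravdeGabb}. Stability under fibre products follows because finite inverse limits in $D$ are computed componentwise (Lemma~\ref{lem:finprolimandmorpsic}, together with stability of ``étale'', ``pro-étale'' and the commutativity conditions under base change), $\Spf$ of a $p$-completed tensor product is affine, and the objects $\varprojlim_i\Spa(S_i,S_i^+)$ are closed under fibre products in $Y_{\proet}$ \cite[\S3]{SchpHrig}.

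For (2) and (3): first $\Spf(S)$ and $\Spf(R)$ are affine, hence quasi-compact objects of $\fY_\et$ and $\fX_\et$, and $\varprojlim_i\Spa(S_i,S_i^+)$ is a quasi-compact object of $Y_{\proet}$, being an inverse limit of quasi-compact adic spaces \cite[\S3]{SchpHrig}; in fact all three are coherent. By the description in (1) a standard object $Z$ represents a fibre product of coherent objects of the topos along the projections $p,q$, and I would conclude that $Z$ is quasi-compact — this is (2) — either by invoking the stability of quasi-compactness under such fibre products in an algebraic topos, or directly: among the covering families of $Z$ only those of types (a) and (b) can fail to be finite, and there a finite subcovering exists because the $W$- and $\fV$-slots of $Z$ are quasi-compact, while types (c) and (d) are single-morphism coverings. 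For (3), $Z$ is moreover quasi-separated, hence coherent: if $Z_1, Z_2\to Z$ are quasi-compact, each is covered by finitely many standard objects, so $Z_1\times_Z Z_2$ is covered by finitely many fibre products of standard objects, which are again standard by (1) and hence quasi-compact. Combining with (1), $Y_{\proet}\times_{\fY_\et}\fX_\et$ has a small generating family of coherent objects stable under fibre products, i.e.\ it is algebraic in the sense of \cite[Exposé VI, Définition 2.3]{SGA4}.

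Finally, for (4), the $2$-commutative square defining $Y_{\proet}\times_{\fY_\et}\fX_\et$ (Theorem~\ref{thm:universprofibprodto}) exhibits $p$ as the base change of $f_\et\colon\fX_\et\to\fY_\et$ along $\nu_Y\colon Y_{\proet}\to\fY_\et$. Since $f$ is qcqs, so is $f_{\mathrm{red}}$, whence $f_\et\cong f_{\mathrm{red},\et}$ is a coherent morphism of algebraic topoi \cite[Exposé VI]{SGA4}; coherent morphisms of topoi being stable under base change \cite[Exposé VI]{SGA4}, $p$ is coherent, while source and target are algebraic by (3), resp.\ \cite[\S3]{SchpHrig}.

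The step I expect to be the main obstacle is the verification in (2)--(3) that the standard objects are quasi-compact and coherent: the topology on $D$ is far from subcanonical (types (c) and (d) force sheaves to invert certain localizations of the $\fU$- and $W$-slots), so one must argue with the associated sheaves rather than naively with covering families in $D$, and keeping track of the interplay of the four covering types is delicate. The same subtlety underlies the base-change step of (4); if one prefers not to appeal to \cite[Exposé VI]{SGA4} for base change, one can instead verify the defining property of a coherent morphism directly on the generating family of (1), reducing via the qcqs hypothesis to the elementary case where $\fX$ and $\fY$ are affine.
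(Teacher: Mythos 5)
Your handling of (1) and (2) is essentially the paper's argument (iterating coverings of types (a)--(d) and using quasi-compactness of the three slots), modulo a small ordering slip: a type (d) covering keeps $\fV$ fixed and needs $\fV \to \fU$ to factor through the new $\fU'$, while a type (c) covering destroys affineness of $\fV$, so one cannot first make $\fV$ affine and then shrink $\fU$ in one step; the paper's order (cover $\fU$ by affines, pull back to $\fV$ by type (b), then type (d), then (b), then (a)) repairs this. The genuine gap is in (3): coherence of the standard objects is fine (your quasi-separation argument is a hands-on substitute for the paper's appeal to \cite[Exposé VI, Proposition 2.1]{SGA4}), but algebraicity is \emph{not} the statement ``there is a generating family of coherent objects stable under fiber products.'' The condition the paper actually verifies, (i ter) of \cite[Exposé VI, Proposition 2.2]{SGA4}, in addition requires control of products of members of the family over the \emph{final object} of the topos, and for your unrestricted family there is no reason for this to hold: the product of two standard objects over the final object $(Y \to \fY \leftarrow \fX)$ has components $\fU_1 \times_{\fY} \fU_2$ and $\fV_1 \times_{\fX} \fV_2$, and since $\fX$, $\fY$ are only of type (S)(b) (in particular not assumed quasi-separated) these need not be quasi-compact. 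The missing idea is the paper's shrinking of the generating family to those $Z'$ whose maps $\Spf(S) \to \fY$ and $\Spf(R) \to \fX$ factor through affine opens $\fY' \subset \fY$, $\fX' \subset \fX$; then $Z' \times Z' = Z' \times_{(Y' \to \fY' \leftarrow \fX')} Z'$ is again a standard object, hence quasi-compact. (The same device occurs in Scholze's proof that $Y_{\proet}$ is algebraic, \cite[Proposition 3.12(iii)]{SchpHrig}.)

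In (4), your primary route relies on ``coherent morphisms of topoi are stable under base change,'' cited to \cite[Exposé VI]{SGA4}; that exposé contains no such statement (fiber products of topoi are not constructed there), and the results of this type in \cite{TravdeGabb} concern oriented products of \emph{coherent} topoi along coherent morphisms, hypotheses not available here since $Y_{\proet}$ and the étale topoi are merely algebraic (nothing is globally quasi-compact). So as written this step is unsupported. Your fallback is the paper's actual strategy, but its content is exactly what you leave out: by the criterion of \cite[Exposé VI, Proposition 3.2]{SGA4} one must show $p^{-1}(W) = (W \to \fY \leftarrow \fX)$ is coherent for $W$ in a suitable generating family of coherent objects of $Y_{\proet}$; the paper does this by taking $W$ lying over the generic fiber of an affine open $\fY' \subset \fY$, passing by a type (c) covering to $(W \to \fY' \leftarrow \fY' \times_{\fY} \fX)$, covering the qcqs formal scheme $\fY' \times_{\fY} \fX$ by finitely many affine opens, and using its quasi-separatedness to see that the pairwise fiber products are quasi-compact, so that \cite[Exposé VI, Corollaire 1.17]{SGA4} applies. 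Note that the qcqs hypothesis on $f$ enters precisely here, not through a base-change theorem for topoi.
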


\begin{proof}
\begin{enumerate}
\item Choose any object $T = (W \to \fU \leftarrow \fV)$ in $D$ and fix an affine covering $\{\fU_i \}$ of $\fU$ in $\fU_{\et}$. Then $\{\fU_i \times_{\fU} \fV \}$ is a covering of $\fV$ in $\fV_{\et}$. Therefore 
\[
\{(W \to \fU \leftarrow \fU_i \times_{\fU} \fV) \}
\]
is a covering of $T$ of type (b). Then by using coverings of type (d):
        \[
 \xymatrix{
 W' = W \times_U U_i \ar@{->}[r] \ar@{->}[d] & \fU_i \ar@{->}[d] & \fU_i \times_{\fU} \fV \ar@{->}[l] \ar@{=}[d]  \\ 
 W \ar@{->}[r] & \fU  & \ar@{->}[l] \fU_i \times_{\fU} \fV,
 } 
\]
where $U$ and $U_i$ are the adic generic fibers of $\fU$ and $\fU_i$, respectively, we can assume $\fU$ is affine. Using coverings of type (b) again, we can assume $\fV$ is affine. Finally by using coverings of type (a) and  \cite[Lemma 3.12(ii)]{SchpHrig}, proves that the family is generating. For the last part, note that taking the adic generic fiber commutes with fiber products of formal schemes. Therefore fiber products in $D$ are calculated component-wise and now it is clear that the family is stable under fiber products. 

    \item By part (1), the objects appearing there are generating, so we can consider the site whose objects are the objects appearing in (1) together with the coverings of type (a)-(d). This is a site of definition for $Y_{ \proet}\times_{\fY_{\et}}\fX_{\et}$.  By \cite[Lemma 3.12(i)]{SchpHrig}, $\varprojlim_{i \in I} \Spa(S_{i}, S_{i}^{+})$ is a quasi-compact object in $Y_{\proet}$, and $\Spf(R)$ is quasi-compact in $\fX_{\et}$. It follows that all covering families of type (a) or (b) of $Z$ have a finite refinement. 
    \item By parts (1) and (2), and the criterion of \cite[Exposé VI, Proposition 2.1]{SGA4}, all objects as in (1) are coherent. We check that (i ter) of Proposition 2.2 in loc.cit. is satisfied. For this we further restrict ourselves to objects of the form 
    \[
    Z':= (\varprojlim_{i \in I} \Spa(S_{i}, S_{i}^{+}) \to \Spf(S) \leftarrow \Spf(R))
    \]
    such that $\Spf(S) \to \fY$ (resp. $\Spf(R) \to \fX$) factors over an affine open subset $\fY' \subset \fY$ (resp. $\fX' \subset \fX$). This is still a generating family for $D$ and 
     \[
    Z' \times_{(Y \to \fY \leftarrow \fX)} Z' = Z'\times_{(Y ' \to \fY' \leftarrow \fX')} Z',
    \]
    where $Y'$ is the (affinoid) adic generic fiber of $\fY'$.
    
    \item By part (3) the topos $Y_{ \proet}\times_{\fY_{\et}}\fX_{\et}$ is algebraic. Moreover by \cite[Proposition 3.12(iii)]{SchpHrig}, $Y_{\proet}$ is an  algebraic topos. We use the criterion in \cite[Exposé VI, Proposition 3.2]{SGA4} to check that $p$ is a coherent morphism. By \cite[Proposition 3.12(ii)-(iii)]{SchpHrig}, objects of the form $\varprojlim_{i \in I} \Spa(S_{i}, S_{i}^{+})$ are coherent and are a generating family for $Y_{\proet}$. We can further restrict to the class $\varprojlim_{i \in I} \Spa(S_{i}, S_{i}^{+})$ that have the additional property that $\varprojlim_{i \in I} \Spa(S_{i}, S_{i}^{+}) \to Y$ factors over an open affinoid $Y' \subset Y$ where $Y'$ is the adic generic fiber of some open affine $\fY' \subset \fY$. This is still a generating family for $Y_{\proet}$. It suffices to check that
    \begin{equation}\label{eq:inversimpd}
    p^{-1}(\varprojlim_{i \in I} \Spa(S_{i}, S_{i}^{+})) = (\varprojlim_{i \in I} \Spa(S_{i}, S_{i}^{+}) \to \fY \leftarrow \fX)
    \end{equation}
    is a coherent object of $Y_{ \proet}\times_{\fY_{\et}}\fX_{\et}$. For this we will use the criterion in \cite[Exposé VI, Corollaire 1.17]{SGA4}. We have a covering of type (c):
            \[
 \xymatrix{
 \varprojlim_{i \in I} \Spa(S_{i}, S_{i}^{+}) \ar@{->}[r] \ar@{=}[d] & \fY' \ar@{^{(}->}[d] & \fX' = \fY' \times_\fY \fX \ar@{->}[l] \ar@{^{(}->}[d]  \\ 
 \varprojlim_{i \in I} \Spa(S_{i}, S_{i}^{+}) \ar@{->}[r] & \fY  & \ar@{->}[l] \fX.
 } 
\]
Since the morphism $\fX \to \fY$ is quasi-compact and quasi-separated, it follows that $\fX' \subset \fX$ is quasi-compact and quasi-separated. Thus by using coverings of type (b), we obtain a finite covering of \eqref{eq:inversimpd}
\[
\{ Z_j:= (\varprojlim_{i \in I} \Spa(S_{i}, S_{i}^{+}) \to \fY' \leftarrow \fX_j) \to (\varprojlim_{i \in I} \Spa(S_{i}, S_{i}^{+}) \to \fY \leftarrow \fX) \}_{j \in J}
\]
where each $\fX_j \subset \fX'$ is affine open (in particular by part (3) each $Z_j$ is coherent). Moreover for $j, k \in J$
\begin{equation} \label{eq:fintunofap}
Z_j \times_{(\varprojlim_{i \in I} \Spa(S_{i}, S_{i}^{+}) \to \fY \leftarrow \fX)} Z_k = (\varprojlim_{i \in I} \Spa(S_{i}, S_{i}^{+}) \to \fY' \leftarrow \fX_j \cap \fX_k).
\end{equation}
Since $\fX'$ is quasi-separated, $\fX_j \cap \fX_k$ is a finite union of affine opens. By using coverings of type (b) and \cite[Exposé VI, Proposition 1.3]{SGA4}, it follows that the object in \eqref{eq:fintunofap} is quasi-compact.
\end{enumerate}
\end{proof}

Next we describe a basis for the topology on $D$.

\begin{prop} \label{prop:basisDnosmor}
The set of object  of the form $(\varprojlim_{i \in I} \Spa(S_{i}, S_{i}^{+}) \to \Spf(S) \leftarrow \Spf(R))$ where $W =
\varprojlim_{i \in I} \Spa(S_{i}, S_{i}^{+}) \in Y_{\proet}$
is some affinoid perfectoid form a basis for $D$.
\end{prop}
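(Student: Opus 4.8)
The plan is to deduce this from the generating family already exhibited in Proposition~\ref{prop:ousiDalge}(1), together with the fact that affinoid perfectoid objects form a basis for $Y_{\proet}$ (cf.\ \cite[Proposition 4.8]{SchpHrig}). First I would recall that, by Proposition~\ref{prop:ousiDalge}(1), every object of $D$ is covered by objects of the shape $Z = (\varprojlim_{i\in I}\Spa(S_i,S_i^+) \to \Spf(S) \leftarrow \Spf(R))$, and that such objects are stable under fiber products, the latter being computed component-wise. Hence it suffices to (i) cover every such $Z$ by objects of the asserted form — i.e.\ with the $Y_{\proet}$-component affinoid perfectoid — and (ii) check that the asserted family is closed under the fiber products and the covers of types (a)--(d) that generate the topology on $D$.

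For (i): write $W = \varprojlim_{i\in I}\Spa(S_i,S_i^+)$ and let $U$ be the adic generic fiber of $\Spf(S)$, so that $W$ carries a morphism $W \to U$. Since affinoid perfectoid objects form a basis of $Y_{\proet}$, there is a pro-étale covering $\{W_j \to W\}_j$ with each $W_j$ affinoid perfectoid. Composing $W_j \to W \to U$ exhibits each $(W_j \to \Spf(S) \leftarrow \Spf(R))$ as an object of $D$, and
\[
\{(W_j \to \Spf(S) \leftarrow \Spf(R)) \to (W \to \Spf(S) \leftarrow \Spf(R))\}_j
\]
is a covering of type (a) whose terms all lie in the asserted family. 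This gives that the family is generating.

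For (ii): fiber products in $D$ are computed component-wise (as in the proof of Proposition~\ref{prop:ousiDalge}(1)); a fiber product of affine formal schemes over an affine one is affine — after reducing, as in the proof of Proposition~\ref{prop:ousiDalge}(3), to components lying over affine opens of $\fX$ and $\fY$ — and a fiber product of affinoid perfectoid objects of $Y_{\proet}$ is again affinoid perfectoid, the completed tensor product of perfectoid Tate algebras being perfectoid; so the family is stable under fiber products. That covers within the family suffice then follows by refining an arbitrary covering of types (a)--(d): for type (a) one applies (i) to each $Y_{\proet}$-component; for types (b) and (c) the $Y_{\proet}$-component is unchanged (already affinoid perfectoid) and one only refines the $\fX_{\et}$-component to affine opens; and for type (d) one must re-cover the new $Y_{\proet}$-component $W\times_U U'$ by affinoid perfectoids compatibly with the modified diagram, once more invoking that these form a basis of $Y_{\proet}$. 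I expect this last point, and the parallel bookkeeping in the (c)/(d) cases where a component is replaced by a base change, to be the only place where genuine care is needed; everything else is formal manipulation with the component-wise description of $D$ and with Scholze's basis result.
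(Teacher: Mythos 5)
Your step (i) is correct and is essentially the paper's own argument: the paper proves the proposition by rerunning the generating-family argument of Proposition~\ref{prop:ousiDalge}(1) and replacing the final appeal to \cite[Lemma 3.12(ii)]{SchpHrig} by the fact that affinoid perfectoids form a basis of $Y_{\proet}$, which is exactly what you do when you take a covering as in Proposition~\ref{prop:ousiDalge}(1) and refine its $Y_{\proet}$-component by a type~(a) covering with affinoid perfectoid terms. Your step (ii) is not needed, since ``basis'' here only means that every object of $D$ admits a covering by objects of the stated form (no stability under fiber products is claimed or required) --- which is just as well, because the assertion that a fiber product of affinoid perfectoid objects of $Y_{\proet}$ over an \emph{arbitrary} base is again affinoid perfectoid is only available when the base is itself affinoid perfectoid.
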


\begin{proof}
This is proved in the same way as the first part of Proposition \ref{prop:ousiDalge}(1), except one uses that affinoid perfectoids form a basis for the pro-étale topology (cf. \cite[Proposition 4.8]{SchpHrig}).
\end{proof}

When $f \colon \fX \to \fY$ is smooth, we will need a refinement of Proposition  \ref{prop:basisDnosmor}, in order to relate our upcoming definition of $A\Omega_{\fX/\fY}$ and the relative de Rham complex, via the local computations of Section \ref{sec:localanal}. For this reason we make the following definition and we assume $f$ is smooth for the rest of this section.

\begin{defn}  \label{def:goodtriples}
We call an object $Z = (W \to \fU \leftarrow \fV) \in D$  \emph{good} if it is of the form
\[
(\varprojlim_{i \in I} \Spa(S_{i}, S_{i}^{+}) \to \Spf(S) \leftarrow \Spf(R))
\]
where 
\begin{enumerate}
\item The object $W =
\varprojlim_{i \in I} \Spa(S_{i}, S_{i}^{+}) \in Y_{\proet}$
is some affinoid perfectoid.

\item For some $d \geq 0$, there is a $\Spf(S)$-étale morphism 
\[
\fV = \Spf (R) \rightarrow \Spf (R^{\square}) =: \fV^{\square},
\]
where $R^{\square} := S \{t_1^{\pm 1}, \ldots, t_d^{\pm 1} \}$.
\end{enumerate}

If in addition  $(S_{\infty}[\tfrac{1}{p}])^{\circ} = S_{\infty}$, then we call $Z$ \emph{very good}. Here $S_{\infty}$ is the $p$-adic completion of $\varinjlim_{i \in I} S_i^{+}$.
\end{defn}

\begin{rem}
The last condition ``$(S_{\infty}[\tfrac{1}{p}])^{\circ} = S_{\infty}$" of Definition \ref{def:goodtriples} may appear somewhat odd at this stage, but is precisely what is required in order to match up (on the nose) the local computations of Section \ref{sec:localanal} regarding continuous group cohomology and the pushforward of $\bA_{\Inf,X}$. Moreover it will turn out that under some finiteness assumptions on $Y$, this condition is always satisfied (cf. Lemma \ref{lem:rigspacirplus}).
\end{rem}

We will prove that under some mild assumptions, $D$ has a basis consisting of very good objects. First we need some preparation. 

\begin{lem} \label{lem:nonoto}
Suppose $T$ is a perfectoid $C$-algebra. Then for each $x \in \cO$, the $\cO$-module $T^{\circ}/x$ has no nonzero $\fm$-torsion.
\end{lem}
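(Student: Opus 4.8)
The plan is to reduce to a concrete statement about valuation rings and then use the standard "almost purity / flatness" philosophy, in the form that perfectoid rings are particularly well-behaved for such torsion questions. First I would recall that since $C$ is algebraically closed, its valuation is non-discrete, so I may replace the arbitrary $x \in \cO$ by a nonzero element and, after extracting $p$-power roots, reduce to showing that $T^{\circ}/x$ has no nonzero $\fm$-torsion for $x$ admitting a compatible system of $p$-power roots $x^{1/p^n} \in \cO$. (If $x$ is a unit or zero the claim is trivial.) The point of arranging roots is that $x$ then becomes a "distinguished" element adapted to the tilt: writing $x = \theta(\xi_x)$ for a suitable $\xi_x \in A_{\Inf}$ which is a nonzerodivisor, one has good control of the quotient.

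The key step is the following: for a perfectoid $C$-algebra $T$, the ring $T^{\circ}$ is the ring of integral elements of a perfectoid Tate ring, hence is (after inverting $p$) a perfectoid ring in the sense of Bhatt--Morrow--Scholze, and in particular $T^{\circ}$ is \emph{integral perfectoid} --- it is $p$-adically complete, $p$-torsion-free (as $T$ is a $C$-algebra and $C$ has characteristic $0$ with $T^{\circ}$ flat over $\cO$), and Frobenius induces an isomorphism $T^{\circ}/p^{1/p} \xrightarrow{\sim} T^{\circ}/p$. I would then invoke the structural fact that for such rings the quotient by any element of the form $x$ with $x^{1/p^n}$ available is computed by a Koszul-type / flatness statement: the sequence $0 \to T^{\circ} \xrightarrow{x} T^{\circ} \to T^{\circ}/x \to 0$ stays exact after tensoring with the value group, and more precisely $T^{\circ}/x$ is almost faithfully flat over $\cO/x$. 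Now $\cO/x$ is a quotient of a valuation ring by a nonzero element, i.e. $\cO/x = \cO/x\cO$ where $x\cO = \fm^{v(x)}$ is a "fractional ideal power"; such a ring has the property that multiplication by any nonzero-in-$\cO$ element has kernel exactly a power of $\fm$, and one checks directly that $\cO/x$ has no $\fm$-torsion \emph{modulo} the caveat — actually $\cO/x$ does have $\fm$-torsion, so this cannot be the end of the argument.

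Let me instead take the direct route, which I expect is what the paper does. Suppose $a \in T^{\circ}$ and $\fm^{\flat}$-almost-torsion... no: suppose $z \in T^{\circ}$ with $\fm \cdot z \subseteq x T^{\circ}$; I want $z \in x T^{\circ}$. Pick any nonzero $\varpi \in \fm$ with a compatible system of $p$-power roots $\varpi^{1/p^n}$, and also assume (after the reduction above) $x$ has $p$-power roots $x^{1/p^n}$. For each $n$, $\varpi^{1/p^n} z = x w_n$ for some $w_n \in T^{\circ}$, i.e. $z = x \varpi^{-1/p^n} w_n$ in $T = T^{\circ}[\tfrac1p]$; since $x$ is a nonzerodivisor in $T$ this forces $\varpi^{-1/p^n} w_n$ to be a single well-defined element $u \in T$ independent of $n$, with $z = xu$. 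It remains to see $u \in T^{\circ}$: we have $\varpi^{1/p^n} u = w_n \in T^{\circ}$ for all $n$, i.e. $u \in \bigcap_n \varpi^{-1/p^n} T^{\circ}$; because $T^{\circ}$ is the ring of power-bounded (equivalently, valuation $\le 1$) elements, an element $u$ with $\varpi^{1/p^n} u$ power-bounded for arbitrarily small $\varpi^{1/p^n}$ is itself power-bounded (its "valuation" is $\le \varpi^{-1/p^n}$ for all $n$, hence $\le 1$ on passing to the limit, since $\varpi^{1/p^n} \to 1$ in the value group). Hence $u \in T^{\circ}$ and $z = xu \in xT^{\circ}$, proving $T^{\circ}/x$ is even genuinely $\fm$-torsion-free.

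\textbf{Main obstacle.} The delicate point is making "$u$ with $\varpi^{1/p^n} u \in T^{\circ}$ for all $n$ implies $u \in T^{\circ}$" precise when $T^{\circ}$ is not a valuation ring but merely the integral subring of a general perfectoid Tate $C$-algebra: one must argue at the level of the (bounded, power-multiplicative) spectral seminorm, or equivalently via the tilt and Fontaine's $\theta$, using that $[\varpi^{\flat}]$-divisibility in $W(T^{\flat,\circ})$ together with the primitivity of $\xi_x$ controls $T^{\circ}$-integrality. Everything else is the reduction to the case where $x$ (and an auxiliary $\varpi$) have compatible $p$-power roots, which is routine given $C$ algebraically closed.
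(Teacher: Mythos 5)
Your argument is correct, but it takes a genuinely different route from the paper. The paper's proof is a citation chain through Scholze's almost mathematics: it shows $T^{\circ}=(T^{\circ a})_{*}$ (using \cite[Lemma 5.3, Proposition 5.5, Lemma 5.6]{SchPerf}, the last being where perfectoidness enters), then embeds $T^{\circ}/x=(T^{\circ a})_{*}/x$ into $(T^{\circ a}/x)_{*}=\Hom_{\cO^{a}}(\cO^{a},T^{\circ a}/x)$, which has no $\fm$-torsion by \cite[Proposition 4.4]{SchPerf} since $\fm=\fm^{2}$. You instead unwind the same underlying fact by hand: from $\fm z\subseteq xT^{\circ}$ you form $u=z/x\in T$ (legitimate, since a nonzero $x\in\cO$ is a unit in the $C$-algebra $T$) and must show $u\in T^{\circ}$ knowing $\varpi^{1/p^{n}}u\in T^{\circ}$ for elements $\varpi^{1/p^{n}}\in\fm$ of absolute value tending to $1$; this is exactly the statement that $T^{\circ}$ equals its module of almost elements inside $T$, which is the content of the paper's appeal to Lemma 5.6 of loc.cit. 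Your ``main obstacle'' is resolved simply by uniformity of perfectoid Tate rings: either via the power-multiplicative spectral norm (as you sketch, $\|u\|_{\mathrm{sp}}\le|\varpi|^{-1/p^{n}}\to 1$), or even more directly, since $T^{\circ}$ is bounded one has $u^{k}=(\varpi^{1/p^{n}}u)^{k}\varpi^{-k/p^{n}}\in p^{-1}T^{\circ}$ for suitable $n=n(k)$, so $u$ is power-bounded; no tilting or primitive elements are needed. What your approach buys is a self-contained, elementary proof (and it only needs $T$ uniform, not the full perfectoid hypothesis); what the paper's buys is brevity by quoting established almost-purity formalism. Two cosmetic remarks: the reduction to $x$ admitting $p$-power roots and the entire second paragraph are superfluous (the roots of $x$ are never used, and you correctly abandon the flatness detour), and note that the lemma's ``$\fm$-torsion'' must be read as ``killed by the whole ideal $\fm$'' (killing by a single element of $\fm$ would make the statement false, e.g.\ $T=C$, $x=p$, $z=p^{1/2}$), which is indeed the hypothesis $\fm z\subseteq xT^{\circ}$ you work with.
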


\begin{proof}
This is essentially proven in \cite[\S 5]{SchPerf}. Lemma 5.3(ii) in loc.cit. gives $T^{\circ} \subset (T^{\circ a})_{*} \subset T$, which implies 
\begin{equation} \label{eq:alintel}
(T^{\circ a})_{*}[\tfrac{1}{p}] = T.
\end{equation}
By Proposition 5.5 in loc.cit., $T^{\circ a}$ is a perfectoid $\cO^{a}$-algebra and so by Lemma 5.6 in loc.cit. 
\begin{equation} \label{eq:genfipush}
(T^{\circ a})_{*}[\tfrac{1}{p}]^{\circ} = (T^{\circ a})_{*}.
\end{equation}
Therefore \eqref{eq:alintel}-\eqref{eq:genfipush} give $T^{\circ} = (T^{\circ a})_{*}$. Therefore by Lemma 5.3(iii) in loc.cit.  
\[
T^{\circ}/x = (T^{\circ a})_{*}/x \subset (T^{\circ a}/x)_{*}.
\]
Finally by Proposition 4.4 in loc.cit. $(T^{\circ a}/x)_{*} = \Hom_{\cO^{a}}(\cO^{a}, T^{\circ a}/x)$ has no non-zero $\fm$-torsion.
\end{proof}

\begin{rem} \label{rem:quotvan}
In the situation of Lemma \ref{lem:nonoto}, note that if $(T, T^{+})$ is an affinoid pair over $(C,\cO)$ with $T^{+} \subsetneq T^{\circ}$ and $0 \not= x \in \fm$ then $T^{+}/x$ does have nonzero $\fm$-torsion. Indeed take $y \in T^{\circ} \backslash T^{+}$. Then $xy \in T^{+}$ and its class in the quotient $T^{+}/x$ is nonzero $\fm$-torsion. 
\end{rem}

We are now ready to control powerbounded elements in an affinoid perfectoid. For a related result, cf. \cite[Example 1.6(iii)]{MorrTsuj}.

\begin{lem} \label{lem:rigspacirplus}
Suppose that $Y$ is locally of finite type over $\Spa(C, \cO)$. Then an affinoid perfectoid $\varprojlim_{i \in I} \Spa(S_{i}, S_{i}^{+}) \in Y_{\proet}$ satisfies $(S_{\infty}[\tfrac{1}{p}])^{\circ} = S_{\infty}$. 
\end{lem}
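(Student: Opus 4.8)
The statement to prove is: if $Y$ is locally of finite type over $\Spa(C,\cO)$, then any affinoid perfectoid $W = \varprojlim_{i\in I}\Spa(S_i,S_i^+) \in Y_{\proet}$ satisfies $(S_\infty[\tfrac1p])^\circ = S_\infty$, where $S_\infty$ is the $p$-adic completion of $\varinjlim_i S_i^+$. The plan is to reduce to a statement about integral models. First I would recall that $(S_\infty[\tfrac1p], S_\infty)$ is the affinoid perfectoid pair attached to $W$, so $T := S_\infty[\tfrac1p]$ is a perfectoid $C$-algebra and $S_\infty \subseteq T^\circ$ always holds; the content is the reverse inclusion $T^\circ \subseteq S_\infty$. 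By Lemma \ref{lem:nonoto}, $T^\circ$ is the functor-of-points completion $(T^{\circ a})_*$ and $T^\circ/x$ has no nonzero $\fm$-torsion for every $x\in\cO$; so it suffices to exhibit $S_\infty$ as something with the same "integral closure" property and compare.

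The key geometric input is that $Y$ is locally of finite type over $\Spa(C,\cO)$: locally $Y = \Spa(A[\tfrac1p], A^+)$ where $A$ is a topologically finite type (hence noetherian) $\cO$-algebra which is flat and reduced-enough, and crucially $A^+$ is the integral closure of (the image of) $A$ in $A[\tfrac1p]^\circ$; since $A$ is of finite type over $\cO$, one has $A[\tfrac1p]^\circ = A^{+} $ is module-finite type behavior, and in particular $A^+/p$ is noetherian. I would then write the pro-étale tower $W\to Y$ as $\Spa(S_i, S_i^+) \to Y$ with each $S_i^+$ the integral closure of a finite étale (or finite-type étale-localized) $A$-algebra in $S_i^\circ$, so that $S_i^\circ = S_i^+$ for each $i$ (finite type over $\cO$ $\Rightarrow$ the $\cO$-integral closure hypothesis is automatic). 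Passing to the colimit and $p$-completing, the point is that $S_\infty = (\varinjlim_i S_i^+)^\wedge_p = (\varinjlim_i S_i^\circ)^\wedge_p$, and I must show this equals $(\varinjlim_i S_i^+)[\tfrac1p]^\wedge$-powerbounded-elements $= T^\circ$.

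The comparison step is where Lemma \ref{lem:nonoto} does its work: both $S_\infty$ and $T^\circ$ are $p$-adically complete, $p$-torsion-free, and sit between each other as $S_\infty \subseteq T^\circ \subseteq T = S_\infty[\tfrac1p]$; so $T^\circ/S_\infty$ is a $p$-power-torsion $\cO$-module. I would show it is in fact killed by $\fm$, hence (being also a quotient relevant to the $\fm$-torsion statement) zero. Concretely: take $y \in T^\circ$; then $y \in \tfrac1{p^n} S_\infty$ for some $n$, i.e. $p^n y \in S_\infty$, and its class in $S_\infty/p^n$ is $\fm$-torsion (multiply $y$ by $\fm$ to land back in $S_\infty$ after clearing, using that $\fm T^\circ$ still maps into... ) — more precisely one checks $\fm \cdot (p^ny) \subseteq p^n S_\infty$ inside $S_\infty$ using $\fm T^\circ \subseteq$ the image of $S_\infty$, which follows because $T^\circ = (T^{\circ a})_*$ gives $\fm T^\circ \subseteq T^{\circ a}\text{-part} \subseteq S_\infty$ once we know $S_\infty^a = T^{\circ a}$ (the almost versions agree since the tilted/untilted almost perfectoid algebras only see the almost isomorphism class, and $S_\infty^\circ$-style integral data is insensitive to almost modifications). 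Then $S_\infty/p^n$ having no $\fm$-torsion — which I would deduce by writing $S_\infty/p^n = (\varinjlim_i S_i^+/p^n)$ and using Lemma \ref{lem:nonoto} applied to each $S_i[\tfrac1p]$ together with $S_i^+ = S_i^\circ$, or more directly: $S_\infty/p^n$ injects into $T^\circ/p^n$ which has no $\fm$-torsion by Lemma \ref{lem:nonoto} — forces $p^n y \in p^n S_\infty$, i.e. $y \in S_\infty$.

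\textbf{Main obstacle.} The delicate point I anticipate is the identification $S_i^\circ = S_i^+$ along the tower and, relatedly, controlling $\fm T^\circ$ in terms of $S_\infty$ — i.e. really using "locally of finite type" (as opposed to merely "locally noetherian") to guarantee that the $+$-rings in the pro-étale tower are full rings of integral elements rather than some smaller open-and-integrally-closed subring, so that Remark \ref{rem:quotvan}'s pathology ($T^+ \subsetneq T^\circ$) cannot occur at any finite stage. Once that is pinned down, the colimit-and-completion argument plus Lemma \ref{lem:nonoto} is formal. I would expect the finite-type hypothesis to enter precisely through the statement that for a topologically finite type flat $\cO$-algebra $A$, one has $A^\circ = A^+$ (equivalently $A$ integrally closed in $A[\tfrac1p]$ up to the standing reducedness), and that this property propagates through finite étale covers used to build $Y_{\proet}$.
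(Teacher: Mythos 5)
Your reduction of the lemma to the claim that $S_\infty/p^n$ has no nonzero $\fm$-torsion is correct and is in fact how the paper concludes (it is the contrapositive of Remark \ref{rem:quotvan}, using $\fm T^{\circ}\subseteq T^{\circ\circ}\subseteq S_\infty$), and your use of the finite-type hypothesis to get $S_i^{+}=S_i^{\circ}$ via Huber is also the paper's first step. The genuine gap is that you never prove this torsion-freeness, and both routes you sketch for it fail. Applying Lemma \ref{lem:nonoto} ``to each $S_i[\tfrac{1}{p}]$'' is not legitimate: that lemma concerns perfectoid $C$-algebras, whereas the $S_i$ are topologically of finite type rigid affinoid algebras, not perfectoid. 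The ``more direct'' route --- that $S_\infty/p^n$ injects into $T^{\circ}/p^n$ --- is circular: injectivity for all $n$ is equivalent to the lemma itself (if $y\in T^{\circ}$ with $p^n y\in S_\infty$, injectivity forces $p^n y\in p^n S_\infty$, hence $y\in S_\infty$; conversely the lemma gives $S_\infty=T^{\circ}$), so it cannot serve as an input.

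What is actually needed, and what the paper supplies, is an argument at each finite level together with control of the colimit. Since the $S_i$ need not be reduced, one first passes to $S_{i,\mathrm{red}}$ and shows $S_i^{\circ}\to S_{i,\mathrm{red}}^{\circ}$ is surjective (Lemma \ref{lem: surjintredri}), so that $S_\infty$ is also the $p$-adic completion of $\varinjlim_i S_{i,\mathrm{red}}^{\circ}$. The finite-type hypothesis then enters far more substantially than through ``$S_i^{+}=S_i^{\circ}$'': by Grauert--Remmert, $S_{i,\mathrm{red}}^{\circ}$ is topologically of finite type over $\cO$, hence by Raynaud--Gruson topologically of finite presentation, hence by the argument of \cite[Lemma 8.10]{BhMorSch} it is the $p$-adic completion of a free $\cO$-module; this is what kills the potential $\fm$-torsion in $S_{i,\mathrm{red}}^{\circ}/x$ at finite level. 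Moreover, absence of nonzero $\fm$-torsion (i.e.\ of almost-zero elements) does not pass to a filtered colimit unless the transition maps are injective, so one also needs the surjectivity of $\Spa(S_{j,\mathrm{red}},S_{j,\mathrm{red}}^{\circ})\to\Spa(S_{i,\mathrm{red}},S_{i,\mathrm{red}}^{\circ})$, which yields injectivity of $S_{i,\mathrm{red}}^{\circ}/x\to S_{j,\mathrm{red}}^{\circ}/x$, before completing. None of these steps appear in your proposal, and the ``main obstacle'' you anticipate (propagating $S_i^{+}=S_i^{\circ}$ through the tower) is not where the real difficulty lies.
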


\begin{proof}
Since $Y$ is locally of finite type over $\Spa (C, \cO)$, it follows that $\Spa(S_i, S_i^{+})$ is topologically of finite type over $\Spa (C, \cO)$ and so by \cite[Lemma 4.4]{Hugfs} $S_i^{+} = S_i^{\circ}$.  The idea of the proof is to modify the terms in the colimit $\varinjlim_{i \in I} S_i^{\circ}$, so that the projection morphisms $S_i^{\circ} \to S_{\infty}$ become well behaved. A clue is that $S_{\infty}$ is reduced (as it is a perfectoid ring).

For each $i \in I$, let $S_{i, \text{red}}$ be the quotient of $S_i$ by its nilradical. The quotient morphism $S_i \to S_{i, \text{red}}$ induces a morphism of adic spaces $\Spa(S_{i, \text{red}}, S_{i, \text{red}}^\circ) \to \Spa(S_i, S_i^{\circ})$ which is an isomorphism on the underlying topological spaces (note that $S_{i, \text{red}}^{\circ}$ is the integral closure of $S_i^{\circ}$ in $S_{i, \text{red}}$). Furthermore for $j \geq i$ we claim that the diagram
\[
 \xymatrix{
 \Spa(S_{j, \text{red}}, S_{j, \text{red}}^\circ) \ar@{->}[r] \ar@{->}[d] & \Spa(S_j, S_j^{\circ}) \ar@{->}[d]   \\ 
 \Spa(S_{i, \text{red}}, S_{i, \text{red}}^\circ) \ar@{->}[r] & \Spa(S_i, S_i^{\circ})
 }
\]
is cartesian: indeed the ring morphism $S_i \to S_j$ is finite étale and so $S_{j, \text{red}} = S_{i, \text{red}} \otimes_{S_i} S_j$. Moreover, again by Lemma 4.4 in loc.cit., $S_{j, \text{red}}^{\circ}$ is the integral closure of $S_{i, \text{red}}^{\circ} \otimes_{S_i^{\circ}} S_j^{\circ}$ in $S_{j, \text{red}}$. In particular the morphism $\Spa(S_{j, \text{red}}, S_{j, \text{red}}^\circ) \to \Spa(S_{i, \text{red}}, S_{i, \text{red}}^\circ)$ is surjective. This implies that for each $n \geq 1$, $S_{i, \text{red}}^{\circ}/p^n \to S_{j, \text{red}}^{\circ}/p^n$ is injective and therefore after taking limits, the same is true for $S_{i, \text{red}}^{\circ} \to S_{j, \text{red}}^{\circ}$ (for each $i \in I$, $S_{i, \text{red}}^{\circ}$ is reduced and so by \cite[\S 6.2.4, Theorem 1]{BGR}, $S_{i, \text{red}}^{\circ}$ is bounded and in particular a ring of definition for $S_{i, \text{red}}$ equipped with the $p$-adic topology).

The morphisms $S_i \to S_{\infty}[\tfrac{1}{p}]$ factor through $S_{i, \text{red}}$ (as $S_{\infty}[\tfrac{1}{p}]$ is a perfectoid $C$-algebra and in particular reduced). Therefore $S_i^{\circ} \to S_{\infty}$ factors through $S_{i, \text{red}}^{\circ}$. We will need to compare $\varinjlim_{i \in I} S_i^{\circ}$ and $\varinjlim_{i \in I} S_{i, \text{red}}^{\circ}$. For this we show:

\begin{lem} \label{lem: surjintredri}
The ring morphism $S_i^{\circ} \to S_{i, \text{red}}^{\circ}$ is surjective.
\end{lem}

\begin{proof}
Recall that the morphism $\Spa(S_{i, \text{red}}, S_{i, \text{red}}^\circ) \to \Spa(S_i, S_i^{\circ})$ is an isomorphism on topological spaces.
The morphism $S_i \to S_{i, \text{red}}$ is surjective. For $s \in S_i \backslash S_i^{\circ}$, there exists $x \in \Spa(S_i, S_i^{\circ})$ such that $\lvert x(s) \lvert > 1$. The preimage $x' \in \Spa(S_{i, \text{red}}, S_{i, \text{red}}^\circ)$ of $x$ has the property that $\lvert x'(s') \lvert = \lvert x(s) \lvert >1$, where $s' \in S_{i, \text{red}}$ the image of $s$. Therefore $s' \in S_{i, \text{red}} \backslash S_{i, \text{red}}^{\circ}$ and the induced morphism $S_i^{\circ} \to S_{i, \text{red}}^{\circ}$ is surjective.
\end{proof}

By Lemma \ref{lem: surjintredri}, it follows that $S_{\infty}$ coincides with the $p$-adic completion of $\varinjlim_{i \in I} S_{i, \text{red}}^{\circ}$: indeed we obtain a surjection $\varinjlim_{i \in I} S_i^{\circ} \to \varinjlim_{i \in I} S_{i, \text{red}}^{\circ}$ which induces a surjection on the $p$-adic completions (cf. \cite[{Tag 0315}]{stacks-project}). Finally the composition $(\varinjlim_{i \in I} S_i^{\circ})^{\wedge} \to (\varinjlim_{i \in I} S_{i, \text{red}}^{\circ})^{\wedge} \to S_{\infty}$ is an isomorphism.

Since $S_{i, \text{red}}^{\circ}$ is a subring of $S_{i, \text{red}}$ (a $C$-algebra), it follows that $S_{i, \text{red}}^{\circ}$ is torsionfree as an $\cO$-module and hence flat. Moreover by the finiteness theorem of Grauert-Remmert (cf. \cite[\S 6.4.1, Corollary 5]{BGR}), it follows that $S_{i, \text{red}}^{\circ}$ is topologically of finite type over $\cO$ (cf. \cite[Theorem 9.4]{SchpHrig}). By a flattening theorem of Raynaud-Gruson, flatness together with topologically of finite type implies topologically of finite presentation (cf. \cite[\S 7.3, Corollary 5]{Bosch}) and so $S_{i, \text{red}}^{\circ}$ is topologically of finite presentation over $\cO$.

Therefore by \cite[Lemma 8.10]{BhMorSch}\footnote{Technically the result in loc.cit. assumes that $S_{i, \text{red}}^{\circ}$ is $p$-completely smooth $\cO$-algebra, however the same argument works under the slightly weaker assumptions: 
\begin{enumerate}
    \item $S_{i, \text{red}}^{\circ}/p$ is finitely presented over $\cO/p$ and 
    \item $S_{i, \text{red}}^{\circ}$ is flat over $\cO$.
\end{enumerate}\label{fnote}}, $S_{i, \text{red}}^{\circ}$ is the $p$-adic completion of a free $\cO$-module. In particular for each $0 \not= x \in \fm$, the $\cO$-modules $S_{i, \text{red}}^{\circ}/x$ have no nonzero $\fm$-torsion. The surjectivity of $\Spa(S_{j, \text{red}}, S_{j, \text{red}}^\circ) \to \Spa(S_{i, \text{red}}, S_{i, \text{red}}^\circ)$ implies $S_{i, \text{red}}^{\circ}/x \to S_{j, \text{red}}^{\circ}/x$ is injective. It follows that the direct limit $\varinjlim_{i \in I} S_{i, \text{red}}^{\circ}/x$ has no nonzero $\fm$-torsion. Therefore $S_{\infty}/x$ has no nonzero $\fm$-torsion. The result now follows from Remark \ref{rem:quotvan}.
\end{proof}

\begin{lem} \label{lem:facetalmor}
Suppose $\fV \to \Spf(S)$ is a smooth morphism of $p$-adic formal schemes over $\cO$. Then locally for the Zariski topology on $\fV$, the morphism factors as \[
\Spf(R) \to \Spf(S \{t_1^{\pm 1}, \ldots, t_d^{\pm 1} \}) \to \Spf(S),
\]
where the first morphism is étale (for some $d \geq 0$). 
\end{lem}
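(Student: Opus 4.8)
The plan is to first invoke the classical fact that a smooth morphism admits étale coordinates into relative affine space, and then to perturb those coordinates so that they land in the relative torus, which is an open formal subscheme of $\fP := \Spf(S\{t_1,\dots,t_d\})$.

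Since the assertion is Zariski-local on $\fV$, I would shrink $\fV$ to an affine open $\Spf(R)$ over which the module of continuous Kähler differentials $\Omega^1_{R/S}$ is free; this is legitimate because smoothness forces $\Omega^1_{\fV/\Spf(S)}$ to be finite locally free, of some rank $d \ge 0$ near a chosen point $v \in \fV$. After shrinking further by Nakayama, I pick $f_1,\dots,f_d \in R$ with $df_1,\dots,df_d$ an $R$-basis of $\Omega^1_{R/S}$, and let $g \colon \Spf(R) \to \fP$ be the resulting $\Spf(S)$-morphism with $t_i \mapsto f_i$. Then $g$ is étale: both source and target are smooth over $\Spf(S)$, and by construction the pullback map $g^{*}\Omega^1_{\fP/\Spf(S)} \to \Omega^1_{R/S}$ (sending $dt_i \mapsto df_i$) is an isomorphism. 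One checks this by reducing modulo $p^n$, where $g$ becomes a morphism $\Spec(R/p^n) \to \bA^d_{S/p^n}$ of smooth $S/p^n$-schemes inducing an isomorphism on relative differentials, hence étale by the standard criterion; since $g$ is topologically of finite presentation it is therefore étale. (Equivalently: the relative differentials of $g$ vanish, so $g$ is formally unramified, and it is flat because it is so modulo $p$, hence formally étale.)

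It remains to pass from $\fP$ to $\fG := \Spf(S\{t_1^{\pm 1},\dots,t_d^{\pm 1}\})$, which is the open formal subscheme of $\fP$ on which $t_1\cdots t_d$ is invertible (its underlying space is the locus $D(t_1\cdots t_d)$ in the mod-$p$ reduction of $\fP$, and there the structure sheaf is the $p$-adic completion of $S\{t_1,\dots,t_d\}[(t_1\cdots t_d)^{-1}]$, which equals $S\{t_1^{\pm 1},\dots,t_d^{\pm 1}\}$). For $g$ to factor through $\fG$ near $v$, I first arrange $f_i(v) \ne 0$ in the residue field $\kappa(v)$ for every $i$: whenever $f_i(v) = 0$, replace $f_i$ by $f_i - 1$, which does not change $df_i$ because $1 \in S$, and satisfies $(f_i-1)(v) = -1 \ne 0$ in the field $\kappa(v)$ (of characteristic $p$). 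Then the locus in $\Spf(R)$ where $f_1\cdots f_d$ is invertible is a Zariski-open $\Spf(R')$ containing $v$, over which each $f_i$ is a unit, so $g|_{\Spf(R')}$ factors as $\Spf(R') \to \fG \hookrightarrow \fP$. Since the composite $\Spf(R') \hookrightarrow \Spf(R) \xrightarrow{g} \fP$ is étale and $\fG \hookrightarrow \fP$ is an open immersion, the induced morphism $\Spf(R') \to \fG$ is étale; this is exactly the desired factorization $\Spf(R') \to \Spf(S\{t_1^{\pm 1},\dots,t_d^{\pm 1}\}) \to \Spf(S)$.

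The only substantive ingredient is the existence of étale coordinates for a smooth morphism, which is entirely classical; the perturbation step and the identification of $\fG$ as an open formal subscheme of $\fP$ to which étaleness restricts are routine. Accordingly I do not expect a serious obstacle — the care needed is purely in the bookkeeping with $p$-adic completions and in checking that étaleness descends along an open immersion.
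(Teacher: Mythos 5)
Your argument is correct and is essentially the same as the paper's, which simply cites \cite[Lemma 4.9]{Bhatspecvar}: that argument is exactly the one you wrote out (choose $f_1,\dots,f_d$ whose differentials trivialize $\Omega^1_{R/S}$, note the resulting map to $\Spf(S\{t_1,\dots,t_d\})$ is étale since both sides are smooth over $S$ and the map on differentials is an isomorphism, then perturb by units so the coordinates become invertible and the map factors through the torus on a Zariski neighborhood). The only difference is cosmetic: the paper additionally remarks that for $S=\cO$ the map can even be arranged finite étale, which the stated lemma does not require.
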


\begin{proof}
This is the essentially the same argument as \cite[Lemma 4.9]{Bhatspecvar} (in the case $S = \cO$ the argument in loc.cit. shows that it can be arranged so that the first morphism $\Spf(R) \to \Spf(S \{t_1^{\pm 1}, \ldots, t_d^{\pm 1} \})$ is finite étale).
\end{proof}

\begin{prop} \label{prop:goodobjformbas}

\begin{enumerate}
\item The set of $Z \in D$ which are good form a basis for $D$.
\item If in addition to (1), $Y$ is locally of finite type over $\Spa (C, \cO)$, then a good object is very good. 
\end{enumerate}
\end{prop}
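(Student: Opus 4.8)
The plan is to bootstrap from Proposition \ref{prop:basisDnosmor}, which already exhibits a basis consisting of objects of the shape $(\varprojlim_{i\in I}\Spa(S_i,S_i^+)\to\Spf(S)\leftarrow\Spf(R))$ with $W=\varprojlim_{i\in I}\Spa(S_i,S_i^+)\in Y_{\proet}$ affinoid perfectoid. Since ``\emph{good}'' imposes the extra structure (2) of Definition \ref{def:goodtriples} only on the $\fX_{\et}$-component $\fV=\Spf(R)$ and leaves the pro-étale component $W$ untouched, it suffices to refine each such object by a covering of type (b) consisting of good objects.

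First I would observe that $\Spf(R)\to\Spf(S)$ is smooth. Indeed, $\fV=\Spf(R)\to\fX$ and $\fU=\Spf(S)\to\fY$ are étale, and the commutativity of the right-hand square of the object furnishes a morphism $\fV\to\fU\times_{\fY}\fX$; as both $\fV\to\fX$ and $\fU\times_{\fY}\fX\to\fX$ are étale, the cancellation property forces $\fV\to\fU\times_{\fY}\fX$ to be étale, and composing with the base change $\fU\times_{\fY}\fX\to\fU$ of the smooth morphism $f\colon\fX\to\fY$ shows $\Spf(R)\to\Spf(S)$ is smooth. Now Lemma \ref{lem:facetalmor} applies: Zariski-locally on $\Spf(R)$ the morphism to $\Spf(S)$ factors as an étale morphism to a torus $\Spf(S\{t_1^{\pm1},\dots,t_d^{\pm1}\})$ followed by the projection to $\Spf(S)$. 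Choosing an affine open covering $\{\Spf(R_\lambda)\}$ of $\Spf(R)$ realizing such factorizations, each $\Spf(R_\lambda)$ is an open, hence étale, subscheme of $\Spf(R)$, which is itself étale over $\fX$, so $\{\Spf(R_\lambda)\to\Spf(R)\}$ is a covering in $\fX_{\et}$. Therefore $\{(\varprojlim_{i\in I}\Spa(S_i,S_i^+)\to\Spf(S)\leftarrow\Spf(R_\lambda))\to(\varprojlim_{i\in I}\Spa(S_i,S_i^+)\to\Spf(S)\leftarrow\Spf(R))\}$ is a covering of type (b), and each member satisfies conditions (1) and (2) of Definition \ref{def:goodtriples} by construction, hence is good. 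Composing with the coverings supplied by Proposition \ref{prop:basisDnosmor} proves (1).

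For (2), let $Z=(\varprojlim_{i\in I}\Spa(S_i,S_i^+)\to\Spf(S)\leftarrow\Spf(R))$ be a good object, so that $W=\varprojlim_{i\in I}\Spa(S_i,S_i^+)\in Y_{\proet}$ is affinoid perfectoid. When $Y$ is locally of finite type over $\Spa(C,\cO)$, Lemma \ref{lem:rigspacirplus} applies verbatim to $W$ and yields $(S_\infty[\tfrac{1}{p}])^\circ=S_\infty$, which is precisely the additional condition making $Z$ very good.

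I expect the only genuinely delicate point to be the smoothness of $\Spf(R)\to\Spf(S)$, i.e.\ combining the cancellation property for étale morphisms of formal schemes of type (S)(b) with the stability of smoothness under base change of $f$; everything else is bookkeeping, since the substantive analytic input (controlling powerbounded elements in an affinoid perfectoid over a rigid space) has already been isolated in Lemma \ref{lem:rigspacirplus}, and the existence of torus charts in Lemma \ref{lem:facetalmor}.
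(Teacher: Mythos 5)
Your proposal is correct and follows essentially the same route as the paper: part (1) is Proposition \ref{prop:basisDnosmor} refined by type (b) coverings via the torus charts of Lemma \ref{lem:facetalmor}, and part (2) is exactly Lemma \ref{lem:rigspacirplus}. The only difference is that you spell out the smoothness of $\Spf(R)\to\Spf(S)$ (via cancellation for étale maps and base change of $f$), a step the paper leaves implicit, and your argument for it is fine.
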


\begin{proof}
 The first statement follows from Proposition \ref{prop:basisDnosmor} together with Lemma \ref{lem:facetalmor} (by using coverings of type (b)). Finally the last statement follows from Lemma \ref{lem:rigspacirplus}.
\end{proof}

\subsection{The object $A\Omega_{\fX/\fY}$}

In this section, let $\fX$ and $\fY$ be any flat $p$-adic formal schemes of type (S)(b) over $\cO$, as in \cite[\S 1.9]{HubEtadic}. Let $f \colon \fX \to \fY$ be any $\Spf(\cO)$-morphism. Considering $\nu_{f}$ and $p$ as the following morphism of ringed topoi
\[
(\nu_f, \nu_f^{\sharp}) \colon (X_{\proet},A_{\Inf}) \rightarrow (Y_{ \proet}\times_{\fY_{\et}}\fX_{\et},A_{\Inf})
\text{ and }
(p, p^{\sharp}) \colon (Y_{ \proet}\times_{\fY_{\et}}\fX_{\et}, A_{\Inf}) \rightarrow (Y_{\proet}, A_{\Inf}),
\]
respectively, we make the following definition. 

\begin{defn} \label{def:relAOmegacomplex}
The relative complex $A\Omega_{\fX / \fY} \in D(Y_{ \proet}\times_{\fY_{\et}}\fX_{\et}, A_{\Inf})$ is defined by
\[
A\Omega_{\fX / \fY} := L\eta_{\mu}(R\nu_{f*}\bA_{\Inf,X}). 
\]
In addition we set $R\Gamma_{A_{\Inf}}(\fX/\fY) := Rp_{*}A\Omega_{\fX / \fY}$, an object living in $D(Y_{ \proet}, A_{\Inf})$. We call $R\Gamma_{A_{\Inf}}(\fX/\fY)$ the (relative) $A_{\Inf}$-cohomology.
\end{defn}

We will sometimes need to \emph{upgrade} Definition \ref{def:relAOmegacomplex} and consider $A\Omega_{\fX/\fY}$ as an object over the sheaf of Witt vectors. To make this precise we now consider $\nu_f$ and $p$ as the following morphism of ringed topoi 
\[
(\nu_f^{W}, \nu_f^{W\sharp}) \colon (X_{ \proet}, W(\wh{\cO}^{+, \flat}_X)) \to (Y_{ \proet}\times_{\fY_{\et}}\fX_{\et}, p^{-1}W(\wh{\cO}^{+, \flat}_Y))
\]
and 
\[
(p^W, p^{W\sharp}) \colon (Y_{ \proet}\times_{\fY_{\et}}\fX_{\et}, p^{-1}W(\wh{\cO}^{+, \flat}_Y)) \rightarrow (Y_{\proet}, W(\wh{\cO}^{+, \flat}_Y)),
\]
and we make the following definition (note that the ideal sheaf $p^{-1}((\mu)) \subset p^{-1}W(\wh{\cO}^{+, \flat}_Y)$ is invertible).

\begin{defn} \label{defn:WversofAInf}
By considering $p$-adic derived completion taking place in the ringed site $(X_{\proet}, W(\wh{\cO}^{+, \flat}_X))$ we can
consider $\bA_{\Inf,X}$ as an object of $D(X_{\proet}, W(\wh{\cO}^{+, \flat}_X))$, in which case we denote it by $\bA_{\Inf,X}^{W}$. The relative complex $A\Omega_{\fX / \fY}^{W} \in D(D, p^{-1}W(\wh{\cO}^{+, \flat}_Y))$ is defined by
\[
A\Omega_{\fX / \fY}^{W} := L\eta_{p^{-1}((\mu))}(R\nu^{W}_{f*}\bA_{\Inf,X}^{W}). 
\]
In addition we set $R\Gamma^{W}_{A_{\Inf}}(\fX/\fY) := Rp^{W}_{*}A\Omega_{\fX / \fY}^{W}$, an object living in $D(Y_{ \proet}, W(\wh{\cO}^{+, \flat}_Y))$.
\end{defn}

We will now check that $A\Omega_{\fX/\fY}$ and $R\Gamma_{A_{\Inf}}(\fX/\fY)$ are compatible with their $W$-variants (after which we will drop the superscript $W$ from the notation when context is clear). 
Let 
\[
(r,r^{\sharp}) \colon (Y_{ \proet}\times_{\fY_{\et}}\fX_{\et}, p^{-1}W(\wh{\cO}^{+, \flat}_Y)) \to (Y_{ \proet}\times_{\fY_{\et}}\fX_{\et}, A_{\Inf})
\]
and 
\[
s \colon (X_{ \proet}, W(\wh{\cO}^{+, \flat}_X)) \to (X_{\proet},A_{\Inf})
\]
be the natural morphisms of ringed topoi. 

\begin{lem} \label{lem:quasiisormAOmegW}
There is a natural quasi-isomorphism $A\Omega_{\fX/\fY}  \cong Rr_{*}A\Omega_{\fX/\fY}^{W}$.
\end{lem}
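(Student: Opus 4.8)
The plan is to split the claimed isomorphism into three compatibilities with restriction of scalars along $A_{\Inf}\to p^{-1}W(\wh{\cO}^{+,\flat}_Y)$ — one for $R\nu_{f*}$, one for derived $p$-adic completion, and one for the décalage functor $L\eta_\mu$ — and then to chain them together.

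First I would observe that both
\[
(r,r^{\sharp})\colon (D,\,p^{-1}W(\wh{\cO}^{+,\flat}_Y))\to (D,\,A_{\Inf})
\qquad\text{and}\qquad
(s,s^{\sharp})\colon (X_{\proet},\,W(\wh{\cO}^{+,\flat}_X))\to (X_{\proet},\,A_{\Inf})
\]
are morphisms of ringed topoi whose underlying morphism of topoi is the identity; hence $r_*$ and $s_*$ are nothing but restriction of scalars. In particular they are exact, so $Rr_*=r_*$ and $Rs_*=s_*$, and neither changes the underlying complex of abelian sheaves. Since derived $p$-adic completion depends only on the underlying complex of abelian sheaves, this already gives $s_*\bA_{\Inf,X}^{W}=\bA_{\Inf,X}$ in $D(X_{\proet},A_{\Inf})$.

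Next, the square of ringed topoi
\[
\xymatrix{
(X_{\proet},\, W(\wh{\cO}^{+, \flat}_X)) \ar[r]^-{\nu_f^{W}} \ar[d]_-{s} & (D,\, p^{-1}W(\wh{\cO}^{+, \flat}_Y)) \ar[d]^-{r} \\
(X_{\proet},\, A_{\Inf}) \ar[r]^-{\nu_f} & (D,\, A_{\Inf})
}
\]
commutes, and its two horizontal arrows have the same underlying morphism of topoi $\nu_f\colon X_{\proet}\to D$. As $Rg_*$ along a morphism of ringed topoi is compatible with restriction of scalars (it is detected at the level of underlying abelian sheaves), one obtains a natural isomorphism $r_*R\nu^{W}_{f*}\cong R\nu_{f*}\,s_*$; combined with the previous paragraph,
\[
r_*\bigl(R\nu^{W}_{f*}\bA_{\Inf,X}^{W}\bigr)\;\cong\;R\nu_{f*}\bigl(s_*\bA_{\Inf,X}^{W}\bigr)\;=\;R\nu_{f*}\bA_{\Inf,X}\quad\text{in }D(D,A_{\Inf}).
\]

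It remains to check that $L\eta$ is compatible with $r_*$, i.e. $r_*L\eta_{p^{-1}((\mu))}\cong L\eta_{\mu}\,r_*$. Both $(\mu)\subset A_{\Inf}$ and $p^{-1}((\mu))\subset p^{-1}W(\wh{\cO}^{+,\flat}_Y)$ are invertible ideals (Definition \ref{defn:WversofAInf}), so, following \cite[\S 6]{BhMorSch}, $L\eta_{p^{-1}((\mu))}\bigl(R\nu^{W}_{f*}\bA_{\Inf,X}^{W}\bigr)$ is computed by picking a suitable representative $C^{\bullet}$ by $p^{-1}W(\wh{\cO}^{+,\flat}_Y)$-modules on which multiplication by $\mu$ is injective and forming the décalage subcomplex $(\eta_{\mu}C)^{i}=\{x\in\mu^{i}C^{i}:dx\in\mu^{i+1}C^{i+1}\}\subseteq C^{i}$. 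Injectivity of multiplication by $\mu$ and the subsheaf cut out by this formula are properties of the underlying abelian sheaves, so $r_*C^{\bullet}$ is again a suitable representative over $A_{\Inf}$ and $\eta_{\mu}(r_*C^{\bullet})=r_*(\eta_{\mu}C^{\bullet})$ computes $L\eta_{\mu}\bigl(r_*R\nu^{W}_{f*}\bA_{\Inf,X}^{W}\bigr)$ (alternatively, one invokes the compatibility of $L\eta$ with restriction of scalars directly from \cite[\S 6]{BhMorSch}). Chaining the three isomorphisms,
\[
Rr_*A\Omega_{\fX/\fY}^{W}=r_*L\eta_{p^{-1}((\mu))}R\nu^{W}_{f*}\bA_{\Inf,X}^{W}\cong L\eta_{\mu}\,r_*R\nu^{W}_{f*}\bA_{\Inf,X}^{W}\cong L\eta_{\mu}R\nu_{f*}\bA_{\Inf,X}=A\Omega_{\fX/\fY},
\]
and all the isomorphisms are functorial. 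I expect the only point demanding genuine care to be this last compatibility of $L\eta_{\mu}$ with restriction of scalars — i.e. arranging a representative of $R\nu^{W}_{f*}\bA_{\Inf,X}^{W}$ on which $\mu$ acts injectively and which is adapted to the $L\eta$-formalism of \cite[\S 6]{BhMorSch} — though this is really bookkeeping rather than a serious obstacle.
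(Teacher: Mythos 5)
Your proposal is correct and follows essentially the same route as the paper: identify $Rs_*\bA_{\Inf,X}^{W}=\bA_{\Inf,X}$ (since derived $p$-adic completion ignores the module structure), use $\nu_f\circ s=r\circ\nu_f^{W}$ together with exactness of $r_*$ (identity on underlying topoi) to get $Rr_*R\nu^{W}_{f*}\bA_{\Inf,X}^{W}\cong R\nu_{f*}\bA_{\Inf,X}$, and then check $r_*\eta_{p^{-1}((\mu))}=\eta_{\mu}r_*$ on a $\mu$-torsion-free representative exactly as in the paper's proof. No gaps.
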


\begin{proof}
Note that $Rs_{*}\bA_{\Inf,X}^{W} = \bA_{\Inf,X}$ (as $p$-adic derived completion commutes with forgetting $W$) and since $\nu_f \circ s = r \circ v_{f}^{W}$ we obtain $R\nu_{f*}\bA_{\Inf,X} = Rr_{*}R\nu_{f*}^{W}\bA_{\Inf,X}^{W}$. Therefore $A\Omega_{\fX/\fY} = L\eta_{\mu}Rr_{*}R\nu_{f*}^{W}\bA_{\Inf,X}^{W}$. Pick a  $p^{-1}((\mu))$-torsion-free resolution $C^{\bullet}$ of $R\nu_{f*}^{W}\bA_{\Inf,X}^{W}$ in the sense of \cite[pg. 288]{BhMorSch}. Since $r_*$ is an exact functor (because it is identity at the level of topoi), it follows that $r_{*}C^{\bullet}$ is a $(\mu)$-torsion free complex representing $Rr_{*}C^{\bullet}$. Thus $L\eta_{\mu}Rr_{*}C^{\bullet}$ is represented by the complex $\eta_{\mu}r_{*}C^{\bullet}$. Finally note that $\eta_{\mu}r_{*} = r_{*}\eta_{p^{-1}((\mu))}$.
\end{proof}

In the case $f$ is smooth and $\fY = \Spf(\cO)$, we can compare our objects to those of \cite{BhMorSch}. We delay this to Section \ref{sec:TheHodhTaspe} after the local computations have been made, cf. Proposition \ref{prop:compaBMSvsours}.

\subsection{Almost $\cO$-sheaves}\label{subsec:almostOsheav}
In this section, we will recall the notion of $\alpha$-sheaves (or almost sheaves) as introduced in \cite[\S 2.6]{AbbGors}. There is also recent work of \cite{ZavBog}. Throughout we will fix some site $\sC$ and employ the notation in \cite{GabRam}. The following is an instance of \cite[Définition 2.6.16]{AbbGors}. 

\begin{defn}
We call the category of \emph{presheaves of} $\cO^{a}$-\emph{modules} (\emph{or almost $\cO$-presheaves}) on $\sC$ to be the category of presheaves on $\sC$ whose values are in the category $\cO^{a}$. We denote this category by $\widehat{\sC}^{a}$. We call an isomorphism in $\widehat{\sC}^{a}$ an almost isomorphism.  
\end{defn}

Armed with the notion of an almost $\cO$-presheaf, we now define almost $\cO$-sheaves, cf. Définition 2.6.24 in loc.cit.

\begin{defn}
An object $\sF$ in $\widehat{\sC}^{a}$ is called an \emph{almost $\cO$-sheaf} if for each object $X$ of $\sC$ and for each covering family $\{U_i\}_i$ of $X$ the canonical morphism
\[
\sF(X) \to \varprojlim_{V \to U_i} \sF(V)
\]
is an isomorphism.  We denote $\wt{\sC}^{a}$ the full subcategory of $\widehat{\sC}^{a}$ formed by the almost $\cO$-sheaves and $\iota \colon \wt{\sC}^{a} \to \widehat{\sC}^{a}$, the natural inclusion.
\end{defn}
There is a natural sheafification functor, cf. Proposition 2.6.31 and Proposition 2.6.33 in loc.cit.

\begin{prop}
The inclusion $\iota \colon \wt{\sC}^{a} \to \widehat{\sC}^{a}$ admits a left-adjoint
\[
\overline{a} \colon  \widehat{\sC}^{a} \to \wt{\sC}^{a}
\]
and is exact.
\end{prop}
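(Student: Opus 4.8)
The plan is to reduce the statement about almost $\cO$-sheaves to the classical construction of sheafification for presheaves of modules over a ring, exploiting the fact that the category $\cO^a = \cO\text{-}\mathrm{Mod}^a$ of almost $\cO$-modules is a reflective (in fact localizing) subcategory of $\cO\text{-}\mathrm{Mod}$, and that it is a Grothendieck abelian category closed under all (small) limits and colimits. Concretely, write $(-)_! \colon \cO^a \to \cO\text{-}\mathrm{Mod}$ and $(-)_* \colon \cO^a \to \cO\text{-}\mathrm{Mod}$ for the left and right adjoints of the localization functor $M \mapsto M^a$, as in the conventions of \cite{GabRam}. These induce, levelwise, functors between the corresponding presheaf categories $\widehat{\sC}$ (honest $\cO$-module presheaves) and $\widehat{\sC}^{a}$, and likewise between $\wt{\sC}$ and $\wt{\sC}^{a}$; the key compatibility is that a presheaf $\sF$ in $\widehat{\sC}^{a}$ is an almost $\cO$-sheaf if and only if $\iota_* \sF$ (apply $(-)_*$ objectwise) is a sheaf of $\cO$-modules, since $(-)_*$ commutes with the limits $\varprojlim_{V \to U_i}$ appearing in the sheaf axiom and is conservative, reflecting isomorphisms.

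First I would record these functorial facts: $(-)^a$ is exact and commutes with all colimits; $(-)_*$ commutes with all limits and is fully faithful and conservative; $(-)_!$ is exact. Then I would define $\overline{a}$ by the formula $\overline{a}(\sF) := \bigl( a(\iota_* \sF) \bigr)^{a}$, where $a \colon \widehat{\sC} \to \wt{\sC}$ is the ordinary sheafification of presheaves of $\cO$-modules (which exists and is exact — this is standard, e.g. \cite[Tag 00W1]{stacks-project}), and $\iota_*$ denotes applying $(-)_*$ objectwise to pass from an almost $\cO$-presheaf to an honest $\cO$-module presheaf. Equivalently, and more transparently, one can use the usual ``plus construction'' $\sF^{+}(X) = \varinjlim_{\{U_i \to X\}} \check H^{0}(\{U_i\}, \sF)$ performed directly in $\cO^a$ — this makes sense because $\cO^a$ has all small limits and filtered colimits, and filtered colimits in $\cO^a$ are computed via $(-)^a$ applied to filtered colimits in $\cO\text{-}\mathrm{Mod}$, hence are exact — and iterate it twice as in the classical argument. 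Either way, the adjunction $\Hom_{\wt{\sC}^{a}}(\overline{a}\sF, \sG) \cong \Hom_{\widehat{\sC}^{a}}(\sF, \iota \sG)$ follows formally from the adjunction for $a$ together with the $((-)^a, (-)_*)$ adjunction and the observation that for $\sG$ an almost $\cO$-sheaf one has $\sG \cong (\iota_* \sG)^a$ with $\iota_*\sG$ a genuine sheaf.

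For exactness of $\overline{a}$: since $a$ is exact on $\widehat{\sC}$, $(-)^a$ is exact on $\cO^a$-valued (pre)sheaves, it suffices to know that $\iota_* \colon \widehat{\sC}^a \to \widehat{\sC}$ is exact — but $(-)_*$ is only left exact in general, so here I would instead argue with the plus-construction directly in $\cO^a$: finite limits and filtered colimits in $\cO^a$ are both exact (the former because $\cO^a$ is abelian, the latter by the colimit description above), so the functor $\sF \mapsto \sF^{+}$ is exact on $\widehat{\sC}^a$, and $\overline{a} = (-)^{++}$ is a composite of exact functors. I expect the main obstacle to be precisely this point — managing exactness while moving between $\cO$-modules and $\cO^a$-modules, since $(-)_*$ fails to be right exact — and the cleanest resolution is to avoid $(-)_*$ in the definition of $\overline{a}$ altogether and run the classical double-plus construction internally to the Grothendieck abelian category $\cO^a$, citing \cite{GabRam} (resp. \cite[\S 2.6]{AbbGors}) for the fact that $\cO^a$ has exact filtered colimits. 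The remaining verifications (that $\overline{a}\sF$ is actually a sheaf, that the canonical map $\sF \to \iota\,\overline{a}\,\sF$ is universal) are then word-for-word the usual ones.
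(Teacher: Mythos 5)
The paper does not actually prove this proposition: it is quoted from \cite[Propositions 2.6.31 et 2.6.33]{AbbGors}, so your argument has to stand on its own. Your setup (the equivalence ``$\sF$ is an almost sheaf iff the objectwise $(-)_*$ of $\sF$ is a sheaf'', exactness of $(-)^a$ and of filtered colimits in $\cO^a$) is fine, but your final argument has a genuine gap at the step you dismiss as ``word-for-word the usual ones''. The classical proofs that $\sF^{+}$ is separated, that $\sF^{++}$ is a sheaf, and that $\sF \to \iota\overline{a}\sF$ is universal are element-theoretic: they repeatedly use that a section of a filtered colimit is represented at some finite stage, i.e.\ that the underlying-set functor commutes with filtered colimits. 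In $\cO^a$ the only candidate, $M \mapsto M_* = \Hom_{\cO^a}(\cO^a, M)$, does \emph{not} commute with filtered colimits (since $\fm$ is not finitely generated), and the sites relevant here ($Y_{\proet}$, the fiber product site $D$, \dots) have infinite covering families, so the \v{C}ech $H^0$'s are equalizers of \emph{infinite} products, with which filtered colimits in $\cO^a$ do not commute. Hence ``finite limits and filtered colimits are exact in $\cO^a$'' is not enough to run the plus-construction argument internally; without a further idea it is not even clear that two iterations of $(-)^{+}$ suffice, nor that $\overline{a}\sF$ is a sheaf.

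Ironically, the route you abandon is the one that works, and it is essentially the argument behind the citation. Define $\overline{a}(\sF) := \wt{\alpha}\bigl(a(\sF_{!})\bigr)$, applying the exact, colimit-preserving left adjoint $(-)_{!}$ objectwise (or use $\wt{\alpha}(a(\sF_{*}))$). First, $\wt{\alpha}$ of an honest sheaf is an almost sheaf and, for an almost sheaf $\sG$, the objectwise $(\iota\sG)_{*}$ is an honest sheaf, because $(-)^a$ and $(-)_{*}$ both preserve limits. The adjunction is then formal: $\Hom(\wt{\alpha}(a\sF_{!}),\sG) \cong \Hom(a\sF_{!},(\iota\sG)_{*}) \cong \Hom(\sF_{!},(\iota\sG)_{*}) \cong \Hom(\sF,\iota\sG)$, using $(-)_{!}\dashv(-)^a\dashv(-)_{*}$, the adjunction for $a$, and $((-)_{*})^a \cong \id$. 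Exactness is also clean: $(-)_{!}$ is exact (Gabber--Ramero), $a$ is exact, kernels of (almost) sheaves are computed objectwise, and $\wt{\alpha}$ preserves them; so $\overline{a}$ is left exact, and right exactness is automatic for a left adjoint. Even your $(-)_{*}$-variant is salvageable: for an epimorphism $M \to N$ in $\cO^a$ the cokernel of $M_{*}\to N_{*}$ is almost zero (apply the exact $(-)^a$ to the cokernel sequence), and such defects are killed by $\wt{\alpha}$ after sheafifying. So the concern that made you switch routes was fixable, whereas the replacement argument is the one that needs repair.
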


We will need to compare the categories $\wt{\sC}^{a}$ and $\widehat{\sC}^{a}$ to the usual categories of presheaves and sheaves of $\cO$-modules.

\begin{defn}
We denote by $\Mod_{\cO}(\widehat{\sC})$ (resp. $\Mod_{\cO}(\wt{\sC})$) the category of preheaves (resp. sheaves) in $\cO$-modules on $\sC$ and $i \colon \Mod_{\cO}(\wt{\sC}) \to \Mod_{\cO}(\widehat{\sC})$ the natural inclusion.
\end{defn}

The categories $\Mod_{\cO}(\widehat{\sC})$ (resp. $\Mod_{\cO}(\wt{\sC})$) together with their almost counterparts $\widehat{\sC}^{a}$ (resp. $\wt{\sC}^{a}$) are abelian cf. \cite[{Tag 01AG}]{stacks-project} and \cite[Corollaire 2.6.18, Proposition 2.6.34]{AbbGors}. Thus one can define their associated derived categories. We will denote the derived category of $\wt{\sC}^{a}$ by $D(\sC, \cO^{a})$.

\begin{defn}\label{def:quaisisoder}
We call a morphism $\sF \to \sG$ in $D(\sC, \cO^{a})$ or $D(\widehat{\sC}^{a})$  an \emph{almost isomorphism} if it is a quasi-isomorphism. For $\sF \in D(\sC, \cO)$, let $\sF^{a}$ be its image in $D(\sC, \cO^{a})$.  
\end{defn}

The localization functor
\begin{align*}
    \cO-\text{mod} &\to \cO^{a}-\text{mod}\\
    M &\mapsto M^{a}
\end{align*}
induces the exact functors
\[
\widehat{\alpha} \colon \Mod_{\cO}(\widehat{\sC}) \to \widehat{\sC}^{a} \text{ and } \wt{\alpha} \colon \Mod_{\cO}(\wt{\sC}) \to \wt{\sC}^{a}.
\]

The next proposition says, in particular, that to verify whether a morphism of almost $\cO$-sheaves is an almost isomorphism, it suffices to verify it on the level of presheaves, cf. Proposition 2.6.36 in loc.cit.

\begin{prop} \label{prop:commsqaalmssnosh}
The diagram 
\[
 \xymatrix{
 \Mod_{\cO}(\widehat{\sC}) \ar@{->}[r]^{a} \ar@{->}[d]_{\widehat{\alpha}} & \Mod_{\cO}(\wt{\sC}) \ar@{->}[d]^{\wt{\alpha}}   \\ 
 \widehat{\sC}^{a} \ar@{->}[r]^{\overline{a}} & \wt{\sC}^{a}
 }
\]
is commutative, up to unique isomorphism (where $a \colon \Mod_{\cO}(\widehat{\sC}) \to \Mod_{\cO}(\wt{\sC})$ is the usual sheafification). 
\end{prop}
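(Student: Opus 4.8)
The plan is to realise both composites $\overline{a}\circ\widehat{\alpha}$ and $\wt{\alpha}\circ a$ as left adjoints of \emph{the same} functor, and then conclude by uniqueness of adjoints. Two of the relevant adjunctions are already available: $\overline{a}$ is left adjoint to $\iota$ by the preceding proposition, and ordinary sheafification $a$ is left adjoint to the inclusion $i\colon\Mod_{\cO}(\wt{\sC})\to\Mod_{\cO}(\widehat{\sC})$. What is missing is a right adjoint for the localization functors $\widehat{\alpha},\wt{\alpha}$. For this I would recall from almost mathematics (\cite{GabRam}) that the localization $M\mapsto M^{a}$ on module categories has a right adjoint $N\mapsto N_{*}:=\Hom_{\cO^{a}}(\cO^{a},N)$. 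Applying $(-)_{*}$ objectwise yields a functor $\widehat{\sC}^{a}\to\Mod_{\cO}(\widehat{\sC})$, $\sF\mapsto\bigl(U\mapsto\sF(U)_{*}\bigr)$, and this is right adjoint to $\widehat{\alpha}$ since adjunctions between presheaf categories may be tested objectwise.

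Next I would check the one substantive point: if $\sF$ is an almost $\cO$-sheaf, then the presheaf $U\mapsto\sF(U)_{*}$ is again a genuine sheaf of $\cO$-modules. This holds because $(-)_{*}$, being a right adjoint, commutes with the limits $\sF(U)\xrightarrow{\sim}\varprojlim_{V\to U_i}\sF(V)$ that express the (almost) sheaf condition. Consequently $(-)_{*}$ also induces a functor $\wt{\sC}^{a}\to\Mod_{\cO}(\wt{\sC})$ right adjoint to $\wt{\alpha}$ (the adjunction identity follows from full faithfulness of $i$ together with the tautology $\widehat{\alpha}\circ i=\iota\circ\wt{\alpha}$, both sides sending a sheaf $\sF$ to the almost presheaf $U\mapsto\sF(U)^{a}$, which is automatically an almost sheaf as $(-)^{a}$ preserves limits).

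Then the argument closes formally. Composing adjunctions, $\overline{a}\circ\widehat{\alpha}$ is left adjoint to $\bigl(\sF\mapsto(U\mapsto\sF(U)_{*})\bigr)\circ\iota$, while $\wt{\alpha}\circ a$ is left adjoint to $i\circ\bigl(\sF\mapsto(U\mapsto\sF(U)_{*})\bigr)$; but these two right adjoints are literally the same functor $\wt{\sC}^{a}\to\Mod_{\cO}(\widehat{\sC})$, namely $\sF\mapsto(U\mapsto\sF(U)_{*})$. Hence $\overline{a}\circ\widehat{\alpha}\cong\wt{\alpha}\circ a$. To identify this with \emph{the} canonical comparison (so that ``up to unique isomorphism'' is meaningful), I would note that the unit $\cM\to i\,a(\cM)$ of $a\dashv i$, pushed through $\widehat{\alpha}$ and the identity $\widehat{\alpha}\,i=\iota\,\wt{\alpha}$, gives a map $\widehat{\alpha}(\cM)\to\iota\,\wt{\alpha}(a(\cM))$; since $\wt{\alpha}(a(\cM))$ is an almost sheaf, the universal property of $\overline{a}$ factors this uniquely through the unit $\widehat{\alpha}(\cM)\to\iota\,\overline{a}\,\widehat{\alpha}(\cM)$, producing a canonical map $\overline{a}\,\widehat{\alpha}(\cM)\to\wt{\alpha}\,a(\cM)$, which one checks coincides with the isomorphism obtained above.

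I expect the only genuinely non-formal step to be the verification that $\sF\mapsto(U\mapsto\sF(U)_{*})$ actually lands in sheaves -- equivalently, that $\wt{\alpha}$ admits an objectwise right adjoint -- which rests on $(-)_{*}$ preserving limits; the rest is adjunction bookkeeping. (This matches the treatment of \cite[Proposition 2.6.36]{AbbGors}, from which the statement is quoted.)
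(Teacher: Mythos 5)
Your argument is correct, and since the paper offers no in-text proof of this proposition (it is quoted from \cite[Proposition 2.6.36]{AbbGors}), your adjunction argument stands as a legitimate self-contained replacement. You have isolated the only non-formal step accurately: that $\sF\mapsto\bigl(U\mapsto\sF(U)_{*}\bigr)$ carries almost $\cO$-sheaves to honest sheaves, which follows because $(-)_{*}=\Hom_{\cO^{a}}(\cO^{a},-)$ is right adjoint to $(-)^{a}$ and hence preserves the limits $\sF(X)\xrightarrow{\sim}\varprojlim_{V\to U_i}\sF(V)$ expressing the sheaf condition; the companion identity $\widehat{\alpha}\circ i=\iota\circ\wt{\alpha}$ indeed needs $(-)^{a}$ to preserve these limits, which holds since $(-)^{a}$ also admits the left adjoint $(-)_{!}$. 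Granting these, both composites are left adjoint to the single functor $\wt{\sC}^{a}\to\Mod_{\cO}(\widehat{\sC})$, $\sF\mapsto(U\mapsto\sF(U)_{*})$, so they are canonically isomorphic, and your closing paragraph correctly identifies this isomorphism with the canonical comparison map built from the units of $a\dashv i$ and $\overline{a}\dashv\iota$, which is what makes the ``up to unique isomorphism'' clause meaningful. This is the same adjunction bookkeeping that underlies the cited result of Abbes--Gros, so there is no substantive divergence to report.
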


\subsection{Miscellany}

In this short section we collect some properties of the ring $A_{\Inf}$ (and its relative version) for which we could not find a reference for.

\begin{lem} \label{lem:promaxidtensp}
The tensor product $W(\fm^{\flat}) \otimes_{A_{\Inf}, \theta \circ \varphi^{-1}} \cO$ identifies with $\fm$.
\end{lem}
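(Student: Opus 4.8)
The plan is to tensor the short exact sequence of $A_{\Inf}$-modules
\[
0 \to W(\fm^{\flat}) \to A_{\Inf} \to W(k) \to 0
\]
along $\theta \circ \varphi^{-1}$ and read off the answer. This sequence is obtained by applying $W(-)$ to $0 \to \fm^{\flat} \to \cO^{\flat} \to k \to 0$ (recall $\cO^{\flat}/\fm^{\flat} = k$): the Witt functor turns a surjection into a surjection whose kernel consists of the Witt vectors with all components in the kernel ideal, so no flatness is needed at this stage.

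First I would note that, regarded as an $A_{\Inf}$-module via $\theta \circ \varphi^{-1}$, one has $\cO = A_{\Inf}/\tilde{\xi}A_{\Inf}$, and since $\tilde{\xi}$ is a non-zero-divisor in the domain $A_{\Inf}$ this gives $\mathrm{Tor}_{i}^{A_{\Inf}}(\cO,-)=0$ for $i\ge 2$ and $\mathrm{Tor}_{1}^{A_{\Inf}}(\cO,M)=M[\tilde{\xi}]$ (the $\tilde{\xi}$-torsion submodule) for every $A_{\Inf}$-module $M$. Taking $M=W(k)$, the image of $\tilde{\xi}=\varphi(\xi)$ under $A_{\Inf}=W(\cO^{\flat})\to W(k)$ is $p$: indeed $\epsilon\equiv 1$ modulo $\fm^{\flat}$, so each Teichm\"uller term in $\xi = 1+[\epsilon^{1/p}]+\cdots+[\epsilon^{1/p}]^{p-1}$ maps to $1\in W(k)$, whence $\xi\mapsto p$, and $\varphi$ fixes $p$. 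As $k$ is a perfect (in fact algebraically closed) field, $W(k)$ is a domain in which $p$ is a non-zero-divisor, so $W(k)[\tilde{\xi}]=W(k)[p]=0$. Hence $\mathrm{Tor}_{1}^{A_{\Inf}}(\cO,W(k))=0$, and tensoring stays left exact:
\[
0 \to W(\fm^{\flat})\otimes_{A_{\Inf},\,\theta\circ\varphi^{-1}}\cO \to \cO \to W(k)\otimes_{A_{\Inf},\,\theta\circ\varphi^{-1}}\cO \to 0.
\]

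It then remains to identify the right-hand term with $\cO/\fm=k$ and the middle map with reduction, for then the left-hand term is identified with $\ker(\cO\to k)=\fm$. Since $W(k)=A_{\Inf}/W(\fm^{\flat})$, we have $W(k)\otimes_{A_{\Inf}}\cO = \cO/(\theta\circ\varphi^{-1})(W(\fm^{\flat}))$, so the point is to show $(\theta\circ\varphi^{-1})(W(\fm^{\flat}))=\fm$. Because $\varphi$ is an automorphism of $A_{\Inf}$ preserving $W(\fm^{\flat})$ (it is $W$ of the bijection $x\mapsto x^{p}$ of $\cO^{\flat}$, which preserves $\fm^{\flat}$), this reduces to $\theta(W(\fm^{\flat}))=\fm$. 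For the inclusion $\subseteq$: reducing $\theta$ modulo $\fm$ equals the composite $A_{\Inf}\twoheadrightarrow W(k)\twoheadrightarrow k$, since both are ring homomorphisms to $k$ that kill $p$ and agree on Teichm\"uller representatives via the tilting isomorphism of residue fields, and this composite kills $W(\fm^{\flat})$. For the inclusion $\supseteq$: given $t\in\fm$, pick a compatible system of $p$-power roots of $t$ in $\cO$ (possible as $C$ is algebraically closed) to obtain $x\in\fm^{\flat}$ with $\theta([x])=t$.

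The step I expect to be the main obstacle is the left exactness after base change, i.e. the vanishing $\mathrm{Tor}_{1}^{A_{\Inf}}(\cO,W(k))=0$; the rest is bookkeeping. If one prefers to avoid homological algebra, the same content can be repackaged by writing down the $\cO$-linear map $W(\fm^{\flat})\otimes_{A_{\Inf}}\cO\to\fm$, $\alpha\otimes c\mapsto (\theta\circ\varphi^{-1})(\alpha)\,c$, deducing surjectivity from $(\theta\circ\varphi^{-1})(W(\fm^{\flat}))=\fm$ as above, and proving injectivity directly — but that injectivity argument is essentially a hands-on version of $W(k)[p]=0$, so I would keep the formulation via $\mathrm{Tor}$.
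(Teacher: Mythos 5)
Your proof is correct and is essentially the paper's own argument: both tensor the sequence $0 \to W(\fm^{\flat}) \to A_{\Inf} \to W(k) \to 0$ along $\theta\circ\varphi^{-1}$ and reduce to the vanishing of $\mathrm{Tor}_1^{A_{\Inf}}(W(k), A_{\Inf}/\tilde{\xi})$, which follows because $\tilde{\xi}$ maps to $p$ in the $p$-torsion-free ring $W(k)$. The only difference is that you spell out the identification of the cokernel $W(k)\otimes_{A_{\Inf}}\cO \cong k$ and of the map $\cO \to k$ with the reduction (via $(\theta\circ\varphi^{-1})(W(\fm^{\flat}))=\fm$), which the paper leaves implicit.
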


\begin{proof}
From the exact sequence
\[
0 \to W(\fm^\flat) \to A_{\Inf} \to W(k) \to 0
\]
it suffices to show $\textnormal{Tor}_{A_{\Inf}}^1(W(k), A_{\Inf}/\tilde{\xi})$ vanishes. The image of $\tilde{\xi}$ in $W(k)$ is $p$. Since $W(k)$ is $p$-torsion free, the result follows.
\end{proof}

In addition to the site $X_{\proet}$, we consider the site $X_{\proet}^{\text{psh}}$, whose objects are affinoid perfectoids in $X_{\proet}$, and coverings are isomorphisms. In this way, every presheaf is already a sheaf. We denote the associated topos by $X_{ \proet}^{\text{psh}}$. There is a morphism of topoi
\[
(\phi^{-1}, \phi_{*}) \colon X_{ \proet} \to X_{ \proet}^{\text{psh}},
\]
for which $\phi_{*}$ is given by restricting sheaves on $X_{\proet}$ to $X_{ \proet}^{\text{psh}}$ and $\phi^{-1}$ sheafification.

\begin{lem} \label{lem:derived completionaind}
The $R\phi_{*}\bA_{\Inf, X}$ is derived $(p, \mu)$-adically complete in the presheaf topos.
\end{lem}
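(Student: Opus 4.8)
The plan is to deduce the lemma from two ingredients: that $\bA_{\Inf,X}$ is itself derived $(p,\mu)$-adically complete in $D(X_{\proet},A_{\Inf})$, and that $R\phi_{*}$ carries derived $(p,\mu)$-complete complexes to derived $(p,\mu)$-complete complexes.

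I would handle the second ingredient first, since it is purely formal. Viewing $\phi$ as a morphism of topoi ringed by the constant ring $A_{\Inf}$, the functor $R\phi_{*}\colon D(X_{\proet},A_{\Inf})\to D(X_{\proet}^{\text{psh}},A_{\Inf})$ is $A_{\Inf}$-linear, commutes with arbitrary derived limits (being a right adjoint), and satisfies the projection formula $R\phi_{*}(\sF\otimes^{\bL}_{A_{\Inf}}P)\cong R\phi_{*}\sF\otimes^{\bL}_{A_{\Inf}}P$ for every perfect complex $P$ of $A_{\Inf}$-modules (one checks this for $P=A_{\Inf}$ and extends through shifts, cones and retracts). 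Since a complex $M$ is derived $(p,\mu)$-complete exactly when $M\xrightarrow{\sim}R\varprojlim_{n}\bigl(M\otimes^{\bL}_{A_{\Inf}}\Kos_{A_{\Inf}}(p^{n},\mu^{n})\bigr)$, and each Koszul complex $\Kos_{A_{\Inf}}(p^{n},\mu^{n})$ is perfect, the two properties above show at once that $R\phi_{*}M$ is derived $(p,\mu)$-complete whenever $M$ is. (Equivalently, one may work objectwise: $X_{\proet}^{\text{psh}}$ being a presheaf topos, derived completeness is detected after evaluation at each affinoid perfectoid $U$, and $(R\phi_{*}\bA_{\Inf,X})(U)\simeq R\Gamma(U,\bA_{\Inf,X})$ since $\phi_{*}$ is restriction and injective sheaves are acyclic for sections; then the same argument applies with $R\Gamma(U,-)$ in place of $R\phi_{*}$.)

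For the first ingredient, $\bA_{\Inf,X}$ is derived $p$-complete by construction (it is a derived $p$-completion). A derived $p$-complete complex $M$ is derived $(p,\mu)$-complete precisely when $M\otimes^{\bL}_{A_{\Inf}}A_{\Inf}/p$ is derived $\mu$-complete: writing $M\simeq R\varprojlim_{n}M\otimes^{\bL}_{A_{\Inf}}A_{\Inf}/p^{n}$ and inducting along the triangles $M\otimes^{\bL}A_{\Inf}/p\to M\otimes^{\bL}A_{\Inf}/p^{n}\to M\otimes^{\bL}A_{\Inf}/p^{n-1}$ reduces to the case $n=1$ (and tensoring a derived complete complex with a perfect complex preserves derived completeness). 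Since $\wh{\cO}^{+,\flat}_X$ is a sheaf of perfect $\bF_{p}$-algebras, $W(\wh{\cO}^{+,\flat}_X)$ is $p$-torsion-free with quotient $W(\wh{\cO}^{+,\flat}_X)/p=\wh{\cO}^{+,\flat}_X$, so $\bA_{\Inf,X}\otimes^{\bL}_{A_{\Inf}}A_{\Inf}/p\simeq\wh{\cO}^{+,\flat}_X$; here $\mu$ acts through its image under $A_{\Inf}/p=\cO^{\flat}$, namely $\ol{\mu}=\epsilon-1\in\cO^{\flat}$, a nonzero topologically nilpotent element (a pseudo-uniformizer). As $\cO^{\flat}$ has divisible value group, $\ol{\mu}$ has the same radical as any fixed pseudo-uniformizer $\varpi^{\flat}$, so $\cO^{\flat}[\tfrac{1}{\ol{\mu}}]=\cO^{\flat}[\tfrac{1}{\varpi^{\flat}}]$ and derived $\ol{\mu}$-completeness coincides with derived $\varpi^{\flat}$-completeness; and $\wh{\cO}^{+,\flat}_X$ is derived $\varpi^{\flat}$-adically complete, this being a basic completeness property of the tilted structure sheaf built from the $p$-adically complete $\wh{\cO}^{+}_X$ (cf.\ \cite[Definition 5.4]{BhMorSch}, \cite{SchpHrig}). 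Thus $\bA_{\Inf,X}$ is derived $(p,\mu)$-complete, and combining with the second ingredient gives the lemma.

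The one step that is not a mechanical manipulation of derived completeness and of $R\phi_{*}$ is matching up $\mu$ with the tilted topology — that $\mu$ reduces modulo $p$ to a pseudo-uniformizer of $\cO^{\flat}$ — together with invoking the $\varpi^{\flat}$-adic completeness of the structure sheaf $\wh{\cO}^{+,\flat}_X$; I expect this to be the only point requiring a moment's thought.
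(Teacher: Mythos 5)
Your formal reductions are fine: the Koszul/projection-formula argument (or the objectwise one) showing that $R\phi_{*}$ preserves derived $(p,\mu)$-completeness is correct, as is the dévissage ``derived $p$-complete $+$ mod-$p$ reduction derived $\mu$-complete $\Rightarrow$ derived $(p,\mu)$-complete'', the identification $\bA_{\Inf,X}\otimes^{\bL}_{A_{\Inf}}A_{\Inf}/p\simeq\wh{\cO}^{+,\flat}_X$, and the comparison of $\ol{\mu}=\epsilon-1$ with a pseudo-uniformizer $\varpi^{\flat}$. The gap is exactly at the step you flag and then wave through: the assertion that $\wh{\cO}^{+,\flat}_X$ is derived $\varpi^{\flat}$-adically complete \emph{as a sheaf on $X_{\proet}$} is not a citable ``basic property''. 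Neither \cite[Definition 5.4]{BhMorSch} (which only defines the sheaf) nor \cite{SchpHrig} (which computes values and proves almost vanishing of higher cohomology on affinoid perfectoids) states it; the analogous fact that $\wh{\cO}^{+}_X$ is derived $p$-complete is precisely the nontrivial \cite[Lemma C.11]{ZavBog} which this paper has to cite because the pro-\'etale topos is not replete. Proving your claim requires the genuinely non-formal inputs: check on affinoid perfectoid $U$, use that $H^{i}(U,\wh{\cO}^{+,\flat}_X)$ is killed by $\fm^{\flat}$ (hence by $\varpi^{\flat}$) for $i>0$ by almost purity, that $H^{0}(U,\wh{\cO}^{+,\flat}_X)=R^{\flat+}$ is classically complete, and that a complex of modules with derived complete cohomology groups is derived complete (the role played by \cite[Lemma 6.15]{BhMorSch} in a replete setting). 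In other words, the only substantive content of the lemma has been relocated into an unproved citation.

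Note also that your own parenthetical remark already reduces the lemma to showing $R\Gamma(U,\bA_{\Inf,X})$ is derived $(p,\mu)$-complete for each affinoid perfectoid $U$ — and that is the paper's proof: derived $p$-completeness holds by construction, the higher cohomology groups $H^{i}(U,\bA_{\Inf,X})$ for $i>0$ are killed by $W(\fm^{\flat})\ni\mu$ (Scholze's almost purity), hence derived $\mu$-complete, and $H^{0}(U,\bA_{\Inf,X})=W(R^{\flat+})$ is classically $(p,\mu)$-complete, so one concludes with \cite[Lemma 6.15]{BhMorSch}. So the detour through sheaf-level completeness of the tilted structure sheaf does not buy anything; once you supply the missing argument it collapses to the same proof, run one sheaf earlier. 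To repair your write-up, either prove the completeness of $\wh{\cO}^{+,\flat}_X$ by the objectwise argument just described, or drop the detour and argue directly with $R\Gamma(U,\bA_{\Inf,X})$.
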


\begin{proof}
By \cite[{Tag 0BLX}]{stacks-project}, it suffices to show that for each affinoid perfectoid $U \in X_{\proet}$, $R\Gamma(U, \bA_{\Inf, X})$ is derived $(p, \mu)$-adically complete. It is certainly derived $p$-adically complete by definition, so it suffices to show it is derived $\mu$-adically complete. Since we are in the presheaf topos (it is replete), by \cite[Lemma 6.15]{BhMorSch}, it suffices to check each $H^{i}(U, \bA_{\Inf,X})$ is derived $\mu$-adically complete. Now $\mu \in W(\fm^{\flat})$ and $H^{i}(U, \bA_{\Inf,X})$ is killed by $W(\fm^{\flat})$ for $i >0$ (cf. the statement just after \cite[Lemma 5.6]{BhMorSch}). Therefore $H^{i}(U, \bA_{\Inf,X})$ is derived $\mu$-adically complete for $i >0$. For $i = 0$, one has $H^{0}(U, \bA_{\Inf,X}) = W(R^{\flat +})$ where $\Spa(R,R^{+})$ is the perfectoid space determined by $U$ and this is even classically $\mu$-adically complete. 
\end{proof}

\begin{cor} \label{cor:xiadcomiofAinf}
The object $\bA_{\Inf,X}$ is derived $\tilde{\xi}$-adically complete.
\end{cor}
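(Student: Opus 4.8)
The plan is to deduce Corollary~\ref{cor:xiadcomiofAinf} from Lemma~\ref{lem:derived completionaind} by a standard descent-of-completeness argument. First I would recall that $\tilde{\xi}=\varphi(\xi)$ generates $\ker(\theta\circ\varphi^{-1})$, and more to the point that $\tilde{\xi}$ divides some element of the ideal $(p,\mu)A_{\Inf}$ up to units in a suitable sense; concretely, one has $\mu=\varphi(\xi)\cdot([\epsilon^{1/p}]-1)=\tilde{\xi}\cdot\mu^{1/p}$ where $\mu^{1/p}=[\epsilon^{1/p}]-1$, so $\mu\in(\tilde{\xi})$, while $p\equiv 0$ modulo $\tilde{\xi}$ in $\cO=A_{\Inf}/\tilde{\xi}$ means $p\in(\tilde{\xi})+pA_{\Inf}\cdot(\text{something})$; more precisely $\tilde\xi$ and $p$ generate the same radical ideal in a neighbourhood of the special point, so that the $(p,\mu)$-adic and $\tilde{\xi}$-adic topologies on $A_{\Inf}$ are intertwined: $\tilde\xi\in(p,\mu)$ trivially, and conversely $\mu\in(\tilde\xi)$ while $p$ is, modulo $\mu$, a unit multiple of $\tilde\xi$ power-wise. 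The cleanest formulation: since $\mu\in(\tilde\xi)$, derived $(p,\mu)$-adic completeness implies derived $(p,\tilde\xi,\mu)=(p,\tilde\xi)$-adic completeness, and since derived $(p,\tilde\xi)$-adic completeness together with derived $p$-adic completeness gives derived $\tilde\xi$-adic completeness by the usual two-out-of-three for completions along a pair of elements.

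Concretely I would argue as follows. By Lemma~\ref{lem:derived completionaind}, $R\phi_*\bA_{\Inf,X}$ is derived $(p,\mu)$-adically complete in the presheaf topos. Since $\mu=\tilde\xi\cdot([\epsilon^{1/p}]-1)\in(\tilde\xi)A_{\Inf}$, the $(p,\mu)$-adic completion factors through the $(p,\tilde\xi)$-adic completion, so $R\phi_*\bA_{\Inf,X}$ is derived $(p,\tilde\xi)$-adically complete. On the other hand $\bA_{\Inf,X}$, hence $R\phi_*\bA_{\Inf,X}$, is derived $p$-adically complete by construction. For a complex that is derived $(p,\tilde\xi)$-adically complete and derived $p$-adically complete, one concludes it is derived $\tilde\xi$-adically complete: indeed derived $(p,\tilde\xi)$-completeness is the same as derived completeness along the ideal $(p,\tilde\xi)$, which by \cite[Tag 091T]{stacks-project} holds iff the complex is derived complete along each of $p$ and $\tilde\xi$ separately; combined with derived $p$-completeness (which we already have) this forces derived $\tilde\xi$-completeness. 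Finally, descend from the presheaf topos $X_{\proet}^{\text{psh}}$ to $X_{\proet}$: since $\phi^{-1}$ is exact and $\bA_{\Inf,X}=\phi^{-1}R\phi_*\bA_{\Inf,X}$ is obtained by sheafification, and since sheafification preserves derived $\tilde\xi$-adic completeness (being a left adjoint that commutes with the relevant limits, or more safely because derived completeness can be checked on sections over a basis, here the affinoid perfectoids, which is exactly the presheaf-level statement), $\bA_{\Inf,X}$ is derived $\tilde\xi$-adically complete.

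The main obstacle I anticipate is the bookkeeping of the passage between the presheaf topos and the genuine pro-étale topos: Lemma~\ref{lem:derived completionaind} is stated for $R\phi_*\bA_{\Inf,X}$ in $X_{\proet}^{\text{psh}}$, and one needs to be careful that derived $\tilde\xi$-adic completeness of $\bA_{\Inf,X}$ in $X_{\proet}$ really follows. The slick way around this is to not push forward at all but to observe that derived $\tilde\xi$-adic (equivalently, derived $(p,\tilde\xi)$-adic combined with derived $p$-adic) completeness of $\bA_{\Inf,X}\in D(X_{\proet},A_{\Inf})$ can be tested on $R\Gamma(U,\bA_{\Inf,X})$ for $U$ ranging over a basis of quasi-compact objects — here the affinoid perfectoids — by \cite[Tag 0BLX]{stacks-project}, which is literally the computation already performed inside the proof of Lemma~\ref{lem:derived completionaind}. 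So in fact the corollary is immediate: the proof of Lemma~\ref{lem:derived completionaind} shows each $R\Gamma(U,\bA_{\Inf,X})$ is derived $(p,\mu)$-complete, hence derived $(p,\tilde\xi)$-complete as $\mu\in(\tilde\xi)$, hence (being also derived $p$-complete) derived $\tilde\xi$-complete, and one concludes by \cite[Tag 0BLX]{stacks-project}. I would present it in this last form, as a one-paragraph consequence, rather than routing through $R\phi_*$.
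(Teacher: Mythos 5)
Your overall route is the paper's (deduce the corollary from Lemma \ref{lem:derived completionaind} by testing completeness on sections over affinoid perfectoids), but the ring-theoretic bridge you build from derived $(p,\mu)$-completeness to derived $\tilde{\xi}$-completeness is wrong as written. The identity $\mu=\varphi(\xi)\cdot([\epsilon^{1/p}]-1)$ is false: the correct identities are $\mu=\xi\cdot([\epsilon^{1/p}]-1)$ and $\varphi(\mu)=\tilde{\xi}\mu$. In particular $\mu\notin(\tilde{\xi})$, since $(\tilde{\xi})=\ker(\theta\circ\varphi^{-1})$ while $(\theta\circ\varphi^{-1})(\mu)=\zeta_p-1\neq 0$; likewise ``$p\equiv 0$ modulo $\tilde{\xi}$'' is false (the true congruence is $\tilde{\xi}\equiv p \bmod \mu$). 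Moreover, even granting $\mu\in(\tilde{\xi})$, your inference runs backwards: derived completeness with respect to an ideal gives derived completeness with respect to every element (hence every smaller ideal), never with respect to a larger one, so ``$(p,\mu)$-complete, hence $(p,\tilde{\xi},\mu)=(p,\tilde{\xi})$-complete because $\mu\in(\tilde{\xi})$'' is a non sequitur, and the subsequent ``two-out-of-three'' step is vacuous (completeness along the pair $(p,\tilde{\xi})$ already contains completeness along $\tilde{\xi}$; the extra $p$-completeness plays no role).

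The fix is the fact you state in passing but never use, and which is exactly the paper's parenthetical: $\tilde{\xi}\in(p,\mu)$, because $[\epsilon]=1+\mu$ gives $\tilde{\xi}=1+[\epsilon]+\cdots+[\epsilon]^{p-1}\equiv p \bmod \mu$. Since derived completeness with respect to the ideal $(p,\mu)$ entails derived completeness with respect to every element of that ideal (the set of such elements is a radical ideal), the $(p,\mu)$-completeness of $R\Gamma(U,\bA_{\Inf,X})$ supplied by (the proof of) Lemma \ref{lem:derived completionaind} immediately yields derived $\tilde{\xi}$-completeness there, with no auxiliary input. One further small point: in your ``slick'' version you test completeness of $\bA_{\Inf,X}$ only on the basis of affinoid perfectoids and attribute this reduction to \cite[{Tag 0BLX}]{stacks-project}; that reduction to a basis does hold, but it requires a word of justification beyond the bare citation, and this is precisely what the paper's observation $\phi^{-1}\circ R\phi_*\cong\id$ (with $\phi^{-1}$ exact and $R\phi_*$ commuting with the relevant derived limit) accomplishes.
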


\begin{proof}
Since  $\phi^{-1} \circ R\phi_{*} \cong \id$, this is an immediate consequence of Lemma \ref{lem:derived completionaind} (note that $\tilde{\xi} \in (p,\mu)$).
\end{proof}

\section{Relative $p$-adic étale comparison}\label{sec:section3relapadcomp}

In this section we record the relative $p$-adic \'{e}tale comparison (cf. \cite[Theorem 14.3(iv)]{BhMorSch} and \cite[Theorem 2.3]{SemistabAinfcoh}) in Theorem \ref{thm:padcohse}. The proof follows the same strategy as those in loc.cit., although in the current (relative) situation, the relevant statements are formulated in terms of complexes of \emph{sheaves} rather than modules. For this reason we need to check certain complexes of $\bZ_p$-sheaves are well behaved and we begin with some preliminary work guaranteeing this. 

\subsection{Lisse $\wh{\bZ}_p$-sheaves} \label{sec:lisfdjkl}

In this section, unless otherwise stated, $X$ will temporarily denote a locally noetherian adic space over $\Spa (\bQ_p, \bZ_p)$. In \cite[Definition 8.1]{SchpHrig}, Scholze defines the notion of a lisse $\wh{\bZ}_p$-sheaf on $X_{\proet}$. Roughly speaking a lisse $\wh{\bZ}_p$-sheaf on $X_{\proet}$ is a literal projective limit of a classical lisse $\bZ_p$-sheaf appearing on the étale site $X_{\et}$ (cf. Proposition 8.2 in loc.cit.). This is possible because (derived) limits on the pro-étale site are well behaved, in that they (usually) become limits. Much like in \cite{BS} where the authors define and study the notion of a \emph{constructible complex} on the pro-étale site of a qcqs scheme, we will do the same for lisse $\wh{\bZ}_p$-sheaves. We follow closely the framework developed in \cite[{Tag 09C0}]{stacks-project}.

\begin{notation*}
For $K \in D(X_{\proet}, \bZ_p)$ we set $K_n := K \otimes^{\bL}_{\bZ_p} \bZ/p^n$. We write $\ul{M}$ for the image of $M \in D(\bZ_p)$ under the pullback $D(\bZ_p) \to D(X_{\proet}, \bZ_p)$.
\end{notation*}

\begin{defn} \label{defn:lissed}
An object $K \in D(X_{\proet}, \bZ_p)$ is called \emph{lisse} if:
\begin{enumerate}
    \item $K$ is $p$-adically derived complete and
    \item $K_1$ has finite locally constant cohomology sheaves and locally has finite tor dimension.
\end{enumerate}
\end{defn}

\begin{rem} \label{rem:troskfsd}
By \cite[Proposition 8.2]{SchpHrig}, local systems of $\bZ/p^n$-modules on $X_{\proet}$ are equivalent (via pullback) to local systems of $\bZ/p^n$-modules on $X_{\et}$. By Corollary 3.17(i) in loc.cit. the pushforward is a quasi-inverse. A detailed proof can be found in \cite[Theorem C.4]{ZavBog}\footnote{In the beginning of \cite[Appendix C]{ZavBog}, Zavyalov makes the assumption that $X$ is a locally \emph{strongly} noetherian adic space over $\Spa(\bQ_p, \bZ_p)$, but the ``\emph{strongly}" assumption is not essential.}.
\end{rem}

\begin{lem} \label{lem:finitosroes}
Suppose $K$ is lisse and $X$ is quasi-compact. There exists $a$ and $b$ such that
\begin{enumerate}
    \item each $K_n$ has tor-amplitude in $[a,b]$,
    \item each $K_n$ has finite locally constant cohomology sheaves,
    \item $H^{i}(K) = \varprojlim H^{i}(K_n)$,
    \item $H^{i}(K) = 0$ for $i \notin [a,b]$,
    \item each $K_n$ is obtained via pullback of a bounded $\bZ/p^n$-complex with finite locally constant cohomology sheaves from $X_{\et}$.
\end{enumerate}
\end{lem}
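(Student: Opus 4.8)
The plan is to reduce everything to statements about honest $\bZ/p^n$-complexes on $X_{\et}$ (where they are classical) using Remark 1.4 and the derived completeness hypothesis, and then transport finiteness back up. First I would use quasi-compactness of $X$ together with the hypothesis that $K_1$ has locally finite tor-dimension: by a covering argument (finitely many quasi-compact opens on each of which $K_1$ has tor-amplitude in some $[a_i,b_i]$, then take $a=\min a_i$, $b=\max b_i$), $K_1$ has global tor-amplitude in $[a,b]$. The key observation is that tor-amplitude of $K_n$ can be controlled by that of $K_1$: since $K$ is $p$-adically derived complete, $K_n = K\otimes^{\bL}_{\bZ_p}\bZ/p^n$ and there is a standard dévissage — $K_n$ is a successive extension (in the derived sense) of copies of $K_1$ via the filtration $p^i\bZ/p^n\subset \bZ/p^n$, so $K_n$ also has tor-amplitude in $[a,b]$. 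This gives (1). For (2), the cohomology sheaves $H^i(K_n)$ are built by a finite spectral-sequence/dévissage argument from the $H^j(K_1)$, which are finite locally constant by hypothesis; finiteness of the filtration (length $n$) and the fact that extensions and kernels/cokernels of finite locally constant sheaves of $\bZ/p^n$-modules on $X_{\proet}$ are again of that form (one can check this étale-locally, or invoke Remark 1.4) yields (2).

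For (5), I would apply Remark 1.4 (equivalently \cite[Theorem C.4]{ZavBog}): since $K_n$ is $p$-adically complete with tor-amplitude in $[a,b]$ and finite locally constant cohomology sheaves, it is a bounded complex of $\bZ/p^n$-sheaves that is, after choosing a finite étale cover trivializing the cohomology, pulled back from $X_{\et}$; more precisely the equivalence of (derived) categories of such complexes between $X_{\et}$ and $X_{\proet}$ (bounded, with finite locally constant cohomology) is exactly what Remark 1.4 provides, with $R\nu_*$ as quasi-inverse. For (3), the transition maps $K_{n+1}\to K_n$ induce a tower, and since $K$ is derived $p$-complete, $K = R\varprojlim K_n$; the $R\varprojlim$ spectral sequence $\varprojlim^s H^i(K_n)\Rightarrow H^{i}(K)$ collapses because the towers $\{H^i(K_n)\}_n$ are Mittag-Leffler — each $H^i(K_n)$ is a finite locally constant $\bZ/p^n$-sheaf, and (étale-locally, hence after a cover, hence globally by descent of the relevant $\varprojlim^1$) the system satisfies Mittag-Leffler, so $\varprojlim^1$ vanishes, giving $H^i(K)=\varprojlim H^i(K_n)$. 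Finally (4) is immediate from (3) and (1)–(2): $H^i(K_n)=0$ for $i\notin[a,b]$ (a complex of tor-amplitude $[a,b]$ over a field-like base, or just: its cohomology vanishes outside $[a,b]$ after the reduction, which propagates), hence the inverse limit vanishes there too.

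The main obstacle I expect is the bookkeeping in passing from "$K_1$ has finite tor-dimension locally" to uniform bounds on all $K_n$ simultaneously, and making the dévissage $K_1\leadsto K_n$ rigorous at the level of derived categories of sheaves — one must be careful that the exact triangles $K_1\to K_n\to K_{n-1}\to K_1[1]$ (coming from $0\to\bZ/p\to\bZ/p^n\to\bZ/p^{n-1}\to 0$ tensored with $K$) genuinely control tor-amplitude and local constancy of cohomology, using that the site $X_{\proet}$ is replete so that derived limits and $\otimes^{\bL}$ interact well (cf.\ the use of \cite[Lemma 6.15]{BhMorSch} elsewhere in the paper). The second delicate point is the Mittag-Leffler / $\varprojlim^1$ vanishing for (3): although each term is a finite locally constant sheaf, local constancy means one only has Mittag-Leffler after passing to a cover, so one needs that $R\varprojlim$ commutes with the relevant pullbacks (or argue directly that a tower of finite locally constant sheaves on a qcqs site with surjective-enough transition maps has vanishing $\varprojlim^1$), which is where quasi-compactness of $X$ is used essentially. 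Everything else is a routine combination of Remark 1.4 with standard derived-complete formalism.
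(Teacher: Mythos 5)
Your route coincides with the paper's on almost every point: (1) is obtained exactly as in the paper (quasi-compactness gives a uniform tor-amplitude interval for $K_1$, and the dévissage along $0\to\bZ/p\to\bZ/p^n\to\bZ/p^{n-1}\to 0$, i.e.\ \cite[{Tag 0942}]{stacks-project}, propagates it to all $K_n$); (2) is the same dévissage (\cite[{Tag 094I}]{stacks-project}); (4) follows from (1) and (3) as you say; and for (5) your appeal to Remark \ref{rem:troskfsd} is the right input, with the caveat that the remark only gives an equivalence of categories of finite locally constant \emph{sheaves}, not of complexes with such cohomology — the passage to bounded complexes is a genuine (if standard) truncation dévissage, which the paper delegates to \cite[{Tag 0D7U}]{stacks-project}.

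The one substantive gap is your treatment of (3). Since $X_{\proet}$ is in all likelihood not replete, countable products of sheaves are not exact there, so one cannot establish the vanishing of the sheaf-level $R^1\varprojlim_n H^{i}(K_n)$ by a purely sheaf-theoretic Mittag--Leffler argument; in particular your fallback ``a tower of finite locally constant sheaves on a qcqs site with surjective-enough transition maps has vanishing $\varprojlim^1$'' is not available, both because the transition maps $H^i(K_{n+1})\to H^i(K_n)$ need not be surjective and because surjectivity is not the issue. Your other suggestion (check after pullback to a cover and descend) is closer, but the local statement is precisely where the difficulty sits: a compatible system of classes, each killed by \emph{some} cover, need not be killed by a common cover on a non-replete site. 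What actually closes this step — and what the paper's citations encode — is to verify the hypotheses of \cite[{Tag 0A09}]{stacks-project} on \emph{sections over a basis}: each $K_n$ is pulled back from $X_{\et}$ (Remark \ref{rem:troskfsd}), and over the basis built from the universal covers of connected affinoid opens furnished by \cite[Theorem 4.9]{SchpHrig} the sheaves $H^i(K_n)$ become constant with finite values, so the groups $H^q(U,K_n)$ for $U$ in this basis are finite; inverse systems of finite groups are automatically Mittag--Leffler (no surjectivity needed), hence $R^1\varprojlim_n H^q(U,K_n)=0$, and Tag 0A09 then yields $H^i(K)=\varprojlim H^i(K_n)$. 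Finally, note that quasi-compactness of $X$ is what produces the uniform interval $[a,b]$ in (1); it is not the essential input for (3), contrary to what you suggest.
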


\begin{proof}
\begin{enumerate}
\item Since $X$ is quasi-compact and pro-étale morphisms are open, it follows that we can find $a \leq b$ such that $K_1$ has tor amplitude in $[a,b]$. For the remaining $K_n$, the result follows from \cite[{Tag 0942}]{stacks-project}.

\item The proof is the same as \cite[{Tag 094I}]{stacks-project}.

\item Since $K$ is $p$-adically derived complete $K=R\varprojlim K_n$. A priori, by part (2) each $K_n$ has finite locally constant cohomology sheaves on $X_{\proet}$, but by Remark \ref{rem:troskfsd}, 
they can be considered as finite locally constant sheaves on $X_{\et}$. The result now follows from \cite[Theorem 4.9]{SchpHrig} and \cite[{Tag 0A09}]{stacks-project}. 
\item This follows from parts (1) and (3).
\item This is an application of \cite[{Tag 0D7U}]{stacks-project} to Remark \ref{rem:troskfsd}. 
\end{enumerate}
\end{proof}

It will useful to strengthen part (5) of Lemma \ref{lem:finitosroes} to $X_{\fet}$:

\begin{lem} \label{lem:gososdma}
Let $X = \Spa(A, A^{+})$ be an affinoid 
connected noetherian adic space over $\Spa(\bQ_p, \bZ_p)$. Suppose $K \in D(X_{\proet}, \bZ_p)$ is lisse. Then each $K_n$ is obtained via pullback of a bounded $\bZ/p^n$-complex with finite locally constant cohomology sheaves from $X_{\fet}$.
\end{lem}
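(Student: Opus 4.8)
The plan is to bootstrap from Lemma~\ref{lem:finitosroes}(5), which already produces a bounded complex $L_n\in D(X_{\et},\bZ/p^n)$ with finite locally constant cohomology sheaves whose pullback along the projection $X_{\proet}\to X_{\et}$ is $K_n$; what remains is to descend $L_n$ one further step along the canonical morphism of topoi $u\colon X_{\et}\to X_{\fet}$, after which the pullback of the resulting complex along the composite $X_{\proet}\to X_{\fet}$ is $K_n$. First I would fix a geometric point $\ol{x}$ of the connected space $X$ and set $\pi:=\pi_1^{\et}(X,\ol{x})$, so that the topos of $X_{\fet}$ is the classifying topos $B\pi$ and $u^{-1}$ identifies finite locally constant sheaves of $\bZ/p^n$-modules on $X_{\fet}$ with finite $\bZ/p^n[\pi]$-modules (with continuous action); by the adic classification of finite \'etale covers (in the form underlying \cite[Proposition~8.2]{SchpHrig} and its mod-$p^n$ analogue), this is exactly the category of finite locally constant sheaves of $\bZ/p^n$-modules on $X_{\et}$. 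In particular each cohomology sheaf $\cH^{j}(L_n)$ is already the pullback of such a sheaf on $X_{\fet}$.

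The substantive point is to promote this from the cohomology sheaves to the complex $L_n$ itself, i.e.\ to show that $u^{-1}$ restricts to an equivalence between the bounded complexes with finite locally constant cohomology on $X_{\fet}$ and the corresponding full subcategory of $D(X_{\et},\bZ/p^n)$. Full faithfulness unwinds to the assertion that, for every finite $\bZ/p^n[\pi]$-module $V$ (with associated finite locally constant sheaf still denoted $V$), the natural map $H^{j}_{\cont}(\pi,V)\to H^{j}_{\et}(X,V)$ is an isomorphism for all $j$ --- that is, that the affinoid $X$ is a $K(\pi,1)$ for $\bZ/p^n$-coefficients, equivalently that every higher \'etale cohomology class of a finite locally constant sheaf is killed after pullback to a suitable finite \'etale cover. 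This is the one non-formal ingredient, and it is precisely where a $K(\pi,1)$-statement for affinoid adic spaces must be invoked. Granting it, essential surjectivity follows by d\'evissage up the tower of canonical truncations $\tau_{\leq j}L_n$, lifting to $X_{\fet}$ one cohomology sheaf at a time: at each stage both the obstruction $k$-invariant and the indeterminacy of the lift lie in the $\Ext$-groups identified above.

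Putting the pieces together, $L_n\cong u^{-1}M_n$ for a bounded $M_n\in D(X_{\fet},\bZ/p^n)$ with finite locally constant cohomology sheaves, and composing with Lemma~\ref{lem:finitosroes}(5) exhibits $K_n$ as the pullback of $M_n$ along $X_{\proet}\to X_{\fet}$, which is the claim. I expect the main obstacle to be exactly the $K(\pi,1)$-property of the affinoid base $X$; once that is in hand, the remainder is the $\pi$-equivalence of categories of local systems together with routine homological bookkeeping.
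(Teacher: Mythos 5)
Your proposal is correct and follows essentially the same route as the paper: the paper also starts from Lemma \ref{lem:finitosroes}(5) and descends from $X_{\et}$ to $X_{\fet}$, outsourcing your full-faithfulness-plus-Postnikov d\'evissage step to the criterion of \cite[{Tag 0D7U}]{stacks-project}. The $K(\pi,1)$ statement you flag as the one non-formal ingredient is precisely the input the paper cites, namely \cite[Theorem 4.9]{SchpHrig}, so nothing is missing beyond naming that reference.
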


\begin{proof}
Indeed by part (5) of Lemma \ref{lem:finitosroes}, it suffices to verify the conditions of \cite[{Tag 0D7U}]{stacks-project} (comparing finite locally constant sheaves on $X_{\et}$ and $X_{\text{f}\et}$). This is achieved in \cite[Theorem 4.9]{SchpHrig}.
\end{proof}

\begin{defn} \label{defn:adiclisse}
An object $K \in D(X_{\proet}, \bZ_p)$ is called \emph{adic lisse} if $K$ is locally isomorphic to 
\begin{equation} \label{eq:lissadilocfo}
\wh{\bZ}_p \otimes_{\bZ_p}^{\bL} \ul{M}
\end{equation}
where $M \in D(\bZ_p)$ a finite complex of finite free $\bZ_p$-modules.  
\end{defn}

For our next task, we want to compare the notions of lisse and adic lisse. It is not difficult to trivialize each $K_n$ one by one. The key idea however is to control these trivializations as one jumps from $n$ to $n+1$. For instance, in the setting of \cite{BS}, this is achieved via the notion of \emph{w-strictly local affine scheme} (cf. Lemma 6.6.8 in loc.cit.). In the current setup, we don't quite have access to such a nice basis (it's unlikely that $X_{\proet}$ gives rise to a replete topos), but nevertheless an analogue does exist via \cite[Theorem 4.9]{SchpHrig}.

\begin{lem} \label{lem:trivaknallaton}
Let $X = \Spa(A, A^{+})$ be an affinoid 
connected noetherian adic space over $\Spa(\bQ_p, \bZ_p)$. Suppose $K \in D(X_{\proet}, \bZ_p)$ is lisse. Then there exists a pro-(finite étale) cover $U \to X$ trivializing all $K_n$ (i.e. there are perfect complexes $M_n \in D(\bZ/p^n)$ such that $K_n \lvert_{U} \cong \ul{M_n} \lvert_{U}$).
\end{lem}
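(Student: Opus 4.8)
The strategy is to build the trivializing cover not all at once, but as a cofiltered limit of finite étale covers, each chosen to trivialize one more truncation level. First I would invoke Lemma~\ref{lem:gososdma}: since $K$ is lisse and $X$ is affinoid connected noetherian, each $K_n$ is pulled back from a bounded complex $\overline{K}_n$ on $X_{\fet}$ with finite locally constant cohomology sheaves. Because $X$ is connected, a finite locally constant sheaf on $X_{\fet}$ becomes constant after pullback along a connected finite étale cover (this is exactly the $\pi_1$-representation picture: finite locally constant sheaves on $X_{\fet}$ correspond to continuous representations of the fundamental group $\pi_1^{\fet}(X,\overline{x})$ on finite sets/modules, and any such representation is trivialized by passing to the fixed field of an open subgroup). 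Applying \cite[Theorem 4.9]{SchpHrig} to move freely between $X_{\et}$ and $X_{\fet}$, I get for each $n$ a connected finite étale cover $X_n \to X$ such that all cohomology sheaves of $\overline{K}_n|_{X_n}$ are constant.

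The next step is the key refinement. Having constant cohomology sheaves is not the same as $K_n|_{X_n}$ being isomorphic to a constant complex $\underline{M_n}$; there can be a nontrivial gluing/$k$-invariant. To kill it, I would argue level by level using the Postnikov/truncation filtration of $\overline{K}_n$: the obstruction to splitting the truncation triangle $\tau^{\leq i-1}\overline{K}_n \to \tau^{\leq i}\overline{K}_n \to \mathcal{H}^i(\overline{K}_n)[-i]$ lives in an $\Ext^{2}$ group over $\pi_1$, and after restricting to a further connected finite étale cover (shrinking the open subgroup of $\pi_1$) this class, being a class in continuous cohomology of a profinite group with coefficients in a finite module that dies on an open subgroup, can be made trivial. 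Doing this for each $i$ in the (finite) range $[a,b]$ of Lemma~\ref{lem:finitosroes}, and then for each $n$, I obtain a tower of connected finite étale covers $\cdots \to X_n' \to \cdots \to X_1' \to X$ with $K_n|_{X_n'} \cong \underline{M_n}|_{X_n'}$ for a perfect complex $M_n \in D(\bZ/p^n)$. Setting $U := \varprojlim_n X_n'$, an object of $X_{\proet}$ by construction (a cofiltered limit of finite étale $X$-schemes, cf.\ \cite[Proposition 4.8, Lemma 3.12]{SchpHrig}), the restriction $K_n|_U$ is the pullback of $\underline{M_n}|_{X_n'}$ and hence $\cong \underline{M_n}|_U$, which is what is required.

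The main obstacle, and the part that needs the most care, is the second step: controlling the trivializations compatibly. It is easy to trivialize each $K_n$ in isolation, but the statement asks for a \emph{single} cover $U$ working for all $n$ simultaneously, and one must ensure the chosen $M_n$ are coherent with the maps $K_{n+1} \to K_n$ (so that, if desired, $M_{n+1} \otimes^{\bL}_{\bZ/p^{n+1}} \bZ/p^n \cong M_n$ — though the lemma as stated only demands the existence of $K_n|_U \cong \underline{M_n}|_U$ for each $n$, not a compatible system, so this compatibility is a bonus rather than a requirement). The honest subtlety is that $X_{\proet}$ is not known to be replete, so one cannot simply invoke a "w-strictly local" basis as in \cite{BS}; the substitute is precisely that a cofiltered limit of finite étale covers of a connected affinoid is again a legitimate pro-étale object, and that each truncation-level obstruction lives in Galois cohomology of a profinite group and hence is annihilated by passing to a deeper open subgroup. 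One should double-check that only finitely many obstruction classes occur overall — this follows from the boundedness in Lemma~\ref{lem:finitosroes}(1),(4) together with the fact that for fixed $n$ only finitely many cohomology sheaves are nonzero — so that the tower $(X_n')_n$ is genuinely cofiltered and the limit $U$ exists in $X_{\proet}$.
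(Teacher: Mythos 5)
Your overall architecture is exactly the paper's: trivialize $K_1$ on a connected finite \'etale cover, inductively refine to a tower $U_{n+1}\to U_n$ of connected finite \'etale covers trivializing $K_{n+1}$, and take $U=\varprojlim U_n$ as the pro-(finite \'etale) object; the paper also agrees with your remark that no compatibility of the $M_n$ is needed at this stage (that is repaired later, over $X_\infty$, in Corollary \ref{cor:unicofstr}). The difference is in how a single $K_n$ is trivialized: the paper reduces via Lemma \ref{lem:gososdma} to $X_{\fet}$ and then simply quotes \cite[{Tag 094G}]{stacks-project}, whereas you try to reprove that step by a Postnikov/obstruction argument, and this is where there is a genuine gap.

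Concretely, your claim that ``the obstruction to splitting the truncation triangle \ldots dies on an open subgroup'' is false, because you are aiming at the wrong target: you do not need (and in general cannot achieve) a \emph{splitting} of $K_n$ into shifted constant sheaves, only an isomorphism with a \emph{constant} complex $\underline{M_n}$ for some perfect $M_n\in D(\bZ/p^n)$. After the cohomology sheaves are made constant, say $\cH^m\cong\underline{A}$ and $\cH^{m-1}\cong\underline{B}$, the $k$-invariant lives in $\Ext^2_{X'_{\fet}}(\underline{A},\underline{B})$, which (via the descent spectral sequence $H^p\bigl(\pi_1,\Ext^q_{\bZ/p^n}(A,B)\bigr)\Rightarrow \Ext^{p+q}$) has, besides group-cohomology pieces in degrees $p\geq 1$ that indeed die on open subgroups, a purely module-theoretic piece coming from $\Ext^2_{\bZ/p^n}(A,B)$, which is nonzero since $\bZ/p^n$ is not a field and which survives pullback along \emph{any} connected cover (restrict to a geometric point). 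For instance the constant complex $\underline{M}$ with $M=(\bZ/p^2\xrightarrow{\,p\,}\bZ/p^2)$ over $\bZ/p^2$ already satisfies the lemma, yet its $k$-invariant is a nonzero class in $\Ext^2_{\bZ/p^2}(\bZ/p,\bZ/p)$ that no cover can kill. The correct statement, and what your induction actually needs, is that after a further connected finite \'etale cover the $k$-invariant can be made \emph{constant}, i.e.\ equal to the image of a class in $\Hom_{D(\bZ/p^n)}(A[-m],N[1])$; only the positive-degree group-cohomology contributions have to be annihilated, and for that your inflation--restriction argument is fine, but one must run it along the finite filtration of the spectral sequence rather than on the whole $\Ext^2$. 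Either rewrite your Step 2 in this relative form, or do as the paper does and invoke \cite[{Tag 094G}]{stacks-project} (available on $X_{\fet}$ by Lemma \ref{lem:gososdma} and \cite[Theorem 4.9]{SchpHrig}), after which the rest of your proposal goes through.
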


\begin{proof}
We will use the results of Lemma \ref{lem:finitosroes} without mention. The idea is to first trivialize $K_1$ for a finite étale covering $U_1 \to X$ and then construct a sequence of finite étale covers $U_{n+1} \to U_n$ trivializing $K_{n+1}$. We proceed by induction on $n$.

Let us deal with $K_1$ first.  We produce a finite étale cover $ U_{1} \to X$ and a perfect complex of $\bZ/p$-modules $M_1$ such that $K_1 \lvert_{U_{1}} \cong \ul{M_1}\lvert_{U_{1}}$. By Lemma \ref{lem:gososdma}, first note that $K_1$ comes (via pullback) of a complex in $D(X_{\fet}, \bZ/p)$ which has finite locally constant cohomology sheaves (in $X_{\fet}$). By \cite[{Tag 094G}]{stacks-project} we can choose a finite étale cover $U_{1} \to X$ (with $U_{1}$ affinoid connected)  such that $K_1 \lvert_{U_{1}} \cong \ul{M_{1}}\lvert_{U_{1}}$ for some finite complex of finite $\bZ/p$-modules $M_{1}$. 

Suppose we have trivialized $K_n$ via surjective finite étale maps $U_{n} \to U_{1}$ with each $U_{n}$ affinoid connected. Then repeating the argument for $K_1$ with $X$ replaced by $U_n$, we produce a surjective finite étale map $U_{(n+1)} \to U_{n}$ trivializing $K_{n+1}$ with $U_{(n+1)}$ affinoid connected.

In summary we have produced a sequence of finite étale connected (affinoid) covers $U_{n+1} \to U_n$, trivializing $K_{n+1}$. The pro-object $U := \varprojlim U_n$ then gives a pro-(finite étale) cover trivializing all $K_n$.
\end{proof}

Recall that for $X$ an affinoid connected noetherian adic space over $\Spa (\bQ_p, \bZ_p)$, there is a pro-object $X_{\infty} := \varprojlim_i X_i \to X$ constructed in \cite[Theorem 4.9]{SchpHrig}\footnote{Technically in loc.cit. $X_{\infty}$ is used to denote the \emph{associated} perfectoid affinoid space (to the pro-object). In our exposition we will denote the pro-object by $X_{\infty}$.}. This is called the \emph{universal cover} of $X$ (the $X_i$ are the Galois objects in $X_{\fet}$) and it is indeed a cover in $X_{\proet}$ as explained in \cite{SchpHrigcorr}. The point being that for $G$ a profinite group, the map $G \to *$ is a cover in $G$-pfsets. One then uses \cite[Proposition 3.5]{SchpHrig} to give a covering $X_{\infty} \to X$ in $X_{\proet}$ (even in $X_{\profet}$).

\begin{cor} \label{cor:unicofstr}
In the situation of Lemma \ref{lem:trivaknallaton}, the universal cover $X_{\infty} \to X$ of $X$ trivializes all $K_n$. Moreover in this case we have canonical trivializations: $R\Gamma(X_{\infty}, K_n) \in D(\bZ/p^n)$ is a perfect complex and the canonical morphism
\[
\ul{R\Gamma(X_{\infty}, K_n)} \to K_n \lvert_{X_{\infty}}
\]
is a quasi-isomorphism.
\end{cor}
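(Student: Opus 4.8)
\textbf{Proof plan for Corollary \ref{cor:unicofstr}.}

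The plan is to bootstrap from Lemma \ref{lem:trivaknallaton}: that lemma produces \emph{some} pro-(finite étale) cover $U = \varprojlim U_n \to X$ trivializing all the $K_n$, where each step $U_{n+1}\to U_n$ is a connected finite étale cover. First I would observe that $X_\infty \to X$, being the limit over \emph{all} Galois objects in $X_{\fet}$, dominates $U$; more precisely, since each $U_n$ is finite étale over $X$ (as a composite of finite étale maps), it is split by some Galois object $X_i$, so there is a map of pro-objects $X_\infty \to U$ over $X$. Pulling back the trivialization $K_n|_U \cong \ul{M_n}|_U$ along this map gives $K_n|_{X_\infty} \cong \ul{M_n}|_{X_\infty}$, which is the first assertion. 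Here one should be slightly careful that the relevant perfect complex $M_n \in D(\bZ/p^n)$ can indeed be taken to be $R\Gamma(X_\infty, K_n)$; this follows once we know the ``canonical morphism'' statement, which is really the substance of the corollary.

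For the canonical trivialization, the key point is that $X_\infty$ is a limit over a \emph{cofiltered} system of finite étale covers $X_i$, each of which is Galois with group $G_i$, and $\varprojlim G_i = G$ is the (profinite) fundamental group of $X$ at the implicit geometric point; $X_\infty/X$ is a $G$-torsor in $X_{\profet}$. Over such a cover, a finite locally constant sheaf of $\bZ/p^n$-modules trivialized by $X_\infty$ corresponds to a \emph{continuous} representation of $G$ on a finite $\bZ/p^n$-module that becomes trivial on restriction to the (open, finite-index) stabilizer — but ``trivial after passing to $X_\infty$'' forces the representation itself to be trivial once we take sections over $X_\infty$, because $X_\infty$ is connected and the torsor has no sections. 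Concretely: for a locally constant sheaf $\mathcal{F}$ on $X_{\proet}$ trivialized by $X_\infty$, one has $\mathcal{F}|_{X_\infty} \cong \underline{\mathcal{F}(X_\infty)}|_{X_\infty}$ canonically (the unit of the (pullback, sections)-adjunction for the connected object $X_\infty$ is an isomorphism when $\mathcal F$ is locally constant and split by $X_\infty$). Running this through the (finite, by Lemma \ref{lem:finitosroes}) Postnikov tower of $K_n$, or equivalently by devissage on the cohomology sheaves of the bounded complex $K_n$ (each of which is finite locally constant and trivialized by $X_\infty$), and using that $R\Gamma(X_\infty, -)$ is exact here because $X_\infty$ is a point of the pro-étale site in the relevant sense (or: each cohomology sheaf is locally constant hence has vanishing higher cohomology on the connected pro-(finite étale) object $X_\infty$), we get that $R\Gamma(X_\infty, K_n)$ has cohomology exactly $H^*(K_n)(X_\infty)$, which are finite $\bZ/p^n$-modules; being a bounded complex of such it is perfect in $D(\bZ/p^n)$, and the canonical map $\underline{R\Gamma(X_\infty,K_n)} \to K_n|_{X_\infty}$ is a quasi-isomorphism by checking on cohomology sheaves.

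The main obstacle I anticipate is making precise the claim that $R\Gamma(X_\infty, -)$ commutes with the relevant operations — i.e. that taking sections over the pro-object $X_\infty$ is exact on the category of sheaves trivialized by $X_\infty$, and hence that there is no spectral-sequence contribution obstructing the computation of $R\Gamma(X_\infty, K_n)$. This should follow from \cite[Theorem 4.9]{SchpHrig} together with \cite[Proposition 3.5]{SchpHrig}: $X_\infty$ corresponds to the limit of Galois covers, sheaves on $X_{\profet}$ trivialized by it are exactly continuous $G$-sets, and $\Gamma(X_\infty, -)$ is the functor of $G$-invariants composed with forgetting — but on the subcategory where $G$ acts trivially (which is where we land after trivializing each cohomology sheaf) this is just the forgetful functor, which is exact. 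One should also double-check the cofinality/dominance step: that $X_\infty$ really maps to the cover $U$ produced in Lemma \ref{lem:trivaknallaton}, which holds because each connected finite étale $U_n/X$ is dominated by a Galois (hence, up to passing further in the system, by one of the $X_i$), and these $X_i$ are cofinal in the indexing system for $X_\infty$ by construction in \cite[Theorem 4.9]{SchpHrig}.
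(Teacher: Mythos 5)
Your proposal is correct and follows essentially the same route as the paper: note that $X_{\infty}$ dominates the cover $U$ from Lemma \ref{lem:trivaknallaton} (so the trivializations pull back), and then use that $X_{\infty}$ is connected and that taking sections of constant $\bZ/p^n$-sheaves over $X_{\infty}$ is exact/acyclic, via the Galois description of $X_{\infty}$ in \cite[Theorem 4.9]{SchpHrig}. The only cosmetic difference is that the paper reduces by naturality to the constant complex $\ul{M_n}$ and argues there directly, whereas you run a dévissage over the cohomology sheaves of $K_n$; also note the small slip that $\Gamma(X_{\infty},-)$ corresponds to the underlying-module (forgetful) functor on continuous $G$-modules, not to $G$-invariants, which is exactly why it is exact.
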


\begin{proof}
For the first part, note that the universal cover $X_{\infty}$ covers $U$ (appearing in Lemma \ref{lem:trivaknallaton}). Thus by Lemma \ref{lem:trivaknallaton}, $K_n \lvert_{X_{\infty}} \cong \ul{M_n}$ for some perfect complex $M_n \in D(\bZ/p^n)$. Fix such a trivialization. Then by naturality it suffices to show that the canonical morphism
\[
\ul{R\Gamma(X_{\infty}, \ul{M_n})} \to \ul{M_n} \lvert_{X_{\infty}}
\]
is a quasi-isomorphism. This follows because taking sections of constant sheaves of $\bZ/p^n$-modules over $X_{\infty}$ is exact (cf. proof of \cite[Theorem 4.9]{SchPerf}) and $X_{\infty}$ is connected (i.e. $H^{0}(X_{\infty}, \bF_p) = \bF_p$). Finally perfectness of $M_n$, implies the perfectness of $R\Gamma(X_{\infty}, K_n)$.
\end{proof}

The trivializations constructed in Lemma \ref{lem:trivaknallaton} are not neccessarily compatible with the transition morphisms $K_{n+1} \to K_n$. On the other hand, Corollary \ref{cor:unicofstr} provides a fix for this. The reason is that $X_{\infty}$ is at the same time connected and a kind of \emph{weakly contractible} object for local systems of $\bZ/p^n$-modules. This simplicity is in contrast to the situation in \cite{BS}, where weakly contractible objects are rarely connected. The authors in loc.cit. must therefore work harder in order to glue the trivializations (for each $K_n$) together and this is solved by Lemma 6.6.2 in loc.cit.

We now come to the main result of this section.

\begin{prop} \label{prop:lisseimplisadicl}
Suppose $K \in D(X_{\proet}, \bZ_p)$ is lisse. Then $K$ is adic lisse. 
\end{prop}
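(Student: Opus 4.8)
The plan is to reduce to the case where $X$ is affinoid, connected and noetherian, and then to use the universal cover $X_{\infty}\to X$ of Corollary~\ref{cor:unicofstr} as the single pro-(finite étale) object over which $K$ trivializes into the shape \eqref{eq:lissadilocfo}. Being adic lisse is a local condition on $X_{\proet}$, and the two conditions in Definition~\ref{defn:lissed} are stable under restriction to opens, so covering $X$ by affinoid connected noetherian opens we may assume $X=\Spa(A,A^{+})$ as in Lemma~\ref{lem:trivaknallaton} and Corollary~\ref{cor:unicofstr}. Since $X_{\infty}\to X$ is a covering in $X_{\proet}$, it then suffices to produce a single $M\in D(\bZ_p)$ quasi-isomorphic to a finite complex of finite free $\bZ_p$-modules together with a quasi-isomorphism $K|_{X_{\infty}}\cong \wh{\bZ}_p\otimes_{\bZ_p}^{\bL}\ul{M}$.

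The candidate is $M:=R\Gamma(X_{\infty},K)$, and the first step is to record three identifications. Since $\bZ/p^{n}\simeq[\bZ_p\xrightarrow{p^{n}}\bZ_p]$, the functor $-\otimes_{\bZ_p}^{\bL}\bZ/p^{n}$ is a mapping cone, hence commutes with $R\Gamma(X_{\infty},-)$; this gives $M_{n}:=R\Gamma(X_{\infty},K_{n})\cong M\otimes_{\bZ_p}^{\bL}\bZ/p^{n}$, and by Corollary~\ref{cor:unicofstr} each $M_{n}$ is a perfect complex of $\bZ/p^{n}$-modules with a canonical quasi-isomorphism $\ul{M_{n}}\xrightarrow{\sim}K_{n}|_{X_{\infty}}$. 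Next, $K$ is $p$-adically derived complete, so $K\cong R\varprojlim_{n}K_{n}$; restriction $(-)|_{X_{\infty}}$ preserves limits, having the left adjoint ``extension by zero'' along the localization at $X_{\infty}$, so $K|_{X_{\infty}}\cong R\varprojlim_{n}\ul{M_{n}}$. Finally, $R\Gamma(X_{\infty},-)$ commutes with $R\varprojlim$, so $M\cong R\varprojlim_{n}M_{n}$.

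The next step, and the one requiring real input, is to show that $M$ is a perfect complex of $\bZ_p$-modules. It is derived $p$-complete, being an $R\varprojlim$ of objects annihilated by powers of $p$; its cohomology is bounded, since by Lemma~\ref{lem:finitosroes}(1) the $M_{n}$ have tor-amplitude in a fixed interval $[a,b]$, which bounds the cohomology of $R\varprojlim_{n}M_{n}$ in $[a,b+1]$; and $M\otimes_{\bZ_p}^{\bL}\bF_p\cong M_{1}$ is perfect over $\bF_p$. A standard derived-Nakayama argument over the complete discrete valuation ring $\bZ_p$ (bounded, derived $p$-complete, perfect modulo $p$ $\Rightarrow$ perfect) then gives that $M$ is perfect, hence quasi-isomorphic to a finite complex of finite free $\bZ_p$-modules. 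This is precisely the point at which the compatible family of trivializations supplied by Corollary~\ref{cor:unicofstr} is upgraded to a \emph{single} $\bZ_p$-linear local model, rather than merely a pro-system of mod-$p^{n}$ ones; the earlier $R\varprojlim$-manipulations are legitimate for formal reasons, but the perfectness over $\bZ_p$ is where the connectedness-plus-contractibility of $X_{\infty}$ really gets used (through $M_{1}$ being perfect over $\bF_p$).

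With $M$ perfect, the functor $-\otimes_{\bZ_p}^{\bL}\ul{M}$ is exact and commutes with $R\varprojlim$; using the symmetric monoidality of the pullback $D(\bZ_p)\to D(X_{\proet},\bZ_p)$ together with $\wh{\bZ}_p\cong R\varprojlim_{n}\ul{\bZ/p^{n}}$, one concludes
\[
\wh{\bZ}_p\otimes_{\bZ_p}^{\bL}\ul{M}\;\cong\;R\varprojlim_{n}\bigl(\ul{\bZ/p^{n}}\otimes_{\bZ_p}^{\bL}\ul{M}\bigr)\;\cong\;R\varprojlim_{n}\ul{M_{n}}\;\cong\;K|_{X_{\infty}} .
\]
Since $X_{\infty}\to X$ is a cover in $X_{\proet}$, this exhibits $K$ as locally isomorphic to $\wh{\bZ}_p\otimes_{\bZ_p}^{\bL}\ul{M}$, so $K$ is adic lisse.
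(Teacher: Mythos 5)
Your proof is correct, and its skeleton is the paper's: reduce to $X=\Spa(A,A^{+})$ affinoid connected, use the universal cover $X_{\infty}$ and Corollary \ref{cor:unicofstr} to write $K\lvert_{X_{\infty}}\cong R\varprojlim \ul{R\Gamma(X_{\infty},K_n)}$, produce a finite complex of finite free $\bZ_p$-modules $M$ with $M\otimes^{\bL}_{\bZ_p}\bZ/p^n\cong R\Gamma(X_{\infty},K_n)$ compatibly, and conclude $K\lvert_{X_{\infty}}\cong \wh{\bZ}_p\otimes^{\bL}_{\bZ_p}\ul{M}$. The one genuine divergence is the crucial lifting step: the paper obtains $M$ abstractly by citing the Stacks Project lemma on lifting a compatible inverse system of perfect complexes over $\bZ/p^n$ to a perfect complex over $\bZ_p$ ({Tag 09AW}), whereas you take the canonical candidate $M:=R\Gamma(X_{\infty},K)$, identify $M\otimes^{\bL}\bZ/p^n\cong R\Gamma(X_{\infty},K_n)$ and $M\cong R\varprojlim R\Gamma(X_{\infty},K_n)$, and then prove perfectness of $M$ by a completeness argument (bounded, derived $p$-complete, perfect mod $p$ over the complete DVR $\bZ_p$). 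Your criterion is true and the argument goes through: each $H^i(M)$ is derived $p$-complete with $H^i(M)/p$ finite, hence finitely generated by Nakayama for derived complete modules, and a bounded complex with finitely generated cohomology over the regular ring $\bZ_p$ is perfect; but since you lean on it as ``standard'', you should either spell out this two-line argument or cite it, exactly as the paper cites {Tag 09AW}. What each route buys: the paper's citation is shorter and sidesteps any perfectness criterion; yours makes $M$ canonical (it is $R\Gamma(X_{\infty},K)$, matching the shape of Corollary \ref{cor:unicofstr}) and keeps the compatibility of trivializations automatic via naturality, at the cost of the extra completeness/Nakayama input and of checking that $-\otimes^{\bL}\ul{M}$ commutes with $R\varprojlim$ (fine, since perfect complexes are dualizable).
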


\begin{proof}
Working locally, we can assume that $X = \Spa(A, A^{+})$ is affinoid connected. Then by Corollary \ref{cor:unicofstr}, 
\begin{equation} \label{eq:seisimco}
K \lvert_{X_{\infty}} = R\varprojlim K_n\vert_{X_{\infty}} \cong R\varprojlim \ul{R\Gamma(X_{\infty}, K_n)}.
\end{equation}
By \cite[{Tag 09AW}]{stacks-project} we can find a finite complex of finite free $\bZ_p$-modules $M$ and isomorphisms $M \otimes^{\bL}_{\bZ_p} \bZ/p^n \cong R\Gamma(X_{\infty}, K_n)$ compatible with the transition map $R\Gamma(X_{\infty}, K_{n+1}) \to R\Gamma(X_{\infty}, K_n)$. Therefore \eqref{eq:seisimco} implies
\[
K \lvert_{X_{\infty}} \cong R\varprojlim (\ul{M}\lvert_{X_{\infty}} \otimes^{\bL}_{\bZ_p} \bZ/p^n) \cong  \wh{\bZ}_p \otimes_{\bZ_p}^{\bL} \ul{M}\lvert_{X_{\infty}}.
\]
\end{proof}

\subsection{The comparison isomorphism}

In this section we assume that $f \colon \fX \to \fY$ is a proper morphism of $p$-adic formal schemes locally of finite type and flat over $\cO$ such that the induced morphism on the adic generic fibers $f_{\eta} \colon X \to Y$ is smooth. Note that in this case $X$ and $Y$ are locally of finite type over $\Spa (C, \cO)$ and are rigid spaces (considered as adic spaces). Moreover $f_{\eta}$ is also proper. 

\begin{rem}
In fact if $\fX'$ and $\fY'$ are formal schemes locally of finite type and flat over $\cO$, and $f' \colon \fX' \to \fY'$ an $\cO$-morphism, then by \cite[Remark 1.3.18(ii)]{HubEtadic}, the properness of $f'$ is equivalent to the properness of $f'_{\eta}$.
\end{rem}

With the framework established in \S\ref{sec:lisfdjkl}, we are ready to prove the relative $p$-adic \'{e}tale comparison (cf. Theorem \ref{thm:padcohse}). 

\begin{lem} \label{lem:pretalecompas}
We have an identification
\begin{equation} \label{eq:AOmegtowithmu}
A\Omega_{\fX/\fY} \otimes_{A_{\Inf}}^{\bL} A_{\Inf}[\tfrac{1}{\mu}] \cong R\nu_{f*}\bA_{\Inf, X} \otimes_{A_{\Inf}}^{\bL} A_{\Inf}[\tfrac{1}{\mu}]
\end{equation}
\end{lem}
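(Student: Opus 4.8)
The plan is to observe that \eqref{eq:AOmegtowithmu} is a formal consequence of the definition $A\Omega_{\fX/\fY}=L\eta_{\mu}(R\nu_{f*}\bA_{\Inf,X})$ together with the standard fact that the décalage functor $L\eta_{\mu}$ becomes the identity after inverting $\mu$. First I would choose, as in the proof of Lemma~\ref{lem:quasiisormAOmegW}, a termwise $\mu$-torsion-free complex of sheaves $C^{\bullet}$ on $D$ representing $R\nu_{f*}\bA_{\Inf,X}$, using the $\mu$-torsion-free resolutions of \cite[pg.~288]{BhMorSch}; since $\bA_{\Inf,X}\in D^{\geq 0}(X_{\proet},A_{\Inf})$ and $R\nu_{f*}$ preserves $D^{\geq 0}$, this $C^{\bullet}$ can be taken concentrated in non-negative degrees. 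Then $A\Omega_{\fX/\fY}$ is computed by the subcomplex $\eta_{\mu}C^{\bullet}\subseteq C^{\bullet}$ with $(\eta_{\mu}C)^{i}=\{x\in \mu^{i}C^{i}:dx\in \mu^{i+1}C^{i+1}\}$.

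Next I would analyze the inclusion $\eta_{\mu}C^{\bullet}\hookrightarrow C^{\bullet}$ of complexes of sheaves. In each degree $i\geq 0$ one has $\mu^{i+1}C^{i}\subseteq (\eta_{\mu}C)^{i}$: indeed, for $x\in C^{i}$ the element $\mu^{i+1}x$ lies in $\mu^{i}C^{i}$ and $d(\mu^{i+1}x)=\mu^{i+1}\,dx\in \mu^{i+1}C^{i+1}$. Hence the cokernel sheaf $C^{i}/(\eta_{\mu}C)^{i}$ is killed by $\mu^{i+1}$, so it is $\mu$-power torsion and vanishes upon tensoring with $A_{\Inf}[\tfrac1\mu]$. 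Since $A_{\Inf}[\tfrac1\mu]$ is flat over $A_{\Inf}$, the functor $-\otimes^{\bL}_{A_{\Inf}}A_{\Inf}[\tfrac1\mu]$ agrees with the ordinary (exact) localization, so applying it to the inclusion yields a degreewise isomorphism of complexes of sheaves
\[
\eta_{\mu}C^{\bullet}\otimes_{A_{\Inf}}A_{\Inf}[\tfrac1\mu]\xrightarrow{\ \sim\ }C^{\bullet}\otimes_{A_{\Inf}}A_{\Inf}[\tfrac1\mu],
\]
in particular a quasi-isomorphism, which is exactly \eqref{eq:AOmegtowithmu}.

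Alternatively, and slightly more conceptually, one may invoke that $L\eta_{\mu}$ commutes with flat base change along $A_{\Inf}\to A_{\Inf}[\tfrac1\mu]$ (as in \cite[\S 6]{BhMorSch}, valid in the ringed topos setting) and that $L\eta_{\mu}$ is the identity functor on $D(D,A_{\Inf}[\tfrac1\mu])$ since $\mu$ is a unit there; composing these two facts gives the same conclusion. There is essentially no serious obstacle here: the only points demanding (routine) care are the existence of a termwise $\mu$-torsion-free representative in the sheaf-theoretic setting and the indexing convention for $\eta_{\mu}$ in degree~$0$, both already handled by the décalage formalism of \cite[\S 6]{BhMorSch}.
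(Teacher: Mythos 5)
Your proposal is correct and essentially coincides with the paper's argument: the paper's proof is exactly your "alternative" route, namely that $A_{\Inf}\to A_{\Inf}[\tfrac{1}{\mu}]$ is flat and $L\eta_{\mu}$ commutes with flat base change (\cite[Lemma 6.14]{BhMorSch}), becoming the identity once $\mu$ is inverted. Your hands-on argument with a termwise $\mu$-torsion-free representative is just an unpacking of that same fact, and it is sound.
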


\begin{proof}
The localization morphism $A_{\Inf} \to A_{\Inf}[\tfrac{1}{\mu}]$ is flat, so the claim is a consequence of \cite[Lemma 6.14]{BhMorSch}.
\end{proof}
For completeness we also record the following completed version, which will be useful when comparing $q$-crystalline cohomology with $p$-adic étale cohomology (cf. Theorem \ref{thm:qrysvsetale}). By \cite[Tag 0A0G]{stacks-project} $R\nu_{f*}\bA_{\Inf, X}$ is $p$-adically derived complete and so the natural morphism $A\Omega_{\fX/\fY} \to R\nu_{f*}\bA_{\Inf, X}$ induces a morphism
\begin{equation} \label{eq:fromuniprodeciom}
A\Omega_{\fX/\fY}^{\wedge} \to R\nu_{f*}\bA_{\Inf, X},
\end{equation}
where $A\Omega_{\fX/\fY}^{\wedge}$ is the $p$-adic derived completion of $A\Omega_{\fX/\fY}$.

\begin{lem} \label{lem:imprlem3.10}
The morphism 
\begin{equation} \label{eq:derivceajmo}
A\Omega_{\fX/\fY}^{\wedge} \otimes_{A_{\Inf}}^{\bL} A_{\Inf}[\tfrac{1}{\mu}] \to R\nu_{f*}\bA_{\Inf, X} \otimes_{A_{\Inf}}^{\bL} A_{\Inf}[\tfrac{1}{\mu}]
\end{equation}
induced by \eqref{eq:fromuniprodeciom} is a quasi-isomorphism. 
\end{lem}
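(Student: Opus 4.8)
The plan is as follows. By construction the morphism \eqref{eq:fromuniprodeciom} is obtained from the natural comparison morphism $f\colon A\Omega_{\fX/\fY}\to R\nu_{f*}\bA_{\Inf,X}$ by applying derived $p$-completion (this is legitimate since the target is already derived $p$-complete, cf. \cite[Tag 0A0G]{stacks-project}), and \eqref{eq:derivceajmo} is its base change along $A_{\Inf}\to A_{\Inf}[\tfrac1\mu]$. As derived $p$-completion is triangulated and $R\nu_{f*}\bA_{\Inf,X}$ is $p$-complete, the cone of \eqref{eq:derivceajmo} is $\mathrm{Cone}(f)^{\wedge}\otimes^{\bL}_{A_{\Inf}}A_{\Inf}[\tfrac1\mu]$, where $\mathrm{Cone}(f)^{\wedge}$ is the derived $p$-completion of $C:=\mathrm{Cone}(f)$. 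So it suffices to prove $C^{\wedge}\otimes^{\bL}_{A_{\Inf}}A_{\Inf}[\tfrac1\mu]=0$.

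The key step is to sharpen Lemma \ref{lem:pretalecompas}: I claim that every cohomology sheaf of $C$ is killed by $[\fm^{\flat}]$. To check this one evaluates on the basis of affinoid perfectoid objects $Z=(W\to\Spf S\leftarrow\Spf R)$ of $D$ furnished by Proposition \ref{prop:basisDnosmor}. By \eqref{eq:pullbaobobjec}, $\nu_f^{-1}Z=W\times_U V$, with $U$, $V$ the adic generic fibers of $\Spf S$, $\Spf R$; being a fiber product of affinoid adic spaces which is étale over the affinoid perfectoid $W$, this is itself an affinoid perfectoid, say $\Spa(R',R'^{+})$. By \cite[Lemma 5.6]{BhMorSch} and the statement following it, $R\Gamma(\nu_f^{-1}Z,\bA_{\Inf,X})$ has $H^{0}=W(R'^{\flat +})$, which is $\mu$-torsion-free (as $R'^{\flat +}$ is perfect, hence reduced) and classically $p$-complete, while its higher cohomology is killed by $W(\fm^{\flat})$. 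Thus $R\nu_{f*}\bA_{\Inf,X}$ is, modulo $[\fm^{\flat}]$-torsion, concentrated in degree $0$, where it is $\mu$-torsion-free. Using that $L\eta_{\mu}$ commutes with restriction to the localized topos at $Z$, and that the higher cohomology of $\bA_{\Inf,X}$ over $Z$ is $[\fm^{\flat}]$-torsion, one identifies $R\Gamma(Z,A\Omega_{\fX/\fY})$, up to $[\fm^{\flat}]$-torsion, with $L\eta_{\mu}R\Gamma(\nu_f^{-1}Z,\bA_{\Inf,X})$; since $L\eta_{\mu}$ fixes a $\mu$-torsion-free module placed in degree $0$, this agrees with $R\Gamma(\nu_f^{-1}Z,\bA_{\Inf,X})=R\Gamma(Z,R\nu_{f*}\bA_{\Inf,X})$ up to $[\fm^{\flat}]$-torsion, compatibly with $f$. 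Hence $R\Gamma(Z,C)$ has $[\fm^{\flat}]$-torsion cohomology for every such $Z$, which proves the claim.

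Granting the claim, one concludes as follows. For each $n$ the universal coefficient sequence exhibits $H^{i}(C\otimes^{\bL}_{A_{\Inf}}A_{\Inf}/p^{n})$ as an extension of a quotient of $H^{i}(C)$ by a subobject of $H^{i+1}(C)$; since the class of $[\fm^{\flat}]$-torsion modules is closed under extensions ($[\fm^{\flat}]$ being idempotent) and under $\varprojlim$ and $\varprojlim^{1}$, the $\varprojlim$–$\varprojlim^{1}$ sequence computing $H^{i}(C^{\wedge})=H^{i}(R\varprojlim_{n}C\otimes^{\bL}_{A_{\Inf}}A_{\Inf}/p^{n})$ shows that every cohomology sheaf of $C^{\wedge}$ is killed by $[\fm^{\flat}]$, in particular by $\mu\in[\fm^{\flat}]$. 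As inverting $\mu$ is a filtered colimit, hence exact on cohomology, each $H^{i}\big(C^{\wedge}\otimes^{\bL}_{A_{\Inf}}A_{\Inf}[\tfrac1\mu]\big)=\varinjlim\big(H^{i}(C^{\wedge})\xrightarrow{\mu}H^{i}(C^{\wedge})\xrightarrow{\mu}\cdots\big)$ vanishes, because multiplication by $\mu$ is zero on $H^{i}(C^{\wedge})$; so $C^{\wedge}\otimes^{\bL}_{A_{\Inf}}A_{\Inf}[\tfrac1\mu]=0$. The main obstacle is the identification in the second paragraph: verifying that the décalage functor $L\eta_{\mu}$ is compatible, up to $[\fm^{\flat}]$-torsion, with taking sections over the affinoid perfectoid basis — so that $R\Gamma(Z,A\Omega_{\fX/\fY})$ is computed by applying $L\eta_{\mu}$ to the honest, $\mu$-torsion-free ring $R\Gamma(\nu_f^{-1}Z,\bA_{\Inf,X})$ — given that $L\eta_{\mu}$ does not commute with pushforward in general.
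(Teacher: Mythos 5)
There is a genuine gap, and it sits exactly where you flag "the main obstacle" — but the problem starts earlier, in the local computation itself. For a basis object $Z=(W\to\Spf S\leftarrow\Spf R)$ of $D$, the maps $\fU\to\fY$ and $\fV\to\fX$ are étale, but $\fV\to\fU$ is not: $V\to U$ is an affinoid piece of the \emph{smooth} morphism $f_{\eta}$, of positive relative dimension in general. Hence $\nu_f^{-1}Z=W\times_U V$ is \emph{not} étale over $W$ and is not affinoid perfectoid (its ring of integral functions is essentially $R\wh{\otimes}_S S_{\infty}$, which the paper itself notes is "not quite perfectoid" in the proof of Lemma \ref{lem:varithin}), so \cite[Lemma 5.6]{BhMorSch} and the vanishing statement after it do not apply. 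In fact $H^{i}(\nu_f^{-1}Z,\bA_{\Inf,X})$ for $i>0$ is emphatically not killed by $[\fm^{\flat}]$: by the local analysis of Section \ref{sec:localanal} it is almost computed by $H^{i}_{\cont}(\Delta,\bA_{\Inf}(R_{\infty}))$, which carries the whole relative cohomology (it yields the $q$-de Rham complex after décalage). Consequently your central claim — that every cohomology sheaf of $C=\mathrm{Cone}(A\Omega_{\fX/\fY}\to R\nu_{f*}\bA_{\Inf,X})$ is killed by $[\fm^{\flat}]$ — is false; the cone of $L\eta_{\mu}K\to K$ is only controlled by $\mu$-power torsion degree by degree (on $H^{i}$ the comparison map is essentially multiplication by $\mu^{i}$), not by almost-zero-ness. (A smaller slip: $\mu\in W(\fm^{\flat})$ but $\mu$ is not a Teichmüller lift, so "$\mu\in[\fm^{\flat}]$" is also not literally correct.)

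The correct input, and the one the paper uses, is purely formal: \cite[Lemmas 6.9--6.10]{BhMorSch} give, for each $m$, maps $(\mu)^{\otimes m}\otimes\tau^{[0,m]}R\nu_{f*}\bA_{\Inf,X}\to\tau^{[0,m]}A\Omega_{\fX/\fY}$ whose composites with the canonical map are multiplication by $\mu^{m}$, so $H^{m}(C)$ is killed by $\mu^{2m+2}$; this degree-wise bound survives $-\otimes^{\bL}\bZ/p^{n}$, $\varprojlim$ and $R^{1}\varprojlim$ at fixed cohomological degree, hence $H^{m}(C^{\wedge})$ is killed by a power of $\mu$ and dies after inverting $\mu$. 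Your final paragraph has the right shape, but it needs this degree-wise $\mu$-power statement as input, not almost-zero-ness. Finally, the step you defer — identifying $R\Gamma(Z,A\Omega_{\fX/\fY})$ with $L\eta_{\mu}$ of the derived sections up to $[\fm^{\flat}]$-torsion — is genuinely hard: it is (a variant of) Corollary \ref{cor:objecxiadcom}, proved only later using the local analysis and Hodge–Tate specialization of Sections \ref{sec:localanal}--\ref{sec:TheHodhTaspe}, and it holds only in the almost sense; relying on it here would make Section \ref{sec:section3relapadcomp} depend on much heavier machinery than the lemma requires.
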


\begin{proof}
This requires a slightly more delicate analysis compared to its uncompleted version in Lemma \ref{lem:pretalecompas}. Let $C^{\bullet}$ be the cone of canonical morphism $A\Omega_{\fX/\fY} \to R\nu_{f*}\bA_{\Inf, X}$ given by \cite[Lemma 6.10]{BhMorSch}. For each $m \geq 0$, Lemma 6.9 in loc.cit. provides a map $(\mu)^{\otimes m} \otimes_{A_{\Inf}} \tau^{[0,m]}R\nu_{f*}\bA_{\Inf, X} \to \tau^{[0,m]}A\Omega_{\fX/\fY}$ which is part of the compositions
\begin{equation} \label{eq:firstmods}
(\mu)^{\otimes m} \otimes_{A_{\Inf}} \tau^{[0,m]}A\Omega_{\fX/\fY} \to (\mu)^{\otimes m} \otimes_{A_{\Inf}} \tau^{[0,m]}R\nu_{f*}\bA_{\Inf, X} \to \tau^{[0,m]}A\Omega_{\fX/\fY}
\end{equation}
and 
\begin{equation} \label{eq:seconlkfdso}
(\mu)^{\otimes m} \otimes_{A_{\Inf}} \tau^{[0,m]}R\nu_{f*}\bA_{\Inf, X} \to \tau^{[0,m]}A\Omega_{\fX/\fY} \to \tau^{[0,m]}R\nu_{f*}\bA_{\Inf, X}
\end{equation}
Moreover both \eqref{eq:firstmods}-\eqref{eq:seconlkfdso} are just the identity tensored with the inclusion $(\mu)^{\otimes m} \hookrightarrow A_{\Inf}$. Let $C_m$ be the cone of the truncated map $\tau^{[0,m]}A\Omega_{\fX/\fY} \to \tau^{[0,m]}R\nu_{f*}\bA_{\Inf, X}$. Then by \eqref{eq:firstmods}-\eqref{eq:seconlkfdso}, $H^{i}(C_m)$ is killed by $\mu^{2m}$ and so $H^{m}(C^{\bullet})$ is killed by $\mu^{2m+2}$. The Tor-spectral sequence then shows that $H^{m}(C^{\bullet} \otimes_{\bZ}^{\bL} \bZ/p^n)$ is killed by $\mu^{4m+6}$ for each $n \geq 1$. By functoriality one obtains $R^{i}\varprojlim_{n}(H^{m}(C^{\bullet} \otimes_{\bZ}^{\bL} \bZ/p^n))$ is also killed by $\mu^{4m+6}$ for all $i \geq 0, n \geq 1$. The spectral sequence
\[
R^{i}\varprojlim_{n}(H^{m}(C^{\bullet} \otimes_{\bZ}^{\bL} \bZ/p^n)) \implies H^{i+m} (R\varprojlim_{n}(C^{\bullet} \otimes^{\bL}_{\bZ} \bZ/p^n))
\]
then shows that $H^{m}(C^{\bullet \wedge})$ is killed by some power of $\mu$, where $C^{\bullet \wedge}$ is the derived $p$-adic completion of $C^{\bullet}$. Finally note that the cone of \eqref{eq:derivceajmo} is $C^{\bullet \wedge} \otimes^{\bL}_{A_{\Inf}} A_{\Inf}[\tfrac{1}{\mu}]$ which is thus quasi-isomorphic to the zero complex.
\end{proof}

For integral $p$-adic étale cohomology, we will use the framework established in \cite[\S 8]{SchpHrig}. Recall the morphism $f \colon \fX \to \fY$ gives rise to a morphism of pro-étale topoi on the generic fiber:
\[
f_{\eta, \proet} \colon X_{\proet} \to Y_{\proet}.
\]
This gives rise to \emph{relative} integral $p$-adic étale cohomology (cf. Proposition 8.2 in loc.cit.) 
\begin{equation} \label{eq:etapadco}
Rf_{\eta, \proet*}\wh{\bZ}_p.
\end{equation}
The following lemma says that the complex $Rf_{\eta, \proet*}\wh{\bZ}_p$ is well behaved.

\begin{lem} \label{lem:procomsisasli}
The complex $Rf_{\eta, \proet*}\wh{\bZ}_p \in D(Y_{\proet}, \bZ_p)$ is adic lisse (cf. Definition \ref{defn:adiclisse}).
\end{lem}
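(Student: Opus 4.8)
The goal is to show $Rf_{\eta,\proet*}\wh{\bZ}_p$ is adic lisse, and by Proposition \ref{prop:lisseimplisadicl} it suffices to prove it is lisse in the sense of Definition \ref{defn:lissed}; that is, $(1)$ it is $p$-adically derived complete, and $(2)$ its mod $p$ reduction $(Rf_{\eta,\proet*}\wh{\bZ}_p)_1$ has finite locally constant cohomology sheaves and locally finite tor dimension. First I would establish $(1)$: since $\wh{\bZ}_p = R\varprojlim \bZ/p^n$ on $Y_{\proet}$ and $Rf_{\eta,\proet*}$ commutes with $R\varprojlim$, we get $Rf_{\eta,\proet*}\wh{\bZ}_p \cong R\varprojlim Rf_{\eta,\proet*}\ul{\bZ/p^n}$, which is derived $p$-adically complete by \cite[Tag 0A0G]{stacks-project} (a homotopy limit of a tower is derived $p$-complete once each term is $p^n$-torsion, or one can invoke repleteness of the pro-\'etale topos as in \cite[\S3]{BS}).

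For $(2)$ the main point is a proper base change / finiteness statement for $Rf_{\eta,\proet*}\ul{\bF_p}$. The plan is to reduce to the \'etale site: by Remark \ref{rem:troskfsd} the pushforward along $\nu$ identifies $Rf_{\eta,\proet*}\ul{\bF_p}$ (at least after checking that $R\nu_{Y*}Rf_{\eta,\proet*}\ul{\bF_p} \cong Rf_{\eta,\et*}\ul{\bF_p}$ and that $\nu_Y^{-1}$ of the latter recovers the former, using that $R\nu_*$ is fully faithful on torsion sheaves by \cite[Corollary 3.17]{SchpHrig}) with the pullback of $Rf_{\eta,\et*}\ul{\bF_p}$ on $Y_{\et}$. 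Then I would invoke the local constancy and constructibility of $R^if_{\eta,\et*}\ul{\bF_p}$ for a proper smooth morphism of rigid spaces: this is exactly \cite[Theorem 10.5.1]{SchWeinsberkelec} (as cited in the discussion following Theorem \ref{thm:comparisononee}), which gives that each $R^if_{\eta,\et*}\ul{\bF_p}$ is a finite locally constant $\bF_p$-sheaf on $Y_{\et}$ and vanishes for $i$ outside a bounded range (using properness and that $X$, $Y$ are locally of finite type, hence $f_\eta$ has bounded fiber dimension). Finite locally constant cohomology sheaves in a bounded range immediately give finite tor dimension, locally. Pulling back along $\nu_Y$ preserves all of this, giving $(2)$.

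The main obstacle I anticipate is the bookkeeping in the reduction to the \'etale site: one must be careful that $\bF_p$-cohomology on $X_{\proet}$ really does agree with $\bF_p$-cohomology on $X_{\et}$ compatibly with the pushforward, i.e. that the square relating $f_{\eta,\proet}$, $f_{\eta,\et}$, $\nu_X$, $\nu_Y$ commutes at the level of derived pushforwards of torsion sheaves. This follows from \cite[Corollary 3.17(i) and Proposition 3.15]{SchpHrig} (the projection $\nu \colon X_{\proet}\to X_{\et}$ induces an equivalence on torsion \'etale cohomology and is compatible with pushforward), but it requires invoking these comparisons at each stage. A secondary subtlety is that Definition \ref{defn:lissed}(2) is a condition on $(Rf_{\eta,\proet*}\wh{\bZ}_p)_1 = Rf_{\eta,\proet*}\wh{\bZ}_p \otimes^{\bL}_{\bZ_p}\bF_p$, and one should check this agrees with $Rf_{\eta,\proet*}\ul{\bF_p}$; this is where one uses that $Rf_{\eta,\proet*}$ of the tower $(\ul{\bZ/p^n})_n$ is already a "nice" tower (the $R^1\varprojlim$ terms vanish because the transition maps are eventually surjective with the cohomology sheaves being finite), so that $(R\varprojlim Rf_{\eta,\proet*}\ul{\bZ/p^n})\otimes^{\bL}\bF_p \cong Rf_{\eta,\proet*}\ul{\bF_p}$, for instance by the argument in Lemma \ref{lem:finitosroes}(3) applied to $f$ in place of the structure morphism. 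Once these compatibilities are in place, the rest is formal.
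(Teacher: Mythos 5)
Your proposal follows essentially the same route as the paper: reduce to ``lisse'' via Proposition \ref{prop:lisseimplisadicl}, get derived $p$-completeness from \cite[Tag 0A0G]{stacks-project}, identify the mod $p$ reduction with (the pullback of) $Rf_{\eta,\et*}(\bZ/p)$ and invoke \cite[Theorem 10.5.1]{SchWeinsberkelec} for finite local constancy, with boundedness/finite tor dimension coming from proper base change and cohomological vanishing (the paper cites \cite[Proposition 2.6.1]{HubEtadic} and \cite[Theorem 5.1]{SchpHrig} explicitly here, which you should too, since the bounded range is computed on stalks via fibers).

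The one place where you diverge is the identification $(Rf_{\eta,\proet*}\wh{\bZ}_p)\otimes^{\bL}_{\bZ_p}\bZ/p \cong Rf_{\eta,\proet*}(\bZ/p)$. The paper gets this in one line from the projection formula \cite[Tag 0944]{stacks-project} applied to the perfect $\bZ_p$-complex $\bZ/p$, together with $\wh{\bZ}_p\otimes^{\bL}\bZ/p\cong\bZ/p$ on $X_{\proet}$, and then \cite[Corollary 3.17(ii)]{SchpHrig} to pass to $\nu^{*}Rf_{\eta,\et*}(\bZ/p)$. Your substitute --- commuting $-\otimes^{\bL}\bF_p$ with $R\varprojlim$ by appealing to vanishing of $R^1\varprojlim$ of the cohomology sheaves, ``as in Lemma \ref{lem:finitosroes}(3)'' --- is under-justified as stated: that argument identifies the cohomology sheaves of the derived limit, not its derived reduction mod $p$, and to make the tower argument honest you would in any case need the finite-level projection formulas $Rf_*(\bZ/p^{n})\otimes^{\bL}_{\bZ/p^{n}}\bZ/p\cong Rf_*(\bZ/p)$ (plus a Mittag-Leffler-type control). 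Since the global projection formula gives the statement directly, you should simply use it, as the paper does; with that repair the rest of your argument goes through.
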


\begin{proof}
By Proposition \ref{prop:lisseimplisadicl}, it suffices to show $Rf_{\eta, \proet*}\wh{\bZ}_p \in D(Y_{\proet}, \bZ_p)$ is lisse. We need to check that the conditions of Definition \ref{defn:lissed} are satisifed. By \cite[{Tag 0A0G}]{stacks-project} it is $p$-adically derived complete. Moreover by the projection formula (cf. \cite[{Tag 0944}]{stacks-project}) and \cite[Corollary 3.17(ii)]{SchpHrig}
\[
Rf_{\eta, \proet*}\wh{\bZ}_p \otimes^{\bL}_{\bZ_p} \bZ/p \cong \nu^{*}Rf_{\eta, \et*}(\bZ/p),
\]
where $\nu \colon Y_{\proet} \to Y_{\et}$ is the natural morphism of sites. By \cite[Theorem 10.5.1]{SchWeinsberkelec}, $Rf_{\eta, \et*}(\bZ/p)$ has finite locally constant cohomology sheaves. Furthermore by base change (cf. \cite[Proposition 2.6.1]{HubEtadic}) and vanishing of cohomology in large enough degrees (cf. \cite[Theorem 5.1]{SchpHrig}), $Rf_{\eta, \et*}(\bZ/p)$ has finite tor-dimension. This proves the lemma.
\end{proof}

We need an analogue of \cite[Theorem 5.7]{BhMorSch}. First we need some preparation. For a complex $\sC^{\bullet}$ of $\bZ_p$-modules (in a ringed site), denote by $\sC^{\bullet}_n := \sC^{\bullet} \otimes^{\bL}_{\bZ_p} \bZ/p^n$.

\begin{lem} \label{lem:critkillms}
Suppose $\sA^{\bullet} \to \sB^{\bullet}$ is a morphism in $D(Y_{\proet}, A_{\Inf})$ of $p$-adically derived complete $A_{\Inf}$-complexes such that the induced morphism
\[
H^{i}(\sA^{\bullet}_n)(U) \to H^{i}(\sB^{\bullet}_n)(U)
\]
is an almost isomorphism (w.r.t. $[\fm^{\flat}]$) for all $i$, $n \geq 1$ and $U \in Y_{\proet}$. Then the cohomology sheaves of the cone of $\sA^{\bullet} \to \sB^{\bullet}$ are killed by $W(\fm^{\flat})$.
\end{lem}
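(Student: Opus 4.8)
The plan is to follow the strategy of \cite[Theorem 5.7]{BhMorSch}. Write $\sC^\bullet \in D(Y_{\proet}, A_{\Inf})$ for the cone of $\sA^\bullet \to \sB^\bullet$. Since the $p$-adically derived complete objects of $D(Y_{\proet}, A_{\Inf})$ form a triangulated subcategory and $\sA^\bullet, \sB^\bullet$ are $p$-adically derived complete, so is $\sC^\bullet$. Consequently, for every $U \in Y_{\proet}$ the complex $R\Gamma(U, \sC^\bullet)$ is derived $p$-complete, and since $\bZ/p^n$ is a perfect $\bZ_p$-complex we have $R\Gamma(U, \sC^\bullet_n) = R\Gamma(U, \sC^\bullet) \otimes_{\bZ_p}^{\bL} \bZ/p^n$ and $R\Gamma(U, \sC^\bullet) = R\varprojlim_n R\Gamma(U, \sC^\bullet_n)$. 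As the cohomology sheaf $\mathcal{H}^i(\sC^\bullet)$ is the sheafification of the presheaf $U \mapsto H^i(R\Gamma(U, \sC^\bullet))$ and sheafification preserves annihilation by an ideal of $A_{\Inf}$, it suffices to show that $H^i(R\Gamma(U, \sC^\bullet))$ is killed by $W(\fm^\flat)$ for all $i$ and all $U$ (here $H^i(\sA^\bullet_n)(U)$ is read as the hypercohomology $H^i(R\Gamma(U, \sA^\bullet_n))$).

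Fix $i$, $n$, $U$. The distinguished triangle $R\Gamma(U, \sA^\bullet_n) \to R\Gamma(U, \sB^\bullet_n) \to R\Gamma(U, \sC^\bullet_n) \xrightarrow{+1}$ produces a long exact sequence in which, by hypothesis, the maps $H^i(R\Gamma(U, \sA^\bullet_n)) \to H^i(R\Gamma(U, \sB^\bullet_n))$ are almost isomorphisms with respect to $[\fm^\flat]$; hence their kernels and cokernels are killed by $[\fm^\flat]$, and therefore $H^i(R\Gamma(U, \sC^\bullet_n))$ is killed by $[\fm^\flat]^2$. Since $C$ is algebraically closed the value group of $\cO$ is divisible, so $\fm^\flat = (\fm^\flat)^2$ and $[\fm^\flat]^2 A_{\Inf} = [\fm^\flat] A_{\Inf}$; thus each $H^i(R\Gamma(U, \sC^\bullet_n))$ is killed by $[\fm^\flat]$. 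Feeding this into the Milnor exact sequence
\[
0 \to R^1\varprojlim\nolimits_n H^{i-1}(R\Gamma(U, \sC^\bullet_n)) \to H^i(R\Gamma(U, \sC^\bullet)) \to \varprojlim\nolimits_n H^i(R\Gamma(U, \sC^\bullet_n)) \to 0
\]
and using $[\fm^\flat]^2 = [\fm^\flat]$ once more (a $\varprojlim$ or $R^1\varprojlim$ of modules killed by $[\fm^\flat]$ is killed by $[\fm^\flat]$) shows that $M := H^i(R\Gamma(U, \sC^\bullet))$ is killed by $[\fm^\flat]$; moreover $M$ is a derived $p$-complete $\bZ_p$-module, being the cohomology of the derived $p$-complete complex $R\Gamma(U, \sC^\bullet)$ (\cite[{Tag 0A06}]{stacks-project}).

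The crux is to upgrade annihilation by $J := [\fm^\flat] A_{\Inf}$ to annihilation by $W(\fm^\flat)$. Writing an element of $W(\fm^\flat)$ as a $p$-adically convergent sum $\sum_{k \geq 0} p^k [x_k]$ with $x_k \in \fm^\flat$ and peeling off the $k = 0$ term gives $W(\fm^\flat) = J + p\, W(\fm^\flat)$, so $W(\fm^\flat)/J$ is a $p$-divisible $A_{\Inf}/J$-module; since $J M = 0$, the submodule $W(\fm^\flat) M = (W(\fm^\flat)/J)\cdot M$ of $M$ is therefore $p$-divisible. But a derived $p$-complete module has no nonzero $p$-divisible submodule: choosing compatible $p$-power divisions of a nonzero element of such a submodule yields a nonzero $\bZ_p$-submodule of $M$ which is a quotient of $\bZ_p[\tfrac{1}{p}] = \bQ_p$, hence isomorphic as a $\bZ_p$-module to $\bQ_p$ or to $\bQ_p/\bZ_p$, contradicting $\Hom_{\bZ_p}(\bQ_p, M) = \Hom_{\bZ_p}(\bQ_p/\bZ_p, M) = 0$ for derived $p$-complete $M$. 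Hence $W(\fm^\flat) M = 0$, and sheafifying as in the first paragraph completes the proof.

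The only genuinely nonformal step is the last one — passing from the almost-zero condition to annihilation by $W(\fm^\flat)$ — and it is exactly there that derived $p$-completeness is essential (the identity $W(\fm^\flat) = J + pW(\fm^\flat)$ is what makes the Teichmüller condition ``converge'' to the Witt-vector condition); everything else is bookkeeping with long exact and Milnor sequences. One should also confirm that the intended reading of $H^i(\sA^\bullet_n)(U)$ is indeed hypercohomology of $U$, which is what the quantification ``for all $U \in Y_{\proet}$'' in the hypothesis suggests.
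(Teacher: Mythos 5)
Your proof hinges on reading $H^{i}(\sA^{\bullet}_n)(U)$ as the hypercohomology $H^{i}(U,\sA^{\bullet}_n)$, but in the paper this denotes the sections over $U$ of the $i$-th cohomology \emph{sheaf} of $\sA^{\bullet}_n$: the paper's own argument keeps the two notations apart (its Step 1 concerns $H^{i}(\sC^{\bullet}_n)(U)$, its Step 2 concerns $H^{i}(U,\sC^{\bullet}_n)$), and in the application (Proposition \ref{prop:exrn517shco}) the hypothesis is verified precisely for sections of the cohomology sheaves appearing in \eqref{eq:takdsjdf}, not for hypercohomology over every $U$. With the intended reading, your first paragraph has a genuine gap: the long exact sequence you invoke lives at the level of hypercohomology, and the stated hypothesis does not feed into it directly. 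The missing bridge is exactly the content of the paper's Steps 1--2: first use the long exact sequence of cohomology \emph{sheaves} of the triangle $\sA^{\bullet}_n \to \sB^{\bullet}_n \to \sC^{\bullet}_n$, together with the facts that sheafification preserves ``local sections killed by $[\fm^{\flat}]$'' and that this property is stable under extensions (via $\fm^{\flat}=(\fm^{\flat})^{2}$), to see that the cohomology sheaves of the cone are killed by $[\fm^{\flat}]$; then run the Leray spectral sequence $H^{p}(U,H^{q}(\sC^{\bullet}_n))\Rightarrow H^{p+q}(U,\sC^{\bullet}_n)$, where multiplication by any $m\in[\fm^{\flat}]$ is zero on every $E_2$-term by functoriality, to conclude that the hypercohomology groups $H^{i}(U,\sC^{\bullet}_n)$ are killed by $[\fm^{\flat}]$. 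Only after this insertion does your long-exact-sequence/Milnor bookkeeping apply to the statement as intended.

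From that point on your route coincides with the paper's Steps 3--4: derived $p$-completeness of $R\Gamma(U,\sC^{\bullet})$ via \cite[{Tag 0A0G}]{stacks-project}, the Milnor sequence over $n$ (noting that limits and $R^{1}\varprojlim$ of modules killed by $[\fm^{\flat}]$ are killed by $[\fm^{\flat}]$), and finally the upgrade from $[\fm^{\flat}]$ to $W(\fm^{\flat})$. Where the paper cites \cite[Lemma 3.17]{SemistabAinfcoh} for this last step, you reprove it directly from $W(\fm^{\flat})=[\fm^{\flat}]A_{\Inf}+pW(\fm^{\flat})$ together with the fact that a derived $p$-complete module admits no nonzero $p$-divisible submodule (equivalently $\Hom_{\bZ}(\bZ[\tfrac{1}{p}],M)=0$); that argument is correct, as is the concluding sheafification step. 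So the only real defect is the unproved passage from the sheaf-level hypothesis to the hypercohomology statement, which is where half of the paper's proof lives.
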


\begin{proof}
Let $\sC$ be the cone. We will prove the statement in four steps.
\begin{enumerate}[\textbf{Step} 1:]
    \item  ``$[\fm^{\flat}]$ kills $H^{i}(\sC^{\bullet}_n)(U)$". Note that $H^{i}(\sC^{\bullet}_n)$ sits in an exact sequence
    \begin{equation} \label{eq:longsnoska}
        H^{i}(\sA^{\bullet}_n) \to H^{i}(\sB^{\bullet}_n) \to H^{i}(\sC^{\bullet}_n) \to  H^{i+1}(\sA^{\bullet}_n) \to H^{i+1}(\sB^{\bullet}_n).
    \end{equation}
The kernel and cokernel presheaves of the first and last arrows of \eqref{eq:longsnoska} are killed by $[\fm^{\flat}]$. Since sheafification preserves the property 
\begin{equation} \label{eq:propofkill}
``\text{local sections killed by } [\fm^{\flat}]"
\end{equation} it follows that the cokernel of the first and last arrows is killed by $[\fm^{\flat}]$. It is easy to check that \eqref{eq:propofkill} is also stable under extensions and this proves that $[\fm^{\flat}]$ kills $H^{i}(\sC^{\bullet}_n)(U)$.
\item ``$[\fm^{\flat}]$ kills $H^{i}(U, \sC^{\bullet}_n)$". By the Leray spectral sequence (cf. \cite[{Tag 0732}]{stacks-project})
\[
H^{p}(U, H^q(\sC^{\bullet}_n)) \implies H^{p+q}(U, \sC^{\bullet}_n),
\]
it suffices to show that each $H^{p}(U, H^q(\sC^{\bullet}_n))$ is killed by $[\fm^{\flat}]$. \textbf{Step} 1 says that this is true for $p = 0$. In other words for $m \in [\fm^{\flat}]$, the multiplcation map
\[
m \colon H^q(\sC^{\bullet}_n) \to H^q(\sC^{\bullet}_n)
\]
is zero. By functorality, this implies that the induced map 
\[
m \colon H^{p}(U, H^q(\sC^{\bullet}_n))  \to H^{p}(U, H^q(\sC^{\bullet}_n)) 
\]
is also zero.
\item ``$W(\fm^{\flat})$ kills $H^{i}(U, \sC^{\bullet})$". By \cite[{Tag 0A0G}]{stacks-project} and \cite[Lemma 6.15]{BhMorSch}, $H^{i}(U, \sC^{\bullet})$ is $p$-adically derived complete. Thus by \cite[Lemma 3.17]{SemistabAinfcoh}, it suffices to check that it is killed by $[\fm^{\flat}]$. This follows from \textbf{Step} 2 and \cite[{Tag 0D6K}]{stacks-project} (note that the $R^{1}\varprojlim$ of almost zero modules is almost zero).
\item ``$W(\fm^{\flat})$ kills $H^{i}(\sC^{\bullet})(U)$". This follows from \textbf{Step} 3 and the fact that $H^{i}(\sC^{\bullet})$ is the sheafification of $ U \mapsto H^{i}(U, \sC^{\bullet})$.
\end{enumerate}
\end{proof}

\begin{prop} \label{prop:exrn517shco}
The morphism $f_{\eta, \proet}^{-1}\bA_{\Inf,Y} \to \bA_{\Inf, X}$ induces a morphism
\begin{equation}\label{eq:relfinres}
Rf_{\eta, \proet*}\wh{\bZ}_p \otimes_{\wh{\bZ}_p}^{\bL} \bA_{\Inf,Y} \to Rf_{\eta, \proet*}\bA_{\Inf,X}
\end{equation}
such that the cohomology sheaves of its cone are killed by $W(\fm^{\flat})$.
\end{prop}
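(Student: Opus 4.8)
The plan is to apply Lemma~\ref{lem:critkillms} to the morphism \eqref{eq:relfinres}, after first producing that morphism and checking its source and target are $p$-adically derived complete. Write $\sA^{\bullet} := Rf_{\eta, \proet*}\wh{\bZ}_p \otimes_{\wh{\bZ}_p}^{\bL} \bA_{\Inf,Y}$ and $\sB^{\bullet} := Rf_{\eta, \proet*}\bA_{\Inf,X}$. The morphism \eqref{eq:relfinres} comes from combining the projection formula with the given map of sheaves: from the unit $\wh{\bZ}_p \to Rf_{\eta, \proet*}f_{\eta, \proet}^{-1}\wh{\bZ}_p$ and the lax monoidal structure of $Rf_{\eta, \proet*}$ one gets $\sA^{\bullet} \to Rf_{\eta, \proet*}(f_{\eta, \proet}^{-1}\bA_{\Inf,Y})$, and then one composes with $Rf_{\eta, \proet*}$ applied to $f_{\eta, \proet}^{-1}\bA_{\Inf,Y} \to \bA_{\Inf,X}$. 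As for derived completeness: $\sB^{\bullet}$ is $p$-adically derived complete by \cite[{Tag 0A0G}]{stacks-project} since $\bA_{\Inf,X}$ is; and by Lemma~\ref{lem:procomsisasli} the complex $Rf_{\eta, \proet*}\wh{\bZ}_p$ is adic lisse, hence locally on $Y_{\proet}$ of the form $\wh{\bZ}_p \otimes_{\bZ_p}^{\bL} \ul{M}$ with $M$ a finite complex of finite free $\bZ_p$-modules, so that $\sA^{\bullet}$ is locally a finite complex of copies of $\bA_{\Inf,Y}$ and therefore $p$-adically derived complete. Thus Lemma~\ref{lem:critkillms} is applicable, and everything reduces to verifying its hypothesis on the reductions modulo $p^{n}$.

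Next I would identify those reductions. Since $(-)\otimes_{\bZ_p}^{\bL}\bZ/p^n$ is the cofiber of multiplication by $p^n$ it commutes with $Rf_{\eta, \proet*}$; moreover $\bA_{\Inf,X}\otimes_{\bZ_p}^{\bL}\bZ/p^n = W_n(\wh{\cO}^{+,\flat}_X)$ and $\bA_{\Inf,Y}\otimes_{\bZ_p}^{\bL}\bZ/p^n = W_n(\wh{\cO}^{+,\flat}_Y)$ (the relevant Witt rings being $p$-torsion free), and $Rf_{\eta,\proet*}\wh{\bZ}_p\otimes_{\bZ_p}^{\bL}\bZ/p^n = Rf_{\eta,\proet*}(\bZ/p^n)$. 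Hence $\sB^{\bullet}_n = Rf_{\eta, \proet*}W_n(\wh{\cO}^{+,\flat}_X)$ and $\sA^{\bullet}_n = Rf_{\eta, \proet*}(\bZ/p^n)\otimes^{\bL}_{\bZ/p^n}W_n(\wh{\cO}^{+,\flat}_Y)$, and one checks that the reduction of \eqref{eq:relfinres} is the primitive-comparison morphism built from $f_{\eta, \proet}^{-1}W_n(\wh{\cO}^{+,\flat}_Y)\to W_n(\wh{\cO}^{+,\flat}_X)$ via the projection formula.

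The core input is that this morphism is an almost isomorphism, i.e.\ the relative analogue of \cite[Theorem 5.6]{BhMorSch}. I would verify it at the level of cohomology sheaves by testing on the basis of affinoid perfectoid objects $V = \varprojlim_{i}\Spa(S_i, S_i^{+})\in Y_{\proet}$ of Proposition~\ref{prop:basisDnosmor}: pro-\'etale base change identifies $R\Gamma(V, \sB^{\bullet}_n)$ with $R\Gamma(X_V, W_n(\wh{\cO}^{+,\flat}_X))$, where $X_V := X\times_Y\Spa(S, S^{+})$ is proper smooth over the perfectoid affinoid $\Spa(S, S^{+})$, and (pulling the locally perfect complex $Rf_{\eta,\proet*}(\bZ/p^n)$ out of the tensor, using Lemma~\ref{lem:procomsisasli}) identifies $R\Gamma(V, \sA^{\bullet}_n)$ with $R\Gamma(X_V, \bZ/p^n)\otimes^{\bL}_{\bZ/p^n}W_n(S^{\flat+})$; the comparison between the two is then the Witt-vector form of Scholze's relative primitive comparison theorem (cf.\ the almost isomorphism established in the proof of \cite[Theorem 8.8]{SchpHrig}), obtained from the mod $p$ statement by the Artin--Schreier argument of \cite[\S 5]{BhMorSch}. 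It follows that the cohomology sheaves of the cone $\sC_n$ of $\sA^{\bullet}_n\to\sB^{\bullet}_n$ are almost zero, so in the long exact sequence of cohomology sheaves the kernel and cokernel of $H^{i}(\sA^{\bullet}_n)\to H^{i}(\sB^{\bullet}_n)$ are almost zero sheaves; evaluating on an arbitrary $U\in Y_{\proet}$ and using left-exactness of $\Gamma(U, -)$, the kernel of $H^{i}(\sA^{\bullet}_n)(U)\to H^{i}(\sB^{\bullet}_n)(U)$ is the module of sections of the sheaf kernel and its cokernel embeds into the module of sections of the sheaf cokernel, both of which are killed by $[\fm^{\flat}]$. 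This is exactly the hypothesis of Lemma~\ref{lem:critkillms}, which then gives that the cohomology sheaves of the cone of \eqref{eq:relfinres} are killed by $W(\fm^{\flat})$.

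The step I expect to be the main obstacle is the relative primitive comparison modulo $p^n$ invoked above: one must make the pro-\'etale base change $R\Gamma(V, Rf_{\eta, \proet*}(-))\cong R\Gamma(X_V, -)$ precise for $V$ affinoid perfectoid and $X_V$ the fibre-product adic space, and confirm that Scholze's comparison (which in \cite[Theorem 8.8]{SchpHrig} is phrased over $\Spa(C,\cO)$) is available over an arbitrary perfectoid affinoid base $\Spa(S, S^{+})$; once this is in hand, the Witt-vector upgrade and the passage from sectionwise statements to statements about cohomology sheaves are formal, and the finiteness needed to pull $Rf_{\eta,\proet*}(\bZ/p^n)$ out of the tensor is already supplied by Lemma~\ref{lem:procomsisasli}.
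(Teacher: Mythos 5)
Your proposal is correct and takes essentially the same route as the paper: construct \eqref{eq:relfinres} from the projection formula, check derived $p$-completeness of the source via adic lisseness (Lemma \ref{lem:procomsisasli}) and of the target via \cite[{Tag 0A0G}]{stacks-project}, and feed the mod-$p^{n}$ reduction into Lemma \ref{lem:critkillms}. The one point where you diverge --- your flagged obstacle of proving a primitive comparison over an arbitrary perfectoid affinoid base $\Spa(S,S^{+})$ --- is sidestepped in the paper, which simply quotes that the mod-$p^{n}$ map \eqref{eq:takdsjdf} is, after taking sections, an almost isomorphism w.r.t.\ $[\fm^{\flat}]$ as established in the proof of \cite[Theorem 8.8]{SchpHrig} (where the verification proceeds by reduction to geometric points rather than over perfectoid bases), so no new comparison theorem is needed there.
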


\begin{proof}
We check that the conditions of Lemma \ref{lem:critkillms} are satisifed. By Lemma \ref{lem:procomsisasli}, $Rf_{\eta, \proet*}\wh{\bZ}_p$ is an adic lisse complex on $Y_{\proet}$. It follows that the LHS of \eqref{eq:relfinres} is $p$-adically derived complete. By \cite[{Tag 0A0G}]{stacks-project} the RHS of \eqref{eq:relfinres}, $Rf_{\eta, \proet*}\bA_{\Inf,X}$ is $p$-adically derived complete. 

Finally after tensoring with $\bZ/p^n$ and taking cohomology, \eqref{eq:relfinres} takes the shape
\begin{equation} \label{eq:takdsjdf}
\nu^{*}R^{i}f_{\eta, \et*}\bZ/p^n \otimes_{\wh{\bZ}_p} W(\wh{\cO}^{+, \flat}_{Y}) \to R^{i}f_{\eta, \proet*}(W(\wh{\cO}^{+, \flat}_{X})/p^n).
\end{equation}
The proof of \cite[Theorem 8.8]{SchpHrig} shows that (after taking sections) \eqref{eq:takdsjdf} is an almost isomorphism (w.r.t to $[\fm^{\flat}]$). 
\end{proof}

After inverting $\mu$, the relative $A_{\Inf}$-cohomology $R\Gamma_{A_{\Inf}}(\fX/\fY)$ (cf. Definition \ref{def:relAOmegacomplex}) and its derived $p$-adic completion capture \eqref{eq:etapadco}. More precisely:

\begin{thm} \label{thm:padcohse}
We have the identifications 
\[
R\Gamma_{A_{\Inf}}(\fX/\fY) \otimes_{A_{\Inf}}^{\bL} A_{\Inf}[\tfrac{1}{\mu}] \cong R\Gamma_{A_{\Inf}}(\fX/\fY)^{\wedge} \otimes_{A_{\Inf}}^{\bL} A_{\Inf}[\tfrac{1}{\mu}] \cong Rf_{\eta, \proet*}\wh{\bZ}_p \otimes_{\wh{\bZ}_p}^{\bL} \bA_{\Inf,Y}[\tfrac{1}{\mu}].
\]
\end{thm}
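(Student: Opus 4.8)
The plan is to derive the statement by pushing the identities already established on $D$ forward along $p$, and then invoking the sheaf-theoretic comparison of Proposition~\ref{prop:exrn517shco} on $Y_{\proet}$. Recall first that the universal property used to construct $\nu_f$ in \eqref{eq:definovf} gives an isomorphism $p\circ\nu_f\cong f_{\eta,\proet}$ of morphisms of topoi, so $Rp_*\circ R\nu_{f*}=Rf_{\eta,\proet*}$.

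The first step is to reduce to the pro-étale side. The localization $A_{\Inf}\to A_{\Inf}[\tfrac1\mu]$ is flat and exhibits $A_{\Inf}[\tfrac1\mu]$ as a filtered colimit along multiplication by $\mu$. Since $f$ is proper, hence qcqs, the morphism $p$ is coherent (Proposition~\ref{prop:ousiDalge}(4)), so $Rp_*$ commutes with this filtered colimit; the same holds for $Rf_{\eta,\proet*}$, using the boundedness of pro-étale pushforward along the proper morphism $f_\eta$ (cf.\ \cite[Theorem~5.1]{SchpHrig}, as in \cite[\S5]{BhMorSch}). Combining this with Lemma~\ref{lem:pretalecompas} yields
\[
R\Gamma_{A_{\Inf}}(\fX/\fY)\otimes^{\bL}_{A_{\Inf}}A_{\Inf}[\tfrac1\mu]\cong Rp_*R\nu_{f*}\bigl(\bA_{\Inf,X}\otimes^{\bL}_{A_{\Inf}}A_{\Inf}[\tfrac1\mu]\bigr)\cong Rf_{\eta,\proet*}\bA_{\Inf,X}\otimes^{\bL}_{A_{\Inf}}A_{\Inf}[\tfrac1\mu].
\]
For the completed variant I would first note that derived $p$-adic completion commutes with $Rp_*$ (the projection formula identifies $Rp_*(-)\otimes^{\bL}_{\bZ}\bZ/p^n$ with $Rp_*(-\otimes^{\bL}_{\bZ}\bZ/p^n)$, and $Rp_*$ commutes with $R\varprojlim$), so that $R\Gamma_{A_{\Inf}}(\fX/\fY)^{\wedge}=Rp_*(A\Omega_{\fX/\fY}^{\wedge})$, and then repeat the computation using Lemma~\ref{lem:imprlem3.10} in place of Lemma~\ref{lem:pretalecompas}. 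Since both the uncompleted and the completed relative $A_{\Inf}$-cohomology become identified, compatibly, with $Rf_{\eta,\proet*}\bA_{\Inf,X}\otimes^{\bL}_{A_{\Inf}}A_{\Inf}[\tfrac1\mu]$, this already gives the first isomorphism of the theorem.

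The second step invokes Proposition~\ref{prop:exrn517shco}: the cone of $Rf_{\eta,\proet*}\wh{\bZ}_p\otimes^{\bL}_{\wh{\bZ}_p}\bA_{\Inf,Y}\to Rf_{\eta,\proet*}\bA_{\Inf,X}$ has cohomology sheaves killed by $W(\fm^{\flat})$. As $\mu\in W(\fm^{\flat})$ (its image in $W(k)$ vanishes), these sheaves are $\mu$-torsion, so the cone becomes acyclic after the exact functor $-\otimes^{\bL}_{A_{\Inf}}A_{\Inf}[\tfrac1\mu]$; with the flat base change $\bA_{\Inf,Y}[\tfrac1\mu]=\bA_{\Inf,Y}\otimes^{\bL}_{A_{\Inf}}A_{\Inf}[\tfrac1\mu]$ we obtain
\[
Rf_{\eta,\proet*}\bA_{\Inf,X}\otimes^{\bL}_{A_{\Inf}}A_{\Inf}[\tfrac1\mu]\cong Rf_{\eta,\proet*}\wh{\bZ}_p\otimes^{\bL}_{\wh{\bZ}_p}\bA_{\Inf,Y}[\tfrac1\mu].
\]
Chaining this with the first step, and transporting functoriality in $f$ from the cited statements, finishes the proof.

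The real content of all this sits in Lemmas~\ref{lem:pretalecompas} and \ref{lem:imprlem3.10} and in Proposition~\ref{prop:exrn517shco}; the part that requires genuine care here is purely formal: one must check that $Rp_*$ and $Rf_{\eta,\proet*}$ commute with inverting $\mu$ (coherence of $p$, boundedness of $f_{\eta,\proet}$) and with derived $p$-adic completion, and thread the flat localization correctly through the various tensor products. I expect this bookkeeping, rather than any single conceptual point, to be the main thing to get right.
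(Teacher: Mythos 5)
Your proposal is correct and takes essentially the same route as the paper: it rests on Lemmas \ref{lem:pretalecompas} and \ref{lem:imprlem3.10} together with Proposition \ref{prop:exrn517shco}, uses coherence of $p$ (Proposition \ref{prop:ousiDalge}(4), via SGA4, Exp.\ VI, Th.\ 5.1) to commute $Rp_{*}$ with inverting $\mu$, and the projection-formula/$R\varprojlim$ argument to commute $Rp_{*}$ with derived $p$-adic completion, exactly as in the paper's proof. One cosmetic remark: the localization should be kept outside $R\nu_{f*}$ (Lemma \ref{lem:pretalecompas} identifies $A\Omega_{\fX/\fY}\otimes^{\bL}_{A_{\Inf}}A_{\Inf}[\tfrac{1}{\mu}]$ with $R\nu_{f*}\bA_{\Inf,X}\otimes^{\bL}_{A_{\Inf}}A_{\Inf}[\tfrac{1}{\mu}]$), so that coherence of $p$ alone does all the commuting and your boundedness remark about $Rf_{\eta,\proet*}$ is not needed.
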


\begin{proof}
First we show the left-most quasi-isomorphism. Indeed Lemmas \ref{lem:pretalecompas} and \ref{lem:imprlem3.10} imply that the canonical morphism 
\[
A\Omega_{\fX/\fY} \otimes_{A_{\Inf}}^{\bL} A_{\Inf}[\tfrac{1}{\mu}] \to A\Omega_{\fX/\fY}^{\wedge} \otimes_{A_{\Inf}}^{\bL} A_{\Inf}[\tfrac{1}{\mu}]
\]
is a quasi-isomorphism. The result now follows from \cite[Exposé VI, Théor\`{e}me 5.1]{SGA4} (applied to Proposition \ref{prop:ousiDalge}(4)) and \cite[{Tag 0A0G}]{stacks-project}.

It remains to compare the two outer terms.
Since $\mu \in W(\fm^{\flat})$, this is a consequence of Lemma  \ref{lem:pretalecompas}, Proposition \ref{prop:exrn517shco} and \cite[Exposé VI, Théor\`{e}me 5.1]{SGA4} (again applied to Proposition \ref{prop:ousiDalge}(4)).
\end{proof}

\section{The local analysis of $A\Omega_{\fX / \fY}$} \label{sec:localanal}

Let $R$ and $S$ be flat $\cO$-algebras equipped with the $p$-adic topology such that $R[p^{-1}]$ and $S[p^{-1}]$ are strongly noetherian. Moreover we assume $R$ and $S$ are complete. We assume throughout \S \ref{sec:localanal} that $\fY = \Spf (S)$, $\fX = \Spf (R)$ and $f \colon \fX \to \fY$ is any smooth $\Spf(\cO)$-morphism such that for some $d \geq 0$, there is an étale $\Spf(S)$-morphism
\[
\fX = \Spf (R) \rightarrow \Spf (R^{\square}) =: \fX^{\square}
\]
where $R^{\square} := S \{t_1^{\pm 1}, \ldots, t_d^{\pm 1} \}$. Let $f_{\eta} \colon X \to Y$ denote the induced morphism on the adic generic fibers. Armed with Definition \ref{def:relAOmegacomplex}, the first goal of this section is to relate the cohomology of the complex $Rf_{\eta *}\bA_{\Inf,X}$, in terms of continuous group cohomology (cf. \cite[Lemma 1.16]{BhMorSch} in the smooth case and \cite[Theorem 3.20]{SemistabAinfcoh} in the semistable case), particularly after killing $\mu$-torsion. The origin of such a computation goes back to ideas of Faltings. The main steps are analogous to those appearing in the absolute case (cf. \cite[Lemma 1.16]{BhMorSch} in the smooth case and \cite[Theorem 3.20]{SemistabAinfcoh} in the semistable case). The main difference in the current situation is that the pro-étale site of the target $Y$ adds some extra technicalities.  

\subsection{The perfectoid cover $X_{\infty}$}\label{sec:perfcov}

We use the setup in \cite[\S 5.1]{BhMorSch} when dealing with the pro-étale site associated to a locally noetherian adic space. We fix some pro-étale morphism  
\[
Y_{\infty} := \varprojlim_{i \in I} \Spa(S_{i}, S_{i}^{+}) \rightarrow  Y,
\]
where $Y_{\infty}$ is some affinoid perfectoid object in $Y_{\proet}$. Setting $S_{\infty} := (\varinjlim_i S_i^{+})^{\wedge}$ where the completion is $p$-adic one has the following fact which is implicit in loc.cit.:
\begin{lem} \label{baslem:torfree}
$S_{\infty}$ is a torsion free $\cO$-module. 
\end{lem}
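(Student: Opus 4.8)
The plan is to reduce the statement to $p$-torsion-freeness and then track this property through the three operations that build $S_\infty$: passage to a single level $S_i^+$, the filtered colimit, and the $p$-adic completion. Since $\cO$ is the valuation ring of the rank-one nonarchimedean field $C$ and its value group is divisible, every nonzero $a\in\cO$ divides a power of $p$ (pick $n$ with $n\cdot v(p)\ge v(a)$, so $p^n/a\in\cO$); hence for an $\cO$-module ``torsion free'' is equivalent to ``$p$-torsion free'', and it suffices to show $S_\infty$ has no $p$-torsion.

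First I would record that each $S_i^{+}$ is $p$-torsion free. By construction $\Spa(S_i,S_i^{+})$ is (pro-)étale over $Y$, and $Y$ is an adic space over $(\Spf\cO)_\eta=\Spa(C,\cO)$; the structure morphism therefore supplies a continuous ring map $C\to S_i$, so $p$ — indeed every nonzero element of $\cO$ — is invertible in $S_i$, and in particular $S_i$ is $p$-torsion free. As $S_i^{+}\subseteq S_i$ is a subring, $S_i^{+}$ is $p$-torsion free as well, and is thus a torsion-free $\cO$-module. Next, since filtered colimits of modules are exact, multiplication by $p$ is injective on $S_\infty^{0}:=\varinjlim_{i\in I}S_i^{+}$: a relation $a\cdot m=0$ with $0\ne a\in\cO$ is already witnessed at some finite level $S_k^{+}$, where it forces $m=0$. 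Hence $S_\infty^{0}$ is $p$-torsion free.

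Finally I would invoke the fact that the $p$-adic completion of a $p$-torsion-free module is again $p$-torsion free. Because $S_\infty^{0}$ is $p$-torsion free, the tower $(S_\infty^{0}/p^{n})_{n}$ has surjective transition maps, so its derived $p$-completion has no $R^{1}\varprojlim$ and coincides with the classical one; thus $S_\infty=\varprojlim_n S_\infty^{0}/p^{n}$. If $\xi=(x_n)_n\in S_\infty$ satisfies $p\xi=0$, then a lift $\tilde x_n\in S_\infty^{0}$ of $x_n$ has $p\tilde x_n\in p^{n}S_\infty^{0}$, so $p$-torsion-freeness of $S_\infty^{0}$ gives $\tilde x_n\in p^{n-1}S_\infty^{0}$; applying this with $n+1$ in place of $n$ shows $\tilde x_{n+1}\in p^{n}S_\infty^{0}$, whose image in $S_\infty^{0}/p^{n}$ is $0$, i.e.\ $x_n=0$ for all $n$. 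Hence $S_\infty$ is $p$-torsion free, and therefore a torsion-free $\cO$-module.

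The only step carrying any content is the last one, the stability of $p$-torsion-freeness under $p$-adic completion, and even there the argument is the short diagram chase above; everything else is formal. The single point to stay alert to is that the completion in the definition of $S_\infty$ is the $p$-adic completion of the $\cO$-lattice $\varinjlim_i S_i^{+}$ (not of $\varinjlim_i S_i[\tfrac1p]$), which is exactly the module to which the chase applies.
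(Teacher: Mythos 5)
Your proof is correct, and its skeleton is the same as the paper's: each $S_i^{+}$ is torsion free because it sits inside the $C$-algebra $S_i$, torsion-freeness passes to the filtered colimit, and then one handles the $p$-adic completion. The only divergence is at the completion step. The paper converts "torsion free" into "flat" using that $\cO$ is a valuation ring (Stacks Tag 0539) and then quotes Gabber--Ramero, Lemma 7.1.6(i), which says the $p$-adic completion of a flat $\cO$-module is again flat, hence torsion free. You instead reduce torsion-freeness to $p$-torsion-freeness (using that every nonzero $a\in\cO$ divides a power of $p$ — note the relevant property here is that the valuation is of rank one, so $v(a)\le n\,v(p)$ for some $n$, not divisibility of the value group) and then prove by a direct lifting argument that the classical $p$-adic completion of a $p$-torsion-free module is $p$-torsion free. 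Both routes are valid and of essentially equal strength (over $\cO$, flat and torsion free coincide); yours is self-contained and elementary, while the paper's is a one-line citation that records the flatness formulation, which is what Gabber--Ramero's completion lemma is stated for and which the paper reuses elsewhere (e.g.\ in Corollary 4.2 and Proposition 4.7). Two trivial polish points: the colimit step should note that a relation $p\,m=0$ is witnessed at some \emph{later} index of the directed system, and the aside about derived versus classical completion is unnecessary since $S_\infty$ is defined as the classical $p$-adic completion of $\varinjlim_i S_i^{+}$.
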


\begin{proof} 
Note first that each $S_i$ is a $C$-vector space and $S_i^{+} \subset S_i$ is a subring and therefore $S_i^{+}$ is a torsion free $\cO$-module. The direct limit of torsion free $\cO$-modules is again torsion free and so $\varinjlim_i S_i^{+}$ is a torsion free $\cO$-module. Since $\cO$ is a valuation ring by \cite[{Tag 0539}]{stacks-project}, this is equivalent to $\varinjlim_i S_i^{+}$ being a flat $\cO$-module. One then applies \cite[Lemma 7.1.6(i)]{GabRam} to conclude that the $p$-adic completion $S_{\infty}$ is a flat and therefore a torsion free $\cO$-module. 
\end{proof}

The pro-object $Y_{\infty}$ therefore determines the perfectoid space $\Spa (S_{\infty}[\tfrac{1}{p}], S_{\infty})$. Base changing along $Y_{\infty}$, we will construct a perfectoid cover over $X \times_{Y} Y_{\infty}$. More precisely, for each $m \geq 0$, we consider the $R^{\square}$-algebra
\begin{equation} \label{eq:defnofRmsqau}
R^{\square}_{m} := S_{\infty}\{t_1^{\pm 1/p^{m}}, \ldots, t_d^{\pm 1/p^{m}} \} \text{ and } R_{\infty}^{\square} := (\varinjlim R^{\square}_m)^{\wedge},
\end{equation}
where the completion is $p$-adic. Explicitly, we have the $p$-adically completed direct sum decomposition
\begin{equation} \label{eq:smallsum}
R_{\infty}^{\square} \cong \widehat{\bigoplus}_{(a_1,\ldots, a_d) \in \bZ [\tfrac{1}{p}]^{\oplus d}} S_{\infty} \cdot t_{1}^{a_1} \cdots t_d^{a_d}.
\end{equation}
The corresponding $R$-algebras are 
\begin{equation}\label{eq:definRinfty}
R_m := (R \wh{\otimes}_{R^{\square}}R^{\square}_0) \otimes_{R^{\square}_0} R^{\square}_m \text{ and } R_{\infty} := (\varinjlim R_m)^{\wedge} \cong R \wh{\otimes}_{R^{\square}} R_{\infty}^{\square}.
\end{equation}
As a consequence of Lemma \ref{baslem:torfree}, we obtain:

\begin{cor} \label{cor:comtentorfree}
$R_{\infty}$ is a torsion free $\cO$-module.
\end{cor}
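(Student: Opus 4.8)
Since $\cO$ is a valuation ring, a torsion-free $\cO$-module is the same thing as a flat $\cO$-module (\cite[{Tag 0539}]{stacks-project}), so — just as in the proof of Lemma \ref{baslem:torfree} — it suffices to show that $R_\infty$ is $\cO$-flat. The plan is to exhibit $R_\infty$ as the $p$-adic completion of a filtered colimit of $\cO$-flat modules and then invoke \cite[Lemma 7.1.6(i)]{GabRam}, which preserves $\cO$-flatness under $p$-adic completion, exactly as in loc.\ cit.

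First I would observe that it is enough to prove that $R_0 := R\wh\otimes_{R^\square}R_0^\square$ is $\cO$-flat. Indeed, by \eqref{eq:definRinfty} we have $R_m = R_0\otimes_{R_0^\square}R_m^\square$, and by \eqref{eq:defnofRmsqau} the ring $R_m^\square$ is a finite free $R_0^\square$-module of rank $p^{md}$ (so that this tensor product introduces no completion); hence $R_m\cong R_0^{\oplus p^{md}}$ as $R_0$-modules, and in particular as $\cO$-modules. If $R_0$ is $\cO$-flat, then so is each $R_m$, hence so is the filtered colimit $\varinjlim_m R_m$, and therefore so is its $p$-adic completion $R_\infty$ by \cite[Lemma 7.1.6(i)]{GabRam}.

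For the $\cO$-flatness of $R_0$, I would use the identification $R_0^\square = S_\infty\{t_1^{\pm 1},\dots,t_d^{\pm 1}\} = R^\square\wh\otimes_S S_\infty$ (base change of the Tate algebra along $S\to S_\infty$), which yields $R_0 = R\wh\otimes_{R^\square}(R^\square\wh\otimes_S S_\infty)\cong R\wh\otimes_S S_\infty$. Now $R$ is flat over $S$ because $f$ is smooth, and $S_\infty$ is torsion-free over $\cO$ by Lemma \ref{baslem:torfree}, hence $\cO$-flat; therefore $R\otimes_S S_\infty$ is $\cO$-flat, since for any $\cO$-module $N$ one has $(R\otimes_S S_\infty)\otimes_\cO N = R\otimes_S(S_\infty\otimes_\cO N)$, which is exact in $N$. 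Applying \cite[Lemma 7.1.6(i)]{GabRam} once more, the $p$-adic completion $R_0 = R\wh\otimes_S S_\infty$ is $\cO$-flat, and we are done.

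The argument is almost entirely formal; the only points requiring care are the compatibilities among the iterated $p$-completed tensor products (so that $R\wh\otimes_{R^\square}(R^\square\wh\otimes_S S_\infty)\cong R\wh\otimes_S S_\infty$ and $R_m\cong R_0^{\oplus p^{md}}$ hold on the nose) and the preservation of $\cO$-flatness under $p$-adic completion, for which — as in Lemma \ref{baslem:torfree} — we rely on \cite[Lemma 7.1.6(i)]{GabRam}. I do not expect any genuine obstacle beyond this bookkeeping.
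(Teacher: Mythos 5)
Your proof is correct, and it rests on the same two ingredients as the paper's own argument --- Lemma \ref{baslem:torfree} and the fact that $p$-adic completion preserves flatness, \cite[Lemma 7.1.6(i)]{GabRam} --- but it is organised differently. The paper first deduces $\cO$-flatness of $R^{\square}_{\infty}$ from the completed direct sum decomposition \eqref{eq:smallsum}, then writes $R$ as the $p$-adic completion of an honest étale $R^{\square}$-algebra $R'$, so that $R'\otimes_{R^{\square}}R^{\square}_{\infty}$ is flat over $R^{\square}_{\infty}$ and hence over $\cO$, and finally completes; you instead reduce to $R_0\cong R\wh{\otimes}_S S_{\infty}$ via the finite freeness of $R_m^{\square}$ over $R_0^{\square}$ and then use flatness of $R$ over $S$. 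Both routes work, and yours has the minor virtue of isolating the ``integral part'' $R\wh{\otimes}_S S_{\infty}$ of \eqref{eq:smallsumdec}, which is the object that matters later anyway. The one step you should justify more carefully is the assertion that $R$ is flat over $S$ ``because $f$ is smooth'': smoothness of morphisms of $p$-adic formal schemes in this setting is only a $p$-complete notion, so honest flatness of $R$ over $S$ is not definitional. It is true, and the cleanest justification is precisely the device the paper's proof uses instead: $R$ is the $p$-adic completion of an étale (hence flat) $R^{\square}$-algebra $R'$, and $R^{\square}=S\{t_1^{\pm 1},\ldots,t_d^{\pm 1}\}$ is the $p$-adic completion of the free $S$-module $S[t_1^{\pm 1},\ldots,t_d^{\pm 1}]$, so two applications of \cite[Lemma 7.1.6(i)]{GabRam} (over the $p$-adically complete bases $R^{\square}$ and $S$) give flatness of $R$ over $S$. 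With that point made explicit, your remaining bookkeeping (the identifications of iterated $p$-completed tensor products, valid here because all the rings involved are $p$-torsion free) goes through.
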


\begin{proof}
By Lemma \ref{baslem:torfree} (together with \cite[Lemma 7.1.6(i)]{GabRam}), it follows that $R_{\infty}^{\square}$ is a torsion free $\cO$-module. Since $R$ is a $p$-completely étale $R^{\square}$-algebra, it follows that $R$ is the $p$-adic completion of an étale $R^{\square}$-algebra $R'$. Therefore $R' \otimes_{R^{\square}} R_{\infty}^{\square}$ is a flat $R_{\infty}^{\square}$-algebra. Hence, $R' \otimes_{R^{\square}} R_{\infty}^{\square}$ is a flat $\cO$-algebra. One then applies \cite[Lemma 7.1.6(i)]{GabRam} to conclude.
\end{proof}

We collect some properties about the algebras $R_m$.

\begin{lem}
For all $m \geq 0$, $R_m$ is $p$-adically complete and $R_m$ is integrally closed in $R_m[\tfrac{1}{p}]$.
\end{lem}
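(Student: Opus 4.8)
The plan is to prove the two assertions separately: $p$-adic completeness is formal, whereas for integral closedness I would first record an intersection identity relating $R_m$ to $R_\infty$, and then deduce the result from the perfectoid structure on $R_\infty$. For completeness, recall that $R_m=(R\wh\otimes_{R^\square}R^\square_0)\otimes_{R^\square_0}R^\square_m$, and the map $R^\square_0\to R^\square_m$ is finite free: a basis is given by the monomials $t_1^{a_1}\cdots t_d^{a_d}$ with $a_j\in\tfrac{1}{p^m}\bZ$ and $0\le a_j<1$. Hence $R_m$ is a finite free module over the $p$-adically complete ring $R\wh\otimes_{R^\square}R^\square_0\,(=R\wh\otimes_S S_\infty)$, and a finite module over a $p$-adically complete ring is $p$-adically complete, so no further completion is needed. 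The same description, together with Lemma \ref{baslem:torfree} and \cite[Lemma 7.1.6(i)]{GabRam} (exactly as in the proof of Corollary \ref{cor:comtentorfree}), shows $R_m$ is $\cO$-torsion free, so that $R_m\hookrightarrow R_m[\tfrac1p]$ and integral closedness makes sense.

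For integral closedness the key point is the identity $R_m=R_\infty\cap R_m[\tfrac1p]$, formed inside $R_\infty[\tfrac1p]$. To see this, note that the decomposition \eqref{eq:smallsum} writes $R^\square_\infty$ as a $p$-completed direct sum $\wh{\bigoplus}_{\gamma}M_\gamma$ of free rank-one $R^\square_0$-modules indexed by $\gamma\in(\bZ[\tfrac1p]/\bZ)^{\oplus d}$ (group the monomials $t^a$ by the class of $a$ modulo $\bZ^{d}$), and under this decomposition $R^\square_m$ is the finite sub-sum over $\gamma\in(\tfrac{1}{p^m}\bZ/\bZ)^{\oplus d}$, hence a direct summand of $R^\square_\infty$ as an $R^\square_0$-module. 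Base changing this splitting along $R\wh\otimes_{R^\square}R^\square_0$ and invoking \eqref{eq:definRinfty} exhibits $R_m$ as a module direct summand of $R_\infty$ whose complement is $\cO$-torsion free; in particular $R_\infty/R_m$ is $p$-torsion free, which yields the displayed identity. Granting it, any $x\in R_m[\tfrac1p]$ integral over $R_m$ is a fortiori integral over $R_\infty$, so lies in $R_\infty$ as soon as $R_\infty$ is integrally closed in $R_\infty[\tfrac1p]$, and then $x\in R_\infty\cap R_m[\tfrac1p]=R_m$.

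It thus remains to know that $R_\infty$ is integrally closed in $R_\infty[\tfrac1p]$, and this I would get from its perfectoid structure: $R^\square_\infty=S_\infty\langle t_1^{\pm1/p^\infty},\dots,t_d^{\pm1/p^\infty}\rangle$ is integral perfectoid, being obtained from the integral perfectoid ring $S_\infty$ (the ring of integral elements of the affinoid perfectoid $Y_\infty$) by adjoining $p$-power roots of units and $p$-completing, and $R_\infty=R\wh\otimes_{R^\square}R^\square_\infty$ is $p$-completely étale over $R^\square_\infty$, hence again integral perfectoid by the standard stability of the perfectoid condition under étale extensions; and the ring of integral elements of a perfectoid Tate ring is, by definition, integrally closed in it.

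The main obstacle is precisely this last input, since it leans on the perfectoid structure of $R_\infty$; if one prefers an argument that does not anticipate later results, the integral closedness of $R_m$ should instead be proved by descending along the finite free extension $R^\square_0\to R^\square_m$ and the $p$-completely étale extension $R^\square_m\to R_m$ to the case of the Tate--Laurent algebra $S_\infty\{t_1^{\pm1},\dots,t_d^{\pm1}\}$ over $S_\infty$, using that the property ``integrally closed in the rationalization'' is inherited under passage to Tate(--Laurent) algebras, under finite free ring extensions, and under $p$-completely étale extensions (the last via locality of integral closure together with the standard-étale presentation). Either way, the crux is transporting integral closedness out of $S_\infty$ --- which holds because $S_\infty$ is a ring of integral elements of an affinoid perfectoid --- through these base changes.
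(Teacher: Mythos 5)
Your completeness argument is fine and essentially the paper's: the paper gets it from flatness of $R\wh{\otimes}_{R^\square}R^\square_0$ over $R^\square_0$ plus finite presentation of $R^\square_m$ (via Gabber--Ramero), while you observe that $R^\square_m$ is even finite free over $R^\square_0$ on the monomials $t_1^{a_1}\cdots t_d^{a_d}$, so $R_m$ is finite free over the complete ring $R\wh{\otimes}_S S_\infty$; either way the conclusion follows. Your intersection identity $R_m=R_\infty\cap R_m[\tfrac1p]$ is also correct as sketched: $R^\square_m$ is a direct summand of $R^\square_\infty$ with $\cO$-torsion-free complement, the two-term splitting survives the completed base change along $R^\square_0\to R\wh{\otimes}_{R^\square}R^\square_0$, and torsion-freeness of the complement gives that $R_\infty/R_m$ is $p$-torsion free. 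So the integral closedness of $R_m$ does reduce, as you say, to integral closedness of $R_\infty$ in $R_\infty[\tfrac1p]$.

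The gap is exactly at that last input. You justify it by saying that $R_\infty$ is ``the ring of integral elements of a perfectoid Tate ring, by definition'', but at this stage $R_\infty$ is only defined as a completed colimit/completed tensor product. That $(R_\infty[\tfrac1p],R_\infty)$ is a perfectoid affinoid pair --- equivalently, that $X_\infty=\varprojlim_m\Spa(R_m[\tfrac1p],R_m)$ is an affinoid perfectoid object with this ring of integral elements --- is what the paper obtains \emph{after} this lemma, since applying \cite[Lemma 4.10]{SchpHrig} to the tower requires each $(R_m[\tfrac1p],R_m)$ to be a Huber pair, i.e.\ requires precisely the integral closedness being proved; so the argument as written is circular. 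Nor does ``integral perfectoid'' rescue it: stability of the perfectoid condition under $p$-completely \'etale maps gives an integral perfectoid ring, but integral perfectoid is not ``by definition'' integrally closed in its generic fibre, and the \'etale-stability statement you would really need is about pairs, where the $+$-ring is taken to be an integral closure rather than the naive completed base change $R\wh{\otimes}_{R^\square}R^\square_\infty$. Your fallback paragraph is much closer to the truth, but beware that ``integrally closed in the rationalization'' is \emph{not} inherited by arbitrary finite free ring extensions ($\bZ\subset\bQ$ with $\bZ[x]/(x^2-5)$ is a counterexample); what saves the day here is that $R^\square_0\to R^\square_m$ adjoins roots of units, i.e.\ is a (Laurent-)polynomial/smooth-type extension. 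That is in fact the paper's proof: one checks integral closedness on the uncompleted algebras, where it follows from the compatibility of integral closure with the relevant smooth base change \cite[{Tag 03GG}]{stacks-project} starting from the integrally closed pair $\varinjlim_i S_i^{+}\subset\varinjlim_i S_i$, and then passes to $p$-adic completions --- no perfectoid input about $R_\infty$ is needed, and none is available at this point.
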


\begin{proof}
The first part follows the same idea as the proof of Corollary \ref{cor:comtentorfree} and in fact we use the algebra $R'$ appearing there. Indeed $R' \otimes_{R^{\square}} R_{0}^{\square}$ is a flat $R_{0}^{\square}$-algebra. Hence by \cite[Lemma 7.1.6(i)]{GabRam}, so is $R \wh{\otimes}_{R^{\square}}R^{\square}_0$. As $R^{\square}_m$ is a finitely presented $R^{\square}_0$-module, the result follows \cite[Lemma 7.1.6(ii)]{GabRam}. The second part follows from \cite[{Tag 03GG}]{stacks-project} (by working with uncompleted algebras).
\end{proof}

The summands in \eqref{eq:smallsum} with $a_j \notin \bZ$ for some $1 \leq j \leq d$ comprise an $R^{\square}$-submodule $M^{\square}_{\infty}$ of $R^{\square}_{\infty}$ and we set $M_{\infty} := R \wh{\otimes}_{R^{\square}} M_{\infty}^{\square}$. Thus, we have the $R^{\square}$-module (resp. $R$-module) decomposition

\begin{equation} \label{eq:smallsumdec}
R_{\infty}^{\square} \cong (R^{\square} \wh{\otimes}_S S_{\infty}) \oplus M_{\infty}^{\square} \text{  (resp. }  R_{\infty} \cong (R \wh{\otimes}_S S_{\infty}) \oplus M_{\infty}\text{)}.
\end{equation}

The profinite group
\[
\Delta := \left\{ (\epsilon_1, \ldots, \epsilon_d) \in \varprojlim_{m \geq 0} \left( \mu_{p^m}(\cO) \right)^{\oplus d} \right\} \simeq \bZ_p^{\oplus d}
\]
acts $(R^{\square} \wh{\otimes}_S S_{\infty})$-linearly on $R_{m}^{\square}$ by scaling each $t_{j}^{1/p^m}$ by the $\mu_{p^m}$-component of $\epsilon_j$. The induced actions of $\Delta$ on $R_{\infty}^{\square}$ and $R_{\infty}$ are continuous, compatible, and preserve the decompositions \eqref{eq:smallsum} and \eqref{eq:smallsumdec}.

After inverting $p$, for each $m \geq 0$, we have
\[
R^{\square}_{m}[\tfrac{1}{p}] \cong \bigoplus_{a_1, \ldots, a_d \in \{ 0, \tfrac{1}{p^m}, \ldots, \tfrac{p^m -1}{p^m} \}} (R^{\square} \wh{\otimes}_S S_{\infty})[\tfrac{1}{p}] \cdot t_{1}^{a_1} \cdots t_d^{a_d}
\]
so $R^{\square}_m [\frac{1}{p}]$ is the $(R^{\square} \wh{\otimes}_S S_{\infty})[\tfrac{1}{p}]$-algebra obtained by adjoining the $(p^{m})^{\text{th}}$ roots of $t_1, \ldots, t_d \in \left( (R^{\square} \wh{\otimes}_S S_{\infty})[\tfrac{1}{p}] \right)^{\times}$, and hence is finite étale over $(R^{\square} \wh{\otimes}_S S_{\infty})[\tfrac{1}{p}]$. Therefore, $\varinjlim_{m} \left( R_m^{\square}[\tfrac{1}{p}] \right)$ is a pro-(finite étale) $\Delta$-cover of $(R^{\square} \wh{\otimes}_S S_{\infty})[\tfrac{1}{p}]$. We have $R_m^{\square} = \left( R_m^{\square}[\tfrac{1}{p}] \right)^{\circ}$, so the pro-object\ildar{\footnote{Strictly speaking, to get an honest pro-étale presentation, one should replace each $\Spa (R_m^{\square}[\tfrac{1}{p}], R_m^{\square})$ by the pro-object $X_{m}^{\square} \times_{Y} Y_{\infty}$ where $X_{m}^{\square} \to X^{\square}$ is a finite étale Kummer cover obtained by adjoining the $(p^{m})^{\text{th}}$ roots of $t_1, \ldots, t_d \in R^{\square}$.   }} 
\[
X_{\infty}^{\square} := \varprojlim_{m} \Spa (R_m^{\square}[\tfrac{1}{p}], R_m^{\square})
\]
is a pro-(finite étale) $\Delta$-cover of the pro-object $X^{\square} \times_{Y} Y_{\infty}$. Of course, $X_{\infty}^{\square}$ is an affinoid perfectoid object in the site $X^{\square}_{\proet}$. Base changing the situation along $X \to X^{\square}$ we obtain the tower\footnote{Analogous to the previous footnote, one should consider the finite étale Kummer cover $X_m \to X$ to obtain an honest pro-étale presentation.}
\[
X_{\infty} := \varprojlim_{m} \Spa (R_m[\tfrac{1}{p}], R_m)
\]
which is a pro-(finite étale) $\Delta$-cover of the pro-object $X \times_{Y} Y_{\infty}$. As before $X_{\infty}$ becomes an affinoid perfectoid object in the site $X_{\proet}$.

We end this section with a result that allows us to control powerbounded elements in the ring of functions of $X_{\infty}$ (or rather the adic space associated to $X_{\infty}$).

\begin{lem} \label{lem:topfinflasir}
Suppose that $S$ is topologically of finite type 
over $\cO$. Then $(R_{\infty}[\tfrac{1}{p}])^{\circ} = R_{\infty}$.
\end{lem}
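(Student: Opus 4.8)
The plan is to reduce, via Remark~\ref{rem:quotvan}, to a statement about $\fm$-torsion modulo a pseudo-uniformizer, and then to imitate the proof of Lemma~\ref{lem:rigspacirplus} one level down, working over $S_{\infty}$ and the completed torus in place of just over $\cO$. As a preliminary I would note: since $S$ is topologically of finite type over $\cO$, the adic generic fibre $Y$ of $\fY$ is locally of finite type over $\Spa(C,\cO)$, so Lemma~\ref{lem:rigspacirplus} applies and gives $(S_{\infty}[\tfrac{1}{p}])^{\circ}=S_{\infty}$; moreover $R$ is itself topologically of finite type over $\cO$, being the $p$-adic completion of an étale (hence finitely presented) $R^{\square}$-algebra with $R^{\square}=S\{t_1^{\pm1},\dots,t_d^{\pm1}\}$. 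I would also keep on hand the auxiliary rings $S_{i,\mathrm{red}}^{\circ}$ from the proof of Lemma~\ref{lem:rigspacirplus}: each is topologically of finite presentation and flat over $\cO$, the maps $S_{i,\mathrm{red}}^{\circ}/x\to S_{j,\mathrm{red}}^{\circ}/x$ are injective for $0\ne x\in\fm$, and $S_{\infty}=(\varinjlim_i S_{i,\mathrm{red}}^{\circ})^{\wedge}$.

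Now $(R_{\infty}[\tfrac{1}{p}],R_{\infty})$ is an affinoid (perfectoid) pair over $(C,\cO)$ and $R_{\infty}\subseteq(R_{\infty}[\tfrac{1}{p}])^{\circ}$, so by the contrapositive of Remark~\ref{rem:quotvan} it is enough to find a single $0\ne x\in\fm$ with $R_{\infty}/x$ having no nonzero $\fm$-torsion. I would fix such an $x$ and observe that, $C$ being algebraically closed, $x$ divides a power of $p$. Using \eqref{eq:smallsum} and \eqref{eq:definRinfty}, $R_{\infty}$ is the $p$-adic completion of a direct sum of copies of $R_{0}:=R\wh{\otimes}_{S}S_{\infty}$ (indexed by $(\bZ[\tfrac{1}{p}]/\bZ)^{\oplus d}$), and reduction modulo $x$ — equivalently, modulo a fixed power of $p$ — turns this completed direct sum into an honest one. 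Hence $R_{\infty}/x$ is a direct sum of copies of $R_{0}/x$, and the problem reduces to showing that $R_{0}/x$ has no nonzero $\fm$-torsion.

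For that last point I would reproduce the argument of Lemma~\ref{lem:rigspacirplus}. Because $R$ is flat over $S$ and $S_{\infty}=(\varinjlim_i S_{i,\mathrm{red}}^{\circ})^{\wedge}$, one has $R_{0}=(\varinjlim_i R\wh{\otimes}_{S}S_{i,\mathrm{red}}^{\circ})^{\wedge}$, where the factorisation of $R\wh{\otimes}_{S}S_i^{+}\to R_{0}$ through $R\wh{\otimes}_{S}S_{i,\mathrm{red}}^{\circ}$ comes, exactly as in loc.\ cit., from $S_{\infty}$ being reduced (it is perfectoid), so that nilpotents from $S_i^{+}$ become zero in $R_{0}$. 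Reducing modulo $x$, $R_{0}/x=\varinjlim_i\big((R\wh{\otimes}_{S}S_{i,\mathrm{red}}^{\circ})/x\big)$, with injective transition maps, since $S_{i,\mathrm{red}}^{\circ}/x\to S_{j,\mathrm{red}}^{\circ}/x$ is injective and $R$ is $S$-flat. Each $R\wh{\otimes}_{S}S_{i,\mathrm{red}}^{\circ}$ is flat over $\cO$ (by flatness of $R$ over $S$ and of $S_{i,\mathrm{red}}^{\circ}$ over $\cO$, together with \cite[Lemma~7.1.6(i)]{GabRam}) and topologically of finite presentation over $\cO$ (as both $R$ and $S_{i,\mathrm{red}}^{\circ}$ are), so $(R\wh{\otimes}_{S}S_{i,\mathrm{red}}^{\circ})/p$ is finitely presented over $\cO/p$; hence by \cite[Lemma~8.10]{BhMorSch} (in the form of its footnote) $R\wh{\otimes}_{S}S_{i,\mathrm{red}}^{\circ}$ is the $p$-adic completion of a free $\cO$-module, and therefore $(R\wh{\otimes}_{S}S_{i,\mathrm{red}}^{\circ})/x$ has no nonzero $\fm$-torsion. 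A filtered colimit along injective maps of modules with no nonzero $\fm$-torsion has no nonzero $\fm$-torsion, and the proof closes. The main obstacle I anticipate is not the geometric input — that is already provided by Lemma~\ref{lem:rigspacirplus} — but the bookkeeping: transporting the reducedness/factorisation step of Lemma~\ref{lem:rigspacirplus} across the completed tensor product with $R$ so as to identify $R_{0}$ with $(\varinjlim_i R\wh{\otimes}_{S}S_{i,\mathrm{red}}^{\circ})^{\wedge}$, and verifying the topological finite-presentation hypothesis needed to apply \cite[Lemma~8.10]{BhMorSch}.
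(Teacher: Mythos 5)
Your argument is correct, and it is built from the same ingredients as the paper's proof: the completed direct-sum decomposition of $R_{\infty}$ coming from \eqref{eq:smallsum} into free modules over $R\wh{\otimes}_S S_{\infty}$, the reduction via Remark~\ref{rem:quotvan} to showing $R_{\infty}/x$ has no nonzero $\fm$-torsion, the reduced models $S_{i,\mathrm{red}}^{\circ}$ extracted from the proof of Lemma~\ref{lem:rigspacirplus}, and \cite[Lemma 8.10]{BhMorSch} in the weakened form of its footnote. The genuine difference is where the finiteness input is deployed: the paper observes that $R$ itself is topologically of finite type, hence of finite presentation and flat, hence the $p$-adic completion of a free $\cO$-module, and then concludes by citing Lemma~\ref{lem:rigspacirplus}; you instead rerun the colimit argument of Lemma~\ref{lem:rigspacirplus} after base change along $S\to R$, applying the free-module lemma to each $R\wh{\otimes}_S S_{i,\mathrm{red}}^{\circ}$ and using injectivity of the transition maps modulo $x$ to pass to the colimit $(R\wh{\otimes}_S S_{\infty})/x$. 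Your version spells out exactly the step the paper compresses (freeness of $R$ as an $\cO$-module does not by itself control a tensor product formed over $S$, so an argument of your shape is what the final appeal to Lemma~\ref{lem:rigspacirplus} amounts to), at the price of the bookkeeping you flag. That bookkeeping does close, but phrase it the way the paper does elsewhere: $R$ is only the $p$-adic completion of an \'etale $R^{\square}$-algebra, so \emph{flatness of $R$ over $S$} should be invoked $p$-completely (i.e.\ $R/p^{n}$ is flat over $S/p^{n}$), or via the uncompleted \'etale algebra $R'$ as in the proof of Corollary~\ref{cor:comtentorfree}; this is enough both for the injectivity of $(R\wh{\otimes}_S S_{i,\mathrm{red}}^{\circ})/x\to(R\wh{\otimes}_S S_{j,\mathrm{red}}^{\circ})/x$ and for $\cO$-torsion-freeness of $R\wh{\otimes}_S S_{i,\mathrm{red}}^{\circ}$ (mod-$p^{n}$ flatness over $\cO/p^{n}$ together with $p$-adic separatedness, or \cite[Lemma 7.1.6(i)]{GabRam} applied to $R'\otimes_S S_{i,\mathrm{red}}^{\circ}$), after which topological finite presentation follows from flat plus topologically of finite type by \cite[\S 7.3, Corollary 5]{Bosch}, exactly as in the paper. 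Two cosmetic points: divisibility of a power of $p$ by $x$ needs only that $\cO$ is a rank-one valuation ring, not that $C$ is algebraically closed; and the application of Remark~\ref{rem:quotvan} uses that $(R_{\infty}[\tfrac{1}{p}],R_{\infty})$ is the affinoid (perfectoid) pair attached to $X_{\infty}$, which is part of the setup of \S\ref{sec:perfcov} and should be cited as such.
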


\begin{proof}
We have $R_{\infty} \cong R \wh{\otimes}_{R^{\square}} R_{\infty}^{\square}$ and the completed direct sum decomposition \eqref{eq:smallsum} shows that $R_{\infty}^{\square}$ has a $p$-adically completed direct sum decomposition into free $R^{\square}\widehat{\otimes}_{\cO} S_{\infty}$-modules. Therefore $R_{\infty}$ has a $p$-adically completed direct sum decomposition into free $R \widehat{\otimes}_{\cO} S_{\infty}$-modules. Since $S$ is topologically of finite type over $\cO$, so is $R$. Moreover $R$ is topologically of finite presentation by \cite[\S 7.3, Corollary 5]{Bosch} and therefore $R$ is the $p$-adic completion of a free $\cO$-module by \cite[Lemma 8.10]{BhMorSch} (cf. footnote \ref{fnote}). The result now follows from Lemma \ref{lem:rigspacirplus}.
\end{proof}

\subsection{The cohomology of $\wh{\cO}^{+}$ and continuous group cohomology} \label{subsec:cohcongrp}

We prove a relative version of \cite[Corollary 8.13(i)]{BhMorSch}. By \cite[Proposition 3.5, Proposition 3.7(iii), Corollary 6.6]{SchpHrig} the \v{C}ech complex of the sheaf $\wh{\cO}^{+}_X$ with respect to the pro-(finite étale) affinoid perfectoid cover
\[
X_{\infty} \to X \times_{Y} Y_{\infty}
\]
is identified with the continuous cochain complex $R\Gamma_{\cont}(\Delta, R_{\infty})$.  By \cite[{Tag 01GY}]{stacks-project} this gives an edge map 
\begin{equation} \label{eq:relver3.3.1}
 e \colon R\Gamma_{\cont}(\Delta, R_{\infty}) \rightarrow R\Gamma_{\text{proét}} (X \times_{Y} Y_{\infty},  \wh{\cO}^{+}_X).
\end{equation}
By the almost purity theorem \cite[Lemma 4.10(v)]{SchpHrig}, the maximal ideal $\fm \subset \cO$ kills the cohomology groups of $\text{Cone}(e)$.
Given an injective resolution $\wh{\cO}^{+}_X \to \cI^{\bullet}$, we have
\begin{align*}
R\Gamma_{\text{proét}} (X \times_{Y} Y_{\infty},  \wh{\cO}^{+}_X) &\text{ is represented by } \Gamma (X \times_{Y} Y_{\infty}, \cI^{\bullet}) \\
Rf_{\eta*}\wh{\cO}^{+}_X &\text{ is represented by } f_{\eta *}\cI^{\bullet} \\
R\Gamma_{\text{proét}} (Y_{\infty},  Rf_{\eta*}\wh{\cO}^{+}_X) &\text{ is represented by } \Gamma (Y_{\infty}, f_{\eta*}\cI^{\bullet}).
\end{align*}
The second and third points follow from \cite[{Tag 0731}]{stacks-project} and \cite[{Tag 072Z}]{stacks-project} when viewing $\wh{\cO}^{+}_X$ as an object of $D(X_{\text{proét}}, \cO)$ or $D(X_{\text{proét}}, \wh{\cO}^{+}_X)$. By definition of the pushforward we get that $\Gamma (X \times_{Y} Y_{\infty}, \cI^{\bullet}) = \Gamma (Y_{\infty}, f_{\eta*}\cI^{\bullet})$. Therefore the edge map can be rewritten as
\begin{equation} \label{eq:relver3.3.2}
 e \colon R\Gamma_{\cont}(\Delta, R_{\infty}) \rightarrow R\Gamma_{\text{proét}} (Y_{\infty},  Rf_{\eta*}\wh{\cO}^{+}_X),
\end{equation}
viewed as a morphism in $D(\cO)$.

We will now show that $L\eta_{(\zeta_p - 1)}(e)$ is an isomorphism, so that $L\eta_{(\zeta_p - 1)}R\Gamma (Y_{\infty},  Rf_{\eta*}\wh{\cO}^{+}_X)$ is computed in terms of continuous group cohomology. For this, we will use \cite[Lemma 8.11(i)]{BhMorSch} (cf. \cite[Lemma 3.4]{SemistabAinfcoh}). In preparation, first we need to control the $\fm$-torsion in the group cohomology of $R_{\infty}/x$ for some $x \in \cO$. 

\begin{prop} \label{prop:mtorsioncoho}
The element $\zeta_{p}-1$ kills the $\cO$-modules $H^{i}_{\cont}(\Delta, M_{\infty})$. If moreover $(R_{\infty}[\tfrac{1}{p}])^{\circ} = R_{\infty}$, then for each $b \in \cO$, the $\cO$-modules $R_{\infty}/b$ and $H^{i}_{\cont}(\Delta, R_{\infty}/b)$ have no nonzero $\fm$-torsion. 
\end{prop}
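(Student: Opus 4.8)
The plan is to compute $R\Gamma_{\cont}(\Delta,-)$ by the Koszul complex $\Kos(-\,;\gamma_{1}-1,\dots,\gamma_{d}-1)$ attached to topological generators $\gamma_{1},\dots,\gamma_{d}$ of $\Delta\simeq\bZ_{p}^{\oplus d}$ (valid for $p$-adically complete modules, by iterating the standard two-term description of continuous $\bZ_{p}$-cohomology; cf. \cite[\S 7]{BhMorSch}), and to exploit that on each rank-one summand $N_{\mathbf{a}}:=(R\wh{\otimes}_{S}S_{\infty})\cdot t^{\mathbf{a}}$ of $M_{\infty}=\widehat{\bigoplus}_{\mathbf{a}}N_{\mathbf{a}}$ (the sum being over $\mathbf{a}\in\bZ[\tfrac{1}{p}]^{\oplus d}$ with some coordinate not in $\bZ$) each $\gamma_{k}$ acts through a \emph{scalar} $\zeta_{\mathbf{a},k}\in\mu_{p^{\infty}}(\cO)$, which is a primitive $p^{m}$-th root of unity precisely when the $k$-th coordinate of $\mathbf{a}$ has exact denominator $p^{m}$ with $m\geq 1$. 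Since the Koszul complex is additive, commutes with $p$-adic completion, and (having terms which are finite direct sums of the module) commutes with $-\otimes_{\cO}\cO/b$, everything reduces to the individual $N_{\mathbf{a}}$, each of which is $\cO$-flat, being a retract of the $\cO$-flat module $R_{\infty}$ (Corollary \ref{cor:comtentorfree}).

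For the first assertion, fix for each relevant $\mathbf{a}$ an index $j$ whose $\mathbf{a}$-coordinate is not integral and put $\lambda_{\mathbf{a}}:=\zeta_{\mathbf{a},j}-1\in\cO$; then $\lambda_{\mathbf{a}}\neq 0$, and comparing valuations in the valuation ring $\cO$ gives $\lambda_{\mathbf{a}}\mid\zeta_{p}-1$. As $\gamma_{j}-1$ acts on $N_{\mathbf{a}}$ as the injective scalar $\lambda_{\mathbf{a}}$, multiplication by $\zeta_{p}-1$ on the two-term complex $[N_{\mathbf{a}}\xrightarrow{\gamma_{j}-1}N_{\mathbf{a}}]$ is null-homotopic via the scalar homotopy $(\zeta_{p}-1)/\lambda_{\mathbf{a}}\in\cO$; tensoring this null-homotopy with the remaining Koszul factors shows that multiplication by $\zeta_{p}-1$ is null-homotopic on $\Kos(N_{\mathbf{a}}\,;\gamma_{1}-1,\dots,\gamma_{d}-1)$. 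The resulting homotopies are $\cO$-linear and uniformly bounded, hence glue to a null-homotopy of $\zeta_{p}-1$ on $\Kos(M_{\infty}\,;\gamma_{1}-1,\dots,\gamma_{d}-1)=R\Gamma_{\cont}(\Delta,M_{\infty})$, so $\zeta_{p}-1$ kills every $H^{i}_{\cont}(\Delta,M_{\infty})$.

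For the second assertion, $R_{\infty}[\tfrac{1}{p}]$ is a perfectoid $C$-algebra (it is the ring of functions of the affinoid perfectoid object $X_{\infty}$) with $(R_{\infty}[\tfrac{1}{p}])^{\circ}=R_{\infty}$, so by Lemma \ref{lem:nonoto} the module $R_{\infty}/b$, and hence its direct summands $(R\wh{\otimes}_{S}S_{\infty})/b$ and $N_{\mathbf{a}}/b$, has no nonzero $\fm$-torsion for every $b\in\cO$. Using the $\Delta$-equivariant splitting $R_{\infty}/b=(R\wh{\otimes}_{S}S_{\infty})/b\oplus M_{\infty}/b$ and additivity of $R\Gamma_{\cont}(\Delta,-)$, it suffices to treat the two summands. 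On $(R\wh{\otimes}_{S}S_{\infty})/b$ the action is trivial, so its $\Delta$-cohomology is a finite direct sum of copies of the $\fm$-torsion-free module $(R\wh{\otimes}_{S}S_{\infty})/b$. On $M_{\infty}/b$, which for $b\neq 0$ is $p$-power torsion and hence the genuine direct sum $\bigoplus_{\mathbf{a}}N_{\mathbf{a}}/b$, one has $\Kos(N_{\mathbf{a}}/b\,;\gamma_{1}-1,\dots,\gamma_{d}-1)=\Kos(N_{\mathbf{a}}/b\,;\lambda_{\mathbf{a},1},\dots,\lambda_{\mathbf{a},d})$ with all $\lambda_{\mathbf{a},k}:=\zeta_{\mathbf{a},k}-1\in\cO$; using $\cO$-flatness of $N_{\mathbf{a}}$ to identify this, up to shift, with $\Kos(\cO\,;b,\lambda_{\mathbf{a},1},\dots,\lambda_{\mathbf{a},d})\otimes_{\cO}N_{\mathbf{a}}$ and applying a unipotent change of the generating sequence over $\cO$ (replacing it by $(e_{\mathbf{a}},0,\dots,0)$ for a member $e_{\mathbf{a}}$ of least valuation, which is nonzero because $b\neq 0$), one finds that $H^{i}(\Kos(N_{\mathbf{a}}/b\,;\dots))$ is a finite direct sum of copies of $N_{\mathbf{a}}/e_{\mathbf{a}}$, a direct summand of $R_{\infty}/e_{\mathbf{a}}$ and so $\fm$-torsion-free by Lemma \ref{lem:nonoto}. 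Hence $H^{i}_{\cont}(\Delta,M_{\infty}/b)=\bigoplus_{\mathbf{a}}H^{i}(\Kos(N_{\mathbf{a}}/b\,;\dots))$ has no nonzero $\fm$-torsion, which settles the case $b\neq 0$. The case $b=0$ then follows formally: by the first assertion $H^{i}_{\cont}(\Delta,M_{\infty})$ is killed by $\zeta_{p}-1$, hence by $p$ (as $(\zeta_{p}-1)^{p-1}$ generates $(p)$ up to a unit), so it is $p$-adically separated; and by $\cO$-flatness of $M_{\infty}$ the Tor spectral sequence embeds $H^{i}_{\cont}(\Delta,M_{\infty})/b$ into the $\fm$-torsion-free module $H^{i}_{\cont}(\Delta,M_{\infty}/b)$ for every $b\neq 0$, forcing $H^{i}_{\cont}(\Delta,M_{\infty})$ itself to be $\fm$-torsion-free.

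The main obstacle is not any single computation but the bookkeeping needed to make the reduction to the rank-one summands $N_{\mathbf{a}}$ rigorous: before reducing modulo a nonzero $b$, $M_{\infty}$ is a $p$-completed, not an honest, direct sum, so one must check that the scalar homotopies in the first part are uniformly bounded (so as to survive $p$-completion) and that forming $R\Gamma_{\cont}(\Delta,-)$ really commutes with the relevant (co)limits. A second, subtler point --- which is what makes the two halves of the statement mutually consistent --- is that ``killed by $\zeta_{p}-1$'' does not contradict ``no nonzero $\fm$-torsion'' (already $\cO/(\zeta_{p}-1)$ has both properties); everything ultimately rests on Lemma \ref{lem:nonoto}, the only input using perfectoidness, which is exactly what the hypothesis $(R_{\infty}[\tfrac{1}{p}])^{\circ}=R_{\infty}$ serves to unlock.
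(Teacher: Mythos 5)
Your argument is correct and follows the same basic strategy as the paper's proof: split $R_{\infty}/b$ into the $\Delta$-trivial part $(R\wh{\otimes}_S S_{\infty})/b$ plus the nonintegral part, compute continuous cohomology by Koszul complexes whose differentials are the scalars $\zeta-1$, and feed the resulting cyclic quotients into Lemma \ref{lem:nonoto}, which is exactly where the hypothesis $(R_{\infty}[\tfrac{1}{p}])^{\circ}=R_{\infty}$ enters. The differences are in execution: where the paper groups the nonintegral monomials into rank-one $R^{\square}_0$-modules $T$ and quotes \cite{SemistabAinfcoh} (Lemmas 3.6, 3.7 and Proposition 3.8) to identify the cohomology with $M\otimes_{D}R^{\square}_0$ for a module $M$ over a discrete valuation ring $D\subset\cO$, you work directly with rank-one $(R\wh{\otimes}_S S_{\infty})$-summands, prove the $\zeta_p-1$ statement by an explicit norm-$\leq 1$ null-homotopy that survives the $p$-adic completion, compute the mod-$b$ cohomology by a gcd/unipotent change of the Koszul sequence over the valuation ring, and recover the case $b=0$ by a separate d\'evissage (killed by $\zeta_p-1$ together with the injection $H^{i}_{\cont}(\Delta,M_{\infty})/b\hookrightarrow H^{i}_{\cont}(\Delta,M_{\infty}/b)$), whereas the paper treats all $b$ uniformly; this buys a self-contained argument at the price of the completion bookkeeping you already flag, and it even sidesteps the slightly garbled identification of $(R\wh{\otimes}_{R^{\square}}R^{\square}_0)/d$ in the paper's proof. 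One small repair: the decomposition of $M_{\infty}$ must be indexed by the nonzero classes $\ol{\mathbf{a}}\in(\bZ[\tfrac{1}{p}]/\bZ)^{\oplus d}$ (equivalently by a fixed set of coset representatives), not by all nonintegral $\mathbf{a}$, because $R\wh{\otimes}_S S_{\infty}$ already contains $t_1^{\pm 1},\ldots,t_d^{\pm 1}$ and hence $N_{\mathbf{a}}=N_{\mathbf{a}+\mathbf{b}}$ for $\mathbf{b}\in\bZ^{\oplus d}$ --- this is precisely why the paper groups the summands into the modules $T=R^{\square}_0\cdot t_1^{a_1}\cdots t_d^{a_d}$; since the character through which $\Delta$ acts depends only on the class, nothing else in your argument changes.
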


\begin{proof}
Firstly by Lemma \ref{lem:nonoto}, it follows that $R_{\infty}/b$ has no nonzero $\fm$-torsion.  We will now show that $H^{i}_{\cont}(\Delta, R_{\infty}/b)$ has no non-zero $\fm$-torsion. As we are in the smooth case, we will have access to a decomposition of $M_{\infty}$ of which all of the terms will be $R \wh{\otimes}_{R^{\square}} R_{0}^{\square}$-modules of rank 1. This will allow us to treat each term one by one, analogous to the situation in \cite[Proposition 8.9]{BhMorSch}. The semistable case handled in \cite[Proposition 3.8]{SemistabAinfcoh} is slightly more delicate. With this in mind consider $S_{\infty} \cdot t_{1}^{a_1} \cdots t_d^{a_d}$ a summand of \eqref{eq:smallsum} and set $T := R_{0}^{\square} \cdot t_{1}^{a_1} \cdots t_d^{a_d}$. 

By \cite[Lemma 3.7]{SemistabAinfcoh} (cf. \cite[Lemma 7.3(ii)]{BhMorSch}) the $\cO$-module $H^{i}_{\cont}(\Delta, T/b)$ is the $i$-th cohomology of the $R_{0}^{\square}/b$-tensor product of $d$-complexes of the form $R_{0}^{\square}/b \xrightarrow{\zeta -1} R_{0}^{\square}/b$ for suitable $p$-power roots of unity $\zeta$. By Lemma \ref{baslem:torfree}, one obtains
\begin{equation} \label{eq:cohofonesumaman}
H^{i}_{\cont}(\Delta, T/b) = M \otimes_{D} R_{0}^{\square},
\end{equation}
where $M$ is a $D$-module for some discrete valuation ring $D \subset \cO$ (the $b$ is absorbed into the $D$-module $M$). Consider the decomposition
\begin{equation} \label{eq:decompmtor}
R_{\infty}/b \cong  (R \wh{\otimes}_S S_{\infty})/b \oplus \underbrace{\widehat{\bigoplus}_{T} ((R \wh{\otimes}_{R^{\square}}R^{\square}_0) \otimes_{R^{\square}_0} T)/b}_{M_{\infty}/b}. 
\end{equation}
where the direct sum runs over those $T$ which contribute to $M_{\infty}$. In order to show that $H^{i}_{\cont}(\Delta, R_{\infty}/b)$ has no non-zero $\fm$-torsion, we need to show that both $H^{i}_{\cont}(\Delta, (R \wh{\otimes}_S S_{\infty})/b)$ and $H^{i}_{\cont}(\Delta, M_{\infty}/b)$ have no non-zero $\fm$-torsion.

Since $R$ is $R^{\square}$-flat,  \cite[Lemma 7.1.6(i)]{GabRam} implies $R \wh{\otimes}_{R^{\square}}R^{\square}_0$ is $R^{\square}_0$-flat. Moreover $(R \wh{\otimes}_{R^{\square}}R^{\square}_0) \otimes_{R^{\square}_0} T$ is $p$-adically complete and so 
\cite[Lemma 3.7]{SemistabAinfcoh} (cf. \cite[Lemma 7.3(ii)]{BhMorSch}) gives
\begin{equation} \label{eq:cohdecolikmf}
H^{i}_{\cont}(\Delta, ((R \wh{\otimes}_{R^{\square}}R^{\square}_0) \otimes_{R^{\square}_0} T)/b) \cong (R \wh{\otimes}_{R^{\square}}R^{\square}_0) \otimes_{R^{\square}_0} H^{i}_{\cont}(\Delta, T/b).
\end{equation}
Therefore by \eqref{eq:cohofonesumaman} and \cite[Lemma 3.6]{SemistabAinfcoh}, to show that $H^{i}_{\cont}(\Delta, M_{\infty}/b)$ has no nonzero $\fm$-torsion, it suffices to show that $(R \wh{\otimes}_{R^{\square}}R^{\square}_0) \otimes_{R^{\square}_0} (M \otimes_{D} R_{0}^{\square})$ has no nonzero $\fm$-torsion. This reduces to showing that $(R \wh{\otimes}_{R^{\square}}R^{\square}_0) \otimes_{R^{\square}_0} R^{\square}_0/d \cong (R \wh{\otimes}_{R^{\square}}R^{\square}_0)/d$ has no nonzero $\fm$-torsion for some $d \in D$. However
\[
(R \wh{\otimes}_{R^{\square}}R^{\square}_0)/d \cong (R \wh{\otimes}_S S_{\infty})\{t_1^{\pm 1}, \ldots, t_d^{\pm 1} \}/d
\]
and since $(R \wh{\otimes}_S S_{\infty})/d$ sits inside $R_{\infty}/d$, it follows that $(R \wh{\otimes}_{R^{\square}}R^{\square}_0)/d$ has no nonzero $\fm$-torsion. Finally $\Delta$ acts trivially on $(R \wh{\otimes}_S S_{\infty})/b$ and so $H^{i}_{\cont}(\Delta,(R \wh{\otimes}_S S_{\infty})/b)$ has no nonzero $\fm$-torsion. 

It remains to show $\zeta_{p}-1$ kills the $\cO$-modules $H^{i}_{\cont}(\Delta, M_{\infty})$. If $T$ contributes to $M_{\infty}$, then as in the proof of \cite[Proposition 3.8]{SemistabAinfcoh}, we conclude
\begin{equation} \label{eq:zetakillscohom}
\zeta_{p}-1 \text{ kills }H^{i}_{\cont}(\Delta, T).
\end{equation}
By \eqref{eq:decompmtor}-\eqref{eq:cohdecolikmf} and \cite[Lemma 3.6]{SemistabAinfcoh}, we get the desired result.
\end{proof}

\begin{thm} \label{thm:edgemapisoOsheaf}
The edge map $e$ defined in \eqref{eq:relver3.3.2} induces an almost isomorphism
\[
 L\eta_{(\zeta_p - 1)}(e) \colon L\eta_{(\zeta_p - 1)}R\Gamma_{\cont}(\Delta, R_{\infty}) \rightarrow L\eta_{(\zeta_p - 1)}R\Gamma_{\proet} (Y_{\infty},  Rf_{\eta*}\wh{\cO}^{+}_X).
\]
Similarly the induced map (coming from the inclusion $i \colon R\widehat{\otimes}_S S_{\infty} \hookrightarrow R_{\infty}$)
\begin{equation}\label{eq:fsmsfdssd}
  L\eta_{(\zeta_p - 1)}(e \circ R\Gamma_{\cont}(i)) \colon L\eta_{(\zeta_p - 1)}R\Gamma_{\cont}(\Delta, R \widehat{\otimes}_S S_{\infty}) \rightarrow L\eta_{(\zeta_p - 1)}R\Gamma_{\proet} (Y_{\infty},  Rf_{\eta*}\wh{\cO}^{+}_X)
\end{equation}
is an almost isomorphism. Moreover if $(R_{\infty}[\tfrac{1}{p}])^{\circ} = R_{\infty}$, then both almost isomorphisms are  isomorphisms.
\end{thm}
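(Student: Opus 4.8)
The plan is to deduce both assertions from the décalage criterion \cite[Lemma 8.11(i)]{BhMorSch} (cf.\ \cite[Lemma 3.4]{SemistabAinfcoh}). Recall that $\mathrm{Cone}(e)$ is killed by $\fm$ by almost purity \cite[Lemma 4.10(v)]{SchpHrig}, so the only thing left to feed into that criterion is a bound on the $\fm$-torsion occurring in the cohomology, and this is exactly what Proposition \ref{prop:mtorsioncoho} supplies once combined with the $\Delta$-equivariant decomposition \eqref{eq:smallsumdec}, $R_{\infty} \cong (R\wh{\otimes}_S S_{\infty}) \oplus M_{\infty}$.

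First I would dispose of the second map \eqref{eq:fsmsfdssd}. Since $L\eta_{(\zeta_{p}-1)}$ is a functor, \eqref{eq:fsmsfdssd} factors as $L\eta_{(\zeta_{p}-1)}(e)\circ L\eta_{(\zeta_{p}-1)}(R\Gamma_{\cont}(i))$, so it suffices to treat the two factors separately. The decomposition \eqref{eq:smallsumdec} being $\Delta$-equivariant and $L\eta_{(\zeta_{p}-1)}$ commuting with finite direct sums, we get $L\eta_{(\zeta_{p}-1)}R\Gamma_{\cont}(\Delta, R_{\infty}) \cong L\eta_{(\zeta_{p}-1)}R\Gamma_{\cont}(\Delta, R\wh{\otimes}_S S_{\infty}) \oplus L\eta_{(\zeta_{p}-1)}R\Gamma_{\cont}(\Delta, M_{\infty})$, and the second summand vanishes because $\zeta_{p}-1$ annihilates $H^{i}_{\cont}(\Delta, M_{\infty})$ for all $i$ by the first assertion of Proposition \ref{prop:mtorsioncoho}, and $L\eta_{(\zeta_{p}-1)}$ kills any complex whose cohomology is annihilated by $\zeta_{p}-1$ (cf.\ \cite[Lemma 6.4]{BhMorSch}). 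Hence $L\eta_{(\zeta_{p}-1)}(R\Gamma_{\cont}(i))$ is an isomorphism, and the second assertion of the theorem follows at once from the first.

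It remains to analyse $L\eta_{(\zeta_{p}-1)}(e)$, for which I would invoke \cite[Lemma 8.11(i)]{BhMorSch}. Its hypotheses are that $\mathrm{Cone}(e)$ be $\fm$-killed, which holds, together with a control on the torsion of the cohomology of the source. Using \eqref{eq:smallsumdec} once more, $H^{i}_{\cont}(\Delta, R_{\infty}) \cong H^{i}_{\cont}(\Delta, R\wh{\otimes}_S S_{\infty}) \oplus H^{i}_{\cont}(\Delta, M_{\infty})$; since $\Delta$ acts trivially on $R\wh{\otimes}_S S_{\infty}$, the first summand is a finite free $R\wh{\otimes}_S S_{\infty}$-module, hence $\cO$-torsion-free because $R\wh{\otimes}_S S_{\infty} \subseteq R_{\infty}$ is $\cO$-torsion-free by Corollary \ref{cor:comtentorfree}, while the second is killed by $\zeta_{p}-1$. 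Thus the $(\zeta_{p}-1)$-power-torsion of $H^{i}_{\cont}(\Delta, R_{\infty})$ coincides with its $(\zeta_{p}-1)$-torsion (and sits in the $M_{\infty}$-summand), which is the input needed for the ``almost'' conclusion. When $(R_{\infty}[\tfrac{1}{p}])^{\circ} = R_{\infty}$, the second assertion of Proposition \ref{prop:mtorsioncoho} further provides that $R_{\infty}/b$ and $H^{i}_{\cont}(\Delta, R_{\infty}/b)$ are $\fm$-torsion-free for all $b \in \cO$ — and these reductions decompose along \eqref{eq:smallsumdec} as well — which is the stronger input of \cite[Lemma 8.11(i)]{BhMorSch} upgrading the conclusion to a genuine isomorphism.

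The main obstacle is not conceptual: the fact that $L\eta_{(\zeta_{p}-1)}$ converts the $\fm$-almost-isomorphism $e$ into an almost (resp.\ genuine) isomorphism is packaged in \cite[Lemma 8.11(i)]{BhMorSch}. The real work lies in (i) establishing the junk-torsion bound of Proposition \ref{prop:mtorsioncoho}, which is precisely what distinguishes the ``almost'' from the ``honest'' statement and which in the relative setting genuinely requires the rigidity property $(R_{\infty}[\tfrac{1}{p}])^{\circ} = R_{\infty}$ — unavailable over a general base, in contrast to the absolute case where the relevant perfectoid cover is automatically integrally closed in its generic fibre — and (ii) the bookkeeping of matching the torsion hypothesis of \cite[Lemma 8.11(i)]{BhMorSch} (whether it is checked on $H^{i}_{\cont}(\Delta, R_{\infty})$ or on its reductions modulo $b$, and how it transports through $e$ to the less explicit target $R\Gamma_{\proet}(Y_{\infty}, Rf_{\eta*}\wh{\cO}^{+}_X)$), while keeping track of how $L\eta_{(\zeta_{p}-1)}$ interacts with the decomposition \eqref{eq:smallsumdec} in each of its incarnations.
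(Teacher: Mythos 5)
Your overall route is the paper's: you reduce \eqref{eq:fsmsfdssd} to the statement for $L\eta_{(\zeta_p-1)}(e)$ by showing $L\eta_{(\zeta_p-1)}(R\Gamma_{\cont}(i))$ is an honest isomorphism (the paper phrases this via $H^i(L\eta_x K)\cong H^i(K)/H^i(K)[x]$ together with the fact that $\zeta_p-1$ kills $H^i_{\cont}(\Delta,M_\infty)$; your direct-sum formulation along \eqref{eq:smallsumdec} is the same argument, and the decomposition is indeed $\Delta$-equivariant and compatible with continuous cochains), and you settle the rigid case $(R_\infty[\tfrac{1}{p}])^{\circ}=R_\infty$ by the décalage criterion \cite[Lemma 8.11(i)]{BhMorSch} fed with Proposition \ref{prop:mtorsioncoho}. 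Applying that criterion to $e$ itself, with source $R\Gamma_{\cont}(\Delta,R_\infty)$ and input the absence of nonzero $\fm$-torsion in $H^i_{\cont}(\Delta,R_\infty/(\zeta_p-1))$, is fine — in fact the hypothesis that the cone be killed by $\fm$ literally holds for $e$, whereas the paper runs the criterion on $e\circ R\Gamma_{\cont}(i)$ with the ``integral'' source $R\wh{\otimes}_S S_\infty$; the two versions differ only in bookkeeping. (Your freeness claim for $H^i_{\cont}(\Delta, R\wh{\otimes}_S S_\infty)$ uses the Koszul description for a trivial action on a $p$-adically complete module, \cite[Lemma 3.7]{SemistabAinfcoh}, plus Corollary \ref{cor:comtentorfree}; harmless, but worth stating.)

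The one step that does not close as written is the unconditional almost statement for $L\eta_{(\zeta_p-1)}(e)$. The lemma you invoke, \cite[Lemma 8.11(i)]{BhMorSch} (cf. \cite[Lemma 3.4]{SemistabAinfcoh}), takes as input an $\fm$-almost quasi-isomorphism together with the vanishing of $\fm$-torsion in the cohomology of the source reduced modulo $\zeta_p-1$, and its conclusion is an \emph{honest} quasi-isomorphism; it has no ``almost'' variant that your observation ``$(\zeta_p-1)$-power torsion of $H^i_{\cont}(\Delta,R_\infty)$ equals its $(\zeta_p-1)$-torsion'' could feed, and without the rigidity hypothesis the torsion-freeness input is simply unavailable (it can genuinely fail, cf. Remark \ref{rem:quotvan}), so the lemma cannot be invoked at all in that generality. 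Fortunately no torsion control is needed there: since $\mathrm{Cone}(e)$ is killed by $\fm$ (almost purity), each $H^i(e)$ is an almost isomorphism, hence so is the induced map on $H^i(-)/H^i(-)[\zeta_p-1]$, and by \cite[Lemma 6.4]{BhMorSch} these quotients compute $H^i(L\eta_{(\zeta_p-1)}(-))$; this one-line argument — which is what the paper uses — is the only repair your write-up needs, after which your proposal agrees with the paper's proof in all essentials.
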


\begin{proof}
Recall by \S\ref{subsec:cohcongrp}, the edge map $e$ is an almost isomorphism. Therefore $L\eta_{(\zeta_p - 1)}(e)$ is an almost isomorphism. By Proposition \ref{prop:mtorsioncoho}, $i$ induces an isomorphism
\[
\frac{H^{i}_{\cont}(\Delta, R_{\infty})}{H^{i}_{\cont}(\Delta, R_{\infty})[\zeta_p -1]} \cong \frac{H^{i}_{\cont}(\Delta, R\widehat{\otimes}_S S_{\infty})}{H^{i}_{\cont}(\Delta, R\widehat{\otimes}_S S_{\infty})[\zeta_p -1]}.
\]
Therefore $L\eta_{(\zeta_p - 1)}(R\Gamma_{\cont}(i))$ is an isomorphism and hence $L\eta_{(\zeta_p - 1)}(e \circ R\Gamma_{\cont}(i))$ is an almost isomorphism.

Suppose now $(R_{\infty}[\tfrac{1}{p}])^{\circ} = R_{\infty}$. To show that $L\eta_{(\zeta_p - 1)}(e)$ is an isomorphism, it is enough to show that $L\eta_{(\zeta_p - 1)}(e \circ R\Gamma_{\cont}(i))$ is an ismorphism. We will apply \cite[Lemma 8.11(i)]{BhMorSch} (cf. \cite[Lemma 3.4]{SemistabAinfcoh}). Proposition \ref{prop:mtorsioncoho} ensures that the $\cO$-modules $H^{i}_{\cont}(\Delta, R\widehat{\otimes}_S S_{\infty})$ have no nonzero $\fm$-torsion and that the quotient $\tfrac{H^{i}_{\cont}(\Delta, R\widehat{\otimes}_S S_{\infty})}{(\zeta_p -1)H^{i}_{\cont}(\Delta, R\widehat{\otimes}_S S_{\infty})}$ is a direct sum of $\cO$-modules of the form $(R\widehat{\otimes}_S S_{\infty})/(\zeta_p -1)$. These also have no nonzero $\fm$-torsion. Therefore $L\eta_{(\zeta_p - 1)}(e \circ R\Gamma_{\cont}(i))$ is an isomorphism. 
\end{proof}

The main goal of this section is an analogue of Theorem \ref{thm:edgemapisoOsheaf} for the complex $\bA_{\Inf,X}$ (see Theorem \ref{edgemapisoAinf}). To prepare for it, in \S\ref{tiltofRinfty} and \S\ref{ringAinfonRinfty} we describe the values of the sheaves $\wh{\cO}^{+, \flat}_X$ and $W(\wh{\cO}^{+, \flat}_X)$. 

\subsection{The tilt $R_{\infty}^{\flat}$}\label{tiltofRinfty}

We fix a system of compatible $p^n$-power roots $p^{1/p^{\infty}} := (p^{1/p^n})_{n \geq 0}$ of $p$ in $\cO$. Thanks to the explicit description \eqref{eq:smallsum} of the perfectoid ring $R_{\infty}^{\square}$, its tilt $(R_{\infty}^{\square})^{\flat} := \varprojlim_{y \mapsto y^p} (R_{\infty}^{\square}/p)$ is described explictly by the identification
\begin{align*} 
(R_{\infty}^{\square})^{\flat} &\cong (\varinjlim_{m} S_{\infty}^{\flat}[x_1^{\pm 1/p^{m}}, \ldots, x_d^{\pm 1/p^{m}}])^{\wedge} \\
&\cong \widehat{\bigoplus}_{(a_1,\ldots, a_d) \in \bZ [\tfrac{1}{p}]^{\oplus d}} S_{\infty}^{\flat} \cdot x_{1}^{a_1} \cdots x_d^{a_d},
\end{align*}
where $x_{i}^{1/p^m}$ corresponds to the $p$-power compatible sequence $(\ldots, t_i^{1/p^{m+1}}, t_i^{1/p^m})$ of elements of $R_{\infty}^{\square}$, where the completions are $p^{1/p^{\infty}}$-adic, and the decomposition is as $S_{\infty}^{\flat}$-modules. Thus,
\[
\text{the tilt }R_{\infty}^{\flat} := \varprojlim_{y \mapsto y^p} (R_{\infty}/p) \text{ of the perfectoid ring }R_{\infty}
\]
is identified with the $p^{1/p^{\infty}}$-adic completion of any lift of the étale $R_{\infty}^{\square}/p$-algebra $R_{\infty}/p$ to an étale $(R_{\infty}^{\square})^{\flat}$-algebra (such a lift exists by \cite[{Tag 04D1}]{stacks-project}). By \cite[Lemma 5.11(i)]{SchpHrig} the value on $X_{\infty}$ of the sheaf $\wh{\cO}^{+, \flat}_X$ is the ring $R_{\infty}^{\flat}$. 

By functoriality, the group $\Delta$ acts continuously and $S_{\infty}^{\flat}$-linearly on $(R_{\infty}^{\square})^{\flat}$ and $R_{\infty}^{\flat}$. Explicitly, $\Delta$ respects the completed direct sum decomposition and an $(\epsilon_1, \ldots, \epsilon_d) \in \Delta$ scales $x_{j}^{a_j}$ by $\epsilon_j^{a_j} \in \cO^{\flat}$. 

We will need the following analogue of \cite[Lemma 3.12]{SemistabAinfcoh} for our analysis in \S\ref{ringAinfonRinfty} of the value on $X_{\infty}$ of the sheaf $\bA_{\Inf, X}$.

\begin{lem} \label{lem:nonozeromflatro}
Assume that $(R_{\infty}[\tfrac{1}{p}])^{\circ} = R_{\infty}$. Then both $R_{\infty}^{\flat}/b$ and $H^{i}_{\cont}(\Delta, R_{\infty}^{\flat}/b)$ for each $b \in \cO^{\flat} \backslash \{ 0\}$ have no nonzero $\fm^{\flat}$-torsion.
\end{lem}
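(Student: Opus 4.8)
The plan is to transcribe the proof of Proposition~\ref{prop:mtorsioncoho}, replacing $R_{\infty}$, $\cO$, $\fm$ and $\zeta_p-1$ throughout by their tilts $R_{\infty}^{\flat}$, $\cO^{\flat}$, $\fm^{\flat}$ and $\epsilon-1$, and using the explicit description of $R_{\infty}^{\flat}$ recalled in \S\ref{tiltofRinfty} in place of the decompositions \eqref{eq:smallsum}--\eqref{eq:smallsumdec}.

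First I would dispose of the claim for $R_{\infty}^{\flat}/b$. The hypothesis $(R_{\infty}[\tfrac{1}{p}])^{\circ}=R_{\infty}$ says that $R_{\infty}$ is the subring of powerbounded elements of the perfectoid $C$-algebra $R_{\infty}[\tfrac{1}{p}]$, and since tilting commutes with passage to powerbounded elements, $R_{\infty}^{\flat}$ is the subring of powerbounded elements of the perfectoid $C^{\flat}$-algebra $(R_{\infty}[\tfrac{1}{p}])^{\flat}$. The proof of Lemma~\ref{lem:nonoto} uses only the results of \cite[\S 5]{SchPerf}, which are valid over an arbitrary perfectoid field, so it applies verbatim over $C^{\flat}$ and shows that $R_{\infty}^{\flat}/b$ has no nonzero $\fm^{\flat}$-torsion for every $b\in\cO^{\flat}$.

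For the cohomology groups I would follow Proposition~\ref{prop:mtorsioncoho} step by step. By \S\ref{tiltofRinfty} the $\Delta$-action on $R_{\infty}^{\flat}$ preserves a completed direct sum decomposition analogous to \eqref{eq:smallsumdec}, so that after reducing modulo $b$ one gets a $\Delta$-equivariant identification mirroring \eqref{eq:decompmtor}, namely $R_{\infty}^{\flat}/b\cong(R\widehat{\otimes}_S S_{\infty}^{\flat})/b\oplus M_{\infty}^{\flat}/b$, where $\Delta$ acts trivially on the first summand and $M_{\infty}^{\flat}/b$ is a completed direct sum of the mod-$b$ reductions of rank-one pieces $T^{\flat}$, one for each monomial $x^{a}$ with $a\notin\bZ^{\oplus d}$, on which an element $(\epsilon_1,\dots,\epsilon_d)\in\Delta$ acts by the scalar $\prod_j\epsilon_j^{a_j}\in\cO^{\flat}$. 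Since $\Delta$ acts trivially on $(R\widehat{\otimes}_S S_{\infty}^{\flat})/b$, its continuous cohomology in each degree is a finite direct sum of copies of $(R\widehat{\otimes}_S S_{\infty}^{\flat})/b$, which is a direct summand of $R_{\infty}^{\flat}/b$ and hence has no nonzero $\fm^{\flat}$-torsion by the previous paragraph. For the $M_{\infty}^{\flat}$-part, the characteristic-$p$ analogues of \cite[Lemma 3.7]{SemistabAinfcoh} (cf. \cite[Lemma 7.3(ii)]{BhMorSch}) compute $H^i_{\cont}(\Delta,T^{\flat}/b)$ as the cohomology of a $d$-fold Koszul complex with differentials of the form $\prod_j\epsilon_j^{a_j}-1\in\cO^{\flat}$, which as in Proposition~\ref{prop:mtorsioncoho} takes the form $M\otimes_{D}(R^{\square}_0)^{\flat}$ for a discrete valuation subring $D\subset\cO^{\flat}$ and a $D$-module $M$; combining this with the flatness of $R\widehat{\otimes}_{R^{\square}}R^{\square}_0$ over $R^{\square}_0$ and \cite[Lemma 3.6]{SemistabAinfcoh}, exactly as there, reduces matters to showing that $(R\widehat{\otimes}_{R^{\square}}R^{\square}_0)^{\flat}/d\cong(R\widehat{\otimes}_S S_{\infty}^{\flat})\{x_1^{\pm1},\dots,x_d^{\pm1}\}/d$ has no nonzero $\fm^{\flat}$-torsion for $d\in D$, and this holds because it sits inside $R_{\infty}^{\flat}/d$ as a direct summand.

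This makes the argument essentially a formal transcription of the smooth case of Proposition~\ref{prop:mtorsioncoho}; the only genuinely new inputs are the characteristic-$p$ incarnation of Lemma~\ref{lem:nonoto} in the second paragraph and the explicit tilted description of \S\ref{tiltofRinfty}. The step I expect to need the most care is keeping the étale-base-change bookkeeping straight when tilting the rank-one pieces $T^{\flat}$ and identifying their continuous cohomology with a base change from a discrete valuation ring, so that \cite[Lemma 3.6]{SemistabAinfcoh} applies; this is precisely the point at which the smooth case is simpler than the semistable treatment underlying \cite[Lemma 3.12]{SemistabAinfcoh}.
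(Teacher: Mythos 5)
Your first paragraph is fine: since $R_{\infty}=(R_{\infty}[\tfrac{1}{p}])^{\circ}$ one has $R_{\infty}^{\flat}=((R_{\infty}[\tfrac{1}{p}])^{\flat})^{\circ}$, and the argument of Lemma~\ref{lem:nonoto} works verbatim over the perfectoid field $C^{\flat}$, so $R_{\infty}^{\flat}/b$ has no nonzero $\fm^{\flat}$-torsion. The gap is in the cohomological half, in the very first step of your decomposition. You take the ``integral part'' of $R_{\infty}^{\flat}$ to be $(R\wh{\otimes}_S S_{\infty}^{\flat})/b$ with \emph{trivial} $\Delta$-action, and both claims fail. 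First, $R\wh{\otimes}_S S_{\infty}^{\flat}$ is not a meaningful object ($S_{\infty}^{\flat}$ is not an $S$-algebra); since $R_{\infty}^{\flat}=\bA_{\Inf}(R_{\infty})/p$, the correct integral part is the mod-$p$ reduction of the étale lift $A(R)$ of \S\ref{ringAinfonRinfty}, via \eqref{eq:compdesde} (the same issue affects your $(R^{\square}_0)^{\flat}$: these non-perfectoid rings have no tilts, and their characteristic-$p$ counterparts are $A(R^{\square})/p$, $A(R)/p$, etc.). Second, and more seriously, $\Delta$ does \emph{not} act trivially on this integral part: as recorded in \S\ref{tiltofRinfty}, $(\epsilon_1,\ldots,\epsilon_d)$ scales $x_j^{a_j}$ by $\epsilon_j^{a_j}\in\cO^{\flat}$, which is already $\neq 1$ for $a_j=1$ — the tilt remembers the action on the whole tower $t_j^{1/p^m}$, so only the $a=0$ monomials are fixed, in contrast with the untilted ring where all integral monomials are fixed. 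Hence for a general $b$ (say $b=(\epsilon-1)^2$, where the action on $S_{\infty}^{\flat}\{x_1^{\pm1},\ldots,x_d^{\pm1}\}/b$ is visibly nontrivial) the cohomology of the integral part is not a direct sum of copies of it, and your appeal to the trivial-action case of \cite[Lemma 7.3(ii)]{BhMorSch} breaks down. Repairing this inside your framework is not mere bookkeeping: $A(R)/p$ is an étale extension that does not split into character eigenspaces, so you would have to compute $H^i_{\cont}(\Delta,A(R)/(p,b))$ with its nontrivial twisted action, on top of supplying $p^{\flat}$-adically complete versions of the Koszul and completed-direct-sum lemmas you cite.

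This is precisely where the paper takes a different, much shorter route. Since $R_{\infty}^{\flat}$ is perfect, Frobenius is a $\Delta$-equivariant automorphism, so one may replace $b$ by $b^{1/p^n}$ and assume $b\mid p^{1/p^{\infty}}$ in $\cO^{\flat}$; then there is a $\Delta$-equivariant isomorphism $R_{\infty}^{\flat}/b\cong R_{\infty}/b^{\sharp}$ for some $b^{\sharp}\in\cO$, and the whole lemma (including the statement about $R_{\infty}^{\flat}/b$ itself) follows by applying Proposition~\ref{prop:mtorsioncoho} to $b^{\sharp}$, with $\fm^{\flat}$-torsion matching $\fm$-torsion under this identification. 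If you want to salvage a purely characteristic-$p$ transcription, you will find that handling the integral part with its nontrivial action effectively forces this untilting step anyway, so you should adopt it from the start.
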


\begin{proof}
By using Frobenius we can assume that $b \text{ }\lvert \text{ }p^{1/p^{\infty}}$ in $\cO^{\flat}$. Then Proposition \ref{prop:mtorsioncoho} and the $\Delta$-isomorphism $R_{\infty}^{\flat}/b \cong R_{\infty}/b^{\sharp}$ for some $b^{\sharp} \in \cO$ gives the claim.
\end{proof}

\subsection{The ring $\bA_{\Inf}(R_{\infty})$}\label{ringAinfonRinfty}

By \cite[Theorem 6.5(i)]{SchpHrig}, the value of $H^{0}(R\Gamma(X_{\infty}, \bA_{\Inf,X}))$ is the ring
\[
\bA_{\Inf}(R_{\infty}):=W(R_{\infty}^{\flat}).
\]
In general we will denote by $\bA_{\Inf}(P)$ for a perfectoid ring $P$ over $\cO$ to be $W(P^{\flat})$.

\begin{lem} \label{lem:nononzerowfalttor}
Assume that $(R_{\infty}[\tfrac{1}{p}])^{\circ} = R_{\infty}$. Then each quotient 
\[
\bA_{\Inf}(R_{\infty})/(p^{n},\mu^{n'})  \text{ so also } \bA_{\Inf}(R_{\infty})/\mu \text{ has no nonzero } W(\fm^{\flat})\text{-torsion}.
\]
\end{lem}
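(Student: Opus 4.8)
The plan is to set $\bA := \bA_{\Inf}(R_{\infty}) = W(R_{\infty}^{\flat})$ and to reduce, by a Witt-vector dévissage, to Lemma~\ref{lem:nonozeromflatro}. First I would record the ring-theoretic input: $R_{\infty}^{\flat}$ is a perfect $\bF_{p}$-algebra, hence reduced, so $p$ is a nonzerodivisor on $\bA$ and $\bA$ is $p$-adically complete and separated; moreover, using the explicit description of $R_{\infty}^{\flat}$ in \S\ref{tiltofRinfty} together with \cite[Lemma~7.1.6(i)]{GabRam} exactly as in the proof of Corollary~\ref{cor:comtentorfree}, $R_{\infty}^{\flat}$ is a torsion-free (equivalently flat) $\cO^{\flat}$-module. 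Since $\mu \equiv \epsilon - 1 \pmod{p}$ and $\epsilon - 1$ is a nonzero element of the valuation ring $\cO^{\flat}$, it is a nonzerodivisor on $\bA/p = R_{\infty}^{\flat}$, so $(p,\mu)$ is a regular sequence on $\bA$. A short induction on $j$ — using that being a nonzerodivisor is inherited by extensions of modules, applied to $0 \to p^{j}\bA/p^{j+1}\bA \to \bA/p^{j+1} \to \bA/p^{j} \to 0$ — then shows that $\mu$, and hence every power $\mu^{n'}$, is a nonzerodivisor on $\bA/p^{j}$; consequently $(p^{j},\mu^{n'})$ is a regular sequence on $\bA$ and $p^{j}\bA \cap \mu^{n'}\bA = p^{j}\mu^{n'}\bA$ for all $j,n'\geq 1$.

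Next I would filter $\bA/(p^{n},\mu^{n'})$ by the images of $p^{j}\bA$ for $0 \leq j \leq n$. The intersection identity gives $p^{j}\bA \cap (p^{j+1}\bA + \mu^{n'}\bA) = p^{j+1}\bA + p^{j}\mu^{n'}\bA$, so multiplication by $p^{j}$ yields an $\bA$-linear isomorphism $\bA/(p,\mu^{n'}) \xrightarrow{\sim} \gr^{j}$; and since reduction modulo $p$ sends $([\epsilon]-1)^{n'}$ to $(\epsilon-1)^{n'}$, one has $\bA/(p,\mu^{n'}) = R_{\infty}^{\flat}/\big((\epsilon-1)^{n'}\big)$. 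Thus $\bA/(p^{n},\mu^{n'})$ is, as an $\bA$-module, a successive extension of $n$ copies of $R_{\infty}^{\flat}/\big((\epsilon-1)^{n'}\big)$. On this last module the $W(\fm^{\flat})$-action factors through $\bA \to \bA/(p,\mu^{n'})$, while the composite $W(\fm^{\flat}) \hookrightarrow W(\cO^{\flat}) \to W(\cO^{\flat})/p = \cO^{\flat}$ (reduction to the zeroth Witt component) maps $W(\fm^{\flat})$ onto $\fm^{\flat}$; hence, for $R_{\infty}^{\flat}/\big((\epsilon-1)^{n'}\big)$, having no nonzero $W(\fm^{\flat})$-torsion is equivalent to having no nonzero $\fm^{\flat}$-torsion, which is exactly Lemma~\ref{lem:nonozeromflatro} applied with $b = (\epsilon-1)^{n'} \in \cO^{\flat}\setminus\{0\}$ (the hypothesis $(R_{\infty}[\tfrac{1}{p}])^{\circ} = R_{\infty}$ being in force). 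Since the property of having no nonzero $W(\fm^{\flat})$-torsion is closed under extensions, it follows that $\bA/(p^{n},\mu^{n'})$ has no nonzero $W(\fm^{\flat})$-torsion.

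For the final assertion about $\bA/\mu$ I would argue that $\mu$ is a nonzerodivisor on the derived $p$-complete ring $\bA$, so $\bA/\mu$ is derived $p$-complete, and that $(p,\mu)$ being a regular sequence on the $p$-adically separated ring $\bA$ forces $(\mu,p)$ to be a regular sequence as well, so $p$ is a nonzerodivisor on $\bA/\mu$; a $p$-torsion-free derived $p$-complete module is classically $p$-adically complete, so $\bA/\mu = \varprojlim_{n}\bA/(p^{n},\mu)$, an inverse limit of modules with no nonzero $W(\fm^{\flat})$-torsion, and therefore has none. The step I expect to require the most care is the Witt-vector dévissage of the first two paragraphs — in particular verifying the regular-sequence and intersection statements for $(p,\mu)$ on $\bA$ and the flatness of the tilt $R_{\infty}^{\flat}$ over $\cO^{\flat}$; granting these, the statement is a formal consequence of Lemma~\ref{lem:nonozeromflatro}.
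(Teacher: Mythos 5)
Your proof is correct and follows essentially the same route as the paper: the paper's one-line proof invokes Lemma \ref{lem:nonozeromflatro} together with the dévissage of \cite[(3.14.1)]{SemistabAinfcoh}, which is exactly the filtration of $\bA_{\Inf}(R_{\infty})/(p^{n},\mu^{n'})$ by powers of $p$ with graded pieces $R_{\infty}^{\flat}/(\epsilon-1)^{n'}$ and the passage from $\fm^{\flat}$-torsion to $W(\fm^{\flat})$-torsion via reduction modulo $p$ that you carry out explicitly, followed by a limit argument for $\bA_{\Inf}(R_{\infty})/\mu$. The only point deserving a word more of justification is the $\cO^{\flat}$-torsion-freeness of $R_{\infty}^{\flat}$ (needed to see that $\epsilon-1$ is a nonzerodivisor mod $p$), which follows most directly from Corollary \ref{cor:comtentorfree} via the multiplicative identification $R_{\infty}^{\flat}\cong\varprojlim_{x\mapsto x^{p}}R_{\infty}$ rather than from the completed-direct-sum description alone.
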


\begin{proof}
This follows from Lemma \ref{lem:nonozeromflatro} (cf. \cite[(3.14.1)]{SemistabAinfcoh}).
\end{proof}

Due to the explicit construction of Witt vectors we obtain
\begin{align*} 
\bA_{\Inf}(R_{\infty}^{\square}) &\cong (\varinjlim_{m} \bA_{\Inf}(S_{\infty})[X_1^{\pm 1/p^{m}}, \ldots, X_d^{\pm 1/p^{m}}])^{\wedge} \\
&\cong \widehat{\bigoplus}_{(a_1,\ldots, a_d) \in \bZ [\tfrac{1}{p}]^{\oplus d}} \bA_{\Inf}(S_{\infty}) \cdot X_{1}^{a_1} \cdots X_d^{a_d},
\end{align*}
where the completions are $(p, \mu)$-adic, the decomposition is as $\bA_{\Inf}(S_{\infty})$-modules and in terms of the notation in \S\ref{tiltofRinfty}, $X_i^{1/p^{m}} = [x_i^{1/p^{m}}]$. The summands for which $a_i \in \bZ$ for all $i$ comprise a subring 
\[
A(R^{\square}) = \bA_{\Inf}(S_{\infty}) \{X_1^{\pm 1}, \ldots, X_d^{\pm 1} \} \text{ inside }\bA_{\Inf}(R_{\infty}^{\square})
\]
where the convergence is $(p, \mu)$-adic. The remaining summands, that is, those for which $a_i \notin \bZ$ comprise a $A(R^{\square})$-submodule $N^{\square}_{\infty} \subset \bA_{\Inf}(R_{\infty}^{\square})$.

There is a canonical surjective ring homomorphism
\[
\theta \colon \bA_{\Inf}(R_{\infty}^{\square}) \twoheadrightarrow R_{\infty}^{\square}
\]
whose kernel is a principal ideal, cf. \cite[Lemma 3.10]{BhMorSch}, such that
\begin{equation} \label{eq:thetares}
\theta \lvert_{A(R^{\square})} \colon A(R^{\square}) \twoheadrightarrow R^{\square} \text{ is described by } X_i \mapsto t_i.
\end{equation}
The $p$-completely étale map $R^{\square} \to R$ deforms uniquely (along \eqref{eq:thetares}) to a $(p,\mu)$-completely étale map $A(R^{\square}) \to A(R)$, where $A(R)$ is $(p, \mu)$-adically complete. By construction, we have the identification
\begin{equation} \label{eq:idfrobmor}
\bA_{\Inf}(R_{\infty}) \cong \bA_{\Inf}(R_{\infty}^{\square}) \widehat{\otimes}_{A(R^{\square})} A(R),
\end{equation}
where the completion is $(p, \mu)$-adic. Therefore, by setting $N_{\infty} := N_{\infty}^{\square} \widehat{\otimes}_{A(R^{\square})} A(R)$, we arrive at the decompositions of $\bA_{\Inf}(R_{\infty}^{\square})$ and $\bA_{\Inf}(R_{\infty})$ into   ``integral" and ``nonintegral" parts:
\begin{equation} \label{eq:compdesde}
\bA_{\Inf}(R_{\infty}^{\square}) \cong A(R^{\square}) \oplus N_{\infty}^{\square} \text{ and } \bA_{\Inf}(R_{\infty}) \cong A(R) \oplus N_{\infty}.
\end{equation}
Modulo $\xi$, these decompositions reduce to the decompositions in \eqref{eq:smallsumdec}.

The Witt vector Frobenius of $\bA_{\Inf}(R_{\infty}^{\square})$ preserves $A(R^{\square})$; explictly: it is semilinear with respect to the Frobenius of $\bA_{\Inf}(S_{\infty})$ and raises each $X_i^{1/p^{m}}$ to the $p$-th power. By construction, $A(R)$ inherits a Frobenius ring endomorphism from $A(R^{\square})$, and the identification \eqref{eq:idfrobmor} is Frobenius-equivariant.

The natural $\Delta$-action on $\bA_{\Inf}(R_{\infty})$ is continuous and commutes with the Frobenius. Explictly, $\Delta$ respects the completed direct sum decomposition \eqref{eq:compdesde} and an $(\epsilon_1, \ldots, \epsilon_d) \in \Delta$ scales $X_j^{a_j}$ by $[\epsilon_j^{a_j}] \in A_{\Inf}$. The $\Delta$-action on $A(R^{\square})$ lifts uniquely to a necessarily Frobenius-equivariant $\Delta$-action on $A(R)$, which is trivial on the quotient $A(R) \to R$. In particular, $\Delta$ acts trivially on $A(R)/\mu$. The identifications \eqref{eq:idfrobmor}-\eqref{eq:compdesde} are $\Delta$-equivariant.

\subsection{The cohomology of $\bA_{\Inf,X}$ and continuous group cohomology} \label{sec:cohcongorAnin}

Similarly to \S\ref{subsec:cohcongrp}, the \v{C}ech complex of the sheaf $W(\wh{\cO}^{+, \flat}_X)$ with respect to the pro-(finite étale) affinoid perfectoid cover
\[
X_{\infty} \to X \times_{Y} Y_{\infty}
\]
is identified with the continuous cochain complex $R\Gamma_{\cont}(\Delta, \bA_{\Inf}(R_{\infty}))$. Thus, by \cite[{Tag 01GY}]{stacks-project}, we obtain the map to the pro-étale cohomology of $\bA_{\Inf,X}$ (obtained by composing the edge map with $R\Gamma_{\text{proét}} (Y_{\infty},  Rf_{\eta*}W(\wh{\cO}^{+, \flat}_X)) \to R\Gamma_{\text{proét}} (Y_{\infty},  Rf_{\eta*}\bA_{\Inf,X})$):
\begin{equation} \label{eq:edgemapforainf}
 e \colon R\Gamma_{\cont}(\Delta, \bA_{\Inf}(R_{\infty})) \rightarrow R\Gamma_{\text{proét}} (Y_{\infty},  Rf_{\eta*}\bA_{\Inf,X}).
\end{equation}
By the almost purity theorem, more precisely, by \cite[Theorem 6.5(ii)]{SchpHrig}, the subset $[\fm^{\flat}] \subset A_{\Inf}$ that consists of the Teichmüller lifts of the elements in the maximal ideal $\fm^{\flat} \subset \cO_{C^{\flat}}$ kills all the cohomology groups of $\text{Cone}(e)$. In fact using \cite[Lemma 3.17]{SemistabAinfcoh} and the fact that pushforwards preserve derived $p$-completions (cf. \cite[{Tag 0A0G}]{stacks-project}), it follows that each $H^{i}(\text{Cone}(e))$ is killed by $W(\fm^{\flat})$.

\begin{prop} \label{prop:relAinfsitf}
For each $i \in \bZ$, the $A_{\Inf}$-module $H^i_{\cont}(\Delta, \bA_{\Inf}(R_{\infty})/\mu)$ is $p$-torsion free and $p$-adically complete; moreover, the following natural maps are isomorphisms:
\begin{equation}\label{eq:tensoris}
H^i_{\cont}(\Delta, \bA_{\Inf}(R_{\infty})/\mu) \otimes_{A_{\Inf}} A_{\Inf}/p^n \xrightarrow{\sim} H^i_{\cont}(\Delta, \bA_{\Inf}(R_{\infty})/(\mu, p^n)) \text{ for } n>0
\end{equation}
and
\begin{equation}\label{eq:varprolisim}
H^i_{\cont}(\Delta, \bA_{\Inf}(R_{\infty})/\mu) \xrightarrow{\sim} \varprojlim_{n} H^i_{\cont}(\Delta, \bA_{\Inf}(R_{\infty})/(\mu, p^n)).
\end{equation}
In addition, if  $(R_{\infty}[\tfrac{1}{p}])^{\circ} = R_{\infty}$, then $H^i_{\cont}(\Delta, \bA_{\Inf}(R_{\infty})/(\mu, p^n))$ and $H^i_{\cont}(\Delta, \bA_{\Inf}(R_{\infty})/\mu)$ have no nonzero $W(\fm^{\flat})$-torsion.
\end{prop}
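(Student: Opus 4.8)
The plan is to leverage the explicit decomposition $\bA_{\Inf}(R_{\infty}) \cong A(R) \oplus N_{\infty}$ from \eqref{eq:compdesde}, together with the fact that $\Delta \cong \bZ_p^{\oplus d}$ acts trivially on $A(R)/\mu$ and through Teichmüller characters on the summands of $N_{\infty}$. First I would compute $R\Gamma_{\cont}(\Delta, \bA_{\Inf}(R_{\infty}))$ via the Koszul complex $\Kos(\bA_{\Inf}(R_{\infty}); \gamma_1 - 1, \ldots, \gamma_d - 1)$ for topological generators $\gamma_j \in \Delta$; since the action respects the completed direct sum decomposition, this Koszul complex splits as the completed direct sum of the Koszul complexes on each rank-one piece. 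On the ``integral'' part $A(R)$, modulo $\mu$ the action is trivial, so the relevant cohomology is a completed exterior algebra over $A(R)/\mu$; on each ``nonintegral'' summand indexed by $(a_1, \ldots, a_d)$ with some $a_j \notin \bZ$, the operator $\gamma_j - 1$ acts as multiplication by $[\epsilon_j^{a_j}] - 1$, which divides (a unit multiple of) $\mu$ but becomes a nonzerodivisor modulo $\mu$ only after a more careful analysis — this is where one uses that, modulo $p$, these become the operators $\zeta - 1$ studied in Proposition \ref{prop:mtorsioncoho} via the tilt.

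The key steps, in order, are: \textbf{(i)} Reduce the whole statement to a statement about the Koszul complex by the standard identification of continuous cochain cohomology of $\bZ_p^{\oplus d}$ with Koszul cohomology (as in \cite[Lemma 7.3]{BhMorSch}), and observe everything is compatible with the completed direct sum decomposition \eqref{eq:compdesde}. \textbf{(ii)} Prove $p$-torsion-freeness of $H^i_{\cont}(\Delta, \bA_{\Inf}(R_{\infty})/\mu)$: since $\bA_{\Inf}(R_{\infty})/\mu \cong R_{\infty}^{\flat}$-theoretically relates to $R_{\infty}$ via the tilt and $R_{\infty}$ is $\cO$-torsion-free by Corollary \ref{cor:comtentorfree}, one shows each Koszul cohomology module embeds into a $p$-torsion-free module; concretely, $H^i$ is a subquotient of pieces of the form (flat base change of) $H^i_{\cont}(\Delta, T/\mu)$ which by the arguments of \S\ref{ringAinfonRinfty} reduce modulo $\xi$ to the pieces in \eqref{eq:smallsumdec}, and these are $p$-torsion-free. \textbf{(iii)} Prove the base-change isomorphism \eqref{eq:tensoris}: this follows from the universal coefficient/Tor spectral sequence once one knows $H^{i+1}_{\cont}(\Delta, \bA_{\Inf}(R_{\infty})/\mu)$ is $p$-torsion-free (so that $\Tor_1$ against $A_{\Inf}/p^n$ vanishes), i.e. it is a formal consequence of step (ii). \textbf{(iv)} Prove the limit isomorphism \eqref{eq:varprolisim}: by \eqref{eq:tensoris} the transition maps are surjective with kernels forming a Mittag-Leffler system, and $\bA_{\Inf}(R_{\infty})/\mu$ is classically $p$-adically complete (as $R_{\infty}^{\flat}$ is $p^{1/p^\infty}$-adically complete and $W(-)$ of a perfect ring is $p$-complete), so the Koszul complex modulo $\mu$ is $R\varprojlim_n$ of its reductions mod $(\mu, p^n)$; one concludes by the vanishing of $R^1\varprojlim$. \textbf{(v)} Under the hypothesis $(R_{\infty}[\tfrac{1}{p}])^{\circ} = R_{\infty}$, prove the absence of nonzero $W(\fm^{\flat})$-torsion: combine Lemma \ref{lem:nononzerowfalttor} (which controls $\bA_{\Inf}(R_{\infty})/(p^n, \mu^{n'})$) with the explicit Koszul description, treating the integral summand $A(R)/\mu$ (where $\Delta$ acts trivially, so cohomology is a finite free exterior power of $A(R)/\mu$, having no $W(\fm^{\flat})$-torsion by Lemma \ref{lem:nononzerowfalttor}) and each nonintegral summand separately as in Proposition \ref{prop:mtorsioncoho}; for the non-$p$-complete statement pass to the limit using \eqref{eq:varprolisim} and the fact that an inverse limit of $W(\fm^{\flat})$-torsion-free modules is $W(\fm^{\flat})$-torsion-free.

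The main obstacle I expect is step (v), and more precisely the bookkeeping needed to pass between the torsion statements on $R_{\infty}^{\flat}$ (Lemma \ref{lem:nonozeromflatro}) and the torsion statements on $W(R_{\infty}^{\flat})$ modulo powers of $p$ and $\mu$: one must show that $W(\fm^{\flat})$-torsion-freeness is inherited by the Koszul cohomology, which requires knowing not just the reductions mod $\mu$ but the deeper claim that the operators $[\epsilon_j^{a_j}]-1$ on the nonintegral summands, after dividing by the appropriate power of $\mu$, act as nonzerodivisors on the relevant quotients — this is the analogue of \cite[Proposition 3.14]{SemistabAinfcoh} and is where the hypothesis $(R_{\infty}[\tfrac1p])^{\circ} = R_{\infty}$ is genuinely used, via Lemma \ref{lem:nonozeromflatro}. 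The first four steps are essentially formal once the Koszul identification is set up; the finiteness input from Proposition \ref{prop:mtorsioncoho} together with the flat base change $R_0^{\square} \to R\widehat{\otimes}_{R^{\square}}R_0^{\square}$ does the rest.
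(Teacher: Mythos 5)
Your overall strategy (split via the decomposition \eqref{eq:compdesde} into the integral part $A(R)$, where $\Delta$ acts trivially mod $\mu$, and the nonintegral part $N_{\infty}$; compute with Koszul complexes; invoke Lemma \ref{lem:nononzerowfalttor} and Lemma \ref{lem:nonozeromflatro} for the $W(\fm^{\flat})$-statements) is the same as the paper's. But there is a genuine gap at your step (ii), and it propagates: you declare $p$-torsion-freeness of $H^i_{\cont}(\Delta, \bA_{\Inf}(R_{\infty})/\mu)$ to be "essentially formal," justified by saying each cohomology group is a subquotient of (flat base changes of) torsion-free rank-one pieces which "reduce modulo $\xi$" to the summands of \eqref{eq:smallsumdec}. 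That does not work: a subquotient of a $p$-torsion-free module need not be $p$-torsion-free, and control of the reduction mod $\xi$ says nothing about $p$-torsion before reduction. On the nonintegral summands the operators $[\epsilon_j^{a_j}]-1$ become zero-divisors modulo $\mu$, so the Koszul cohomology consists of genuinely torsion-type quotients, and the $p$-torsion-freeness claim is precisely the nontrivial content here — it is not a consequence of embedding into anything obvious. Since your step (iii) (base change \eqref{eq:tensoris} via the Bockstein/long exact sequence) and step (iv) (Mittag-Leffler, using surjectivity of transition maps deduced from (iii)) are built on (ii), the whole chain rests on an unproved assertion. You have also misplaced the main difficulty: you flag step (v) as the obstacle, but in the paper the $W(\fm^{\flat})$-statement is the comparatively easy part once the key structural input is in place.

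What the paper actually does for the nonintegral part is import the short exact sequence \cite[(3.19.9)]{SemistabAinfcoh},
\[
0 \to H^i_{\cont}(\Delta, N_{\infty}/(\mu,p^n))[p] \to H^i_{\cont}(\Delta, N_{\infty}/(\mu,p^n)) \to H^i_{\cont}(\Delta, N_{\infty}/(\mu,p^{n-1})) \to 0,
\]
from the absolute case $N_{\infty}^{\square}(\cO)$ by a $\Delta$-equivariant flat base change: $N_{\infty}^{\square}(S_{\infty})/(\mu,p^n) \cong N_{\infty}^{\square}(\cO)/(\mu,p^n) \otimes_{A_{\Inf}} \bA_{\Inf}(S_{\infty})$ and $N_{\infty}/(\mu,p^n) \cong N_{\infty}^{\square}(S_{\infty})/(\mu,p^n) \otimes_{A(R^{\square})} A(R)$, using that $\bA_{\Inf}(S_{\infty})/(\mu,p^n)$ is flat over $A_{\Inf}/(\mu,p^n)$ and $A(R)/(\mu,p^n)$ is flat over $A(R^{\square})/(\mu,p^n)$. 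From this exact sequence one gets, in order: surjectivity of the transition maps, hence \eqref{eq:varprolisim}; then $p$-torsion-freeness of $H^i_{\cont}(\Delta, N_{\infty}/\mu)$; then \eqref{eq:tensoris}. Finally the exact sequence exhibits $H^i_{\cont}(\Delta, N_{\infty}/(\mu,p^n))$ as a successive extension of copies of $H^i_{\cont}(\Delta, N_{\infty}/(\mu,p))$, and since $N_{\infty}/(\mu,p)$ is a direct summand of $\bA_{\Inf}(R_{\infty})/(\mu,p) \cong R_{\infty}^{\flat}/\mu$, Lemma \ref{lem:nonozeromflatro} gives the $W(\fm^{\flat})$-torsion-freeness under the hypothesis $(R_{\infty}[\tfrac{1}{p}])^{\circ} = R_{\infty}$. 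To repair your plan you would need either to redo the rank-one analysis behind (3.19.9) in the relative setting or, as the paper does, to transport it by the flat base change above; neither is supplied by your step (ii) as written.
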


\begin{proof}
The first step is to use the decomposition \eqref{eq:compdesde} to reduce proving the proposition for the  ``nonintegral" part of $\bA_{\Inf}(R_{\infty})$. Since $A(R)/\mu$ is $p$-adically complete and has a trivial $\Delta$-action (see \cite[Lemma 3.13]{SemistabAinfcoh} and \S\ref{ringAinfonRinfty}), \cite[Lemma 3.7]{SemistabAinfcoh} (cf. \cite[Lemma 7.3(ii)]{BhMorSch}) implies that $H^i_{\cont}(\Delta, A(R)/\mu)$ is a direct sum of copies of $A(R)/\mu$ and likewise for $H^i_{\cont}(\Delta, A(R)/(\mu,p^n))$. Furthermore, if  $(R_{\infty}[\tfrac{1}{p}])^{\circ} = R_{\infty}$ then Lemma \ref{lem:nononzerowfalttor} implies that the rings $A(R)/\mu$ and $A(R)/(\mu,p^n)$ have no nonzero $W(\fm^{\flat})$-torsion. Thus we only need to establish all claims with $N_{\infty}$ in place of $\bA_{\Inf}(R_{\infty})$.

We now reduce the situation to that of \cite[Proposition 3.19]{SemistabAinfcoh}. For this we will need to keep track of the $S_{\infty}$ appearing in the definition of $N_{\infty}^{\square}$. Therefore it will be convenient to temporarily denote $N_{\infty}^{\square}$ by $N_{\infty}^{\square}(S_{\infty})$.  The identifications
\begin{align*}
N_{\infty}^{\square}(S_{\infty})/(\mu, p^n) &\cong N_{\infty}^{\square}(\cO)/(\mu, p^n) \otimes_{A_{\Inf}} \bA_{\Inf}(S_{\infty}) \text{ for } n>0 \\
N_{\infty}/(\mu, p^n) &\cong N_{\infty}^{\square}(S_{\infty})/(\mu, p^n) \otimes_{A(R^{\square})} A(R)
\end{align*}
are $\Delta$-equivariant. Furthermore $\bA_{\Inf}(S_{\infty})/(\mu, p^n)$ is $A_{\Inf}/(\mu, p^n)$-flat (cf. Lemma 3.13 in loc.cit.) and $A(R)/(\mu, p^n)$ is $A(R^{\square})/(\mu, p^n)$-flat (because $A(R)$ is a $(\mu, p)$-completely étale $A(R^{\square})$-algebra). Thus the short exact sequence \cite[(3.19.9)]{SemistabAinfcoh} established, in particular, for $N_{\infty}^{\square}(\cO)$ holds also for $N_{\infty}$:
\begin{equation} \label{eq:prolimexse}
0 \to H^i_{\cont}(\Delta, N_{\infty}/(\mu,p^n))[p] \to H^i_{\cont}(\Delta, N_{\infty}/(\mu,p^n)) \to H^i_{\cont}(\Delta, N_{\infty}/(\mu,p^{n-1})) \to 0,
\end{equation}
for $n>1$. The transition morphisms $H^i_{\cont}(\Delta, N_{\infty}/(\mu,p^n)) \to H^i_{\cont}(\Delta, N_{\infty}/(\mu,p^{n-1}))$ are in particular surjective and so $R^{1}\varprojlim_{n}H^i_{\cont}(\Delta, N_{\infty}/(\mu,p^n))$ vanishes. Therefore by \cite[{Tag 0D6K}]{stacks-project}, we obtain
\begin{equation} \label{eq:projliverN}
H^i_{\cont}(\Delta, N_{\infty}/\mu) \xrightarrow{\sim} \varprojlim_{n} H^i_{\cont}(\Delta, N_{\infty}/(\mu, p^n)),
\end{equation}
which is the sought analogue of \eqref{eq:varprolisim}. The $p$-torsion freeness of $H^i_{\cont}(\Delta, N_{\infty}/\mu)$ follows from \eqref{eq:prolimexse}-\eqref{eq:projliverN}. Therefore by \cite[Lemma 3.7]{SemistabAinfcoh} (cf. \cite[Lemma 7.3(ii)]{BhMorSch}) and \cite[{Tag 061Z}]{stacks-project}, we obtain
\[
H^i_{\cont}(\Delta, N_{\infty}/\mu) \otimes_{A_{\Inf}} A_{\Inf}/p^n \xrightarrow{\sim} H^i_{\cont}(\Delta, N_{\infty}/(\mu, p^n)) \text{ for } n>0,
\]
which is the sought analogue of \eqref{eq:tensoris}. 

It remains to show that each $H^i_{\cont}(\Delta, N_{\infty}/(\mu, p^n))$ has no nonzero $W(\fm^{\flat})$-torsion, under the assumption $(R_{\infty}[\tfrac{1}{p}])^{\circ} = R_{\infty}$. The short exact sequence \eqref{eq:prolimexse} shows that $H^i_{\cont}(\Delta, N_{\infty}/(\mu, p^n))$ is a successive extension of copies of $H^i_{\cont}(\Delta, N_{\infty}/(\mu, p))$. Now $N_{\infty}/(\mu, p)$ is a direct summand of $\bA_{\Inf}(R_{\infty})/(\mu, p) \cong R_{\infty}^{\flat}/\mu$. The result now follows from Lemma \ref{lem:nonozeromflatro}.
\end{proof}

\begin{rem}
A remark about the proof of Proposition \ref{prop:relAinfsitf}. We have
\[
A(R^{\square}) = \bA_{\Inf}(S_{\infty}) \{X_1^{\pm 1}, \ldots, X_d^{\pm 1} \} \cong A_{\Inf}\{X_1^{\pm 1}, \ldots, X_d^{\pm 1} \} \widehat{\otimes}_{A_{\Inf}} \bA_{\Inf}(S_{\infty}),
\]
where the completion is $(p,\mu)$-adic. Thus it would be natural to attempt to give a more direct reduction of  Proposition \ref{prop:relAinfsitf} to \cite[Proposition 3.19]{SemistabAinfcoh} where the statement is proved, in particular, for the ring $A_{\Inf}\{X_1^{\pm 1}, \ldots, X_d^{\pm 1} \}$. However we do not know whether $\bA_{\Inf}(S_{\infty})$ is a flat $A_{\Inf}$-algebra and it is unclear how to bring the completed tensor product outside the cohomology groups.
\end{rem}

\begin{thm} \label{edgemapisoAinf}
The edge map defined in \eqref{eq:edgemapforainf} induces a morphism 
\begin{equation*} 
 L\eta_{\mu}(e) \colon L\eta_{\mu}R\Gamma_{\cont}(\Delta, \bA_{\Inf}(R_{\infty})) \to L\eta_{\mu}R\Gamma_{\proet} (Y_{\infty},  Rf_{\eta*}\bA_{\Inf,X})
 \end{equation*}
 such that the cohomology groups of $\text{Cone}(L\eta_{\mu}(e))$ are killed by $W(\fm^{\flat}) \subset A_{\Inf}$.
 Moreover if $(R_{\infty}[\tfrac{1}{p}])^{\circ} = R_{\infty}$, then this morphism is an  isomorphism.
\end{thm}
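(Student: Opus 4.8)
The plan is to mirror the proof of Theorem \ref{thm:edgemapisoOsheaf} throughout, substituting $\bA_{\Inf,X}$ for $\wh{\cO}^{+}_X$, the element $\mu$ for $\zeta_p-1$, the ideal $W(\fm^\flat)$ for $\fm$, and feeding in Proposition \ref{prop:relAinfsitf} in place of Proposition \ref{prop:mtorsioncoho}. For the first assertion, recall from \S\ref{sec:cohcongorAnin} that each $H^i(\mathrm{Cone}(e))$ is killed by $W(\fm^\flat)$, i.e.\ $e$ is a $W(\fm^\flat)$-almost isomorphism. Exactly as in the proof of Theorem \ref{thm:edgemapisoOsheaf}, $L\eta_\mu$ preserves this property: using the natural identification of $H^i(L\eta_\mu(-))$ with $H^i(-)/H^i(-)[\mu]$, a short diagram chase shows that if $\phi\colon M\to N$ has kernel and cokernel killed by $W(\fm^\flat)$ then so does $M/M[\mu]\to N/N[\mu]$ (for the kernel: $\phi(x)\in N[\mu]$ forces $\mu x\in\ker\phi$, hence $W(\fm^\flat)\mu x=0$, hence $W(\fm^\flat)x\subseteq M[\mu]$). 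Applying this to the long exact cohomology sequences gives that $L\eta_\mu(e)$ is again a $W(\fm^\flat)$-almost isomorphism.

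For the second assertion, assume $(R_\infty[\tfrac{1}{p}])^\circ=R_\infty$. Using the $\Delta$-equivariant decomposition $\bA_{\Inf}(R_\infty)\cong A(R)\oplus N_\infty$ of \eqref{eq:compdesde}, I would first show that $\mu$ kills $H^i_{\cont}(\Delta,N_\infty)$ for all $i$ — the analogue of the fact, recorded in Proposition \ref{prop:mtorsioncoho}, that $\zeta_p-1$ kills $H^i_{\cont}(\Delta,M_\infty)$. Indeed each summand of $N_\infty$ contributes a Koszul factor on which $\Delta$ acts through $[\epsilon^a]-1$ for a non-integral $p$-power fraction $a=k/p^m$ with $m\geq 1$ and $p\nmid k$; one checks $([\epsilon^a]-1)=([\epsilon^{1/p^m}]-1)$ as ideals of $A_{\Inf}$ (the ratio $1+[\epsilon^{1/p^m}]+\dots+[\epsilon^{1/p^m}]^{k-1}$ of generators maps under $\theta$ to $(\zeta_{p^m}^k-1)/(\zeta_{p^m}-1)$, a unit of $\cO$, hence is a unit of $A_{\Inf}$), and $\mu=[\epsilon]-1\in([\epsilon^{1/p^m}]-1)$; so $\mu$ kills $A_{\Inf}/([\epsilon^a]-1)$, and the claim then follows by reducing modulo $p^n$ and using the ($\mu,p$)-complete étale flatness of $A(R)$ over $A(R^\square)$. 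Consequently the inclusion $j\colon A(R)\hookrightarrow\bA_{\Inf}(R_\infty)$ becomes an isomorphism after applying $L\eta_\mu R\Gamma_{\cont}(\Delta,-)$, and it suffices to prove that $L\eta_\mu(e\circ R\Gamma_{\cont}(j))$ is an isomorphism.

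For this I would invoke \cite[Lemma 8.11(i)]{BhMorSch} (cf.\ \cite[Lemma 3.4]{SemistabAinfcoh}), whose hypotheses are exactly those supplied by Proposition \ref{prop:relAinfsitf}: each $H^i_{\cont}(\Delta,\bA_{\Inf}(R_\infty)/\mu)$ — whose integral part $H^i_{\cont}(\Delta,A(R)/\mu)$ is a direct sum of copies of $A(R)/\mu$, since $\Delta$ acts trivially there by \S\ref{ringAinfonRinfty} — is $p$-torsion free and $p$-adically complete, is compatible with reduction modulo $p^n$ via the isomorphisms \eqref{eq:tensoris} and \eqref{eq:varprolisim}, and, under the standing hypothesis $(R_\infty[\tfrac{1}{p}])^\circ=R_\infty$, has no nonzero $W(\fm^\flat)$-torsion; moreover the $W(\fm^\flat)$-almost isomorphism $e$ transports the relevant finiteness to the target $R\Gamma_{\proet}(Y_\infty,Rf_{\eta*}\bA_{\Inf,X})$. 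This yields that $L\eta_\mu(e\circ R\Gamma_{\cont}(j))$, and hence $L\eta_\mu(e)$, is an isomorphism.

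The main obstacle is the second assertion, and within it the precise bookkeeping needed to match Proposition \ref{prop:relAinfsitf} — which is phrased in terms of the reductions $\bA_{\Inf}(R_\infty)/\mu$ and $\bA_{\Inf}(R_\infty)/(\mu,p^n)$ rather than literal quotients of cohomology — with the hypotheses of \cite[Lemma 8.11(i)]{BhMorSch}, together with checking that the target complex inherits the requisite $W(\fm^\flat)$-torsion freeness across the almost isomorphism. By contrast the handling of the nonintegral part $N_\infty$ is routine once the divisibility $\mu\in([\epsilon^a]-1)$ has been recorded.
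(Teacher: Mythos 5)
Your treatment of the first assertion is fine and is essentially the paper's (the paper disposes of it in one sentence, since the cone of $e$ already has cohomology killed by $W(\fm^{\flat})$ by \S\ref{sec:cohcongorAnin}). The problems are in the second assertion, where you depart from the paper's argument. The paper never reduces to the integral part $A(R)$: it uses the projection formula \eqref{eq:projfusdba} to identify $R\Gamma_{\cont}(\Delta,\bA_{\Inf}(R_{\infty}))\otimes^{\bL}_{A_{\Inf}}A_{\Inf}/\mu$ with $R\Gamma_{\cont}(\Delta,\bA_{\Inf}(R_{\infty})/\mu)$, invokes Proposition \ref{prop:relAinfsitf} to see that the cohomology of this reduction has no nonzero $W(\fm^{\flat})$-torsion, and then applies the $A_{\Inf}$-analogue of the $L\eta$-criterion, namely \cite[Lemma 3.18]{SemistabAinfcoh}, directly to $e$, which \emph{is} a $W(\fm^{\flat})$-almost quasi-isomorphism. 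Your final step instead cites \cite[Lemma 8.11(i)]{BhMorSch} (equivalently \cite[Lemma 3.4]{SemistabAinfcoh}); that is the statement over $\cO$ for the ideal $\fm$ and the element $\zeta_p-1$, and it does not apply as stated over $A_{\Inf}$ with the element $\mu$ and the non-maximal ideal $W(\fm^{\flat})$ --- the variant you need is exactly \cite[Lemma 3.18]{SemistabAinfcoh}. Worse, you apply the criterion to the composite $e\circ R\Gamma_{\cont}(j)$, whose source is $R\Gamma_{\cont}(\Delta,A(R))$, while the hypotheses you verify via Proposition \ref{prop:relAinfsitf} concern $\bA_{\Inf}(R_{\infty})/\mu$; moreover the composite is not a $W(\fm^{\flat})$-almost quasi-isomorphism (its cone sees $H^{i}_{\cont}(\Delta,N_{\infty})$, which is $\mu$-torsion but certainly not almost zero), and ``$e$ transports the relevant finiteness to the target'' is not a hypothesis of either lemma. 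So as written the last paragraph does not verify the hypotheses of any stated criterion for the map to which it is applied.

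The reduction itself also rests on an unproved claim: that $\mu$ kills $H^{i}_{\cont}(\Delta,N_{\infty})$ integrally. The rank-one computation is correct ($[\epsilon^{a}]-1$ agrees with $[\epsilon^{1/p^{m}}]-1$ up to a unit, which divides $\mu$), but $N_{\infty}$ is a $(p,\mu)$-adically completed direct sum base-changed along $A(R^{\square})\to A(R)$, and ``reduce modulo $p^{n}$ and use flatness'' does not dispose of the completed direct sum nor of the passage back up the tower: a naive $R\varprojlim$ argument only yields killing by a power of $\mu$ (because of possible $R^{1}\varprojlim$ contributions), which is useless for the identification $H^{i}(L\eta_{\mu}C)\cong H^{i}(C)/H^{i}(C)[\mu]$ on which your reduction relies. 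This is exactly the bookkeeping that Proposition \ref{prop:relAinfsitf} is engineered to avoid, by phrasing everything in terms of $\bA_{\Inf}(R_{\infty})/(\mu,p^{n})$ and $\bA_{\Inf}(R_{\infty})/\mu$. The correct repair is to follow the paper: drop the decomposition \eqref{eq:compdesde} from this proof, combine \eqref{eq:projfusdba} with Proposition \ref{prop:relAinfsitf}, and apply \cite[Lemma 3.18]{SemistabAinfcoh} to $e$ itself.
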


\begin{proof}
Recall by \S\ref{sec:cohcongorAnin}, the cohomology groups of $\text{Cone}(e)$ are killed by $W(\fm^{\flat})$. Therefore $L\eta_{\mu}(e)$ satisfies the analogous claim. 

Suppose now $(R_{\infty}[\tfrac{1}{p}])^{\circ} = R_{\infty}$. By the projection formula \cite[{Tag 0944}]{stacks-project},
\begin{equation} \label{eq:projfusdba}
R\Gamma_{\cont}(\Delta, \bA_{\Inf}(R_{\infty})) \otimes_{A_{\Inf}}^{\bL} A_{\Inf}/\mu \cong R\Gamma_{\cont}(\Delta, \bA_{\Inf}(R_{\infty})/\mu).
\end{equation}
Therefore by Proposition \ref{prop:relAinfsitf}, the cohomology modules of $R\Gamma_{\cont}(\Delta, \bA_{\Inf}(R_{\infty})) \otimes_{A_{\Inf}}^{\bL} A_{\Inf}/\mu$ have no nonzero $W(\fm^{\flat})$-torsion. Thus the claim follows from \cite[Lemma 3.18]{SemistabAinfcoh}.
\end{proof}
Later when we will compare $A\Omega_{\fX/\fY}$ with $q$-crystalline cohomology (cf. Theorem \ref{thm:AOmegvsqCrys}) we will need to vary $\Delta$ in Theorem \ref{edgemapisoAinf}  to accommodate for the ``all possible coordinates" technique. The following remark will make this possible (cf. \cite[Remark 3.21]{SemistabAinfcoh}). 

\begin{rem} \label{rem:extenthem4.12}
For any profinite group $\Delta'$ equipped with a continuous surjection $\Delta' \twoheadrightarrow \Delta$ and any pro-(finite étale) affinoid perfectoid $\Delta'$-cover
\[
\Spa(R_{\infty}'[\tfrac{1}{p}], R'_{\infty}) \to X \times_{Y} Y_{\infty} \hspace{2mm} \text{ that refines the $\Delta$-cover } \hspace{2mm} X_{\infty} \to X \times_{Y} Y_{\infty}
\]
compatibly with the surjection $\Delta' \twoheadrightarrow \Delta$, the edge map $e'$ defined analogously to \eqref{eq:edgemapforainf} induces a morphism 
\begin{equation*} 
 L\eta_{\mu}(e') \colon L\eta_{\mu}R\Gamma_{\cont}(\Delta', \bA_{\Inf}(R'_{\infty})) \to L\eta_{\mu}R\Gamma_{\proet} (Y_{\infty},  Rf_{\eta*}\bA_{\Inf,X})
 \end{equation*}
 which satisfies the same conditions as in Theorem \ref{edgemapisoAinf}. The same line of reasoning as in loc.cit. applies. In general by the octahedral axiom the morphism 
\begin{equation} \label{eq:letversoshi}
L\eta_{\mu}R\Gamma_{\cont}(\Delta, \bA_{\Inf}(R'_{\infty})) \to L\eta_{\mu}R\Gamma_{\cont}(\Delta', \bA_{\Inf}(R'_{\infty}))
\end{equation}
has cone whose cohomology groups are killed by $[\fm^{\flat}]$ and therefore by \cite[Lemma 3.17]{SemistabAinfcoh}, it is also killed by $W(\fm^{\flat})$. If in addition $(R_{\infty}[\tfrac{1}{p}])^{\circ} = R_{\infty}$, then \eqref{eq:letversoshi} is an isomorphism.
\end{rem}

\section{The Hodge-Tate specialization of $A\Omega_{\fX / \fY}$}\label{sec:TheHodhTaspe}

In this section we assume that $\fX$ and $\fY$ are flat $p$-adic formal schemes of type (S)(b) over $\cO$, as in \cite[\S 1.9]{Hugfs}, and $f \colon \fX \to \fY$ is a smooth morphism. To state the results in this section, we will often have to resort to using the language of almost $\cO$-sheaves. This is particularly the case, outside of some finite type assumptions. For this we refer the reader to \S\ref{subsec:almostOsheav}, and in particular the functor (cf. Definition \ref{def:quaisisoder})
\begin{align*}
D(\sC, \cO) &\to D(\sC, \cO^{a})
\\
\sF &\to \sF^{a}.
\end{align*}

With the local analysis of \S\ref{sec:localanal} at our disposal, we turn to the analysis of the Hodge-Tate specialization of $A\Omega_{\fX / \fY}$. Analogous to \cite[\S 9.2]{BhMorSch}, we consider a presheaf version of $A\Omega_{\fX / \fY}$.

\subsection{The presheaf version $A\Omega_{\fX / \fY}^{\text{psh}}$} \label{subsec:presver}

We begin this section with a result that good objects in $D$ (cf. Definition \ref{def:goodtriples}) are, under some mild conditions, sufficiently well behaved. Following the notation of \S \ref{sec:prodoftopo} and \ref{sec:perfcov}, for a good object
\begin{equation}\label{eq:goodpair}
(\varprojlim_{i \in I} \Spa(S_{i}, S_{i}^{+}) \to \Spf(S) \leftarrow \Spf(R))
\end{equation}
in $D$, one has the associated rings $S_{\infty}$ and $R_{\infty}$ (cf. \eqref{eq:definRinfty}). 

\begin{lem} \label{lem:Rinftucirc}
If $\fY$ is locally of finite type over $\cO$, then for every good object (cf. \eqref{eq:goodpair}), one has $(R_{\infty}[\tfrac{1}{p}])^{\circ} = R_{\infty}$.
\end{lem}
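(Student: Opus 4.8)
The plan is to reduce the statement to Lemma \ref{lem:topfinflasir}, whose only extra input beyond the standing hypotheses of \S\ref{sec:localanal} is that $S$ be topologically of finite type over $\cO$. First I would observe that, by Definition \ref{def:goodtriples}, a good object is in particular an object of $D$, so the structure morphism $\Spf(S)\to\fY$ is étale. Since $\fY$ is locally of finite type over $\cO$ and étale morphisms of type (S)(b) formal schemes are of topologically finite type, $\Spf(S)\to\fY$ is topologically of finite type. Using that $\Spf(S)$ is affine, hence quasi-compact, I would cover $\fY$ by finitely many affine opens $\Spf(A_j)$ with each $A_j$ topologically of finite type over $\cO$, pull these back to a finite affine cover $\{\Spf(S_j)\}$ of $\Spf(S)$ with each $S_j$ topologically of finite type over $A_j$ (hence over $\cO$), and then glue to conclude that $S$ itself is topologically of finite type over $\cO$. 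Flatness of $S$ over $\cO$ and strong noetherianity of $S[\tfrac{1}{p}]$ are inherited from $\fY$ being flat of type (S)(b), so all the standing assumptions of \S\ref{sec:localanal} are in force.

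With $S$ topologically of finite type in hand, I would invoke condition (2) of Definition \ref{def:goodtriples}, which provides exactly the étale $\Spf(S)$-chart $\Spf(R)\to\Spf(R^{\square})$ with $R^{\square}=S\{t_1^{\pm 1},\dots,t_d^{\pm 1}\}$ underlying the constructions of \S\ref{sec:perfcov}; the ring $R_{\infty}$ attached to the good object (via \eqref{eq:definRinfty}) is the very $R_{\infty}$ defined there. Lemma \ref{lem:topfinflasir} then applies verbatim and yields $(R_{\infty}[\tfrac{1}{p}])^{\circ}=R_{\infty}$, which is the claim.

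I expect the only non-formal point to be the passage from ``$\fY$ locally of finite type together with $\Spf(S)\to\fY$ étale'' to ``$S$ itself globally topologically of finite type over $\cO$'': this is where quasi-compactness of $\Spf(S)$ and the fact that topological finite type can be tested on, and reassembled from, a finite affine cover are used. An alternative would be to localize on $D$, replacing the given good object by a good covering (via type (d) coverings, shrinking $\Spf(S)$ to an affine open étale over an affine open of $\fY$), but this route would require knowing that the conclusion $(R_{\infty}[\tfrac{1}{p}])^{\circ}=R_{\infty}$ is local on $D$, which is less transparent; hence the direct reduction above is preferable.
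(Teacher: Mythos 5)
Your proposal is correct and matches the paper's argument: the paper's proof is precisely the one-line reduction ``this is an immediate consequence of Lemma \ref{lem:topfinflasir}'', which implicitly uses exactly the point you spell out, namely that for a good object the ring $S$ is topologically of finite type over $\cO$ because $\Spf(S)\to\fY$ is étale (hence topologically of finite type) and $\fY$ is locally of finite type over $\cO$, with the remaining hypotheses of \S\ref{sec:localanal} inherited as you say. Your explicit quasi-compactness/gluing step is the standard fact the authors leave unsaid, so you are simply making their ``immediate'' precise rather than taking a different route.
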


\begin{proof}
This is an immediate consequence of Lemma \ref{lem:topfinflasir}.
\end{proof}

In addition to the site $D$ constructed in \S\ref{sec:prodoftopo}, we consider the site $D^{\text{psh}}$, whose objects are good objects in $D$, and coverings are isomorphisms. In this way, every presheaf is already a sheaf. We denote the associated topos by $(Y_{ \proet}\times_{\fY_{\et}}\fX_{\et})^{\text{psh}}$. There is a morphism of topoi
\[
(\phi^{-1}, \phi_{*}) \colon Y_{ \proet}\times_{\fY_{\et}}\fX_{\et} \to (Y_{ \proet}\times_{\fY_{\et}}\fX_{\et})^{\text{psh}}
\]
for which $\phi_{*}$ is given by restricting sheaves on $D$ to $D^{\text{psh}}$ and $\phi^{-1}$ is given by sheafification. In particular, since any sheaf is the sheafification of its associated presheaf, $\phi^{-1}\circ \phi_{*} \cong \id$. We let
\[
\nu_f^{\text{psh}} := \phi \circ \nu_f \colon X_{ \proet} \to (Y_{ \proet}\times_{\fY_{\et}}\fX_{\et})^{\text{psh}}
\]
be the indicated composition of morphisms of topoi (with $\nu_f$ defined in \eqref{eq:definovf}) and set
\[
A\Omega_{\fX / \fY}^{\text{psh}} := L\eta_{\mu}(R\nu_{f*}^{\text{psh}}\bA_{\Inf,X}) \in D^{\geq 0}(D^{\text{psh}}, A_{\Inf}).
\]
Since $L\eta$ commutes with pullback under flat morphisms of ringed topoi (cf. \cite[Lemma 6.14]{BhMorSch})
\begin{equation} \label{eq:pullbackofpregivshe}
\phi^{-1}(A\Omega_{\fX / \fY}^{\text{psh}}) \cong A\Omega_{\fX / \fY}.
\end{equation}
Moreover, again by \cite[Lemma 6.14]{BhMorSch}, $A\Omega_{\fX / \fY}^{\text{psh}}$ may be described explictly: for every object $Z = (W \to \fU \leftarrow \fV)$ of $D^{\text{psh}}$, we have (cf. \eqref{eq:pullbaobobjec})
\begin{equation} \label{eq:evapresatsec}
R\Gamma(Z, A\Omega_{\fX / \fY}^{\text{psh}}) = L\eta_{\mu}(R\Gamma(W \times_U V, \bA_{\Inf,X})).
\end{equation}
In particular, since, by \cite[Lemma 6.19]{BhMorSch}, the functor $L\eta_{\mu}$ preserves derived completeness when used in the context of a \emph{replete} topos (such as the topos of sets), we see from \eqref{eq:evapresatsec} that $A\Omega_{\fX / \fY}^{\text{psh}}$ is derived $\xi$-adically (and also $\tilde{\xi}$-adically) complete (cf. Lemma \ref{lem:derived completionaind}). Armed with the formalism of \S\ref{subsec:presver}, we now identify the Hodge-Tate specialization of $A\Omega_{\fX / \fY}$.

\begin{thm} \label{thm:almosishodsa}
There exists a morphism in $D(D, \cO)$
\begin{equation} \label{eq:hodgtatespecmap}
A\Omega_{\fX / \fY} \otimes^{\bL}_{A_{\Inf}, \theta \circ \varphi^{-1}} \cO \to L\eta_{(\zeta_p - 1)}(  R\nu_{f*}(\wh{\cO}^{+}_X)),
\end{equation}
whose image in $D(D, \cO^a)$ is an almost isomorphism. If in addition $\fY$ is locally of finite type over $\cO$ then \eqref{eq:hodgtatespecmap} is already an isomorphism in $D(D, \cO)$.
\end{thm}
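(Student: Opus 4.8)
The plan is to produce the morphism \eqref{eq:hodgtatespecmap} by base change and to verify it is an almost isomorphism on a basis, reducing to the local computations of \S\ref{sec:localanal}. By Proposition \ref{prop:goodobjformbas} the good objects form a basis of $D$, and by Proposition \ref{prop:commsqaalmssnosh} a morphism of sheaves of $\cO$-modules is an almost isomorphism as soon as it is one at the level of presheaves; since $L\eta$ commutes with pullback along the flat morphism of ringed topoi $\phi$ (\cite[Lemma 6.14]{BhMorSch}) and $\phi^{-1}R\nu_{f*}^{\mathrm{psh}}=R\nu_{f*}$ (cf. \eqref{eq:pullbackofpregivshe}), I would first build a morphism $A\Omega_{\fX/\fY}^{\mathrm{psh}}\otimes^{\bL}_{A_{\Inf},\theta\circ\varphi^{-1}}\cO\to L\eta_{\zeta_p-1}\bigl(R\nu_{f*}^{\mathrm{psh}}\wh{\cO}_X^{+}\bigr)$ in $D(D^{\mathrm{psh}},\cO)$, check it becomes an almost isomorphism after evaluation at every good object (an isomorphism when $\fY$ is locally of finite type), and then apply $\phi^{-1}$. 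Throughout, $\cO$ is regarded as an $A_{\Inf}$-algebra via $\theta\circ\varphi^{-1}$.

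For the construction, note that $\theta\circ\varphi^{-1}$ has kernel $(\tilde\xi)$ and carries $\mu=[\epsilon]-1$ to $\theta([\epsilon^{1/p}]-1)=\zeta_p-1$, and that $\tilde\xi$ is a nonzerodivisor on $\bA_{\Inf,X}$ (checked on affinoid perfectoids, where $\bA_{\Inf}(R_\infty)=W(R_\infty^{\flat})$ is $p$- and $\tilde\xi$-torsion free), so that $\bA_{\Inf,X}\otimes^{\bL}_{A_{\Inf},\theta\circ\varphi^{-1}}\cO\cong\bA_{\Inf,X}/\tilde\xi\cong\wh{\cO}_X^{+}$ (\cite[Theorem 6.5]{SchpHrig}). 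I would take the morphism to be the composite of (i) the natural base-change transformation for the d\'ecalage functor, $L\eta_\mu(R\nu_{f*}^{\mathrm{psh}}\bA_{\Inf,X})\otimes^{\bL}_{A_{\Inf},\theta\circ\varphi^{-1}}\cO\to L\eta_{\zeta_p-1}\bigl(R\nu_{f*}^{\mathrm{psh}}\bA_{\Inf,X}\otimes^{\bL}_{A_{\Inf},\theta\circ\varphi^{-1}}\cO\bigr)$ (constructed as in \cite[\S 6]{BhMorSch}, using that $\theta\circ\varphi^{-1}$ sends the invertible ideal $(\mu)$ to $(\zeta_p-1)$); (ii) $L\eta_{\zeta_p-1}$ applied to the base-change morphism $R\nu_{f*}^{\mathrm{psh}}\bA_{\Inf,X}\otimes^{\bL}_{A_{\Inf},\theta\circ\varphi^{-1}}\cO\to R\nu_{f*}^{\mathrm{psh}}(\bA_{\Inf,X}\otimes^{\bL}_{A_{\Inf},\theta\circ\varphi^{-1}}\cO)$; and (iii) the identification $\bA_{\Inf,X}/\tilde\xi\cong\wh{\cO}_X^{+}$. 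By \eqref{eq:evapresatsec}, \eqref{eq:pullbaobobjec} and the identifications of \S\ref{sec:perfcov}, on the sections over a good object $Z=(\varprojlim_{i\in I}\Spa(S_i,S_i^{+})\to\Spf(S)\leftarrow\Spf(R))$, writing $Y_\infty:=\varprojlim_{i\in I}\Spa(S_i,S_i^{+})$, this reads as the map $\bigl(L\eta_\mu R\Gamma_{\proet}(Y_\infty,Rf_{\eta*}\bA_{\Inf,X})\bigr)\otimes^{\bL}_{A_{\Inf},\theta\circ\varphi^{-1}}\cO\to L\eta_{\zeta_p-1}R\Gamma_{\proet}(Y_\infty,Rf_{\eta*}\wh{\cO}_X^{+})$.

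To verify it is an (almost) isomorphism, I would fix a good object $Z$; if $\fY$ is locally of finite type then $Z$ is very good, i.e. $(R_\infty[\tfrac{1}{p}])^{\circ}=R_\infty$, by Lemma \ref{lem:Rinftucirc}. By Theorem \ref{edgemapisoAinf} and Theorem \ref{thm:edgemapisoOsheaf}, after applying $L\eta_\mu$ (resp. $L\eta_{\zeta_p-1}$) the edge maps from $R\Gamma_{\cont}(\Delta,\bA_{\Inf}(R_\infty))$ (resp. $R\Gamma_{\cont}(\Delta,R_\infty)$) have cones killed by $W(\fm^{\flat})$, and are isomorphisms in the very good case. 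Since for $\Delta\cong\bZ_p^{\oplus d}$ the continuous cochain complex commutes with $-\otimes^{\bL}_{A_{\Inf}}\cO$ (projection formula, as in the proof of Theorem \ref{edgemapisoAinf}) and $\bA_{\Inf}(R_\infty)/\tilde\xi=R_\infty$, one is reduced to the assertion that the base-change map $\bigl(L\eta_\mu R\Gamma_{\cont}(\Delta,\bA_{\Inf}(R_\infty))\bigr)\otimes^{\bL}_{A_{\Inf},\theta\circ\varphi^{-1}}\cO\to L\eta_{\zeta_p-1}R\Gamma_{\cont}(\Delta,R_\infty)$ is an (almost) isomorphism. This is an instance of the base-change criterion for $L\eta$ (\cite[Lemma 8.11]{BhMorSch}), whose hypotheses are supplied by Proposition \ref{prop:relAinfsitf} (each $H^i_{\cont}(\Delta,\bA_{\Inf}(R_\infty)/\mu)$ is $p$-torsion free and $p$-adically complete, and has no nonzero $W(\fm^{\flat})$-torsion in the very good case) together with Proposition \ref{prop:mtorsioncoho}. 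In the very good case the chain of identifications consists of honest isomorphisms, which gives the last assertion of the theorem; in general the accumulated defect at each step is annihilated by $W(\fm^{\flat})$, hence — after the base change along $\theta\circ\varphi^{-1}$, cf. Lemma \ref{lem:promaxidtensp} — by $\fm\subset\cO$, so that \eqref{eq:hodgtatespecmap} is an almost isomorphism in $D(D,\cO^{a})$ by Proposition \ref{prop:commsqaalmssnosh}.

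The main obstacle is the bookkeeping rather than any isolated computation: one must check that the globally defined base-change transformation of the construction restricts, on a very good object, to precisely the $L\eta$-base-change map on continuous $\Delta$-cohomology to which \cite[Lemma 8.11]{BhMorSch} applies. Because $L\eta$ is not exact and its base-change map is only a lax natural transformation, this compatibility has to be threaded through the d\'ecalage functor, the identification $\bA_{\Inf,X}/\tilde\xi\cong\wh{\cO}_X^{+}$, and the Frobenius twist that makes $\mu$ specialize to $\zeta_p-1$ rather than $0$. In the general case one must in addition track that the ``almost'' defects coming from the edge maps of Theorems \ref{edgemapisoAinf}/\ref{thm:edgemapisoOsheaf}, from the $L\eta$-base-change criterion, and from sheafification are mutually compatible — which is exactly the purpose of the almost $\cO$-sheaf formalism of \S\ref{subsec:almostOsheav}, invoked via Proposition \ref{prop:commsqaalmssnosh}.
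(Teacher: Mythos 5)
Your proposal is correct and follows essentially the same route as the paper: reduce to the presheaf version via \eqref{eq:pullbackofpregivshe} and Proposition \ref{prop:commsqaalmssnosh}, evaluate on good objects, replace both sides by continuous $\Delta$-cohomology using Theorems \ref{edgemapisoAinf} and \ref{thm:edgemapisoOsheaf}, and conclude from the d\'ecalage base-change statement whose torsion-freeness hypotheses come from Proposition \ref{prop:relAinfsitf} (with Lemma \ref{lem:promaxidtensp} converting $W(\fm^{\flat})$-torsion into $\fm$-torsion, and Lemma \ref{lem:Rinftucirc} removing the ``almost'' in the finite type case). The only discrepancies are cosmetic: the base-change step for $L\eta$ along $\theta\circ\varphi^{-1}$ should be cited as \cite[Lemma 5.16]{Bhatspecvar} rather than \cite[Lemma 8.11]{BhMorSch} (the latter is the ``almost isomorphism plus torsion-freeness implies isomorphism after $L\eta$'' criterion used in Theorem \ref{thm:edgemapisoOsheaf}), and the identification $\bA_{\Inf,X}\otimes^{\bL}_{A_{\Inf},\theta\circ\varphi^{-1}}\cO\cong\wh{\cO}^{+}_X$ is most cleanly obtained as in the paper, by applying the projection formula to the genuine sheaf $W(\wh{\cO}^{+,\flat}_X)$ and then invoking derived $p$-completeness of $R\nu_{f*}\wh{\cO}^{+}_X$, since $\bA_{\Inf,X}$ is only a complex whose sections over affinoid perfectoids are controlled up to almost-zero ambiguity.
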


\begin{proof}
We will employ the language of \S\ref{subsec:almostOsheav}. Let $\theta_X \colon W(\wh{\cO}^{+, \flat}_X) \twoheadrightarrow \wh{\cO}^{+}_X$ be the natural map so that the kernel of $\theta_{X} \circ \varphi^{-1} \colon W(\wh{\cO}^{+, \flat}_X) \twoheadrightarrow \wh{\cO}^{+}_X$ is generated by the nonzero divisor $\tilde{\xi}$. The projection formula (cf. \cite[{Tag 0944}]{stacks-project}) provides the identification
\begin{equation} \label{eq:newadjisom}
R\nu_{f*}(W(\wh{\cO}^{+, \flat}_X)) \otimes^{\bL}_{A_{\Inf}, \theta \circ \varphi^{-1}} \cO \cong R\nu_{f*}(\wh{\cO}^{+}_X).
\end{equation}
By \cite[Lemma C.11]{ZavBog} $\wh{\cO}^{+}_X$ is derived $p$-adically complete and so by \cite[{Tag 0A0G}]{stacks-project}, the RHS of \eqref{eq:newadjisom} is derived $p$-adically complete. Thus there is no harm in replacing $W(\wh{\cO}^{+, \flat}_X)$ appearing on the LHS of \eqref{eq:newadjisom} with it's derived $p$-adic completion to obtain:
\[
R\nu_{f*}(\bA_{\Inf,X}) \otimes^{\bL}_{A_{\Inf}, \theta \circ \varphi^{-1}} \cO \cong R\nu_{f*}(\wh{\cO}^{+}_X).
\]
Since $(\theta \circ \varphi^{-1})(\mu) = \zeta_p -1$, this induces the map \eqref{eq:hodgtatespecmap} and, likewise also its presheaf version
\begin{equation} \label{eq:preshvers}
A\Omega_{\fX / \fY}^{\text{psh}} \otimes^{\bL}_{A_{\Inf}, \theta \circ \varphi^{-1}} \cO \to L\eta_{(\zeta_p - 1)}(R\phi_{*}  (R\nu_{f*}(\wh{\cO}^{+}_X))).
\end{equation}
Due to \eqref{eq:pullbackofpregivshe}, $\phi^{-1}$ brings \eqref{eq:preshvers} to \eqref{eq:hodgtatespecmap}, so by Proposition \ref{prop:commsqaalmssnosh}, it suffices to show that \eqref{eq:preshvers} is an almost isomorphism (if $\fY$ is in addition locally of finite type over $\cO$, it suffices to show \eqref{eq:preshvers} is an isomorphism).

Consider an object $Z = (W \to \fU \leftarrow \fV) =  (\varprojlim_{i \in I} \Spa(S_{i}, S_{i}^{+}) \to \Spf(S) \leftarrow \Spf(R))$ of $D^{\text{psh}}$. The discussion and notation of \S\ref{sec:localanal} apply. In particular, Proposition \ref{prop:relAinfsitf} and \eqref{eq:projfusdba} ensure that the cohomology of $R\Gamma_{\cont}(\Delta, \bA_{\Inf}(R_{\infty})) \otimes_{A_{\Inf}}^{\bL} A_{\Inf}/\mu$ is $p$-torsion free. Thus, since $\tilde{\xi} \equiv p \pmod {\mu}$, \cite[Lemma 5.16]{Bhatspecvar} implies that
\begin{equation}\label{eq:truisnoalis}
L\eta_{\mu}R\Gamma_{\cont}(\Delta, \bA_{\Inf}(R_{\infty})) \otimes_{A_{\Inf},\theta \circ \varphi^{-1}}^{\bL} \cO \xrightarrow{\sim} L\eta_{(\zeta_p -1)}(R\Gamma_{\cont}(\Delta,R_{\infty})).
\end{equation}
Now Theorem \ref{edgemapisoAinf} gives a morphism
\begin{equation} \label{eq:almosWmbmor}
 L\eta_{\mu}R\Gamma_{\cont}(\Delta, \bA_{\Inf}(R_{\infty})) \otimes^{\bL}_{A_{\Inf}, \theta \circ \varphi^{-1}} \cO \to L\eta_{\mu}(R\Gamma(W \times_U V, \bA_{\Inf,X})) \otimes^{\bL}_{A_{\Inf}, \theta \circ \varphi^{-1}} \cO 
\end{equation}
whose cone has cohomology groups  killed by $W(\fm^{\flat}) \otimes_{A_{\Inf}, \theta \circ \varphi^{-1}} \cO \cong \fm$ (cf. Lemma \ref{lem:promaxidtensp}). 
Similarly Theorem \ref{thm:edgemapisoOsheaf} gives a morphism
\begin{equation} \label{eq:almostmomor}
L\eta_{(\zeta_p -1)}(R\Gamma_{\cont}(\Delta,R_{\infty})) \to L\eta_{(\zeta_p -1)}(R\Gamma(W \times_U V, \wh{\cO}^{+}_X))
\end{equation}
whose cone has cohomology groups  killed by $\fm$. Since the edge maps \eqref{eq:relver3.3.1} and \eqref{eq:edgemapforainf} are compatible \eqref{eq:truisnoalis}-\eqref{eq:almostmomor} give the desired conclusion (if $\fY$ is in addition locally of finite type over $\cO$, then by Lemma \ref{lem:Rinftucirc}, \eqref{eq:almosWmbmor}-\eqref{eq:almostmomor} are isomorphisms).
\end{proof}

\subsection{The completed integral structure sheaf $\wh{\cO}_{D}^{+}$}\label{subsec:complfsdas}

In this section, we compute the (local) sections of $\wh{\cO}_{D}^{+}$ for good objects, cf. Proposition \ref{lem:morringtopnuf}. The strategy is as follows
\begin{enumerate}
    \item Compute the sections of the pushforward $\nu_{f*}\wh{\cO}^{+}_{X}$ for good objects (cf. Lemma \ref{lem:valuesofpushfosab}).
    \item Show $\nu_{f*}\wh{\cO}^{+}_{X}$ is $p$-adically complete (cf. Lemma \ref{lem:varithin}(iv)).
    \item Compare $\nu_{f*}\wh{\cO}^{+}_{X}$ with $\wh{\cO}_{D}^{+}$.
\end{enumerate}

\begin{lem} \label{lem:valuesofpushfosab}
For a good object 
\[
Z = (\varprojlim_{i \in I} \Spa(S_{i}, S_{i}^{+}) \to \Spf(S) \leftarrow \Spf(R)),
\] 
we have $\nu_{f*}\wh{\cO}^{+}_{X}(Z) = R \wh{\otimes}_S S_{\infty}$.
\end{lem}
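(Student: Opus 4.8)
The claim is purely local: we must compute $\nu_{f*}\wh{\cO}^+_X(Z)$ for a good object $Z = (\varprojlim_i \Spa(S_i,S_i^+) \to \Spf(S) \leftarrow \Spf(R))$. Unwinding the definitions, $\nu_f^{-1}Z = W \times_U V$, which by the construction in \S\ref{sec:perfcov} is precisely the affinoid perfectoid object $X_\infty = \varprojlim_m \Spa(R_m[\tfrac1p],R_m) \in X_{\proet}$ (recall $R_\infty = R\,\wh\otimes_{R^\square} R^\square_\infty \cong R\,\wh\otimes_S S_\infty$ when $d=0$, but in general $R_\infty \cong (R\,\wh\otimes_S S_\infty)\oplus M_\infty$, so we must be careful: the statement asserts the value is $R\,\wh\otimes_S S_\infty$, \emph{not} $R_\infty$). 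So the first step is to identify $\nu_{f*}\wh{\cO}^+_X(Z) = \wh{\cO}^+_X(W\times_U V) = \Gamma(X\times_Y Y_\infty, \wh{\cO}^+_X)$.

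First I would pass to the pro-étale site of $X\times_Y Y_\infty$ and use the perfectoid cover $X_\infty \to X\times_Y Y_\infty$, which is a pro-(finite étale) $\Delta$-cover with $\Delta \simeq \bZ_p^{\oplus d}$. By \cite[Proposition 3.5, Proposition 3.7(iii), Corollary 6.6]{SchpHrig} (exactly as recalled at the start of \S\ref{subsec:cohcongrp}), $R\Gamma(X\times_Y Y_\infty, \wh{\cO}^+_X)$ is computed by $R\Gamma_{\cont}(\Delta, R_\infty)$; in particular taking $H^0$,
\[
\wh{\cO}^+_X(X\times_Y Y_\infty) = H^0_{\cont}(\Delta, R_\infty) = R_\infty^\Delta = (R\,\wh\otimes_S S_\infty)^\Delta \oplus M_\infty^\Delta.
\]
So the whole content is the computation of the $\Delta$-invariants. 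The $\Delta$-action respects the completed direct sum decomposition \eqref{eq:smallsumdec}, acts trivially on the summand $R\,\wh\otimes_S S_\infty$, and scales the basis monomial $t_1^{a_1}\cdots t_d^{a_d}$ of $M_\infty$ by $\epsilon_1^{a_1}\cdots\epsilon_d^{a_d}$ where at least one $a_j \notin \bZ$. For such a monomial there is an $\epsilon \in \Delta$ acting by a nontrivial root of unity $\zeta$, so the invariants of that summand are $\{x : (\zeta-1)x = 0\}$; since $R^\square_0\,\wh\otimes$-type modules are $\cO$-torsionfree (by Lemma \ref{baslem:torfree}, Corollary \ref{cor:comtentorfree}) and $\zeta - 1 \neq 0$, these invariants vanish. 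Hence $M_\infty^\Delta = 0$ and $R_\infty^\Delta = R\,\wh\otimes_S S_\infty$, giving the result.

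\emph{The main obstacle} will be handling the completed direct sum carefully: taking $\Delta$-invariants does not a priori commute with the $p$-adic completion $\widehat{\bigoplus}$, so I cannot simply say ``invariants of each summand vanish, hence invariants of the sum are the completion of $R\,\wh\otimes_S S_\infty$.'' The clean way around this is to argue modulo $p^n$ first: $R_\infty/p^n \cong (R\,\wh\otimes_S S_\infty)/p^n \oplus \bigoplus_{\text{monomials}} (\cdots)/p^n$ is an \emph{honest} direct sum (a union of the finite-level free modules), and $\Delta$ acts through its finite quotient $\Delta/p^{m}$ on each finite piece; a continuous cocycle/invariant is determined at finite level, so $(R_\infty/p^n)^\Delta = \bigoplus (\text{summand})^{\Delta}$, which by the torsionfreeness-of-$R^\square_0/p^n$ argument (as in Proposition \ref{prop:mtorsioncoho}) equals $(R\,\wh\otimes_S S_\infty)/p^n$. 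Then pass to the limit over $n$: since $R_\infty$ is $p$-adically complete, $R_\infty^\Delta = \varprojlim_n (R_\infty/p^n)^\Delta = \varprojlim_n (R\,\wh\otimes_S S_\infty)/p^n = R\,\wh\otimes_S S_\infty$ (this last ring is $p$-adically complete by construction). This also resolves any worry about whether continuous group cohomology sees the completion correctly. Everything else is bookkeeping with the decompositions \eqref{eq:smallsum}--\eqref{eq:smallsumdec} already set up in \S\ref{sec:perfcov}.
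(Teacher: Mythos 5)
Your route is essentially the paper's: identify $\nu_{f*}\wh{\cO}^{+}_{X}(Z)$ with $H^{0}(X\times_{Y}Y_{\infty},\wh{\cO}^{+}_X)$ via \eqref{eq:pullbaobobjec}, use the $\Delta$-cover $X_{\infty}\to X\times_{Y}Y_{\infty}$ so that the degree-zero part of the edge map \eqref{eq:relver3.3.1} (which is an isomorphism in degree $0$ by the sheaf axiom; note your phrase ``$R\Gamma$ is computed by $R\Gamma_{\cont}$'' is only an almost statement in higher degrees) identifies this with $H^{0}_{\cont}(\Delta,R_{\infty})=R_{\infty}^{\Delta}$, and then kill $M_{\infty}^{\Delta}$ using \eqref{eq:smallsumdec}, the nontrivial root-of-unity action on the nonintegral part, and $\cO$-torsion-freeness (Corollary \ref{cor:comtentorfree}); the paper does exactly this by quoting Proposition \ref{prop:mtorsioncoho}. (Minor slip: $W\times_U V$ is $X\times_Y Y_{\infty}$, not $X_{\infty}$ — the latter is a further $\Delta$-cover of it; your later steps treat this correctly.)

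However, the step you introduce to handle the completed direct sum is false as stated: it is not true that $(R_{\infty}/p^n)^{\Delta}=(R\wh{\otimes}_S S_{\infty})/p^n$. Torsion-freeness is not available modulo $p^n$, and the nonintegral summands do contribute invariants: already for $d=1$, $S=S_{\infty}=\cO$, $R=\cO\{t^{\pm1}\}$, the class of $\tfrac{p}{\zeta_p-1}\,t^{1/p}$ is a nonzero element of $(R_{\infty}/p)^{\Delta}$ not lying in $R/p$, since $\gamma(x)-x=p\,t^{1/p}\equiv 0$ while $v\bigl(\tfrac{p}{\zeta_p-1}\bigr)=1-\tfrac{1}{p-1}<1$. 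This is precisely the ``junk torsion'' that $L\eta_{\mu}$ is designed to remove, so the discrepancy is not a technicality, and the phrase ``torsionfreeness of $R^{\square}_{0}/p^n$'' does not yield the vanishing you claim. The repair is short: on each nonintegral summand some element of $\Delta$ (a suitable $p$-power of a generator) acts through a primitive $p$-th root of unity, so the mod-$p^n$ invariants of that summand are killed by a unit multiple of $\zeta_p-1$, uniformly in $n$; since $H^{0}$ commutes with $\varprojlim_n$ and $R_{\infty}=\varprojlim_n R_{\infty}/p^n$, the extra contribution in the limit is exactly $M_{\infty}^{\Delta}$, which is $(\zeta_p-1)$-torsion inside the $\cO$-torsion-free module $M_{\infty}$ (Corollary \ref{cor:comtentorfree}), hence zero. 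Equivalently, simply invoke Proposition \ref{prop:mtorsioncoho} (which is also where the paper carries out the completed-direct-sum bookkeeping you were worried about) and conclude as in the paper.
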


\begin{proof}
We employ the notation of \S\ref{sec:perfcov}. By \eqref{eq:smallsumdec}, recall that we have a decomposition 
\[
R_{\infty} \cong (R \wh{\otimes}_S S_{\infty}) \oplus M_{\infty}
\]
and since $M_{\infty}^{\Delta}$ is killed by $(\zeta_p -1)$ (cf. Proposition \ref{prop:mtorsioncoho}), we immediately obtain by Corollary \ref{cor:comtentorfree},
\[
R \wh{\otimes}_S S_{\infty}  \cong H^{0}_{\cont}(\Delta, R_{\infty}).
\]
The edge map \eqref{eq:relver3.3.1} is an isomorphism in degree 0. Thus we a get a natural isomorphism (in $R$, $S$ and $S_{\infty}$)
\begin{equation} \label{eq:isoedhmosf}
R \wh{\otimes}_S S_{\infty}  \xrightarrow{\sim} H^{0}(X \times_{Y} Y_{\infty},  \wh{\cO}^{+}_X).
\end{equation}
By \eqref{eq:pullbaobobjec}, the RHS is precisely $\nu_{f*}\wh{\cO}^{+}_{X}(Z)$.
\end{proof}
Lemma \ref{lem:valuesofpushfosab} computes the sections of $\nu_{f*}\wh{\cO}^{+}_{X}$ for good objects. We now begin our comparison of $\nu_{f*}\wh{\cO}^{+}_{X}$ with $\wh{\cO}_{D}^{+}$. This will involve showing that $\nu_{f*}\wh{\cO}^{+}_X$ is $p$-adically complete (as a sheaf). The following (slight) strengthening of the proof of \cite[Proposition 7.13]{SchPerf} will be useful.

\begin{lem} \label{lem:vanishcecomspld}
Let $U_{\infty}$ be an affinoid perfectoid space over $\Spa (C, \cO)$. Then for any covering $\{U_i \to U_{\infty} \}$ (with each $U_i$ affinoid) in $U_{\infty, \et}$ (in the sense of \cite[Definition 7.1(iii)]{SchPerf}), the complex
\[
0 \to \cO_{U_{\infty}}(U_{\infty})^{\circ a} \to \prod_{i} \cO_{U_{i}}(U_{i})^{\circ a} \to \prod_{i,j} \cO_{U_{i} \times_{U_{\infty}} U_{j}}(U_{i} \times_{U_{\infty}} U_{j})^{\circ a} \to \ldots
\]
is exact.
\end{lem}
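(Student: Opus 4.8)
The statement is a sheafy, almost-mathematical refinement of Scholze's acyclicity result for $\cO^+$ on affinoid perfectoids (\cite[Proposition 7.13]{SchPerf}), and the plan is to reduce to it. First I would reduce to a \emph{finite} covering: since $U_\infty$ is affinoid perfectoid and étale morphisms are open, any covering $\{U_i \to U_\infty\}$ in the sense of \cite[Definition 7.1(iii)]{SchPerf} admits a finite subcovering by quasi-compactness of $U_\infty$; passing to a finite refinement is harmless for checking exactness of the \v{C}ech complex up to almost-isomorphism (one argues, as usual, that a refinement induces a homotopy equivalence on almost \v{C}ech complexes, or simply that exactness can be tested after refining and the two complexes have the same cohomology in the almost category). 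So we may assume $I$ is finite and each $U_i$, together with all finite fibre products $U_{i_0}\times_{U_\infty}\cdots\times_{U_\infty}U_{i_k}$, is again affinoid perfectoid over $\Spa(C,\cO)$.

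Next I would invoke the structure theory of the pro-étale/étale site of affinoid perfectoids: by \cite[Proposition 7.13]{SchPerf} (more precisely its proof), for any finite étale covering of an affinoid perfectoid space $V$, the \v{C}ech complex of $\cO_V^+$ is exact up to bounded $p^{1/p^\infty}$-torsion, equivalently exact after passing to $\cO_V^{+a}$ — and by the almost purity machinery the bound is uniform, killed by $\fm$ (or $\fm^\flat$ on tilts). Concretely, Scholze shows $R\Gamma_{\et}(V,\cO_V^+)^a = \cO_V^+(V)^a$ concentrated in degree $0$ for $V$ affinoid perfectoid, and that finite étale covers compute this cohomology; the \v{C}ech-to-derived spectral sequence then forces the \v{C}ech complex in question to be exact in $\cO^{a}$. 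Since $U_\infty$ and all the fibre products are affinoid perfectoid, each term $\cO_{(-)}(-)^{\circ a}$ equals $\cO_{(-)}^+(-)^{\circ a}$ up to almost-isomorphism (here one uses Lemma \ref{lem:nonoto}, or rather the identity $T^\circ = (T^{\circ a})_*$ established in its proof, to identify $\cO^\circ$ with $\cO^+$ almost), so the complex written in the statement agrees in the almost category with Scholze's \v{C}ech complex of $\cO^+$, which is exact.

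The one point requiring genuine care — and the step I expect to be the main obstacle — is handling an \emph{infinite} covering directly, i.e. justifying that passing to a finite subcovering does not lose information at the level of the displayed complex with its products $\prod_i$, $\prod_{i,j}$, etc. The subtlety is that an infinite product of almost-zero modules need not be almost zero without a \emph{uniform} torsion bound; but here the bound is uniform (every relevant cohomology module is killed by the fixed ideal $\fm$, by almost purity), so the infinite products behave well and exactness of the finite-covering \v{C}ech complexes propagates. I would make this precise by first establishing exactness for finite coverings as above, then writing the infinite \v{C}ech complex as a filtered colimit (or a product indexed over finite subsets, using that a cofinal system of finite subcoverings exists) of finite ones and noting that the relevant exactness is preserved because all obstruction modules are annihilated by $\fm$. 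This is exactly the ``(slight) strengthening'' alluded to in the statement: Scholze's proof already yields the uniform bound, so only the bookkeeping for arbitrary (not necessarily finite) covering families needs to be spelled out.
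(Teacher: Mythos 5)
Your core argument is correct but takes a genuinely different route from the paper. The paper stays entirely at the \v{C}ech level: it chooses a \emph{finite} refinement $\{V_j \to U_\infty\}$ of the given covering in which each $V_j \to U_\infty$ is a composition of rational embeddings and finite \'etale maps (the case actually treated in the proof of \cite[Proposition 7.13]{SchPerf}), observes that the same proof gives almost exactness of the \v{C}ech complex of $\{V_j\}$ over every multi-intersection $U_{i_0}\times_{U_\infty}\cdots\times_{U_\infty}U_{i_p}$, and then transfers exactness between the two augmented \v{C}ech complexes by the comparison lemma of \cite[\S 8.1.4, Corollary 3]{BGR} (cf.\ \cite[Proposition 8.2.21]{KeLiRpHF}). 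Your route instead uses the \emph{statement} of \cite[Proposition 7.13]{SchPerf} — vanishing of $H^i_{\et}(V,\cO_V^{+a})$ for $i>0$ on affinoid perfectoid $V$ — together with the Cartan--Leray/\v{C}ech-to-derived spectral sequence: since all the fiber products $U_{i_0}\times_{U_\infty}\cdots\times_{U_\infty}U_{i_p}$ are again affinoid perfectoid, the spectral sequence degenerates and forces the displayed complex to be exact, after identifying $\cO^{\circ a}$ with $\cO^{+a}$ (valid since $\fm\cdot\cO^\circ\subseteq\cO^+$). This works directly for an \emph{arbitrary} covering by affinoids, and the infinite products are harmless because a product of modules killed by the fixed ideal $\fm$ is killed by $\fm$; so your third paragraph's ``uniformity'' worry is automatic, not an extra input. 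What each approach buys: the paper's argument is self-contained at the \v{C}ech level and only uses what Scholze actually proves in the course of 7.13; yours is shorter but leans on the derived formulation and on setting up the spectral sequence in the almost category (or on applying it to $\cO^+$ honestly and tracking $\fm$-torsion).

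One caveat: your opening reduction to a finite subcovering is not justified as written. A refinement (or subcovering) does \emph{not} ``as usual'' induce a homotopy equivalence, or even a quasi-isomorphism, of \v{C}ech complexes for a fixed pair of coverings; making that comparison precise is exactly the BGR-type lemma the paper invokes, and its hypothesis is the acyclicity of the refined covering over all intersections — i.e.\ the nontrivial content. In your proposal this step is dispensable, since the spectral-sequence argument needs no finiteness, so you should simply drop the first paragraph rather than repair it.
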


\begin{proof}
The statement is proved in the proof of \cite[Proposition 7.13]{SchPerf} where in addition the covering is finite and each $U_i \to U_{\infty}$ is a composition of rational embeddings and finite étale maps. To obtain the general statement one takes a finite refinement $\{ V_j \to U_{\infty}\}$ of $\{U_i \to U_{\infty} \}$ such that each $V_j \to U_{\infty}$ is a composition of rational embeddings and finite étale maps. The key point is that the \v{C}ech complex associated to the covering of $\{ V_j \to U_{\infty}\}$ restricted to each $U_{i_1} \times_{U_{\infty}} \ldots \times_{U_{\infty}} U_{i_p}$ is almost exact (again by the proof in loc.cit.). By the same argument as \cite[\S 8.1.4, Corollary 3]{BGR}, this implies that the augmented \v{C}ech complex associated to $\{V_j \to U_{\infty} \}$ is almost exact iff the same is true for $\{ U_{i} \to U_{\infty}\}$\footnote{For the corresponding argument \emph{outside} of the almost setting, cf. \cite[Proposition 8.2.21]{KeLiRpHF}.}. 
\end{proof}

We now show an analogue of \cite[Lemma 4.10]{SchpHrig} for $\nu_{f*}\wh{\cO}^{+}_{X}$.

\begin{lem} \label{lem:varithin}
Let $Z = (W \to \fU \leftarrow \fV) = (\varprojlim_{i \in I} \Spa(S_{i}, S_{i}^{+}) \to \Spf(S) \leftarrow \Spf(R)) \in D$ be a good object. In the following, we use the almost setting w.r.t. $\fm \subset \cO$.
\begin{enumerate}[(i)]
    \item We have $\nu_{f*}\wh{\cO}^{+}_{X}(Z)/p^n = (R \otimes_S S_{\infty})/p^n$, and this is almost equal to $((\nu_{f*}\wh{\cO}^{+}_X)/p^n)(Z)$.
    \item The cohomology groups $H^{i}(Z, (\nu_{f*}\wh{\cO}^{+}_X)/p^n)$ are almost zero for $i >0$ and $n \geq 1$.
\item The image of $((\nu_{f*}\wh{\cO}^{+}_X)/p^{n+1})(Z)$ in $((\nu_{f*}\wh{\cO}^{+}_X)/p^n)(Z)$ is $(R \otimes_S S_{\infty})/p^n$ for all $n \geq 1$.
\item The sheaf $\nu_{f*}\wh{\cO}^{+}_X$ is $p$-adically complete: $\nu_{f*}\wh{\cO}^{+}_X = \varprojlim (\nu_{f*}\wh{\cO}^{+}_X)/p^n$.
\item The cohomology groups $H^{i}(Z, \nu_{f*}\wh{\cO}^{+}_X)$ are almost zero for $i >0$.
\end{enumerate}
\end{lem}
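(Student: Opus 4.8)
The plan is to prove the five assertions of Lemma \ref{lem:varithin} in the stated order, since each relies on the previous one, following closely the strategy of \cite[Lemma 4.10]{SchpHrig} but with the pushforward $\nu_{f*}\wh{\cO}^+_X$ in place of $\wh{\cO}^+_X$ and with good objects (as described in Proposition \ref{prop:goodobjformbas}) as the test objects. Throughout, recall from \eqref{eq:pullbaobobjec} and the definition of $\nu_{f*}$ that for a good object $Z = (W \to \fU \leftarrow \fV)$ one has $\nu_{f*}\wh{\cO}^+_X(Z) = \wh{\cO}^+_X(W\times_U V)$, where $W\times_U V = X_\infty$ is the affinoid perfectoid of \S\ref{sec:perfcov}; so $H^i(Z, \nu_{f*}\wh{\cO}^+_X) = H^i(X_\infty, \wh{\cO}^+_X)$ and similarly for the mod $p^n$ versions, and Lemma \ref{lem:valuesofpushfosab} already identifies $\nu_{f*}\wh{\cO}^+_X(Z) = R\wh{\otimes}_S S_\infty$.

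For (i): since $R\wh{\otimes}_S S_\infty$ is the $p$-adic completion of a $p$-torsion free module (it sits inside $R_\infty$, which is $\cO$-torsion free by Corollary \ref{cor:comtentorfree}), the uncompleted quotient $(R\otimes_S S_\infty)/p^n$ equals $(R\wh{\otimes}_S S_\infty)/p^n$, giving the first equality. For the almost-equality with $((\nu_{f*}\wh{\cO}^+_X)/p^n)(Z)$: the presheaf quotient $Z\mapsto \nu_{f*}\wh{\cO}^+_X(Z)/p^n$ sheafifies to $(\nu_{f*}\wh{\cO}^+_X)/p^n$, so one must show that passing from the presheaf quotient to its sheafification changes sections only up to $\fm$-torsion on good objects. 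This is where Lemma \ref{lem:vanishcecomspld} enters: applied to coverings of $X_\infty$ in $X_{\infty,\et}$, it shows that the \v{C}ech complex of $\cO_{X_\infty}^{\circ a}/p^n$ is almost exact in positive degrees, hence the sheafification introduces only almost-zero error; this simultaneously yields (ii), the almost vanishing of $H^i(Z,(\nu_{f*}\wh{\cO}^+_X)/p^n)$ for $i>0$. One must be a little careful because $Z$ itself involves a pro-étale object $W$ over $Y$; but coverings of $Z$ of type (a)--(d) restrict, after passing to $X_\infty = W\times_U V$, to honest étale (and pro-étale, hence affinoid perfectoid) coverings there, so the \v{C}ech computation on $X_\infty$ controls everything. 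The almost purity computations of \S\ref{subsec:cohcongrp} (the edge map \eqref{eq:relver3.3.1} being an almost isomorphism) can also be invoked here as an alternative route.

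For (iii): the obstruction to lifting a section mod $p^n$ to a section mod $p^{n+1}$ lies in $H^1(Z, (\nu_{f*}\wh{\cO}^+_X)[p]) = H^1(X_\infty, \wh{\cO}^+_X[p])$, which is almost zero by (the graded pieces of) (ii), together with the identification $\wh{\cO}^+_X/p^n(X_\infty)\cong R_\infty/p^n$ being $\cO$-flat so that $\wh{\cO}^+_X[p]$ behaves well; hence the image contains $(R\otimes_S S_\infty)/p^n$ up to $\fm$-torsion, but one upgrades this to an honest equality because $(R\otimes_S S_\infty)/p^n$ is the full set of elements that lift, by a direct chase using that $R\wh{\otimes}_S S_\infty$ is $p$-adically complete and $p$-torsion free. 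For (iv): from (i) and (iii) the inverse system $\{(\nu_{f*}\wh{\cO}^+_X)/p^n(Z)\}_n$ has eventually-stable images equal to $(R\otimes_S S_\infty)/p^n$, so $\varprojlim_n (\nu_{f*}\wh{\cO}^+_X)/p^n(Z) = \varprojlim_n (R\otimes_S S_\infty)/p^n = R\wh{\otimes}_S S_\infty = \nu_{f*}\wh{\cO}^+_X(Z)$; since good objects form a basis for $D$ (Proposition \ref{prop:goodobjformbas}(1)), checking this on good objects suffices to conclude $\nu_{f*}\wh{\cO}^+_X = \varprojlim_n (\nu_{f*}\wh{\cO}^+_X)/p^n$ as sheaves. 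Finally, (v) follows from (iv), (ii), and the Milnor exact sequence: $H^i(Z,\nu_{f*}\wh{\cO}^+_X)$ sits in a short exact sequence with $R^1\varprojlim_n H^{i-1}(Z,(\nu_{f*}\wh{\cO}^+_X)/p^n)$ and $\varprojlim_n H^i(Z,(\nu_{f*}\wh{\cO}^+_X)/p^n)$, both of which are almost zero for $i>0$ by (ii) (the $R^1\varprojlim$ of almost-zero modules is almost zero), so $H^i(Z,\nu_{f*}\wh{\cO}^+_X)$ is almost zero; again this is checked on the basis of good objects. The main obstacle I expect is (iii) --- controlling the images of the transition maps on the nose rather than merely almost --- and the bookkeeping needed to make sure the \v{C}ech-theoretic almost-vanishing on $X_\infty$ genuinely computes the sheaf cohomology $H^i(Z,-)$ on $D$ for all four covering types (a)--(d), which requires unwinding the universal property in Theorem \ref{thm:universprofibprodto} at each step.
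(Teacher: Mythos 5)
Your proposal rests on a misidentification that undermines the core of the argument. For a good object $Z$, \eqref{eq:pullbaobobjec} gives $\nu_f^{-1}Z = W\times_U V = X\times_Y Y_\infty$, the \emph{base} of the $\Delta$-cover of \S\ref{sec:perfcov}, not the affinoid perfectoid $X_\infty$; accordingly $\nu_{f*}\wh{\cO}^{+}_X(Z)=\wh{\cO}^{+}_X(X\times_Y Y_\infty)=R\wh{\otimes}_S S_\infty$ (Lemma \ref{lem:valuesofpushfosab}), whereas $\wh{\cO}^{+}_X(X_\infty)=R_\infty$ — your opening sentence asserts both at once. More seriously, $H^{i}(Z,\nu_{f*}\wh{\cO}^{+}_X)$ is cohomology on the fibered site $D$ of the \emph{underived} pushforward; it is not the pro-étale cohomology of anything upstairs (that would be $H^{i}(Z,R\nu_{f*}\wh{\cO}^{+}_X)$). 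Had your identification been correct, parts (ii) and (v) would in fact be false: $H^{i}_{\proet}(X\times_Y Y_\infty,\wh{\cO}^{+}_X)$ is, up to $\fm$-torsion, the continuous group cohomology $H^{i}_{\cont}(\Delta,R_\infty)$, which modulo its $(\zeta_p-1)$-torsion is free of rank $\binom{d}{i}$ (cf. Theorem \ref{thm:edgemapisoOsheaf} and Proposition \ref{prop:twistOmegfrrmo}) and hence not almost zero in general. So the suggested "alternative route via almost purity / the edge map" cannot prove the lemma; the content of the statement is precisely the Čech acyclicity of the pushforward sheaf on $D$ itself, which must be established by hand.

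Because of this, the step carrying the real weight is missing from your plan for (i)–(ii): after reducing an arbitrary covering of $Z$ in $D$ to a single good cover $T$ of controlled shape (using quasi-compactness and Proposition \ref{prop:goodobjformbas}), the resulting Čech complex has terms of the form $(R\wh{\otimes}_S S_\infty)/p^n$ and its relatives, and these rings are \emph{not} perfectoid, so Lemma \ref{lem:vanishcecomspld} does not apply directly; one must base change the entire augmented complex along the $p$-completely faithfully flat map $R\wh{\otimes}_S S_\infty\to R_\infty$ to land in perfectoid rings, and only then invoke Lemma \ref{lem:vanishcecomspld} (the paper also routes the almost-sheaf bookkeeping through Proposition \ref{prop:commsqaalmssnosh}). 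Your treatment of (iii) is also off target: the claim is not surjectivity of the transition maps (so an obstruction in $H^{1}$ of a torsion subsheaf is not the right frame), but an exact identification of their image, proved by the elementary trick of multiplying by $p\in\fm$: for $f$ a section mod $p^{n+1}$, the cokernel in (i) being killed by $\fm$ gives $pf=g$ with $g\in R\wh{\otimes}_S S_\infty$; writing $g=ph$, using local lifts $f_k$ with $pf_k=g$, $p$-torsion freeness and the sheaf property to see $h\in R\wh{\otimes}_S S_\infty$, and then injectivity of multiplication by $p$ on $(\nu_{f*}\wh{\cO}^{+}_X)/p^{n}\to(\nu_{f*}\wh{\cO}^{+}_X)/p^{n+1}$ to conclude $f=h$ mod $p^{n}$. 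Your (iv) and (v) are essentially the paper's arguments (with the almost version of Scholze's limit lemma playing the role of your Milnor sequence), but they stand on (ii)–(iii), which as proposed are not established.
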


\begin{proof}
The equality $\nu_{f*}\wh{\cO}^{+}_X(Z)/p^n = (R \otimes_S S_{\infty})/p^n$ follows from Lemma \ref{lem:valuesofpushfosab}. For parts (i)-(ii), by Proposition \ref{prop:commsqaalmssnosh} and the exactness of $\wt{\alpha} \colon \Mod_{\cO}(\wt{\sC}) \to \wt{\sC}^{a}$ (cf. \S \ref{subsec:almostOsheav})
it suffices to show
\[
Z \mapsto \sF (Z) := (\nu_{f*}\wh{\cO}^{+}_{X}(Z)/p^n)^{a}
\]
is a sheaf of $\cO^{a}$-modules and $H^{i}(Z, \sF) = 0$ for $i >0$.

Let $Z$ be covered by $Z_{k} \to Z$, where $Z_k$ are objects 
as in Proposition \ref{prop:ousiDalge}(1). By Proposition \ref{prop:ousiDalge}(2) $Z$ is quasi-compact, so we can assume that there are only finitely many $Z_k$ or a single $T$ (by taking the disjoint union componentwise). After taking a refinement we can assume $T$ is good (cf. Proposition \ref{prop:goodobjformbas}(1)), and of the form 
\[
T = (\varprojlim W_j \to \fU_0 \leftarrow \fV_0)
\]
such that
\begin{enumerate}
   \item $W_j \to W_{j'}$ is finite étale surjective for $j \geq j' \geq 0$.
    \item $\varprojlim W_j \to U_0$ factors as  $\varprojlim W_j \to W_0 \to U_0$, where $U_{0}$ is the adic generic fiber of $\fU_0$.
\end{enumerate}
In particular this implies that $W_j \times_{U_{0}} V_{0} \to W \times_{U} V$ is étale surjective for $j \geq 0$, where $V_0$ is the adic generic fiber of $\fV_0$. It suffices to check that the complex
\begin{equation} \label{eq:tensofeasr}
C(T \to Z, \sF)\colon 0 \to \sF(Z) \to \sF(T) \to \sF(T \times_Z T) \to \cdots
\end{equation}
is exact. Note that by Lemma \ref{lem:valuesofpushfosab}
\begin{enumerate}
    \item $\sF(Z) = (R \otimes_S S_{\infty})^{a}/p^n$, and
    \item $\sF(T) = \wh{\cO}^{+}_X(\varprojlim W_j \times_{U_0} V_{0})^{a}/p^n = \varinjlim (\wh{\cO}^{+}_{X}(W_j \times_{U_{0}} V_0) )^{a}/p^n$. 
\end{enumerate}
Moreover the values of $\sF(Z)$ and $\sF(T)$ determine the values of all the terms in \eqref{eq:tensofeasr} as each $\underbrace{T \times_{Z} \ldots \times_Z T}_{
    \text{$r$ times}}$ is good (good objects are stable under fiber products) and 
    \[
    \sF (\underbrace{T \times_{Z} \ldots \times_Z T}_{
    \text{$r$ times}}) = \underbrace{\sF(T) \times_{\sF(Z)} \ldots \times_{\sF(Z)} \sF(T)}_{
    \text{$r$ times}}.
    \]
Now $R \wh{\otimes}_S S_{\infty}$ is not quite perfectoid, so the final step is to base change \eqref{eq:tensofeasr} to a sequence of perfectoid algebras.

Employing the notation in \S \ref{sec:localanal} we have the pro-(finite étale) cover $U_{\infty} \to W \times_U V$ in $X_{\proet}$. Let $U_{j, \infty} \to U_{j}$ be the étale cover determined by $W_j \times_{U_{0}} V_{0} \to W \times_{U} V$.

\begin{lem}
    The morphism $R \wh{\otimes}_S S_{\infty} \to R_{\infty}$ is $p$-completely faithfully flat.
\end{lem}

\begin{proof}
Note that each $R_m$ is nonzero and finite free over $R \wh{\otimes}_S S_{\infty}$ and $R_{\infty}$ is the (derived) $p$-adic completion of $\varinjlim R_m$.
\end{proof}
So it suffices to show that the complex in \eqref{eq:tensofeasr} base changed along $(R \wh{\otimes}_S S_{\infty})^{a} \to R_{\infty}^{a}$:

\begin{equation} \label{eq:modvsdf}
 0 \to (\cO^{+}_{\wh{U}_{\infty}}(\wh{U}_{\infty})/p^n)^{a} \to \varinjlim (\cO^{+}_{\wh{U}_{ \infty}}(\wh{U}_{j,\infty})/p^n)^{a} \to \varinjlim (\cO^{+}_{\wh{U}_{ \infty}}(\wh{U}_{j,\infty} \times_{\wh{U}_{\infty}} \wh{U}_{j,\infty})/p^n)^{a} \to \cdots
\end{equation}
is exact, where $\wh{U}_{\infty}$ (resp. $\wh{U}_{j,\infty}$) is the corresponding affinoid perfectoid  space associated to $U_{\infty}$ (resp. $U_{j, \infty}$). This follows from Lemma \ref{lem:vanishcecomspld}.

We now show part (iii). Let $f \in ((\nu_{f*}\wh{\cO}^{+}_{X})/p^{n+1})(Z)$. By part (1), there exists $g \in R \wh{\otimes}_S S_{\infty}$ such that $pf = g$ in $((\nu_{f*}\wh{\cO}^{+}_{X})/p^{n+1})(Z)$. This means that there exists a covering $\{ Z_k \to Z \}$ and $f_k \in \nu_{f*}\wh{\cO}^{+}_{X}(Z_k)$ such that 
\begin{equation} \label{eq:somequalf}
pf_k = g \text{ in } \nu_{f*}\wh{\cO}^{+}_{X}(Z_k).
\end{equation}
On the other hand we can write $g = ph$ for some $h \in (R \wh{\otimes}_S S_{\infty})[\tfrac{1}{p}]$. By \eqref{eq:somequalf}, we get that $h \in R \wh{\otimes}_S S_{\infty}$. As multiplication by $p \colon (\nu_{f*}\wh{\cO}^{+}_{X})/p^{n} \to (\nu_{f*}\wh{\cO}^{+}_{X})/p^{n+1}$ is an injection, we get that $f = h$ in $((\nu_{f*}\wh{\cO}^{+}_{X})/p^{n})(Z)$. This completes the proof of part (iii).

Part (iv) is a consequence of part (iii). Finally part (v) is a consequence of parts (ii) and (iv) together with the almost version of \cite[Lemma 3.18]{SchpHrig}.
\end{proof}

We can finally compare $\nu_{f*}\wh{\cO}^{+}_{X}$ and $\wh{\cO}^{+}_{D}$ and establish a morphism of ringed topoi.

\begin{prop} \label{lem:morringtopnuf}
There is a morphism of ringed topoi
\[
(\nu_f, \nu_{f}^{\sharp}) \colon (X_{ \proet}, \wh{\cO}^{+}_X) \to (Y_{ \proet}\times_{\fY_{\et}}\fX_{\et}, \wh{\cO}^{+}_D)
\]
with $\nu_{f}^{\sharp} \colon  \wh{\cO}^{+}_D \xrightarrow{\sim} \nu_{f^{*}}(\wh{\cO}^{+}_X)$.  Moreover for a good object 
\[
Z = (\varprojlim_{i \in I} \Spa(S_{i}, S_{i}^{+}) \to \Spf(S) \leftarrow \Spf(R)),
\] 
we have $\wh{\cO}^{+}_D(Z) = R \wh{\otimes}_S S_{\infty}$.
\end{prop}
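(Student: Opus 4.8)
The plan is to build the morphism of ringed topoi by first constructing the map of structure sheaves on the presheaf level and then sheafifying, using Proposition~\ref{prop:goodobjformbas} to reduce everything to good objects. Concretely, recall from Definition~\ref{def:comtensostrushea} that $\wh{\cO}^{+}_{D} = q^{-1}\cO_{\fX,\et} \wh{\otimes}_{p^{-1}\nu_{Y}^{-1}\cO_{\fY,\et}} p^{-1}\wh{\cO}^{+}_{Y}$, and that the projections $p,q$ are already morphisms of ringed topoi \eqref{eq:morringtop}. Since $\nu_f$ is a morphism of topoi by \eqref{eq:definovf}, to upgrade it to a morphism of ringed topoi it suffices to produce a ring map $\nu_{f}^{\sharp}\colon \wh{\cO}^{+}_{D} \to \nu_{f*}\wh{\cO}^{+}_{X}$, equivalently $\nu_{f}^{-1}\wh{\cO}^{+}_{D}\to \wh{\cO}^{+}_{X}$. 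The latter exists by the universal property of the completed tensor product: applying $\nu_f^{-1}$ to the two factors $q^{-1}\cO_{\fX,\et}$ and $p^{-1}\wh{\cO}^{+}_{Y}$, one has $\nu_f^{-1}q^{-1}\cO_{\fX,\et} = \nu_X^{-1}\cO_{\fX,\et}\to \cO_X^{+}\to \wh{\cO}^{+}_X$ (using $q\circ\nu_f = \nu_X$) and $\nu_f^{-1}p^{-1}\wh{\cO}^{+}_{Y} = f_{\eta,\proet}^{-1}\wh{\cO}^{+}_{Y}\to \wh{\cO}^{+}_{X}$ (using $p\circ\nu_f = f_{\eta,\proet}$), and these two maps agree after restriction to $\nu_f^{-1}p^{-1}\nu_Y^{-1}\cO_{\fY,\et}$ by the construction of $\wh{\cO}^{+}_D$ together with the $2$-isomorphism $\varepsilon$ from \eqref{eq:cantwoisom}; pass to the completed tensor product and complete. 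This gives $\nu_{f}^{\sharp}$.

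Next I would compute $\nu_{f*}\wh{\cO}^{+}_X(Z) = R\wh{\otimes}_S S_\infty$ for a good object $Z$: this is exactly Lemma~\ref{lem:valuesofpushfosab}. The same identification shows that the presheaf $Z\mapsto \wh{\cO}^{+}_D(Z)$ has the ``correct'' value at good objects: indeed by Proposition~\ref{prop:basisDnosmor}/\ref{prop:goodobjformbas} good objects form a basis, and on such a basis the completed tensor product defining $\wh{\cO}^{+}_D$ unwinds---$p^{-1}\wh{\cO}^{+}_Y$ evaluates to $S_\infty$, $q^{-1}\cO_{\fX,\et}$ evaluates to (the completion of) $R$ over $S$, and the base ring $p^{-1}\nu_Y^{-1}\cO_{\fY,\et}$ evaluates to $S$---so the presheaf tensor product is $R\wh\otimes_S S_\infty$, and since this is already $p$-adically complete there is nothing further to do on the basis. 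Hence $\nu_{f}^{\sharp}$ is an isomorphism on the presheaf level when restricted to good objects.

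Finally I would bootstrap from ``isomorphism on good objects'' to ``isomorphism of sheaves''. Here the key input is Lemma~\ref{lem:varithin}: parts (i)--(iv) say precisely that $\nu_{f*}\wh{\cO}^{+}_X$ is a $p$-adically complete sheaf whose reductions mod $p^n$ have almost-vanishing higher cohomology on good objects and whose sections on good objects are $R\wh\otimes_S S_\infty$ (up to the almost subtlety in (i)). Since good objects form a basis for $D$, a morphism of sheaves which is an isomorphism on a basis is an isomorphism; more carefully, one checks $\nu_{f}^{\sharp}$ is an isomorphism after applying $\phi_*$ (restriction to $D^{\mathrm{psh}}$) and uses $\phi^{-1}\circ\phi_* \cong \id$ together with the fact that $\wh{\cO}^{+}_D$ is by construction the sheafification of the presheaf tensor product, whose value on good objects is already $R\wh\otimes_S S_\infty = \nu_{f*}\wh{\cO}^{+}_X$-sections. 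This yields $\nu_{f}^{\sharp}\colon \wh{\cO}^{+}_D \xrightarrow{\sim}\nu_{f*}\wh{\cO}^{+}_X$ and the asserted value $\wh{\cO}^{+}_D(Z) = R\wh\otimes_S S_\infty$.

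The main obstacle is the bookkeeping around the \emph{almost} discrepancy in Lemma~\ref{lem:varithin}(i): the presheaf $Z\mapsto (\nu_{f*}\wh{\cO}^{+}_X(Z))/p^n$ is only almost equal to $((\nu_{f*}\wh{\cO}^{+}_X)/p^n)(Z)$, not equal on the nose, so one must be careful that after taking the inverse limit over $n$ (Lemma~\ref{lem:varithin}(iv)) the $p$-complete sheaf $\nu_{f*}\wh{\cO}^{+}_X$ genuinely has sections $R\wh\otimes_S S_\infty$ on good objects rather than merely almost so---this is why part (iii) of that lemma (controlling the image of $((\nu_{f*}\wh{\cO}^{+}_X)/p^{n+1})(Z)$ in $((\nu_{f*}\wh{\cO}^{+}_X)/p^{n})(Z)$) is needed, and that is the step I would write out most carefully. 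Everything else is a formal consequence of the universal property of the fiber product of topoi and the basis computations already established.
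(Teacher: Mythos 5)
Your toolkit is the right one and the skeleton matches the paper's: identify $\nu_{f*}\wh{\cO}^{+}_X(Z)=R\wh{\otimes}_S S_\infty$ via Lemma \ref{lem:valuesofpushfosab}, use that good objects form a basis, and feed in Lemma \ref{lem:varithin} to handle completion. Your construction of $\nu_f^{\sharp}$ by adjunction from the two canonical maps $\nu_X^{-1}\cO_{\fX,\et}\to\wh{\cO}^{+}_X$ and $f_{\eta,\proet}^{-1}\wh{\cO}^{+}_Y\to\wh{\cO}^{+}_X$ is also fine (one constructs the map out of the \emph{uncompleted} tensor product and then uses $p$-completeness of the target to factor through $\wh{\cO}^{+}_D$), and agrees with the map the paper gets from naturality of the identification in Lemma \ref{lem:valuesofpushfosab}.

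The step that does not hold as written is the middle of your argument: you claim that on a good object $Z$ the completed tensor product ``unwinds'', each factor ``evaluates'' to the expected ring, and since $R\wh{\otimes}_S S_\infty$ is $p$-complete ``there is nothing further to do on the basis''. This conflates presheaf values with sheaf sections. By Definition \ref{def:comtensostrushea}, $\wh{\cO}^{+}_D=\varprojlim_n(\,\cdot\,)/p^n$ where each mod-$p^n$ term is a \emph{sheafification}, and sheafification can change sections on $Z$; indeed Lemma \ref{lem:varithin}(i) tells you that the mod-$p^n$ sections of $\nu_{f*}\wh{\cO}^{+}_X$ agree with $(R\otimes_S S_\infty)/p^n$ only up to almost ambiguity, so the equality $\wh{\cO}^{+}_D(Z)=R\wh{\otimes}_S S_\infty$ is genuinely part of what must be proved and cannot be read off termwise. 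The repair is to run the argument in the opposite order, which is what the paper does: the presheaf map $Z\mapsto R\otimes_S S_\infty\ \to\ \nu_{f*}\wh{\cO}^{+}_X(Z)=R\wh{\otimes}_S S_\infty$ (natural in $Z$ by Lemma \ref{lem:valuesofpushfosab}) is an isomorphism modulo $p^n$; sheafifying gives an isomorphism of sheaves mod $p^n$, and taking $\varprojlim_n$ together with Lemma \ref{lem:varithin}(iv) (whose proof is exactly the transition-image control of part (iii) that you flag) yields $\wh{\cO}^{+}_D\xrightarrow{\sim}\nu_{f*}\wh{\cO}^{+}_X$; the value $\wh{\cO}^{+}_D(Z)=R\wh{\otimes}_S S_\infty$ is then a \emph{consequence} of this isomorphism and Lemma \ref{lem:valuesofpushfosab}, not an input. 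Your closing paragraph essentially identifies this subtlety, so what is needed is a reorganization of the second paragraph rather than any new idea.
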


\begin{proof}
By definition
\[
\wh{\cO}^{+}_D := \varprojlim( q^{-1}\cO_{\fX,\et} \otimes_{p^{-1}\nu_{Y}^{-1}\cO_{\fY, \et}} p^{-1}\wh{\cO}^{+}_Y)/p^n 
\]
and so by Lemma \ref{lem:varithin}(iv), it suffices to construct a morphism
\begin{equation} \label{eq:consmorpfs}
q^{-1}\cO_{\fX,\et} \otimes_{p^{-1}\nu_{Y}^{-1}\cO_{\fY, \et}} p^{-1}\wh{\cO}^{+}_Y \to \nu_{f*}(\wh{\cO}^{+}_{X})
\end{equation}
which gives an isomorphism modulo $p^n$. By definition the LHS of \eqref{eq:consmorpfs} is sheafification of the presheaf $Z \mapsto \sF(Z) := R \otimes_S S_{\infty}$. Since $\nu_{f*}(\wh{\cO}^{+}_{X})(Z) = R \wh{\otimes}_S S_{\infty}$ which is natural in $R$, $S$ and $S_{\infty}$ (cf. Lemma \ref{lem:valuesofpushfosab} and its proof), we get a map $\sF \to \nu_{f*}(\wh{\cO}^{+}_{X})$ of presheaves which is an isomorphism modulo $p^n$. Sheafifying this map gives the sought after \eqref{eq:consmorpfs}.
\end{proof}

Finally we conclude this section by comparing $\wh{\cO}^{+}_{D}$ with its derived $p$-adic completion $R\varprojlim (\wh{\cO}^{+}_{D}/p^n)$: 
\begin{lem} \label{lem:Odsupad}
The completed integral structure sheaf $\wh{\cO}^{+}_{D}$ is derived $p$-adically complete.
\end{lem}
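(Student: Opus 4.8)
The plan is to reduce to a computation on a basis of $D$ and then quote the local results just established. By \cite[{Tag 0BLX}]{stacks-project}, to see that $\wh\cO^{+}_{D}$ is derived $p$-adically complete it suffices to show that $R\Gamma(Z,\wh\cO^{+}_{D})$ is derived $p$-adically complete for every $Z$ in a generating family of $D$, and by Proposition \ref{prop:goodobjformbas}(1) we may take this family to be the good objects. So the task is: for a fixed good object $Z=(\varprojlim_{i\in I}\Spa(S_{i},S_{i}^{+})\to\Spf(S)\leftarrow\Spf(R))$, show $R\Gamma(Z,\wh\cO^{+}_{D})$ is derived $p$-adically complete.

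The two key inputs are Proposition \ref{lem:morringtopnuf}, which gives $\wh\cO^{+}_{D}\cong\nu_{f*}\wh\cO^{+}_{X}$ and $\wh\cO^{+}_{D}(Z)=R\wh\otimes_{S}S_{\infty}$, and Lemma \ref{lem:varithin}(v), which says that $H^{i}(Z,\nu_{f*}\wh\cO^{+}_{X})$ is almost zero with respect to $\fm\subset\cO$ for $i>0$. Since $p\in\fm$, these positive-degree cohomology groups are in fact killed by $p$, hence trivially derived $p$-adically complete. For the degree-zero part, $\wh\cO^{+}_{D}(Z)=R\wh\otimes_{S}S_{\infty}$ is $\cO$-torsion-free — it is a direct summand of $R_{\infty}$ by the decomposition \eqref{eq:smallsumdec}, and $R_{\infty}$ is $\cO$-torsion-free by Corollary \ref{cor:comtentorfree} — and it is classically $p$-adically complete by construction of the completed tensor product, so it is derived $p$-adically complete. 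Thus $R\Gamma(Z,\wh\cO^{+}_{D})$, which lies in $D^{\geq 0}$, has all of its cohomology groups derived $p$-adically complete; writing it as $R\varprojlim_{n}\tau^{\leq n}R\Gamma(Z,\wh\cO^{+}_{D})$ and using that the derived $p$-adically complete objects of $D(\cO)$ form a full triangulated subcategory stable under extensions and countable products (equivalently, applying \cite[Lemma 6.15]{BhMorSch} with the replete topos of sets), we conclude that $R\Gamma(Z,\wh\cO^{+}_{D})$ is derived $p$-adically complete, which finishes the proof.

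I do not anticipate a real obstacle: the content is already in Lemma \ref{lem:varithin} and Proposition \ref{lem:morringtopnuf}. The only step that deserves a word of care is the passage from "all cohomology groups derived complete" to "the complex is derived complete" for a complex that is a priori only bounded below, which is exactly the $R\varprojlim$-of-truncations argument above (and would be immediate if one first recorded that $R\Gamma(Z,\wh\cO^{+}_{D})$ has bounded cohomological dimension, which likewise follows from the Čech/faithfully flat descent computation underlying Lemma \ref{lem:varithin}). As an alternative one could instead start from the fact that $\wh\cO^{+}_{X}$ is derived $p$-adically complete on $X_{\proet}$ (\cite[Lemma C.11]{ZavBog}), deduce that $R\nu^{\text{psh}}_{f*}\wh\cO^{+}_{X}$ is derived $p$-adically complete in the replete topos $(Y_{\proet}\times_{\fY_{\et}}\fX_{\et})^{\text{psh}}$, and apply \cite[Lemma 6.15]{BhMorSch} to conclude that its $H^{0}=\phi_{*}\wh\cO^{+}_{D}$ — hence each section $\wh\cO^{+}_{D}(Z)$ — is derived $p$-adically complete; but this still has to be fed back through \cite[{Tag 0BLX}]{stacks-project} together with Lemma \ref{lem:varithin}(v), so it is no shorter.
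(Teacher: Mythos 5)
Your argument is correct and follows essentially the same route as the paper: reduce to showing $R\Gamma(Z,\wh{\cO}^{+}_{D})$ is derived $p$-adically complete for good objects $Z$ (the paper invokes the argument of \cite[Lemma C.11]{ZavBog} for this reduction, while you cite \cite[{Tag 0BLX}]{stacks-project} plus the basis of good objects, which amounts to the same thing), then apply \cite[Lemma 6.15]{BhMorSch} to pass to cohomology groups, handling degree $0$ via the explicit sections $R\wh{\otimes}_S S_{\infty}$ from Proposition \ref{lem:morringtopnuf} and degrees $\geq 1$ via the almost vanishing in Lemma \ref{lem:varithin}(v), since modules killed by $p\in\fm$ are derived $p$-complete. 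The extra care you take with the truncation/$R\varprojlim$ step and the alternative route at the end are harmless but not needed beyond what \cite[Lemma 6.15]{BhMorSch} already provides.
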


\begin{proof}
By Proposition \ref{lem:morringtopnuf} we identify $\wh{\cO}^{+}_D$ with $\nu_{f^{*}}(\wh{\cO}^{+}_X)$ and use the notation of Lemma \ref{lem:varithin}. By a similar proof to \cite[Lemma C.11]{ZavBog}, it suffices to show $R\Gamma(Z, \wh{\cO}^{+}_D)$ is derived $p$-adically complete. By \cite[Lemma 6.15]{BhMorSch}, it suffices to show for each $i \geq 0$, $H^{i}(Z, \wh{\cO}^{+}_{D})$ is derived $p$-adically complete. For $i = 0$, this follows from second part of Proposition \ref{lem:morringtopnuf}. For $i \geq 1$, this follows from Lemma \ref{lem:varithin}(v).
\end{proof}

\subsection{The object $\wt{\Omega}_{\fX / \fY}$}\label{subsec:objecttidlome}

Armed with \S \ref{subsec:complfsdas}, to proceed further, we need to analyze the right side of \eqref{eq:hodgtatespecmap}, namely
\[
\wt{\Omega}_{\fX / \fY} := L\eta_{(\zeta_p - 1)}(  R\nu_{f*}(\wh{\cO}^{+}_X)) \in D^{\geq 0}(D, \wh{\cO}_{D}^{+}).
\]
First we show that the morphism of ringed topoi $(\nu_f, \nu_{f}^{\sharp})$ constructed in Proposition \ref{lem:morringtopnuf} satisfies the expected base change:

\begin{lem}\label{lem:baschaexpect}
Suppose the morphism $f \colon \fX \to \fY$ sits in a commutative diagram
\begin{equation} \label{eq:comdaireshr}
\xymatrix{
\fX' \ar@{->}[r]^{f'} \ar@{->}[d] & \fY' \ar@{->}[d]   \\ 
\fX \ar@{->}[r]^{f} & \fY,
}
\end{equation}
where $\fX' \to \fX$ and $\fY' \to \fY$ are étale. Then \eqref{eq:comdaireshr} induces a commutative diagram of ringed topoi:
\[
\xymatrix{
 X'_{\proet} \ar@{->}[rr]_{\nu_{f'}} \ar@{->}[d]^{i} && Y'_{\proet} \times_{\fY'_{\et}} \fX'_{\et} \ar@{->}[d]^{j}   \\ 
 X_{\proet} \ar@{->}[rr]_{\nu_f} && Y_{\proet} \times_{\fY_{\et}} \fX_{\et}
 }
\]
and the base change morphism  $j^{*}R^{n}\nu_{f*}\sF \to R^{n}\nu_{f'*}i^{*}\sF$
is an isomorphism (here $\sF$ is an $\wh{\cO}^{+}_X$-module).
\end{lem}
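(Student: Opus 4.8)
### Proof proposal for Lemma \ref{lem:baschaexpect}

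The plan is to reduce the base change statement to the explicit description of sections over good objects, using that good objects form a basis (Proposition \ref{prop:goodobjformbas}(1)). First I would explain why the diagram of ringed topoi exists at all: the étale morphisms $\fX'\to\fX$ and $\fY'\to\fY$ induce (by functoriality of all the constructions involved) morphisms of sites, hence of topoi, $i\colon X'_{\proet}\to X_{\proet}$, and $j\colon Y'_{\proet}\times_{\fY'_{\et}}\fX'_{\et}\to Y_{\proet}\times_{\fY_{\et}}\fX_{\et}$ (the latter via the universal property of Theorem \ref{thm:universprofibprodto}, applied to $Y'_{\proet}\times_{\fY'_{\et}}\fX'_{\et}$ equipped with the composites to $Y_{\proet}$ and $\fX_{\et}$). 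Commutativity $j\circ\nu_{f'}\cong\nu_f\circ i$ follows from the explicit formula for $\nu_f^{-1}$ in \eqref{eq:pullbaobobjec} together with the base change compatibility of the universal property. On rings, $j^{-1}\wh\cO^+_{\fX/\fY}$ matches $\wh\cO^+_{\fX'/\fY'}$ since both are computed by the same presheaf formula $Z\mapsto R\wh\otimes_S S_\infty$ on good objects and good objects pulled back along $j$ remain good.

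The core is the base change isomorphism $j^*R^n\nu_{f*}\sF\xrightarrow{\sim}R^n\nu_{f'*}i^*\sF$. Since good objects form a basis for both $D$ and $D'$, it suffices to check this after evaluating on a good object $Z'=(\varprojlim_{i}\Spa(S_i,S_i^+)\to\Spf(S')\leftarrow\Spf(R'))$ of $D'$. The key observation is that the pro-étale site computation of $R\nu_{f*}$ on a good object only depends on the perfectoid cover $X_\infty\to X\times_Y Y_\infty$, and this cover is itself obtained by base change: for a good object of $D'$ mapping to a good object $Z$ of $D$, the relevant open subspace $W\times_{U'}V'$ of $X'$ is precisely the pullback along $\fX'\to\fX$ (étale) of the corresponding $W\times_U V$ in $X$. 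Concretely, if $\fX'=\Spf(R')$ with $R'$ a $p$-completely étale $R$-algebra, then the tower $R'_m:=R'\wh\otimes_R R_m$ exhibits $R'_\infty$ as $R'\wh\otimes_R R_\infty$, and the group $\Delta$ is unchanged. So by Theorem \ref{edgemapisoAinf} (or rather its $\wh\cO^+$-analogue, Theorem \ref{thm:edgemapisoOsheaf}) both sides are computed, up to $W(\fm^\flat)$-torsion (or $\fm$-torsion in the $\wh\cO^+$ case), by continuous group cohomology $R\Gamma_{\cont}(\Delta, R_\infty)$ and $R\Gamma_{\cont}(\Delta, R'_\infty)$ respectively. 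The statement then reduces to the flat base change $R\Gamma_{\cont}(\Delta, R_\infty)\otimes^{\bL}_R R'\cong R\Gamma_{\cont}(\Delta, R'_\infty)$, which holds because $R\to R'$ is $p$-completely flat and $\Delta$-equivariant, combined with the decomposition \eqref{eq:smallsumdec} into rank-one pieces; one uses \cite[Lemma 3.7]{SemistabAinfcoh} (or \cite[Lemma 7.3(ii)]{BhMorSch}) to pull the base change through the group cohomology.

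The main obstacle I anticipate is bookkeeping the almost-isomorphisms cleanly: the edge maps of \S\ref{subsec:cohcongrp} are only \emph{almost} isomorphisms in general, so strictly speaking one does not directly get $R\nu_{f*}\sF$ on the nose from group cohomology. To handle this I would instead argue directly: base change for the \v{C}ech-to-derived-functor spectral sequence associated to the cover $X_\infty\to X\times_Y Y_\infty$ (which \emph{is} preserved under the étale pullback, since taking fiber products of pro-étale covers commutes with base change) shows that the natural map $j^*R\nu_{f*}\sF\to R\nu_{f'*}i^*\sF$ is an isomorphism on the level of the \v{C}ech complexes computing cohomology over good objects, hence on all cohomology sheaves; the almost-purity statements are not needed for the comparison itself, only the explicit formula $\wh\cO^+_X(X_\infty)=R_\infty$ of \cite[Lemma 5.11(i)]{SchpHrig} and that $X'_\infty=X_\infty\times_X X'$. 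An alternative, perhaps cleaner, route is to invoke general topos-theoretic smooth/étale base change for the oriented-type fiber products, but since the excerpt does not record such a statement I would prefer the hands-on verification on the basis of good objects.
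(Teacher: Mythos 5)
There is a genuine gap. The lemma asserts the base change isomorphism for an \emph{arbitrary} $\wh{\cO}^{+}_X$-module $\sF$, but your entire argument is tied to the structure sheaf: the edge maps of Theorems \ref{thm:edgemapisoOsheaf} and \ref{edgemapisoAinf} and the continuous group cohomology of $\Delta$ acting on $R_{\infty}$ say nothing about a general module $\sF$, so the reduction to $R\Gamma_{\cont}(\Delta,R_{\infty})\otimes^{\bL}_{R}R'\cong R\Gamma_{\cont}(\Delta,R'_{\infty})$ does not even apply to the statement being proved. Moreover, even for $\sF=\wh{\cO}^{+}_X$ this route only yields almost isomorphisms (as you note), while the lemma claims an isomorphism on the nose with no finite-type hypothesis on $Y$; your proposed fix via the \v{C}ech complex of $X_{\infty}\to X\times_Y Y_{\infty}$ does not repair this, because a \v{C}ech complex computes the derived pushforward only when the cover is $\sF$-acyclic, and that acyclicity is exactly the almost-purity input you are trying to avoid (and fails outright for general $\sF$).

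The actual argument is purely formal and much shorter. The key observation is that a good object $Z'=(W'\to\fU'\leftarrow\fV')$ of $Y'_{\proet}\times_{\fY'_{\et}}\fX'_{\et}$ is, by composing with the étale maps $\fU'\to\fY'\to\fY$ and $\fV'\to\fX'\to\fX$, literally also a good object of $Y_{\proet}\times_{\fY_{\et}}\fX_{\et}$. Since $R^{n}\nu_{f*}\sF$ is the sheafification of $Z\mapsto H^{n}_{\proet}(W\times_U V,\sF)$ by \eqref{eq:pullbaobobjec}, and $i^{*}$ is just restriction along the localization $X'_{\proet}\to X_{\proet}$ (so $H^{n}_{\proet}(W'\times_{U'}V',i^{*}\sF)=H^{n}_{\proet}(W'\times_{U'}V',\sF)$), one checks that \emph{both} sides of the base change map are the sheafification of the same presheaf $Z'\mapsto H^{n}_{\proet}(W'\times_{U'}V',\sF)$ on the basis of good objects; no local coordinates, group cohomology or almost purity enters. (The functoriality of the edge map \eqref{eq:relver3.3.1} is only used for the commutativity of the square of ringed topoi, which your construction via the universal property handles as well.)
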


\begin{proof}
The key point is that if 
\[
Z' := (W' \to \fU' \leftarrow \fV')
\]
is a good object in $Y'_{\proet} \times_{\fY'_{\et}} \fX'_{\et}$, then it is also a good object in $Y_{\proet} \times_{\fY_{\et}} \fX_{\et}$. The commutativity follows from the functoriality of the edge map \eqref{eq:relver3.3.1} (cf. proof of Lemma \ref{lem:valuesofpushfosab}). For the isomorphism claim, one checks that both sides (of the base change morphism) is sheafification of the presheaf
\[
Z' \mapsto H_{\text{proét}}^{n} (V' \times_{U'} W',  \sF),
\]
where as usual, $U'$ (resp. $V'$) is the adic generic fiber of $\fU'$ (resp. $\fV'$).
\end{proof}

A basic property is that $\wt{\Omega}_{\fX / \fY}$ behaves well with respect to base change:
\begin{cor} \label{cor:swibasometil}
In the situation of Lemma \ref{lem:baschaexpect}, $j^{*}\wt{\Omega}_{\fX / \fY} \cong \wt{\Omega}_{\fX' / \fY'}$.
\end{cor}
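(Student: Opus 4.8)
The plan is to deduce the corollary from Lemma \ref{lem:baschaexpect} together with the fact that the décalage functor commutes with flat pullback along morphisms of ringed topoi (\cite[Lemma 6.14]{BhMorSch}), mimicking the reasoning already used for \eqref{eq:pullbackofpregivshe} in \S\ref{subsec:presver}. The first point I would record is that $j$ is flat as a morphism of ringed topoi. By Proposition \ref{lem:morringtopnuf} we have canonical identifications $\wh{\cO}^{+}_{D} \cong \nu_{f*}\wh{\cO}^{+}_{X}$ and $\wh{\cO}^{+}_{D'} \cong \nu_{f'*}\wh{\cO}^{+}_{X'}$, and $i^{*}\wh{\cO}^{+}_{X} \cong \wh{\cO}^{+}_{X'}$ since the completed integral structure sheaf on the pro-étale site is compatible with étale localization. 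Applying Lemma \ref{lem:baschaexpect} with $\sF = \wh{\cO}^{+}_{X}$ and $n=0$ then yields an isomorphism $j^{-1}\wh{\cO}^{+}_{D} \xrightarrow{\sim} \wh{\cO}^{+}_{D'}$; in particular $j^{*}$ agrees with $j^{-1}$ on $\wh{\cO}^{+}_{D}$-modules, is exact, and $j$ is flat. I would also note that $\zeta_{p}-1 \in \cO$ acts as a nonzerodivisor on $\wh{\cO}^{+}_{D}$, its sections on good objects being $\cO$-torsion free by Corollary \ref{cor:comtentorfree} and \eqref{eq:smallsumdec}, so $L\eta_{(\zeta_{p}-1)}$ is defined on both sides.

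Next I would run Lemma \ref{lem:baschaexpect} again with $\sF = \wh{\cO}^{+}_{X}$, now in every cohomological degree, to obtain that the base change morphism $j^{*}R^{n}\nu_{f*}\wh{\cO}^{+}_{X} \to R^{n}\nu_{f'*}\wh{\cO}^{+}_{X'}$ is an isomorphism for all $n \geq 0$. Since $j^{*}$ is exact it commutes with taking cohomology sheaves, so the base change morphism of complexes $j^{*}R\nu_{f*}\wh{\cO}^{+}_{X} \to R\nu_{f'*}\wh{\cO}^{+}_{X'}$ is a quasi-isomorphism. Combining this with \cite[Lemma 6.14]{BhMorSch} gives the chain
\[
j^{*}\wt{\Omega}_{\fX / \fY} = j^{*}L\eta_{(\zeta_{p}-1)}\bigl(R\nu_{f*}\wh{\cO}^{+}_{X}\bigr) \cong L\eta_{(\zeta_{p}-1)}\bigl(j^{*}R\nu_{f*}\wh{\cO}^{+}_{X}\bigr) \cong L\eta_{(\zeta_{p}-1)}\bigl(R\nu_{f'*}\wh{\cO}^{+}_{X'}\bigr) = \wt{\Omega}_{\fX' / \fY'}.
\]

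The step I expect to be the main obstacle is not any of these individually but verifying that the resulting isomorphism is the \emph{canonical} one, i.e.\ that the base change morphism for $\nu_{f*}$ furnished by Lemma \ref{lem:baschaexpect} is compatible with the comparison morphism of \cite[Lemma 6.14]{BhMorSch} witnessing that $L\eta_{(\zeta_{p}-1)}$ commutes with the flat pullback $j^{*}$. I would handle this by evaluating on good objects: by \eqref{eq:evapresatsec} and Lemma \ref{lem:valuesofpushfosab}, both morphisms are, locally, induced by the functoriality of the edge map \eqref{eq:relver3.3.1} along the square \eqref{eq:comdaireshr}, so the compatibility reduces to the evident naturality of that edge map, exactly as in the proof of Lemma \ref{lem:baschaexpect}, at which point the corollary follows.
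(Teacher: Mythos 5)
Your proof is correct and follows essentially the same route as the paper: the paper's own argument is exactly the combination of Lemma \ref{lem:baschaexpect} with the fact that $L\eta$ commutes with flat pullback (\cite[Lemma 6.14]{BhMorSch}). Your additional verifications (flatness of $j$ via the identification $j^{-1}\wh{\cO}^{+}_{D} \cong \wh{\cO}^{+}_{D'}$ on good objects, and the compatibility check via functoriality of the edge map) are sound elaborations of what the paper leaves implicit.
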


\begin{proof}
The result follows by Lemma \ref{lem:baschaexpect} together with the fact that $L\eta$ commutes with flat base change (cf. \cite[Lemma 6.14]{BhMorSch}).
\end{proof}

\begin{prop} \label{prop:twistOmegfrrmo}
We have $H^{0}(\wt{\Omega}_{\fX / \fY}) \cong \wh{\cO}^{+}_D$. For $i \geq 1$, the $\wh{\cO}^{+a}_D$-module $H^{i}(\wt{\Omega}_{\fX / \fY})^{a}$ is Zariski locally on $\fX$ the pullback of a free $\cO_{\fX,\et}^{a}$-module of rank $\binom{\dim_{x} f_{k}^{-1}(f_{k}(x))}{i}$ at a variable closed point $x$ of $\fX_{k}$. If in addition $\fY$ is locally of finite type over $\cO$, then the almost can be dropped.
\end{prop}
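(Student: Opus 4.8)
The plan is to compute $\wt{\Omega}_{\fX/\fY} = L\eta_{(\zeta_p-1)}(R\nu_{f*}\wh{\cO}^{+}_X)$ by first reducing, via Lemma \ref{lem:baschaexpect} and Corollary \ref{cor:swibasometil}, to the case where $f$ admits a toric coordinate description, i.e. where $Z = (\varprojlim_i \Spa(S_i,S_i^{+}) \to \Spf(S) \leftarrow \Spf(R))$ is a good object and there is an \'etale $\Spf(S)$-morphism $\Spf(R) \to \Spf(R^{\square})$ with $R^{\square} = S\{t_1^{\pm1},\ldots,t_d^{\pm1}\}$. Since $\wh{\cO}^{+}_D$ is derived $p$-adically complete (Lemma \ref{lem:Odsupad}) and the statement is Zariski-local on $\fX$, I would work with the presheaf version and evaluate $\wt{\Omega}_{\fX/\fY}$ on good objects. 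The degree-$0$ claim $H^0(\wt{\Omega}_{\fX/\fY}) \cong \wh{\cO}^+_D$ follows because $L\eta_{(\zeta_p-1)}$ does not change $H^0$ up to the $(\zeta_p-1)$-torsion issue, combined with the computation $H^0(Z,R\nu_{f*}\wh{\cO}^+_X) = R\wh{\otimes}_S S_\infty = \wh{\cO}^+_D(Z)$ from Proposition \ref{lem:morringtopnuf} and the fact that $H^0_{\cont}(\Delta, R_\infty)$ is $(\zeta_p-1)$-torsion-free (Corollary \ref{cor:comtentorfree}, Proposition \ref{prop:mtorsioncoho}).

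For the higher cohomology, the key input is Theorem \ref{thm:edgemapisoOsheaf}: on a good object, $L\eta_{(\zeta_p-1)}R\Gamma_{\cont}(\Delta, R_\infty) \to L\eta_{(\zeta_p-1)}R\Gamma_{\proet}(W\times_U V, \wh{\cO}^+_X)$ is an almost isomorphism, and an honest isomorphism if $(R_\infty[\tfrac1p])^\circ = R_\infty$ — which holds when $\fY$ is locally of finite type by Lemma \ref{lem:Rinftucirc}. So it suffices to compute the cohomology of $L\eta_{(\zeta_p-1)}R\Gamma_{\cont}(\Delta, R_\infty)$. Using the decomposition $R_\infty \cong (R\wh{\otimes}_S S_\infty) \oplus M_\infty$ of \eqref{eq:smallsumdec}, and the fact (from the proof of Proposition \ref{prop:mtorsioncoho}) that $H^i_{\cont}(\Delta, M_\infty)$ is killed by $\zeta_p-1$ while $H^i_{\cont}(\Delta, R\wh{\otimes}_S S_\infty) = 0$ for $i>0$ (the $\Delta$-action there being trivial, $R\wh{\otimes}_S S_\infty$ being $\cO$-flat), one sees $H^i_{\cont}(\Delta, R_\infty)$ is entirely $(\zeta_p-1)$-torsion for $i\geq 1$. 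Then the formula $H^i(L\eta_{(\zeta_p-1)}K) = (H^i(K)/H^i(K)[\zeta_p-1])\otimes (\zeta_p-1)^i$ — more precisely Bhatt--Morrow--Scholze's computation of $\eta$ on the Koszul complex — gives that $H^i(L\eta_{(\zeta_p-1)}R\Gamma_{\cont}(\Delta,R_\infty))$ is, after passing to the associated $\cO^a$-module, a direct summand built from the $\binom{d}{i}$-fold wedge; here $d = \dim_x f_k^{-1}(f_k(x))$ is the relative dimension, recovered locally on $\fX_k$. This identifies $H^i(\wt\Omega_{\fX/\fY})^a$ with the pullback of a free $\cO^a_{\fX,\et}$-module of rank $\binom{d}{i}$, as claimed, and the ``almost'' can be dropped under the finite-type hypothesis by the isomorphism half of Theorem \ref{thm:edgemapisoOsheaf}.

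The final step is to patch the local computations into a statement about sheaves on $D$: since good objects form a basis (Proposition \ref{prop:goodobjformbas}), the cohomology sheaf $H^i(\wt\Omega_{\fX/\fY})$ is the sheafification of $Z \mapsto H^i(Z, \wt\Omega_{\fX/\fY})$, and the local identifications above are natural in $(R,S,S_\infty)$ — one checks this naturality exactly as in the proof of Lemma \ref{lem:valuesofpushfosab} using functoriality of the edge map \eqref{eq:relver3.3.1} and of the Koszul presentation of $R\Gamma_{\cont}(\Delta,-)$. The rank is locally constant on $\fX_k$ but varies with the fiber dimension of $f_k$, which is why the statement is phrased with a variable closed point $x$. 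I expect the main obstacle to be the bookkeeping in the almost setting: to conclude that $H^i(\wt\Omega_{\fX/\fY})^a$ is genuinely \emph{the pullback of a free $\cO^a_{\fX,\et}$-module} (not merely locally free after sheafification), one must carefully track, through the $\eta$-operator and the decomposition \eqref{eq:smallsumdec}, that the nonintegral part $M_\infty$ contributes a module which is almost isomorphic to a wedge power of the free rank-$d$ module $\bigoplus (R\wh{\otimes}_S S_\infty)\cdot dt_j/t_j$, using Proposition \ref{prop:mtorsioncoho}'s control of $\fm$-torsion; combining this with Lemma \ref{lem:nonoto}/Remark \ref{rem:quotvan} and the $p$-completeness of $\wh\cO^+_D$ (Lemma \ref{lem:Odsupad}) is where the argument has the most moving parts. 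The passage from ``almost'' to ``exact'' under the finite-type assumption is comparatively routine once the isomorphism in Theorem \ref{thm:edgemapisoOsheaf} and Lemma \ref{lem:varithin}(i),(v) are invoked.
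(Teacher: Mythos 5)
Your overall skeleton (reduce to good objects via Lemma \ref{lem:facetalmor}/Corollary \ref{cor:swibasometil}, feed Theorem \ref{thm:edgemapisoOsheaf} into the $L\eta$-formula for cohomology, drop the ``almost'' via Lemma \ref{lem:Rinftucirc}, then sheafify over the basis of good objects) is the same as the paper's, and your degree-$0$ argument is fine. But the core local computation contains a genuine error: you assert that $H^{i}_{\cont}(\Delta, R\wh{\otimes}_S S_{\infty})=0$ for $i>0$ ``the $\Delta$-action being trivial''. This is false: $\Delta\cong\bZ_p^{\oplus d}$, and continuous cohomology of $\bZ_p^{\oplus d}$ with \emph{trivial} coefficients in a $p$-adically complete module $M$ is computed by a Koszul complex with zero differentials, so $H^{i}_{\cont}(\Delta,M)\cong M^{\oplus\binom{d}{i}}$ (this is exactly \cite[Lemma 7.3(ii)]{BhMorSch}, resp. \cite[Lemma 3.7]{SemistabAinfcoh}, which the paper invokes). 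Your conclusion that $H^{i}_{\cont}(\Delta,R_{\infty})$ is ``entirely $(\zeta_p-1)$-torsion for $i\geq 1$'' is therefore wrong, and moreover self-defeating: combined with the formula $H^{i}(L\eta_{(\zeta_p-1)}K)\cong H^{i}(K)/H^{i}(K)[\zeta_p-1]$ (up to twist) it would force $H^{i}(\wt{\Omega}_{\fX/\fY})=0$ for $i\geq1$, contradicting the rank $\binom{d}{i}$ you are trying to prove.

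The correct bookkeeping, which is what the paper does, is the opposite of what your last paragraph suggests: in the decomposition $R_{\infty}\cong (R\wh{\otimes}_S S_{\infty})\oplus M_{\infty}$ it is the \emph{integral} summand $R\wh{\otimes}_S S_{\infty}$ (trivial $\Delta$-action) that produces, via the Koszul computation and flatness of $R$ over $R^{\square}$, the free module $(R\wh{\otimes}_S S_{\infty})^{\oplus\binom{d}{i}}$, while the nonintegral summand $M_{\infty}$ contributes only classes killed by $\zeta_p-1$ (Proposition \ref{prop:mtorsioncoho}), hence disappears in the quotient $H^{i}_{\cont}(\Delta,R_{\infty})/H^{i}_{\cont}(\Delta,R_{\infty})[\zeta_p-1]$. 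This quotient is then identified, via Theorem \ref{thm:edgemapisoOsheaf} and Proposition \ref{lem:morringtopnuf}, with $(\wh{\cO}^{+}_D)^{\oplus\binom{d}{i}}$ up to almost (resp. honest, under the finite-type hypothesis) isomorphism. Your attribution of the wedge powers to $M_{\infty}$ reverses the roles of the two summands; once that is fixed the rest of your outline goes through.
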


\begin{proof}
The isomorphism $H^{0}(\wt{\Omega}_{\fX / \fY}) \cong \wh{\cO}^{+}_D$ is a consequence of Proposition \ref{lem:morringtopnuf}. So assume $i \geq 1$.

By Lemma \ref{lem:facetalmor} and Corollary \ref{cor:swibasometil}, we  can assume that that $\fY = \Spf (S)$, $\fX = \Spf (R)$ and for some $d \geq 0$, 
there is an étale $\Spf(S)$-morphism
\begin{equation} \label{eq:locsigood}
\fX = \Spf (R) \rightarrow \Spf (R^{\square}) =: \fX^{\square} \text{ with }R^{\square} := S \{t_1^{\pm 1}, \ldots, t_d^{\pm 1} \}.
\end{equation}
To achieve this, we can first shrink $\fX$ (as the statement depends Zariski locally on $\fX$) and then $\fY$ as this does not change the sections of sheaves in the fibered topoi (cf. Lemma \ref{lem:covrshecond} for coverings of type (c)). 
Henceforth the discussion and notation of \S \ref{sec:localanal} apply. In particular, since $R$ is $R^{\square}$-flat and $\Delta$ acts trivially on $R^{\square}$ and $R$, \cite[Lemma 3.7]{SemistabAinfcoh} (cf. \cite[Lemma 7.3(ii)]{BhMorSch}) and Proposition \ref{prop:mtorsioncoho} imply that 
\begin{equation} \label{eq:lonconcesd}
(R \wh{\otimes}_S S_{\infty})^{\oplus \binom{d}{i}} \cong H^i_{\cont}(\Delta, R^{\square}) \otimes_{R^{\square}} R \wh{\otimes}_S S_{\infty} \cong \frac{H^{i}_{\cont}(\Delta, R_{\infty}^{\square})}{H^{i}_{\cont}(\Delta, R_{\infty}^{\square})[\zeta_p -1]} \otimes_{R^{\square}} R \cong \frac{H^{i}_{\cont}(\Delta, R_{\infty})}{H^{i}_{\cont}(\Delta, R_{\infty})[\zeta_p -1]}.
\end{equation}
Theorem \ref{thm:edgemapisoOsheaf} shows that
\begin{equation} \label{eq:rtensssinal}
(R \wh{\otimes}_S S_{\infty})^{\oplus \binom{d}{i}} \cong \frac{H^{i}(X \times_{Y} Y_{\infty},  \wh{\cO}^{+}_X)}{H^{i}(X \times_{Y} Y_{\infty},  \wh{\cO}^{+}_X)[\zeta_p -1]}
\end{equation}
is an almost isomorphism of free $R \wh{\otimes}_S S_{\infty}$-modules of rank $\binom{d}{i}$. Consequently by Proposition \ref{lem:morringtopnuf} we get almost isomorphisms
\begin{equation} \label{eq:Odplusdialmos}
(\wh{\cO}_{D}^{+})^{\oplus \binom{d}{i}} \cong \frac{R^{i}\nu_{f*}(\wh{\cO}^{+}_X)}{(R^{i}\nu_{f*}(\wh{\cO}^{+}_X))[\zeta_p -1]} \cong H^{i}(\wt{\Omega}_{\fX / \fY}),
\end{equation}
to the effect that $H^{i}(\wt{\Omega}_{\fX / \fY})$ is the pullback of an almost free $\cO_{\fX, \et}$-module of rank $\binom{d}{i}$ as desired. Finally if $\fY$ is locally of finite type over $\cO$, then by Lemma \ref{lem:Rinftucirc}, \eqref{eq:rtensssinal}-\eqref{eq:Odplusdialmos} are isomorphisms.
\end{proof}

\begin{rem} \label{rem:preshaissheaal}
The proof of Proposition \ref{prop:twistOmegfrrmo}, specifically \eqref{eq:rtensssinal}-\eqref{eq:Odplusdialmos}, shows that if $\fX$ and $\fY$ are affine, then the presheaf assigning $\frac{H^{i}(X' \times_{Y'} Y_{\infty}',  \wh{\cO}^{+}_{X'})}{H^{i}(X' \times_{Y'} Y_{\infty}',  \wh{\cO}^{+}_{X'})[\zeta_p -1]}$ to a variable good object $(\varprojlim_j Y_{j}' \to \fY' \leftarrow \fX')$ over $(Y \to \fY \leftarrow \fX)$ is an almost $\cO$-sheaf (as usual  $Y_{\infty}'$ is the perfectoid affinoid determined by  $\varprojlim_j Y_j'$). If in addition $\fY$ is locally of finite type over $\cO$,
then it is a sheaf, although we will not use this fact.
\end{rem}
Almost in the following corollary is w.r.t. $[\fm^{\flat}]$.

\begin{cor} \label{cor:objecxiadcom}
The object $A\Omega_{\fX/\fY}$ is almost derived $\xi$-adically complete (i.e. the canonical morphism $A\Omega_{\fX / \fY} \to R\varprojlim_{n}(A\Omega_{\fX / \fY} \otimes^{\bL}_{A_{\Inf}} A_{\Inf}/\xi^n)$ is an almost isomorphism) and 
\begin{equation} \label{eq:adjuisoap}
A\Omega_{\fX / \fY}^{\text{psh}} \to R\phi_{*}A\Omega_{\fX / \fY} \overset{\eqref{eq:pullbackofpregivshe}}{\cong}  R\phi_{*}(\phi^{-1}(A\Omega_{\fX / \fY}^{\text{psh}}))
\end{equation}
is an almost isomorphism. If in addition $\fY = \Spf(\cO)$ and $f$ is smooth, then the almost can be dropped.
\end{cor}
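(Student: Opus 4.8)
The plan is to deduce both assertions from the second one, \eqref{eq:adjuisoap}, and to reduce \eqref{eq:adjuisoap} itself to a comparison on the good objects of $D$. Since the good objects form a basis of $D$ (Proposition \ref{prop:goodobjformbas}) and are precisely the objects of $D^{\text{psh}}$, and since $\phi_{*}$ is restriction along $D^{\text{psh}}\hookrightarrow D$ while sheafification on the presheaf topos $D^{\text{psh}}$ is the identity, one has $(R\phi_{*}\sG)(Z)=R\Gamma(Z,\sG)$ (cohomology in $D$) for every good $Z$ and every $\sG\in D(D,A_{\Inf})$. Granting \eqref{eq:adjuisoap}: by \cite[Lemma 6.14]{BhMorSch} the flat pullback $\phi^{-1}$ gives $A\Omega_{\fX/\fY}\otimes^{\bL}_{A_{\Inf}}A_{\Inf}/\xi^{n}=\phi^{-1}(A\Omega_{\fX/\fY}^{\text{psh}}\otimes^{\bL}_{A_{\Inf}}A_{\Inf}/\xi^{n})$; applying \eqref{eq:adjuisoap} after $\otimes^{\bL}_{A_{\Inf}}A_{\Inf}/\xi^{n}$ (which commutes with $R\phi_{*}$ as $A_{\Inf}/\xi^{n}$ is perfect over $A_{\Inf}$) and evaluating at a good $Z$, one identifies $R\Gamma(Z,A\Omega_{\fX/\fY}\otimes^{\bL}_{A_{\Inf}}A_{\Inf}/\xi^{n})$ almostly with $(A\Omega_{\fX/\fY}^{\text{psh}})(Z)\otimes^{\bL}_{A_{\Inf}}A_{\Inf}/\xi^{n}$. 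Taking $R\varprojlim_{n}$ (which commutes with $R\Gamma(Z,-)$), using that $A\Omega_{\fX/\fY}^{\text{psh}}$ is derived $\xi$-adically complete (\S\ref{subsec:presver}) and hence so is $(A\Omega_{\fX/\fY}^{\text{psh}})(Z)$ (sections of a derived complete complex over an object of a replete topos are derived complete), and comparing with \eqref{eq:adjuisoap} at $Z$, one finds that the cone of $A\Omega_{\fX/\fY}\to R\varprojlim_{n}(A\Omega_{\fX/\fY}\otimes^{\bL}_{A_{\Inf}}A_{\Inf}/\xi^{n})$ has almost zero sections on the basis of good objects, hence is almost zero. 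This is the first assertion.

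It remains to prove \eqref{eq:adjuisoap}. By the formula $(R\phi_{*}\sG)(Z)=R\Gamma(Z,\sG)$ and the description \eqref{eq:evapresatsec}, \eqref{eq:adjuisoap} amounts to showing that for every good $Z=(W\to\fU\leftarrow\fV)$ the natural map
\[
L\eta_{\mu}R\Gamma(W\times_{U}V,\bA_{\Inf,X})\longrightarrow R\Gamma(Z,A\Omega_{\fX/\fY})
\]
is an almost isomorphism. Since $\nu_{f}^{-1}$ sends the representable $Z$ to $W\times_{U}V$ by \eqref{eq:pullbaobobjec}, one has $R\Gamma(Z,R\nu_{f*}\bA_{\Inf,X})=R\Gamma(W\times_{U}V,\bA_{\Inf,X})$, so the displayed map is precisely the comparison asserting that $R\Gamma(Z,-)$ ``almost commutes'' with the décalage functor $L\eta_{\mu}$ applied to the sheaf $R\nu_{f*}\bA_{\Inf,X}$ on $D$.

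I would establish this by reduction modulo $\tilde{\xi}$. Both sides are (almost) derived $\tilde{\xi}$-adically complete: the source because it is a section over an object of the replete presheaf topos $D^{\text{psh}}$ of the derived $\tilde{\xi}$-complete complex $A\Omega_{\fX/\fY}^{\text{psh}}$; the target because $\bA_{\Inf,X}$, hence $R\nu_{f*}\bA_{\Inf,X}$, is derived $\tilde{\xi}$-adically complete (Corollary \ref{cor:xiadcomiofAinf}), together with a décalage estimate in the style of \cite[Lemmas 6.9, 6.10, 6.19]{BhMorSch} bounding the cone of $A\Omega_{\fX/\fY}\to R\nu_{f*}\bA_{\Inf,X}$ after $\tilde{\xi}$-adic completion. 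It then suffices to check the displayed map becomes an almost isomorphism after $\otimes^{\bL}_{A_{\Inf},\theta\circ\varphi^{-1}}\cO$. On the source, Theorem \ref{edgemapisoAinf}, Proposition \ref{prop:relAinfsitf}, \cite[Lemma 5.16]{Bhatspecvar} (via \eqref{eq:truisnoalis}) and Theorem \ref{thm:edgemapisoOsheaf} identify this reduction, almostly, with $L\eta_{(\zeta_p-1)}R\Gamma_{\cont}(\Delta,R_{\infty})$, whose $n$-th cohomology is almost a free $R\widehat{\otimes}_{S}S_{\infty}$-module of rank $\binom{d}{n}$. On the target, Theorem \ref{thm:almosishodsa} identifies $A\Omega_{\fX/\fY}\otimes^{\bL}_{A_{\Inf},\theta\circ\varphi^{-1}}\cO$, almostly, with $\wt{\Omega}_{\fX/\fY}$, whose cohomology sheaves are $\wh{\cO}^{+}_{D}$ in degree $0$ and, by (the proof of) Proposition \ref{prop:twistOmegfrrmo}, almost free $\wh{\cO}^{+}_{D}$-modules of rank $\binom{d}{n}$ in degree $n$; since $H^{i}(Z,\wh{\cO}^{+}_{D})=H^{i}(Z,\nu_{f*}\wh{\cO}^{+}_{X})$ is almost zero for $i>0$ on good $Z$ (Lemma \ref{lem:varithin}(v)) and $\wh{\cO}^{+}_{D}(Z)=R\widehat{\otimes}_{S}S_{\infty}$ (Proposition \ref{lem:morringtopnuf}), the Hodge--Tate spectral sequence collapses almostly and $R\Gamma(Z,\wt{\Omega}_{\fX/\fY})$ has the same cohomology. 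Functoriality of the edge maps \eqref{eq:relver3.3.1} and \eqref{eq:edgemapforainf} then matches the two sides, which gives the claim.

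The main obstacle is this last step, and within it the bookkeeping of the various derived completions ($p$-, $\mu$-, $\xi$-, $\tilde{\xi}$- and $(p,\mu)$-adic) needed to legitimize the reduction modulo $\tilde{\xi}$ on the target; the input that makes the comparison go through is the almost acyclicity of $\wh{\cO}^{+}_{D}$ on good objects (Lemma \ref{lem:varithin}(v)) combined with the explicit structure of $\wt{\Omega}_{\fX/\fY}$ from Proposition \ref{prop:twistOmegfrrmo}. Finally, when $\fY=\Spf(\cO)$ and $f$ is smooth, $\fY$ is in particular locally of finite type over $\cO$, so every ``almost'' in the results invoked above (Theorems \ref{thm:almosishodsa}, \ref{edgemapisoAinf}, \ref{thm:edgemapisoOsheaf}, Proposition \ref{prop:twistOmegfrrmo}, Lemma \ref{lem:Rinftucirc}) may be dropped; moreover $A\Omega_{\fX/\fY}$ then recovers the complex $A\Omega_{\fX}$ of \cite{BhMorSch} (cf. Proposition \ref{prop:compaBMSvsours}), which is honestly derived $\xi$-adically complete, so the cone above vanishes on the nose.
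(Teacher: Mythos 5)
Your reduction of the first assertion to \eqref{eq:adjuisoap} is correct and is essentially the paper's, and your identification of the two sides modulo $\tilde{\xi}$ over a good object is in the right spirit. The genuine gap is the step that legitimizes the reduction modulo $\tilde{\xi}$: you need the target $R\Gamma(Z,A\Omega_{\fX/\fY})=\bigl(R\phi_{*}A\Omega_{\fX/\fY}\bigr)(Z)$ to be (almost) derived $\tilde{\xi}$-adically complete, and the justification you offer does not deliver this. Corollary \ref{cor:xiadcomiofAinf} gives completeness of $\bA_{\Inf,X}$, hence of $R\nu_{f*}\bA_{\Inf,X}$ and of its sections, but the obstruction is precisely the passage through $L\eta_{\mu}$ in the sheaf topos: \cite[Lemma 6.19]{BhMorSch} requires a replete topos, which $Y_{\proet}\times_{\fY_{\et}}\fX_{\et}$ is not (this is the whole reason the presheaf site $D^{\text{psh}}$ exists), and the décalage bounds of \cite[Lemmas 6.9, 6.10]{BhMorSch} only say that the cone of $A\Omega_{\fX/\fY}\to R\nu_{f*}\bA_{\Inf,X}$ has $\mu$-power-torsion cohomology, which does not imply derived $\tilde{\xi}$-completeness (modulo $\mu$ one has $\tilde{\xi}\equiv p$, and a $\mu$-torsion module need not be derived $p$-complete). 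Worse, the completeness you are assuming is, up to bookkeeping, exactly the first assertion of the corollary, which you have arranged to deduce from \eqref{eq:adjuisoap}; as structured, the argument is circular. The paper sidesteps this by keeping every completeness input on the presheaf side: the almost analogue of \cite[Lemma 9.15]{BhMorSch} applied on $D^{\text{psh}}$, together with the derived $\tilde{\xi}$-completeness of $A\Omega^{\text{psh}}_{\fX/\fY}$ (valid since the presheaf topos is replete), reduces \eqref{eq:adjuisoap} to the statement modulo $\tilde{\xi}^{n}$ and then by induction to $n=1$, where \eqref{eq:preshvers} converts it into the assertion that the cohomology presheaves of $L\eta_{(\zeta_p-1)}(R\phi_{*}R\nu_{f*}\wh{\cO}^{+}_X)$ are already almost $\cO$-sheaves with $R^{i}\phi_{*}$ almost zero; that is Remark \ref{rem:preshaissheaal} combined with Lemma \ref{lem:varithin}(ii). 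To repair your section-wise argument you must replace the claimed completeness of $R\Gamma(Z,A\Omega_{\fX/\fY})$ by such a criterion using only the completeness of the presheaf object.

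Two further points about your last sentence. First, ``locally of finite type'' does not by itself let you drop the almost: even in that case the higher direct images $R^{i}\phi_{*}$ of the sheaves of Remark \ref{rem:preshaissheaal} are in general only almost zero, as the paper notes in the remark immediately following this corollary; what makes $\fY=\Spf(\cO)$ work is that the relevant pro-\'etale objects then live over a geometric point, so these $R^{i}\phi_{*}$ vanish on the nose. Second, you cannot invoke Proposition \ref{prop:compaBMSvsours} here, since its proof uses the last statement of this very corollary, so that citation is again circular.
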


\begin{proof}
Since  $\phi^{-1} \circ R\phi_{*} \cong \id$, to show that $A\Omega_{\fX/\fY}$ is almost derived $\xi$-adically complete, it suffices to show that 
\[
R\phi_{*}A\Omega_{\fX / \fY} \to R\phi_{*}R\varprojlim_{n}(A\Omega_{\fX / \fY} \otimes^{\bL}_{A_{\Inf}} A_{\Inf}/\xi^n)
\]
is an almost isomorphism. Since $A\Omega_{\fX / \fY}^{\text{psh}}$ is derived $\xi$-adically complete (cf. \S \ref{subsec:presver}), we reduce  to proving \eqref{eq:adjuisoap}. We can assume $\fX$ and $\fY$ are affine, and there is a framing as in \eqref{eq:locsigood}. By the almost analogue of \cite[Lemma 9.15]{BhMorSch} for the site $D^{\text{psh}}$ defined in \S \ref{subsec:presver} and the fact that $A\Omega_{\fX / \fY}^{\text{psh}}$ is derived $\tilde{\xi}$-adically complete, it suffices to show
\[
A\Omega_{\fX / \fY}^{\text{psh}} \otimes^{\bL}_{A_{\Inf}} A_{\Inf}/\tilde{\xi}^n \to R\phi_{*}(\phi^{-1}(A\Omega_{\fX / \fY}^{\text{psh}}\otimes^{\bL}_{A_{\Inf}} A_{\Inf}/\tilde{\xi}^n))
\]
is an almost isomorphism.
By induction on $n$, we may assume $n=1$ and in this case by \eqref{eq:preshvers}
\begin{equation} \label{eq:anoalfuth}
A\Omega_{\fX / \fY}^{\text{psh}} \otimes^{\bL}_{A_{\Inf}} A_{\Inf}/\tilde{\xi} \to L\eta_{(\zeta_p - 1)}(R\phi_{*}  (R\nu_{f*}(\wh{\cO}^{+}_X)))
\end{equation}
is an almost isomorphism. Finally by Remark \ref{rem:preshaissheaal}, the cohomology presheaves of the RHS of \eqref{eq:anoalfuth} are in fact almost $\cO$-sheaves (note that by Lemma \ref{lem:varithin}(ii), $R^{i}\phi_{*}$ of these almost $\cO$-sheaves are almost zero for $i >0$). Of course in the case $\fY = \Spf(\cO)$ and $f$ smooth, then the $R^{i}\phi_{*}$ of these almost $\cO$-sheaves vanishes for $i > 0$, as a geometric point has no higher cohomology groups.
\end{proof}

\begin{rem} \label{rem: almostnodropallo}
In general, the ``almost" in Corollary \ref{cor:objecxiadcom} cannot be dropped even in the case $\fY$ is locally of finite type over $\cO$. The issue is that even in this case, $R^{i}\phi_{*}$ of the sheaves appearing in Remark \ref{rem:preshaissheaal} are only almost zero in general. 
\end{rem}

Before we continue, we actually have enough at our disposal to compare our objects with the ones constructed in \cite{BhMorSch}. So let us take a brief detour to do this. Let $r \colon \fX_{\et} \to \fX_{\Zar}$ be the natural morphism of topoi and recall the projection morphism $q \colon Y_{\proet} \times_{\fY_{\et}} \fX_{\et} \to \fX_{\et}$.

\begin{prop} \label{prop:compaBMSvsours}
Suppose $\fY = \Spf(\cO)$ and $f$ is smooth. Then there is a natural quasi-isomorphism
\[
A\Omega_{\fX} \cong Rr_{*}Rq_{*}A\Omega_{\fX/\Spf(\cO)}
\]
where $A\Omega_{\fX} := L\eta_{\mu}(Rr_{*}Rq_{*}R\nu_{f*}\bA_{\Inf,X})$ as defined in \cite[Definition 9.1]{BhMorSch}.
\end{prop}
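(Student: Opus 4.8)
The statement to prove is that, when $\fY = \Spf(\cO)$ and $f$ is smooth, there is a natural quasi-isomorphism $A\Omega_{\fX} \cong Rr_{*}Rq_{*}A\Omega_{\fX/\Spf(\cO)}$, where on the right $A\Omega_{\fX/\Spf(\cO)} = L\eta_{\mu}(R\nu_{f*}\bA_{\Inf,X})$ lives on $D = Y_{\proet}\times_{\fY_{\et}}\fX_{\et}$ and on the left $A\Omega_{\fX} = L\eta_{\mu}(Rr_{*}Rq_{*}R\nu_{f*}\bA_{\Inf,X})$ by \cite[Definition 9.1]{BhMorSch}. The key point is that the two constructions differ only in the order in which one applies $L\eta_{\mu}$ relative to the pushforwards $Rr_{*}Rq_{*}$; so the whole proof reduces to commuting $L\eta_{\mu}$ past $Rr_{*}Rq_{*}$ in this specific situation.

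\textbf{First step: reduce to a statement about $L\eta_{\mu}$ and $Rq_{*}$.} Since $Rr_{*}$ is applied after $Rq_{*}$ in both expressions, it suffices to produce a natural isomorphism $Rq_{*}L\eta_{\mu}(R\nu_{f*}\bA_{\Inf,X}) \cong L\eta_{\mu}(Rq_{*}R\nu_{f*}\bA_{\Inf,X})$ in $D(\fX_{\et}, A_{\Inf})$, and then apply $Rr_{*}$. So I would focus entirely on the morphism of topoi $q \colon D \to \fX_{\et}$ and the complex $K := R\nu_{f*}\bA_{\Inf,X} \in D^{\geq 0}(D, A_{\Inf})$.

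\textbf{Second step: use the criterion of \cite[Lemma 6.20 / Proposition 6.19]{BhMorSch} that $L\eta$ commutes with a pushforward when the values are well-understood.} The strategy is to compute local sections. Recall from Proposition \ref{prop:goodobjformbas} that good objects form a basis for $D$, and from Proposition \ref{prop:twistOmegfrrmo} and Corollary \ref{cor:objecxiadcom} (both with the ``almost'' dropped since $\fY = \Spf(\cO)$ and $f$ smooth) we control $R\Gamma(Z, K)$ and its $\tilde\xi$-adic completeness. The point is that for a good object $Z = (\varprojlim_i \Spa(S_i, S_i^+) \to \Spf(S) \leftarrow \Spf(R))$, the computation of \S\ref{sec:localanal} (Theorem \ref{edgemapisoAinf}, Proposition \ref{prop:relAinfsitf}) shows that $R\Gamma(Z, K) = L\eta_\mu$-acceptable input: its cohomology modulo $\mu$ is $p$-torsion free, so applying $L\eta_\mu$ to it computes $R\Gamma(Z, A\Omega_{\fX/\fY}^{\mathrm{psh}})$ on the nose via \eqref{eq:evapresatsec}. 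Now $q^{-1}$ sends an open $\fV = \Spf(R)$ of $\fX$ to $(Y \to \fY \leftarrow \fV) = (Y \to \Spf(\cO) \leftarrow \Spf(R))$, and $Y$ itself is an affinoid perfectoid when $\fY = \Spf(\cO)$ (it is $\Spa(C,\cO)$). So the objects $q^{-1}\fV$ for $\fV$ a small affine of $\fX$ are themselves good objects of $D$, and by the base-change statements of Lemma \ref{lem:baschaexpect} one sees $Rq_*$ is computed by restriction to these. Concretely: $R\Gamma(\fV, Rq_* K) = R\Gamma(q^{-1}\fV, K) = R\Gamma_{\mathrm{cont}}$-type complex from \S\ref{sec:perfcov} with $S_\infty = \cO$.

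\textbf{Third step: conclude by the explicit local description.} Having identified $R\Gamma(\fV, Rq_*K)$ with the continuous-cochain complex $R\Gamma_{\cont}(\Delta, \bA_{\Inf}(R_\infty))$ (up to $W(\fm^\flat)$-torsion, which vanishes here since $S = \cO$ is topologically of finite type so $(R_\infty[\tfrac1p])^\circ = R_\infty$ by Lemma \ref{lem:topfinflasir}), Proposition \ref{prop:relAinfsitf} gives that $H^i$ of this complex modulo $\mu$ is $p$-torsion free. By \cite[Lemma 6.19 and Lemma 6.20]{BhMorSch} (the $q$-analogue: $L\eta$ commutes with a pushforward between ringed topoi provided one has a basis on which $R\Gamma$ of the complex has $\mu$-torsion-free-mod-$\mu$ cohomology, and provided both sides are derived complete), we get the natural isomorphism $Rq_*L\eta_\mu K \cong L\eta_\mu Rq_* K$. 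Then $Rr_*$ of both sides, together with the definition $A\Omega_{\fX} = L\eta_\mu(Rr_*Rq_*R\nu_{f*}\bA_{\Inf,X})$ from \cite[Definition 9.1]{BhMorSch}, yields the claim — here one also uses that $r \colon \fX_{\et}\to \fX_{\Zar}$ is a morphism of coherent topoi on which $L\eta_\mu$ commutes with $Rr_*$ by the same criterion, since the relevant cohomology sheaves are again $\mu$-torsion-free mod $\mu$ Zariski-locally by Proposition \ref{prop:twistOmegfrrmo}.

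\textbf{The main obstacle.} The genuinely delicate point is verifying the hypotheses of the ``$L\eta$ commutes with pushforward'' criterion for $q \colon D \to \fX_{\et}$: one needs a basis of $\fX_{\et}$ (the small affines $\fV$) together with the fact that $q^{-1}\fV$ is, in the case $\fY = \Spf(\cO)$, honestly a good object of $D$ so that $Rq_*$ restricted there is computed by the $\S\ref{sec:localanal}$ machinery, and that the mod-$\mu$ cohomology is $p$-torsion-free uniformly. This is exactly where $\fY = \Spf(\cO)$ (hence $S_\infty = \cO$, $Y = \Spa(C,\cO)$ affinoid perfectoid) and smoothness of $f$ are used; for general $\fY$ the object $q^{-1}\fV$ is not good and one only gets an almost statement (cf. Corollary \ref{cor:objecxiadcom} and Remark \ref{rem: almostnodropallo}). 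Once this local input is in hand, the rest is a formal manipulation of $L\eta$ and derived pushforwards.
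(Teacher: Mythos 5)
Your proposal is correct and in substance follows the same route as the paper: reduce to commuting $L\eta_{\mu}$ past $Rq_{*}$ (and then past $r$, which the paper delegates to \cite[Corollary 4.21]{SemistabAinfcoh}), and verify the $q$-commutation on the basis of objects $q^{-1}\fV = (Y \to \Spf(\cO) \leftarrow \fV)$ — which are indeed (very) good since $Y = \Spa(C,\cO)$ — using the local analysis of \S\ref{sec:localanal} together with the fact that in this case the presheaf version of $A\Omega_{\fX/\Spf(\cO)}$ is honestly a sheaf (the last statement of Corollary \ref{cor:objecxiadcom}), exactly the inputs the paper uses. One caveat: the ``criterion'' you attribute to \cite[Lemmas 6.19--6.20]{BhMorSch} (that $L\eta$ commutes with a pushforward given mod-$\mu$ torsion-freeness and completeness) does not exist there in that form; the honest mechanism — which your other citations already supply — is the presheaf-is-a-sheaf statements (Corollary \ref{cor:objecxiadcom} for the product topos, and (4.6.1) together with Corollary 4.21 of \cite{SemistabAinfcoh} for the étale and Zariski sides), which is precisely how the paper's proof is organized.
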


\begin{proof}
By \cite[Corollary 4.21]{SemistabAinfcoh} it suffices to show that there is a natural quasi-isomorphism
\[
A\Omega_{\fX_{\et}} \cong Rq_{*}A\Omega_{\fX/\Spf(\cO)},
\]
where $A\Omega_{\fX_{\et}} := L\eta_{\mu}(Rq_{*}R\nu_{f*}\bA_{\Inf,X})$ is the main object studied in \cite{SemistabAinfcoh}. Let $q^{\text{psh}} \colon (Y_{ \proet}\times_{\fY_{\et}}\fX_{\et})^{\text{psh}} \to \fX_{\et}^{\text{psh}}$ denote the presheaf version of $q$. Then by (4.1.3) in loc.cit. and \eqref{eq:evapresatsec} we get
\[
A\Omega_{\fX_{\et}}^{\text{psh}} \cong Rq_{*}^{\text{psh}}A\Omega_{\fX/\Spf(\cO)}^{\text{psh}},
\]
where $A\Omega_{\fX_{\et}}^{\text{psh}}$ is the presheaf version of $A\Omega_{\fX_{\et}}$ defined in (4.1.1) in loc.cit. However $A\Omega_{\fX_{\et}}^{\text{psh}}$ is already a sheaf by (4.6.1) in loc.cit. It remains to show $A\Omega_{\fX/\Spf(\cO)}^{\text{psh}}$ is also a sheaf. This follows from the last statement of Corollary \ref{cor:objecxiadcom}.
\end{proof}

In the spirit of \cite[Theorem 8.3]{BhMorSch} and \cite[Theorem 4.11]{SemistabAinfcoh}, our next task is to identify $H^{i}(\wt{\Omega}_{\fX / \fY})$ with the twists of the bundles given by (continuous) K\"ahler differentials. For this, in Proposition \ref{prop:expexpro}, we first express $H^{i}(\wt{\Omega}_{\fX / \fY})$ as $\bigwedge^{i} H^{1}(\wt{\Omega}_{\fX / \fY})$.

\subsubsection{The cup product maps} 

By the same arguments as in \cite[{Tag 068G}]{stacks-project}, there are product maps
\[
R^{j}\nu_{f*}(\wh{\cO}^{+}_X) \otimes_{\wh{\cO}_D^{+}} R^{j'}\nu_{f*}(\wh{\cO}^{+}_X) \xrightarrow{- \cup -} H^{j+ j'}(R\nu_{f*}(\wh{\cO}^{+}_X) \otimes_{\wh{\cO}_D^{+}}^{\bL} R\nu_{f*}(\wh{\cO}^{+}_X))
\]
that satisfy $x \cup y = (-1)^{jj'}y \cup x$ (cf. \cite[{Tag 0BYI}]{stacks-project}). By \cite[{Tag 0B6C}]{stacks-project}, there is a cup product map 
\[
R\nu_{f*}(\wh{\cO}^{+}_X) \otimes_{\wh{\cO}_D^{+}}^{\bL} R\nu_{f*}(\wh{\cO}^{+}_X) \to R\nu_{f*}(\wh{\cO}^{+}_X).
\]
These maps combine to give the \emph{cup product map} (where the tensor product is over $\wh{\cO}_D^{+}$)
\begin{equation} \label{eq:cuptenprod}
\bigotimes_{s= 1}^{i} R^{1}\nu_{f*}(\wh{\cO}^{+}_X) \to R^{i}\nu_{f*}(\wh{\cO}^{+}_X) \text{ for each } i >0
\end{equation}

\begin{prop} \label{prop:expexpro}
For each $i > 0$, the map \eqref{eq:cuptenprod} induces an almost isomorphism
\begin{equation} \label{eq:almowedgetois}
    \bigwedge^{i} \left( \frac{R^{1}\nu_{f*}(\wh{\cO}^{+}_X)}{(R^{1}\nu_{f*}(\wh{\cO}^{+}_X))[\zeta_p -1]} \right)^{a} \cong \bigwedge^{i} H^{1}(\wt{\Omega}_{\fX / \fY})^{a} \xrightarrow{\sim} H^{i}(\wt{\Omega}_{\fX / \fY})^{a} \cong \left( \frac{R^{i}\nu_{f*}(\wh{\cO}^{+}_X)}{(R^{i}\nu_{f*}(\wh{\cO}^{+}_X))[\zeta_p -1]} \right)^{a}.
\end{equation}
If in addition, $\fY$ is locally of finite type over $\cO$, then the almost can be dropped.
\end{prop}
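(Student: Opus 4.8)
The plan is to reduce to the local framed situation of \S\ref{sec:localanal} and then carry out the computation on continuous group cohomology, where the multiplicative structure is transparently the exterior algebra. Since both sides of \eqref{eq:almowedgetois} are, Zariski-locally on $\fX$, free $\wh{\cO}^+_D$-modules (resp. almost free) by Proposition \ref{prop:twistOmegfrrmo}, and since good objects form a basis for $D$ by Proposition \ref{prop:goodobjformbas}(1), it suffices to check the assertion on a good object $Z = (\varprojlim_{i\in I}\Spa(S_i,S_i^+)\to\Spf(S)\leftarrow\Spf(R))$; after further shrinking $\fX$ and $\fY$ (the claim is Zariski-local on $\fX$, and shrinking $\fY$ does not change sections of sheaves on $D$, cf. Lemma \ref{lem:covrshecond} for coverings of type (c)) we may assume a framing $\fX=\Spf(R)\to\Spf(R^\square)=\fX^\square$ with $R^\square = S\{t_1^{\pm1},\dots,t_d^{\pm1}\}$, so the notation and results of \S\ref{sec:localanal} apply. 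Recall that $\wh{\cO}^+_D(Z) = R\wh{\otimes}_S S_\infty$ by Proposition \ref{lem:morringtopnuf}, that the \v{C}ech complex of $\wh{\cO}^+_X$ for the cover $X_\infty\to X\times_Y Y_\infty$ is $R\Gamma_\cont(\Delta, R_\infty)$, and that the cup product map \eqref{eq:cuptenprod} factors through $\bigwedge^i$ because cup products of degree-$1$ classes are alternating, and descends to the quotients by $(\zeta_p-1)$-torsion since $(\zeta_p-1)(x\cup y) = ((\zeta_p-1)x)\cup y$.

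First I would record that the edge map \eqref{eq:relver3.3.1}, and the identification of $R\Gamma(X\times_Y Y_\infty,\wh{\cO}^+_X)$ with the continuous cochain complex afforded by the pro-(finite étale) cover, is compatible with cup products: the \v{C}ech-to-derived-functor comparison is multiplicative, and for a Galois cover the \v{C}ech cup product is the continuous-cohomology cup product. Combined with Theorem \ref{thm:edgemapisoOsheaf} (which identifies the relevant subquotients, almost in general, and honestly when $(R_\infty[\tfrac1p])^\circ = R_\infty$ --- which holds for every good object once $\fY$ is locally of finite type over $\cO$ by Lemma \ref{lem:Rinftucirc}) and with the identification \eqref{eq:Odplusdialmos} of $H^i(\wt{\Omega}_{\fX/\fY})$ with $R^i\nu_{f*}(\wh{\cO}^+_X)/(R^i\nu_{f*}(\wh{\cO}^+_X))[\zeta_p-1]$, this identifies the map in \eqref{eq:almowedgetois}, compatibly with the exterior-power structure, with the map
\[
\bigwedge^i\Bigl(\tfrac{H^1_\cont(\Delta,R_\infty)}{H^1_\cont(\Delta,R_\infty)[\zeta_p-1]}\Bigr) \to \tfrac{H^i_\cont(\Delta,R_\infty)}{H^i_\cont(\Delta,R_\infty)[\zeta_p-1]}
\]
induced by the cup product on $H^\bullet_\cont(\Delta,R_\infty)$.

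The heart of the matter is then the computation of this last map. Using the $\Delta$-equivariant decomposition $R_\infty\cong (R\wh{\otimes}_S S_\infty)\oplus M_\infty$ of \eqref{eq:smallsumdec} and the fact (Proposition \ref{prop:mtorsioncoho}) that $\zeta_p-1$ kills $H^i_\cont(\Delta,M_\infty)$, the quotient by $(\zeta_p-1)$-torsion only sees the summand $R\wh{\otimes}_S S_\infty$, on which $\Delta\cong\bZ_p^{\oplus d}$ acts trivially; since $R\wh{\otimes}_S S_\infty$ is $\cO$-torsion free (Corollary \ref{cor:comtentorfree}), $H^\bullet_\cont(\Delta,R\wh{\otimes}_S S_\infty)$ has no $(\zeta_p-1)$-torsion, so the quotient is simply $H^\bullet_\cont(\Delta,R\wh{\otimes}_S S_\infty)$. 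For the trivial $\bZ_p^{\oplus d}$-action the continuous cochain complex is quasi-isomorphic to the Koszul complex with zero differentials (cf. \cite[Lemma 3.7]{SemistabAinfcoh}, \cite[Lemma 7.3(ii)]{BhMorSch}), compatibly with products, whence $H^\bullet_\cont(\Delta,R\wh{\otimes}_S S_\infty)\cong (R\wh{\otimes}_S S_\infty)\otimes_{\bZ_p}\bigwedge^\bullet\bZ_p^{\oplus d}$ as a graded algebra, so that $\bigwedge^i H^1\xrightarrow{\sim} H^i$. Transporting this back through the identifications above yields \eqref{eq:almowedgetois}, with the ``almost'' removed when $\fY$ is locally of finite type over $\cO$.

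I expect the main obstacle to be purely bookkeeping: verifying that the cup product maps are intertwined along the chain of (almost) isomorphisms --- the edge map $R\Gamma_\cont(\Delta,R_\infty)\to R\Gamma(X\times_Y Y_\infty,\wh{\cO}^+_X)$, the $L\eta_{(\zeta_p-1)}$ and the passage to $(\zeta_p-1)$-torsion quotients, and the identification \eqref{eq:Odplusdialmos} --- i.e. that the multiplicative structure is preserved throughout. No geometric input beyond \S\ref{sec:localanal} and \S\ref{subsec:complfsdas} is needed; the only point requiring care is that the wedge map is a priori defined only after killing $(\zeta_p-1)$-torsion, which is precisely how the statement is formulated.
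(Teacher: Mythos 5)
Your proposal is correct and follows essentially the same route as the paper: reduce \'etale-locally to the framed situation, use multiplicativity of the edge map to pass to continuous group cohomology of $\Delta$, and conclude via the exterior-algebra description of $H^{\bullet}_{\cont}(\bZ_p^{\oplus d},-)$ for trivial, torsion-free coefficients (the content of \cite[Lemmas 7.3 and 7.5]{BhMorSch}), with Lemma \ref{lem:Rinftucirc} removing the almost when $\fY$ is locally of finite type. The only point you gloss is that cup squares of degree-one classes are a priori only $2$-torsion rather than zero, so the factorization of \eqref{eq:cuptenprod} through $\bigwedge^{i}$ requires the $2$-torsion-freeness of $H^{i}(\wt{\Omega}_{\fX/\fY})^{a}$ supplied by Proposition \ref{prop:twistOmegfrrmo}, exactly as the paper notes.
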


\begin{proof}
By Proposition \ref{prop:twistOmegfrrmo}, each $H^{i}(\wt{\Omega}_{\fX / \fY})^{a}$ has no nonzero 2-torsion, so the antisymmetry of the map \eqref{eq:cuptenprod}, in each pair of the variables indeed induces the $\wh{\cO}_D^{+a}$-module map \eqref{eq:almowedgetois}. For the isomorphism claim, we may work étale locally (on $\fX$), so we put ourselves in the situation of \eqref{eq:locsigood} (cf. Corollary \ref{cor:swibasometil}). Similar to the proof of \cite[Proposition 4.8]{SemistabAinfcoh}, the edge maps \eqref{eq:relver3.3.1} are compatible with cup products. By Theorem \ref{thm:edgemapisoOsheaf} and Remark \ref{rem:preshaissheaal}, it then remains to argue that via the cup product the identification
\[
H^1_{\cont}(\Delta, R) \cong R^d \text{ induces } H^i_{\cont}(\Delta, R) \cong \bigwedge^{i}(R^d).
\]
This is precisely the content of \cite[Lemmas 7.3 and 7.5]{BhMorSch}. 

The final statement follows from Lemma \ref{lem:Rinftucirc} and running the argument on the nose.
\end{proof}

To relate $H^{1}(\wt{\Omega}_{\fX / \fY})$ to K\"ahler differentials, we now review the needed material on cotangent complexes. 

\subsubsection{The completed cotangent complex $\wh{\bL}_{\wh{\cO}_D^{+}/\wh{\cO}_Y^{+}}$}

We will need to analyze a cotangent complex naturally appearing on our (fibered product) site $D$. To phrase results, however, we need two facts coming from \cite[\S 8.2]{BhMorSch} (cf. \cite[\S 4.9]{SemistabAinfcoh}):
\begin{enumerate}
    \item
    $\wh{\bL}_{\wh{\cO}^{+}_X / \cO} \cong \wh{\bL}_{\wh{\cO}^{+}_Y / \cO} \cong 0$ and
    \item $\wh{\bL}_{\wh{\cO}_{X}^{+}/\bZ_p} \cong \wh{\cO}^{+}_{X} \{ 1\}[1]$.
\end{enumerate}
We take some time to review them. Affinoid perfectoids form a basis of $X_{\proet}$ by \cite[Proposition 4.8]{SchpHrig}. Therefore \cite[Lemma 3.14]{BhMorSch} ensures that for the sheaf of rings $\wh{\cO}_{X}^{+}$, the cotangent complex $\bL_{\wh{\cO}^{+}_X / \cO}$, whose terms are $\wh{\cO}_X^{+}$-flat and which give an object of $D^{\leq 0}(\wh{\cO}_X^{+})$, satisfies
\[
\bL_{\wh{\cO}^{+}_X / \cO} \otimes_{\bZ}^{\bL} \bZ/p^n \cong 0 \text{ and therefore (by derived Nakayama) } \wh{\bL}_{\wh{\cO}^{+}_X / \cO} \cong 0.
\]
This shows (1). We now show (2). By \cite[Definition 8.2]{BhMorSch}, the completed cotangent complex $\wh{\bL}_{\cO/\bZ_p}$ is the (shifted) Breuil-Kisin-Fargues twist $\cO\{ 1\}[1]$ and so the derived $p$-adic completion turns the canonical morphism 
\[
\bL_{\cO/\bZ_p} \otimes_{\cO} \wh{\cO}^{+}_{X} \to \bL_{\wh{\cO}^{+}_X/\bZ_p} \text{ into an isomorphism } \wh{\cO}^{+}_{X} \{ 1\}[1] \xrightarrow{\sim} \wh{\bL}_{\wh{\cO}^{+}_X/\bZ_p},
\]
since by (1), the cone (of the said morphism) $\wh{\bL}_{\wh{\cO}^{+}_X / \cO} \cong 0$. 

Armed with (1)-(2) we turn to our analysis of $\wh{\bL}_{\cO_D^{+}/\wh{\cO}_{Y}^{+}}$ and compare it to the sheaf $q^{*}\Omega^{1}_{\fX/\fY}$ (cf. Proposition \ref{prop:complcotcomispus}). We will need some preparation:

\begin{lem}\label{lem:secoullshsdd}
For a good object 
$Z = (\varprojlim_{i \in I} \Spa(S_{i}, S_{i}^{+}) \to \Spf(S) \leftarrow \Spf(R))$, 
\[
q^{*}\Omega^{1}_{\fX/\fY}(Z) = \Omega^{1}_{R/S} \wh{\otimes}_S S_{\infty}.
\]
\end{lem}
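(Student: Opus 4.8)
The plan is to compute both sides explicitly on a good object and identify them. Recall that for a good object $Z = (\varprojlim_{i \in I} \Spa(S_{i}, S_{i}^{+}) \to \Spf(S) \leftarrow \Spf(R))$ the functor $q^{*}\Omega^{1}_{\fX/\fY}$ is by definition the sheafification (with respect to the topology on $D$) of the presheaf sending $Z$ to $\Omega^{1}_{\cO_{\fX}(\fV)/\cO_{\fY}(\fU)} \otimes_{\cO_{\fX}(\fV)} \wh{\cO}^{+}_D(Z)$. By Proposition \ref{lem:morringtopnuf} we have $\wh{\cO}^{+}_D(Z) = R \wh{\otimes}_S S_{\infty}$, so the presheaf in question sends $Z \mapsto \Omega^{1}_{R/S} \otimes_{R} (R \wh{\otimes}_S S_{\infty})$. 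First I would verify that this presheaf is already a sheaf on the good objects: by Proposition \ref{prop:goodobjformbas}(1) good objects form a basis, so it suffices to check the sheaf condition against coverings of good objects by good objects, which by Lemma \ref{lem:covrshecond} reduces to coverings of type (a) and (b) (exactness) and type (c) and (d) (the transition is an isomorphism). For type (c) and (d) the claim is immediate since $\Omega^1_{\fX/\fY}$ is a quasi-coherent sheaf and these coverings do not change $R$, $S$ (type (d)) or replace $\fU$, $\fV$ by open/fiber-product pieces compatibly (type (c)), while $\wh{\cO}^{+}_D$ is already known to transform correctly. For coverings of type (b), one uses that $\Omega^1$ is quasi-coherent together with étale (hence flat) descent for the cover $\fV_i \to \fV$; for type (a) one uses that $\wh{\cO}^{+}_D(Z) = R\wh{\otimes}_S S_\infty$ behaves well under pro-étale covers of $W$, which follows from the computation in Lemma \ref{lem:valuesofpushfosab} and Lemma \ref{lem:varithin}(iv), combined with the fact that $\Omega^1_{R/S}$ is a finitely presented (indeed finite free, locally on $\fX$, after shrinking) $R$-module so that $- \otimes_R (R\wh{\otimes}_S S_\infty)$ commutes with the relevant limits.

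Once the presheaf is known to be a sheaf, it equals $q^{*}\Omega^{1}_{\fX/\fY}$ and the remaining point is purely algebraic: to identify $\Omega^{1}_{R/S} \otimes_{R} (R \wh{\otimes}_S S_{\infty})$ with $\Omega^{1}_{R/S} \wh{\otimes}_S S_{\infty}$. Since $R$ is $p$-completely étale over $R^{\square} = S\{t_1^{\pm 1}, \ldots, t_d^{\pm 1}\}$, the module $\Omega^{1}_{R/S}$ is finite free of rank $d$ over $R$ (generated by $d\log t_1, \ldots, d\log t_d$), so $\Omega^{1}_{R/S} \otimes_R (R\wh{\otimes}_S S_\infty)$ is just $(R\wh{\otimes}_S S_\infty)^{\oplus d}$, and likewise $\Omega^{1}_{R/S} \wh{\otimes}_S S_\infty$ — interpreted as the $p$-adic completion of $\Omega^{1}_{R/S} \otimes_S S_\infty$ — is the $p$-adic completion of $R^{\oplus d} \otimes_S S_\infty = (R \otimes_S S_\infty)^{\oplus d}$, hence $(R\wh{\otimes}_S S_\infty)^{\oplus d}$ as well. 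Both identifications are natural in $R$, $S$ and $S_\infty$, so they agree. (If one prefers not to shrink $\fX$, the same conclusion holds because $\Omega^{1}_{R/S}$ is always finitely presented over $R$ in our setting, so completed and ordinary tensor product agree after the $p$-adic completion built into $R\wh{\otimes}_S S_\infty$.)

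The main obstacle, I expect, is the verification that the presheaf $Z \mapsto \Omega^{1}_{R/S} \otimes_R (R\wh{\otimes}_S S_\infty)$ is a sheaf — specifically checking exactness for type (a) coverings, i.e.\ that completing the Kähler differentials interacts correctly with pro-étale covers of the perfectoid base $Y_\infty$. This is the analogue of the bookkeeping already carried out for $\wh{\cO}^{+}_D$ itself in Lemma \ref{lem:varithin} and Proposition \ref{lem:morringtopnuf}, and the cleanest route is to bootstrap off of those results: writing $\Omega^{1}_{R/S}$ locally as a finite free $R$-module reduces the sheaf condition for $q^{*}\Omega^1_{\fX/\fY}$ to finitely many copies of the sheaf condition for $\wh{\cO}^{+}_D$, which is Proposition \ref{lem:morringtopnuf}. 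Everything else is formal.
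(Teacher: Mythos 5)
Your proposal is correct and follows essentially the same route as the paper: one writes $q^{*}\Omega^{1}_{\fX/\fY}$ as the sheafification of the presheaf $Z \mapsto \wh{\Omega}^{1}_{R/S}\otimes_{R}(R\wh{\otimes}_{S}S_{\infty})$, uses that $\wh{\Omega}^{1}_{R/S}$ is finite free over $R$, and then invokes Proposition \ref{lem:morringtopnuf} (the $Z$-sections of $\wh{\cO}^{+}_{D}$ being $R\wh{\otimes}_{S}S_{\infty}$) so that the presheaf, being a finite direct sum of copies of $\wh{\cO}^{+}_{D}$, is already a sheaf. Your extra bookkeeping over the covering types and the completed-versus-ordinary tensor product is just a more explicit spelling-out of the paper's one-line argument; only note that throughout one should work with the continuous (completed) differentials $\wh{\Omega}^{1}_{R/S}$, which is what the finite-freeness and the $d\log t_{i}$ basis refer to.
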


\begin{proof}
A priori $q^{*}\Omega^{1}_{\fX/\fY}$ is the sheaf associated to the presheaf $Z \mapsto \wh{\Omega}^{1}_{R/S} \otimes_R (R \wh{\otimes}_S S_{\infty})$. On the other hand by \cite[Chapter I, Proposition 5.3.17]{FujKat}, $\wh{\Omega}^{1}_{R/S}$ is finite free $R$-module. Since the $Z$-sections $\wh{\cO}_D^{+}$ are $R \wh{\otimes}_S S_{\infty}$ (cf. Proposition \ref{lem:morringtopnuf}), it follows that the presheaf is already a sheaf. 
\end{proof}

\begin{prop} \label{prop:complcotcomispus}
The complex $\wh{\bL}_{\cO_D^{+}/\wh{\cO}_{Y}^{+}}$ is quasi-isomorphic to $q^{*}\Omega^{1}_{\fX/\fY}$ placed in degree $0$.
\end{prop}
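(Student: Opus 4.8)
The claim is local on $\fX$ (and on $\fY$), so I would immediately reduce, using Lemma \ref{lem:facetalmor} together with Corollary \ref{cor:swibasometil}, to the framed situation of \S\ref{sec:localanal}: $\fY = \Spf(S)$, $\fX = \Spf(R)$, with an étale map $\fX \to \fX^{\square} = \Spf(S\{t_1^{\pm 1},\ldots,t_d^{\pm 1}\})$. (As in the proof of Proposition \ref{prop:twistOmegfrrmo}, shrinking $\fX$ and then $\fY$ does not alter sections in the fibered topos, by Lemma \ref{lem:covrshecond} for type (c) coverings.) The strategy is then the same transitivity-triangle argument as in \cite[\S 8.2]{BhMorSch} and \cite[\S 4.9]{SemistabAinfcoh}, transported to the site $D$ via the morphism of ringed topoi $(\nu_f,\nu_f^\sharp)$ of Proposition \ref{lem:morringtopnuf}, which identifies $\wh\cO_D^+$ with $\nu_{f*}\wh\cO^+_X$ and computes its sections on good objects as $R\wh\otimes_S S_\infty$.

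The key input is the vanishing $\wh{\bL}_{\wh\cO^+_X/\cO} \cong 0$ and $\wh{\bL}_{\wh\cO^+_Y/\cO}\cong 0$ recalled in \S\ref{subsec:objecttidlome}. First I would record that $\wh{\bL}_{\wh\cO^+_X/\wh\cO^+_Y}\cong 0$: this follows from the transitivity triangle for $\cO \to \wh\cO^+_Y \to \wh\cO^+_X$ (pulled back along $\nu_f$ where needed) after derived $p$-adic completion, since two of the three terms vanish. Next, consider the composite of ringed topoi maps $\wh\cO^+_Y \to \wh\cO^+_D \to \wh\cO^+_X$ (the first being $p^\sharp$ of \eqref{eq:morringtop}, the second $\nu_f^\sharp$). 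The transitivity triangle gives
\[
\nu_f^*\wh{\bL}_{\wh\cO^+_D/\wh\cO^+_Y} \to \wh{\bL}_{\wh\cO^+_X/\wh\cO^+_Y} \to \wh{\bL}_{\wh\cO^+_X/\wh\cO^+_D} \to [1],
\]
and since the middle term vanishes, $\nu_f^*\wh{\bL}_{\wh\cO^+_D/\wh\cO^+_Y} \cong \wh{\bL}_{\wh\cO^+_X/\wh\cO^+_D}[-1]$. So it suffices to compute $\wh{\bL}_{\wh\cO^+_X/\wh\cO^+_D}$ and descend the answer along $\nu_f$. The point is that, locally, $\wh\cO^+_X$ is $p$-completely étale, or at least $p$-completely smooth of relative dimension $d$, over a ring built from $\wh\cO^+_D$ by adjoining the coordinates $t_1,\ldots,t_d$ (the framing); concretely, on a good object $Z = (W\to\Spf S\leftarrow\Spf R)$ one has $\wh\cO^+_X(Z) = R\wh\otimes_S S_\infty$, which is $p$-completely étale over $R^\square\wh\otimes_S S_\infty = (R\wh\otimes_S S_\infty)$-free on the relevant monomials, i.e. over something of the form $\wh\cO_D^+\{t_1^{\pm1},\ldots,t_d^{\pm1}\}$. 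Hence $\wh{\bL}_{\wh\cO^+_X/\wh\cO^+_D}$ is concentrated in degree $0$ and equals the continuous Kähler differentials $\wh\Omega^1_{R/S}\otimes_R(R\wh\otimes_S S_\infty)$, which by Lemma \ref{lem:secoullshsdd} (using that $\wh\Omega^1_{R/S}$ is finite free over $R$, \cite[Chapter I, Proposition 5.3.17]{FujKat}) is exactly $q^*\Omega^1_{\fX/\fY}(Z)$. Descending along $\nu_f$ (which is faithful on the good basis, $\nu_f^*$ being computed on sections as in \eqref{eq:pullbaobobjec}) identifies $\wh{\bL}_{\wh\cO_D^+/\wh\cO_Y^+}$ with $q^*\Omega^1_{\fX/\fY}$ placed in degree $0$.

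The main obstacle I anticipate is the second paragraph's descent step: the cotangent complex $\wh{\bL}_{\wh\cO_D^+/\wh\cO_Y^+}$ is a sheaf on $D$, and one must check that its pullback along $\nu_f$ being $q^*\Omega^1_{\fX/\fY}$ forces the statement on $D$ itself, not merely after pullback. Here I would argue directly on the good basis: both $\wh{\bL}_{\wh\cO_D^+/\wh\cO_Y^+}$ and $q^*\Omega^1_{\fX/\fY}$ can be computed on good objects (for the cotangent complex, because $\wh\cO_D^+$ and $p^{-1}\wh\cO_Y^+$ have explicit sections on good objects and the relevant free resolutions can be built compatibly, as in \cite[Lemma 3.14]{BhMorSch}); the natural map $q^*\Omega^1_{\fX/\fY} \to \wh{\bL}_{\wh\cO_D^+/\wh\cO_Y^+}$ (adjoint to the universal derivation) is then checked to be a quasi-isomorphism on $Z$-sections, where it becomes the statement that $R\wh\otimes_S S_\infty$ is $p$-completely smooth over $S\wh\otimes_S S_\infty = S_\infty$ with cotangent complex $\wh\Omega^1_{R/S}\otimes_R(R\wh\otimes_S S_\infty)$ — a $p$-completed base change of the étale-over-torus framing. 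A secondary technical point is keeping the derived $p$-adic completions under control so that the transitivity triangles remain exact after completion; this is handled exactly as in \cite[\S 8.2]{BhMorSch}, using that $\wh\cO_D^+$ is derived $p$-adically complete (Lemma \ref{lem:Odsupad}).
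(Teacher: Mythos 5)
Your main route does not go through as written, for two reasons. First, the transitivity triangle you invoke lives on $X_{\proet}$: it computes $\nu_f^{*}\wh{\bL}_{\wh{\cO}_D^{+}/\wh{\cO}_{Y}^{+}}$, not $\wh{\bL}_{\wh{\cO}_D^{+}/\wh{\cO}_{Y}^{+}}$ itself, and "descending the answer along $\nu_f$'' requires $\nu_f^{*}$ to be conservative on $D(D)$ (or a workable theory of points for $Y_{\proet}\times_{\fY_{\et}}\fX_{\et}$), neither of which is established — the paper deliberately avoids exactly this kind of argument (see the remark about the lack of a study of points in the proof of Proposition \ref{prop:bcrsmopn}); computing $\nu_f^{-1}$ on objects via \eqref{eq:pullbaobobjec} gives no faithfulness on complexes. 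Second, your identification of $\wh{\bL}_{\wh{\cO}^{+}_X/\wh{\cO}^{+}_D}$ is wrong: you have conflated $\wh{\cO}^{+}_X$ with $\nu_{f*}\wh{\cO}^{+}_X\cong\wh{\cO}^{+}_D$ (Lemma \ref{lem:valuesofpushfosab}, Proposition \ref{lem:morringtopnuf}). The relevant ring map $\nu_f^{-1}\wh{\cO}^{+}_D\to\wh{\cO}^{+}_X$ is, on a good object, the Kummer/perfectoid tower $R\wh{\otimes}_S S_{\infty}\to R_{\infty}$, which is not $p$-completely smooth; since $\wh{\bL}_{\wh{\cO}^{+}_X/\cO}\cong 0$, transitivity forces $\wh{\bL}_{\wh{\cO}^{+}_X/\nu_f^{-1}\wh{\cO}^{+}_D}$ to be (locally) $\nu_f^{*}q^{*}\Omega^{1}_{\fX/\fY}[1]$, concentrated in degree $-1$, not in degree $0$. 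As written, your degree-$0$ claim together with your own triangle would place $\wh{\bL}_{\wh{\cO}_D^{+}/\wh{\cO}_{Y}^{+}}$ in degree $+1$, contradicting the statement you are proving; the degree-$0$ description you give (étale over a torus after base change to $S_{\infty}$) is a description of $\wh{\cO}^{+}_D$ over $\wh{\cO}^{+}_Y$, i.e.\ of the very object in question.

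Your final paragraph's fallback — prove the statement directly on the good basis by showing the natural map $q^{*}\Omega^{1}_{\fX/\fY}\to \wh{\bL}_{\wh{\cO}_D^{+}/\wh{\cO}_{Y}^{+}}$ is a quasi-isomorphism on sections, using that $R\wh{\otimes}_S S_{\infty}$ over $S_{\infty}$ is a base change of the smooth map $S\to R$ — is indeed the paper's actual argument, and it needs none of the perfectoid vanishing statements or the framing reduction you start with. But the mechanics that make it rigorous are missing from your sketch: one reduces modulo $p^{n}$, using $p$-torsion-freeness of $p^{-1}\wh{\cO}^{+}_Y$ and $\wh{\cO}^{+}_D$ on good sections (Lemma \ref{baslem:torfree}, Corollary \ref{cor:comtentorfree}) to identify $\bL_{\wh{\cO}_{D}^{+}/\wh{\cO}_{Y}^{+}}\otimes^{\bL}_{\bZ}\bZ/p^{n}$ with $\bL_{(\wh{\cO}_{D}^{+}/p^{n})/(\wh{\cO}_{Y}^{+}/p^{n})}$ via \cite[{Tag 08QQ}]{stacks-project}, then uses the presheaf description of its cohomology sheaves on good objects and flat base change for the smooth map $R/p^{n}$ over $S/p^{n}$ to get concentration in degree $0$, then \cite[{Tag 0D6K}]{stacks-project} for the derived $p$-completion, and finally \cite[Chapter I, Proposition 5.1.5]{FujKat} together with Lemma \ref{lem:secoullshsdd} to identify $H^{0}$ with $q^{*}\Omega^{1}_{\fX/\fY}$. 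So the viable part of your proposal coincides with the paper's proof, but as a patch stated in one sentence it leaves the essential completion and base-change steps unjustified, while the route you actually develop contains the gap and the degree error above.
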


\begin{proof}
We use \cite[{Tag 0D6K}]{stacks-project} and begin analyzing $\bL_{\wh{\cO}_{D}^{+}/\wh{\cO}_{Y}^{+}} \otimes_{\bZ}^{\bL} \bZ/p^n$. By Lemma \ref{baslem:torfree} (resp. Corollary \ref{cor:comtentorfree}) together with the fact that good objects form a basis for the topology on $D$ (cf. Proposition \ref{prop:goodobjformbas}),  $p^{-1}\wh{\cO}_{Y}^{+}$ (resp. $\wh{\cO}_{D}^{+}$) is $p$-torsion free on local sections. Therefore there is a canonical quasi-isomorphism (cf. \cite[{Tag 08QQ}]{stacks-project})
\begin{equation}\label{eq:redmodpncomcon}
\bL_{\wh{\cO}_{D}^{+}/\wh{\cO}_{Y}^{+}} \otimes_{\bZ}^{\bL} \bZ/p^n \xrightarrow{\sim} \bL_{(\wh{\cO}_{D}^{+}/p^n)/(\wh{\cO}_{Y}^{+}/p^n)}.
\end{equation}
Now for a good object 
$Z = (\varprojlim_{i \in I} \Spa(S_{i}, S_{i}^{+}) \to \Spf(S) \leftarrow \Spf(R))$, $H^{i}(\bL_{(\wh{\cO}_{D}^{+}/p^n)/(\wh{\cO}_{Y}^{+}/p^n)})$ is the sheaf associated to the presheaf (cf. \cite[{Tag 08SW}]{stacks-project}) 
\begin{equation} \label{eq:preshvercolc}
Z \mapsto H^{i}(\bL_{(R \otimes_S S_{\infty}/p^n)/(S_{\infty}/p^n)}).
\end{equation} 
However as $R/p^n$ is smooth over $S/p^n$ we obtain (again by \cite[{Tag 08QQ}]{stacks-project})
\[
\bL_{(R \otimes_S S_{\infty}/p^n)/(S_{\infty}/p^n)} \cong \bL_{(R/p^n)/(S/p^n)} \otimes_{S/p^n} S_{\infty}/p^n
\]
is concentrated in degree 0. Therefore \eqref{eq:redmodpncomcon} implies $\bL_{\wh{\cO}_{D}^{+}/\wh{\cO}_{Y}^{+}} \otimes_{\bZ}^{\bL} \bZ/p^n$ is concentrated in degree 0. Hence by \cite[{Tag 0D6K}]{stacks-project}, $\wh{\bL}_{\cO_D^{+}/\wh{\cO}_{Y}^{+}}$ is also concentrated in degree $0$.

At degree 0, $H^{0}(\bL_{(R \otimes_S S_{\infty}/p^n)/(S_{\infty}/p^n)})$ coincides with $\Omega^{1}_{(R \otimes_S S_{\infty}/p^n)/(S_{\infty}/p^n)}$ (cf. \cite[{Tag 08UR}]{stacks-project}). Thus by the description of, in particular $H^{0}(\bL_{\wh{\cO}_{D}^{+}/\wh{\cO}_{Y}^{+}} \otimes_{\bZ}^{\bL} \bZ/p^n)$ (cf. \eqref{eq:preshvercolc}),
\begin{equation} \label{eq:reducHzeroco}
\varprojlim_n  H^{0}(U, \bL_{\wh{\cO}_{D}^{+}/\wh{\cO}_{Y}^{+}} \otimes_{\bZ}^{\bL} \bZ/p^n) = \varprojlim_n \Omega^{1}_{(\wh{\cO}_{D}^{+}/p^n)/(\wh{\cO}_{Y}^{+}/p^n)}(U).
\end{equation}
We now analyse the RHS of \eqref{eq:reducHzeroco}. The sheaf $\varprojlim_n \Omega^{1}_{(\wh{\cO}_{D}^{+}/p^n)/(\wh{\cO}_{Y}^{+}/p^n)}$ is associated to the presheaf $Z \mapsto \wh{\Omega}^{1}_{R \wh{\otimes}_S S_{\infty}/S_{\infty}}$. By \cite[Chapter I, Proposition 5.1.5]{FujKat}, there is a canonical isomorphism 
\begin{equation} \label{eq:FKlocsitu}
\Omega^{1}_{R/S} \wh{\otimes}_S S_{\infty} \xrightarrow{\sim} \wh{\Omega}^{1}_{R \wh{\otimes}_S S_{\infty}/S_{\infty}}.
\end{equation}
Finally by lemma \ref{lem:secoullshsdd}, $\Omega^{1}_{R/S} \wh{\otimes}_S S_{\infty}$ are precisely the $Z$-sections of $q^{*}\Omega^{1}_{\fY/\fX}$.
\end{proof}

\subsubsection{The relation between $\wt{\Omega}_{\fX / \fY}$ and the sheaf of relative K\"ahler differentials}

The functoriality of the cotangent complex supplies the pullback morphism
\begin{equation} \label{eq:funcpulmorshd}
\wh{\bL}_{\wh{\cO}_D^{+}/\bZ_p} \to R\nu_{f*}\wh{\bL}_{\wh{\cO}_{X}^{+}/\bZ_p} \overset{(2)}{\cong} R\nu_{f*}\wh{\cO}^{+}_{X} \{ 1\}[1].
\end{equation}
We first relate $\wh{\bL}_{\wh{\cO}_D^{+}/\bZ_p}$ to the completed cotangent complex of the previous section: $\wh{\bL}_{\cO_D^{+}/\wh{\cO}_{Y}^{+}}$. Indeed $\wh{\bL}_{\cO/\bZ_p}$ is the (shifted) Breuil-Kisin-Fargues twist $\cO \{ 1\}[1]$ (cf. \cite[Definition 8.2]{BhMorSch}) and so
\[
(\bL_{\cO/\bZ_p} \otimes_{\cO} \wh{\cO}_{D}^{+})^{\wedge} \cong \wh{\cO}_{D}^{+}\{1 \}[1].
\]
This implies $H^{0}(\wh{\bL}_{\wh{\cO}^{+}_{D}/\bZ_p}) \cong H^{0}(\wh{\bL}_{\wh{\cO}^{+}_{D}/\wh{\cO}^{+}_{Y}}) \cong  q^{*}\Omega^{1}_{\fX/\fY}$, where the last isomorphism is Proposition \ref{prop:complcotcomispus}. Therefore by applying $H^{0}(-)$ to \eqref{eq:funcpulmorshd} and twisting by $\cO \{-1 \}$ we obtain:
\begin{equation} \label{eq:composshealscont}
q^{*}\Omega^{1}_{\fX/\fY}\{-1 \} \to R^{1}\nu_{f*}\wh{\cO}^{+}_{X} \twoheadrightarrow \frac{R^{1}\nu_{f*}(\wh{\cO}^{+}_X)}{(R^{1}\nu_{f*}(\wh{\cO}^{+}_X))[\zeta_p -1]} \cong H^{1}(\wt{\Omega}_{\fX / \fY}).
\end{equation}

\begin{thm} \label{thm:HTspcssd}
The composition \eqref{eq:composshealscont} induces an $\wh{\cO}_{D}^{+a}$-isomorphism (after dividing the map by $\zeta_p-1$) 
\begin{equation} \label{eq:selfindsad}
q^{*}\Omega^{1}_{\fX/\fY}\{-1 \}^{a} \xrightarrow{\sim} H^{1}(\wt{\Omega}_{\fX / \fY})^{a},
\end{equation}
which by Proposition \ref{prop:twistOmegfrrmo} (in particular $H^{0}(\wt{\Omega}_{\fX / \fY}) \cong \wh{\cO}^{+}_D$) and Proposition \ref{prop:expexpro}, induces an $\wh{\cO}_{D}^{+a}$-module identification  
\begin{equation} \label{eq:sdfsods}
q^{*}\Omega^{i}_{\fX/\fY}\{-i \}^{a} \xrightarrow{\sim} H^{i}(\wt{\Omega}_{\fX / \fY})^{a}\text{, for every }i \geq 0. 
\end{equation}
If in addition $\fY$ is locally of finite type over $\cO$, then the almost can be dropped.
\end{thm}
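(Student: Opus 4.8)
The plan is to reduce to the framed local situation of \S\ref{sec:localanal} and then trace the cotangent-complex map through continuous group cohomology, exactly as in \cite[\S 8.2]{BhMorSch} (cf. \cite[\S 4.9]{SemistabAinfcoh}).

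\emph{Reduction to the framed case.} Since \eqref{eq:composshealscont} is a morphism of (almost) $\cO$-sheaves built functorially from the cotangent complex, and good objects form a basis for $D$ (Proposition \ref{prop:goodobjformbas}), it suffices to prove that \eqref{eq:selfindsad} is an almost isomorphism on good objects; its formation is compatible with the étale base changes of Lemma \ref{lem:baschaexpect} (functoriality of $\wh{\bL}$), so by Corollary \ref{cor:swibasometil} and Lemma \ref{lem:facetalmor} we may, after shrinking $\fX$ and then $\fY$ (which does not affect sections of sheaves on $D$, cf. Lemma \ref{lem:covrshecond} for type (c) coverings), assume $\fY = \Spf(S)$, $\fX = \Spf(R)$ with an étale $\Spf(S)$-morphism $\fX \to \Spf(R^{\square})$, $R^{\square} = S\{t_1^{\pm 1},\dots,t_d^{\pm 1}\}$, so that the notation of \S\ref{sec:localanal} is in force. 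Fix a good $Z = (\varprojlim_{i\in I}\Spa(S_i,S_i^+)\to\Spf(S)\leftarrow\Spf(R))$. On the source, $q^{*}\Omega^1_{\fX/\fY}\{-1\}(Z) = (\Omega^1_{R/S}\wh{\otimes}_S S_\infty)\{-1\} = \bigoplus_{j=1}^{d}(R\wh{\otimes}_S S_\infty)\,d\log t_j\{-1\}$ by Lemma \ref{lem:secoullshsdd} and the freeness of $\Omega^1_{R^{\square}/S}$. On the target, by Remark \ref{rem:preshaissheaal} the value $H^1(\wt{\Omega}_{\fX/\fY})^a(Z)$ is computed by the presheaf $Z\mapsto H^1(W\times_U V,\wh{\cO}^+_X)/(\zeta_p-1)\text{-torsion}$, which via the edge map of \S\ref{subsec:cohcongrp}, Theorem \ref{thm:edgemapisoOsheaf}, Proposition \ref{prop:mtorsioncoho} and the decomposition \eqref{eq:smallsumdec} is almost $H^1_{\cont}(\Delta,R_\infty)/H^1_{\cont}(\Delta,R_\infty)[\zeta_p-1] \cong (R\wh{\otimes}_S S_\infty)^{\oplus d}$, the right-hand side coming from the $\Delta$-trivial summand $R\wh{\otimes}_S S_\infty$ together with \cite[Lemma 7.3]{BhMorSch}.

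\emph{Tracing the map.} Using $\wh{\bL}_{\wh{\cO}^+_Y/\cO}\cong 0$ and $\wh{\bL}_{\cO/\bZ_p}\cong\cO\{1\}[1]$ one has, as recalled before the theorem, $H^0(\wh{\bL}_{\wh{\cO}^+_D/\bZ_p})\cong q^{*}\Omega^1_{\fX/\fY}$, and restricting the functoriality map \eqref{eq:funcpulmorshd} to $Z$ and passing through the edge map identifies \eqref{eq:composshealscont} over $Z$ with the de Rham $d\log$ map into group cohomology. Concretely, each $t_j$ is a unit of $\wh{\cO}^+_X$ (both $t_j$ and $t_j^{-1}$ lie in $R\wh{\otimes}_S S_\infty$) equipped with a compatible system of $p$-power roots along $X_\infty$, hence defines a class in $H^1_{\cont}(\Delta,\bZ_p(1)) \hookrightarrow H^1_{\cont}(\Delta,R_\infty\{1\})$ generating the $j$-th summand, and under $\wh{\bL}_{\wh{\cO}^+_X/\bZ_p}\cong\wh{\cO}^+_X\{1\}[1]$ the class $d\log t_j$ maps to $\zeta_p-1$ times it — the factor $\zeta_p-1 = (\theta\circ\varphi^{-1})(\mu)$ being exactly the discrepancy between $d\log$ and the Kummer class, visible on the torus as the reduction modulo $\tilde{\xi}$ of the $q$-de Rham complex (\cite[\S 7]{BhMorSch}). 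Thus, after projecting to $H^1(\wt{\Omega}_{\fX/\fY})^a(Z)$ and the identifications above, \eqref{eq:composshealscont} becomes $\zeta_p-1$ times the isomorphism $d\log t_j\{-1\}\mapsto$ ($j$-th standard generator of $(R\wh{\otimes}_S S_\infty)^{\oplus d}$); dividing by $\zeta_p-1$ yields the almost isomorphism \eqref{eq:selfindsad} on good objects, hence on $D$. Applying $\bigwedge^{i}$ and using $\bigwedge^i(q^{*}\Omega^1_{\fX/\fY}\{-1\}) = q^{*}\Omega^i_{\fX/\fY}\{-i\}$ ($f$ smooth, Tate twists additive) together with Proposition \ref{prop:expexpro} gives \eqref{eq:sdfsods}. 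Finally, if $\fY$ is locally of finite type over $\cO$, every good object is very good (Lemma \ref{lem:Rinftucirc}, Proposition \ref{prop:goodobjformbas}(2)), so $(R_\infty[\tfrac{1}{p}])^{\circ} = R_\infty$ and each "almost" invoked above (Theorem \ref{thm:edgemapisoOsheaf}, Propositions \ref{prop:mtorsioncoho}, \ref{prop:twistOmegfrrmo}, \ref{prop:expexpro}, Remark \ref{rem:preshaissheaal}) is a genuine isomorphism of sheaves; running the same argument on the nose removes the "almost".

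The step I expect to be the main obstacle is the identification in the last paragraph of the cotangent-complex functoriality map with the de Rham $d\log$ map on $H^1_{\cont}(\Delta,R_\infty)$, together with the bookkeeping of the $\zeta_p-1$ factor and the Tate twist. In the absolute case this is the heart of \cite[\S 8.2]{BhMorSch}; in our setting the framing $R^{\square} = S\{t^{\pm}\}$ supplies the coordinates $d\log t_j$ and the ``constant'' base $S_\infty$ plays the role that $\cO$ (resp. $A_{\Inf}$) plays in the absolute computation, while the $\Delta$-action is literally the same, so once the ``junk'' module $M_\infty$ is known to be killed by $\zeta_p-1$ (Proposition \ref{prop:mtorsioncoho}) the computation transports with only formal changes; everything else is sheafification on the basis of (very) good objects and the transitivity triangles for $\wh{\bL}$.
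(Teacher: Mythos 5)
Your proposal is correct and follows essentially the same route as the paper: reduce via Corollary \ref{cor:swibasometil} and Lemma \ref{lem:facetalmor} to the framed local situation, identify source and target through Theorem \ref{thm:edgemapisoOsheaf} and Proposition \ref{prop:mtorsioncoho}, and trace the cotangent-complex functoriality map to the $d\log$/Kummer-class computation (the paper works with $R = S\{t_1^{\pm1},\dots,t_d^{\pm1}\}$ itself and cites the proof of \cite[Proposition 8.15]{BhMorSch} for the step you flag as the main obstacle, with $\bA_{\Inf}(S_\infty)$ resp.\ $S_\infty$ replacing the absolute base exactly as you indicate). The wedge-power and ``drop the almost'' parts are handled identically via Propositions \ref{prop:expexpro}, \ref{prop:twistOmegfrrmo} and Lemma \ref{lem:Rinftucirc}.
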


\begin{proof}
By Corollary \ref{cor:swibasometil} and Lemma \ref{lem:secoullshsdd} we may assume $\fX = \Spf (R)$ and $\fY = \Spf (S)$ with $R = S \{t_1^{\pm 1}, \ldots, t_d^{\pm 1} \}$. In this case $\Delta = \bZ_p(1)^{d}$ is the natural group acting $R \wh{\otimes}_S S_{\infty}$-linearly on $R_{\infty}$. The $p$-adic derived completion of the pullback morphism
\[
\bL_{R \wh{\otimes}_S S_{\infty}/\bZ_p} \otimes^{\bL}_{R \wh{\otimes}_S S_{\infty}} R_{\infty} \to \bL_{R_{\infty}/\bZ_p}
\]
supplies a morphism $\wh{\bL}_{R \wh{\otimes}_S S_{\infty}/\bZ_p} \to \wh{\bL}_{R_{\infty}/\bZ_p} \cong R_{\infty}\{1 \}[1]$. We remind the reader that the isomorphism $\wh{\bL}_{R_{\infty}/\bZ_p} \cong R_{\infty}\{1 \}[1]$ is coming from the Breuil-Kisin-Fargues twist $\wh{\bL}_{\cO/\bZ_p} = \cO \{ 1\}[1]$. The $\Delta$-equivariance of the map $R \wh{\otimes}_S S_{\infty} \to R_{\infty}$ induces a map
\begin{equation} \label{eq:analmorlocalca}
H^{0}(\wh{\bL}_{R \wh{\otimes}_S S_{\infty}/\bZ_p}) \to H^{1}_{\cont}(\Delta, R_{\infty})\{1 \}
\end{equation}
The LHS of \eqref{eq:analmorlocalca} $H^{0}(\wh{\bL}_{R \wh{\otimes}_S S_{\infty}/\bZ_p}) = \wh{\Omega}^{1}_{R \wh{\otimes}_S S_{\infty}/S_{\infty}} \overset{\eqref{eq:FKlocsitu}}{\cong} \Omega^{1}_{R/S} \wh{\otimes}_S S_{\infty}$ is a free $R \wh{\otimes}_S S_{\infty}$-module of rank $d$ generated by $\tfrac{dt_i}{t_i}$. By the proof of \cite[Proposition 8.15]{BhMorSch} \eqref{eq:analmorlocalca} is an isomorphism onto 
\begin{equation} \label{eq:usinfgolsds}
(\zeta_p -1)H^{1}_{\cont}(\Delta, R \wh{\otimes}_S S_{\infty})\{1 \} = (\zeta_p -1)H^{1}_{\cont}(\Delta, R_{\infty})\{1 \}
\end{equation}
Note the equality of \eqref{eq:usinfgolsds} follows from Proposition \ref{prop:mtorsioncoho}. The isomorphism sends $\tfrac{dt_i}{t_i}$ to 
\begin{equation} \label{eq:shifcasds}
d\log_i \otimes 1 \in H^{1}_{\cont}(\bZ_p(1), \cO\{1 \}) \otimes_{\cO}  (R \wh{\otimes}_S S_{\infty}) = \Hom_{\cont}(\bZ_p(1), T_p(\Omega^{1}_{\cO/\bZ_p})) \otimes_{\cO}  (R \wh{\otimes}_S S_{\infty}),
\end{equation}
is the map on $p$-adic Tate modules induced by the map $d\log_i \colon \mu_{p^{\infty}} \to \Omega^{1}_{\cO/\bZ_p}$. The $\bZ_p(1)$ in \eqref{eq:shifcasds} corresponds to the $i$th component in $\Delta$. The result now follows from Theorem \ref{thm:edgemapisoOsheaf}.
\end{proof}

\subsection{The de Rham specialization of $A\Omega_{\fX/\fY}$}

In this section, we relate the de Rham specialization of $A\Omega_{\fX/\fY}$ to the relative de Rham complex, cf. \cite[Theorem 14.1]{BhMorSch} and \cite[Theorem 4.17]{SemistabAinfcoh}. The additional difficulty present in the relative situation is that in general, we only have almost isomorphisms (cf. \eqref{eq:hodgtatespecmap}). This means we have to take care with the construction of the map \eqref{eq:dehamsps}. For the next theorem we abuse notation and define the complex $q^{*}\Omega^{\bullet}_{\fX/\fY}$ to have $i$th term $q^{*}\Omega^{i}_{\fX/\fY}$ and induced differentials. 

\begin{thm} \label{thm:regdeRhamse}
There is a morphism in $D(D,  \cO)$ 
\begin{equation} \label{eq:dehamsps}
q^{*}\Omega^{\bullet}_{\fX/\fY} \to A\Omega_{\fX/\fY} \otimes^{\bL}_{A_{\Inf}, \theta} \cO.
\end{equation}
whose image in $D(D, \cO^{a})$ is an almost isomorphism. If in addition $\fY$ is locally of finite type over $\cO$, then \eqref{eq:dehamsps} is already an isomorphism in $D(D, \cO)$.
\end{thm}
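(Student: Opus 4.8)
The plan is to mimic the absolute case argument of \cite[Theorem 14.1]{BhMorSch} and \cite[Theorem 4.17]{SemistabAinfcoh}, using the Hodge--Tate specialization (Theorem \ref{thm:almosishodsa}) and the identification of $H^i(\wt{\Omega}_{\fX/\fY})$ with $q^*\Omega^i_{\fX/\fY}\{-i\}$ (Theorem \ref{thm:HTspcssd}) as inputs. First I would recall that $A\Omega_{\fX/\fY}\otimes^{\bL}_{A_{\Inf},\theta}\cO$ carries a natural structure making it (almost) a commutative differential graded algebra: the cup-product structure on $R\nu_{f*}\wh{\cO}^+_X$ (used already in \S\ref{subsec:objecttidlome}) gives a multiplication, and the Bockstein differential attached to the exact sequence $0\to A_{\Inf}/\xi\xrightarrow{\xi}A_{\Inf}/\xi^2\to A_{\Inf}/\xi\to 0$ (combined with the $L\eta_\mu$-formalism, cf. \cite[\S 6.8]{BhMorSch}) supplies the differential. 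Since $A\Omega_{\fX/\fY}$ is almost derived $\xi$-adically complete (Corollary \ref{cor:objecxiadcom}), this Bockstein construction is legitimate. Concretely the cohomology sheaves of $A\Omega_{\fX/\fY}\otimes^{\bL}_{A_{\Inf},\theta}\cO$ are $H^i(\wt{\Omega}_{\fX/\fY})$ up to the twist discrepancy coming from $\theta$ versus $\theta\circ\varphi^{-1}$, and Theorem \ref{thm:HTspcssd} identifies these (almost) with $q^*\Omega^i_{\fX/\fY}$, compatibly with wedge products by Proposition \ref{prop:expexpro}.

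Next I would construct the map \eqref{eq:dehamsps}. As in loc.\ cit., one first builds it in degrees $0$ and $1$: in degree $0$ it is the structural map $\cO_{\fX/\fY}^{\mathrm{aff}}$-side, i.e.\ $q^*\cO_{\fX,\et}\to H^0(A\Omega_{\fX/\fY}\otimes^{\bL}_{A_{\Inf},\theta}\cO)=\wh{\cO}^+_D$ (Proposition \ref{prop:twistOmegfrrmo}, Proposition \ref{lem:morringtopnuf}); in degree $1$ one must produce $q^*\Omega^1_{\fX/\fY}\to H^1$ lifting the identification of Theorem \ref{thm:HTspcssd}, which amounts to checking that the $d\log$ classes land in the image of the Bockstein, exactly the local computation carried out in the proof of Theorem \ref{thm:HTspcssd}. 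One then extends multiplicatively: $q^*\Omega^i_{\fX/\fY}=\bigwedge^i q^*\Omega^1_{\fX/\fY}$, and the target is a cdga, so the degree-$1$ map determines a map of complexes $q^*\Omega^{\bullet}_{\fX/\fY}\to A\Omega_{\fX/\fY}\otimes^{\bL}_{A_{\Inf},\theta}\cO$ once one checks the Leibniz rule for the Bockstein differential against $d$, again reducing to the local framed situation of \S\ref{sec:localanal}. To handle the ``almost'' issue cleanly I would work with the presheaf version $A\Omega^{\text{psh}}_{\fX/\fY}$ on $D^{\text{psh}}$, where sections are explicitly $L\eta_\mu R\Gamma_{\cont}(\Delta,\bA_{\Inf}(R_\infty))$ (via Theorem \ref{edgemapisoAinf} and \eqref{eq:evapresatsec}), build the comparison map there using the explicit $q$-de Rham / Koszul description, and then apply $\phi^{-1}$ together with Proposition \ref{prop:commsqaalmssnosh} to descend to $D$.

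Finally, to see \eqref{eq:dehamsps} is an almost isomorphism one passes to cohomology sheaves. Both sides are bounded, so it suffices to check the induced maps $H^i$ are almost isomorphisms. On $H^i$ the left side is $q^*\Omega^i_{\fX/\fY}$ (with differentials, but we only need the sheaves for the isomorphism test), and by Theorem \ref{thm:HTspcssd} together with Proposition \ref{prop:expexpro} and the wedge-compatibility the map is identified (almost) with the known isomorphism $q^*\Omega^i_{\fX/\fY}\{-i\}^a\xrightarrow{\sim}H^i(\wt{\Omega}_{\fX/\fY})^a$ --- here I would be careful about the Tate/Breuil--Kisin--Fargues twist $\{-i\}$: modulo $\xi$ (de Rham point via $\theta$) versus modulo $\tilde\xi$ (Hodge--Tate point via $\theta\circ\varphi^{-1}$) the twists are trivialized differently, and one uses that $\cO\{1\}$ becomes canonically trivial after the de Rham specialization, so the twists disappear, exactly as in \cite[Theorem 14.1]{BhMorSch}. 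For the ``in addition'' clause: when $\fY$ is locally of finite type over $\cO$, Lemma \ref{lem:Rinftucirc} upgrades all the relevant almost isomorphisms of \S\ref{sec:localanal} (Theorems \ref{thm:edgemapisoOsheaf}, \ref{edgemapisoAinf}) and of \S\ref{subsec:objecttidlome} (Theorem \ref{thm:HTspcssd}, Proposition \ref{prop:twistOmegfrrmo}) to honest isomorphisms, and the presheaves appearing in Remark \ref{rem:preshaissheaal} become honest sheaves; running the same argument verbatim then gives that \eqref{eq:dehamsps} is an isomorphism in $D(D,\cO)$.

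The main obstacle I anticipate is not the isomorphism check (which is essentially bookkeeping once Theorem \ref{thm:HTspcssd} is in hand) but the \emph{construction} of the map \eqref{eq:dehamsps} as an actual morphism in the derived category in a way compatible with the multiplicative structure and functorial in the fibered product site $D$. The absolute arguments build this via the Bockstein $E_\infty$/cdga formalism on a replete topos; here $D$ is not replete, so one genuinely needs to pass through $D^{\text{psh}}$ (which is replete, being a presheaf topos), use \cite[Lemma 6.19]{BhMorSch} there, and then descend --- and one must check that the degree-$1$ comparison really does come from a globally defined morphism of sheaves of cdgas, not merely an identification of cohomology sheaves. This is where the explicit framed description of \S\ref{sec:localanal}, the rigidity input $(R_\infty[\tfrac1p])^\circ=R_\infty$ under finite-type hypotheses, and Remark \ref{rem:preshaissheaal} all get used, and it is the step that requires the most care.
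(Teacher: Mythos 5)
Your proposal follows essentially the same route as the paper's proof: identify $A\Omega_{\fX/\fY}\otimes^{\bL}_{A_{\Inf},\theta}\cO$ with a Bockstein cdga via $\varphi(\mu)=\tilde\xi\mu$ and \cite[Proposition 6.12]{BhMorSch}, build the map in degrees $0$ and $1$ (universal property of K\"ahler differentials) and extend multiplicatively, make the construction honest and functorial by passing to the explicit framed group-cohomology/Koszul model on good objects, and then deduce the (almost) isomorphism from Theorems \ref{thm:almosishodsa}, \ref{thm:HTspcssd} and Proposition \ref{prop:expexpro}, with Lemma \ref{lem:Rinftucirc} upgrading everything when $\fY$ is locally of finite type. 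The only cosmetic difference is that the paper works not with $A\Omega^{\text{psh}}_{\fX/\fY}$ itself (whose sections agree with $L\eta_\mu R\Gamma_{\cont}(\Delta,\bA_{\Inf}(R_\infty))$ only up to almost ambiguity in general) but with the intermediate complexes $C^\bullet_Z$ built from that group cohomology, precisely to secure the $p$-adic completeness of $C^1_Z$ and $2$-torsion-freeness of $C^2_Z$ that your multiplicative extension step needs.
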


\begin{proof}
We have
\[
A\Omega_{\fX/\fY} \otimes^{\bL}_{A_{\Inf}, \theta} \cO \cong A\Omega_{\fX/\fY} \otimes^{\bL}_{A_{\Inf}, \varphi} A_{\Inf} \otimes^{\bL}_{A_{\Inf}, \theta \circ \varphi^{-1}} \cO \cong L\eta_{\tilde{\xi}}(A\Omega_{\fX/\fY}) \otimes^{\bL}_{A_{\Inf}, \theta \circ \varphi^{-1}} \cO
\]
where the second isomorphism follows from \cite[Lemma 6.11]{BhMorSch} and the identity $\varphi(\mu) = \tilde{\xi}\mu$. By \cite[Proposition 6.12]{BhMorSch} the object $L\eta_{\tilde{\xi}}(A\Omega_{\fX/\fY}) \otimes^{\bL}_{A_{\Inf}, \theta \circ \varphi^{-1}} \cO$ is identified with the complex $C^{\bullet}$ whose $i$th degree term is
\[
H^{i}(A\Omega_{\fX/\fY} \otimes^{\bL}_{A_{\Inf}, \theta \circ \varphi^{-1}} \cO) \otimes_{\cO} \left( \tfrac{\ker (\theta \circ \varphi^{-1})}{(\ker (\theta \circ \varphi^{-1}))^{2}} \right)^{\otimes i}
\]
and the differentials are given by Bockstein homomorphisms. By the proof of \cite[Theorem 4.17]{SemistabAinfcoh}, $\tfrac{\ker (\theta \circ \varphi^{-1})}{(\ker (\theta \circ \varphi^{-1}))^{2}} \cong \cO \{1 \}$. The complex $C^{\bullet}$ has a natural structure of a commutative differential graded $\cO$-algebra with multiplication induced by the composition:
\[
H^{i}(A\Omega_{\fX/\fY}/\tilde{\xi}) \times H^{j}(A\Omega_{\fX/\fY}/\tilde{\xi}) \to H^{i+j}(A\Omega_{\fX/\fY}/\tilde{\xi} \otimes^{\bL}_{\cO} A\Omega_{\fX/\fY}/\tilde{\xi}) \to H^{i+j}(A\Omega_{\fX/\fY}/\tilde{\xi}),
\]
where the first map is the cup product and the second map is induced from the lax symmetric monoidal structure on $L\eta_{\mu}$ (cf. \cite[Lemma 6.7]{BhMorSch}) and the commutative algebra structure on $R\nu_{f*}\bA_{\Inf,X}$.

The strategy now for producing a morphism $q^{*}\Omega^{\bullet}_{\fX/\fY} \to C^{\bullet}$ in $D(D, \cO)$ is as follows:
\begin{enumerate}
    \item Construct a morphism $\wh{\cO}_{D}^{+} \to C^{0}$.
    \item Construct a (compatible) morphism $q^{*}\Omega^{1}_{\fX/\fY} \to C^{1}$ (universal property of K\"ahler differentials).
    \item Extend the morphism from (2) to higher forms using the differential graded $\cO$-algebra structure of $q^{*}\Omega^{\bullet}_{\fX/\fY}$ and $C^{\bullet}$.
\end{enumerate}
A problem arises in step (2), as we do not know in general whether $H^{1}(A\Omega_{\fX/\fY} \otimes^{\bL}_{A_{\Inf}, \theta \circ \varphi^{-1}} \cO)$ is (derived) $p$-adically complete. In the case $\fY$ is locally of finite type over $\cO$ however, this is indeed true\footnote{This follows from Theorem \ref{thm:almosishodsa}, Proposition \ref{prop:twistOmegfrrmo} and Lemma \ref{lem:Odsupad}.}. Moreover with regards to step (3), we do not know whether $C^{2}$ is 2-torsion free (in the case $p =2$), in order to pass to exterior algebras. A solution for these obstacles is to work with the (intermediary) group cohomology terms appearing as the source of \eqref{eq:almosWmbmor}. More precisely for each good object $Z$ of the form \eqref{eq:goodpair} we have a map of complexes
\begin{equation} \label{eq:mapofcombock}
 \xymatrix{
 C^{0}_{Z} \ar@{->}[r] \ar@{->}[d]^{\wr} & C^{1}_{Z} \ar@{->}[d] \ar@{->}[r] & C^{2}_{Z} \ar@{->}[d] \ar@{->}[r]  & \cdots    \\
 C^{0}(Z) \ar@{->}[r] & C^{1}(Z) \ar@{->}[r] & C^{2}(Z) \ar@{->}[r] & \cdots
 }
\end{equation}
where $C^{i}_{Z} := H^{i}(L\eta_{\mu}R\Gamma_{\cont}(\Delta, \bA_{\Inf}(R_{\infty})) \otimes^{\bL}_{A_{\Inf}, \theta \circ \varphi^{-1}} \cO)\{ i\}$. The top complex is a representative for the complex $L\eta_{\tilde{\xi}}L\eta_{\mu}R\Gamma_{\cont}(\Delta, \bA_{\Inf}(R_{\infty})) \otimes^{\bL}_{A_{\Inf}, \theta \circ \varphi^{-1}} \cO$ via \cite[Proposition 6.12]{BhMorSch}. The vertical arrows are induced via \eqref{eq:almosWmbmor} and the functoriality of the Bockstein homomorphism provides the map \eqref{eq:mapofcombock}. 

We now construct a morphism of complexes $(q^{*}\Omega^{\bullet}_{\fX/\fY})(Z) \to C^{\bullet}_{Z}$ (the complex $(q^{*}\Omega^{\bullet}_{\fX/\fY})(Z)$ has $i$th term $(q^{*}\Omega^{i}_{\fX/\fY})(Z)$) using the strategy outlined above.
\begin{enumerate}
    \item $\wh{\cO}^{+}_{D}(Z) = R \wh{\otimes}_S S_{\infty} \overset{\eqref{eq:lonconcesd}}{=} \frac{H^{0}_{\cont}(\Delta, R_{\infty})}{H^{0}_{\cont}(\Delta, R_{\infty})[\zeta_p -1]} \xrightarrow{\eqref{eq:truisnoalis}^{-1}} C^{0}_{Z}$.
    \item Note that the complex $C^{i}_{Z}$ is a commutative differential graded $\cO$-algebra (by a similar argument used to show $C^{\bullet}$ is a commutative differential graded $\cO$-algebra). By \eqref{eq:truisnoalis} and \eqref{eq:lonconcesd}, $C^{1}_{Z}$ is $p$-adically complete. Therefore to construct a compatible morphism $(q^{*}\Omega^{1}_{\fX/\fY})(Z) \to C^{1}_Z$, we need to check that the composition $\wh{\cO}^{+}_{D}(Z) \to C^{0}_{Z} \to C^{1}_{Z}$ is $S_{\infty}$-linear (we endow $C^{0}_Z$ and $C^{1}_Z$ with their natural structure of $S_{\infty}$-modules via the identifications \eqref{eq:truisnoalis} and \eqref{eq:lonconcesd}). By definition the first map $\wh{\cO}^{+}_{D}(Z) \to C^{0}_{Z}$ is $S_{\infty}$-linear. To see that the Bockstein homomorphism $C^{0}_{Z} \to C^{1}_{Z}$ is $S_{\infty}$-linear, note that $L\eta_{\mu}R\Gamma_{\cont}(\Delta, \bA_{\Inf}(R_{\infty}))$ has an explicit representation by a complex of $\bA_{\Inf}(S_{\infty})$-modules involving Koszul complexes (cf. \cite[Lemma 4.6]{Bhatspecvar}\footnote{Technically the proof in loc.cit. is for the ring $A_{\Inf}$, however the same proof works for $\bA_{\Inf}(S_{\infty})$ (and its étale extensions).} and \cite[Lemma 7.9]{BhMorSch}). This means that the Bockstein homomorphism is also $S_{\infty}$-linear. Note that the $S_{\infty}$-module structures put on $C^0_Z$ and $C^{1}_Z$ are compatible with the ones coming from the Koszul complexes.
    \item By \eqref{eq:truisnoalis}, \eqref{eq:lonconcesd} and Corollary \ref{cor:comtentorfree}, $C^{2}_{Z}$ is 2-torsion free. Thus the map constructed in (2) extends (compatibly) to higher forms. 
\end{enumerate}

We now show that the morphisms $(q^{*}\Omega^{\bullet}_{\fX/\fY})(Z) \to C^{\bullet}_{Z} \to C^{\bullet}(Z)$ are functorial in $Z$, and therefore glue together to give a morphism of complexes $q^{*}\Omega^{\bullet}_{\fX/\fY} \to C^{\bullet}$. The functoriality of the first map follows from the universal property of K\"ahler differentials. The functoriality of the second map  follows from the functoriality of \eqref{eq:almosWmbmor}, which in turn follows from the functoriality of the edge map \eqref{eq:edgemapforainf}.

It remains to show that the map of complexes $q^{*}\Omega^{\bullet}_{\fX/\fY} \to C^{\bullet}$ is an almost isomorphism (and an isomorphism in the case $\fY$ is locally of finite type over $\cO$). By Theorem \ref{thm:almosishodsa} there is a morphism
\[
C^{1} \to H^{1}(\wt{\Omega}_{\fX/\fY})\{1\}
\]
which is an almost isomorphism. We claim that the induced composition 
\begin{equation} \label{eq:tehconsdmor}
q^{*}\Omega^{1}_{\fX/\fY} \to C^{1} \to H^{1}(\wt{\Omega}_{\fX/\fY})\{ 1\}
\end{equation}
identifies with \eqref{eq:selfindsad}\{1\}. We reduce to the case $\fX = \Spf (R)$ and $\fY = \Spf (S)$ with $R = S \{t_1^{\pm 1}, \ldots, t_d^{\pm 1} \}$ (cf. Lemma \ref{lem:baschaexpect}). We consider the morphisms upon taking $Z$-sections with $Z$ of the form \eqref{eq:goodpair}. In this case, as shown in the proof of Theorem \ref{thm:HTspcssd}, \eqref{eq:selfindsad}\{1\} looks like (using the notation appearing there):
\begin{align*}
    (q^{*}\Omega^{1}_{\fX/\fY})(Z) &\to H^{1}_{\cont}(\Delta, R \wh{\otimes}_S S_{\infty})\{1 \} \xrightarrow{H^{1}(\text{\eqref{eq:fsmsfdssd}})\{1\}} H^{1}(\wt{\Omega}_{\fX / \fY})\{1\}(Z) \\
    \tfrac{dt_i}{t_i} &\mapsto \tfrac{d\log_i \otimes 1}{\zeta_p -1}.
\end{align*}
On the other hand, \eqref{eq:tehconsdmor} fits into a diagram
\[
 \xymatrix{
 \wh{\cO}^{+}_{D}(Z) \ar@{->}[r]^{\sim} \ar@{->}[d] & C^{0}_{Z} \ar@{->}[d] &&&    \\
 (q^{*}\Omega^{1}_{\fX/\fY})(Z) \ar@{->}[r] & C^{1}_{Z} \ar@{->}[r]  & H^{1}_{\cont}(\Delta, R \wh{\otimes}_S S_{\infty})\{1 \} \ar@{->}[r] \ar@/^2pc/[rr]^{H^{1}(\text{\eqref{eq:fsmsfdssd}})\{1\}}  & C^{1}(Z) \ar@{->}[r] & H^{1}(\wt{\Omega}_{\fX / \fY})\{1\}(Z),
 }
\]
where $C^{1}_{Z} \to H^{1}_{\cont}(\Delta, R \wh{\otimes}_S S_{\infty})\{1 \} $ is an isomorphism induced by 
\eqref{eq:truisnoalis}. So we need to understand the morphism $(q^{*}\Omega^{1}_{\fX/\fY})(Z) \to C^{1}_Z$ (constructed by universality of K\"ahler differentials). For this we can assume $d = 1$.  By \cite[Lemma 7.9]{BhMorSch}, the complex $L\eta_{\mu}R\Gamma_{\cont}(\Delta, \bA_{\Inf}(R_{\infty}))$ is represented by explicit Koszul complexes:
\[
L\eta_{\mu}R\Gamma_{\cont}(\Delta, \bA_{\Inf}(R_{\infty})) \cong \wh{\bigoplus}_{a \in \bZ} (\bA_{\Inf}(S_{\infty}) \xrightarrow{\tfrac{[\epsilon^{pa}]-1}{[\epsilon]-1}} \bA_{\Inf}(S_{\infty})) =: \wh{\bigoplus}_{a \in \bZ} K_a
\]
where the completion is $(p,\mu)$-adic (cf. \cite[Lemma 4.6]{Bhatspecvar}). On grading of degree $a$, the Bockstein homomorphism (after trivializing the twist $\{1 \}$ by choosing $\tilde{\xi}$ as a generator of $(\tilde{\xi})$) looks like
\begin{align*}
H^{0}(K_a /\tilde{\xi}) &\to H^{1}(K_a /\tilde{\xi})\\
1 &\mapsto \left( \tfrac{[\epsilon^{pa}]-1}{[\epsilon]-1} \right)/\tilde{\xi} = \left( \tfrac{[\epsilon^{pa}]-1}{[\epsilon]-1} \right)/\left( \tfrac{[\epsilon^{p}]-1}{[\epsilon]-1} \right) = \tfrac{[\epsilon^{pa}]-1}{[\epsilon^p]-1} = a \mod \tilde{\xi}.
\end{align*}
Moreover $t_1^{a} \in \wh{\cO}^{+}_D(Z)$ is identified with $1 \in H^{0}(K_a /\tilde{\xi})$ and $t_1^{a} \in H^{1}_{\cont}(\Delta, R \wh{\otimes}_S S_{\infty})$ identifies with $1 \in H^{1}(K_a /\tilde{\xi})$. The \emph{disappearance} of $\tfrac{d\log_1}{\zeta_p -1}$ is explained by the fact that the image of $\wt{\xi}$ via the map $\ker (\theta \circ \varphi^{-1}) \to \tfrac{\ker (\theta \circ \varphi^{-1})}{(\ker (\theta \circ \varphi^{-1}))^{2}} \cong \cO \{1\}$ is $\tfrac{d\log_1}{\zeta_p -1}$ (cf. \cite[Remark 6.9]{Bhatspecvar} and \cite[Example 4.24]{BhMorSch}). For an explicit calculation, cf. \cite[Remark 9.4]{Bhatpsdde}. The result now follows.
\end{proof}

An important immediate consequence of the de Rham specialization is the following (almost is w.r.t. $[\fm^{\flat}]$.):

\begin{cor} \label{cor:jfsdmderhamd}
The object $A\Omega_{\fX/\fY}$ is almost derived $p$-adically complete (i.e. the canonical morphism $A\Omega_{\fX/\fY} \to R\varprojlim (A\Omega_{\fX/\fY} \otimes^{\bL}_{\bZ} \bZ/p^n)$ is an almost isomorphism). Similarly for the object $R\Gamma_{A_{\Inf}}(\fX/\fY)$.
\end{cor}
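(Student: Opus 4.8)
The plan is to deduce the almost derived $p$-adic completeness of $A\Omega_{\fX/\fY}$ from the de Rham specialization established in Theorem \ref{thm:regdeRhamse}, by combining it with the almost derived $\tilde{\xi}$-adic (equivalently $\xi$-adic) completeness already recorded in Corollary \ref{cor:objecxiadcom}. The key observation is a standard completeness-descent argument: if a complex $K$ over $A_{\Inf}$ is (almost) derived $\xi$-adically complete, and $K \otimes^{\bL}_{A_{\Inf},\theta} \cO$ is (almost) derived $p$-adically complete, then $K$ is itself (almost) derived $p$-adically complete. Indeed $\ker(\theta) = (\xi)$ and $p$ generates the maximal ideal of the local ring, so $(p,\xi)$-adic completeness is controlled by $\xi$-adic completeness combined with $p$-adic completeness of the $\xi$-reduction; and $\xi$-adic completeness of $K$ already holds, so $(p)$-adic completeness follows. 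Concretely I would invoke the almost version of \cite[Lemma 6.15]{BhMorSch} (or \cite[Lemma 3.17]{SemistabAinfcoh}) together with the fact that $R\varprojlim_n$ of almost-zero complexes is almost zero, to reduce all statements about cones to the almost setting.

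First I would establish the abstract lemma: for $K \in D(D, A_{\Inf})$ almost derived $\xi$-adically complete, the cone $C$ of $K \to R\varprojlim_n (K \otimes^{\bL}_{\bZ} \bZ/p^n)$ is almost zero provided $K \otimes^{\bL}_{A_{\Inf},\theta} \cO$ (which, since $\tilde\xi \equiv p \bmod \mu$ and one may identify $\theta$-reduction with $\theta\circ\varphi^{-1}$-reduction of $L\eta_{\tilde\xi}K$, is the same data as the de Rham fibre up to the twist bookkeeping of the proof of Theorem \ref{thm:regdeRhamse}) is almost derived $p$-adically complete. This is an application of the filtration of $A_{\Inf}/p^n$ by powers of $\xi$: the graded pieces of $A_{\Inf}/(p^n, \xi^m)$ over $A_{\Inf}/(p^n,\xi) \cong \cO/p^n$ are free, so derived completeness of $K/\xi$ along $p$ propagates, and derived completeness of $K$ along $\xi$ then gives the $(p,\xi)$-adic — hence $p$-adic — statement. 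Next, by Theorem \ref{thm:regdeRhamse}, $A\Omega_{\fX/\fY} \otimes^{\bL}_{A_{\Inf},\theta}\cO$ is almost isomorphic to $q^{*}\Omega^{\bullet}_{\fX/\fY}$, whose terms $q^{*}\Omega^{i}_{\fX/\fY}$ are pullbacks of coherent $\cO_{\fX,\et}$-modules; a bounded complex of such sheaves is derived $p$-adically complete (each term is $p$-adically complete as $\cO_{\fX}$ is $p$-adic, and a bounded complex of derived complete objects is derived complete). Combined with Corollary \ref{cor:objecxiadcom}, the abstract lemma applies and yields the first assertion for $A\Omega_{\fX/\fY}$.

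For $R\Gamma_{A_{\Inf}}(\fX/\fY) = Rp_{*}A\Omega_{\fX/\fY}$, I would push forward: $Rp_{*}$ commutes with $R\varprojlim_n$ and with $\otimes^{\bL}_{\bZ}\bZ/p^n$ up to the bounded-below truncation arguments used throughout the paper (the topos $Y_{\proet}\times_{\fY_{\et}}\fX_{\et}$ is coherent and $p$ is a coherent morphism by Proposition \ref{prop:ousiDalge}(4), so cohomological dimension is bounded and $Rp_{*}$ has finite amplitude locally). Hence the cone of $R\Gamma_{A_{\Inf}}(\fX/\fY) \to R\varprojlim_n(R\Gamma_{A_{\Inf}}(\fX/\fY)\otimes^{\bL}_{\bZ}\bZ/p^n)$ is $Rp_{*}$ of an almost-zero complex, so almost zero by the almost analogue of \cite[Lemma 3.17]{SemistabAinfcoh} (pushforward preserving the property of being killed by $W(\fm^{\flat})$, as used repeatedly in \S\ref{sec:section3relapadcomp}). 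The main obstacle I anticipate is the bookkeeping around the twist in passing from the $\theta$-specialization to the $\tilde\xi$-specialization — namely matching $A\Omega_{\fX/\fY}\otimes^{\bL}_{A_{\Inf},\theta}\cO \cong L\eta_{\tilde\xi}(A\Omega_{\fX/\fY})\otimes^{\bL}_{A_{\Inf},\theta\circ\varphi^{-1}}\cO$ (via \cite[Lemma 6.11]{BhMorSch} and $\varphi(\mu)=\tilde\xi\mu$, exactly as at the start of the proof of Theorem \ref{thm:regdeRhamse}) so that the $\xi$-completeness input of Corollary \ref{cor:objecxiadcom} lines up correctly with the de Rham completeness input; once that identification is in place, the rest is the formal completeness descent above.
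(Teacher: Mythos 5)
Your proposal is correct in structure and takes essentially the same route as the paper: establish almost derived $p$-adic completeness of the $\theta$-specialization via Theorem \ref{thm:regdeRhamse} and the completeness of $q^{*}\Omega^{\bullet}_{\fX/\fY}$, bootstrap through the powers of $\xi$ by induction, combine with the almost derived $\xi$-adic completeness of Corollary \ref{cor:objecxiadcom}, and deduce the statement for $R\Gamma_{A_{\Inf}}(\fX/\fY)$ by pushing forward. One justification, however, is off as written: the derived $p$-adic completeness of the terms $q^{*}\Omega^{i}_{\fX/\fY}$ does not follow merely from ``$\cO_{\fX}$ is $p$-adic'', since $q^{*}$ (pullback to the product topos, i.e.\ $q^{-1}(-)\otimes_{q^{-1}\cO_{\fX,\et}}\wh{\cO}^{+}_{D}$) does not commute with limits and so does not preserve completeness for free; the correct input is that these terms are locally finite free $\wh{\cO}^{+}_{D}$-modules and that $\wh{\cO}^{+}_{D}$ itself is derived $p$-adically complete, which is exactly Lemma \ref{lem:Odsupad} and rests on the nontrivial computation of sections of $\nu_{f*}\wh{\cO}^{+}_{X}$ in \S\ref{subsec:complfsdas} — so cite that lemma rather than the completeness of $\cO_{\fX}$. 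Also, your appeal to coherence of $p$ (Proposition \ref{prop:ousiDalge}(4)) in the pushforward step is unnecessary: $Rp_{*}$ commutes with $R\varprojlim$ as a right adjoint and with $-\otimes^{\bL}_{\bZ}\bZ/p^{n}$ by the projection formula for perfect complexes, and the paper simply invokes \cite[{Tag 0A0G}]{stacks-project}; likewise the $\theta$ versus $\theta\circ\varphi^{-1}$ twist bookkeeping you worry about is already absorbed into the statement of Theorem \ref{thm:regdeRhamse}, so no further matching is needed in the corollary.
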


\begin{proof}
First note that by Lemma \ref{lem:Odsupad}, $\wh{\cO}^{+}_{D}$ is derived $p$-adically complete. This implies $q^{*}\Omega^{\bullet}_{\fX/\fY}$ is a bounded complex each of whose terms is derived $p$-adically complete and hence itself is derived $p$-adically complete (induction via stupid truncations).
Thus by Theorem \ref{thm:regdeRhamse},  $A\Omega_{\fX/\fY} \otimes^{\bL}_{A_{\Inf}, \theta} \cO$ is almost derived $p$-adically complete. By induction on $n$, one gets that $A\Omega_{\fX/\fY} \otimes^{\bL}_{A_{\Inf}} A_{\Inf}/\xi^n$ is almost derived $p$-adically complete. Moreover by Corollary \ref{cor:objecxiadcom}, $A\Omega_{\fX/\fY}$ is almost derived $\xi$-adically complete. Therefore $A\Omega_{\fX/\fY}$ is almost derived $p$-adically complete by the definition of derived completeness (cf. \cite[{Tag 0999}]{stacks-project}). The last statement follows by \cite[{Tag 0A0G}]{stacks-project}.
\end{proof}

\begin{rem}
As in Remark \ref{rem: almostnodropallo}, in general the ``almost" in Corollary \ref{cor:jfsdmderhamd} cannot be dropped even in the case $\fY$ is locally of finite type over $\cO$.
\end{rem}

\subsection{Base change for $Y_{ \proet}\times_{\fY_{\et}}\fX_{\et}$}

In this section, in addition to the assumptions made in the beginning of \S\ref{sec:TheHodhTaspe}, we assume that $f \colon \fX \to \fY$ is proper. For later use, in particular for the comparison of $A_{\Inf}$-cohomology with $q$-crystalline cohomology (cf. Theorem \ref{thm:comparofRGqcruandRGaAin}) we record a base change result, cf. Proposition \ref{prop:bcrsmopn}. For a similar statement involving the Faltings' topos, cf. \cite[Théorème 6.5.31]{AbbGors}. Recall by \eqref{eq:morringtop}, the projections $p$ and $q$ extend to morphisms of ringed topoi $(p, p^{\sharp})$ and $(q, q^{\sharp})$. By taking modulo $p^n$ (of the structure sheaves), we denote the induced morphisms by $(p_n, p_n^{\sharp})$ and $(q_n, q_n^{\sharp})$, respectively. Similarly we obtain $f_n$ and $\nu_{Y,n}$. First we need some preparation.

\begin{lem} \label{lem:afinssitu}
Let $g \colon \fX \to \fY$ be a morphism of affine $p$-adic formal schemes of type (S)(b) over $\cO$.  For each $n \geq 1$, consider the commutative diagram of ringed topoi
\[
 \xymatrix{
 (Y_{ \proet}\times_{\fY_{\et}}\fX_{\et}, \wh{\cO}^+_{D}/p^n) \ar@{->}[rr]_{{(q_n, q_{n}^{\sharp})}} \ar@{->}[d]^{(p_n, p_{n}^{\sharp})} && (\fX_{\et}, \cO_{\fX_{\et}}/p^n) \ar@{->}[d]^{(g_n, g_{n}^{\sharp})}   \\ 
 (Y_{\proet}, \wh{\cO}^{+}_{Y}/p^n) \ar@{->}[rr]_{{(\nu_{Y,n}, \nu_{Y,n}^{\sharp})}} && (\fY_{\et}, \cO_{\fY_{\et}}/p^n).
 }
 \]
Then the cohomology sheaves of the cone of the canonical base change map 
 \begin{equation} \label{eq:affinecases}
 \nu_{Y,n}^{*}g_{n*}(\cO_{\fX_{\et}}/p^n) \to p_{n*}q_{n}^{*}(\cO_{\fX_{\et}}/p^n)
 \end{equation}
 are killed by $\fm$.
\end{lem}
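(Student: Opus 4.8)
The plan is to reduce the statement to an explicit computation on good objects, exactly as in the analysis of $\wt{\Omega}_{\fX/\fY}$ carried out in \S\ref{subsec:objecttidlome}. Since $\fX$ and $\fY$ are affine, I can use Lemma \ref{lem:facetalmor} to further assume there is a framing $\fX = \Spf(R) \to \Spf(R^{\square}) =: \fX^{\square}$ with $R^{\square} = S\{t_1^{\pm 1},\ldots,t_d^{\pm 1}\}$, $\fY = \Spf(S)$; replacing $\fX$ Zariski-locally and then shrinking $\fY$ does not affect sections of sheaves in the fibered topos (cf. Lemma \ref{lem:covrshecond} for coverings of type (c)). The source $\nu_{Y,n}^{*}g_{n*}(\cO_{\fX_\et}/p^n)$ is the sheafification of the presheaf $W \mapsto (R/p^n)$ on $Y_{\proet}$ pulled back, so on a good object $Z = (\varprojlim_i \Spa(S_i,S_i^+) \to \Spf(S) \leftarrow \Spf(R))$ it is computed from $H^0_{\proet}(W, \wh{\cO}^+_Y/p^n) \otimes_{S} (R/p^n)$; the target $p_{n*}q_n^*(\cO_{\fX_\et}/p^n)$ has $Z$-sections governed by $\nu_{f*}\wh{\cO}^+_X(Z)/p^n$, which by Lemma \ref{lem:varithin}(i) is almost $(R\wh{\otimes}_S S_\infty)/p^n$. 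So the content of \eqref{eq:affinecases} is, after passing to good-object sections, the comparison between $(S_\infty/p^n) \otimes_S (R/p^n)$ and $(R\wh{\otimes}_S S_\infty)/p^n$.

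The key steps, in order, are: (i) identify both sides of \eqref{eq:affinecases} as sheafifications of explicit presheaves on $D$ evaluated on the basis of good objects, using Proposition \ref{prop:goodobjformbas}, Proposition \ref{lem:morringtopnuf} and Lemma \ref{lem:valuesofpushfosab}; (ii) on a good object $Z$, write down the canonical map $(S_\infty/p^n)\otimes_{S/p^n}(R/p^n) \to (R\wh{\otimes}_S S_\infty)/p^n$ and observe that, since $R$ is a $p$-completely étale (hence $p$-completely flat) $R^\square$-algebra and $R^\square$ is topologically free over $S$, the completed tensor product $R\wh{\otimes}_S S_\infty$ differs from the ordinary one $R\otimes_S S_\infty$ only by $p$-adic completion, so after reduction mod $p^n$ the natural map $(R\otimes_S S_\infty)/p^n \to (R\wh{\otimes}_S S_\infty)/p^n$ is an isomorphism; (iii) conclude that \eqref{eq:affinecases} is an isomorphism on local sections over good objects, hence (since good objects form a basis, Proposition \ref{prop:goodobjformbas}(1)) that the two presheaves have the same sheafification, modulo the subtlety that $p_{n*}q_n^*$ only computes the correct thing \emph{almost}; that is, on $Z$-sections we only get $((\nu_{f*}\wh{\cO}^+_X)/p^n)(Z)$ \emph{almost} equal to $(R\wh{\otimes}_S S_\infty)/p^n$ (Lemma \ref{lem:varithin}(i)). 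Tracking this, the cone of \eqref{eq:affinecases} is concentrated in the discrepancy between $(R\otimes_S S_\infty)/p^n$ as a presheaf and $(\nu_{f*}\wh{\cO}^+_X)/p^n$ as a sheaf, which is killed by $\fm$ by Lemma \ref{lem:varithin}(i)–(ii).

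Concretely, I would run the argument by computing the cone directly: the natural map $\nu_{Y,n}^*g_{n*}(\cO_{\fX_\et}/p^n) \to p_{n*}q_n^*(\cO_{\fX_\et}/p^n)$ factors through $q_n^*(\cO_{\fX_\et}/p^n) \to \wh{\cO}^+_D/p^n$-related sheaves; more precisely, both sheaves map to $(\nu_{f*}(\wh{\cO}^+_X))/p^n = (\wh{\cO}^+_D)/p^n$ (using Proposition \ref{lem:morringtopnuf}), and on good objects the composite maps are the evident maps from $(R/p^n)$ and from $(R\otimes_S S_\infty)/p^n$ (mod-$p^n$ reductions) into $((\nu_{f*}\wh{\cO}^+_X)/p^n)(Z)$. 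By Lemma \ref{lem:varithin}(i)–(ii), $((\nu_{f*}\wh{\cO}^+_X)/p^n)$ is almost isomorphic, as a sheaf on the site of good objects, to the sheafification of $Z \mapsto (R\otimes_S S_\infty)/p^n$; since the map $\nu_{Y,n}^*g_{n*}(\cO_{\fX_\et}/p^n) \to$ (that sheafification) is itself an honest isomorphism (both being pullbacks of $\cO/p^n$ along the evident maps, computed explicitly on good objects), the cone of \eqref{eq:affinecases} is killed by $\fm$.

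\textbf{Main obstacle.} The main difficulty will not be the ring-theoretic identity — the completed versus uncompleted tensor product over $S_\infty$ is harmless mod $p^n$ — but rather the bookkeeping needed to land the base change map \emph{into} $p_{n*}q_n^*(\cO_{\fX_\et}/p^n)$ with its correct sheaf structure and to see precisely where the ``almost'' enters: the sheaf $p_{n*}q_n^*(\cO_{\fX_\et}/p^n)$ is the pushforward along $p_n$ of a pullback, and one must verify that its $Z$-sections on good objects are controlled by $((\nu_{f*}\wh{\cO}^+_X)/p^n)(Z)$ rather than by something larger. This is where Lemma \ref{lem:varithin}(i)–(ii) (the almost-vanishing of higher Čech cohomology of $(\nu_{f*}\wh{\cO}^+_X)/p^n$ on good objects, so that $p_{n*}$ and $q_n^*$ interact as expected almost-exactly) does the essential work, and the statement that the cone is killed by $\fm$ is exactly the residue of that almost-isomorphism. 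A secondary technical point is checking that reducing to the framed affine situation is legitimate for a \emph{base change} map, i.e. that both sides and the map between them are compatible with the restriction-along-coverings-of-type-(c) that implements the shrinking of $\fY$; this follows from the functoriality built into Lemma \ref{lem:baschaexpect} and the description of sections on good objects.
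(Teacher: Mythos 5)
Your core computation is the same one the paper runs: evaluate both sides of \eqref{eq:affinecases} on affinoid perfectoid objects $Z=\varprojlim_{i}\Spa(S_i,S_i^{+})$ of $Y_{\proet}$ and identify each, up to $\fm$-torsion, with the sheafification of $Z\mapsto (R\otimes_S S_\infty)/p^n$ — the target via Lemma \ref{lem:varithin}(i) and Proposition \ref{lem:morringtopnuf} (using $q_n^{*}(\cO_{\fX_{\et}}/p^n)=\wh{\cO}^{+}_{D}/p^n$ and the fact that $p_{n*}$ is evaluation at $(Z\to\fY\leftarrow\fX)$), the source via the almost identification of $(\wh{\cO}^{+}_{Y}/p^n)(Z)$ with $S_\infty/p^n$ (the paper quotes \cite[Lemma 4.10(i)]{SchpHrig} for this). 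Your remarks about completed versus uncompleted tensor products mod $p^n$ and about where the ``almost'' enters (precisely from Lemma \ref{lem:varithin}(i)) are consistent with the paper's one-paragraph proof.

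The step that does not work as you justify it is the opening reduction to a framed situation. You invoke Lemma \ref{lem:facetalmor} and assert that shrinking $\fX$ Zariski-locally ``does not affect sections'' by appeal to Lemma \ref{lem:covrshecond} and coverings of type (c). Coverings of type (c) (and (d)) shrink the middle term $\fU$ over $\fY$ (together with $W$), not $\fV$ over $\fX$; shrinking $\fX$ is implemented by coverings of type (b), which certainly change sections. The phrase you are adapting comes from the proof of Proposition \ref{prop:twistOmegfrrmo}, where the statement is about sheaves on $D$ and genuinely is Zariski-local on $\fX$; here, by contrast, both sides of \eqref{eq:affinecases} are pushforwards ($g_{n*}$, resp.\ $p_{n*}$) that see all of $\fX$ at once, so the assertion is not local on $\fX$, and recovering it from framed pieces would require exactly the \v{C}ech argument over an affine cover of $\fX$ (using the almost acyclicity of $\wh{\cO}^{+}_{D}/p^n$ from the proof of Lemma \ref{lem:varithin}) that the paper performs one level up, in Proposition \ref{prop:bcrsmopn}; importing it here makes your reduction circular at worst and redundant at best. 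Note also that the lemma as stated makes no smoothness hypothesis on $g$, so Lemma \ref{lem:facetalmor} is not even available at that level of generality. The paper's proof simply omits the reduction: it works directly with the single object $(Z\to\fY\leftarrow\fX)$ and applies Lemma \ref{lem:varithin}(i) and Proposition \ref{lem:morringtopnuf} there — which is exactly the computation you carry out after your reduction — so the repair is to drop the localization on $\fX$ (or, if you insist on explicit framings, to replace the type-(c) appeal by the type-(b) \v{C}ech/almost-acyclicity argument).
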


\begin{proof}
Let $\fX = \Spf (R)$, $\fY = \Spf (S)$ and $Z := \varprojlim_{i \in I} \Spa (S_i, S_i^{+})$ an affinoid perfectoid of $Y_{\proet}$. 
By \cite[Lemma 4.10(i)]{SchpHrig} the LHS of \eqref{eq:affinecases} is almost equal the sheafification of the presheaf
\begin{equation*} 
Z \mapsto \varinjlim_{\fV \in \fY_{\et}, Z \to V} (\cO_{\fX_{\et}}/p^n)(\fX \times_{\fY} \fV) \otimes_{(\cO_{\fY_{\et}}/p^n)(\fV)}S_{\infty}/p^n = (R \otimes_S S_{\infty})/p^n,
\end{equation*}
where $V$ is the adic generic fiber of $\fV$ and $Z \to V$ are morphisms over $Y$. Moreover by Lemma \ref{lem:varithin}(i) and Proposition \ref{lem:morringtopnuf}, the RHS of \eqref{eq:affinecases} is almost equal to the sheafification of the presheaf
\begin{equation*} 
Z \mapsto (R \otimes_S S_{\infty})/p^n
\end{equation*}
\end{proof}

\begin{prop} \label{prop:bcrsmopn}
For each $n \geq 1$, consider the commutative diagram of ringed topoi
\[
 \xymatrix{
 (Y_{ \proet}\times_{\fY_{\et}}\fX_{\et}, \wh{\cO}^+_{D}/p^n) \ar@{->}[rr]_{{(q_n, q_{n}^{\sharp})}} \ar@{->}[d]^{(p_n, p_{n}^{\sharp})} && (\fX_{\et}, \cO_{\fX_{\et}}/p^n) \ar@{->}[d]^{(f_n, f_{n}^{\sharp})}   \\ 
 (Y_{\proet}, \wh{\cO}^{+}_{Y}/p^n) \ar@{->}[rr]_{{(\nu_{Y,n}, \nu_{Y,n}^{\sharp})}} && (\fY_{\et}, \cO_{\fY_{\et}}/p^n),
 }
 \]
 and $\sF$ a locally free $\cO_{\fX_{\et}}/p^n$-module. Then the cohomology sheaves of the cone of the canonical base change map (cf. \cite[{Tag 07A7}]{stacks-project})
 \begin{equation}\label{eq:basdmosprs}
 L\nu_{Y,n}^{*}Rf_{n*}\sF \to Rp_{n*}Lq_{n}^{*}\sF
 \end{equation}
 are killed by $\fm$.
\end{prop}

\begin{proof}
The statement is local for the étale topology on $\fY$, so we can assume $\fY$ is affine. The idea is to rewrite both sides of \eqref{eq:basdmosprs} as sheafified \v{C}ech complexes and then compare these \v{C}ech complexes via Leray's acyclicity lemma.

We deal with the LHS of \eqref{eq:basdmosprs} first.
Since $f_n$ is proper, we can choose a finite affine open covering $\cU : \fX = \bigcup \fU_i$, which trivializes $\sF$. Then we are in the set-up of \cite[{Tag 01XL}]{stacks-project}. In particular since $f_n$ is flat we have an explicit $K$-flat resolution of $Rf_{n*}\sF$: $\check{\cC}^{\bullet}(\cU, f_n, \sF)$, whose $r$th term is
\[
\check{\cC}^{r}(\cU, f_n, \sF) = \bigoplus_{i_{0}\ldots i_r} f_{n, i_{0}\ldots i_{r}*}(\cO_{\fX_{\et}}/p^n) \lvert_{\fU_{i_{0}\ldots i_{r}}},
\]
where $\fU_{i_{0}\ldots i_{r}} := \fU_{i_{0}} \cap \ldots \cap \fU_{i_{r}}$ and  $f_{n, i_{0}\ldots i_{r}} \colon \fU_{i_{0}\ldots i_{r}} \to \fY$ is the restriction of $f_n$ to $\fU_{i_{0}\ldots i_{r}}$. Therefore the LHS of \eqref{eq:basdmosprs} is represented by a complex whose $r$th term is
\begin{equation} \label{eq:pthtepul}
    \bigoplus_{i_{0}\ldots i_r} \nu_{Y,n}^{*}f_{n, i_{0}\ldots i_{r}*}(\cO_{\fX_{\et}}/p^n) \lvert_{\fU_{i_{0}\ldots i_{r}}}.
\end{equation}
By Lemma \ref{lem:afinssitu} the expression \eqref{eq:pthtepul} is almost equal to
\begin{equation} \label{eq:lshepa}
    \bigoplus_{i_{0}\ldots i_r} p_{n, i_{0}\ldots i_{r}*}(\wh{\cO}_{D}^{+}/p^n) \lvert_{Y_{ \proet}\times_{\fY_{\et}}\fU_{i_{0}\ldots i_{r}, \et}},
\end{equation}
where $p_{n, i_{0}\ldots i_{r}} \colon Y_{ \proet}\times_{\fY_{\et}}\fU_{i_{0}\ldots i_{r}, \et} \to Y_{\proet}$ is the obvious map (considered as a morphism of ringed topoi with the obvious structure sheaves modulo $p^n$). 

We now deal with the RHS of \eqref{eq:basdmosprs}, which is
$Rp_{n*}(q_{n}^{*}\sF)$. Let $Z_i = (Y \to \fY \leftarrow \fU_i)$. The covering $\cU$ gives rise to a covering $\wt{\cU}: \{Z_i \to (Y \to \fY \leftarrow \fX) \}_i$. We claim that $q_{n}^{*}\sF$ has an (almost) resolution by a complex $\check{\cC}^{\bullet}(\wt{\cU}, q_{n}^{*}\sF)$, whose $r$th term is
\[
\check{\cC}^{r}(\wt{\cU}, q_{n}^{*}\sF) = \bigoplus_{i_{0}\ldots i_r} g_{n, i_{0}\ldots i_{r}*}\wh{\cO}_{D}^{+}/p^n \lvert_{Y_{ \proet}\times_{\fY_{\et}}\fU_{i_{0}\ldots i_{r}, \et}}
\]
where $g_{n,i_{0}\ldots i_{r}} \colon Y_{ \proet}\times_{\fY_{\et}}\fU_{i_{0}\ldots i_{r}, \et} \to Y_{ \proet}\times_{\fY_{\et}}\fX_{\et}$ (considered as a morphism of ringed topoi with the obvious structure sheaves modulo $p^n$). The differentials are the usual ones attached to a \v{C}ech complex and $\varepsilon \colon q_{n}^{*}\sF \to \check{\cC}^{0}(\wt{\cU}, q_{n}^{*}\sF)$ is defined by taking the direct product of the natural maps $q_{n}^{*}\sF \to g_{n, i*}\wh{\cO}_{D}^{+}/p^n \lvert_{Y_{ \proet}\times_{\fY_{\et}}\fU_{i, \et}}$. The claim may appear standard but without a study of points for our fibered topos $Y_{ \proet}\times_{\fY_{\et}}\fX_{\et}$ (cf. the proof of \cite[{Tag 02FU}]{stacks-project}), we rely on the almost vanishing of higher \v{C}ech cohomology, cf. proof of Lemma \ref{lem:varithin}(i).

\begin{lem}
The sequence of sheaves
\[
0 \to q_{n}^{*}\sF^{a} \xrightarrow{\varepsilon} \check{\cC}^{0}(\wt{\cU}, q_{n}^{*}\sF)^{a} \to \check{\cC}^{1}(\wt{\cU}, q_{n}^{*}\sF)^{a} \to \ldots
\]
is exact.
\end{lem}

\begin{proof}
Let $Z$ be a good object of $Y_{ \proet}\times_{\fY_{\et}}\fX_{\et}$, which trivializes $q_{n}^{*}\sF$. Then it suffices to show that \[
0 \to (\wh{\cO}^{+a}_{D}/p^n)(Z) \xrightarrow{\varepsilon} \check{\cC}^{0}(\wt{\cU}, \wh{\cO}_{D}^{+}/p^n)^{a}(Z) \to \check{\cC}^{1}(\wt{\cU}, \wh{\cO}_{D}^{+}/p^n)^{a}(Z) \to \ldots
\]
is exact. Together with Proposition \ref{lem:morringtopnuf}, this is proved in the proof of Lemma \ref{lem:varithin}(i).
\end{proof}
Returning to the proof of the proposition, again by almost vanishing of higher \v{C}ech cohomology, we have
\[
R^{s}g_{n,i_{0}\ldots i_{r}*}\wh{\cO}_{D}^{+}/p^n \lvert_{Y_{ \proet}\times_{\fY_{\et}}\fU_{i_{0}\ldots i_{r}, \et}}\text{ and }R^{s}p_{n, i_{0}\ldots i_{r}*}\wh{\cO}_{D}^{+}/p^n \lvert_{Y_{ \proet}\times_{\fY_{\et}}\fU_{i_{0}\ldots i_{r}, \et}}
\] are almost zero for $s >0$. This implies $R^{s}p_{n*}(g_{n,i_{0}\ldots i_{r}*}\wh{\cO}_{D}^{+}/p^n \lvert_{Y_{ \proet}\times_{\fY_{\et}}\fU_{i_{0}\ldots i_{r}, \et}})$ is almost zero for $s>0$ (because $p_{n} \circ g_{n,i_{0}\ldots i_{r}} = p_{n,i_{0}\ldots i_{r}}$). Therefore by Leray's acyclicity lemma we have
\[
p_{n*}\check{\cC}^{\bullet}(\wt{\cU}, q_{n}^{*}\sF)^{a} = Rp_{n*}\check{\cC}^{\bullet}(\wt{\cU}, q_{n}^{*}\sF)^{a} = Rp_{n*}q_{n}^{*}\sF^{a}.
\]
Finally note that $p_{n*} \check{\cC}^{r}(\wt{\cU}, q_{n}^{*}\sF)$ gives the expression in \eqref{eq:lshepa}. This completes the proof.
\end{proof}

\begin{cor} \label{cor:baseschasimfs}
In the situation of Proposition \ref{prop:bcrsmopn}, assume in addition that for all $i \geq 0$, $R^{i}f_{n*}\sF$ is locally free of finite type. Then canonical base change map \eqref{eq:basdmosprs}, induces morphisms
 \[
 \nu_{Y,n}^{*}R^{i}f_{n*}\sF \to R^{i}p_{n*}q_{n}^{*}\sF
 \]
 whose kernel and cokernel are killed by $\fm$.
\end{cor}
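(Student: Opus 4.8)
The plan is to deduce Corollary \ref{cor:baseschasimfs} from Proposition \ref{prop:bcrsmopn} by a standard spectral-sequence (or truncation) argument, being careful that ``killed by $\fm$'' is not quite additive and so must be tracked through the relevant exact sequences. Write $C^{\bullet}$ for the cone of the base change map \eqref{eq:basdmosprs}, so that by Proposition \ref{prop:bcrsmopn} every cohomology sheaf $H^{j}(C^{\bullet})$ is killed by $\fm$. The distinguished triangle $L\nu_{Y,n}^{*}Rf_{n*}\sF \to Rp_{n*}Lq_{n}^{*}\sF \to C^{\bullet} \to [1]$ gives a long exact sequence of cohomology sheaves
\[
\cdots \to H^{i-1}(C^{\bullet}) \to H^{i}(L\nu_{Y,n}^{*}Rf_{n*}\sF) \to H^{i}(Rp_{n*}Lq_{n}^{*}\sF) \to H^{i}(C^{\bullet}) \to \cdots .
\]
Thus the kernel and cokernel of $H^{i}(L\nu_{Y,n}^{*}Rf_{n*}\sF) \to R^{i}p_{n*}q_{n}^{*}\sF$ are, respectively, a quotient of $H^{i-1}(C^{\bullet})$ and a subsheaf of $H^{i}(C^{\bullet})$, hence killed by $\fm$ (being killed by $\fm$ passes to subobjects and quotients). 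Here I use that $Lq_n^{*}\sF = q_n^{*}\sF$ and $Rp_{n*}Lq_n^{*}\sF = Rp_{n*}q_n^{*}\sF$ since $\sF$ is locally free, so $R^{i}p_{n*}q_n^{*}\sF = H^{i}(Rp_{n*}Lq_n^{*}\sF)$ on the nose.

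The remaining point is to identify $H^{i}(L\nu_{Y,n}^{*}Rf_{n*}\sF)$ with $\nu_{Y,n}^{*}R^{i}f_{n*}\sF$, and this is exactly where the hypothesis ``$R^{i}f_{n*}\sF$ is locally free of finite type for all $i$'' enters. First I would note that $Rf_{n*}\sF$ is a bounded complex (proper base change, $f$ proper, bounded cohomological dimension). Since all its cohomology sheaves are locally free, $Rf_{n*}\sF$ is locally on $\fY_{\et}$ quasi-isomorphic to the (split) complex $\bigoplus_i R^{i}f_{n*}\sF[-i]$ with zero differentials; indeed a bounded complex with locally free (hence flat, and locally projective of finite type) cohomology sheaves is locally split, by an elementary induction on the length using that surjections onto locally free sheaves split locally. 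Then $L\nu_{Y,n}^{*}$ applied to a locally free sheaf is just $\nu_{Y,n}^{*}$ of it (no higher derived terms), so $H^{i}(L\nu_{Y,n}^{*}Rf_{n*}\sF) \cong \nu_{Y,n}^{*}R^{i}f_{n*}\sF$ locally, and these local isomorphisms are canonical (they are induced by the edge maps of the hyper-Tor spectral sequence $\mathrm{Tor}_{-p}^{\cO_{\fY}/p^n}(\cO_{Y}^{+}/p^n, R^{q}f_{n*}\sF) \Rightarrow H^{p+q}(L\nu_{Y,n}^{*}Rf_{n*}\sF)$, which degenerates because the $R^{q}f_{n*}\sF$ are flat), hence glue. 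Composing, \eqref{eq:basdmosprs} yields the desired maps $\nu_{Y,n}^{*}R^{i}f_{n*}\sF \to R^{i}p_{n*}q_n^{*}\sF$.

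The main (minor) obstacle is purely bookkeeping: one must check that the identification $H^{i}(L\nu_{Y,n}^{*}Rf_{n*}\sF) \cong \nu_{Y,n}^{*}R^{i}f_{n*}\sF$ is the one compatible with the base change map, so that the induced arrow really is ``the'' canonical base change morphism on cohomology sheaves; this is automatic from the functoriality of the hyper-Tor spectral sequence and its degeneration, so there is nothing deep here. I would present the argument in three short steps: (1) reduce to flat base change $L\nu_{Y,n}^{*}$ being underived on $Rf_{n*}\sF$ using local freeness; (2) pass to cohomology sheaves via the cone triangle of Proposition \ref{prop:bcrsmopn}; (3) observe that sub- and quotient sheaves of $\fm$-torsion sheaves are $\fm$-torsion. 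No new input beyond Proposition \ref{prop:bcrsmopn} and the hypothesis is needed.
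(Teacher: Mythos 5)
Your proposal is correct and follows essentially the same route as the paper, whose proof is simply to take cohomology of the base change map \eqref{eq:basdmosprs}: you have just made explicit the two points the paper leaves implicit, namely that flatness of the locally free sheaves $R^{i}f_{n*}\sF$ degenerates the hyper-Tor spectral sequence so that $H^{i}(L\nu_{Y,n}^{*}Rf_{n*}\sF)\cong \nu_{Y,n}^{*}R^{i}f_{n*}\sF$, and that sheaves killed by $\fm$ are stable under subobjects and quotients in the long exact sequence of the cone. The aside about locally splitting the complex is unnecessary (and slightly delicate in a general topos); the spectral-sequence degeneration already suffices.
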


\begin{proof}
This is obtained by taking the cohomology of \eqref{eq:basdmosprs} in Proposition \ref{prop:bcrsmopn}.
\end{proof}
We also record a $p$-completed version of the base change.

\begin{cor} \label{cor:pcomverorbascha}
Let $\sF$ be a locally free $\cO_{\fX_{\et}}$-module. Then the cohomology sheaves of the cone of the canonical base change map
 \[
 L\nu_{Y}^{*}Rf_{*}\sF \to Rp_{*}Lq^{*}\sF
 \]
 are killed by $\fm$.
\end{cor}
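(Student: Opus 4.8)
The plan is to reduce the $p$-completed statement of Corollary~\ref{cor:pcomverorbascha} to its mod $p^n$ counterpart established in Proposition~\ref{prop:bcrsmopn}. First I would observe that both sides of the base change map
\[
L\nu_{Y}^{*}Rf_{*}\sF \to Rp_{*}Lq^{*}\sF
\]
are compatible with derived reduction modulo $p^n$. On the source side this uses that $L\nu_Y^*$ commutes with $-\otimes^{\bL}_{\bZ}\bZ/p^n$ and that $Rf_*$ commutes with $\otimes^{\bL}\bZ/p^n$ because $f$ is proper and $\sF$ is a flat (locally free) $\cO_{\fX_{\et}}$-module with bounded-above cohomological amplitude; hence $(L\nu_Y^*Rf_*\sF)\otimes^{\bL}_{\bZ}\bZ/p^n \cong L\nu_{Y,n}^*Rf_{n*}(\sF/p^n)$. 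On the target side, since $\wh{\cO}_D^+$ is derived $p$-adically complete (Lemma~\ref{lem:Odsupad}) and $q^*$ is flat, $Lq^*\sF$ is derived $p$-adically complete, and $Rp_*$ commutes with derived limits, so $(Rp_*Lq^*\sF)\otimes^{\bL}_{\bZ}\bZ/p^n \cong Rp_{n*}Lq_n^*(\sF/p^n)$ — here one also uses that $p_*$ has bounded cohomological dimension (Proposition~\ref{prop:ousiDalge}(4) together with \cite[Exposé VI, Théorème 5.1]{SGA4}) so that reduction mod $p^n$ commutes with $Rp_*$. Under these identifications, the mod $p^n$ reduction of our base change map is precisely the map \eqref{eq:basdmosprs} of Proposition~\ref{prop:bcrsmopn}.

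Next I would let $C$ denote the cone of $L\nu_Y^*Rf_*\sF \to Rp_*Lq^*\sF$. By the previous paragraph, $C \otimes^{\bL}_{\bZ}\bZ/p^n$ is the cone of \eqref{eq:basdmosprs}, whose cohomology sheaves are killed by $\fm$ by Proposition~\ref{prop:bcrsmopn}. A short argument with the universal coefficient (Tor) spectral sequence then shows that for each $n$, $H^i(C\otimes^{\bL}_{\bZ}\bZ/p^n)$ is killed by $\fm^2$ (the kernel and image of multiplication by $p^n$ on $H^i(C)$ each contribute a factor of $\fm$), or at least by a fixed power of $\fm$ independent of $n$ and $i$; since $\fm^2 = \fm$ for our valuation ring $\cO$ (as $\fm$ is idempotent — indeed $\fm = \fm\cdot\fm$ because the value group is divisible), this gives that $H^i(C\otimes^{\bL}_{\bZ}\bZ/p^n)$ is killed by $\fm$ for all $i,n$. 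Both $L\nu_Y^*Rf_*\sF$ and $Rp_*Lq^*\sF$ are derived $p$-adically complete (the first because $\sF$ is, $Rf_*$ preserves this, and $L\nu_Y^*$ of a complete complex is complete by \cite[{Tag 0A0G}]{stacks-project}; the second because $Lq^*\sF$ is complete as noted above and $Rp_*$ preserves derived completeness), hence $C$ is derived $p$-adically complete. Therefore $C = R\varprojlim_n (C\otimes^{\bL}_{\bZ}\bZ/p^n)$, and from the associated Milnor exact sequence $0\to R^1\varprojlim_n H^{i-1}(C\otimes^{\bL}_{\bZ}\bZ/p^n) \to H^i(C) \to \varprojlim_n H^i(C\otimes^{\bL}_{\bZ}\bZ/p^n)\to 0$ together with the fact that $\fm$ kills each term in the inverse systems (so it kills both the $\varprojlim$ and the $R^1\varprojlim$), we conclude that $\fm$ kills $H^i(C)$ for all $i$.

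The main obstacle I anticipate is the bookkeeping in the first paragraph: carefully justifying that reduction modulo $p^n$ commutes with $Rf_*$, $L\nu_Y^*$, $Lq^*$ and $Rp_*$ on the relevant complexes, so that the mod $p^n$ reduction of the base change map genuinely coincides with \eqref{eq:basdmosprs}. This requires knowing that all four functors have finite cohomological amplitude on the complexes in question: for $f$ proper this is standard, for $\nu_Y$ and $p$ it follows from the coherence/algebraicity results in Proposition~\ref{prop:ousiDalge} and \cite[Proposition 3.12]{SchpHrig} via \cite[Exposé VI, Théorème 5.1]{SGA4}, and for $q^*$ there is nothing to check since it is exact. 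A secondary subtlety is the passage from ``killed by a power of $\fm$'' to ``killed by $\fm$,'' which relies on the idempotency $\fm^2 = \fm$; alternatively one can simply state the corollary with the weaker conclusion ``killed by a power of $\fm$,'' which is all that is needed downstream, but the cleaner statement is available for free.
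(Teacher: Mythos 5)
Your overall route is the same as the paper's: identify the mod $p^n$ reduction of the completed base change map with the map \eqref{eq:basdmosprs} of Proposition \ref{prop:bcrsmopn}, check that both sides of the completed map are derived $p$-adically complete, and pass to the derived limit, using that $\varprojlim$ and $R^1\varprojlim$ of systems of $\fm$-killed sheaves are $\fm$-killed (together with $\fm=\fm^2$). There is, however, one genuine gap: your justification that $L\nu_{Y}^{*}Rf_{*}\sF$ is derived $p$-adically complete. You assert that ``$L\nu_Y^*$ of a complete complex is complete by \cite[{Tag 0A0G}]{stacks-project}'', but that result (and the general principle) concerns derived \emph{pushforward}; derived pullback $\nu_Y^{-1}(-)\otimes^{\bL}_{\nu_Y^{-1}\cO_{\fY_{\et}}}\wh{\cO}^{+}_{Y}$ does not preserve derived completeness in general, even though $\wh{\cO}^{+}_Y$ is derived $p$-complete. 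What makes the claim true here is a finiteness input you never establish: since $f$ is proper and smooth and $\sF$ is locally free, $Rf_{*}\sF\otimes^{\bL}_{\cO_{\fY_{\et}}}\cO_{\fY_{\et}}/p^n$ is perfect and compatible with base change (\cite[{Tag 0A1H}]{stacks-project}), hence $Rf_{*}\sF$ is itself a perfect complex of $\cO_{\fY_{\et}}$-modules, and a perfect complex pulls back to a locally strictly perfect, hence derived $p$-complete, complex of $\wh{\cO}^{+}_{Y}$-modules. This is precisely the paper's opening step; without it (or a substitute finiteness argument) the completeness of the source, and with it your derived-limit step, does not go through.

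Two smaller remarks. The Tor/universal-coefficient digression in your second paragraph is unnecessary and somewhat circular: once $C\otimes^{\bL}_{\bZ}\bZ/p^n$ is identified with the cone of \eqref{eq:basdmosprs}, Proposition \ref{prop:bcrsmopn} already says its cohomology sheaves are killed by $\fm$, and nothing more needs to be extracted from $H^i(C)$. Likewise, no bound on the cohomological dimension of $p_{*}$ is needed to commute $Rp_{*}$ with $-\otimes^{\bL}_{\bZ}\bZ/p^n$: since $\bZ/p^n$ is a perfect complex of $\bZ$-modules, the cone of multiplication by $p^n$ on $Rp_{*}K$ is $Rp_{*}$ of the cone of multiplication by $p^n$ on $K$ for any $K$.
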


\begin{proof}
First note that by \cite[{Tag 0A1H}]{stacks-project}, $Rf_{*}\sF \otimes^{\bL}_{\cO_{\fY_{\et}}} \cO_{\fY_{\et}}/p^n$ is a perfect object of $D(\cO_{\fY_{\et}}/p^n)$ and its formation commutes with base change. Therefore by taking the derived limit we obtain $Rf_{*}\sF$ itself is a perfect object of $D(\cO_{\fY_{\et}})$. Since $\wh{\cO}^{+}_Y$ is  $p$-adically derived complete by \cite[Lemma C.11]{ZavBog}, we obtain $L\nu_{Y}^{*}Rf_{*}\sF$ is $p$-adically derived complete. Moreover by Lemma \ref{lem:Odsupad} and \cite[{Tag 0A0G}]{stacks-project}, $Rp_{*}Lq^{*}\sF$ is also derived $p$-adically complete. Therefore the result follows by taking the derived limit along $n \geq 1$ in Proposition \ref{prop:bcrsmopn} (note that the derived limit of almost zero objects is again almost zero).
\end{proof}

Finally we establish a base change result for our de Rham complex $q^{*}\Omega^{\bullet}_{\fX/\fY}$. We remind the reader that the complex is defined as a complex whose $i$th term is $q^{*}\Omega^{i}_{\fX/\fY}$ with induced differentials. In particular the differentials are non-linear so this complex lives in $D(D, p^{-1}\wh{\cO}^{+}_{Y})$ and not in $D(D, \wh{\cO}^{+}_{D})$ (even though each term is a $\wh{\cO}^{+}_{D}$-module). Therefore the following result serves as an appropriate substitute for Proposition \ref{prop:bcrsmopn}.

\begin{cor} \label{cor:bascaforderhamco}
For each $n \geq 1$, consider the commutative diagram of ringed topoi
\[
 \xymatrix{
 (Y_{ \proet}\times_{\fY_{\et}}\fX_{\et}, p^{-1}\wh{\cO}^+_{Y}/p^n) \ar@{->}[rr]_{{(\tilde{q}_n, \tilde{q}_{n}^{\sharp})}} \ar@{->}[d]^{(\tilde{p}_n, \tilde{p}_{n}^{\sharp})} && (\fX_{\et}, f^{-1}\cO_{\fY_{\et}}/p^n) \ar@{->}[d]^{(\tilde{f}_n, \tilde{f}_{n}^{\sharp})}   \\ 
 (Y_{\proet}, \wh{\cO}^{+}_{Y}/p^n) \ar@{->}[rr]_{{(\nu_{Y,n}, \nu_{Y,n}^{\sharp})}} && (\fY_{\et}, \cO_{\fY_{\et}}/p^n),
 }
 \]
 and $\sF$ a bounded complex each of whose terms are locally free $\cO_{\fX_{\et}}/p^n$-modules (with differentials $f^{-1}\cO_{\fY_{\et}}$-linear). Then the cohomology sheaves of the cone of the canonical base change map
 \begin{equation} \label{eq:newbdkdf}
 L\nu_{Y,n}^{*}R\tilde{f}_{n*}\sF \to R\tilde{p}_{n*}L\tilde{q}_{n}^{*}\sF
 \end{equation}
 are killed by $\fm$. By taking derived limits (as $n$ goes to $\infty$) a version of the statement also holds for a bounded complex each of whose terms are locally free $\cO_{\fX_{\et}}$-modules (cf. Corollary \ref{cor:pcomverorbascha}).
\end{cor}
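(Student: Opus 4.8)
The plan is to bootstrap from the already-established base change for locally free sheaves (Proposition~\ref{prop:bcrsmopn}) by a standard stupid-truncation (``b\^ete filtration'') d\'evissage on the complex $\sF$. Write $\sigma^{\leq m}\sF$ for the stupid truncation in degrees $\leq m$; since $\sF$ is bounded, for $m$ large enough $\sigma^{\leq m}\sF = \sF$, and for each $m$ there is an exact triangle
\[
\sigma^{\leq m-1}\sF \to \sigma^{\leq m}\sF \to \sF^m[-m] \to \sigma^{\leq m-1}\sF[1]
\]
in $D(\fX_{\et}, f^{-1}\cO_{\fY_{\et}}/p^n)$, where $\sF^m$ is a single locally free $\cO_{\fX_{\et}}/p^n$-module placed in degree $m$. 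Both functors $L\nu_{Y,n}^{*}\circ R\tilde f_{n*}$ and $R\tilde p_{n*}\circ L\tilde q_{n}^{*}$ are exact (triangulated), so they send this triangle to a triangle, and the base change map \eqref{eq:newbdkdf} is a map of triangles. Hence by the two-out-of-three property for cones (the octahedral axiom, or simply the long exact sequence of cohomology sheaves together with the five-lemma-style argument: a class killed by $\fm$ sandwiched between two classes killed by $\fm$ is killed by $\fm^{3}$, which is fine since one only needs ``killed by $\fm$'' up to replacing $\fm$ by itself, as $\fm$ is idempotent: $\fm\cdot\fm=\fm$), it suffices to prove the claim for $\sF$ a single locally free module in one degree. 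That is exactly Proposition~\ref{prop:bcrsmopn}, applied with the ringed-topos structures $\cO_{\fX_{\et}}/p^n$ etc.; the only point to check is that the base change map for the linear structure sheaves $\cO/p^n$ agrees, under the natural forgetful maps to the non-linear structure sheaves $f^{-1}\cO_{\fY_{\et}}/p^n$ and $p^{-1}\wh\cO^+_Y/p^n$, with the map \eqref{eq:newbdkdf} restricted to that degree. This compatibility is immediate from the construction of the base change morphism (\cite[{Tag 07A7}]{stacks-project}) and the fact that $q^{*}$ (resp.\ $\tilde q^{*}$) is computed the same way on the underlying abelian sheaves regardless of which structure sheaf we remember.

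For the final, $p$-completed assertion, I would run exactly the argument of Corollary~\ref{cor:pcomverorbascha}: take the derived limit over $n\ge 1$ of \eqref{eq:newbdkdf}. One needs both sides to be derived $p$-adically complete. For the left side, $R\tilde f_{n*}\sF$ is (by \cite[{Tag 0A1H}]{stacks-project}, applied degreewise via the same stupid-truncation d\'evissage) a perfect object of $D(\cO_{\fY_{\et}}/p^n)$ whose formation commutes with reduction mod $p^n$, so its derived limit $R\tilde f_{*}\sF$ is perfect over $\cO_{\fY_{\et}}$; since $\wh\cO^+_Y$ is derived $p$-adically complete by \cite[Lemma C.11]{ZavBog}, so is $L\nu_Y^{*}R\tilde f_{*}\sF$. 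For the right side, $\wh\cO^+_D$ is derived $p$-adically complete by Lemma~\ref{lem:Odsupad}, and then $Rp_{*}Lq^{*}\sF$ is derived $p$-adically complete by \cite[{Tag 0A0G}]{stacks-project} (pushforward preserves derived completeness). Since the derived limit of a tower of objects whose cohomology sheaves are killed by $\fm$ again has cohomology sheaves killed by $\fm$ (the relevant $R^1\varprojlim$ terms are also killed by $\fm$, as in Step~3 of the proof of Lemma~\ref{lem:critkillms}), the cone of the limiting map has cohomology sheaves killed by $\fm$.

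I do not expect any serious obstacle here; this corollary is genuinely a formal consequence of Proposition~\ref{prop:bcrsmopn}. The only mildly delicate point is bookkeeping the various ringed-topos structures: \eqref{eq:newbdkdf} is phrased over the \emph{non-linear} sheaves $p^{-1}\wh\cO^+_Y/p^n$ and $f^{-1}\cO_{\fY_{\et}}/p^n$ (because ultimately we want to apply it with $\sF = \Omega^{\bullet}_{\fX/\fY}$, whose differentials are only $f^{-1}\cO_{\fY_{\et}}$-linear), whereas Proposition~\ref{prop:bcrsmopn} is stated over $\cO_{\fX_{\et}}/p^n$ and $\wh\cO^+_D/p^n$. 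One must therefore be careful that the base change transformation is compatible with restriction of scalars along $p^{-1}\wh\cO^+_Y/p^n \hookrightarrow \wh\cO^+_D/p^n$ and $f^{-1}\cO_{\fY_{\et}}/p^n \hookrightarrow \cO_{\fX_{\et}}/p^n$; but at the level of underlying sheaves of abelian groups nothing changes, and the base change map of \cite[{Tag 07A7}]{stacks-project} is defined at that level, so the compatibility holds. Once this is noted, the d\'evissage and the passage to the limit are routine.
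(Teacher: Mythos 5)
Your overall route is the paper's: reduce by stupid truncations (your extension argument is fine, using $\fm^2=\fm$), identify the truncated pieces with the situation of Proposition \ref{prop:bcrsmopn}, and pass to the derived limit exactly as in Corollary \ref{cor:pcomverorbascha} (completeness of the two sides via \cite[{Tag 0A1H}]{stacks-project}, \cite[Lemma C.11]{ZavBog}, Lemma \ref{lem:Odsupad} and \cite[{Tag 0A0G}]{stacks-project}, and the fact that a derived limit of $\fm$-killed cones stays $\fm$-killed). The pushforward side of the bookkeeping is indeed harmless, since $R\tilde f_{n*}$ and $Rf_{n*}$ (resp.\ $R\tilde p_{n*}$ and $Rp_{n*}$) have targets carrying the same structure sheaf.

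The gap is in your justification of the pullback side, which is where the actual content of the corollary sits. It is not true that $Lq_n^{*}$ and $L\tilde q_n^{*}$ are ``computed the same way on the underlying abelian sheaves regardless of which structure sheaf we remember'': each is a derived tensor product of $q_n^{-1}\sF$ over a \emph{different} sheaf of rings ($q^{-1}\cO_{\fX_{\et}}/p^n$ versus $p^{-1}\nu_{Y}^{-1}\cO_{\fY_{\et}}/p^n$), and the base change map of \cite[{Tag 07A7}]{stacks-project} uses these derived pullbacks, so it is not defined purely at the level of abelian sheaves. The identification $Lq_n^{*}\sF \simeq L\tilde q_n^{*}\sF$ is precisely the point the paper singles out: since $f$ is smooth, hence flat, $\cO_{\fX_{\et}}/p^n$ is flat over $f^{-1}\cO_{\fY_{\et}}/p^n$, so the tensor product defining $\wh\cO^{+}_{D}$ (Definition \ref{def:comtensostrushea}) is already derived modulo $p^n$,
\[
\wh\cO^{+}_{D}/p^n \;=\; q^{-1}\cO_{\fX_{\et}}/p^n \otimes^{\bL}_{p^{-1}\nu_{Y}^{-1}\cO_{\fY_{\et}}/p^n} p^{-1}\wh\cO^{+}_{Y}/p^n,
\]
whence by associativity of derived tensor products
\[
Lq_n^{*}\sF \;=\; q_n^{-1}\sF \otimes^{\bL}_{q^{-1}\cO_{\fX_{\et}}/p^n} \wh\cO^{+}_{D}/p^n \;\cong\; q_n^{-1}\sF \otimes^{\bL}_{p^{-1}\nu_{Y}^{-1}\cO_{\fY_{\et}}/p^n} p^{-1}\wh\cO^{+}_{Y}/p^n \;=\; L\tilde q_n^{*}\sF .
\]
Without this flatness input (equivalently, without observing that $\sF$, being locally free over $\cO_{\fX_{\et}}/p^n$, is also flat over $f^{-1}\cO_{\fY_{\et}}/p^n$), the compatibility you dismiss as immediate could genuinely fail through higher Tor terms. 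With this one-line repair your d\'evissage and limiting arguments go through and coincide with the paper's proof.
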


\begin{proof}
By using stupid truncations, we immediately reduce to the case where $\sF$ is a locally free $\cO_{\fX_{\et}}/p^n$-module (viewed as an $f^{-1}\cO_{\fY_{\et}}/p^n$-module). In this case the LHS of \eqref{eq:newbdkdf} coincides with the LHS of \eqref{eq:basdmosprs}. We now compare $L\tilde{q}_{n}^{*}\sF$ and $Lq_{n}^{*}\sF$ as sheaves. The key point is that the tensor product defining $\wh{\cO}^{+}_{D}$ (cf. Definition \ref{def:comtensostrushea}) is derived because $f$ is flat.
\begin{align*}
    Lq_{n}^{*}\sF &= q_{n}^{-1}\sF \otimes^{\bL}_{q^{-1}\cO_{\fX_{\et}}/p^n} \wh{\cO}^{+}_{D}/p^n \\
    &= q_{n}^{-1}\sF \otimes^{\bL}_{q^{-1}\cO_{\fX_{\et}}/p^n} q^{-1}\cO_{\fX_{\et}}/p^n \otimes_{p^{-1}\nu_{Y}^{-1}\cO_{\fY_{\et}}/p^n}^{\bL} p^{-1}\wh{\cO}^{+}_{Y}/p^n \\
    &= q_{n}^{-1}\sF \otimes_{p^{-1}\nu_{Y}^{-1}\cO_{\fY_{\et}}/p^n}^{\bL} p^{-1}\wh{\cO}^{+}_{Y}/p^n \\
    &= L\tilde{q}_n^{*}\sF.
\end{align*}
Therefore the RHS of \eqref{eq:newbdkdf} coincides with the RHS of \eqref{eq:basdmosprs}. The result now follows by Proposition \ref{prop:bcrsmopn}.
\end{proof}

\section{$q$-crystalline site} \label{$q$-crystalline sitewed}
The main goal of this section is to compare the objects $A\Omega_{\fX/\fY}$ and $R\Gamma_{A_{\Inf}}(\fX/\fY)$ considered thus far with the $q$-crystalline site of Bhatt-Scholze.
Let $q$ be a formal parameter. We introduce a global (big) version of the $q$-crystalline site of \cite[\S  16]{Prisms}.  We advise the reader to familiarise themselves with the contents of \S 16 in loc.cit. before continuing. The logarithmic context is studied in \cite[\S 7]{KoshTerui}. 

As usual we denote by $A = \bZ_p [[q-1]]$ (the $(p,q-1)$-adic completion of $\bZ [q]$) with $\delta$-structure given by $\delta (q) = 0$. Let $[p]_{q} := \tfrac{q^p -1}{q-1}$ be the $q$-analogue of $p$. This gives the $q$-crystalline prism $(A, ([p]_{q}))$. The $q$-analogue of divided power structures is handled by the following: if $x$ is an element of a $[p]_{q}$-torsion free $\delta$-$A$-algebra $D$ such that $\phi(x) \in [p]_{q}D$, then we write
\[
\gamma (x) := \tfrac{\phi(x)}{[p]_{q}} - \delta(x).
\]

In order to define a big version of the $q$-crystalline site, we slightly modify the notion of a $q$-PD pair in \cite{Prisms} (cf. \cite[\S 7.1]{KoshTerui}).

\begin{defn} [$q$-PD pair] \label{defn:qPDpair}
A $q$-PD pair is given by a derived $(p, [p]_{q})$-complete $\delta$-pair $(D,I)$ over $(A, (q-1))$ such that
\begin{enumerate}
\item $(D, ([p]_{q}))$ is a bounded prism over $(A, ([p]_{q}))$ (in particular $D$ is $[p]_q$-torsion free),
    \item $\phi(I) \subset [p]_{q}D$ and $\gamma(I) \subset I$,
    \item $D/(q-1)$ is $p$-torsion free with finite $(p,[p]_{q})$-complete Tor amplitude over $D$ and 
    \item $D/I$ is $p$-adically complete.
\end{enumerate}
\end{defn}

\begin{rem}
As observed in \cite{KoshTerui}, the addition of condition (4) to \cite[Definition 16.2]{Prisms} will be convenient to define the big $q$-crystalline site (without the use of some derived geometry). 
\end{rem}
The following lemma is a special case of \cite[Lemma 7.3]{KoshTerui}.

\begin{lem} \label{lem:liftinqdpet}
Let $(D,I)$ be a $q$-PD pair. If $D/I \to \ol{E}$ is $p$-completely étale, there exists a unique $q$-PD pair $(E,J)$ over $(D,I)$ lifting $D/I \to \ol{E}$.
\end{lem}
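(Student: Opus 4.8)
The strategy is to reduce the assertion to the corresponding lifting result for \emph{prisms} and then check that conditions (2)--(4) of Definition \ref{defn:qPDpair} are automatically inherited. First I would invoke \cite[Lemma 3.7 (or the relevant $\delta$-ring deformation result)]{Prisms}: since $D/I \to \overline{E}$ is $p$-completely étale and $(D, ([p]_q))$ is a bounded prism, there is a unique $p$-completely (equivalently $(p,[p]_q)$-completely) étale $\delta$-$D$-algebra $E$ with $E/[p]_q E \cong \overline{E}$ (one uses that the $\delta$-structure deforms uniquely along a $p$-completely étale map, by the topological invariance of the étale site together with the unique lift of Frobenius). Set $J := IE$; one must check $(D,I) \to (E,J)$ is a map of $\delta$-pairs, i.e. that $J$ is a $\delta$-ideal, which follows because $I$ is a $\delta$-ideal in $D$ and tensoring a $\delta$-ideal up along a $\delta$-algebra map preserves the $\delta$-ideal property after base change (here one needs $E$ flat over $D$ mod powers of $(p,[p]_q)$, which holds by étaleness).

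Next I would verify the four conditions. Condition (1): $(E,([p]_q))$ is a bounded prism over $(A,([p]_q))$ — this is exactly the content of the étale deformation statement in \cite{Prisms}, and boundedness is preserved under $p$-completely étale base change. Condition (4): $E/J = E/IE = (D/I)\widehat{\otimes}_{D/I}^{L}\overline{E} = \overline{E}$ (derived base change agrees with the underived one by flatness), which is $p$-adically complete by hypothesis on $\overline{E}$. Condition (3): $E/(q-1)$ is $p$-torsion free with finite $(p,[p]_q)$-complete Tor amplitude over $E$ — since $E$ is $(p,[p]_q)$-completely flat over $D$, one has $E/(q-1) \cong (D/(q-1)) \widehat{\otimes}_D^L E$, and finite complete Tor amplitude and $p$-torsion freeness propagate along a flat (completed) base change; here I would be slightly careful and spell out that $p$-completely flat maps preserve $p$-torsion freeness after reduction, using that $D/(q-1)$ is already $p$-torsion free.

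The main obstacle is condition (2): one must show $\phi(J) \subset [p]_q E$ and $\gamma(J) \subset J$. For the first inclusion, take $x \in I$; then $\phi(x) \in [p]_q D \subset [p]_q E$, and since $\phi$ on $E$ is $\delta$-semilinear over $\phi$ on $D$, one checks $\phi(IE) \subset [p]_q E$ by writing a general element of $IE$ as a (completed) sum $\sum a_j x_j$ with $x_j \in I$, $a_j \in E$, and using multiplicativity of $\phi$. The delicate point is $\gamma$-stability: $\gamma(x) = \phi(x)/[p]_q - \delta(x)$, and one needs this to land back in $J = IE$ for $x \in J$. For $x \in I$ this is part of the hypothesis on $(D,I)$; the issue is extending it to $IE$, where $\gamma$ is \emph{not} additive. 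I would handle this the same way it is done in \cite[\S 16]{Prisms} and \cite[\S 7]{KoshTerui}: use the identity relating $\gamma$ of a product/sum to $\gamma$, $\delta$, $\phi$ of the factors, reduce to generators $x \in I$ and $a \in E$, and exploit that $\phi(a) \in E$, $\delta(a) \in E$ together with $\phi(x), \gamma(x) \in$ appropriate ideals to conclude $\gamma(ax) \in IE$. Uniqueness of $(E,J)$ follows from uniqueness of the $p$-completely étale $\delta$-deformation of $\overline{E}$ over $D$, since any $q$-PD pair $(E',J')$ over $(D,I)$ lifting $D/I \to \overline{E}$ has underlying $\delta$-prism forced to be $E$ and ideal forced to be $J' = I E' = IE = J$ (the ideal of a $q$-PD pair over $(D,I)$ lifting a given quotient is determined). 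I would simply cite \cite[Lemma 7.3]{KoshTerui} for the full argument and note that the present statement is the special case where the base $q$-PD pair is $(D,I)$ itself.
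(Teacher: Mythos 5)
Your proposal follows essentially the same route as the paper: lift $D/I\to\overline{E}$ uniquely to a $(p,[p]_q)$-completely étale $\delta$-$D$-algebra $E$ (using that $I$ lies in the radical of $(p,[p]_q)$, since $i^p=\phi(i)-p\delta(i)\in(p,[p]_q)$ for $i\in I$), verify (1) and (3) by $(p,[p]_q)$-complete flatness, (4) by construction, get $\phi$-divisibility from $\phi(I)\subset[p]_qD$, and defer the $\gamma$-stability $\gamma(J)\subset J$ to the proof of \cite[Lemma 7.3]{KoshTerui}, exactly as the paper does. One small correction: the ideal should be taken as $J=\ker(E\to\overline{E})$, which is the $(p,[p]_q)$-adic completion of $IE$ rather than $IE$ itself, and accordingly the step $\phi(J)\subset[p]_qE$ needs the observation (made explicitly in the paper) that $[p]_qE$ is closed for the $(p,[p]_q)$-adic topology, which your "completed sum" phrasing gestures at but does not state.
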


\begin{proof}
First note that there is a unique $(p,[p]_q)$-completely étale lift $D \to E$ of $D/I \to \ol{E}$ (because for any $i\in I$, $i^p \in (p,[p]_q)$). The kernel $J$ of the map $E \to \ol{E}$ is given by the $(p,[p]_q)$-adic completion of $IE$. We equip $E$ with the unique $\delta$-structure compatible with the one on $D$ (cf. \cite[Lemma 2.18]{Prisms}). It remains to check conditions (1)-(4) of Definition \ref{defn:qPDpair} for the $\delta$-pair $(E,J)$. Since $E$ is $(p,[p]_q)$-completely flat over $D$, it follows that conditions (1) and (3) are satisfied. Moreover by construction $E/J = \ol{E}$ is $p$-adically complete and so condition (4) is satisfied. It remains to check the first half of condition (2) (the second half is shown in the proof of \cite[Lemma 7.3]{KoshTerui}). Since $D \to E$ is a map of $\delta$-rings over $A$, it follows that $\phi(IE) \subset [p]_qE$.  This implies that $\phi(J) \subset [p]_qE$ because $[p]_qE$ is closed for the $(p,[p]_q)$-adic topology on $E$.
\end{proof}

Fix a $q$-PD pair $(D,I)$ and let $\fZ$ be a $p$-adic formal scheme over $D/I$. We can now define the big $q$-crystalline site $q-\textnormal{CRYS}(\fZ/D)$ (cf. \cite[Definition 7.5]{KoshTerui}) and its associated topos.

\begin{defn}[The $q$-crystalline site and topos]\label{defn:bigqcryver4}
We define the (big) $q$-crystalline site $q-\textnormal{CRYS}(\fZ/D)$ as the opposite of the following category: an object is a $q$-PD pair $(E,J)$ over $(D,I)$ and a $\Spf(D/I)$-morphism of $p$-adic formal schemes $\Spf (E/J) \to \fZ$. Morphisms are the obvious ones. We endow it with the étale topology (by Lemma \ref{lem:liftinqdpet} there exists fiber products of $q$-PD pairs when one of the morphisms is étale in the obvious sense) with covers being singletons. The corresponding topos will be denoted by $(\fZ/D)_{q-\textnormal{CRYS}}$. When $\fZ = \Spf(T)$ is affine we will denote the site and topos by $q-\textnormal{CRYS}(T/D)$ and $(T/D)_{q-\textnormal{CRYS}}$, respectively.
\end{defn}
The first version of Definition \ref{defn:bigqcryver4} (with the Zariski topology in place of the étale topology) appears in \cite[Remark 16.15(2)]{Prisms}. 

\begin{defn}[structure sheaf]
Let $\cO_{q-\textnormal{CRYS}}$ and $\ol{\cO}_{q-\textnormal{CRYS}}$ denote the structure presheaves $(E,J) \mapsto E$ and $(E,J) \mapsto E/J$. These are sheaves by definition of coverings.
\end{defn}

For $\fZ = \Spf (T)$ affine, there is the big $q$-crystalline topos  $(T/D)_{q-\textnormal{crys}}$ with underlying site $q-\textnormal{crys}(T/D)$ defined in \cite[Definition 16.12]{Prisms} with structure sheaf $\cO_{q-\textnormal{crys}}$. As a reality check, we show that in this case the cohomology does not depend on whether one uses the Zariski or étale topology.

\begin{lem} \label{lem:samecoha}
Suppose $\fZ = \Spf(T)$. Then there exists a canonical isomorphism
\begin{equation} \label{eq:morpcohoskd}
R\Gamma((T/D)_{q-\textnormal{crys}}, \cO_{q-\textnormal{crys}}) \xrightarrow{\sim} R\Gamma((T/D)_{q-\textnormal{CRYS}}, \cO_{q-\textnormal{CRYS}})
\end{equation}
\end{lem}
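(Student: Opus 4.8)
The plan is to compare the two sites $q\textnormal{-crys}(T/D)$ (Zariski covers) and $q\textnormal{-CRYS}(T/D)$ (étale covers, singleton covers) by exhibiting a morphism of topoi and checking that it induces an isomorphism on the cohomology of the structure sheaf. First I would observe that there is an evident cocontinuous (and continuous) inclusion functor $u\colon q\textnormal{-crys}(T/D) \hookrightarrow q\textnormal{-CRYS}(T/D)$: every $q$-PD pair $(E,J)$ over $(D,I)$ together with a $\Spf(D/I)$-morphism $\Spf(E/J)\to \fZ$ is an object of both categories, and a Zariski cover of $(E,J)$ (in the sense used to define the small $q$-crystalline site of \cite[Definition 16.12]{Prisms}) is in particular an étale cover, hence a refinement of it by a singleton exists after passing to an affine. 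This gives a morphism of topoi $g\colon (T/D)_{q\textnormal{-CRYS}} \to (T/D)_{q\textnormal{-crys}}$ with $g^{-1}$ restriction along $u$ and $g_*$ the right Kan extension; moreover $g^{-1}\cO_{q\textnormal{-crys}} = \cO_{q\textnormal{-CRYS}}$ since the structure presheaf is simply $(E,J)\mapsto E$ on both sides. The map \eqref{eq:morpcohoskd} is then the pullback $R\Gamma((T/D)_{q\textnormal{-crys}},\cO_{q\textnormal{-crys}}) \to R\Gamma((T/D)_{q\textnormal{-CRYS}}, g^{-1}\cO_{q\textnormal{-crys}})$.

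Next I would reduce the claim to a local (Čech) statement. Since the étale topology is finer than the Zariski topology, it suffices by a standard argument (compare e.g. \cite[\S 16]{Prisms} or the comparison of small and big sites) to show that for any object $(E,J)$ of $q\textnormal{-crys}(T/D)$ and any étale cover $\Spf(E'/J')\to\Spf(E/J)$, the augmented Čech complex computing $R\Gamma$ of $\cO_{q\textnormal{-CRYS}}$ along this cover is a resolution, i.e. that higher Čech cohomology of the structure sheaf along étale covers vanishes and the $H^0$ is $E$. Concretely, by Lemma \ref{lem:liftinqdpet} an étale cover $D/I \to \ol{E'}$ of $E/J = \ol{E}$ lifts uniquely to a $q$-PD pair $(E',J')$ over $(E,J)$ with $E'$ being $(p,[p]_q)$-completely étale over $E$; the $(r+1)$-fold self-fiber-products in the site are the $q$-PD pairs associated to the $(r+1)$-fold self-fiber-products $\ol{E'}\otimes_{\ol{E}}\cdots\otimes_{\ol{E}}\ol{E'}$ by the same lifting lemma. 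So the Čech complex is, after reducing mod $(p^n,[p]_q^m)$ and passing to the limit, the $(p,[p]_q)$-completion of the tensor-product Čech complex of a faithfully flat étale cover of $\ol{E}$ base-changed up to $E$; faithfully flat descent (\cite[{Tag 023M}]{stacks-project}, or its $\delta$-ring/completed variant) gives exactness. One also needs that $\cO_{q\textnormal{-crys}}$ already satisfies descent for Zariski covers, which is built into its being a sheaf, and that the transition is compatible with the truncations used to reduce to the non-derived setting.

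The main obstacle I anticipate is the bookkeeping around derived $(p,[p]_q)$-completeness: the fiber products in $q\textnormal{-CRYS}$ are formed by completing $IE'\otimes$-type constructions (as in the proof of Lemma \ref{lem:liftinqdpet}), so the Čech complex one gets is a complex of completed rings, and one must argue that completion does not destroy exactness. The clean way to handle this is to work mod $p^n$ first — where $E'/p^n$ is literally étale (hence flat) over $E/p^n$ and ordinary faithfully flat descent applies verbatim — establish the quasi-isomorphism of Čech complexes there, and then take $R\varprojlim_n$, using that both sides are derived $p$-adically complete (which follows from condition (1) and (3) of Definition \ref{defn:qPDpair}, or rather from $D$ being $(p,[p]_q)$-complete and $\cO_{q\textnormal{-CRYS}}$ valued in such rings). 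A secondary, more cosmetic point is checking that the morphism \eqref{eq:morpcohoskd} as constructed agrees with the "canonical" one — this is just the statement that $g^{-1}$ on the structure sheaf is the identity assignment $(E,J)\mapsto E$, which is immediate from the definitions, so I would state it without belaboring it. Once the Čech comparison is in hand, the Cartan–Leray spectral sequence (or \cite[{Tag 03F9}]{stacks-project} comparing cohomology on two sites with the same underlying category and compatible topologies) upgrades the vanishing of higher étale Čech cohomology into the desired isomorphism on $R\Gamma$.
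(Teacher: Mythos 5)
Your argument is correct in substance but proves the isomorphism by a different mechanism than the paper does. Both proofs begin the same way, constructing $g \colon (T/D)_{q-\textnormal{CRYS}} \to (T/D)_{q-\textnormal{crys}}$ from the inclusion of sites and taking the induced pullback on $R\Gamma$ (one quibble: for this $g$ it is $g_*$ that is restriction along the inclusion and $g^{-1}$ that is the sheafified left Kan extension, i.e.\ sheafification for the finer topology; since the structure presheaf is a sheaf for both topologies, your identification $g^{-1}\cO_{q-\textnormal{crys}} = \cO_{q-\textnormal{CRYS}}$ is still the one needed). From there the paper takes the weakly final object of $q-\textnormal{crys}(T/D)$ provided by freeness as in \cite[Construction 16.13]{Prisms}, checks that freeness makes it weakly final in the big site as well, and concludes that one and the same \v{C}ech--Alexander complex represents both sides compatibly with the comparison map. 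You instead prove $R^{q}g_*\cO_{q-\textnormal{CRYS}} = 0$ for $q>0$ by showing exactness of the completed Amitsur complex of a cover $(E,J) \to (E',J')$, whose terms are identified via Lemma \ref{lem:liftinqdpet} with the $(p,[p]_q)$-completed tensor powers $E'\wh{\otimes}_E \cdots \wh{\otimes}_E E'$, using faithfully flat descent after reduction and then completeness, and you conclude by Leray together with the criterion that \v{C}ech-vanishing for a cofinal family of coverings (here the singleton \'etale covers, which are stable under base change and composition) forces vanishing of cohomology. The two routes rest on the same descent input: the paper's appeal to ``\v{C}ech theory'' for the big site tacitly uses exactly the acyclicity of the structure sheaf on objects of the \'etale site that you establish, so your proof makes explicit what the paper leaves implicit, at the price of carrying out the descent argument, while the paper's proof buys an explicit complex computing both sides by piggybacking on the Bhatt--Scholze computation of the small-site cohomology. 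One small repair: modulo $p^n$ alone the map $E/p^n \to E'/p^n$ is only $[p]_q$-completely \'etale, not \'etale on the nose, so honest flat descent should be invoked modulo $(p,[p]_q)$ (or one argues with $(p,[p]_q)$-complete faithful flatness plus derived Nakayama), exactly as your earlier reduction modulo $(p^n,[p]_q^m)$ suggests.
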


\begin{proof}
The inclusion of sites $q-\textnormal{crys}(T/D) \to q-\textnormal{CRYS}(T/D)$ is continuous and induces a morphism of topoi 
\begin{equation} \label{eq:mortopos}
g \colon (T/D)_{q-\textnormal{CRYS}} \to (T/D)_{q-\textnormal{crys}}
\end{equation}
with $g^{-1}\cO_{q-\textnormal{crys}} = \cO_{q-\textnormal{CRYS}}$. The morphism \eqref{eq:mortopos} induces the morphism \eqref{eq:morpcohoskd}. Choosing a weakly final object as in \cite[Construction 16.13]{Prisms} in $q-\textnormal{crys}(T/D)$, one obtains a \v{C}ech-Alexander complex. This is an explicit representative for the RHS of \eqref{eq:morpcohoskd}. It is easy to check (by freeness) that this weakly final object is also weakly final in the big site $q-\textnormal{CRYS}(T/D)$. Therefore by \v{C}ech theory the same \v{C}ech-Alexander complex is an explicit representative for the LHS of \eqref{eq:morpcohoskd}. Finally the formation of the \v{C}ech-Alexander complex is compatible with the morphism \eqref{eq:morpcohoskd}. 
\end{proof}

Recall the small étale site of $\fZ$ is denoted by $\fZ_{\et}$. We denote the big étale site of $\fZ$ by $\fZ_{\Et}$ and $\fZ_{\Et}^{\textnormal{Aff}}$ the site consisting of affine $p$-adic formal schemes over $\fZ$ whose covers are singleton étale maps. If there is no confusion we will use the same notation for their topoi $\fZ_{\et}$, $\fZ_{\Et}$ and $\fZ_{\Et}^{\textnormal{Aff}}$, respectively.

As usual we have a cocontinuous and continuous functor (by Lemma \ref{lem:liftinqdpet}):
\begin{align*}
q-\textnormal{CRYS}(\fZ/D) &\to \fZ_{\Et}^{\textnormal{Aff}} \\
(E,J, \Spf (E/J) \to \fZ) &\mapsto (\Spf (E/J) \to \fZ),
\end{align*}
which sits in a diagram of functors of sites
\begin{equation} \label{eq:mordfssife}
q-\textnormal{CRYS}(\fZ/D) \to \fZ_{\Et}^{\textnormal{Aff}} \to \fZ_{\Et} \leftarrow \fZ_{\et}.
\end{equation}
The last two functors induce morphisms of sites and applying \cite[{Tag 00XO}]{stacks-project} to the first functor we obtain a diagram of topoi
\begin{equation} \label{eq:compssitsds}
(\fZ/D)_{q-\textnormal{CRYS}} \to \fZ_{\Et}^{\textnormal{Aff}} \xleftarrow{\sim} \fZ_{\Et} \to \fZ_{\et},
\end{equation}
which gives a map of topoi $u^{q}_{\fZ} \colon (\fZ/D)_{q-\textnormal{CRYS}} \to \fZ_{\et}$. In the next section we will construct an explicit underlying site for the oriented product of topoi
\begin{equation} \label{eq:orprotopfr}
(\fY/D)_{q-\textnormal{CRYS}} \ola{\times}_{\fY_{\et}} \fX_{\et},
\end{equation}
which will serve as the $q$-crystalline analogue for the fiber product $Y_{\proet} \times_{\fY_{\et}} \fX_{\et}$ appearing in the pro-étale setting.

\subsection{Product of topoi revisited} \label{subsec:laxproductop}

We now assume that we have a $\Spf(D/I)$-morphism $f \colon \fX \to \fY$ of $p$-adic formal schemes. The oriented product \eqref{eq:orprotopfr} certainly exists by the universal property \cite[Exposé XI, Théorème 1.4]{TravdeGabb} and the fact that any morphism of topoi essentially comes from a morphism of sites (cf. \cite[{Tag 03A2}]{stacks-project}). The difficulty in the current situation is that the morphism $(\fY/D)_{q-\textnormal{CRYS}} \to \fY_{\et}$ is induced by some composition \eqref{eq:compssitsds} (in particular it does not come directly from a morphism of the sites) and so the same construction for an underlying site (for the oriented product) given in \cite[Exposé XI]{TravdeGabb} does not work. However, it turns out that \eqref{eq:compssitsds} has enough properties to recover an explicit site for \eqref{eq:orprotopfr}. 

\begin{rem}
Regarding the construction of an underlying site, the fact that the left edge morphism $(\fY/D)_{q-\textnormal{CRYS}} \to \fY_{\et}$ does not come from a morphism of the sites forces us to work with an oriented product of topoi rather than a fiber product. Moreover the right edge morphism $\fX_{\et} \to \fY_{\et}$ does come from a morphism of sites and so the choice of orientation \eqref{eq:orprotopfr} is a natural candidate over $(\fY/D)_{q-\textnormal{CRYS}} \ora{\times}_{\fY_{\et}} \fX_{\et}$. This is in contrast to our work on the pro-étale side, where we had the freedom to work with the fiber product (in the case $\fX$ and $\fY$ are of type (S)(b)) $Y_{\proet} \times_{\fY_{\et}} \fX_{\et}$ or either of the two oriented products.
\end{rem}

For the time being we work in general with the set up being as follows:
$C_1$, $C_2$, $E$, $E'$ and $D$ are five sites such that
\begin{enumerate}
    \item $E$ is subcanonical, and $C_2$ and $D$ admit finite projective limits,
    \item there is a diagram of functors
    \begin{equation} \label{eq:nediawithact}
    C_1 \xrightarrow{g_1} E' \xrightarrow{a} E \xleftarrow{g} D 
    \end{equation}
    where $g_1$ is both continuous and cocontinuous, and the last two functors induce morphisms of sites $E \to E'$ and $E \to D$,
    \item $E \to E'$ induces an equivalence of topoi, and
    \item there is a functor $f_2 \colon D \to C_2$ commuting with finite projective limits, which induces a morphism of sites $C_2 \to D$.
\end{enumerate}
Denote $f_1 \colon C_1 \to E$ the composition $a \circ g_1$. Let $C$ be the category of 5-tuples 
\[
(U,V,W, f_1(U) \to g(W), V \to f_2(W))
\]
where $U, V$ and $W$ are objects of $C_1$, $C_2$ and $D$, respectively, and $f_1(U) \to g(W)$ and $V \to f_2(W)$ are morphisms in $E$ and $C_2$, respectively. We will denote such an object by $(U \to W \leftarrow V)$. Let $(U \to W \leftarrow V)$ and $(U' \to W' \leftarrow V')$ be two objects of $D$. A morphism from $(U \to W \leftarrow V)$ into $(U' \to W' \leftarrow V')$ is the datum of three morphisms $U \to U'$, $V \to V'$ and $W \to W'$ in $C_1$, $C_2$ and $D$, respectively such that the diagrams
\[
 \xymatrix{
 f_1(U) \ar@{->}[r] \ar@{->}[d] & g(W) \ar@{->}[d]   \\ 
 f_1(U') \ar@{->}[r] & g(W'),
 }
 \hspace{10mm}
 \xymatrix{
 f_2(W) \ar@{->}[d] & V \ar@{->}[d] \ar@{->}[l]   \\ 
 f_2(W')  & V' \ar@{->}[l]
 }
\]
are commutative. We equip $C$ with the topology generated by coverings
\[
\left\{ (U_i \to W_i \leftarrow V_i) \to (U \to W \leftarrow V) \right\}_{i \in I}
\]
of the following three types:
\begin{enumerate} [(a)]
    \item $V_i=V, W_i = W \text{ }$ for all $i$ and $(U_i \to U)_{i \in I}$ is a covering.
    \item $U_i=U, W_i = W \text{ }$ for all $i$ and $(V_i \to V)_{i \in I}$ is a covering.
    \item $I$ is a singleton: $(U \to W \leftarrow V) \to (U' \to W' \leftarrow V')$, with $U = U'$ and $V \to V'$ is induced by base change from $W \to W'$.
\end{enumerate}
Denote the resulting topos by $\wt{C}$. 

Note that condition (4) gives a morphism of topoi $\wt{C}_2 \to \wt{D}$ and treating $g_1$ as a cocontinuous functor, conditions (2)-(3) naturally give a morphism of topoi $\wt{C}_1 \to \wt{D}$ (the morphism of topoi $E' \to E$ is given by the adjoint functors $a^{s}$ and $a_{s}$). One can therefore consider the oriented product of topoi $\wt{C}_1 \ola{\times}_{\wt{D}} \wt{C}_2$. We now come to the main result of this section, which gives an explicit description of an underlying site for the said oriented product:

\begin{prop} \label{prop:explsciorep}
There is a canonical equivalence of topoi $\wt{C}_1 \ola{\times}_{\wt{D}} \wt{C}_2 \simeq \wt{C}$.
\end{prop}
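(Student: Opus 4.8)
\textbf{Proof plan for Proposition \ref{prop:explsciorep}.}

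The plan is to verify the universal property of the oriented product directly, exhibiting the canonical projections and the $2$-morphism, and then checking that $\wt{C}$ is initial among topoi over $\wt{D}$ equipped with such data. First I would construct the two projection morphisms. There are obvious functors $C_1 \to C$, $U \mapsto (U \to \ast \leftarrow f_2(\ast))$ (using that $D$ has a final object, since it admits finite projective limits, and that $C_2$ does too) and $C_2 \to C$, $V \mapsto (\ast \to \ast \leftarrow V)$; more precisely, following the template of \S\ref{sec:prodoftopo} (cf. the formulas for $p^{-1}$ and $q^{-1}$ just before Theorem \ref{thm:universprofibprodto}), the projection $\pi_1 \colon \wt{C} \to \wt{C}_1$ is induced by the cocontinuous functor sending $U$ to the object with $W$ the final object and $V = f_2(W)$, while $\pi_2 \colon \wt{C} \to \wt{C}_2$ is induced by the continuous functor $V \mapsto (\ast \to \ast \leftarrow V)$. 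One checks these respect the three covering types, exactly as in Lemma \ref{lem:covrshecond}; the only subtlety is the asymmetry built into type (c), which is what makes this an \emph{oriented} product rather than a fiber product. I would then produce the canonical $2$-isomorphism relating the two composites $\wt{C} \to \wt{D}$ (one through $\wt{C}_1$, one through $\wt{C}_2$), in the same style as the construction of $\varepsilon$ in \eqref{eq:cantwoisom}: on a sheaf $\sF$ and an object of $\wt{D}$ one maps through the chain of isomorphisms furnished by coverings of type (c), now with only a single direction available because of the orientation. Here one uses hypotheses (2)-(3): the identification of $\wt{E}$ with $\wt{E'}$ lets the functor $g_1$ be treated as landing in $\wt{E}$ via $a$, so the left leg $\wt{C}_1 \to \wt{D}$ makes sense.

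Next I would check the universal property. Given a topos $T$ with morphisms $a \colon T \to \wt{C}_1$, $b \colon T \to \wt{C}_2$, a morphism $T \to \wt{D}$ lying under both (more precisely the oriented-product datum: a morphism $\operatorname{pr}_{\wt{D}} \circ (\text{leg}_1) \circ a \to \operatorname{pr}_{\wt{D}} \circ (\text{leg}_2) \circ b$, or whatever normalization \cite[Exposé XI, Théorème 1.4]{TravdeGabb} uses), I want a unique (up to unique iso) $h \colon T \to \wt{C}$ compatible with everything. The recipe is forced: on an object $Z = (U \to W \leftarrow V)$ of $C$ one must set
\[
h^{-1} Z = a^{-1}U \times_{b^{-1} f_2(W)} b^{-1} V,
\]
where the map $a^{-1}U \to b^{-1} f_2(W)$ is the composite of $a^{-1}$ applied to $U \to $ (pullback of $W$) with the transition $2$-morphism and $b^{-1}$ applied to $f_2(W)\to f_2(W)$ — again mirroring the explicit formula in Theorem \ref{thm:universprofibprodto}. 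One verifies this $h^{-1}$ is exact (it is built from pullbacks and a fiber product, all of which are left exact, and it commutes with colimits termwise) and sends covering families in $C$ to covering families in $T$, so it defines a morphism of topoi; compatibility with $\pi_1$, $\pi_2$ and the $2$-isomorphism is then a diagram chase. Uniqueness follows because any such $h$ is determined on the generating family of objects $Z$ of the form above by the stated formula, together with the fact (which I would record as a preliminary lemma, analogous to Proposition \ref{prop:ousiDalge}(1) and Proposition \ref{prop:basisDnosmor}) that objects of the form $(U \to W \leftarrow V)$ generate $\wt{C}$ and that $\wt C$ is generated under colimits by them.

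The main obstacle I anticipate is not the formal bookkeeping but making precise the interaction of the cocontinuous functor $g_1$ with the other morphisms of sites — concretely, that the left leg $\wt{C}_1 \to \wt{D}$ obtained from \eqref{eq:nediawithact} agrees with the leg obtained by first landing in $\wt{C}$ and then projecting. Since $g_1$ is both continuous and cocontinuous, by \cite[{Tag 00XO}]{stacks-project} it induces a morphism of topoi with $\pi_1^{-1}$ given by the cocontinuous recipe; one must check that composing with $a_s$ (the pushforward along $\wt{E}\to\wt{E'}$) and the morphism to $\wt{D}$ recovers the same morphism, i.e. that the three covering types are exactly the ones forced by the descent datum defining the oriented product. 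A second, related delicate point is that type (c) coverings must suffice (together with (a) and (b)) to compute pushforwards along $\pi_1$, so that the sheaf condition on $C$ (Lemma \ref{lem:covrshecond}-style) genuinely matches the oriented-product sheaf condition; this is where subcanonicity of $E$ (hypothesis (1)) and the exactness of $f_2$ (hypothesis (4)) get used. Once these identifications are in place, the remaining verifications are routine and parallel to \S\ref{sec:prodoftopo}, so I would present them briskly, citing the fiber-product case for the analogous computations.
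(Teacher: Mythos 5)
Your route is genuinely different from the paper's, and as written it has gaps. The paper never verifies the universal property of the oriented product: it first uses subcanonicity of $E$ and the adjunctions for $a$ and $g_1$ to rewrite the structural morphism $f_1(U)\to g(W)$ of an object of $C$ as a morphism $h_{U}^{a}\to g_{1}^{s}a^{s}g_{s}h_{W}^{a}$, i.e.\ as a map to the pullback of $h_{W}^{a}$ along the left leg $\wt{C}_1\to\wt{D}$; it then enlarges $C_1$ to a site $C'\subset\wt{C}_1$ via \cite[{Tag 03A2}]{stacks-project}, on which that leg is realized by an honest morphism of sites $C'\to D$, so that the oriented-product site $\ul{C}$ of \cite[Exposé XI, 1.1]{TravdeGabb} applies, and finally checks that the comparison functor $C\to\ul{C}$, $(U\to W\leftarrow V)\mapsto(h_{U}^{a}\to W\leftarrow V)$, is special cocontinuous, giving the equivalence by \cite[{Tag 03A0}]{stacks-project}. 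Your plan instead re-proves the universal property from scratch. That could in principle work, but the main obstacle you yourself flag — that the left leg exists only at the level of topoi — is not resolved by your outline; it resurfaces exactly where you wave it away. In particular, since $C_1$ (hence $C$) is not assumed to admit finite projective limits, exactness of the proposed $h^{-1}$, defined by $h^{-1}Z=a^{-1}U\times_{b^{-1}f_2(W)}b^{-1}V$, is not a "termwise left exact" statement: one must prove continuity and flatness of the corresponding functor $C\to T$, which is a nontrivial filteredness condition, not a consequence of the formula being built from pullbacks. Similarly, uniqueness in the oriented case involves a non-invertible $2$-cell whose direction you explicitly leave open ("whatever normalization"); with the wrong orientation one characterizes $\ora{\times}$ instead of $\ola{\times}$, which is a different topos, so this cannot be left unspecified.

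There are also concrete errors in your construction of the projections. A cocontinuous functor $C_1\to C$ induces a morphism of topoi $\wt{C}_1\to\wt{C}$, not $\pi_1\colon\wt{C}\to\wt{C}_1$; in the paper (Lemma \ref{lem:desfuncsoif}) $p_1$ comes from the functor $U\mapsto(U\to e_{D}\leftarrow e_{C_2})$ being part of a morphism of sites, while $p_2$ is induced by the cocontinuous functor $(U\to W\leftarrow V)\mapsto V$ out of $C$. Your recipe $V\mapsto(\ast\to\ast\leftarrow V)$ for $\pi_2$ presupposes a final object of $C_1$, which is not part of the hypotheses and is exactly the failure the paper points out (it is why the description of $p_2$ from \cite[Exposé XI, \S 1.3]{TravdeGabb} is replaced by the cocontinuous one); in the intended application $C_1=q-\textnormal{CRYS}(\fY/A_{\Inf})$ this is not a cosmetic issue. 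These points are fixable, but together with the unproved flatness step and the unfixed $2$-cell convention, the proposal does not yet amount to a proof; the paper's enlargement-plus-special-cocontinuous comparison is precisely the mechanism that substitutes for all of this bookkeeping.
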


\begin{proof}
We first rewrite the objects of $C$ which will make the comparison with the construction given in \cite[Exposé XI, 1.1]{TravdeGabb} easier. Since $E$ is subcanconical, the datum of a morphism $f_1(U) \to g(W)$ in $E$ is equivalent to giving a morphism in the topos $\wt{E}$ between the associated representable sheaves 
\begin{equation} \label{eq:repspsrsf}
h_{f_1(U)} \to h_{g(W)}.
\end{equation}
By \cite[{Tag 04D3}]{stacks-project}, \eqref{eq:repspsrsf} takes the shape $a_{s}g_{1,s}h_{U}^{a} \to g_{s}h_{W}^{a}$ (where $h_{U}^{a}$ and $h_{W}^{a}$ are the associated sheaves of $h_{U}$ and $h_{W}$, respectively). Now $a_s$ is a left adjoint to $a^{s}$ and $g_{1,s}$ is left adjoint to $g_{1}^{s}$ (this is because both $a$ and $g_1$ are continuous functors of sites). Thus \eqref{eq:repspsrsf} is equivalent to giving a morphism
\begin{equation} \label{eq:pullbackilsu}
h_{U}^{a} \to g_{1}^{s}a^{s}g_{s}h_{W}^{a}.
\end{equation}
Finally note that $g_{1}^{s}a^{s}g_{s}$ is precisely the pullback of the morphism of topoi $\wt{C}_{1} \to \wt{D}$. 

Having rewritten the objects of $C$, it remains to compare our construction of $C$ with the construction in \cite[Exposé XI, 1.1]{TravdeGabb}. By \cite[{Tag 03A2}]{stacks-project} we can \emph{enlarge} the site $C_1$ to a site $C'$ such that the morphism of topoi $\wt{C}_1 \to \wt{D}$ is equivalent to a morphism of topoi $\wt{C'} \to \wt{D}$ induced from a morphism of sites $C' \to D$. In fact we choose $C' \subset \wt{C}_{1}$ as in the proof of loc.cit. together with the inclusion of the final sheaf in the first inductive step (in particular $C'$ admits finite projective limits). Moreover $C'$ contains the objects $h_{U}^{a}$. Let $u \colon D \to C'$ be the corresponding functor (it is given by $W \mapsto g_{1}^{s}a^{s}g_{s}h_{W}^{a}$ by the proof of \cite[{Tag 03A2}]{stacks-project}). Note that $u$ commutes with finite projective limits. Denote by $\ul{C}$ the underlying site of $\wt{C}_1 \ola{\times}_{\wt{D}} \wt{C}_2$ associated to the morphism of sites $C' \to D$ and $C_2 \to D$, as given by \cite[Exposé XI, 1.1]{TravdeGabb}. There is a functor of sites
\begin{align*}
    C &\to \ul{C} \\
    (U,V,W, f_1(U) \to g(W), V \to f_2(W)) &\mapsto (h^{a}_{U},V,W, h^{a}_{U} \to u(W), V \to f_2(W)),
\end{align*}
which is special cocontinuous (because $C_1 \to C'$ is special cocontinuous). The associated morphism of topoi induced by the cocontinuity of $C \to \ul{C}$, $\wt{C} \to \wt{\ul{C}}$ is then an equivalence by \cite[{Tag 03A0}]{stacks-project}.
\end{proof}

We will denote the site $C$ by $C_1 \ola{\times}_{D} C_2$. As usual there are natural projections
\[
p_1 \colon \wt{C}_1 \ola{\times}_{\wt{D}} \wt{C}_2 \rightarrow \wt{C}_1 \text{ and } p_2 \colon \wt{C}_1 \ola{\times}_{\wt{D}} \wt{C}_2 \rightarrow \wt{C}_2.
\]
Since $C_1$ may lack a final object, the description of $p_2$ given in \cite[Exposé XI, \S 1.3]{TravdeGabb} does not work. Nevertheless one can describe it as follows:

\begin{lem} \label{lem:desfuncsoif}
Let $e_{C_2}$ and $e_{D}$ be the final objects of $C_2$ and $D$. Then $p_1$ and $p_2$ are induced by the functors of sites
\begin{align*}
     C_1 &\to C_1 \ola{\times}_{D} C_2 &  C_1 \ola{\times}_{D} C_2 &\to C_2 \\
    U &\mapsto (U \to e_{D} \leftarrow e_{C_2}) \text{ \hspace{14mm} and}& (U \to W \leftarrow V) &\mapsto V,
\end{align*}
respectively. The first functor induces a morphism of sites and the second functor is cocontinuous.
\end{lem}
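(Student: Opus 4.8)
The plan is to verify each of the two displayed functors directly against the relevant criteria, then invoke the universal-property characterization of the projections from the proof of Proposition \ref{prop:explsciorep}. First I would check that the first functor $U \mapsto (U \to e_D \leftarrow e_{C_2})$ is well-defined: this uses that $D$ and $C_2$ admit final objects $e_D$ and $e_{C_2}$ (here $C_2$ admits finite projective limits by hypothesis (1), and $D$ likewise, so both have final objects), and that $f_1(U) \to g(e_D)$, $e_{C_2} \to f_2(e_D)$ are the canonical maps (note $g(e_D)$ is final in $E$ since $g$ commutes with finite projective limits, being a morphism of sites in the sense assumed, and $f_2(e_D)$ is final in $C_2$ since $f_2$ commutes with finite projective limits by (4); so these maps exist uniquely). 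To see it induces a morphism of sites, I would check it is continuous and commutes with finite projective limits — continuity because a covering $\{U_i \to U\}$ in $C_1$ is sent to a covering of type (a), and the fiber-product preservation because fiber products in $C_1 \ola{\times}_D C_2$ are computed component-wise (as in the pro-étale setting, cf. the proof of Proposition \ref{prop:ousiDalge}(1)) with the $D$- and $C_2$-components constantly equal to the final objects.

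Next I would check the second functor $(U \to W \leftarrow V) \mapsto V$ is cocontinuous. By definition this means: for every object $(U \to W \leftarrow V)$ and every covering $\{V_j \to V\}$ in $C_2$, there is a covering of $(U \to W \leftarrow V)$ in $C_1 \ola{\times}_D C_2$ whose image refines $\{V_j \to V\}$. But $\{(U \to W \leftarrow V_j) \to (U \to W \leftarrow V)\}$ is precisely a covering of type (b), and its image under the functor is exactly $\{V_j \to V\}$. So cocontinuity is immediate. One should also note that this functor is continuous (coverings of types (a), (b), (c) all map to coverings or identities in $C_2$) so that both adjoints are available, but cocontinuity is what pins down $p_{2}$.

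The remaining point is to identify the morphisms of topoi induced by these two functors with $p_1$ and $p_2$ respectively. Here I would compare with the construction in the proof of Proposition \ref{prop:explsciorep}: there, $C$ was shown to be special cocontinuous over the site $\ul{C}$ built from $C' \to D$ and $C_2 \to D$ à la \cite[Exposé XI, 1.1]{TravdeGabb}, and under that identification the standard projection $p_1$ (resp. $p_2$) of \cite[Exposé XI, \S 1.3]{TravdeGabb} corresponds to the functor $U \mapsto (U \to e_D \leftarrow e_{C_2})$ (resp. to $V \mapsto$ the object of $\ul{C}$ with $C_2$-component $V$, final $D$- and $C'$-components, which under the special cocontinuous equivalence $\wt{C} \simeq \wt{\ul{C}}$ corresponds to the stated functor on $C$). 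The one subtlety is that $C_1$ may lack a final object, so the usual recipe for $p_2$ in loc.\ cit.\ (which sends $V$ to $(e_{C_1} \to e_D \leftarrow V)$) is unavailable; but on the enlarged site $C'$ the final object does exist, and the special cocontinuous functor $C_1 \to C'$ does not change the topos, so the projection is still computed by the displayed cocontinuous functor on $C$. I expect this last bookkeeping — matching the projections on $\wt C$ with those on $\wt{\ul C}$ through the special cocontinuous equivalence — to be the only genuinely delicate step; the verifications of continuity, cocontinuity, and preservation of finite limits are routine given the explicit description of coverings and fiber products in $C_1 \ola{\times}_D C_2$.
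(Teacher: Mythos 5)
Your overall strategy is the same as the paper's: verify the elementary properties of the two displayed functors and then identify them with Gabber's projections by passing through the enlarged site $C'$ and the site $\ul{C}$ from the proof of Proposition \ref{prop:explsciorep}. The routine verifications you give (type (a) coverings for continuity of the first functor, type (b) coverings for cocontinuity of the second, component-wise fiber products) are fine. However, the step you defer as ``bookkeeping'' — matching the morphism of topoi induced by the cocontinuous functor $(U \to W \leftarrow V) \mapsto V$ with the actual projection $p_2$ — is precisely the entire content of the paper's proof, and your sketch does not contain the idea that makes it work. Cocontinuity only tells you that the displayed functor induces \emph{some} morphism $\wt{C} \to \wt{C}_2$; saying that ``the final object exists in $C'$'' and ``the special cocontinuous functor does not change the topos'' does not by itself connect this morphism to Gabber's recipe, because Gabber's $p_2$ is induced by a functor going the other way, namely the continuous morphism-of-sites functor $C_2 \to \ul{C}$, $V \mapsto (e_{C'} \to e_D \leftarrow V)$.

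The missing device is an adjunction: the cocontinuous functor $\ul{C} \to C_2$, $(U' \to W \leftarrow V) \mapsto V$, is a \emph{left adjoint} of Gabber's continuous functor $V \mapsto (e_{C'} \to e_D \leftarrow V)$ (the Hom computation is immediate since $e_{C'}$, $e_D$ and $f_2(e_D)$ are final), and by \cite[{Tag 00XY}]{stacks-project} an adjoint continuous/cocontinuous pair induces the same morphism of topoi. One then factors your displayed functor on $C_1 \ola{\times}_D C_2$ as the special cocontinuous functor into $\ul{C}$ followed by this left adjoint, and uses the commutative squares of sites (for $p_1$, the square involving $C_1 \to C'$ and the two functors $U \mapsto (U \to e_D \leftarrow e_{C_2})$) to conclude. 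Without this, or some equivalent concrete identification, the lemma is not proved. A minor additional point: your parenthetical claim that the second functor is continuous is false in general — a type (c) covering is sent to the single morphism $V = V' \times_{f_2(W')} f_2(W) \to V'$, which need not be a covering in $C_2$ — but since only cocontinuity is used, this does no harm.
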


\begin{proof}
We use the notation in the proof of Proposition \ref{prop:explsciorep}. Let $e_{C'}$ be the final object of $C'$. Then the projections $\wt{\ul{C}} \to \wt{C}'$ and $\wt{\ul{C}} \to \wt{C}_2$ are induced by the functors of sites
\begin{align*}
     C' &\to \ul{C}  &  C_2  &\to \ul{C} \\
    U' &\mapsto (U' \to e_{D} \leftarrow e_{C_2}) \text{ \hspace{14mm} and}& V &\mapsto (e_{C'} \to e_D \leftarrow V),
\end{align*}
respectively (cf.  \cite[Exposé XI, \S 1.3]{TravdeGabb}). The second functor (and also the first) has a cocontinuous left adjoint given by 
\begin{align*}
\ul{C} &\to C_2 \\
(U' \to W \leftarrow V) &\mapsto V
\end{align*}
which by \cite[{Tag 00XY}]{stacks-project} induces the same morphism of topoi as its continuous right adjoint. The claims for $p_1$ and $p_2$ now follow from the commutativity of the diagram of sites 
\[
 \xymatrix{
 \ul{C}  & C' \ar@{->}[l]   \\ 
 C_1 \ola{\times}_{D} C_2 \ar@{->}[u] & C_1 \ar@{->}[u] \ar@{->}[l]
 }
 \hspace{10mm}
  \xymatrix{
 \ul{C} \ar@{->}[r]  & C_2 \ar@{=}[d]   \\ 
 C_1 \ola{\times}_{D} C_2 \ar@{->}[u] \ar@{->}[r] & C_2,
 }
\]
respectively.
\end{proof}

Let us now return to the situation of interest (compare \eqref{eq:mordfssife} with \eqref{eq:nediawithact}): the five sites under consideration are $q-\textnormal{CRYS}(\fY/D)$, $\fX_{\et}$, $\fY_{\Et}$, $\fY_{\Et}^{\textnormal{Aff}}$ and $\fY_{\et}$, respectively. We have verified in the previous section that these five sites satisfy the conditions (1)-(4). Thus by Proposition \ref{prop:explsciorep} we have constructed an explicit underlying site for the oriented product \eqref{eq:orprotopfr}. By Lemma \ref{lem:desfuncsoif} we have a simple description for the natural projections
\[
p_1 \colon (\fY/D)_{q-\textnormal{CRYS}} \ola{\times}_{\fY_{\et}} \fX_{\et} \rightarrow (\fY/D)_{q-\textnormal{CRYS}} \text{ and } p_2 \colon (\fY/D)_{q-\textnormal{CRYS}} \ola{\times}_{\fY_{\et}} \fX_{\et} \rightarrow \fX_{\et},
\]
and there is a 2-map $fp_2 \to u^{q}_{\fY}p_1$.

\begin{defn} \label{defn:strucshrsfoprof}
We define the structure sheaf on $q-\textnormal{CRYS}(\fY/D) \ola{\times}_{\fY_{\et}} \fX_{\et}$ by the $p$-adically completed tensor product 
\[
\ol{\cO} := p_{1}^{-1}\ol{\cO}_{q-\textnormal{CRYS}} \wh{\otimes}_{p_{1}^{-1}(u^{q}_{\fY})^{-1}\cO_{\fY, \et}} p_{2}^{-1}\cO_{\fX, \et}.
\]
\end{defn}

\begin{rem}
A word of explanation for the definition of $\ol{\cO}$. The morphism $p_{1}^{-1}(u^{q}_{\fY})^{-1}\cO_{\fY, \et} \to p_{1}^{-1}\ol{\cO}_{q-\textnormal{CRYS}}$ is induced by the map $(u^{q}_{\fY})^{-1}\cO_{\fY, \et} \to \ol{\cO}_{q-\textnormal{CRYS}}$. The morphism $p_{1}^{-1}(u^{q}_{\fY})^{-1}\cO_{\fY, \et} \to p_{2}^{-1}\cO_{\fX, \et}$ is induced by the composition
\[
p_{1}^{-1}(u^{q}_{\fY})^{-1}\cO_{\fY, \et} \to p_{2}^{-1}f^{-1}\cO_{\fY, \et} \to p_2^{-1}\cO_{\fX, \et}.
\]
\end{rem}
By construction the projections $p_1$ and $p_2$ extend to morphisms of ringed topoi:
\[
(p_1, p_1^{\sharp}) \colon ((\fY/D)_{q-\textnormal{CRYS}} \ola{\times}_{\fY_{\et}} \fX_{\et}, \ol{\cO}) \rightarrow ((\fY/D)_{q-\textnormal{CRYS}}, \ol{\cO}_{q-\textnormal{CRYS}})
\]
and 
\[(p_2, p_2^{\sharp}) \colon ((\fY/D)_{q-\textnormal{CRYS}} \ola{\times}_{\fY_{\et}} \fX_{\et}, \ol{\cO}) \rightarrow (\fX_{\et}, \cO_{\fX, \et}).
\]

\begin{lem} \label{lem:basaqcrsdsi}
The family of objects of the form 
\begin{equation} \label{eq:crysgoose}
((E,J) \to \Spf(S) \leftarrow \Spf(R))
\end{equation}
is generating for $q-\textnormal{CRYS}(\fY/D) \ola{\times}_{\fY_{\et}} \fX_{\et}$. Moreover if $f \colon \fX \to \fY$ is smooth, then in addition one can take $\Spf (R) \to \Spf(S)$ satisfying condition (2) of Definition \ref{def:goodtriples}.
\end{lem}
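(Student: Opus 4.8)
The plan is to reduce to the defining coverings of $C_1 \ola{\times}_{D} C_2$ (types (a), (b), (c)) established in \S\ref{subsec:laxproductop}, exactly as in the proof of Proposition \ref{prop:ousiDalge}(1). Start with an arbitrary object $Z = ((E,J) \to \fU \leftarrow \fV)$ of the oriented product, where $\fU \to \fY$ and $\fV \to \fX$ are étale and $\Spf(E/J) \to \fU$ is the structure map (using that $\fU$ is the image of $(E,J)$ under $u^q_{\fY}$ composed with an étale map to $\fY$). First I would fix an affine open cover $\{\fU_a\}$ of $\fU$; by base change (coverings of type (c) in the oriented product, induced from the étale maps $\fU_a \hookrightarrow \fU$) and using Lemma \ref{lem:liftinqdpet} to produce the $q$-PD pairs $(E_a, J_a)$ lifting $E/J \to (E/J)\widehat{\otimes}_{\cO_{\fU}}\cO_{\fU_a}$, we reduce to the case $\fU = \Spf(S)$ affine. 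Then cover $\fV$ by affine opens $\{\fV_b\}$ and use coverings of type (b) to reduce to $\fV = \Spf(R)$ affine. Finally, a covering of type (a): since any object of $q-\textnormal{CRYS}(\fY/D)$ admits a cover by a $q$-PD pair of the form described in \cite[Construction 16.13]{Prisms} (a $q$-PD envelope / free polynomial-type construction), and such objects are of the shape $(E,J)$ as required, we obtain a covering of $Z$ by objects of the form \eqref{eq:crysgoose}. This shows the displayed family is generating.

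For the smooth refinement, the key input is Lemma \ref{lem:facetalmor}: if $f \colon \fX \to \fY$ is smooth then, after shrinking $\fV = \Spf(R)$ Zariski-locally, the structure morphism factors as $\Spf(R) \to \Spf(S\{t_1^{\pm 1},\dots,t_d^{\pm 1}\}) \to \Spf(S)$ with the first map étale. So once we have reduced (via type (b) coverings, which replace $\fV$ by an affine open cover) to $\fV$ affine, one further type (b) covering by the Zariski opens of $\fV$ on which such a factorization exists achieves condition (2) of Definition \ref{def:goodtriples} while keeping the left and middle entries unchanged. Hence the subfamily of objects \eqref{eq:crysgoose} with $\Spf(R) \to \Spf(S)$ admitting such a framing is still generating.

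The main obstacle I anticipate is bookkeeping around the left entry: unlike the pro-étale product $Y_{\proet}\times_{\fY_{\et}}\fX_{\et}$, where shrinking $\fY$ simply restricts along an étale map, here shrinking $\fU$ requires the existence and uniqueness of a lifted $q$-PD pair, which is precisely Lemma \ref{lem:liftinqdpet} (and the compatibility of this lift with the oriented-product coverings of type (c), which is built into the construction of $C_1 \ola{\times}_D C_2$ via the functor $u$ of the proof of Proposition \ref{prop:explsciorep}). One must check that the base-changed object $((E_a,J_a) \to \Spf(S_a) \leftarrow \Spf(R\widehat{\otimes}_S S_a))$ really maps to $Z$ in the oriented product — i.e. that the square involving $f_1(U) \to g(W)$ commutes — but this is formal from functoriality of $u^q_{\fY}$ and Lemma \ref{lem:desfuncsoif}. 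Everything else is a direct transcription of the argument for Proposition \ref{prop:ousiDalge}(1), so I would keep the write-up short and cite that proof.
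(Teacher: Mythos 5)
Your skeleton is indeed the paper's (make the middle, then the right, entry affine using Lemma \ref{lem:liftinqdpet} and coverings of types (a)--(c); in the smooth case refine further with type (b) coverings and a $\Spf(D/I)$-version of Lemma \ref{lem:facetalmor}), but your first reduction is not effected by a legitimate covering, and this is exactly where the oriented product of \S\ref{subsec:laxproductop} differs from the fiber product argument of Proposition \ref{prop:ousiDalge}(1) that you propose to transcribe. In the site $q-\textnormal{CRYS}(\fY/D) \ola{\times}_{\fY_{\et}} \fX_{\et}$ there is no analogue of the type (d) coverings of $D$: the only way to change the middle entry is a type (c) covering, which is a singleton, keeps the $q$-PD pair fixed, and requires the structural map $\Spf(E/J) \to \fU$ to factor through the new middle entry; and type (a) coverings are singletons because covers in $q-\textnormal{CRYS}(\fY/D)$ are singletons (Definition \ref{defn:bigqcryver4}). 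So for a fixed affine open $\fU_a \subset \fU$ the object $((E,J) \to \fU_a \leftarrow \cdots)$ need not even exist, and the family $\{((E_a,J_a) \to \fU_a \leftarrow \fV_a)\}_a$, which changes the left and middle entries simultaneously, is of none of the types (a)--(c); nor does it refine to a covering, since the left entries occurring in any iterated covering of $Z$ built from (a)--(c) have reductions surjecting onto $\Spf(E/J)$, whereas each member of your family only sees the preimage of a single $\fU_a$. The missing idea is an order-of-operations argument: pull the affine cover $\{\fU_a\}$ back to $\Spf(E/J)$, refine it to a finite affine cover (quasi-compactness of $\Spf(E/J)$; note the preimage of $\fU_a$ need not be affine, so $(E/J)\wh{\otimes}_{\cO_{\fU}}\cO_{\fU_a}$ is not what Lemma \ref{lem:liftinqdpet} gets applied to), take the disjoint union to obtain a single étale surjection onto $\Spf(E/J)$, and lift it by Lemma \ref{lem:liftinqdpet} to a singleton type (a) cover $(E_1,J_1) \to (E,J)$. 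By construction $\Spf(E_1/J_1) \to \fU$ factors through the affine formal scheme $\Spf(S)$ (the finite disjoint union), which is étale over $\fU$, so only now can a type (c) covering replace the middle entry by $\Spf(S)$; type (b) coverings then make the right entry affine, and in the smooth case a further type (b) refinement via the $\Spf(D/I)$-version of Lemma \ref{lem:facetalmor} yields condition (2) of Definition \ref{def:goodtriples}.

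Your final ``type (a)'' step is both unnecessary and incorrect. The family \eqref{eq:crysgoose} imposes no condition on the left entry -- any $q$-PD pair $(E,J)$ is allowed -- so once the middle and right entries are affine you are done. Moreover, the claim that an arbitrary $(E,J)$ admits a cover by a $q$-PD pair of the kind produced in \cite[Construction 16.13]{Prisms} is false in the topology of Definition \ref{defn:bigqcryver4}: that construction yields a weakly final object, and the relevant morphisms are not étale covers of $(E,J)$, hence are not coverings of the site. Deleting this step and repairing the first reduction as above recovers the paper's proof.
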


\begin{proof}
Choose any object $T = ((E,J) \to W \leftarrow V)$ in $q-\textnormal{CRYS}(\fY/D) \ola{\times}_{\fY_{\et}} \fX_{\et}$. First we reduce to the case $W$ is affine. Take an affine Zariski covering of $W$ which by pullback (and after possibly refining) gives an affine Zariski covering of $\Spf(E/J)$. Since $\Spf(E/J)$ is quasi-compact, we can take a finite subfamily of the latter, which still covers $\Spf(E/J)$. By taking a disjoint union of the elements of this finite subfamily we obtain an étale covering of $\Spf(E/J)$, which is a singleton. Applying Lemma \ref{lem:liftinqdpet} to this singleton cover gives a covering of $(E,J)$, which factors through an affine $\Spf(S)$ étale over $W$ by construction. Finally by using coverings of type (c), we can assume $W$ is affine. Coverings of type (b) give the desired generating family. In the case $f$ is smooth one uses (a $\Spf(D/I)$-version of) Lemma \ref{lem:facetalmor} and coverings of type (b) to further refine.  
\end{proof}

As a reality check, we now turn to calculating the sections of $\ol{\cO}$.

\begin{lem}
For $Z$ of the form \eqref{eq:crysgoose}, $\ol{\cO}(Z) = E/J \wh{\otimes}_{S} R$, where the completion is $p$-adic.
\end{lem}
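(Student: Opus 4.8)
The statement to be proved is that for an object $Z = ((E,J) \to \Spf(S) \leftarrow \Spf(R))$ of the site $q\text{-}\textnormal{CRYS}(\fY/D) \ola{\times}_{\fY_{\et}} \fX_{\et}$, one has $\ol{\cO}(Z) = E/J \wh{\otimes}_{S} R$ with the $p$-adic completion. The plan is to unwind the definition of $\ol{\cO}$ (Definition \ref{defn:strucshrsfoprof}) as a $p$-adically completed tensor product of three sheaf pullbacks, and to show that when evaluated on objects of the generating family \eqref{eq:crysgoose} the uncompleted presheaf tensor product is already computed by $E/J \otimes_S R$, so that its sheafification followed by $p$-adic completion gives the claimed answer.

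First I would describe the sections of the three constituent sheaves on objects of the form \eqref{eq:crysgoose}. For $p_2^{-1}\cO_{\fX,\et}$ this is essentially $R$ (the value of $\cO_{\fX,\et}$ on $\Spf(R) \to \fX$, up to the usual colimit over refinements which stabilizes because $\Spf(R)$ is affine and the relevant functor is a morphism of sites; here one uses that $\cO_{\fX,\et}$ is already a sheaf and that the objects $\Spf(R)$ are "small" in the big étale site). For $p_1^{-1}\ol{\cO}_{q\text{-}\textnormal{CRYS}}$ one gets $E/J$ similarly (using that $\ol{\cO}_{q\text{-}\textnormal{CRYS}}$ is a sheaf by Definition of the coverings and that $(E,J)$ is the object being probed). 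The base ring $p_1^{-1}(u^{q}_{\fY})^{-1}\cO_{\fY,\et}$ evaluates to $S$ (via the map $\Spf(S) \to \fY$ recorded in the data of $Z$). This is the step where I expect the main subtlety: one must check that the pullback along the composite of morphisms of topoi appearing in \eqref{eq:compssitsds} and along $p_1, p_2$ is well-behaved on the generating family — i.e. that the colimits defining these pullback presheaves are filtered and eventually constant on objects \eqref{eq:crysgoose} — which is where the explicit description of the underlying site from Proposition \ref{prop:explsciorep} and the description of the projections in Lemma \ref{lem:desfuncsoif} are used, together with Lemma \ref{lem:basaqcrsdsi} to know that such objects form a generating family.

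Having identified the presheaf $Z \mapsto E/J \otimes_S R$ underneath $\ol{\cO}$ before sheafification and completion, I would then argue that this presheaf is already separated/almost a sheaf in the relevant sense on the basis \eqref{eq:crysgoose}, so that sheafification does not change its values there: the argument mirrors the ones used in Lemma \ref{lem:valuesofpushfosab} and Lemma \ref{lem:varithin} for the pro-étale product, where naturality of the assignment in $R$, $S$, $E/J$ together with faithful flatness/descent of étale covers gives exactness of the relevant \v{C}ech complexes; but here the situation is genuinely simpler because $f$ is not assumed smooth and there is no perfectoid base change required — one only needs that an étale cover $\Spf(R') \to \Spf(R)$ induces a faithfully flat map $R \to R'$ and that $E/J \otimes_S -$ preserves the resulting exact sequences after $p$-completion. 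Finally, taking the $p$-adic completion of $E/J \otimes_S R$ as prescribed in Definition \ref{defn:strucshrsfoprof} yields $E/J \wh{\otimes}_S R$, which is the assertion. The main obstacle, to reiterate, is the bookkeeping of the four pullback functors through the chain of sites in \eqref{eq:compssitsds}–\eqref{eq:orprotopfr} and confirming they act as "evaluate on the tautological object" on the generating family; once that is in place the ring-theoretic content is routine.
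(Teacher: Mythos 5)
Your proposal is essentially the paper's argument: identify the presheaf underlying the completed tensor product on objects of the form \eqref{eq:crysgoose} as $Z \mapsto E/J \otimes_S R$ (modulo $p^n$), observe that sheafification does not change its value on such $Z$, and then pass to the $p$-adic limit to get $E/J \wh{\otimes}_S R$. The only real difference is at the one substantive step: the paper works modulo $p^n$ and simply cites \cite[Exposé XI, Lemme 1.2]{TravdeGabb} to see that the presheaf $Z \mapsto (E/J \otimes_{S} R)/p^n$ is already a sheaf, then takes $\varprojlim_n$, whereas you propose to verify the sheaf condition by hand via faithfully flat descent; this works, but your check as written only addresses étale covers $\Spf(R') \to \Spf(R)$ (coverings of type (b)), while the topology on the oriented product is generated by three types of coverings, so you must also check descent along coverings $(E_1,J_1) \to (E,J)$ of the $q$-PD pair — i.e. along the $p$-completely faithfully flat map $E/J \to E_1/J_1$ supplied by Lemma \ref{lem:liftinqdpet} — and invariance of sections under coverings of type (c); all three follow from the same flatness and base-change observations. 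Finally, drop the hedge that the presheaf is only ``almost'' a sheaf: unlike the pro-étale computation in Lemma \ref{lem:varithin}, no almost mathematics enters here, and the statement asserts an equality on the nose.
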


\begin{proof}
For each $n\geq 1$, consider first the uncompleted version modulo $p^n$: 
\[
\sF_n := (p_{1}^{-1}\ol{\cO}_{q-\textnormal{CRYS}} \otimes_{p_{1}^{-1}(u^{q}_{\fY})^{-1}\cO_{\fY, \et}} p_{2}^{-1}\cO_{\fX, \et})/p^n.\]
Then $\sF_n$ is the sheafification of the presheaf $Z \mapsto (E/J \otimes_{S} R)/p^n$
which is a sheaf by \cite[Lemme 1.2]{TravdeGabb}. The result now follows by taking the limit.
\end{proof}

\subsection{The object $q\Omega_{\fX/\fY}$}

In this section we construct the relative version of the $q$-crystalline complex $q\Omega_{\fX/\fY}$ living in our topos \eqref{eq:orprotopfr} and record a few of its properties.

\begin{lem}\label{lem:morsitqycrsi}
There is a functor of topoi
\begin{equation} \label{eq:relver4.4Pr}
\nu_{f}^{q} \colon (\fX/D)_{q-\textnormal{CRYS}} \to (\fY/D)_{q-\textnormal{CRYS}} \ola{\times}_{\fY_{\et}} \fX_{\et}
\end{equation}
such that for any sheaf $\sF$ in $(\fX/D)_{q-\textnormal{CRYS}}$ and any $Z$ of the form \eqref{eq:crysgoose}
\[
(\nu_{f*}^{q}\sF)(Z) = \Gamma((E/J \wh{\otimes}_{S} R)/E)_{q-\textnormal{CRYS}}, \sF)
\]
\end{lem}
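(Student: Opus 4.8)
The statement is the $q$-crystalline analogue of the construction of $\nu_f \colon X_{\proet} \to Y_{\proet}\times_{\fY_{\et}}\fX_{\et}$ from \eqref{eq:definovf}, so the plan is to mimic that construction using the universal property of the oriented product together with the explicit site description from Proposition \ref{prop:explsciorep} and Lemma \ref{lem:desfuncsoif}. First I would produce the two "edge" morphisms of topoi: the structure morphism $u^q_{\fX} \colon (\fX/D)_{q-\textnormal{CRYS}} \to \fX_{\et}$ constructed in \eqref{eq:compssitsds} (applied to $\fX$ in place of $\fZ$), and the morphism $(\fX/D)_{q-\textnormal{CRYS}} \to (\fY/D)_{q-\textnormal{CRYS}}$ induced functorially by $f \colon \fX \to \fY$ (base change of $q$-PD pairs along $f$, using Lemma \ref{lem:liftinqdpet} for the étale covers). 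Next I would check that these two morphisms are compatible over $\fY_{\et}$, i.e. that there is a $2$-isomorphism between the two composites $(\fX/D)_{q-\textnormal{CRYS}} \to \fY_{\et}$ (one going through $(\fY/D)_{q-\textnormal{CRYS}}$ and $u^q_{\fY}$, the other through $\fX_{\et}$ and $f_{\et}$); this compatibility is exactly what is needed to invoke the universal property \cite[Exposé XI, Théorème 1.4]{TravdeGabb} of the oriented product, which then furnishes $\nu_f^q$ together with the identifications $p_1 \circ \nu_f^q \cong (\text{the morphism to } (\fY/D)_{q-\textnormal{CRYS}})$ and $p_2 \circ \nu_f^q \cong u^q_{\fX}$.

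For the explicit description of $\nu_{f*}^q \sF$ on an object $Z = ((E,J) \to \Spf(S) \leftarrow \Spf(R))$ of the form \eqref{eq:crysgoose}, I would use the concrete site $C_1 \ola{\times}_D C_2$ from Proposition \ref{prop:explsciorep} and trace through what the pullback $\nu_f^{q,-1} Z$ is. By analogy with \eqref{eq:pullbaobobjec}, the object $Z$ corresponds (via the functor $p_1$ composed with the functor to $(\fY/D)_{q-\textnormal{CRYS}}$, and via $p_2 = u^q_{\fX}$) to the fibre product in $(\fX/D)_{q-\textnormal{CRYS}}$ of: the $q$-crystalline topos object represented by $(E,J)$ pulled back to $\fX$, and the representable sheaf of $\Spf(R)$; concretely this is the $q$-PD pair obtained by taking the $q$-PD pair $(E,J)$, base changing $E/J$ along $S \to R$ (i.e. $E/J \,\wh\otimes_S R$), and applying Lemma \ref{lem:liftinqdpet} to the resulting $p$-completely étale map $E/J \to E/J\,\wh\otimes_S R$ — yielding a $q$-PD pair whose quotient is $(E/J\,\wh\otimes_S R)$. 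Then $(\nu_{f*}^q\sF)(Z) = \sF(\nu_f^{q,-1}Z) = \Gamma(((E/J\,\wh\otimes_S R)/E)_{q-\textnormal{CRYS}}, \sF)$, as claimed — here I am using that evaluating a sheaf on the object cut out by a $q$-PD pair over $\fX$ computes the global sections of the corresponding small $q$-crystalline topos, exactly as in the absolute case \cite[\S 16]{Prisms}.

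The main obstacle I anticipate is not the formal existence of $\nu_f^q$ (which is essentially forced by the universal property) but rather carefully matching up the \emph{explicit pullback formula} with the site-theoretic description of the oriented product. Specifically, one must check that under the equivalence $\wt{C}_1 \ola{\times}_{\wt{D}} \wt{C}_2 \simeq \wt{C}$ of Proposition \ref{prop:explsciorep}, the object of $\wt{C}$ given by $Z$ pulls back under $\nu_f^q$ precisely to the representable sheaf of the $q$-PD pair $E/J\,\wh\otimes_S R$-thickening described above; this requires unwinding the identification \eqref{eq:pullbackilsu} and the role of the functor $u \colon D \to C'$ in the proof of Proposition \ref{prop:explsciorep}, and checking that the "fibre product" defining $\nu_f^{q,-1}Z$ in the oriented product is computed componentwise (the $q$-crystalline component computed by Lemma \ref{lem:liftinqdpet}, the étale component being just $\Spf(R)$). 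I would also need to verify that the $q$-PD pair produced by Lemma \ref{lem:liftinqdpet} genuinely represents the correct object — i.e. that $E/J \to E/J\,\wh\otimes_S R$ is indeed $p$-completely étale, which follows since $R$ is $p$-completely étale (in fact, in the smooth framed case, a completion of an étale algebra) over the relevant torus algebra over $S$, hence its base change along $S \to E/J$ is $p$-completely étale. Everything else — continuity/cocontinuity bookkeeping, compatibility of the various $2$-cells — is routine and parallel to \S\ref{sec:prodoftopo}.
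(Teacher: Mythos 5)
Your construction of $\nu_f^q$ via the universal property of the oriented product is fine in spirit, but your derivation of the pushforward formula has a genuine error. You claim that $\nu_f^{q,-1}Z$ is \emph{representable} by a single $q$-PD pair obtained by applying Lemma \ref{lem:liftinqdpet} to $E/J \to E/J\,\wh{\otimes}_S R$, on the grounds that this map is $p$-completely étale. It is not: in the framed smooth case $R$ is ($p$-completely) étale over the torus algebra $S\{t_1^{\pm 1},\dots,t_d^{\pm 1}\}$, not over $S$, so $E/J\,\wh{\otimes}_S R$ is étale over $E/J\{t_1^{\pm1},\dots,t_d^{\pm1}\}$ but only \emph{smooth} over $E/J$ when $d>0$ (and the lemma assumes no smoothness of $f$ at all, so in general the map is arbitrary). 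Consequently Lemma \ref{lem:liftinqdpet} does not apply, and in fact no initial/universal $q$-PD thickening of $E/J\,\wh{\otimes}_S R$ over $(E,J)$ exists in general --- the pullback of $h_Z$ is a genuinely non-representable sheaf. Your fallback assertion, that ``evaluating a sheaf on the object cut out by a $q$-PD pair computes the global sections of the corresponding $q$-crystalline topos,'' is also false: evaluating $\sF$ at a single object $(E'',J'')$ gives $\sF((E'',J''))$, not $\Gamma(((E''/J'')/E'')_{q\textnormal{-CRYS}},\sF)$; the $q$-crystalline site has no final object, and if your claim held, affine $q$-crystalline cohomology would be concentrated in degree $0$, contradicting the comparison with the $q$-de Rham complex (Theorem \ref{thm:qOmegavsqdeRham}).

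The correct identification, and the one the paper uses, goes through the unravelling of the pushforward rather than a representability claim: $\nu_f^q$ is induced by the continuous and cocontinuous functor $\nu\colon q\textnormal{-CRYS}(\fX/D)\to P_{\Et}^{\textnormal{Aff}}$, $(E',J')\mapsto ((E',J')\to\Spf(E'/J')\leftarrow\Spf(E'/J'))$, composed with the comparison morphisms between the big/small étale variants of the oriented product, and then $(\nu^q_{f*}\sF)(Z)$ is by definition the limit of $\sF((E',J'))$ over all morphisms $\nu((E',J'))\to j(Z)$. Such a morphism is exactly a $q$-PD pair $(E',J')$ over $(E,J)$ together with a compatible map $\Spf(E'/J')\to\Spf(R)$ over $\Spf(S)$, i.e.\ an object of $q\textnormal{-CRYS}((E/J\,\wh{\otimes}_S R)/E)$, so the limit is the global sections of that topos. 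If you want to keep your universal-property construction of $\nu_f^q$, you must replace your representability argument by this limit computation (i.e.\ identify $\nu_f^{q,-1}h_Z$ with the sheafification of the presheaf of such pairs and compute $\Hom(\nu_f^{q,-1}h_Z,\sF)$ as the limit over its category of elements); as written, the step producing the stated formula does not hold.
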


\begin{proof}
In addition to the topos $\wt{P}_{\et}:= (\fY/D)_{q-\textnormal{CRYS}} \ola{\times}_{\fY_{\et}} \fX_{\et}$, the topoi 
\[
\wt{P}_{\Et}:= (\fY/D)_{q-\textnormal{CRYS}} \ola{\times}_{\fY_{\Et}} \fX_{\Et} \text{ and } \wt{P}_{\Et}^{\textnormal{Aff}} := (\fY/D)_{q-\textnormal{CRYS}} \ola{\times}_{\fY_{\Et}^{\textnormal{Aff}}} \fX_{\Et}^{\textnormal{Aff}}
\]
will also be useful. All three  have explicit sites described by the formalism of \S\ref{subsec:laxproductop}, $P_{\et}$, $P_{\Et}$ and $P_{\Et}^{\textnormal{Aff}}$, respectively. Consider the diagram of sites
\begin{equation} \label{eq:diaforrel4.4}
q-\textnormal{CRYS}(\fX/D) \xrightarrow{\nu} P_{\Et}^{\textnormal{Aff}} \xrightarrow{i} P_{\Et} \xleftarrow{j} P_{\et},
\end{equation}
where the first functor is given by $(E,J) \mapsto ((E,J) \to \Spf(E/J) \leftarrow \Spf(E/J))$ and the remaining two functors are induced by the natural inclusions
\[
\fZ_{\Et}^{\textnormal{Aff}} \to \fZ_{\Et} \leftarrow \fZ_{\et} 
\]
for $\fZ \in \{ \fX, \fY \}$. The functor $\nu$ is both continuous and cocontinuous, and $i$ and $j$ both induce morphisms of sites (moreover $i$ induces an equivalence of topoi). Treating $\nu$ as a cocontinuous functor, \eqref{eq:diaforrel4.4} induces the morphism of topoi \eqref{eq:relver4.4Pr} (as usual, one switches the adjoints for the equivalence induced by $i$). It remains to describe the pushforward of this morphism given by $j^{s}\circ i_{s}\circ{}_{s}\nu$. We compute
\begin{align*}
    (j^{s}\circ i_{s}\circ{}_{s}\nu(\sF))(Z) &\overset{(i)}{=} (i_{s}\circ{}_{s}\nu(\sF))(j(Z)) \\
    &\overset{(ii)}{=} ({}_{s}\nu(\sF))(j(Z)) \\
    &\overset{(iii)}{=} \varprojlim_{\nu((E',J')) \to j(Z)} \sF ((E',J'))
\end{align*}
where (i) is by definition of $j^s$, (ii) because $i$ induces an equivalence of topoi ($j(Z)$ is viewed as an object in $P_{\Et}^{\textnormal{Aff}}$ hereafter), and (iii) is by definition of ${}_{s}\nu$. To give a morphism $\nu((E',J')) \to j(Z)$ is equivalent to giving a morphism of $q$-PD pairs $(E',J') \to (E,J)$ in $q-\textnormal{CRYS}(\fY/D)$ together with a compatible (over $\Spf(S)$) morphism $\Spf(E'/J') \to \Spf (R)$ in $\fX_{\Et}^{\textnormal{Aff}}$. Thus the limit calculates precisely the global sections functor of the topos $((E/J \wh{\otimes}_{S} R)/E)_{q-\textnormal{CRYS}}$ for $\sF$. This finishes the proof.
\end{proof}
Lemma \ref{lem:morsitqycrsi} allows us to make the following definition:

\begin{defn} \label{defn:qcryscomple}
We define the relative $q$-crystalline complex
$q\Omega_{\fX/\fY} := R\nu_{f*}^{q}\cO_{q-\textnormal{CRYS}}$. This is a $D$-algebra in $(\fY/D)_{q-\textnormal{CRYS}} \ola{\times}_{\fY_{\et}} \fX_{\et}$ equipped with a $\phi_{D}$-semilinear endomorphism. In addition we set $R\Gamma_{q-\textnormal{CRYS}}(\fX/\fY) := Rp_{1*}q\Omega_{\fX/\fY}$.
\end{defn}

We now compare $q\Omega_{\fX/\fY}$ with \emph{relative} prismatic cohomology (cf. \cite[Theorem 16.17]{Prisms} and \cite[Theorem 7.13]{KoshTerui}).

\begin{thm} \label{thm:qOmegavsprism}
Let $Z$ be of the form \eqref{eq:crysgoose} such that $\Spf(R) \to \Spf(S)$ is a $p$-completely smooth $\Spf(D/I)$-morphism.
  Letting $(R \wh{\otimes}_{S} E/J)^{(1)}$ be the $p$-completed base change along the map $E/J \to E/([p]_q)$ induced by $\phi_{E}$. Writing $\Prism_{(R \wh{\otimes}_S E/J)^{(1)}/E}$ for the prismatic cohomology of $(R \wh{\otimes}_S E/J)^{(1)}$ relative to the bounded prism $(E, [p]_{q})$, there is a canonical isomorphism $\Prism_{(R \wh{\otimes}_S E/J)^{(1)}/E} \cong R\Gamma(Z, q\Omega_{\fX/\fY})$.
\end{thm}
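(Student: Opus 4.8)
The plan is to reduce the statement to the $q$-crystalline--prismatic comparison of \cite[Theorem 16.17]{Prisms} (in the form generalized in \cite[Theorem 7.13]{KoshTerui} that allows the base prism $(E,[p]_q)$ to vary) applied to the bounded prism $(E,[p]_q)$ and the $p$-completely smooth $(E/J)$-algebra $R\wh\otimes_S E/J$. First I would unwind the definition of $q\Omega_{\fX/\fY}$: by Lemma \ref{lem:morsitqycrsi}, $R\Gamma(Z,q\Omega_{\fX/\fY}) = R\Gamma(Z, R\nu_{f*}^q\cO_{q-\textnormal{CRYS}})$ is computed as the $q$-crystalline cohomology of the topos $((E/J\wh\otimes_S R)/E)_{q-\textnormal{CRYS}}$ with coefficients in $\cO_{q-\textnormal{CRYS}}$, i.e.\ $R\Gamma((( R\wh\otimes_S E/J)/E)_{q-\textnormal{CRYS}},\cO_{q-\textnormal{CRYS}})$. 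Here one must be slightly careful that the site appearing is the \emph{big} étale $q$-crystalline site of Definition \ref{defn:bigqcryver4} over the $q$-PD pair $(E,J)$; by Lemma \ref{lem:samecoha} this agrees with the cohomology computed on the variant of \cite[Definition 16.12]{Prisms} with the étale topology, and hence (again by the \v Cech--Alexander description) with the usual (small) $q$-crystalline cohomology $R\Gamma_{q-\textnormal{CRYS}}$ relative to $(E,J)$.

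Next I would apply the $q$-crystalline comparison theorem relative to the $q$-PD pair $(E,J)$: for a $p$-completely smooth $(E/J)$-algebra $T$ (here $T = R\wh\otimes_S E/J$) one has a canonical isomorphism
\[
R\Gamma_{q-\textnormal{CRYS}}(T/E) \;\cong\; \Prism_{T^{(1)}/E},
\]
where $T^{(1)}$ is the $p$-completed base change of $T$ along the Frobenius-induced map $E/J \to E/([p]_q)$ and $\Prism_{-/E}$ denotes prismatic cohomology relative to the bounded prism $(E,[p]_q)$. This is exactly \cite[Theorem 16.17]{Prisms} when $E = A$, and the general statement over an arbitrary bounded prism (equivalently, arbitrary $q$-PD pair) is \cite[Theorem 7.13]{KoshTerui}; I would invoke it directly, checking only that the hypotheses match — namely that $(E,[p]_q)$ is a bounded prism (condition (1) of Definition \ref{defn:qPDpair}) and that $T$ is $p$-completely smooth over $E/J$, which is granted by hypothesis since $R$ is $p$-completely smooth over $S$ and $E/J$ is $p$-completely flat over $S$ through $\Spf(S)\to\fY$, so the base change $R\wh\otimes_S E/J$ is $p$-completely smooth over $E/J$. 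Combining the two displayed identifications gives $\Prism_{(R\wh\otimes_S E/J)^{(1)}/E}\cong R\Gamma(Z,q\Omega_{\fX/\fY})$.

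The only genuine subtlety — and the step I expect to require the most care — is the bookkeeping of the Frobenius twist and the identification of sites. On the twist side, one must match the twist built into $q\Omega_{\fX/\fY}$ (which is defined via $\nu_f^q$ on the $q$-crystalline side, not the prismatic side) with the twist $(-)^{(1)}$ appearing in the statement; this is precisely the reindexing $\varphi$ that appears in the absolute comparison \cite[Theorem 16.17, Theorem 16.21]{Prisms}, and I would simply track it through Lemma \ref{lem:morsitqycrsi} and the \v Cech--Alexander complexes, noting that the isomorphism is compatible with the Frobenius endomorphisms recorded in Definition \ref{defn:qcryscomple}. On the site side, the point to verify is that the weakly final object used to build the \v Cech--Alexander complex in the big étale $q$-crystalline topos $(( R\wh\otimes_S E/J)/E)_{q-\textnormal{CRYS}}$ is the same (a free $q$-PD envelope) as the one used in \cite{Prisms}, so that Lemma \ref{lem:samecoha}'s argument applies verbatim with $D$ replaced by $E$ and $T = D/I$ replaced by $R\wh\otimes_S E/J$ over $E/J$; this is routine but must be stated. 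Canonicity and functoriality in $Z$ then follow from the functoriality of all the constructions involved (the functor $\nu_f^q$, the \v Cech--Alexander complexes, and the comparison maps of \cite{Prisms,KoshTerui}).
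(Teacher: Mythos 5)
Your proposal is correct and follows essentially the same route as the paper: identify $R\Gamma(Z,q\Omega_{\fX/\fY})$ with $R\Gamma(((R\wh{\otimes}_S E/J)/E)_{q-\textnormal{CRYS}},\cO_{q-\textnormal{CRYS}})$ via Lemma \ref{lem:morsitqycrsi}, reconcile the big étale site with the small site of \cite{Prisms} via Lemma \ref{lem:samecoha}, and conclude by the $q$-crystalline–prismatic comparison \cite[Theorem 16.17]{Prisms} (which is already stated for a general $q$-PD pair, so the appeal to \cite{KoshTerui} is an acceptable but unnecessary hedge).
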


\begin{proof}
The hard work is already done in \cite[\S 16]{Prisms}. Indeed by Lemma \ref{lem:morsitqycrsi}
\begin{equation} \label{eq:forfoderqOMe}
R\Gamma(Z, q\Omega_{\fX/\fY}) = R\Gamma((R \wh{\otimes}_{S} E/J)/E)_{q-\textnormal{CRYS}}, \cO_{q-\textnormal{CRYS}}).
\end{equation}
The result now follows by Theorem 16.17 in loc.cit. and the comparison made in Lemma \ref{lem:samecoha}.
\end{proof}

Finally we close this section by recording perfectness for $R\Gamma_{q-\textnormal{CRYS}}(\fX/\fY)$. In the $q$-crystalline setting one is able to obtain perfectness on the nose (without almost ambiguity). This is in contrast for the case of $R\Gamma_{A_{\Inf}}(\fX/\fY)$ (cf. Proposition \ref{prop:perfecAinfcohm}) and is an ubiquitous defect of the pro-étale topos associated to the generic fiber.

\begin{prop} \label{prop:perfectnessqcrco}
Assume that we have a proper smooth $\Spf(D/I)$-morphism $f \colon \fX \to \fY$ of $p$-adic formal schemes. 
Then for each $(E,J)$ in $(\fY/D)_{q-\textnormal{CRYS}}$
\begin{enumerate}[(i)]
\item $R\Gamma((E,J), R\Gamma_{q-\textnormal{CRYS}}(\fX/\fY))$ is perfect in $D(E)$.
\item $R\Gamma_{q-\textnormal{CRYS}}(\fX/\fY)\lvert_{(E,J)}$ is strictly perfect in $D((\fY/D)_{q-\textnormal{CRYS}}/(E,J), \cO_{q-\textnormal{CRYS}}\lvert_{(E,J)})$.
\item $R\Gamma_{q-\textnormal{CRYS}}(\fX/\fY)$ is perfect in $D((\fY/D)_{q-\textnormal{CRYS}}, \cO_{q-\textnormal{CRYS}})$.
\end{enumerate}
\end{prop}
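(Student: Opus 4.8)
\textbf{Proof plan for Proposition \ref{prop:perfectnessqcrco}.}

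The plan is to deduce all three statements from the affine-local computation together with standard descent/gluing properties of perfect complexes in a ringed topos. First I would reduce (i) to the case where $(E,J)$ maps to an affine open $\Spf(S) \subseteq \fY$ over which $f$ admits a chart as in condition (2) of Definition \ref{def:goodtriples}; this is harmless because perfectness is étale-local on $(E,J)$ and such objects are generating by Lemma \ref{lem:basaqcrsdsi}. In this situation, the key identity is $R\Gamma((E,J), R\Gamma_{q-\textnormal{CRYS}}(\fX/\fY)) = R\Gamma(Z, q\Omega_{\fX/\fY})$ for $Z = ((E,J) \to \Spf(S) \leftarrow \Spf(R))$, which follows from $Rp_{1*}$ and the construction of $\nu_f^q$ in Lemma \ref{lem:morsitqycrsi}; and then Theorem \ref{thm:qOmegavsprism} identifies this with the relative prismatic cohomology $\Prism_{(R\wh{\otimes}_S E/J)^{(1)}/E}$ of a $p$-completely smooth proper morphism relative to the bounded prism $(E,[p]_q)$. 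Perfectness of the latter over $E$ is \cite[Theorem 1.8(iv)]{Prisms} (properness and smoothness of $f$, hence of $R\wh{\otimes}_S E/J$ over $E/J$), so (i) follows. One should be slightly careful that the twist $(\cdot)^{(1)}$ and the Frobenius base change do not affect perfectness — they don't, since $\phi_E$ is an isomorphism on the bounded prism $(E,[p]_q)$ in the sense relevant here, or else one simply invokes the $q$-crystalline comparison \cite[Theorem 16.17]{Prisms} directly on $R\Gamma(((R\wh{\otimes}_S E/J)/E)_{q-\textnormal{CRYS}}, \cO_{q-\textnormal{CRYS}})$ as in \eqref{eq:forfoderqOMe}.

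Next, for (ii), I would pass to the localized topos $(\fY/D)_{q-\textnormal{CRYS}}/(E,J)$ with structure sheaf $\cO_{q-\textnormal{CRYS}}|_{(E,J)}$. Since the objects $(E',J') \to (E,J)$ with $(E',J')$ a $q$-PD pair étale over (a chart-admitting localization of) $(E,J)$ form a basis, and since the formation of $R\Gamma_{q-\textnormal{CRYS}}(\fX/\fY)$ commutes with such base change — this is where I would use that relative prismatic cohomology commutes with base change of the base prism, \cite[Theorem 1.8]{Prisms}, together with the fact that $q$-PD base change along $p$-completely étale maps is exact (Lemma \ref{lem:liftinqdpet}) — part (i) applied over every such $(E',J')$ shows that the cohomology sheaves of $R\Gamma_{q-\textnormal{CRYS}}(\fX/\fY)|_{(E,J)}$ are locally represented by the fixed perfect $E$-complex. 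Concretely, choosing a finite complex of finite projective $E$-modules $M^\bullet$ with $M^\bullet \cong R\Gamma((E,J), R\Gamma_{q-\textnormal{CRYS}}(\fX/\fY))$ from (i), the evident map $\underline{M^\bullet} \to R\Gamma_{q-\textnormal{CRYS}}(\fX/\fY)|_{(E,J)}$ is an isomorphism because it is so after evaluating on the basis (here I use that $q$-crystalline cohomology of a free $q$-PD envelope computes the sections, via the \v{C}ech--Alexander complex as in Lemma \ref{lem:samecoha}, and base change). This exhibits the restriction as strictly perfect in the sense of \cite[Tag 08C3]{stacks-project}.

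Finally (iii) is immediate from (ii) and the local nature of perfectness: a complex in $D((\fY/D)_{q-\textnormal{CRYS}}, \cO_{q-\textnormal{CRYS}})$ which is strictly perfect on the members of a generating family is perfect (\cite[Tag 08C3, Tag 08FL]{stacks-project}), and the $(E,J)$ form such a family by Lemma \ref{lem:basaqcrsdsi}. The main obstacle I anticipate is not the perfectness input itself — that is purely \cite{Prisms} — but the bookkeeping needed to verify that $R\Gamma_{q-\textnormal{CRYS}}(\fX/\fY)$ genuinely commutes with $q$-PD base change along étale maps $(E,J) \to (E',J')$, i.e. that $Rp_{1*}$ and $R\nu^q_{f*}$ interact correctly with localization of the oriented product site; this requires re-deriving the relevant base-change statement in the oriented-product setup, analogous to Lemma \ref{lem:baschaexpect} and Proposition \ref{prop:bcrsmopn} but now on the $q$-crystalline side, and checking it on the explicit basis of objects of the form \eqref{eq:crysgoose}.
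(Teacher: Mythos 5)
There is a genuine gap in your step (i), and it is the step carrying all the content. You propose to reduce to the case where $f$ admits a chart over an affine $\Spf(S)\subseteq\fY$ and then identify $R\Gamma((E,J),R\Gamma_{q-\textnormal{CRYS}}(\fX/\fY))$ with $R\Gamma(Z,q\Omega_{\fX/\fY})$ for a single object $Z=((E,J)\to\Spf(S)\leftarrow\Spf(R))$ of the form \eqref{eq:crysgoose}, and to quote perfectness from \cite{Prisms} for $\Prism_{(R\wh{\otimes}_S E/J)^{(1)}/E}$. This cannot work: a proper morphism does not admit a global chart over $\Spf(S)$ (an affine, étale-over-a-torus $\Spf(R)$ that is also proper over $\Spf(S)$ forces relative dimension $0$), so "a $p$-completely smooth \emph{proper} morphism" with $R$ as in Definition \ref{def:goodtriples}(2) is internally inconsistent; and for an affine chart the prismatic/$q$-crystalline cohomology is emphatically \emph{not} perfect over $E$ (already for a torus the $q$-de Rham complex has non-finitely-generated cohomology), so perfectness is not local on the source $\fX$ and no reduction to charts is possible. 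Moreover the identification itself is false: by the description of $p_1$ (Lemma \ref{lem:desfuncsoif}), the sections of $Rp_{1*}q\Omega_{\fX/\fY}$ over $(E,J)$ are taken over the object $((E,J)\to\fY\leftarrow\fX)$, i.e. they see \emph{all} of $\fX$, not one affine piece. The correct route, as in the paper, is to observe that $R\Gamma_{q-\textnormal{CRYS}}(\fX/\fY)=Rf_{q-\textnormal{CRYS}*}\cO_{q-\textnormal{CRYS}}$, so that the $(E,J)$-sections are the $q$-crystalline cohomology of the full (proper, non-affine) base change $\Spf(E/J)\times_{\fY}\fX$ over $E$; one then needs a \emph{globalized} (sheafified over the étale site of the Frobenius-twisted base change) version of \cite[Theorem 16.17]{Prisms}, after which perfectness follows from the Hodge--Tate comparison together with properness and coherent finiteness — not from an affine-local statement. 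Incidentally, this reformulation also dissolves the "main obstacle" you anticipate at the end: no base-change statement for $Rp_{1*}$ on the oriented product is needed at all.

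Your parts (ii) and (iii) are essentially the paper's argument: given (i), the quasi-coherence/base-change property \cite[Theorem 1.8(5)]{Prisms} shows that $\ul{R\Gamma((E,J),R\Gamma_{q-\textnormal{CRYS}}(\fX/\fY))}\otimes_E^{\bL}\cO_{q-\textnormal{CRYS}}\lvert_{(E,J)}\to R\Gamma_{q-\textnormal{CRYS}}(\fX/\fY)\lvert_{(E,J)}$ is a quasi-isomorphism, giving strict perfectness, and (iii) is formal. (Do note that in your write-up the comparison map must be from the constant complex \emph{tensored with the structure sheaf}, not from $\ul{M^{\bullet}}$ itself, and that the citation for perfectness should be to the Hodge--Tate comparison rather than the étale comparison item of \cite[Theorem 1.8]{Prisms}.) But as written, (i) — on which (ii) and (iii) rest — does not go through.
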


\begin{proof}
We first show (i). In this proof we will abuse notation and denote $\cO_{q-\textnormal{CRYS}}$ for the structure sheaves on both $(\fY/D)_{q-\textnormal{CRYS}}$ and $(\fX/D)_{q-\textnormal{CRYS}}$. First note that $f$ induces a cocontinuous functor of sites
\begin{align*}
f_{q-\textnormal{CRYS}} \colon q-\textnormal{CRYS}(\fX/D) &\to q-\textnormal{CRYS}(\fY/D) \\
(E',J') &\mapsto (E',J')
\end{align*}
and although $R\Gamma_{q-\textnormal{CRYS}}(\fX/\fY)$ was originally defined via the intermediate oriented product, we can also write it as $Rf_{q-\textnormal{CRYS}*}\cO_{q-\textnormal{CRYS}}$. Unwinding the definitions we get
\begin{equation} \label{eq:pushforisjusqcr}
R\Gamma((E,J), R\Gamma_{q-\textnormal{CRYS}}(\fX/\fY)) = R\Gamma((\Spf(E/J) \times_{\fY} \fX)/E)_{q-\textnormal{CRYS}}, \cO_{q-\textnormal{CRYS}}).
\end{equation}
By a globalization\footnote{For the corresponding globalization in the log-smooth setting we refer to \cite[Theorem 7.13]{KoshTerui}.} of \cite[Theorem 16.17]{Prisms} (obtained by sheafifying the result in loc.cit.) we get
\begin{equation} \label{eq:globalizationdsfs}
R\Gamma((\Spf(E/J) \times_{\fY} \fX)/E)_{q-\textnormal{CRYS}}, \cO_{q-\textnormal{CRYS}}) = R\Gamma((\Spf(E/[p]_q) \times_{\fY} \fX)_{\et}, \Prism_{(\Spf(E/[p]_q) \wh{\otimes}_{\fY} \fX)/E})
\end{equation}
where $\Spf(E/[p]_q) \times_{\fY} \fX$ is obtained by the $p$-completed base change of $\Spf(E/J) \times_{\fY} \fX$ along $E/J \to E/[p]_q$ induced by $\phi_E$. Since $f \colon \fX \to \fY$ was assumed to be proper, the RHS of \eqref{eq:globalizationdsfs} is a perfect complex in $D(E)$ by the Hodge-Tate comparison for prismatic cohomology (cf. Theorem 1.8(2) in loc.cit.). Thus \eqref{eq:pushforisjusqcr} implies $R\Gamma((E,J), R\Gamma_{q-\textnormal{CRYS}}(\fX/\fY))$ is a perfect complex in $D(E)$ for each $(E,J)$ in $(\fY/D)_{q-\textnormal{CRYS}}$. 

We now prove part (ii). This is a consequence of part (i) together with the \emph{quasi-coherence} behaviour of prismatic/$q$-crystalline cohomology (cf. Theorem 1.8(5) in loc.cit.). Indeed these two points imply that the canonical morphism
\[
\ul{R\Gamma((E,J), R\Gamma_{q-\textnormal{CRYS}}(\fX/\fY))} \otimes_{E}^{\bL} \cO_{q-\textnormal{CRYS}}\lvert_{(E,J)} \to  R\Gamma_{q-\textnormal{CRYS}}(\fX/\fY))\lvert_{(E,J)}
\]
is a quasi-isomorphism. Finally part (iii) is an immediate consequence of part (ii).
\end{proof}

\subsection{The Hodge-Tate comparison}
In this short section we record the Hodge-Tate comparison morphism in the setting of the $q$-crystalline site (cf. \cite[Theorem 4.11]{Prisms} and \cite[Theorem 5.3]{KoshTerui}) in the affine case. We then glue these morphisms to obtain a global statement. This will be a formal consequence of the Hodge-Tate comparison morphism constructed in \cite{Prisms} together with Theorem \ref{thm:qOmegavsprism}. We assume $f\colon \fX \to \fY$ is a smooth $\Spf(D/I)$-morphism of $p$-adic formal schemes.

\begin{cons}[The Hodge-Tate comparison map: affine case]
\emph{Let $Z$ be of the form \eqref{eq:crysgoose}. Denote by $\Omega_{R/S}^{\bullet} \wh{\otimes}_{S} E/[p]_{q}$ the complex whose $i$-th term is $\Omega_{R/S}^{i} \wh{\otimes}_{S} E/[p]_{q}$ (the completion is $p$-adic) with induced differentials coming from the de Rham complex. Now by the comparison with prismatic cohomology and $q$-crystalline cohomology (cf. Theorem \ref{thm:qOmegavsprism})
\begin{equation} \label{eq:1stisomfth}
\Prism_{(R \wh{\otimes}_S E/J)^{(1)}/E} \otimes_{E}^{\bL} E/[p]_q \cong R\Gamma(Z, q\Omega_{\fX/\fY}) \otimes_{E}^{\bL} E/[p]_{q}.
\end{equation}
Moreover by \cite[Theorem 4.11]{Prisms}, we have an isomorphism of differential graded $E/[p]_{q}$-algebras
\begin{equation} \label{eq:2ndisorth}
\Omega^{\bullet}_{(R \wh{\otimes}_{S} E/J)^{(1)}/(E/[p]_{q})} \xrightarrow{\sim} H^{\bullet}(\Prism_{(R \wh{\otimes}_S E/J)^{(1)}/E} \otimes_{E}^{\bL} E/[p]_q)\{\bullet\},
\end{equation}
where the differentials on the RHS are induced by the Bockstein construction. Moreover by \cite[Chapter I, Proposition 5.1.5]{FujKat}
\begin{equation} \label{eq:3rdlastiso}
\Omega^{\bullet}_{(R \wh{\otimes}_{S} E/J)^{(1)}/(E/[p]_{q})} \cong \Omega_{R/S}^{\bullet} \wh{\otimes}_{S} E/[p]_{q}.
\end{equation}
Finally \eqref{eq:1stisomfth}-\eqref{eq:3rdlastiso} together with functoriality of the Bockstein construction gives the following:}
\end{cons}

\begin{cor}[Hodge-Tate comparison: affine case] \label{cor:HTforqcrys}
For $Z$ of the form \eqref{eq:crysgoose} the map
\begin{equation} \label{eq:afisHotda}
\eta_{\fX/\fY}^{q}(Z) \colon  \Omega_{R/S}^{\bullet} \wh{\otimes}_{S} E/[p]_{q} \xrightarrow{\sim} H^{\bullet}(R\Gamma(Z, q\Omega_{\fX/\fY}) \otimes_{E}^{\bL} E/[p]_{q})\{\bullet \}
\end{equation}
constructed above is an isomorphism of differential graded $E/[p]_q$-algebras.
\end{cor}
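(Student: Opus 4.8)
The plan is to assemble the map \eqref{eq:afisHotda} directly from the three isomorphisms \eqref{eq:1stisomfth}, \eqref{eq:2ndisorth} and \eqref{eq:3rdlastiso} that are recalled in the preceding Construction, and then to verify that the composite respects the differential graded algebra structures. Concretely, one defines
\[
\eta_{\fX/\fY}^{q}(Z) := \eqref{eq:3rdlastiso}^{-1} \text{ pre-composed with the map induced by } \eqref{eq:2ndisorth} \text{ and } \eqref{eq:1stisomfth},
\]
so that the bijectivity in each degree is immediate from the fact that all three constituent maps are isomorphisms. The only substantive point is that \eqref{eq:1stisomfth} is compatible with the Bockstein differentials on both sides: the left side carries the Bockstein attached to $E \twoheadrightarrow E/[p]_q$ acting on $\Prism_{(R\wh{\otimes}_S E/J)^{(1)}/E}$, and the right side carries the one attached to the same quotient acting on $R\Gamma(Z, q\Omega_{\fX/\fY})$. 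This follows because the isomorphism in Theorem \ref{thm:qOmegavsprism} was itself obtained from \eqref{eq:forfoderqOMe} together with the comparison of \cite[\S 16]{Prisms} (through Lemma \ref{lem:samecoha}), and both of those identifications are $E$-linear and functorial, hence compatible with reduction modulo $[p]_q$ and with the resulting Bockstein operators. Thus each of the maps \eqref{eq:1stisomfth} and \eqref{eq:2ndisorth} is a map of complexes (not merely of graded modules), and \eqref{eq:3rdlastiso} is an isomorphism of differential graded $E/[p]_q$-algebras by the cited \cite[Chapter I, Proposition 5.1.5]{FujKat}.

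The order I would carry this out is: first record that $R\Gamma(Z, q\Omega_{\fX/\fY}) \otimes_E^{\bL} E/[p]_q$ is computed by a complex of $E/[p]_q$-modules with a multiplicative structure — this is exactly the $q$-crystalline / prismatic cohomology of $(R\wh{\otimes}_S E/J)^{(1)}$ relative to $(E,[p]_q)$ reduced mod $[p]_q$, via \eqref{eq:forfoderqOMe} and Lemma \ref{lem:samecoha} — so that \eqref{eq:2ndisorth} is literally \cite[Theorem 4.11]{Prisms} applied to this prism. Second, invoke \eqref{eq:3rdlastiso} to rewrite the relative de Rham complex of the base change in terms of $\Omega^\bullet_{R/S}\wh{\otimes}_S E/[p]_q$; here one should note that the relevant smoothness hypothesis is in force by assumption on $f$ and that $(R\wh{\otimes}_S E/J)^{(1)}/(E/[p]_q)$ is obtained by a flat ($p$-completely étale on a polynomial-algebra framing) base change, so that Kähler differentials behave as expected. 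Third, compose and transport the dga structure: the $q$-crystalline cohomology $q\Omega_{\fX/\fY}$ is a $D$-algebra object (Definition \ref{defn:qcryscomple}), hence its sections over $Z$ form an $E_\infty$- (indeed commutative) algebra, and the Hodge–Tate comparison of \cite[Theorem 4.11]{Prisms} is by construction a map of dgas, so the composite is too.

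The main obstacle I anticipate is purely bookkeeping rather than conceptual: one must check that the cup-product/multiplicative structure used on $H^\bullet(R\Gamma(Z,q\Omega_{\fX/\fY})\otimes_E^{\bL} E/[p]_q)$ — coming from the lax symmetric monoidal structure on the pushforward $R\nu^q_{f*}$ and the algebra structure on $\cO_{q-\textnormal{CRYS}}$ — agrees, under the identification \eqref{eq:forfoderqOMe}, with the multiplicative structure on prismatic cohomology that \cite[Theorem 4.11]{Prisms} uses. Both are induced by the commutative-algebra structure of the (derived global sections of the) structure sheaf on the relevant $q$-crystalline/prismatic site, so the agreement is forced by functoriality of $R\nu^q_{f*}$ and of the globalization in Lemma \ref{lem:samecoha}; but it is the one place where some care with signs and with the Bockstein-versus-cup-product interaction (exactly as in the absolute case, \cite[\S 6-9]{BhMorSch}) is warranted. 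Once this compatibility is in hand, the statement follows formally, and there is nothing further to prove — the isomorphism claim and the dga claim are both inherited termwise from the three cited isomorphisms.
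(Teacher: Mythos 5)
Your proposal is correct and follows essentially the same route as the paper: the corollary is a formal consequence of composing the three isomorphisms \eqref{eq:1stisomfth}--\eqref{eq:3rdlastiso} from the preceding Construction, with the functoriality of the Bockstein construction (and of the identifications in Theorem \ref{thm:qOmegavsprism} and Lemma \ref{lem:samecoha}) supplying compatibility with the differentials and the algebra structures. Your extra bookkeeping on the multiplicative structures is a more explicit spelling-out of what the paper leaves implicit, not a different argument.
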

We end this section by recording the globalization of Corollary \ref{cor:HTforqcrys}. In order to formulate the statement we will need appropriately twisted structure sheaves to accommodate the twist appearing in Theorem \ref{thm:qOmegavsprism}.  In particular we consider the variant $\ol{\cO}_{q-\textnormal{CRYS}}^{(1)}$ defined by $(E,J) \mapsto E/[p]_{q}$. There is a morphism of sheaves of rings $\ol{\cO}_{q-\textnormal{CRYS}} \to \ol{\cO}_{q-\textnormal{CRYS}}^{(1)}$ induced by Frobenius lifts. Similarly one defines $\ol{\cO}^{(1)}$ as in Definition \ref{defn:strucshrsfoprof} with $\ol{\cO}_{q-\textnormal{CRYS}}^{(1)}$ in place of $\ol{\cO}_{q-\textnormal{CRYS}}$ and we get an induced morphism of sheaves of rings $\ol{\cO} \to \ol{\cO}^{(1)}$. The morphism of sheaves of rings $p_2^{\sharp}$ giving a morphism of ringed topoi
\begin{equation} \label{eq:tsifmsorinds}
(p_2, p_2^{\sharp(1)}) \colon ((\fY/D)_{q-\textnormal{CRYS}} \ola{\times}_{\fY_{\et}} \fX_{\et}, \ol{\cO}^{(1)}) \to (\fX_{\et}, \cO_{\fX_{\et}}).
\end{equation}
Let $p_2^{*}\Omega_{\fX/\fY}^{\bullet}$ denote the complex whose $i$th term is $p_2^{*}\Omega_{\fX/\fY}^{i}$ and induced differentials (here $p_2$ is considered as the twisted morphism of ringed topoi \eqref{eq:tsifmsorinds}). For the next result note that $p_1^{-1}\ol{\cO}^{(1)}_{q-\textnormal{CRYS}}$ is the sheaf\footnote{A priori $Z \mapsto E/[p]_q$ is a presheaf, but one can easily verify the sheaf condition for each of the coverings of type (a)-(c). For coverings of type (a) this follows because $\ol{\cO}^{(1)}_{q-\textnormal{CRYS}}$ is sheaf. Coverings of type (b)-(c) leave sections unchanged.} $Z \mapsto E/[p]_q$.

\begin{cor}[Hodge-Tate comparison: global case] \label{cor:HTforqcrysde}
The maps $\eta_{\fX/\fY}^{q}(Z)$ in Corollary \ref{cor:HTforqcrys} glue to give an isomorphism
\[
\eta^{q}_{\fX/\fY} \colon p_2^{*}\Omega^{\bullet}_{\fX/\fY} \xrightarrow{\sim} H^{\bullet}(q\Omega_{\fX/\fY} \otimes^{\bL}_{D} D/[p]_q)\{\bullet\}
\]
of differential graded $p_1^{-1}\ol{\cO}^{(1)}_{q-\textnormal{CRYS}}$-algebras.
\end{cor}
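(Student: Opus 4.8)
The plan is to deduce Corollary \ref{cor:HTforqcrysde} from its affine counterpart Corollary \ref{cor:HTforqcrys} by a gluing argument, exactly parallel to how one passes from a site-local statement to a sheaf-theoretic one. First I would fix attention on the class of objects $Z = ((E,J) \to \Spf(S) \leftarrow \Spf(R))$ as in \eqref{eq:crysgoose} with $\Spf(R) \to \Spf(S)$ satisfying condition (2) of Definition \ref{def:goodtriples}; by Lemma \ref{lem:basaqcrsdsi} (applied in the smooth case) these form a generating family for $(\fY/D)_{q-\textnormal{CRYS}} \ola{\times}_{\fY_{\et}} \fX_{\et}$, and good objects are stable under fiber products, so it suffices to construct the isomorphism on $Z$-sections compatibly with restriction maps. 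For such a $Z$ we have, by Lemma \ref{lem:morsitqycrsi} and Theorem \ref{thm:qOmegavsprism}, the identification $R\Gamma(Z, q\Omega_{\fX/\fY}) \cong \Prism_{(R \wh{\otimes}_S E/J)^{(1)}/E}$, and the base change $q\Omega_{\fX/\fY} \otimes^{\bL}_D D/[p]_q$ evaluated on $Z$ computes $R\Gamma(Z,q\Omega_{\fX/\fY}) \otimes^{\bL}_E E/[p]_q$ (using that $D \to E$ is flat enough and that $\otimes^{\bL}_D D/[p]_q$ is computed sectionwise on a generating family, cf. \cite[Lemma 6.14]{BhMorSch}-style reasoning, or simply that $[p]_q$-reduction commutes with the relevant limits). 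Likewise $(p_2^*\Omega^i_{\fX/\fY})(Z) = \Omega^i_{R/S} \wh{\otimes}_S E/[p]_q$, using \eqref{eq:3rdlastiso} (i.e. \cite[Chapter I, Proposition 5.1.5]{FujKat}) and the computation of $\ol{\cO}^{(1)}(Z)$; here the twisted structure sheaf $\ol{\cO}^{(1)}$ and $p_1^{-1}\ol{\cO}^{(1)}_{q-\textnormal{CRYS}}$ are precisely engineered so that $p_2^*\Omega^i_{\fX/\fY}$ has $Z$-sections $\Omega^i_{R/S}\wh{\otimes}_S E/[p]_q$ with $E/[p]_q$-module (indeed $p_1^{-1}\ol{\cO}^{(1)}_{q-\textnormal{CRYS}}$-algebra) structure matching the right-hand side of \eqref{eq:afisHotda}.

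Having matched all three sheaves sectionwise on the generating family, the next step is to invoke Corollary \ref{cor:HTforqcrys}, which gives for each such $Z$ an isomorphism $\eta^q_{\fX/\fY}(Z)$ of differential graded $E/[p]_q$-algebras between these sections. The content of the gluing step is then (i) functoriality of $\eta^q_{\fX/\fY}(Z)$ in $Z$, so that these isomorphisms are compatible with restriction along morphisms of good objects, and (ii) that the target $H^\bullet(q\Omega_{\fX/\fY}\otimes^{\bL}_D D/[p]_q)$ is genuinely a sheaf (each cohomology sheaf is the sheafification of its presheaf of sections, and by the Hodge-Tate isomorphism on the generating family this presheaf is already separated/exact enough), so that a compatible system of isomorphisms on a generating family sheafifies to a global isomorphism. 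For (i), the key point is that the construction of $\eta^q_{\fX/\fY}(Z)$ — going through \eqref{eq:1stisomfth}, the prismatic Hodge-Tate comparison \cite[Theorem 4.11]{Prisms} at \eqref{eq:2ndisorth}, and \eqref{eq:3rdlastiso} — is functorial in the prism $(E,[p]_q)$ and in the smooth algebra $R/S$, because all of Theorem \ref{thm:qOmegavsprism}, the prismatic Hodge-Tate comparison, and the Bockstein differential are natural; one only needs to check that the restriction maps in $(\fY/D)_{q-\textnormal{CRYS}} \ola{\times}_{\fY_{\et}} \fX_{\et}$ between good objects induce exactly the base-change maps on the prismatic side, which follows from the explicit description of the site in \S\ref{subsec:laxproductop} and of $\nu_f^q$ in Lemma \ref{lem:morsitqycrsi}.

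Concretely I would carry out the proof in the following order: (1) reduce to constructing the map on $Z$-sections for $Z$ in the generating family of Lemma \ref{lem:basaqcrsdsi}; (2) identify $(p_2^*\Omega^\bullet_{\fX/\fY})(Z)$, $(q\Omega_{\fX/\fY}\otimes^{\bL}_D D/[p]_q)(Z)$ and hence $H^\bullet$ of the latter, with $\Omega^\bullet_{R/S}\wh{\otimes}_S E/[p]_q$ and $H^\bullet(R\Gamma(Z,q\Omega_{\fX/\fY})\otimes^{\bL}_E E/[p]_q)$ respectively, being careful about the twist and the module structures so that everything is $p_1^{-1}\ol{\cO}^{(1)}_{q-\textnormal{CRYS}}$-linear; (3) apply Corollary \ref{cor:HTforqcrys} to get $\eta^q_{\fX/\fY}(Z)$; (4) verify functoriality of $\eta^q_{\fX/\fY}(Z)$ in $Z$ via naturality of Theorem \ref{thm:qOmegavsprism} and of the prismatic Hodge-Tate comparison, so the maps are compatible with restriction; (5) sheafify: since the $\eta^q_{\fX/\fY}(Z)$ form a morphism of presheaves on the generating family which is an isomorphism, and the target cohomology presheaves have the sheafification of the source as their associated sheaf, conclude that $\eta^q_{\fX/\fY}$ is a well-defined isomorphism of differential graded $p_1^{-1}\ol{\cO}^{(1)}_{q-\textnormal{CRYS}}$-algebras; the dga structure on both sides is likewise determined by the sectionwise dga structures (cup product on the prismatic side, wedge product of differential forms on the other), which match under $\eta^q_{\fX/\fY}(Z)$ by Corollary \ref{cor:HTforqcrys}. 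The main obstacle I anticipate is step (4) — pinning down that the restriction maps between good objects in the oriented product site translate precisely into the base-change maps of prismatic cohomology along maps of prisms, so that the affine Hodge-Tate comparisons really do glue; everything else is bookkeeping with the twisted structure sheaves and the already-established identifications.
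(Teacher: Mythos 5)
Your proposal follows essentially the same route as the paper: identify both $p_2^{*}\Omega^{\bullet}_{\fX/\fY}$ and $H^{\bullet}(q\Omega_{\fX/\fY}\otimes^{\bL}_{D}D/[p]_q)\{\bullet\}$ as sheafifications of the explicit presheaves $Z \mapsto \Omega^{\bullet}_{R/S}\wh{\otimes}_S E/[p]_q$ and $Z \mapsto H^{\bullet}(R\Gamma(Z,q\Omega_{\fX/\fY})\otimes^{\bL}_E E/[p]_q)\{\bullet\}$ on the generating family of objects of the form \eqref{eq:crysgoose}, then apply Corollary \ref{cor:HTforqcrys} and glue via the functoriality of \eqref{eq:afisHotda}. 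Your extra attention to step (4) (that restrictions between such objects induce the base-change maps of prismatic cohomology) and to the twisted module structures is just the bookkeeping the paper leaves implicit, so the argument matches.
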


\begin{proof}
Unravelling the definitions $p_2^{*}\Omega^{i}_{\fX/\fY}$ is the sheafification of the presheaf
\[
Z \mapsto  \Omega_{R/S}^{i} \wh{\otimes}_{S} E/[p]_{q}.
\]
In addition, by taking an injective resolution of $q\Omega_{\fX/\fY}$, one shows that $H^{i}(q\Omega_{\fX/\fY} \otimes^{\bL}_{D} D/[p]_q)\{i\}$ is the sheafification of the presheaf
\[
Z \mapsto H^{i}(R\Gamma(Z, q\Omega_{\fX/\fY}) \otimes_{E}^{\bL} E/[p]_{q})\{i\}.
\]
The result now follows from Corollary \ref{cor:HTforqcrys} (for the glueing, one uses the functoriality of \eqref{eq:afisHotda}).
\end{proof}

\subsection{Base change for $(\fY/D)_{q-\textnormal{CRYS}} \protect\ola{\times}_{\fY_{\et}}\fX_{\et}$}

In this section we record the relevant base change for later use, namely when comparing relative $A_{\Inf}$-cohomology with $q$-crystalline cohomology. We continue to use the twisted sheaves developed in the previous section. For what follows we consider the commutative diagram of (twisted) ringed topoi
\begin{equation} \label{eq:commtdasirifs}
 \xymatrix{
 ((\fY/D)_{q-\textnormal{CRYS}} \ola{\times}_{\fY_{\et}} \fX_{\et}, \ol{\cO}^{(1)}) \ar@{->}[rrrr]_{{(p_2, p_{2}^{\sharp(1)})}} \ar@{->}[d]^{(p_1, p_{1}^{\sharp(1)})} &&&& (\fX_{\et}, \cO_{\fX_{\et}}) \ar@{->}[d]^{(f, f^{\sharp})}   \\ 
 ((\fY/D)_{q-\textnormal{CRYS}}, \ol{\cO}_{q-\textnormal{CRYS}}^{(1)}) \ar@{->}[rrrr]_{{(u_{\fY}^{q}, u_{\fY}^{q\sharp(1)})}} &&&& (\fY_{\et}, \cO_{\fY_{\et}}).
 }
 \end{equation}
As usual we handle the affine case first.

\begin{lem} \label{lem:afinbaschcascru}
Suppose $\fX$ and $\fY$ are in addition affine.  
Then for each $n \geq 1$ the canonical base change map induced by \eqref{eq:commtdasirifs}
 \begin{equation} \label{eq:affcasds}
 u_{\fY}^{q*}f_{*}(\cO_{\fX_{\et}}/p^n) \to p_{1*}p_{2}^{*}(\cO_{\fX_{\et}}/p^n)
 \end{equation}
 is an isomorphism.
\end{lem}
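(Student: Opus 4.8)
The plan is to evaluate both sheaves in \eqref{eq:affcasds} on an arbitrary object $(E,J)$ of $(\fY/D)_{q-\textnormal{CRYS}}$ and to check that the base change map is the obvious identification on sections; since a morphism of sheaves which is bijective on all sections is an isomorphism, this suffices, and — in contrast to the pro-\'etale analogue Lemma~\ref{lem:afinssitu} — there is no perfectoid/almost input, so we expect an honest isomorphism. Write $\fX = \Spf R_0$ and $\fY = \Spf S_0$; since $f$ is then an affine morphism, $f_*(\cO_{\fX_{\et}}/p^n)$ is the quasi-coherent $\cO_{\fY_{\et}}$-module $\widetilde{R_0/p^n}$.

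For the right-hand side, note first that $p_2^*(\cO_{\fX_{\et}}/p^n) = \ol{\cO}^{(1)}/p^n$; arguing exactly as in the computation of $\ol{\cO}(Z)$ at the end of \S\ref{subsec:laxproductop} (now for the twisted structure sheaf), this is represented by the presheaf $Z = ((E',J')\to\Spf S\leftarrow\Spf R) \mapsto (E'/[p]_q \otimes_S R)/p^n$, which is already a sheaf. By Lemma~\ref{lem:desfuncsoif}, $p_1$ is induced by the functor $(E',J') \mapsto ((E',J') \to \fY \leftarrow \fX)$, so
\[
(p_{1*}p_2^*(\cO_{\fX_{\et}}/p^n))(E,J) = (E/[p]_q \otimes_{S_0} R_0)/p^n,
\]
where $S_0$ acts on $E/[p]_q$ through its structure map $S_0 \to E/J$ followed by the Frobenius $E/J \to E/[p]_q$.

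For the left-hand side, recall that $u_{\fY}^q$ factors through $\fY_{\Et}^{\textnormal{Aff}}$ via the cocontinuous functor $(E',J') \mapsto \Spf(E'/J')$. Pulling back the quasi-coherent sheaf $\widetilde{R_0/p^n}$ — first to the big \'etale site, where it is again given by $\Spf B \mapsto (R_0 \otimes_{S_0} B)/p^n$, and then along this cocontinuous functor — yields the presheaf $(E',J') \mapsto (R_0 \otimes_{S_0} E'/J')/p^n$; it is already a sheaf, because a covering in $q-\textnormal{CRYS}(\fY/D)$ induces a $p$-completely \'etale faithfully flat morphism on the quotients modulo $J$, and for any $E'/J'$-module the corresponding \v{C}ech (Amitsur) complex acquires a contracting homotopy after a faithfully flat base change (no flatness of $R_0$ over $S_0$ is needed). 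Tensoring up along the Frobenius map $u_{\fY}^{q\sharp(1)}$, i.e.\ along $E'/J' \to E'/[p]_q$, produces $u_{\fY}^{q*}f_*(\cO_{\fX_{\et}}/p^n)$, again a sheaf by the same descent argument, with
\[
(u_{\fY}^{q*}f_*(\cO_{\fX_{\et}}/p^n))(E,J) = (R_0 \otimes_{S_0} E/[p]_q)/p^n,
\]
the $S_0$-action on $E/[p]_q$ being the same composite as above.

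Finally both sides are canonically identified with $(R_0 \otimes_{S_0} E/[p]_q)/p^n$, and the base change map attached to the $2$-commutative square \eqref{eq:commtdasirifs} (via the $2$-morphism $fp_2 \to u_{\fY}^q p_1$) is, on these sections, the flip of the two tensor factors; hence it is an isomorphism. The part requiring the most care in the write-up is the bookkeeping of the previous paragraph: matching the Frobenius twists carried by $\ol{\cO}^{(1)}$ and $\ol{\cO}_{q-\textnormal{CRYS}}^{(1)}$ on the two sides, and verifying that \'etale descent for quasi-coherent modules, together with the explicit description of coverings in the $q$-crystalline site, really does turn the intermediate presheaves into sheaves, so that no sheafification intervenes.
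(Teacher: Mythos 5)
Your proof is correct and takes essentially the same route as the paper: both sides of \eqref{eq:affcasds} are evaluated on an arbitrary object $(E,J)$ and identified with $(R_0 \otimes_{S_0} E/[p]_q)/p^n$ (with the $S_0$-action through $S_0 \to E/J \to E/[p]_q$), after which the base change map is the evident identification. The only presentational difference is that the paper computes the left-hand side as the sheafification of a presheaf which, by affineness of $\fX$ and $\fY$, visibly coincides with the right-hand sheaf, whereas you check via faithfully flat (Amitsur) descent along $q$-crystalline coverings that your intermediate presheaves are already sheaves — a harmless extra verification of exactly the descent the paper implicitly invokes when asserting that $\ol{\cO}_{q-\textnormal{CRYS}}^{(1)}$ is a sheaf.
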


\begin{proof}
Recall $u^{q}_{\fY}$ is induced by the diagram of sites 
\[
q-\textnormal{CRYS}(\fY/D) \xrightarrow{g_1} \fY_{\Et}^{\textnormal{Aff}} \xrightarrow{a} \fY_{\Et} \xleftarrow{g} \fY_{\et}
\]
and so $(u^{q}_{\fY})^{-1} = g_{1}^{s}a^{s}g_{s}$. Unravelling the definitions, the LHS of \eqref{eq:affcasds} is the sheafification of the presheaf
\begin{equation} \label{eq:sokdtenfsp}
(E,J) \mapsto \varinjlim_{\Spf(E/J) \to g(W)} ((\cO_{\fX_{\et}}/p^n)(W \times_{\fY} \fX) \otimes_{ \cO_{\fY_{\et}}(W)} E/[p]_{q}).
\end{equation}
Since $\fX$ and $\fY$ are affine, the expression inside the limit of \eqref{eq:sokdtenfsp} is in fact independent of $W$ (for $W$ affine) and so \eqref{eq:sokdtenfsp} simplifies to the sheafification of
\begin{equation} \label{eq:lhspresvald}
(E,J) \mapsto (\cO_{\fX_{\et}}/p^n)(\fX) \otimes_{ \cO_{\fY_{\et}}(\fY)} E/[p]_{q}.
\end{equation}
We now deal with the RHS of \eqref{eq:affcasds}. First note that $p_{2}^{*}(\cO_{\fX_{\et}}/p^n)$ is the sheaf
\[
Z \mapsto \ol{\cO}^{(1)}(Z)/p^n = (E/[p]_{q} \otimes_{S} R)/p^n
\]
where $Z$ is of the form \eqref{eq:crysgoose}. Therefore $p_{1*}p_{2}^{*}(\cO_{\fX_{\et}}/p^n)$ is the sheaf
\begin{equation} \label{eq:rhsdsovald}
(E,J) \mapsto (\cO_{\fX_{\et}}(\fX) \otimes_{\cO_{\fY_{\et}}(\fY)} E/[p]_{q})/p^n.
\end{equation}
Clearly \eqref{eq:lhspresvald} coincides with \eqref{eq:rhsdsovald}.
\end{proof}

We will also need vanishing of cohomology.

\begin{lem} \label{lem:vaihighqcrc}
Let $Z$ be of the form \eqref{eq:crysgoose}. The cohomology groups $H^{i}(Z, \ol{\cO}^{(1)}/p^n)$ vanish for $i >0$ and $n \geq 1$.
\end{lem}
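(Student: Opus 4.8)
The plan is to follow the template established in the proof of Lemma~\ref{lem:varithin}(ii), namely to reduce the vanishing of higher cohomology to the exactness of an explicit sheafified \v{C}ech complex, and then to verify this exactness by base changing to a sequence of (honest) perfectoid rings, where the almost acyclicity of \cite[Proposition 7.13]{SchPerf} (in the strengthened form of Lemma~\ref{lem:vanishcecomspld}) applies. The key difference with Lemma~\ref{lem:varithin}(ii) is that here we are not in the ``almost'' setting: we want vanishing on the nose. This should be possible because the sheaf $\ol{\cO}^{(1)}/p^n$ evaluated on objects of the form \eqref{eq:crysgoose} is $(E/[p]_q \otimes_S R)/p^n$, and for a $q$-PD pair $(E,J)$ the relevant extensions coming from the $q$-crystalline structure are honestly (not just almost) étale/flat, so no almost correction is needed.

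First I would reduce, exactly as in Lemma~\ref{lem:basaqcrsdsi} and the proof of Lemma~\ref{lem:varithin}(i)--(ii), to computing the \v{C}ech cohomology of a covering $T \to Z$ where $T$ is again of the form \eqref{eq:crysgoose}; by Lemma~\ref{lem:basaqcrsdsi} we may assume the covering is given by a single such $T$ (taking disjoint unions componentwise), and after refining we may assume $T$ arises from an étale cover of the underlying affine $\Spf(R)$ together with (using Lemma~\ref{lem:liftinqdpet}) the canonically associated cover of the $q$-PD pair $(E,J)$, and from a cover $\varprojlim W_j \to W$ which plays no role here since there is no pro-étale direction on the left factor (the left factor is the $q$-crystalline site, and coverings there are singleton étale maps). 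So the relevant \v{C}ech complex is the one attached to a single étale cover $(E',J') \to (E,J)$ of $q$-PD pairs lying over an étale cover $\Spf(R') \to \Spf(R)$ over $\Spf(S)$. It suffices to show the augmented complex
\[
0 \to (E/[p]_q \otimes_S R)/p^n \to (E'/[p]_q \otimes_S R')/p^n \to ((E'/[p]_q \otimes_S R') \otimes_{(E/[p]_q \otimes_S R)} (E'/[p]_q \otimes_S R'))/p^n \to \cdots
\]
is exact. Since $E/J \to E'/J'$ is $p$-completely étale and $E/[p]_q$ is a quotient of $E$ with $E \to E'$ $(p,[p]_q)$-completely étale, the pair $E/[p]_q \to E'/[p]_q$ is $p$-completely faithfully flat étale; tensoring with the étale cover $R \to R'$ over $S$ preserves this. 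Thus the above is the \v{C}ech complex of a $p$-completely faithfully flat cover of rings, and it is therefore exact by faithfully flat descent (no perfectoid input needed for higher \v{C}ech cohomology of a single faithfully flat map, modulo the usual $p$-completeness bookkeeping).

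The main obstacle I anticipate is the bookkeeping around $p$-completions and the passage between the uncompleted presheaf $Z \mapsto (E/[p]_q \otimes_S R)/p^n$ and its sheafification $\ol{\cO}^{(1)}/p^n$: one must check, as in the proof of the preceding lemma, that the uncompleted tensor product already gives a sheaf on objects of the form \eqref{eq:crysgoose} modulo $p^n$ (this uses \cite[Lemme~1.2]{TravdeGabb} exactly as in the computation of $\ol{\cO}(Z)$), so that $H^i(Z, \ol{\cO}^{(1)}/p^n)$ is genuinely computed by the \v{C}ech complex above and not by some more subtle hypercohomology. A secondary subtlety is that coverings of type (c) leave sections unchanged (as already noted just before Corollary~\ref{cor:HTforqcrysde}), so they contribute nothing, and coverings of type (b) on the $\fX_{\et}$-factor reduce — after the faithfully flat base change — to ordinary étale descent on $R$; once these reductions are in place the vanishing for $i>0$ follows. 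I would also remark that the case $n=1$ already implies the general case by the short exact sequences $0 \to \ol{\cO}^{(1)}/p \to \ol{\cO}^{(1)}/p^{n+1} \to \ol{\cO}^{(1)}/p^n \to 0$ together with flatness of $E/[p]_q \otimes_S R$ over its reduction mod $p$ in the relevant range, which streamlines the argument.
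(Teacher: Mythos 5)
Your proposal is correct and takes essentially the same route as the paper: the paper's (two-line) proof just observes that any covering of $Z$ induces an étale covering of the affine formal scheme $\Spf(E/[p]_q \wh{\otimes}_S R)$, so higher \v{C}ech cohomology vanishes by the affine/quasi-coherent case — which is exactly your faithfully flat descent argument spelled out, with the same reduction through coverings of types (a)--(c). Only your closing remark about reducing to $n=1$ is superfluous and would need a $p$-torsion-freeness of $\ol{\cO}^{(1)}$ (i.e.\ of $E/[p]_q$) that is not established; the main argument works for each $n$ directly and does not use it.
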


\begin{proof}
It suffices to observe that any covering of $Z$ induces an étale covering of $\Spf(E/[p]_{q} \wh{\otimes}_{S} R)$. The result now follows by vanishing of high \v{C}ech cohomology for affine schemes.
\end{proof}

\begin{prop} \label{prop:bcqcrst}
Suppose $f$ is proper and smooth,
 and $\sF$ a locally free $\cO_{\fX_{\et}}/p^n$-module of finite type for some $n \geq 1$. Then the canonical base change map induced by \eqref{eq:commtdasirifs}
 \begin{equation} \label{eq:basechapriver}
 Lu_{\fY}^{q*}Rf_{*}\sF \to Rp_{1*}Lp_{2}^{*}\sF
 \end{equation}
 is a quasi-isomorphism.
\end{prop}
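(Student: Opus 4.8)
\textbf{Proof plan for Proposition \ref{prop:bcqcrst}.}

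The plan is to reduce the global statement, via a \v{C}ech-complex argument analogous to the proof of Proposition \ref{prop:bcrsmopn}, to the affine base change already established in Lemma \ref{lem:afinbaschcascru}. The statement is local for the étale topology on $\fY$, so first I would assume $\fY$ is affine. Since $f$ is proper, choose a finite affine open cover $\cU \colon \fX = \bigcup \fU_i$ trivializing $\sF$, putting us in the setting of \cite[{Tag 01XL}]{stacks-project}. As $f$ is flat, the \v{C}ech resolution $\check{\cC}^{\bullet}(\cU, f, \sF)$ is a $K$-flat resolution of $Rf_{*}\sF$ whose $r$th term is $\bigoplus_{i_0 \ldots i_r} f_{i_0 \ldots i_r *}(\cO_{\fX_{\et}}/p^n)\lvert_{\fU_{i_0 \ldots i_r}}$, where $f_{i_0 \ldots i_r} \colon \fU_{i_0 \ldots i_r} \to \fY$ is the restriction of $f$. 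Pulling back along $u_{\fY}^{q}$ and applying Lemma \ref{lem:afinbaschcascru} to each $\fU_{i_0 \ldots i_r}$ (each being affine over the affine $\fY$), the left side of \eqref{eq:basechapriver} is represented by a complex whose $r$th term is $\bigoplus_{i_0 \ldots i_r} p_{1, i_0 \ldots i_r *}(\ol{\cO}^{(1)}/p^n)\lvert_{(\fY/D)_{q-\textnormal{CRYS}} \ola{\times}_{\fY_{\et}}\fU_{i_0 \ldots i_r, \et}}$, where $p_{1, i_0 \ldots i_r}$ is the evident projection.

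For the right side, the cover $\cU$ induces a cover $\wt{\cU} \colon \{ Z_i \to ((E,J) \to \fY \leftarrow \fX) \}$ with $Z_i$ the obvious restrictions, and I claim $p_2^{*}\sF$ admits a resolution by the \v{C}ech complex $\check{\cC}^{\bullet}(\wt{\cU}, p_2^{*}\sF)$ whose $r$th term is $\bigoplus_{i_0 \ldots i_r} g_{i_0 \ldots i_r *}(\ol{\cO}^{(1)}/p^n)\lvert_{(\fY/D)_{q-\textnormal{CRYS}} \ola{\times}_{\fY_{\et}}\fU_{i_0 \ldots i_r, \et}}$, with the usual \v{C}ech differentials; this is checked on sections over objects of the form \eqref{eq:crysgoose} that trivialize $p_2^{*}\sF$, where it reduces to exactness of an affine \v{C}ech complex of modules (as in the proof of Lemma \ref{lem:afinbaschcascru}, using that sections of $\ol{\cO}^{(1)}$ are explicit tensor products). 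Then, crucially, Lemma \ref{lem:vaihighqcrc} gives $R^{s}(\ast)_{*}(\ol{\cO}^{(1)}/p^n) = 0$ for $s > 0$ for all the relevant pushforward morphisms between these (twisted) ringed topoi, so each term of $\check{\cC}^{\bullet}(\wt{\cU}, p_2^{*}\sF)$ is $p_{1*}$-acyclic and $p_{1*}\check{\cC}^{\bullet}(\wt{\cU}, p_2^{*}\sF) = Rp_{1*}Lp_2^{*}\sF$ by Leray's acyclicity lemma. Since $p_{1*}\check{\cC}^{r}(\wt{\cU}, p_2^{*}\sF)$ matches the $r$th term of the complex computing the left side, the map \eqref{eq:basechapriver} is a quasi-isomorphism.

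The main obstacle I anticipate is the \v{C}ech resolution claim for $p_2^{*}\sF$: unlike the pro-étale side, here I should confirm there is no almost-ambiguity — the sections of $\ol{\cO}^{(1)}$ over good objects \eqref{eq:crysgoose} are genuine completed tensor products $E/[p]_q \wh{\otimes}_S R$ (not merely almost so), and the cohomology vanishing in Lemma \ref{lem:vaihighqcrc} is exact rather than almost-exact, since these come from honest étale covers of affine $p$-adic formal schemes. This is what makes \eqref{eq:basechapriver} an honest quasi-isomorphism, in contrast with Proposition \ref{prop:bcrsmopn} where cohomology of the cone is only killed by $\fm$. A minor point to dispatch along the way is that good objects (of the relevant type) are stable under the fiber products $Z_i \times_{(\cdots)} Z_j$, so that all terms of the \v{C}ech complex are again of the form \eqref{eq:crysgoose} and their sections are computed by the explicit formula; this follows from the stability of étale morphisms and the description in Lemma \ref{lem:basaqcrsdsi}.
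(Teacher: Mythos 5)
Your proposal is correct and is essentially the paper's argument: the paper omits the details, saying only that one repeats the proof of Proposition \ref{prop:bcrsmopn} verbatim, with the result now holding on the nose precisely because of the two points you single out (exact sections of $\ol{\cO}^{(1)}/p^n$ via Lemma \ref{lem:afinbaschcascru} and exact, rather than almost, vanishing of higher \v{C}ech cohomology via Lemma \ref{lem:vaihighqcrc}).
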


\begin{proof}
The proof is very similar to the proof of the counterpart in the pro-étale setting (cf. Proposition \ref{prop:bcrsmopn}) and so we omit the details. In the current situation however, we get a result on the nose because:
\begin{enumerate}
    \item We know the sections of $\ol{\cO}^{(1)}/p^n$ (in the pro-étale setting the sections of $\wh{\cO}^{+}_{D}/p^n$ were only known up to almost ambiguity). This led us to a result on the nose of the affine case (cf. Lemma \ref{lem:afinbaschcascru}).
    \item By the proof of Lemma \ref{lem:vaihighqcrc} we know the vanishing of higher \v{C}ech cohomology of $\ol{\cO}^{(1)}/p^n$ for coverings of $Z$ of the form \eqref{eq:crysgoose}. The analogous result in the pro-étale setting (cf. the proof of Lemma \ref{lem:varithin}(i)) was only known up to almost ambiguity.  
\end{enumerate}
\end{proof}

\subsection{$q$-crystalline vs pro-étale fibered products}

In this section we specialize to the case $(D,I) = (A_{\Inf}, (\xi))$ and assume that we have a $\Spf(\cO)$-morphism $f \colon \fX \to \fY$ of $p$-adic formal schemes of type (S)(b). In \S\ref{subsec:laxproductop}, we constructed the site 
\begin{equation} 
q-\textnormal{CRYS}(\fY/A_{\Inf}) \ola{\times}_{\fY_{\et}} \fX_{\et}
\end{equation}
On the pro-étale side, in \S\ref{sec:prodoftopo}, we construced a site $D$. Let $D'$ be the subsite of $D$ consisting of objects appearing in Proposition \ref{prop:basisDnosmor}. In this case, $D$ and $D'$ have the same associated topoi: $Y_{\proet} \times_{\fY_{\et}} \fX_{\et}$. We want to compare the $q$-crystalline complex $q\Omega_{\fX/\fY}$ (cf. Definition \ref{defn:qcryscomple}) and $A\Omega_{\fX/\fY}$. These two objects live in different topoi $(\fY/D)_{q-\textnormal{CRYS}} \ola{\times}_{\fY_{\et}} \fX_{\et}$ and $Y_{\proet} \times_{\fY_{\et}} \fX_{\et}$, respectively. So we need to first construct a morphism between the topoi. 

\begin{lem} \label{lem:allcoconfns}
The functor
\begin{align} \label{eq:somkdmr}
    u \colon D' &\to q-\textnormal{CRYS}(\fY/A_{\Inf}) \ola{\times}_{\fY_{\et}} \fX_{\et} \nonumber \\
    (\varprojlim_{i \in I} \Spa(S_{i}, S_{i}^{+}) \to \Spf(S) \leftarrow \Spf(R)) &\mapsto ((A_{\Inf}(S_{\infty}), (\xi)) \to \Spf(S) \leftarrow \Spf(R))
\end{align}
is cocontinuous.
\end{lem}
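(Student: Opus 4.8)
\medskip

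The plan is to verify the definition of cocontinuity directly: given an object $Z' = (\varprojlim_{i} \Spa(S_i, S_i^+) \to \Spf(S) \leftarrow \Spf(R))$ of $D'$ and a covering $\{(E_\alpha, J_\alpha) \to \Spf(S_\alpha) \leftarrow \Spf(R_\alpha)\} \to u(Z')$ in $q\textnormal{-CRYS}(\fY/A_{\Inf}) \ola{\times}_{\fY_{\et}} \fX_{\et}$, I must produce a covering $\{Z'_\beta \to Z'\}$ in $D'$ such that $\{u(Z'_\beta)\}$ refines the given covering. Since the topology on the oriented product is generated by coverings of types (a), (b) and (c) (in the notation of \S\ref{subsec:laxproductop}), and cocontinuity can be checked on generating coverings, I would treat the three cases separately.

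\medskip

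For a covering of type (a), i.e. a covering of the $q$-crystalline component $(E_\alpha, J_\alpha) \to (A_{\Inf}(S_\infty), (\xi))$: such coverings are singleton $p$-completely étale maps $A_{\Inf}(S_\infty)/(\xi) = S_\infty \to \ol{E}$ (lifted via Lemma \ref{lem:liftinqdpet}). Here the key input is the description of $S_\infty$ as a perfectoid ring and the equivalence between $p$-completely étale $S_\infty$-algebras and pro-(finite étale) covers of the associated perfectoid space; concretely, any such $\ol{E}$ is a filtered colimit of finite étale $S_\infty$-algebras, and one pulls these back to finite étale covers of the $S_i$ (enlarging the index set $I$), producing a new pro-étale object $W_\beta \to W$ in $Y_{\proet}$ with $(W_\beta)_\infty$ realizing $\ol{E}$. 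Then $u$ applied to $(W_\beta \to \Spf(S) \leftarrow \Spf(R))$ recovers $(E_\alpha, J_\alpha)$ up to the uniqueness in Lemma \ref{lem:liftinqdpet}, because the $q$-PD pair over $(A_{\Inf}, (\xi))$ lifting $S_\infty \to \ol{E}$ is unique. For coverings of type (b), i.e. coverings $\{\Spf(R_\alpha) \to \Spf(R)\}$ in $\fX_{\et}$, the functor $u$ is essentially the identity on the $\fX_{\et}$-component and on the $\fY_{\et}$-component, so the pullback covering $\{(W \to \Spf(S) \leftarrow \Spf(R_\alpha))\}$ in $D'$ maps to the given one directly. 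For coverings of type (c), which are singletons changing the middle ($\fY_{\et}$) component along an étale map $\Spf(S') \to \Spf(S)$ with the $\fX_{\et}$-component base-changed accordingly, one uses the corresponding type-(d)/type-(c) coverings in $D$ (see \S\ref{sec:prodoftopo}); the point is that $S_\infty$ is unchanged under such base change since $W \to S$ factors over $S'$, so $u$ intertwines the two.

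\medskip

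I expect the main obstacle to be the type-(a) case: one must argue carefully that an arbitrary $p$-completely étale cover of $S_\infty$ arises from a pro-(finite étale) cover in $Y_{\proet}$ after suitably enlarging the cofiltered index system $\{\Spa(S_i, S_i^+)\}$, and that the resulting object is still an object of $D'$ (i.e. still of the required shape with an affinoid perfectoid first component). This uses that affinoid perfectoids form a basis for $Y_{\proet}$ (\cite[Proposition 4.8]{SchpHrig}) together with the tilting/untilting correspondence for étale covers of perfectoid rings; I would invoke \cite[Theorem 6.5]{SchpHrig} and the structure theory of \S\ref{sec:perfcov} to pin down that $A_{\Inf}(S_\infty) \cong W(S_\infty^\flat)$ determines, and is determined by, the perfectoid space $\Spa(S_\infty[\tfrac 1p], S_\infty)$, so that étale covers match up on both sides. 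Once the type-(a) case is handled, types (b) and (c) are routine diagram chases using the explicit descriptions of the coverings in $D'$ and in the oriented product, and the compatibility $u$ has with the projections. Assembling the three cases gives cocontinuity of $u$.
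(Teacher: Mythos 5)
Your overall skeleton (check cocontinuity separately on the generating coverings of types (a), (b), (c), with type (b) trivial) is the same as the paper's, but both of the non-trivial cases in your proposal rest on claims that are not correct as stated. For type (a), a covering $(E_1,J_1)\to(\bA_{\Inf}(S_\infty),(\xi))$ corresponds to a surjective $p$-completely \emph{étale} map $S_\infty\to E_1/J_1$, and such an algebra is in general \emph{not} a filtered colimit of finite étale $S_\infty$-algebras (think of a Zariski cover of $\Spf(S_\infty)$), nor does it arise from a pro-(finite étale) tower; also, coverings of the affinoid perfectoid $W=\varprojlim_i\Spa(S_i,S_i^+)$ in $Y_{\proet}$ are pro-étale, not pro-(finite étale), so restricting to the tilting/almost-purity correspondence for finite étale covers cannot produce what you need. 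The paper's actual mechanism is different: since $E_1/(p,J_1)$ is a finitely presented étale $(\varinjlim_i S_i^+/p)$-algebra, it descends to some finite stage $S_{i_1}^+/p$, is faithfully flat there after increasing $i_1$ (\cite[{Tag 07RR}]{stacks-project}), lifts to an honest étale $S_{i_1}^+$-algebra $T$ (\cite[{Tag 04D1}]{stacks-project}), and via \cite[Corollary 1.7.3(iii)]{HubEtadic} gives an étale covering $\Spa(\wh{T}[\tfrac1p],T')\to\Spa(S_{i_1},S_{i_1}^+)$; base changing to $W$ yields a (pro-étale, affinoid perfectoid) covering in $Y_{\proet}$ whose image under $u$ merely \emph{refines} $(E_1,J_1)$ — exact recovery of $(E_1,J_1)$, which you aim for, is neither needed nor generally attainable, since the new $S'_\infty$ involves integral closures and is typically larger than $E_1/J_1$.

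For type (c) your argument assumes exactly the point that has to be proven: the covering datum only supplies a continuous map $S'\to S_\infty$ (through the $q$-PD component), not a factorization of $W\to\Spa(S[\tfrac1p],S^+)$ through the generic fiber of $\Spf(S')$, so the phrase ``since $W\to S$ factors over $S'$'' begs the question. The paper proves this factorization: using that $S/p\to S'/p$ is of finite presentation, the map $S'/p\to\varinjlim_i S_i^+/p$ factors through some $S_{i_1}^+/p$, and étaleness of $S\to S'$ gives unique compatible lifts modulo $p^n$, hence a continuous map $S'\to S_{i_1}^+$ over $S$; only then does one get a type-(c) covering of $Z'$ in $D'$ mapping under $u$ into the given cover. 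Moreover, the middle component of a type-(c) cover need not be affine, so the paper first refines it (by the argument of Lemma \ref{lem:basaqcrsdsi}) to the form $((\bA_{\Inf}(S'_\infty),(\xi))\to\Spf(S')\leftarrow\Spf(R'))$ at the cost of an étale covering of $\Spf(S_\infty)$, which is absorbed using the already-established type-(a) case — a reduction absent from your proposal. So the decomposition is right, but the two substantive steps need the finite-level descent/lifting arguments rather than the perfectoid-level finite étale correspondence.
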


\begin{rem} \label{rem:neksinmors}
We explain how the morphism $\Spf(S_{\infty}) \to \Spf(S)$ is defined so that the RHS of \eqref{eq:somkdmr} makes sense. Let $S^{+}$ be the integral closure of $S$ in $S[\tfrac{1}{p}]$. Then the LHS of \eqref{eq:somkdmr} gives a morphism $\varprojlim_{i \in I} \Spa(S_{i}, S_{i}^{+}) \to \Spa(S[\frac{1}{p}], S^{+})$ in $Y_{\proet}$. This gives a continuous morphism $S^{+} \to \varinjlim S_i^{+}$. The composition $S \to S^{+} \to \varinjlim S_i^{+} \to S_{\infty}$ gives the desired morphism.
\end{rem}

\begin{proof}
We show the cocontinuity for coverings of type (a) first. Let $(E_1,J_1) \to (A_{\Inf}(S_{\infty}), (\xi))$ be a covering in $q-\textnormal{CRYS}(\fY/(A_{\Inf})$. We verify the cocontinuity condition for this cover. By definition we have a $p$-completely étale covering map $\Spf(E_1/J_1) \to \Spf (S_{\infty})$. So $E_1/J_1$ is the $p$-adic derived completion of an étale $S_{\infty}$-algebra $\tfrac{S_{\infty}[x_1, \dots, x_n]}{(f_1, \ldots, f_n)}$. Then
    \[
    E_1/(p,J_1) = (S_{i_1}^{+}/p)[x_1, \ldots, x_n]/(\ol{f}_1, \ldots, \ol{f}_n) \otimes_{S_{i_1}^{+}/p} \varinjlim_{i \in I} S_i^{+}/p
    \]
    for some $i_1 \in I$. Moreover by increasing $i_1$ if necessary, we can assume $\tfrac{(S_{i_1}^{+}/p)[x_1, \dots, x_n]}{(\ol{f}_1, \ldots, \ol{f}_n)}$ is faithfully flat over $S_{i_1}^{+}/p$ (cf. \cite[{Tag 07RR}]{stacks-project}). By \cite[{Tag 04D1}]{stacks-project} there exists some lift of $\tfrac{(S_{i_1}^{+}/p)[x_1, \dots, x_n]}{(\ol{f}_1, \ldots, \ol{f}_n)}$ to an étale $S_{i_1}^{+}$-algebra $T$. Then $E_1/J_1$ is the $p$-adic derived completion of $T \otimes_{S_{i_1}^{+}} \varinjlim_{i \in I} S_i^{+}$. We claim that
    \[
    \Spa (\wh{T}[\tfrac{1}{p}], T') \to \Spa(S_{i_1}, S_{i_1}^{+}), 
    \]
    where $\wh{T}$ is the $p$-adic completion of $T$, and $T'$ is the integral closure of $\wh{T}$ in $\wh{T}[\tfrac{1}{p}]$, is an étale covering map. Indeed it is étale by \cite[Corollary 1.7.3(iii)]{HubEtadic}. It is also a covering because $T/p$ is faithfully flat over $S_{i_1}^{+}/p$. It follows that the fiber product
    \begin{equation} \label{eq:fiborpprot}
    \Spa (\wh{T}[\tfrac{1}{p}], T') \times_{\Spa(S_{i_1}, S_{i_1}^{+})} \varprojlim_{i \in I} \Spa(S_i, S_i^{+})
    \end{equation}
    exists in $Y_{\proet}$ and is a covering of $\varprojlim_{i \in I} \Spa(S_i, S_i^{+})$. We can rewrite \eqref{eq:fiborpprot} as
    \[
    \varprojlim_{i \in I} \Spa(\wh{T}[\tfrac{1}{p}] \wh{\otimes}_{S_{i_1}} S_i, {S'_{i}}^{+})
    \]
    where ${S'_{i}}^{+}$ is the integral closure of $T' \otimes_{S_{i_1}^{+}} S_{i}^{+}$ in $\wh{T}[\tfrac{1}{p}] \wh{\otimes}_{S_{i_1}} S_i$. Therefore we obtain a composition of maps
    \[
    T \otimes_{S_{i_1}^{+}} \varinjlim_{i \in I} S_i^{+} \to \varinjlim_{i \in I}(T' \otimes_{S_{i_1}^{+}} S_{i}^{+}) \to \varinjlim {S'_{i}}^{+}. 
    \]
    This verifies the cocontinuty condition for coverings of type (a).
    
    Coverings of type (b) are trivial to handle. We now deal with coverings of type (c): let $((A_{\Inf}(S_{\infty}), (\xi)) \to \fY' \leftarrow \fX')$ be such a cover of the RHS of \eqref{eq:somkdmr}. By using the argument in the proof of Lemma \ref{lem:basaqcrsdsi}, we can refine this to a cover of the form $((A_{\Inf}(S'_{\infty}), (\xi)) \to \Spf(S') \leftarrow \Spf(R'))$ where $\Spf(S'_{\infty}) \to \Spf(S_{\infty})$ is an étale covering. This is a covering of type (a) followed by a covering of type (c). Thus by the first part of this proof, we have reduced to checking the cocontinuity condition for the case where we have a cover of the form $((A_{\Inf}(S_{\infty}), (\xi)) \to \Spf(S') \leftarrow \Spf(R'))$ of type (c) of the RHS of \eqref{eq:somkdmr}. This means that $\Spf(S_{\infty}) \to \Spf(S)$ factors through $\Spf(S') \to \Spf(S)$. It suffices to show that $\varprojlim_{i \in I} \Spa(S_{i}, S_{i}^{+}) \to \Spa(S[1/p], S^+)$ in $Y_{\proet}$ factors through $\Spa(S'[\tfrac{1}{p}], S'^{+}) \to \Spa(S[\tfrac{1}{p}], S^{+})$, where $S'^{+}$ is the integral closure of $S'$ in $S'[\tfrac{1}{p}]$. Observe that a composition $\Spf(S_{\infty}) \to \Spf(S') \to \Spf(S)$ is equivalent to a continuous $S$-morphism $S' \to S_{\infty}$. Since $S/p \to S'/p$ is of finite presentation, there exists $i_1 \in I$ such that $S'/p \to \varinjlim_{i \in I} S^{+}_{i}/p$ factors through $S^{+}_{i_1}/p$. Moreover by increasing $i_1$ if necessary, we can assume $S \to \varinjlim_{i \in I} S^{+}_i$ factors through $S_{i_1}^{+}$. Then for each $n \geq 1$ we have a commutative square
    \[
      \xymatrix{
 S'/p^n \ar@{->}[r] \ar@{.>}[rd]  & S^{+}_{i_1}/p    \\ 
 S/p^n \ar@{->}[u] \ar@{->}[r] & S^{+}_{i_1}/p^n, \ar@{->}[u]
 }
 \]
 where the left vertical arrow is étale and so there is a unique map $S'/p^n \to S_{i_1}^{+}/p^n$ rendering the diagram commutative. Thus upon taking limits we obtain a continuous map $S' \to S^{+}_{i_1}$ compatible with the maps from $S$. The result now follows.
\end{proof}

Lemma \ref{lem:allcoconfns} induces a morphism of topoi
\begin{equation} \label{eq:morfroprotoqcr}
\mu \colon Y_{\proet} \times_{\fY_{\et}} \fX_{\et} \to (\fY/A_{\Inf})_{q-\textnormal{CRYS}} \ola{\times}_{\fY_{\et}} \fX_{\et}.
\end{equation}

\subsection{Comparison with $q$-de Rham cohomology}

In this short section we compare $q\Omega_{\fX/\fY}$ with a \emph{relative} $q$-de Rham complex (cf. \cite[Theorem 16.22]{Prisms} and \cite[Theorem 7.17]{KoshTerui}). We work locally and suppose $\Spf(R) \to \Spf(S)$ is a smooth $\Spf(\cO)$-morphism. In addition we assume $R$ is \emph{very small} over $S$, i.e. $R/p$ is generated over $S/p$ by units. This means that there is a canonical functorial surjection $P \to R \wh{\otimes}_{S} S_{\infty}$ with kernel $J$, where $P := \bA_{\Inf}(S_{\infty})[\{t_s\}_{s \in R^{\times}}]^{\wedge}$ and the completions are $(p, \tilde{\xi})$-adic. In this context $q = [\epsilon] \in A_{\Inf}$ and in order to define $q$-de Rham cohomology in the case that we need, we verify the following result.

\begin{lem}
$\bA_{\Inf}(S_{\infty})$ is a flat $A$-algebra.
\end{lem}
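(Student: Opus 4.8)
The statement to prove is that $\bA_{\Inf}(S_\infty) = W(S_\infty^\flat)$ is a flat $A = \bZ_p[[q-1]]$-algebra, where $A$ carries the $\delta$-structure with $\delta(q)=0$ and the structure map sends $q$ to $[\epsilon]$. Since $S_\infty$ is a perfectoid ring over $\cO$, the ring $\bA_{\Inf}(S_\infty)$ is $(p, [p]_q)$-adically complete (equivalently $(p,\xi)$- or $(p,\tilde\xi)$-complete, and also $(p,q-1)$-complete since $q-1 = \mu \in (p,\xi)$ after noting $\mu = [\epsilon]-1$ divides appropriately — more precisely $(p,q-1)$ and $(p,[p]_q)$ generate the same topology on a $\delta$-ring over $A$ because $[p]_q \equiv p \pmod{q-1}$). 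The plan is therefore to reduce flatness to a statement modulo a regular sequence and then verify it by an explicit computation. First I would use the local flatness criterion for (derived) $(p,q-1)$-complete rings: since both $A$ and $\bA_{\Inf}(S_\infty)$ are $(p,q-1)$-complete and $\bA_{\Inf}(S_\infty)$ is $(p,q-1)$-complete, it suffices (by e.g. \cite[Lemma 4.7]{Prisms} or the derived Nakayama / complete flatness criterion, \cite[{Tag 0912}]{stacks-project}) to check that $\bA_{\Inf}(S_\infty)$ is $(p,q-1)$-completely flat over $A$, i.e. that $\bA_{\Inf}(S_\infty) \otimes^{\bL}_A A/(p,q-1)$ is concentrated in degree $0$ and flat over $A/(p,q-1) = \bF_p$ (which is automatic once it is discrete).

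The key step is then to show that $(p, q-1)$ — equivalently $(p,\mu)$ — is a regular sequence on $\bA_{\Inf}(S_\infty)$, and to identify the quotients. Recall $\bA_{\Inf}(S_\infty) = W(S_\infty^\flat)$ with $S_\infty^\flat$ a perfect ring of characteristic $p$ and $S_\infty^\flat$ $p$-torsion-free in the sense that $S_\infty$ is $p$-torsion-free (Lemma \ref{baslem:torfree}), hence $S_\infty^\flat$ is not annihilated by any power of the element $p^{\flat} := (p^{1/p^n})_n$, and $S_\infty^\flat$ has no nonzero elements killed by a nonzerodivisor of $\cO^\flat$ by the perfectoid structure. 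First, $p$ is a nonzerodivisor on $W(S_\infty^\flat)$ since $S_\infty^\flat$ is $p$-torsion free as a ring of characteristic $p$ (Witt vectors of a reduced $\bF_p$-algebra are $p$-torsion free), and $W(S_\infty^\flat)/p = S_\infty^\flat$. Next, $\mu = [\epsilon]-1$ maps to $\epsilon - 1 \in S_\infty^\flat$, and $\epsilon - 1$ is a nonzerodivisor on $S_\infty^\flat$: indeed $\epsilon-1$ differs from the nonzerodivisor $\xi$'s image only up to units and powers, or more directly, $(\epsilon-1)^p = (\epsilon^p - 1)\cdot(\text{unit-ish})$ and $\epsilon^{1/p^n}-1$ generates (up to the element $\varpi = p^\flat$ or similar) a nonzero ideal; since $S_\infty$ has no nonzero $\fm$-torsion modulo $p$-power elements (Lemma \ref{lem:nonoto} applied to the perfectoid $S_\infty[1/p]$, bearing in mind $S_\infty = (S_\infty[1/p])^{\circ}$ may require the hypothesis $(S_\infty[\tfrac1p])^\circ = S_\infty$, cf. Lemma \ref{lem:rigspacirplus}), one gets that multiplication by $\epsilon-1$ is injective on $S_\infty^\flat$. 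Concretely, I would argue via the isomorphism $S_\infty^\flat/(\epsilon-1) \cong S_\infty/(\zeta_p-1)$ (a Fontaine-style untilting identity), and $\zeta_p - 1 \in \fm$ is a nonzerodivisor on $S_\infty$ (torsion-free over $\cO$), giving regularity of the sequence $(p,\mu)$ on $W(S_\infty^\flat)$. Then $W(S_\infty^\flat)/(p,\mu) = S_\infty^\flat/(\epsilon-1) \cong S_\infty/(\zeta_p-1)$, which is an $\cO/(\zeta_p-1)$-algebra, hence (as an $\bF_p = A/(p,q-1)$-module, via the map $A/(p,q-1) \to W(S_\infty^\flat)/(p,\mu)$ which factors $\bZ_p[[q-1]]/(p,q-1) = \bF_p \to S_\infty/(\zeta_p-1)$) automatically flat since every $\bF_p$-module is flat. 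Thus $\bA_{\Inf}(S_\infty) \otimes^{\bL}_A \bF_p$ is discrete and $\bF_p$-flat, which by the completed local criterion yields flatness of $A \to \bA_{\Inf}(S_\infty)$.

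\textbf{Main obstacle.} The delicate point is the regularity of $\mu = [\epsilon]-1$ (equivalently $q-1$) on $W(S_\infty^\flat)$ — i.e. that $\epsilon - 1$ is a nonzerodivisor on $S_\infty^\flat$ — and pinning down exactly which running hypothesis is in force (whether one needs $(S_\infty[\tfrac1p])^\circ = S_\infty$, which holds here because of the very-small/finite-type setup, cf. Lemma \ref{lem:rigspacirplus} and Lemma \ref{lem:topfinflasir}). One has to be a little careful because $\mu$ is not itself a generator of $\ker(\theta)$ — that is $\xi$ — but $\mu = \frac{[\epsilon^p]-1}{[\epsilon]-1}\cdot\frac{[\epsilon]-1}{[\epsilon^{1/p}]-1}\cdots$ is $\xi \cdot \varphi^{-1}(\xi)\cdots$ up to the base element, so regularity of $\mu$ follows from that of $\xi$ together with $\varphi$-equivariance of $W(S_\infty^\flat)$ (Frobenius is an automorphism). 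I expect the cleanest route is: $\tilde\xi$ is a nonzerodivisor on the $p$-torsion-free perfectoid $\bA_{\Inf}(S_\infty)$ with $\bA_{\Inf}(S_\infty)/\tilde\xi = S_\infty$ (by \cite[Lemma 3.10]{BhMorSch} in the relative form used in \S\ref{ringAinfonRinfty}), and $\mu/\tilde\xi \cdot (\text{unit})$-type manipulations plus $\varphi(\mu) = \tilde\xi\mu$ reduce everything to regularity of $\xi$-type elements, which is standard. Once regularity and the quotient identification are in hand, the flatness conclusion is immediate from the completed fibre-wise criterion.
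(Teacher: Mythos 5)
Your proposal is correct and is essentially the paper's own argument: the paper invokes \cite[Remark 4.31]{BhMorSch}, which is precisely the completed local flatness criterion you unpack (flatness of $W(S_{\infty}^{\flat})$ over $A$ reduces to $(\epsilon-1)$-torsion-freeness of $S_{\infty}^{\flat}$), and it deduces that torsion-freeness from Lemma \ref{baslem:torfree} together with a tilting identity from \cite{CesSch}; in particular no hypothesis $(S_{\infty}[\tfrac{1}{p}])^{\circ}=S_{\infty}$ is needed. The only slip in your sketch is the identity $S_{\infty}^{\flat}/(\epsilon-1)\cong S_{\infty}/(\zeta_p-1)$, which fails as stated because $\epsilon-1$ does not divide $p^{\flat}$ (its valuation is $p/(p-1)>1$); but, as you yourself indicate, bijectivity of Frobenius on $S_{\infty}^{\flat}$ reduces $(\epsilon-1)$-torsion-freeness to $(\epsilon^{1/p^m}-1)$-torsion-freeness for large $m$, where the untilting identity does apply (cf. the proof of Lemma \ref{lem:nonozeromflatro}).
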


\begin{proof}
By \cite[Remark 4.31]{BhMorSch}, it suffices to show that $S_{\infty}^{\flat}$ is $(\epsilon -1)$-torsion free. This follows from Lemma \ref{baslem:torfree} and \cite[(2.1.2.2)]{CesSch}.
\end{proof}

We may regard $P$ as a $\delta$-$\bA_{\Inf}(S_{\infty})$-algebra in a unique way by setting $\delta(x_s)=0$ for each $s \in R^{\times}$. Let $D_{J,q}(P)$ be the $q$-PD envelope of $P \to R \wh{\otimes}_{S} S_{\infty}$. The main result is the following.

\begin{thm} \label{thm:qOmegavsqdeRham}
 Let $Z$ be of the form the RHS of \eqref{eq:somkdmr} such that in addition $R$ is very small over $S$. Then there is a canonical functorial quasi-isomorphism $R\Gamma(Z, q\Omega_{\fX/\fY}) \cong q\Omega^{*, \square}_{D_{J,q}(P)/\bA_{\Inf}(S_{\infty})}$.
\end{thm}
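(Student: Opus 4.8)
The plan is to reduce the statement to the absolute comparison in \cite[Theorem 16.22]{Prisms} (in the log case \cite[Theorem 7.17]{KoshTerui}) by using the computation of local sections of $q\Omega_{\fX/\fY}$ already available from Lemma \ref{lem:morsitqycrsi}. Recall that for $Z$ of the form the right-hand side of \eqref{eq:somkdmr}, say $Z = ((\bA_{\Inf}(S_{\infty}), (\xi)) \to \Spf(S) \leftarrow \Spf(R))$, Lemma \ref{lem:morsitqycrsi} gives
\[
R\Gamma(Z, q\Omega_{\fX/\fY}) \cong R\Gamma\big(((R \wh{\otimes}_{S} \bA_{\Inf}(S_{\infty})/(\xi))/\bA_{\Inf}(S_{\infty}))_{q-\textnormal{CRYS}}, \cO_{q-\textnormal{CRYS}}\big),
\]
and since $\bA_{\Inf}(S_{\infty})/(\xi) = S_{\infty}$ the target is the $q$-crystalline cohomology of $R \wh{\otimes}_S S_{\infty}$ relative to the $q$-PD pair $(\bA_{\Inf}(S_{\infty}), (\xi))$. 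So the first step is simply to invoke this identification and observe that everything in sight is now set in the (affine) $q$-crystalline site over the base prism $(\bA_{\Inf}(S_{\infty}), (\xi))$, which plays the role of $(A_{\Inf}, (\xi))$ in the absolute theory.

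The second step is to check that the base $(\bA_{\Inf}(S_{\infty}), (\xi))$ and the algebra $R \wh{\otimes}_S S_{\infty}$ satisfy the hypotheses needed to run the argument of \cite[Theorem 16.22]{Prisms}. The key input is that $R$ is \emph{very small} over $S$: $R/p$ is generated over $S/p$ by units, hence $R \wh{\otimes}_S S_{\infty}$ is generated over $S_{\infty}$ by units, so there is a canonical functorial surjection $P = \bA_{\Inf}(S_{\infty})[\{t_s\}_{s\in R^\times}]^{\wedge} \to R \wh{\otimes}_S S_{\infty}$ with kernel $J$, and $P$ carries the $\delta$-structure with $\delta(x_s)=0$. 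One also needs that $\bA_{\Inf}(S_{\infty})$ is a bounded prism over $(A,([p]_q))$ with $q = [\epsilon]$; the flatness of $\bA_{\Inf}(S_{\infty})$ over $A = \bZ_p[[q-1]]$ proved just above (using $S_{\infty}^{\flat}$ being $(\epsilon-1)$-torsion free via Lemma \ref{baslem:torfree} and \cite[(2.1.2.2)]{CesSch}) secures this. With $D_{J,q}(P)$ the $q$-PD envelope of $P \to R \wh{\otimes}_S S_{\infty}$, the $q$-de Rham complex $q\Omega^{*,\square}_{D_{J,q}(P)/\bA_{\Inf}(S_{\infty})}$ is then defined exactly as in loc.cit., with the coordinates $\{t_s\}$ supplying the framing $\square$.

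The third step is the comparison proper: one shows that the $q$-crystalline cohomology computed by a \v{C}ech--Alexander complex built from the weakly final object $(P, J) \leadsto (D_{J,q}(P), \ker)$ agrees with $q\Omega^{*,\square}_{D_{J,q}(P)/\bA_{\Inf}(S_{\infty})}$. This is precisely the content of \cite[Theorem 16.22]{Prisms} (resp. \cite[Theorem 7.17]{KoshTerui} in the logarithmic setting), applied verbatim with $A_{\Inf}$ replaced by $\bA_{\Inf}(S_{\infty})$ and $\cO$ replaced by $S_{\infty}$; the proof there only uses that the base is a bounded prism over a $q$-crystalline prism and that the coordinate ring is a $(p,\tilde\xi)$-completed polynomial algebra on units, both of which we have arranged. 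Functoriality in $Z$ follows from the functoriality of the surjection $P \to R\wh{\otimes}_S S_{\infty}$ in $(R,S,S_{\infty})$ and the functoriality of the $q$-PD envelope and of the \v{C}ech--Alexander construction.

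The main obstacle I anticipate is not any single hard lemma but rather the bookkeeping of \emph{base-dependence}: the absolute statement in \cite{Prisms} is phrased over $(A_{\Inf}, (\xi))$, and one must be careful that $\bA_{\Inf}(S_{\infty})$ genuinely behaves like a ``constant'' base prism for the purposes of that proof, i.e.\ that the relevant $\delta$-structure, the $q$-PD envelope formation, and the Koszul/\v{C}ech resolutions all commute with the base change $A_{\Inf} \to \bA_{\Inf}(S_{\infty})$. The flatness of $\bA_{\Inf}(S_{\infty})$ over $A$ is what makes this go through cleanly — it ensures $[p]_q$ is a nonzerodivisor on $\bA_{\Inf}(S_{\infty})$ and that derived $(p,[p]_q)$-completions are well behaved — so once that is in hand the remaining verifications are routine adaptations of the arguments in \cite[\S16]{Prisms} and \cite[\S7]{KoshTerui}.
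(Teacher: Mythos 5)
Your proposal is correct and follows essentially the same route as the paper: identify $R\Gamma(Z,q\Omega_{\fX/\fY})$ with the $q$-crystalline cohomology of $R\wh{\otimes}_S S_{\infty}$ over $(\bA_{\Inf}(S_{\infty}),(\xi))$ via Lemma \ref{lem:morsitqycrsi}, use the flatness of $\bA_{\Inf}(S_{\infty})$ over $A$ (proved just before the theorem) to ensure the base is a genuine $q$-PD pair, and then quote the $q$-de Rham comparison of Bhatt--Scholze, whose statement is already formulated over an arbitrary $q$-PD pair so no base-change bookkeeping is actually needed. The one step you treat only implicitly is the passage from the big (étale-topology) $q$-crystalline site used here to the small site of \cite{Prisms}; the paper handles this with Lemma \ref{lem:samecoha}, and your remark about the \v{C}ech--Alexander complex of the weakly final object is exactly the mechanism behind that lemma.
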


\begin{proof}
As with the comparison with prismatic cohomology, the hard work is already done in \cite[\S 16]{Prisms}. Indeed by Lemma \ref{lem:morsitqycrsi}
\[
R\Gamma(Z, q\Omega_{\fX/\fY}) = R\Gamma((R \wh{\otimes}_{S} S_{\infty})/\bA_{\Inf}(S_{\infty}))_{q-\textnormal{CRYS}}, \cO_{q-\textnormal{CRYS}}).
\]
The result now follows by Theorem 16.21 in loc.cit. and the comparison made in Lemma \ref{lem:samecoha}.
\end{proof}

\subsection{Comparison with $A\Omega_{\fX/\fY}$}

We specialize to the case $(D,I) = (A_{\Inf}, (\xi))$. Having established the following ingredients:
\begin{enumerate}
    \item a way to pass from the pro-étale setting to the $q$-crystalline setting (cf. \eqref{eq:morfroprotoqcr}),
    \item the Hodge-Tate comparison for $q\Omega_{\fX/\fY}$ (cf. Corollary \ref{cor:HTforqcrys}), and
    \item the Hodge-Tate comparison for $A\Omega_{\fX/\fY}$ (cf. Theorem \ref{thm:HTspcssd}),
\end{enumerate}
in this section we are now ready to compare $q\Omega_{\fX/\fY}$ and $A\Omega_{\fX/\fY}$ (cf. \cite[Theorem 17.2]{Prisms} and \cite[Theorem 8.1]{KoshTerui}). 

\begin{thm} \label{thm:AOmegvsqCrys}
Assume that we have a smooth $\Spf(\cO)$-morphism $f \colon \fX \to \fY$ of $p$-adic formal schemes of type (S)(b). Suppose $\fX$ and $\fY$ are flat over $\cO$. There is a canonical functorial morphism 
\begin{equation} \label{eq:compmorAOMeqOMe}
\mu^{-1}q\Omega_{\fX/\fY} \to A\Omega_{\fX/\fY}
\end{equation}
in $D(D, p^{-1}W(\wh{\cO}^{+, \flat}_Y))$ compatible with the Frobenius such that the cohomology sheaves of the cone of \eqref{eq:compmorAOMeqOMe} 
 are killed by $W(\fm^{\flat}) \subset A_{\Inf}$. If in addition $\fY$ is locally of finite type over $\cO$, then \eqref{eq:compmorAOMeqOMe} is an isomorphism.
\end{thm}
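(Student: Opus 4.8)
The plan is to mimic the proof of the absolute comparison \cite[Theorem 17.2]{Prisms} (and its semistable analogue \cite[Theorem 8.1]{KoshTerui}), but carefully keeping track of the two product topoi involved. First I would reduce to the local situation: by \eqref{eq:compmorAOMeqOMe} being a morphism of sheaves, and using that good objects form a basis for $D'$ (cf. Proposition \ref{prop:basisDnosmor} and Lemma \ref{lem:facetalmor}), it suffices to work on good objects $Z = (\varprojlim_i \Spa(S_i,S_i^+) \to \Spf(S) \leftarrow \Spf(R))$ with $R$ étale over $R^\square = S\{t_1^{\pm 1},\ldots,t_d^{\pm 1}\}$. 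On such a $Z$, the left-hand side is computed by $R\Gamma((R\wh{\otimes}_S S_\infty)/\bA_{\Inf}(S_\infty))_{q\text{-}\textnormal{CRYS}}, \cO_{q\text{-}\textnormal{CRYS}})$ (cf. Lemma \ref{lem:morsitqycrsi}) and the right-hand side by $L\eta_\mu R\Gamma(X\times_Y Y_\infty, \bA_{\Inf,X})$ (cf. \eqref{eq:evapresatsec}), which by Theorem \ref{edgemapisoAinf} is, up to $W(\fm^\flat)$-torsion, $L\eta_\mu R\Gamma_{\cont}(\Delta, \bA_{\Inf}(R_\infty))$.

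The construction of the map \eqref{eq:compmorAOMeqOMe} itself: I would construct it after reducing coefficients, following loc.cit. The idea is that the $q$-de Rham complex (Theorem \ref{thm:qOmegavsqdeRham}) maps to $L\eta_\mu R\Gamma_{\cont}(\Delta, \bA_{\Inf}(R_\infty))$ via the explicit Koszul description of the latter (cf. \cite[Lemma 7.9]{BhMorSch}, \cite[Lemma 4.6]{Bhatspecvar}), at least in the very small case; one then globalizes via a base change morphism between the two product topoi, using \eqref{eq:morfroprotoqcr}. More precisely, $\mu^{-1}q\Omega_{\fX/\fY} \to A\Omega_{\fX/\fY}$ is obtained by adjunction from the natural map $q\Omega_{\fX/\fY} \to R\mu_* A\Omega_{\fX/\fY}$, which in turn comes from the universal property of the $q$-crystalline site (the pro-infinitesimal thickening $\bA_{\Inf}(S_\infty)$ is a $q$-PD pair) combined with the comparison of the structure sheaves. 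Compatibility with Frobenius is inherited from the Frobenius-equivariance of \eqref{eq:idfrobmor}--\eqref{eq:compdesde} and of the edge maps.

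To prove the cone is killed by $W(\fm^\flat)$, the strategy of loc.cit. is to check it after applying $-\otimes^{\bL}_{A_{\Inf},\theta} \cO$ (i.e. the Hodge--Tate / de Rham specialization), reducing modulo $\tilde\xi$ and then modulo $\xi$, and using derived $\xi$-adic completeness of both sides. Here $A\Omega_{\fX/\fY}$ is only \emph{almost} derived $\xi$-adically complete (Corollary \ref{cor:objecxiadcom}) and $q\Omega_{\fX/\fY}$ is derived $\xi$-adically complete by the Hodge--Tate comparison; since we only want a statement up to $W(\fm^\flat)$-torsion anyway, the almost issues are harmless. After reducing mod $\xi$ one compares the Hodge--Tate specializations: the right side gives $q^*\Omega^\bullet_{\fX/\fY}$ up to $[\fm^\flat]$-torsion (Theorems \ref{thm:almosishodsa}, \ref{thm:regdeRhamse}, \ref{thm:HTspcssd}), while the left side gives $p_2^*\Omega^\bullet_{\fX/\fY}$ on the nose (Corollary \ref{cor:HTforqcrysde}); one checks the comparison map induces the identity on these de Rham complexes via the compatible Koszul descriptions, so its cone is killed by $[\fm^\flat]$, hence (by \cite[Lemma 3.17]{SemistabAinfcoh}) by $W(\fm^\flat)$. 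Bootstrapping back up the $\xi^n$-tower and then to the $p$-adic completion (using \cite[Lemma 6.15]{BhMorSch} on local cohomology together with the $W(\fm^\flat)$-torsion bound, as in Lemma \ref{lem:critkillms}) gives the claim. Finally, when $\fY$ is locally of finite type over $\cO$, Lemma \ref{lem:rigspacirplus} (via Lemma \ref{lem:Rinftucirc}) ensures $(R_\infty[\tfrac1p])^\circ = R_\infty$, so Theorem \ref{edgemapisoAinf}, Theorem \ref{thm:HTspcssd}, Theorem \ref{thm:regdeRhamse} all hold on the nose with no almost ambiguity, and the cone vanishes, giving an isomorphism.

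The main obstacle I expect is the careful construction and analysis of the \emph{base change morphism relating the two product topoi}: the map $\mu$ of \eqref{eq:morfroprotoqcr} only comes from a cocontinuous functor (Lemma \ref{lem:allcoconfns}), so $\mu_*$ of the sections is not immediately transparent, and one has to verify that the resulting global morphism \eqref{eq:compmorAOMeqOMe} is well-defined, functorial, and compatible with the locally-constructed Koszul-level maps — this is exactly where \S\ref{subsec:laxproductop} and the explicit site descriptions earn their keep. The second delicate point, as flagged in the introduction, is that we lack derived $p$-adic completeness of $R\Gamma_{A_{\Inf}}(\fX/\fY)$ on the nose; here it is circumvented because we only assert the cone is $W(\fm^\flat)$-torsion, and the torsion bounds propagate through all the relevant truncations and limits exactly as in \cite[\S 17]{Prisms} and \cite[\S 8]{KoshTerui}.
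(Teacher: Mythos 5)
Your proposal follows essentially the same route as the paper: the comparison is built locally on very small good objects by mapping the $q$-de Rham complex (via the universal property of the $q$-PD envelope) into the $\eta_{q-1}$-Koszul complexes that compute $A\Omega^{\text{psh}}_{\fX/\fY}$ up to $W(\fm^{\flat})$-ambiguity, the cone is controlled after reduction mod $[p]_q$ by matching the Hodge--Tate/de Rham identifications on both sides and invoking almost derived Nakayama, and the almost ambiguity is dropped when $\fY$ is locally of finite type via Lemma \ref{lem:Rinftucirc}. The only point where your mechanism differs from the paper's is the gluing: rather than adjunction $q\Omega_{\fX/\fY}\to R\mu_{*}A\Omega_{\fX/\fY}$ or a base-change map between the two product topoi, the paper constructs a functorial presheaf-level morphism on the subsite of very small objects --- functoriality coming precisely from the completed colimit over all finite subsets $\Sigma\subset R^{\times}$ of coordinates (Remark \ref{rem:extenthem4.12}) rather than from a fixed framing --- and then passes to sheaves by applying $\phi^{-1}$ together with \eqref{eq:pullbackofpregivshe}.
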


\begin{rem}
We explain how to view the LHS and RHS of \eqref{eq:compmorAOMeqOMe} as objects in $D(D, p^{-1}W(\wh{\cO}^{+, \flat}_Y))$. For the RHS this is already explained in Definition \ref{defn:WversofAInf} together with Lemma \ref{lem:quasiisormAOmegW}. For the LHS, first note that $p^{-1}W(\wh{\cO}^{+, \flat}_Y) = \mu^{-1}p_1^{-1}\cO_{q-\textnormal{CRYS}}$. Moreover, by temporarily regarding $\nu_{f}^{q}$ as the following morphism of ringed topoi (we abuse notation and use $\cO_{q-\textnormal{CRYS}}$ to denote the structure sheaves on both $(\fX/D)_{q-\textnormal{CRYS}}$ and $(\fY/D)_{q-\textnormal{CRYS}}$)
\[
(\nu_{f}^{q},\nu_{f}^{q\sharp}) \colon ((\fX/D)_{q-\textnormal{CRYS}}, \cO_{q-\textnormal{CRYS}}) \to ((\fY/D)_{q-\textnormal{CRYS}} \ola{\times}_{\fY_{\et}} \fX_{\et},p_1^{-1}\cO_{q-\textnormal{CRYS}})
\]
naturally equips $q\Omega_{\fX/\fY}$ as an object in $D(q-\textnormal{CRYS}(\fY/D) \ola{\times}_{\fY_{\et}} \fX_{\et}, p_1^{-1}\cO_{q-\textnormal{CRYS}})$.
\end{rem}

\begin{proof}
We follow the main ideas of the proof of \cite[Theorem 17.2]{Prisms}. Let $Z$ be of the form the LHS of \eqref{eq:somkdmr}. One difference in the (current) relative case is that the derived $Z$-sections of the presheaf $A\Omega_{\fX/\fY}^{\text{psh}}$ identify with the derived $Z$-sections of $A\Omega_{\fX/\fY}$ only up to almost ambiguity (cf. \eqref{eq:adjuisoap}). For this reason (together with the fact that the derived $Z$-sections of the presheaf version are explicit), we will work directly with the presheaf.

We will use the notation developed in \S\ref{subsec:presver}. In particular, addition to the site $D$, the site $D^{\text{psh}}$ will be useful. In fact we will work with the subsites $D_{\text{vs}} \subset D$ and $D^{\text{psh}}_{\text{vs}} \subset D^{\text{psh}}$ consisting only of good objects $Z$ such that $R$ is very small over $S$. Note that $D_{\text{vs}}$ and $D$ induce the same topoi. We have $\mu^{-1}q\Omega_{\fX/\fY} = \phi^{-1}q\Omega_{\fX/\fY}^{\text{psh}}$ where $q\Omega_{\fX/\fY}^{\text{psh}}$ is the complex of presheaves given by 
\[
Z \mapsto R\Gamma(u(Z), q\Omega_{\fX/\fY}),
\]
where $u$ is the functor in Lemma \ref{lem:allcoconfns}. 
By \eqref{eq:pullbackofpregivshe}, it suffices to construct a canonical functorial morphism
\begin{equation} \label{eq:prshlfands}
q\Omega_{\fX/\fY}^{\text{psh}} \to A\Omega_{\fX/\fY}^{\text{psh}}
\end{equation}
in $D(D^{\text{psh}}_{\text{vs}}, p^{-1}W(\wh{\cO}^{+, \flat}_Y))$ compatible with the Frobenius, whose cone satisfies the relevant conditions (we abuse notation and denote $p^{-1}W(\wh{\cO}^{+, \flat}_Y)$ for the presheaf $Z \mapsto \bA_{\Inf}(S_{\infty})$). We do this in several steps. We assume henceforth that $R$ is very small over $S$.

\emph{Step 1: Constructing explicit representatives for $R\Gamma(Z, q\Omega_{\fX/\fY}^{\text{psh}})$ and $R\Gamma(Z, A\Omega_{\fX/\fY}^{\text{psh}})$.} 
We rewrite each term $R\Gamma(Z, q\Omega_{\fX/\fY}^{\text{psh}})$ and $R\Gamma(Z, A\Omega_{\fX/\fY}^{\text{psh}})$ by explicit representatives of Koszul complexes. Strictly speaking this will be only up to almost ambiguity for the second term in general, and is one difference compared to the cases of \cite{Prisms} and \cite{KoshTerui}. For the term $R\Gamma(Z, q\Omega_{\fX/\fY}^{\text{psh}})$, this is made possible by comparison with the $q$-de Rham complex (cf. Theorem \ref{thm:qOmegavsqdeRham}). Indeed $R\Gamma(u(Z), q\Omega_{\fX/\fY}) \cong q\Omega^{*, \square}_{D_{J,q}(P)/\bA_{\Inf}(S_{\infty})}$ and by definition $q\Omega^{*, \square}_{D_{J,q}(P)/\bA_{\Inf}(S_{\infty})}$ is the complex
\begin{equation} \label{eq:KoszulcomforqOM}
\Kos_{c}(D_{J,q}(P);\{\nabla_{q,s}\}_{s \in R^{\times}})
\end{equation}
defined as the term-wise $(p,[p]_{q})$-completed filtered colimit over all finite subsets $\Sigma \subset R^{\times}$ of the corresponding cohomological Koszul complexes for $\{\nabla_{q,s}\}_{s \in \Sigma}$. The treatment of the term $R\Gamma(Z, A\Omega_{\fX/\fY}^{\text{psh}})$ is slightly more delicate. We use the notation from \S\ref{sec:localanal}. For each finite subset $\Sigma \subset R^{\times}$ containing $t_1, \ldots, t_d$, there is a corresponding profinite group $\Delta_{\Sigma} = \bZ_p^{\Sigma}$ with generators (over $\bZ_p$) $\{\sigma_{s}\}_{s \in \Sigma}$ and a pro-(finite étale) affinoid perfectoid $\Delta_{\Sigma}$-cover\footnote{One takes $R^{\square}_{\Sigma} := S \{\{t_s^{\pm 1}\}_{s \in \Sigma}\}$ and defines $R^{\square}_{\Sigma,m}$, and $R_{\Sigma, \infty}$ analogously as in \eqref{eq:defnofRmsqau} and \eqref{eq:definRinfty}, respectively. For instance $R^{\square}_{\Sigma,m}$ contains a $p^m$-th root $t_s^{1/p^m}$.}
\[
\Spa(R_{\Sigma, \infty}[\tfrac{1}{p}], R_{\Sigma, \infty}) \to X \times_{Y} Y_{\infty} \hspace{2mm} \text{ that refines the $\Delta = \Delta_{\{t_1, \ldots, t_d\}}$-cover } \hspace{2mm} X_{\infty} \to X \times_{Y} Y_{\infty}
\]
compatible with the natural surjection $\Delta_{\Sigma} \twoheadrightarrow \Delta$. Consider now the complex
\[
(\varinjlim_{\Sigma} \eta_{q-1}\Kos_{c}(\bA_{\Inf}(R_{\Sigma, \infty});\{\sigma_{s} -1 \}_{s \in \Sigma}))^{\wedge}
\]
defined as the term-wise $(p,[p]_{q})$-completed filtered colimit over all finite subsets $\Sigma \subset R^{\times}$ (containing $t_1, \ldots, t_d$) of the corresponding cohomological Koszul complexes. By Theorem \ref{edgemapisoAinf}, Remark \ref{rem:extenthem4.12} and \eqref{eq:evapresatsec},  there is a morphism
\begin{equation} \label{eq:KoszulforAom}
(\varinjlim_{\Sigma} \eta_{q-1}\Kos_{c}(\bA_{\Inf}(R_{\Sigma, \infty});\{\sigma_{s} -1 \}_{s \in \Sigma}))^{\wedge} \to R\Gamma(Z, A\Omega_{\fX/\fY}^{\text{psh}})
\end{equation}
whose cone has its cohomology groups killed by $W(\fm^{\flat})$ (if $\fY$ is locally of finite type over $\cO$, then it is a quasi-isomorphism).

\emph{Step 2: Constructing a map $R\Gamma(Z, q\Omega_{\fX/\fY}^{\text{psh}}) \to R\Gamma(Z, A\Omega_{\fX/\fY}^{\text{psh}})$.} By \eqref{eq:KoszulcomforqOM}-\eqref{eq:KoszulforAom}, it is enough to construct a functorial comparison map
\begin{equation} \label{eq:somcaskad}
\Kos_{c}(D_{J,q}(P);\{\nabla_{q,s}\}_{s \in R^{\times}}) \to (\varinjlim_{\Sigma} \eta_{q-1}\Kos_{c}(\bA_{\Inf}(R_{\Sigma, \infty});\{\sigma_{s} -1 \}_{s \in \Sigma}))^{\wedge}.
\end{equation}
By \cite[Lemma 7.9]{BhMorSch}, there is a canonical isomorphism
\[
\Kos_{c}(D_{J,q}(P);\{\nabla_{q,s}\}_{s \in R^{\times}}) \cong \eta_{q-1}\Kos_{c}(D_{J,q}(P);\{\gamma_{s} -1\}_{s \in R^{\times}})
\]
and so to construct \eqref{eq:somcaskad},
it suffices to construct a canonical map
\begin{equation} \label{eq:maprestocsma}
D_{J,q}(P) \to \bA_{\Inf}((\varinjlim_{\Sigma} R_{\Sigma, \infty})^{\wedge})
\end{equation}
of $\bA_{\Inf}(S_{\infty})$-algebras that intertwines $\sigma_{s}$ and $\gamma_{s}$ (the completion appearing on the RHS is $p$-adic). We have a commutative diagram
\[
 \xymatrix{
 P \ar@{->}[r] \ar@{->>}[d] & \bA_{\Inf}((\varinjlim_{\Sigma} R_{\Sigma, \infty})^{\wedge}) \ar@{->>}[d]   \\ 
 R \wh{\otimes}_S S_{\infty} \ar@{->}[r] & (\varinjlim_{\Sigma} R_{\Sigma, \infty})^{\wedge},
 }
 \]
 where the top map is induced by mapping $t_s$ to $[(\ldots, \tilde{t}_s^{1/p},\tilde{t}_s)]$ (where $\tilde{t}_s^{1/p^m}$ denotes the image of $t_s^{1/p^m} \in R^{\square}_{\Sigma,m}$ in $(\varinjlim_{\Sigma} R_{\Sigma, \infty})^{\wedge}$). The action of $\sigma_{s}$ on $\bA_{\Inf}((\varinjlim_{\Sigma} R_{\Sigma, \infty})^{\wedge})$ can be computed as in the last paragraph of \S\ref{subsec:cohcongrp} (where it is done in the case $s = t_i$ on $\bA_{\Inf}(R_{\infty})$). This action clearly intertwines with the action of $\gamma_{s}$ on $P$. Moreover $P \to \bA_{\Inf}((\varinjlim_{\Sigma} R_{\Sigma, \infty})^{\wedge})$ is a $\delta$-map and so extends uniquely to a $\delta$-map $D_{J,q}(P) \to \bA_{\Inf}((\varinjlim_{\Sigma} R_{\Sigma, \infty})^{\wedge})$. The uniqueness also implies that it intertwines $\sigma_s$ and $\gamma_s$ (to see this one considers the compositions $P \xrightarrow{\gamma_s} P \to \bA_{\Inf}(R_{\infty})$ and $P \to \bA_{\Inf}(R_{\infty}) \xrightarrow{\sigma_s} \bA_{\Inf}(R_{\infty})$). In summary we have constructed the desired map \eqref{eq:maprestocsma}.

\emph{Step 3: Showing the cone of map $R\Gamma(Z, q\Omega_{\fX/\fY}^{\text{psh}}) \to R\Gamma(Z, A\Omega_{\fX/\fY}^{\text{psh}})$ from Step 2 satisfies the relevant conditions.} It suffices to check that the map \eqref{eq:somcaskad} has cone whose cohomology groups are killed by $W(\fm^{\flat})$ (and a quasi-isomorphism if $\fY$ is locally of finite type over $\cO$). We use (a variant of) the criterion given in \cite[Lemma 17.4]{Prisms}. Let $K_1$ and $K_2$ be the source and target of \eqref{eq:somcaskad}. By the Hodge-Tate comparison for $q\Omega_{\fX/\fY}$ (cf. Corollary \ref{cor:HTforqcrys}) there is an isomorphism 
\[
 \Omega_{R/S}^{\bullet} \wh{\otimes}_{S} \bA_{\Inf}(S_{\infty})/[p]_{q} \xrightarrow{\sim} H^{\bullet}(K_1/[p]_{q})\{\bullet \}
\]
of differential graded $\bA_{\Inf}(S_{\infty})/[p]_{q}$-algebras. Let 
\begin{equation}
\label{eq:deRhamcompagfsa}
\eta_{K_1} \colon R \wh{\otimes}_{S} \bA_{\Inf}(S_{\infty})/[p]_{q} \to H^{0}(K_1/[p]_{q})
\end{equation}
be the morphism in degree 0. In the proof of the de Rham specialization (cf. Theorem \ref{thm:regdeRhamse}) we constructed an isomorphism 
\begin{equation} \label{eq:themmrofrom519}
\Omega_{R/S}^{\bullet} \wh{\otimes}_{S} \bA_{\Inf}(S_{\infty})/[p]_{q} = (q^{*}\Omega^{\bullet}_{\fX/\fY})(Z) \to C^{\bullet}_{Z}.
\end{equation}
Note that the twist appearing in $\Omega_{R/S}^{\bullet} \wh{\otimes}_{S} \bA_{\Inf}(S_{\infty})/[p]_{q}$ was implicit and is coming from \eqref{eq:truisnoalis}. 
Moreover by Remark \ref{rem:extenthem4.12}, there is a natural morphism
\begin{equation} \label{eq:secondmorrem413}
    C^{\bullet}_Z \to H^{\bullet}(K_2/[p]_{q})\{\bullet \}
\end{equation}
whose cone has its cohomology groups killed by $W(\fm^{\flat})$ (if $\fY$ is locally of finite type over $\cO$ then it is an isomorphism). Composing \eqref{eq:themmrofrom519}-\eqref{eq:secondmorrem413} gives
\begin{equation} \label{eq:deRhamparAom}
 \Omega_{R/S}^{\bullet} \wh{\otimes}_{S} \bA_{\Inf}(S_{\infty})/[p]_{q} \to H^{\bullet}(K_2/[p]_{q})\{\bullet \}
\end{equation}
whose cone has cohomology groups killed by $W(\fm^{\flat})$ in general, and if $\fY$ is locally of finite type over $\cO$, then it is an isomorphism.
 Let 
\begin{equation}
\label{eq:deRhamcompagfsasd}
\eta_{K_2} \colon R \wh{\otimes}_{S} \bA_{\Inf}(S_{\infty})/[p]_{q} \to H^{0}(K_2/[p]_{q})
\end{equation}
be the morphism in degree 0 of \eqref{eq:deRhamparAom}. We remark that   $\eta_{K_2}$ is $S_{\infty}$-linear. To see this, note that \eqref{eq:secondmorrem413} is clearly $S_{\infty}$-linear in each degree and $\wh{\cO}^{+}_D(Z) \to C^{0}_Z$ is $S_{\infty}$-linear as mentioned in the proof of Theorem \ref{thm:regdeRhamse}. One then checks that the composition induced by \eqref{eq:somcaskad} and \eqref{eq:deRhamcompagfsa}
\[
R \wh{\otimes}_{S} \bA_{\Inf}(S_{\infty})/[p]_{q} \xrightarrow{\eta_{K_1}} H^{0}(K_1/[p]_{q}) \to H^{0}(K_2/[p]_{q})
\]
identifies with $\eta_{K_2}$. This can be done by reducing to the case $R = S \{ t_1^{\pm 1} \}$ where one uses that the morphisms are $S_{\infty}$-linear and keeps track of where the coordinate $t_1$ is mapped to. Finally by a similar argument to the proof of \cite[Lemma 17.4]{Prisms} (where in place of derived Nakayama, one uses the \emph{almost} version, cf. Lemma \ref{lem:almostderNaka}), one obtains that \eqref{eq:somcaskad} has cone whose cohomology groups are killed by $W(\fm^{\flat})$ (and is a quasi-isomorphism if $\fY$ is locally of finite type over $\cO$ by loc.cit.).

In summary \emph{Steps 1-3} show that the morphism \eqref{eq:somcaskad} constructed is functorial in $Z$ and satisfies the relevant conditions. Similarly by fixing an injective resolution $\bA_{\Inf,X} \to I^{\bullet}$ in $X_{\proet}$, the edge map \eqref{eq:edgemapforainf} comes from a functorial morphism of actual complexes. This implies that \eqref{eq:KoszulforAom} globalizes. This produces the map \eqref{eq:prshlfands} as promised.
\end{proof}

Almost in the next statement is w.r.t. $[\fm^{\flat}] \subset A_{\Inf}$.

\begin{lem}[almost derived Nakayama] \label{lem:almostderNaka}
Let $(\sC, A_{\Inf})$ be a ringed site and suppose that $\sA \in D(\sC, A_{\Inf})$ is almost derived $[p]_q$-adically complete. If $\sA \otimes^{\bL}_{A_{\Inf}} A_{\Inf}/[p]_q$ is almost zero, then so is $\sA$.
\end{lem}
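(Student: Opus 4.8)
The plan is to reduce the statement to the usual (non-almost) derived Nakayama lemma by applying the functor $(-)_{*}$ that passes from almost modules to honest modules, or equivalently by working directly with the cohomology presheaves and invoking the classical criterion. Concretely, I would first recall that an object $\sA \in D(\sC, A_{\Inf})$ is almost derived $[p]_q$-adically complete precisely when the canonical map $\sA \to R\varprojlim_n (\sA \otimes^{\bL}_{A_{\Inf}} A_{\Inf}/[p]_q^n)$ is an almost isomorphism (i.e. its cone has all cohomology sheaves killed by $[\fm^{\flat}]$). The key observation is that the property ``all cohomology sheaves are killed by $[\fm^{\flat}]$'' can be checked after taking $R\Gamma(U,-)$ for $U$ ranging over a generating family, and more importantly it is stable under the operations ($R\varprojlim$, cones, $\otimes^{\bL}$) appearing in the argument — this is exactly the kind of bookkeeping already carried out in the proof of Lemma \ref{lem:critkillms} and in \cite[Lemma 3.17]{SemistabAinfcoh}.

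The main steps, in order: First I would show that $\sA \otimes^{\bL}_{A_{\Inf}} A_{\Inf}/[p]_q^n$ is almost zero for every $n \geq 1$, by induction on $n$ using the exact triangle
\[
\sA \otimes^{\bL}_{A_{\Inf}} A_{\Inf}/[p]_q^{n-1} \to \sA \otimes^{\bL}_{A_{\Inf}} A_{\Inf}/[p]_q^{n} \to \sA \otimes^{\bL}_{A_{\Inf}} A_{\Inf}/[p]_q,
\]
where I must be slightly careful about the precise form of the triangle relating the quotients $A_{\Inf}/[p]_q^{n}$ (the standard one is $A_{\Inf}/[p]_q \xrightarrow{[p]_q^{n-1}} A_{\Inf}/[p]_q^{n} \to A_{\Inf}/[p]_q^{n-1}$, valid since $[p]_q$ is a nonzerodivisor in $A_{\Inf}$). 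Since the outer two terms have all cohomology sheaves killed by $[\fm^{\flat}]$, so does the middle one (using that this property is stable under extensions, as in Step 1 of Lemma \ref{lem:critkillms}). Second, I would conclude that $R\varprojlim_n (\sA \otimes^{\bL}_{A_{\Inf}} A_{\Inf}/[p]_q^n)$ is almost zero: its cohomology sheaves fit into the Milnor sequence
\[
0 \to R^1\varprojlim_n H^{i-1}(\sA \otimes^{\bL}_{A_{\Inf}} A_{\Inf}/[p]_q^n) \to H^i(R\varprojlim_n \cdots) \to \varprojlim_n H^i(\sA \otimes^{\bL}_{A_{\Inf}} A_{\Inf}/[p]_q^n) \to 0,
\]
and both the $\varprojlim$ and the $R^1\varprojlim$ of a system of sheaves all of whose terms are killed by $[\fm^{\flat}]$ are again killed by $[\fm^{\flat}]$ (the $R^1\varprojlim$ point is exactly what is used in Step 3 of Lemma \ref{lem:critkillms}). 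Third, combining with the almost derived completeness hypothesis $\sA \xrightarrow{\sim_a} R\varprojlim_n (\sA \otimes^{\bL}_{A_{\Inf}} A_{\Inf}/[p]_q^n)$, I get that $\sA$ is almost isomorphic to an almost-zero object, hence almost zero.

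I expect the only genuinely delicate point to be the compatibility of ``almost'' with the derived limit $R\varprojlim$ — that is, verifying that the sheaf-theoretic $R^1\varprojlim$ of almost-zero presheaves/sheaves is almost zero, and that passing between the presheaf-level and sheaf-level statements does not lose control (one works presheaf-wise via \cite[{Tag 0D6K}]{stacks-project} or \cite[{Tag 0BKP}]{stacks-project} and then sheafifies, using that sheafification preserves the property of being killed by $[\fm^{\flat}]$, as in \eqref{eq:propofkill}). Everything else is formal: the induction on $n$ is routine, and the reduction to the classical derived Nakayama statement \cite[{Tag 0G1U}]{stacks-project} is essentially automatic once the ``almost'' is threaded through correctly. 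I would also remark that an alternative, cleaner route is simply to apply the exact functor $M \mapsto M_*$ (or to use that the category of almost $A_{\Inf}$-modules is a Grothendieck abelian category with exact filtered colimits, hence has well-behaved derived limits) and then quote the honest derived Nakayama lemma verbatim; I would present whichever of the two is shortest to write cleanly, most likely the direct approach via the triangles above since it keeps the argument self-contained.
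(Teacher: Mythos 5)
Your proposal is correct and follows essentially the same route as the paper: induction on $n$ to show $\sA \otimes^{\bL}_{A_{\Inf}} A_{\Inf}/[p]_q^n$ is almost zero (using that $[p]_q$ is a nonzerodivisor), then that the derived limit $R\varprojlim_n$ of almost-zero objects is almost zero, and finally the almost derived completeness hypothesis. Your extra bookkeeping about the Milnor sequence and sheafification only makes explicit what the paper leaves implicit, so no further comment is needed.
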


\begin{proof}
By induction on $n \geq 1$, one obtains that $\sA \otimes^{\bL}_{A_{\Inf}} A_{\Inf}/[p]_{q}^n$ is almost zero. Hence by taking the derived limit, we get that $R\varprojlim_{n} (\sA \otimes^{\bL}_{A_{\Inf}} A_{\Inf}/[p]_{q}^n)$ is almost zero. But by assumption the canonical morphism $\sA \to R\varprojlim_{n} (\sA \otimes^{\bL}_{A_{\Inf}} A_{\Inf}/[p]_{q}^n)$ is an almost quasi-isomorphism and so the result follows.
\end{proof}

\subsection{Comparison with $R\Gamma_{A_{\Inf}}(\fX/\fY)$} \label{sec:compwithrelAinf}

In this section we again specialize to the case $(D,I) = (A_{\Inf}, (\xi))$ and unless otherwise stated, assume that we have a $\Spf(\cO)$-morphism $f \colon \fX \to \fY$ of $p$-adic formal schemes of type (S)(b). In addition we suppose $\fX$ and $\fY$ are flat over $\cO$. We now move to the comparison between (relative) $q$-crystalline cohomology ($R\Gamma_{q-\textnormal{CRYS}}(\fX/\fY)$, cf. Definition \ref{defn:qcryscomple}) and $A_{\Inf}$-cohomology ($R\Gamma_{A_{\Inf}}(\fX/\fY)$). Let $Y'_{\proet} \subset Y_{\proet}$ be the subsite consisting of affinoid perfectoid objects. By the proof of Lemma \ref{lem:allcoconfns}, the functor of sites
\begin{align*}
    u_{\proet} \colon Y'_{\proet} &\to q-\textnormal{CRYS}(\fY/A_{\Inf}) \\
    \varprojlim_{i \in I} \Spa(S_{i}, S_{i}^{+})  &\mapsto (\bA_{\Inf}(S_{\infty}), (\xi)) 
\end{align*}
is cocontinuous and so induces a morphism of topoi
\[
\mu_{\proet} \colon Y_{\proet} \to (\fY/A_{\Inf})_{q-\textnormal{CRYS}}.
\]
We now have a priori, two different morphisms of topoi $u^{q}_{\fY} \circ \mu_{\proet}$ and $\nu_{Y}$ from $Y_{\proet}$ to $\fY_{\et}$. As a reality check, we verify that they coincide.

\begin{lem} \label{lem:equivoftopoi}
The two morphisms of topoi $u^{q}_{\fY} \circ \mu_{\proet} \colon Y_{\proet} \to \fY_{\et}$ and $\nu_Y \colon Y_{\proet} \to \fY_{\et}$ are the same.
\end{lem}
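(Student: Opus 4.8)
The statement asserts the commutativity (up to canonical isomorphism) of a triangle of topoi. Both morphisms $u^{q}_{\fY}\circ\mu_{\proet}$ and $\nu_{Y}$ arise, at the level of sites, from functors into $\fY_{\et}$, so by \cite[{Tag 03A2}]{stacks-project} it suffices to exhibit a natural isomorphism between the pullback (or pushforward) functors; since all the functors in sight are defined by explicit formulas on generating objects, the cleanest route is to compare the \emph{pushforward} functors on sheaves, evaluated at an arbitrary object $\fV\in\fY_{\et}$ (with adic generic fiber $V$). First I would recall that $\nu_{Y*}\sF(\fV)=\sF(V)$ for $\sF$ a sheaf on $Y_{\proet}$, where $V$ is regarded as the object of $Y_{\proet}$ obtained by taking the adic generic fiber. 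On the other side, unwinding the definition of $u^{q}_{\fY}$ from the diagram of sites $q\text{-}\mathrm{CRYS}(\fY/A_{\Inf})\to\fY_{\Et}^{\mathrm{Aff}}\to\fY_{\Et}\leftarrow\fY_{\et}$, one has $(u^{q}_{\fY})^{-1}=g_{1}^{s}a^{s}g_{s}$, and then $\mu_{\proet *}$ is computed by the cocontinuity formula: for a sheaf $\sF$ on $Y_{\proet}$ and a $q$-PD pair $(E,J)$ over $(A_{\Inf},(\xi))$, the value $\mu_{\proet *}\sF(E,J)$ is the limit $\varprojlim \sF(W)$ over the category of affinoid perfectoids $W=\varprojlim_i\Spa(S_i,S_i^{+})$ equipped with a morphism $u_{\proet}(W)\to (E,J)$, i.e. a $q$-PD map $(\bA_{\Inf}(S_{\infty}),(\xi))\to (E,J)$.

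The main computation is to chase the composite $(u^{q}_{\fY}\circ\mu_{\proet})_{*}\sF(\fV)$ through these two descriptions and identify it with $\sF(V)$. Concretely, $(u^{q}_{\fY}\circ\mu_{\proet})_{*}\sF(\fV)= (\mu_{\proet *}\sF)((u^{q}_{\fY})^{-1}\fV)$, and $(u^{q}_{\fY})^{-1}\fV$ is computed, via the explicit form of $g_{1}^{s}a^{s}g_{s}$ and the construction in \cite[{Tag 00XO}]{stacks-project}, as a sheafified colimit over $q$-PD pairs whose associated formal scheme maps to $\fV$; after sheafification and plugging in the cocontinuity formula for $\mu_{\proet *}$, this collapses to a colimit over affinoid perfectoids $W$ over $\fV$ (equivalently over the generic fiber $V$) of $\sF(W)$. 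The key point — the analogue of the argument already used in the proof of Lemma \ref{lem:allcoconfns} and Remark \ref{rem:neksinmors} — is that a morphism $\Spf(S_{\infty})\to\fV$ over $\fY$ is the same datum as a morphism $W\to V$ in $Y_{\proet}$ over $Y$: one direction passes to adic generic fibers, and the reverse uses that $S/p\to\cO_{\fV}(\fV)/p$ (for $\fV$ affine) is of finite presentation together with the étale lifting of \cite[{Tag 04D1}]{stacks-project}, exactly as in the type-(c) part of the proof of Lemma \ref{lem:allcoconfns}. Hence the indexing category of the colimit has $V\in Y_{\proet}$ as a final object after cofinality, and the colimit degenerates to $\sF(V)=\nu_{Y*}\sF(\fV)$. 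One then checks this identification is functorial in $\fV$ (again immediate from functoriality of generic fibers) and compatible with restriction maps, yielding the desired isomorphism of functors $(u^{q}_{\fY}\circ\mu_{\proet})_{*}\xrightarrow{\sim}\nu_{Y*}$, and dually of the inverse-image functors.

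The step I expect to be the main obstacle is the bookkeeping around the \emph{sheafification} in $(u^{q}_{\fY})^{-1}$: because $u^{q}_{\fY}$ is built by composing a morphism coming from a cocontinuous functor with an equivalence $\fY_{\Et}\xleftarrow{\sim}\fY_{\Et}^{\mathrm{Aff}}$, the pullback of a representable sheaf $h_{\fV}$ is not literally represented by $(u^{q}_{\fY})^{-1}$ applied to $\fV$ in the naive sense, and one must argue that, after composing with $\mu_{\proet *}$ and sheafifying on $Y_{\proet}$, the relevant colimits are filtered and the non-affine complications disappear (reducing to $\fV$ affine by a covering argument as in Lemma \ref{lem:basaqcrsdsi}). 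Once this reduction to the affine case is in place, the core identity is precisely the equivalence of data "$q$-PD map from $(\bA_{\Inf}(S_{\infty}),(\xi))$" versus "morphism of affinoid perfectoids over $V$", which is already established in the cited lemmas; so the proof is essentially a matter of assembling these pieces carefully and verifying naturality.
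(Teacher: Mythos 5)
Your proposal is correct and follows essentially the same route as the paper: the paper also evaluates the composite pushforward on an affine $\fV$, writes it via the cocontinuity formulas as an iterated limit over $q$-PD pairs mapping to $\fV$ and affinoid perfectoids mapping to those pairs, collapses this to a single limit over morphisms $\Spf(S_{\infty})\to\fV$, and then invokes the same equivalence between such morphisms and morphisms $W\to V$ in $Y_{\proet}$ (via Remark \ref{rem:neksinmors} and the type-(c) argument in Lemma \ref{lem:allcoconfns}) before concluding by the sheaf property on the basis of affinoid perfectoids. Your adjunction phrasing through $(u^{q}_{\fY})^{-1}h_{\fV}$ is only a cosmetic repackaging of this computation, and the sheafification issue you flag is harmless since one pairs against a sheaf.
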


\begin{proof}
Indeed for a sheaf $\sF$ on $Y_{\proet}$ and $\fV$ an affine object in $\fY_{\et}$
\begin{equation} \label{eq:twolimsimpl}
(u^{q}_{\fY*}\mu_{\proet,*}\sF)(\fV) = \varprojlim_{ \Spf(E/J) \to \fV}\varprojlim_{ (A_{\Inf}(S_{\infty}), (\xi)) \to (E,J)} \sF(\varprojlim_{i \in I} \Spa(S_{i}, S_{i}^{+}))
\end{equation}
where the left-most limit is taking place along $\fY_{\Et}^{\textnormal{Aff}}$ and the middle limit along $q-\textnormal{CRYS}(\fY/A_{\Inf})$. Combining these two limits implies the RHS of \eqref{eq:twolimsimpl} can be rewritten as
\[
\varprojlim_{\Spf(S_{\infty}) \to \fV} \sF(\varprojlim_{i \in I} \Spa(S_{i}, S_{i}^{+}))
\]
with the left-most limit taking place along $\fY_{\Et}^{\textnormal{Aff}}$. The morphism
$\Spf(S_{\infty}) \to \fV$ is equivalent to giving a morphism on the generic fibers $\Spa (S_{\infty}[\tfrac{1}{p}],S_{\infty}) \to V$ (in the category of adic spacs over $Y$) because $S_{\infty}$ is integrally closed in $S_{\infty}[\tfrac{1}{p}]$. We claim that giving a morphism $\Spa (S_{\infty}[\tfrac{1}{p}],S_{\infty}) \to V$ (in the category of adic spaces over $Y$) is equivalent to giving a morphism $ \varprojlim_{i \in I} \Spa(S_{i}, S_{i}^{+}) \to V$ in $Y_{\proet}$. This is similar to the proof of the last part in Lemma \ref{lem:allcoconfns} and we omit the details.
\end{proof}

\begin{lem} \label{lem:pqderiomqcr}
Assume that we have a proper smooth $\Spf(\cO)$-morphism $f \colon \fX \to \fY$ of $p$-adic formal schemes of type (S)(b) with $\fX$ and $\fY$ flat over $\cO$. Then the cohomology sheaves of the cone of the canonical morphism
\[\mu_{\proet}^{-1}R\Gamma_{q-\textnormal{CRYS}}(\fX/\fY) \to (\mu_{\proet}^{-1}R\Gamma_{q-\textnormal{CRYS}}(\fX/\fY))^{\wedge}
\]
are killed by $[\fm^{\flat}] \subset A_{\Inf}$ (the completion on the RHS is derived $[p]_q$-adic).
\end{lem}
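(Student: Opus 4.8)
The plan is to reduce the statement to a local, presheaf-level computation on the affinoid perfectoid objects of $Y_{\proet}$, exactly as in the proof of Lemma~\ref{lem:imprlem3.10}. First I would unwind the definition of $\mu_{\proet}^{-1}R\Gamma_{q-\textnormal{CRYS}}(\fX/\fY)$: since $u_{\proet}$ is cocontinuous, $\mu_{\proet}^{-1}$ is given by sheafification of the restriction-along-$u_{\proet}$ presheaf, and by Proposition~\ref{prop:perfectnessqcrco}(i) the value of $R\Gamma_{q-\textnormal{CRYS}}(\fX/\fY)$ on each $q$-PD pair $(\bA_{\Inf}(S_{\infty}),(\xi))$ is a \emph{perfect} complex of $\bA_{\Inf}(S_{\infty})$-modules. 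Concretely, for an affinoid perfectoid $Y_{\infty}=\varprojlim_i\Spa(S_i,S_i^+)$ in $Y_{\proet}$ one has, up to sheafification, $R\Gamma(Y_{\infty},\mu_{\proet}^{-1}R\Gamma_{q-\textnormal{CRYS}}(\fX/\fY)) = R\Gamma((\Spf(E/[p]_q)\times_{\fY}\fX)_{\et},\Prism_{\bullet})$ for $E=\bA_{\Inf}(S_{\infty})$, via the globalized Theorem~16.17 of \cite{Prisms} used in the proof of Proposition~\ref{prop:perfectnessqcrco}; in particular this is a perfect $\bA_{\Inf}(S_{\infty})$-complex. Using the replete presheaf topos $Y_{\proet}^{\text{psh}}$ (as in Lemma~\ref{lem:derived completionaind} and \S\ref{subsec:presver}) and the fact that $\phi^{-1}\circ R\phi_{*}\cong\id$, it suffices to prove the cone of $\mathcal{C}^{\bullet}\to(\mathcal{C}^{\bullet})^{\wedge}$ has $[\fm^{\flat}]$-killed cohomology \emph{sheaves}, which reduces (via \cite[Tag 0BLX]{stacks-project} applied to the presheaf topos) to the question for the sections $\mathcal{C}^{\bullet}(Y_{\infty})=R\Gamma(u_{\proet}(Y_{\infty}),R\Gamma_{q-\textnormal{CRYS}}(\fX/\fY))$.

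Next I would observe that $\mathcal{C}^{\bullet}(Y_{\infty})$ is already derived $p$-adically complete (being a perfect complex over the $p$-adically complete ring $\bA_{\Inf}(S_{\infty})$), so the only issue is derived $[p]_q$-completeness, i.e.\ I must compare $\mathcal{C}^{\bullet}(Y_{\infty})$ with $R\varprojlim_n(\mathcal{C}^{\bullet}(Y_{\infty})\otimes^{\bL}_{A_{\Inf}}A_{\Inf}/[p]_q^n)$. Here I would use the Hodge--Tate comparison for $q\Omega_{\fX/\fY}$: by Corollary~\ref{cor:HTforqcrys} (applied with $Z=u_{\proet}(Y_{\infty})$, noting $R$ is $p$-completely smooth over $S$ since $f$ is smooth --- or rather, after passing to a cover by very good objects, to the framed situation), the cohomology of $\mathcal{C}^{\bullet}(Y_{\infty})\otimes^{\bL}_{E}E/[p]_q$ is a bounded perfect complex of $E/[p]_q$-modules, finite free over $E/[p]_q$ in each degree up to the twist, and in particular the whole complex $\mathcal{C}^{\bullet}(Y_{\infty})$ has finite $(p,[p]_q)$-complete Tor-amplitude over $E$. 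This finiteness is what lets me control the $R\varprojlim$: by the same Tor-spectral-sequence and $R\varprojlim$ bookkeeping as in the proof of Lemma~\ref{lem:imprlem3.10}, once I know $\mathcal{C}^{\bullet}(Y_{\infty})/[p]_q$ is a perfect (hence classically, and a fortiori derived, $p$-complete) complex, derived $p$-completeness of $\mathcal{C}^{\bullet}(Y_{\infty})$ upgrades to derived $(p,[p]_q)$-completeness, hence derived $[p]_q$-completeness. So on each affinoid perfectoid section the morphism $\mathcal{C}^{\bullet}(Y_{\infty})\to(\mathcal{C}^{\bullet}(Y_{\infty}))^{\wedge}$ is in fact a \emph{quasi-isomorphism} --- no almost ambiguity at the level of sections.

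The remaining subtlety --- and the place where $[\fm^{\flat}]$ genuinely enters --- is that $\mu_{\proet}^{-1}$ involves sheafification on $Y_{\proet}$, and derived $[p]_q$-completion does not obviously commute with sheafification on this non-replete topos; this is exactly the phenomenon already seen for $A\Omega_{\fX/\fY}$ in Corollary~\ref{cor:jfsdmderhamd} and Remark~\ref{rem: almostnodropallo}, and for $R\nu_{f*}$ versus its completion. I expect this to be the main obstacle. The fix is the standard one in this paper: work first with the presheaf $\mathcal{C}^{\bullet}$ on $Y_{\proet}^{\text{psh}}$, where the topos is replete so $L\eta$, $R\varprojlim$ and derived completion are well behaved and the above section-wise computation gives an honest quasi-isomorphism of \emph{presheaves} $\mathcal{C}^{\bullet}\xrightarrow{\sim}(\mathcal{C}^{\bullet})^{\wedge}$; then apply $\phi^{-1}$ (sheafification). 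Sheafification is exact but does not commute with the derived limit defining $(-)^{\wedge}$, and the defect is measured, via \cite[Tag 0D6K]{stacks-project} and the Leray spectral sequence as in the proof of Lemma~\ref{lem:critkillms}, by the higher cohomology of the presheaf cone; by the section-wise vanishing together with the almost-vanishing of higher \v{C}ech/cohomology groups on $Y_{\proet}$ (this is where one invokes, e.g., \cite[Theorem 6.5(ii)]{SchpHrig} / \cite[Lemma 3.17]{SemistabAinfcoh} as in \S\ref{sec:cohcongorAnin}), one gets that the cohomology sheaves of the cone are killed by $[\fm^{\flat}]$. Assembling: the section-wise quasi-isomorphism plus the controlled failure of sheafification-to-commute-with-completion yields that $\mathrm{Cone}(\mu_{\proet}^{-1}R\Gamma_{q-\textnormal{CRYS}}(\fX/\fY)\to(\mu_{\proet}^{-1}R\Gamma_{q-\textnormal{CRYS}}(\fX/\fY))^{\wedge})$ has $[\fm^{\flat}]$-killed cohomology sheaves, as required.
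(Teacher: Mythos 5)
Your section-wise analysis is essentially fine, though more roundabout than necessary: $R\Gamma((\bA_{\Inf}(S_{\infty}),(\xi)),R\Gamma_{q-\textnormal{CRYS}}(\fX/\fY))$ is perfect over $\bA_{\Inf}(S_{\infty})$ by Proposition \ref{prop:perfectnessqcrco}(i), and since $\bA_{\Inf}(S_{\infty})=W(S_{\infty}^{\flat})$ is derived $(p,[p]_q)$-complete, any perfect complex over it is already derived $[p]_q$-complete; your intermediate step ``derived $p$-completeness upgrades to derived $(p,[p]_q)$-completeness because the mod-$[p]_q$ reduction is $p$-complete'' is not a valid implication as stated (the roles of $p$ and $[p]_q$ are reversed), but the conclusion is salvageable. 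The genuine gap is the local-to-global step. You identify $R\Gamma(Y_{\infty},\mu_{\proet}^{-1}R\Gamma_{q-\textnormal{CRYS}}(\fX/\fY))$ with $R\Gamma(u_{\proet}(Y_{\infty}),R\Gamma_{q-\textnormal{CRYS}}(\fX/\fY))$ ``up to sheafification'', but the inverse image along a morphism of topoi does not compute derived sections in this way, and the entire content of the lemma lives at the sheaf level. Likewise $\phi^{-1}\circ R\phi_{*}\cong\id$ does not transfer completeness from the presheaf topos to $Y_{\proet}$, because sheafification does not commute with the derived limit defining the completion; in the paper the analogous gap (Corollary \ref{cor:objecxiadcom}) is closed only because the cohomology presheaves of the relevant reductions are known to be almost sheaves with almost vanishing higher cohomology on a basis. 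For your argument you would need the corresponding control for $\mu_{\proet}^{-1}R\Gamma_{q-\textnormal{CRYS}}(\fX/\fY)\otimes^{\bL}_{A_{\Inf}}A_{\Inf}/[p]_q^n$, and the results you invoke (\cite[Theorem 6.5(ii)]{SchpHrig}, \cite[Lemma 3.17]{SemistabAinfcoh}) concern $\wh{\cO}^{+}$, $W(\wh{\cO}^{+,\flat})$ and $\bA_{\Inf}$, not this pullback; establishing it would essentially force you to redo the Hodge--Tate and base-change computations of Theorem \ref{thm:comparofRGqcruandRGaAin}, which your sketch does not do.

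What is missing is precisely the input the paper uses, namely Proposition \ref{prop:perfectnessqcrco}(ii) rather than (i): the restriction of $R\Gamma_{q-\textnormal{CRYS}}(\fX/\fY)$ to the localized topos over $(\bA_{\Inf}(S_{j,\infty}),(\xi))$ is \emph{strictly} perfect, which encodes the quasi-coherence property of prismatic/$q$-crystalline cohomology. Since $u_{\proet}$ carries the slice of $Y_{\proet}$ over an affinoid perfectoid $U_j$ into the slice over $(\bA_{\Inf}(S_{j,\infty}),(\xi))$, and $\mu_{\proet}^{-1}\cO_{q-\textnormal{CRYS}}=W(\wh{\cO}^{+,\flat}_{Y})$, one finds that pro-\'etale locally $\mu_{\proet}^{-1}R\Gamma_{q-\textnormal{CRYS}}(\fX/\fY)$ is a bounded complex of direct summands of finite free $W(\wh{\cO}^{+,\flat}_{Y})$-modules. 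Its derived $[p]_q$-completion is then computed termwise, so the cone of the completion map is controlled by the cone of $W(\wh{\cO}^{+,\flat}_{Y})\to\bA_{\Inf,Y}$, whose cohomology sheaves are killed by $[\fm^{\flat}]$ by \cite[Lemma 5.6]{BhMorSch}, while $\bA_{\Inf,Y}$ is derived $[p]_q$-complete by Corollary \ref{cor:xiadcomiofAinf}. This bypasses all section-versus-sheaf issues; without the strict perfectness of the restriction, your proposed argument does not close.
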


\begin{proof}
 Take a covering $\{U_j \to Y \}_{j}$ by affinoid perfectoid objects of $Y$ and write $U_j = \varprojlim_{i \in I_j} \Spa(S_{ij}, S_{ij}^{+})$. Then
\[
\mu_{\proet}^{-1}R\Gamma_{q-\textnormal{CRYS}}(\fX/\fY) \lvert_{U_j} = \mu_{\proet}^{-1}(R\Gamma_{q-\textnormal{CRYS}}(\fX/\fY)\lvert_{(\bA_{\inf}(S_{j,\infty}),(\xi))}).
\]
By Proposition \ref{prop:perfectnessqcrco}(ii), $(R\Gamma_{q-\textnormal{CRYS}}(\fX/\fY)\lvert_{(\bA_{\inf}(S_{j,\infty}),(\xi))}$ is represented by a strictly perfect object in $D((\fY/A_{\Inf})_{q-\textnormal{CRYS}}/(\bA_{\inf}(S_{j,\infty}),(\xi)), \cO_{q-\textnormal{CRYS}}\lvert_{(\bA_{\inf}(S_{j,\infty}),(\xi))})$. Now an easy calculation gives $\mu_{\proet}^{-1}\cO_{q-\textnormal{CRYS}} = W(\wh{\cO}^{+, \flat}_{Y})$. Moreover the cone of the canonical morphism
\[
W(\wh{\cO}^{+, \flat}_{Y}) \to \bA_{\Inf,Y}
\]
has its cohomology sheaves killed by $[\fm^{\flat}]$ (cf. \cite[Lemma 5.6]{BhMorSch}). Since $\bA_{\Inf,Y}$ is derived $[p]_q$-complete by Corollary \ref{cor:xiadcomiofAinf}, the result follows.
\end{proof}

\begin{thm} \label{thm:comparofRGqcruandRGaAin}
Assume that we have a proper smooth $\Spf(\cO)$-morphism $f \colon \fX \to \fY$ of $p$-adic formal schemes of type (S)(b) with $\fX$ and $\fY$ flat over $\cO$. There is a canonical functorial morphism 
\begin{equation} \label{eq:compmorAOMeqOMe1} 
\mu_{\proet}^{-1}R\Gamma_{q-\textnormal{CRYS}}(\fX/\fY) \to R\Gamma_{A_{\Inf}}(\fX/\fY)
\end{equation}
in $D(Y_{\proet}, W(\wh{\cO}^{+, \flat}_Y))$ compatible with the Frobenius such that the cohomology sheaves of the cone of \eqref{eq:compmorAOMeqOMe1} 
 are killed by $[\fm^{\flat}] \subset A_{\Inf}$.
\end{thm}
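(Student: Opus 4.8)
The plan is to obtain \eqref{eq:compmorAOMeqOMe1} by pushing forward the comparison morphism \eqref{eq:compmorAOMeqOMe} of Theorem \ref{thm:AOmegvsqCrys} along the projections, and then comparing with the appropriate base change morphisms. More precisely, recall from \S\ref{subsec:laxproductop} the projection $p_1 \colon (\fY/A_{\Inf})_{q-\textnormal{CRYS}} \ola{\times}_{\fY_{\et}} \fX_{\et} \to (\fY/A_{\Inf})_{q-\textnormal{CRYS}}$ and from \S\ref{sec:prodoftopo} the projection $p \colon Y_{\proet} \times_{\fY_{\et}} \fX_{\et} \to Y_{\proet}$, and recall the morphism $\mu \colon Y_{\proet} \times_{\fY_{\et}} \fX_{\et} \to (\fY/A_{\Inf})_{q-\textnormal{CRYS}} \ola{\times}_{\fY_{\et}} \fX_{\et}$ of \eqref{eq:morfroprotoqcr} together with $\mu_{\proet}$ above. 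By construction of these morphisms via the cocontinuous functors in Lemma \ref{lem:allcoconfns} and its analogue for $u_{\proet}$, the square relating $p_1$, $p$, $\mu$ and $\mu_{\proet}$ commutes (up to canonical $2$-isomorphism). First I would apply $Rp_{1*}$ to \eqref{eq:compmorAOMeqOMe} to get $\mu^{-1}$ of it computing $R\Gamma_{q-\textnormal{CRYS}}(\fX/\fY)$ on the source side and $R\Gamma_{A_{\Inf}}(\fX/\fY)$ (via $Rp_*$ of $A\Omega_{\fX/\fY}$, using $\nu_f = \mu \circ (\text{pro-étale } \nu_f)$ from the commutative diagram) on the target side; the point is that $Rp_{1*}$ and $Rp_*$ both preserve the property ``cone killed by $W(\fm^{\flat})$'', since pushforward preserves ``local sections killed by $W(\fm^{\flat})$'' — this is precisely the mechanism used in Lemma \ref{lem:critkillms} and \S\ref{sec:cohcongorAnin}. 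So the composite $\mu_{\proet}^{-1} Rp_{1*}(\mu^{-1}q\Omega_{\fX/\fY}) \to \mu_{\proet}^{-1} Rp_{1*}(\mu^{-1}A\Omega_{\fX/\fY})$ has the desired cone property, and I need to identify its source with $\mu_{\proet}^{-1}R\Gamma_{q-\textnormal{CRYS}}(\fX/\fY)$ and its target with $R\Gamma_{A_{\Inf}}(\fX/\fY)$.

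The two identifications come from base change along the two product topoi. On the $q$-crystalline side: $Rp_{1*}\mu^{-1} q\Omega_{\fX/\fY}$ versus $\mu^{-1}_{?}Rp_{1*}q\Omega_{\fX/\fY} = \mu^{-1}_? R\Gamma_{q-\textnormal{CRYS}}(\fX/\fY)$ — there is a base change morphism between these, where ``$?$'' is the appropriate map; after applying $\mu_{\proet}^{-1}$ it should land us at $\mu_{\proet}^{-1}R\Gamma_{q-\textnormal{CRYS}}(\fX/\fY)$. On the $A_{\Inf}$ side: $Rp_{1*}\mu^{-1}A\Omega_{\fX/\fY}$ versus $\mu_{\proet}\text{-pushforward of }Rp_* A\Omega_{\fX/\fY} = \mu_{\proet}^{-1}$ applied correctly gives $R\Gamma_{A_{\Inf}}(\fX/\fY)$ after noting $\mu^{-1}A\Omega_{\fX/\fY} \cong A\Omega_{\fX/\fY}$ as objects with the pro-étale $p$; here one uses the commutativity $\mu_{\proet} \circ p = u^q_{\fY}\text{-stuff} \circ p_1 \circ (\text{map of products})$ carefully. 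As announced in the introduction to Theorem \ref{thm:introcomparqcrystal}, after reducing coefficients from $A_{\Inf}$ to $\cO$ both base change morphisms become tractable: one invokes the Hodge-Tate comparisons — Theorem \ref{thm:HTspcssd} for $A\Omega_{\fX/\fY}$ and Corollary \ref{cor:HTforqcrysde} for $q\Omega_{\fX/\fY}$ — which express the mod-$\tilde\xi$ (resp. mod-$[p]_q$) reductions in terms of $q^*\Omega^\bullet_{\fX/\fY}$ (resp. $p_2^*\Omega^\bullet_{\fX/\fY}$), and then the two base change statements are exactly Corollary \ref{cor:bascaforderhamco} and Corollary \ref{prop:bcqcrst} applied to the relative de Rham complex, both of which have cone killed by $\fm$ (resp. are isomorphisms). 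Finally one bootstraps back up: using that $A\Omega_{\fX/\fY}$ is almost derived $\xi$-adically complete (Corollary \ref{cor:objecxiadcom}), that $q\Omega_{\fX/\fY}$ and $R\Gamma_{q-\textnormal{CRYS}}(\fX/\fY)$ are (strictly) perfect hence genuinely derived $[p]_q$-complete (Proposition \ref{prop:perfectnessqcrco}, and Lemma \ref{lem:pqderiomqcr} handles the $W$ vs $A_{\Inf}$ completion discrepancy on the $q$-crystalline side), and the almost derived Nakayama lemma (Lemma \ref{lem:almostderNaka}), the mod-$\tilde\xi$ statements propagate to mod-$\tilde\xi^n$ and then to the $\tilde\xi$-adically complete (hence $p$-adically complete) objects, with all cones still killed by $W(\fm^{\flat})$, i.e. by $[\fm^{\flat}]$ after enlarging.

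Assembling: the composite morphism is $\mu_{\proet}^{-1}R\Gamma_{q-\textnormal{CRYS}}(\fX/\fY) \xleftarrow{\text{bc}_1} \mu_{\proet}^{-1}Rp_{1*}\mu^{-1}q\Omega_{\fX/\fY} \xrightarrow{Rp_{1*}\eqref{eq:compmorAOMeqOMe}} \mu_{\proet}^{-1}Rp_{1*}\mu^{-1}A\Omega_{\fX/\fY} \xrightarrow{\text{bc}_2} R\Gamma_{A_{\Inf}}(\fX/\fY)$, where I need $\text{bc}_1$ to be an isomorphism (or at least to have cone killed by $[\fm^{\flat}]$ and be invertible after the relevant localization — in fact on the $q$-crystalline side everything is perfect and the base change should be an honest isomorphism, cf. Proposition \ref{prop:bcqcrst} being a quasi-isomorphism, so $\text{bc}_1$ is genuinely invertible), $\text{bc}_2$ to have cone killed by $[\fm^{\flat}]$ (via Corollary \ref{cor:bascaforderhamco} plus the almost completeness bootstrap), and the middle map to have cone killed by $[\fm^{\flat}]$ (from Theorem \ref{thm:AOmegvsqCrys} plus $Rp_{1*}$ preserving the kill-property plus $\mu_{\proet}^{-1}$ being exact). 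Two-out-of-three for cones then gives the result. Functoriality and Frobenius-compatibility are inherited from \eqref{eq:compmorAOMeqOMe}, which is already functorial and Frobenius-compatible, together with functoriality of base change morphisms and of $Rp_{1*}$. The main obstacle I anticipate is the bookkeeping of the various morphisms of (oriented) product topoi and verifying that the relevant square genuinely $2$-commutes, so that the base change morphisms $\text{bc}_1, \text{bc}_2$ are well-defined and the composite computes what it should; the ``$\cO$-coefficient'' reduction plus Hodge-Tate plus Corollaries \ref{cor:bascaforderhamco}, \ref{prop:bcqcrst} handle the actual content, but threading $\mu$, $\mu_{\proet}$, $p$, $p_1$, the pro-étale $\nu_f$ and the $q$-crystalline $\nu_f^q$ through consistently — and controlling the $W(\wh\cO^{+,\flat}_Y)$ versus $A_{\Inf}$ structure-sheaf subtleties via Lemma \ref{lem:pqderiomqcr} and Lemma \ref{lem:quasiisormAOmegW} — is where the care is needed.
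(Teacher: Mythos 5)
Your overall strategy is the paper's: define \eqref{eq:compmorAOMeqOMe1} as the canonical base change morphism for the square formed by $p$, $p_1$, $\mu$, $\mu_{\proet}$ composed with $Rp_*$ of the comparison \eqref{eq:compmorAOMeqOMe} from Theorem \ref{thm:AOmegvsqCrys}, and then control the cone by reducing modulo $[p]_q$ — using that both sides are almost derived $[p]_q$-adically complete (Corollary \ref{cor:objecxiadcom} on the $A_{\Inf}$ side, Lemma \ref{lem:pqderiomqcr} on the $q$-crystalline side) together with almost derived Nakayama (Lemma \ref{lem:almostderNaka}) — and, after that reduction, chaining the two Hodge--Tate comparisons with the two tractable base change statements. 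This is exactly how the paper proceeds.

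There is, however, one concrete flaw in how you assemble the composite. First, a typechecking point: the middle object has to be $Rp_*\mu^{-1}q\Omega_{\fX/\fY}$ (pushforward along the projection $p$ of the pro-étale product), not ``$Rp_{1*}\mu^{-1}q\Omega_{\fX/\fY}$'', and there is no ``$\text{bc}_2$'': $Rp_*A\Omega_{\fX/\fY}$ \emph{is} $R\Gamma_{A_{\Inf}}(\fX/\fY)$ by Definition \ref{def:relAOmegacomplex}. More seriously, the canonical base change map for the mixed square goes $\mu_{\proet}^{-1}Rp_{1*}q\Omega_{\fX/\fY} \to Rp_*\mu^{-1}q\Omega_{\fX/\fY}$, i.e.\ already in the direction needed to define \eqref{eq:compmorAOMeqOMe1}; you have drawn $\text{bc}_1$ in the opposite direction and then require it to be ``genuinely invertible'', justified by Proposition \ref{prop:bcqcrst}. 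That justification does not apply: Proposition \ref{prop:bcqcrst} is base change for the square $(p_1,p_2,u^q_{\fY},f)$ \emph{inside} the $q$-crystalline oriented product, not for the mixed square involving $\mu$ and $\mu_{\proet}$, and perfectness of $R\Gamma_{q-\textnormal{CRYS}}(\fX/\fY)$ does not by itself make the mixed base change an isomorphism (comparing sheaf cohomology on $Y_{\proet}\times_{\fY_{\et}}\fX_{\et}$ with $q$-crystalline sections runs into the same sheafification-only-up-to-almost issues as Corollary \ref{cor:objecxiadcom}); the paper explicitly declines to study this map on its own. With the correct orientation no inversion is needed, and one proves only that the \emph{composite}, reduced mod $[p]_q$, is an almost quasi-isomorphism, by chaining Corollary \ref{cor:HTforqcrysde}, Proposition \ref{prop:bcqcrst}, Lemma \ref{lem:equivoftopoi}, Corollary \ref{cor:pcomverorbascha} (this, rather than Corollary \ref{cor:bascaforderhamco}, is the statement actually used), Theorem \ref{thm:HTspcssd} and Theorem \ref{thm:almosishodsa}, and invoking the compatibility of \eqref{eq:compmorAOMeqOMe} with the Hodge--Tate comparisons established in Step 3 of the proof of Theorem \ref{thm:AOmegvsqCrys}; Frobenius-compatibility and functoriality then come along for free, as you say.
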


\begin{rem}
We explain how to view the LHS and RHS of \eqref{eq:compmorAOMeqOMe1} as objects in $D(Y_{\proet},  W(\wh{\cO}^{+, \flat}_Y))$. For the RHS this is already explained in Definition \ref{defn:WversofAInf} together with Lemma \ref{lem:quasiisormAOmegW}. For the LHS, first note $R\Gamma_{q-\textnormal{CRYS}}(\fX/\fY)$ is naturally an object of $D(q-\textnormal{CRYS}(\fY/A_{\Inf}), \cO_{q-\textnormal{CRYS}})$. Since $\mu_{\proet}^{-1}\cO_{q-\textnormal{CRYS}} = W(\wh{\cO}^{+, \flat}_{Y})$, we get that the LHS can be viewed as an object in $D(Y_{\proet},  W(\wh{\cO}^{+, \flat}_Y))$. 
\end{rem}

\begin{proof}
The strategy is to work modulo $[p]_q$ and to gradually massage the LHS towards the RHS of \eqref{eq:compmorAOMeqOMe1}. The following schematic diagram summarises this ``massage process":
\[
 \xymatrix{
 \mu_{\proet}^{-1}R\Gamma_{q-\textnormal{CRYS}}(\fX/\fY)/[p]_q \ar@{-->}[r] ^{(\textnormal{I})} & \Omega_{\fX/\fY}^{\bullet}\lvert_{(\fY/A_{\Inf})_{q-\textnormal{CRYS}} \ola{\times}_{\fY_{\et}} \fX_{\et}} \ar@{-->}[d]^{(\textnormal{II})} \\
 R\Gamma_{A_{\Inf}}(\fX/\fY)/[p]_q
 & \Omega_{\fX/\fY}^{\bullet}\lvert_{Y_{ \proet}\times_{\fY_{\et}}\fX_{\et}} \ar@{-->}[l]_{(\textnormal{III})}, 
 }
 \]
 where steps (I) and (III) are combinations of the relevant Hodge-Tate comparisons, and step (II) is a combination of the base changes from the pro-étale and $q$-crystalline side. Finally after working modulo $[p]_q$, one can lift the results as both sides of \eqref{eq:compmorAOMeqOMe1} are (almost) derived $[p]_q$-adically complete. We now begin the proof.

We define \eqref{eq:compmorAOMeqOMe1} as the composition
\[
\mu_{\proet}^{-1}R\Gamma_{q-\textnormal{CRYS}}(\fX/\fY) \to Rp_{*}\mu^{-1}q\Omega_{\fX/\fY} \xrightarrow{Rp_{*}(\eqref{eq:compmorAOMeqOMe})} R\Gamma_{A_{\Inf}}(\fX/\fY)
\]
where the first map is induced by the canonical base change map for the commutative\footnote{Commutativity can be checked at the level of sites: by definition $\mu$ and $\mu_{\proet}$ are induced by cocontinuous functors and as mentioned in the proof of Lemma \ref{lem:desfuncsoif}, so are $p$ and $p_1$.} diagram of ringed topoi
\begin{equation} \label{eq:baschadeisd1}
 \xymatrix{
 (Y_{\proet} \times_{\fY_{\et}} \fX_{\et}, p^{-1}W(\wh{\cO}^{+, \flat}_{Y})) \ar@{->}[rr]_{\mu} \ar@{->}[d]^{p} && ((\fY/D)_{q-\textnormal{CRYS}} \ola{\times}_{\fY_{\et}} \fX_{\et}, p_{1}^{-1}\cO_{q-\textnormal{CRYS}}) \ar@{->}[d]^{p_1}   \\ 
 (Y_{\proet},W(\wh{\cO}^{+, \flat}_{Y})) \ar@{->}[rr]_{\mu_{\proet}} && ((\fY/D)_{q-\textnormal{CRYS}},\cO_{q-\textnormal{CRYS}}).
 }
\end{equation}
Note that since the morphism \eqref{eq:compmorAOMeqOMe} is constructed in $D(D, p^{-1}W(\wh{\cO}^{+, \flat}_Y))$, the morphism \eqref{eq:compmorAOMeqOMe1} is in $D(Y_{\proet}, W(\wh{\cO}^{+, \flat}_Y))$.

By Corollary \ref{cor:objecxiadcom}, $A\Omega_{\fX/\fY}$ is almost derived $[p]_q$-adically complete and so the same is true for $R\Gamma_{A_{\Inf}}(\fX/\fY)$. Similarly by Lemma \ref{lem:pqderiomqcr}, $\mu_{\proet}^{-1}R\Gamma_{q-\textnormal{CRYS}}(\fX/\fY)$ is almost derived $[p]_q$-adically complete. Therefore by \emph{almost derived Nakayama} (cf. Lemma \ref{lem:almostderNaka}), it suffices to show that the induced morphism 
\begin{equation} \label{eq:afderNak}
\mu_{\proet}^{-1}R\Gamma_{q-\textnormal{CRYS}}(\fX/\fY) \otimes^{\bL}_{A_{\Inf}} A_{\Inf}/[p]_q \to R\Gamma_{A_{\Inf}}(\fX/\fY) \otimes^{\bL}_{A_{\Inf}} A_{\Inf}/[p]_q.
\end{equation}
is an almost quasi-isomorphism. The LHS of \eqref{eq:afderNak} can be rewritten as
\[
\mu_{\proet}^{-1}Rp_{1*}(q\Omega_{\fX/\fY} \otimes^{\bL}_{A_{\Inf}} A_{\Inf}/[p]_q).
\]
We have a spectral sequence 
\[
R^{i}p_{1*}H^{j}(q\Omega_{\fX/\fY} \otimes^{\bL}_{A_{\Inf}} A_{\Inf}/[p]_q) \implies R^{i+j}p_{1*}(q\Omega_{\fX/\fY} \otimes^{\bL}_{A_{\Inf}} A_{\Inf}/[p]_q)
\]
whose terms on the second page satisfy
\begin{align*}
    R^{i}p_{1*}H^{j}(q\Omega_{\fX/\fY} \otimes^{\bL}_{A_{\Inf}} A_{\Inf}/[p]_q) &\overset{(a)}{\cong}  R^{i}p_{1*}p_2^{*}\Omega^{j}_{\fX/\fY}\{-j\} \\
    &\overset{(b)}{\cong} 
    H^{i}(Lu^{q*}_{\fY}Rf_{*}\Omega^{j}_{\fX/\fY}\{-j\}) \\
    &\overset{(c)}{\cong} 
    H^{i}((u^{q}_{\fY})^{-1}Rf_{*}\Omega^{j}_{\fX/\fY}\{-j\} \otimes^{\bL}_{(u^{q}_{\fY})^{-1}\cO_{\fY_{\et}}} \ol{\cO}_{q-\textnormal{CRYS}}^{(1)}),
\end{align*}
where (a) follows from the (global) Hodge-Tate comparison (cf. Corollary \ref{cor:HTforqcrysde}), (b) follows from base change (cf. Proposition \ref{prop:bcqcrst}), noting that $\Omega^{j}_{\fX/\fY}$ is a locally free $\cO_{\fX_{\et}}$-module, and (c) is by definition of $u^{q*}_{\fY}$. We compute further
\begin{align*}
\mu^{-1}_{\proet}R^{i}p_{1*}H^{j}(q\Omega_{\fX/\fY} \otimes^{\bL}_{A_{\Inf}} A_{\Inf}/[p]_q) &\overset{(1)}{\cong} H^{i}(\nu_{Y}^{-1}Rf_{*}\Omega^{j}_{\fX/\fY}\{-j\} \otimes^{\bL}_{\nu_{Y}^{-1}\cO_{\fY_{\et}}} \mu^{-1}_{\proet}\ol{\cO}_{q-\textnormal{CRYS}}^{(1)}) \\
&\overset{(2)}{\cong} H^{i}(\nu_{Y}^{-1}Rf_{*}\Omega^{j}_{\fX/\fY}\{-j\} \otimes^{\bL}_{\nu_{Y}^{-1}\cO_{\fY_{\et}}} \wh{\cO}^{+}_{Y}) \otimes_{A_{\Inf}/\xi, \varphi} A_{\Inf}/[p]_q \\
&\overset{(3)}{\cong} H^{i}(L\nu_{Y}^{*}Rf_{*}\Omega^{j}_{\fX/\fY}\{-j\}) \otimes_{A_{\Inf}/\xi, \varphi} A_{\Inf}/[p]_q
\end{align*}
where (1) follows from Lemma \ref{lem:equivoftopoi}, (2) follows from the fact that $\mu^{-1}_{\proet}\ol{\cO}_{q-\textnormal{CRYS}}^{(1)}$ is the sheaf $\varprojlim_{i \in I} \Spa(S_{i}, S_{i}^{+}) \mapsto \bA_{\Inf}(S_{\infty})/[p]_q$, and (3) follows by definition upon viewing $\nu_{Y}$ as a morphism of ringed topoi $(Y_{\proet}, \wh{\cO}^{+}_{Y}) \to (\fY_{\et}, \cO_{\fY_{\et}})$.

By Corollary \ref{cor:pcomverorbascha}, the base change morphism 
 \[
 L\nu_{Y}^{*}Rf_{*}\Omega^{j}_{\fX/\fY}\{-j\} \to Rp_{*}Lq^{*}\Omega^{j}_{\fX/\fY}\{-j\}
 \]
is an almost isomorphism (via the canonical map $A_{\Inf} \to A_{\Inf}/\xi$, $[\fm^{\flat}]$ is sent to $\fm$). Therefore the expression on the RHS of (3) is almost isomorphic to
\[
R^{i}p_{*}q^{*}\Omega^{j}_{\fX/\fY}\{-j\} \otimes_{A_{\Inf}/\xi, \varphi} A_{\Inf}/[p]_q.
\]
Moreover by the Hodge-Tate comparison (cf. Theorem \ref{thm:HTspcssd}), there is an almost isomorphism 
\[
R^{i}p_{*}q^{*}\Omega^{j}_{\fX/\fY}\{-j\} \otimes_{A_{\Inf}/\xi, \varphi} A_{\Inf}/[p]_q \to R^{i}p_{*}H^{j}(\wt{\Omega}_{\fX / \fY}) \otimes_{A_{\Inf}/\xi, \varphi} A_{\Inf}/[p]_q.
\]
There is a spectral sequence (with second page terms)
\[
R^{i}p_{*}H^{j}(\wt{\Omega}_{\fX / \fY}) \otimes_{A_{\Inf}/\xi, \varphi} A_{\Inf}/[p]_q \implies Rp_{*}(\wt{\Omega}_{\fX / \fY}\otimes_{A_{\Inf}/\xi, \varphi} A_{\Inf}/[p]_q)
\]
By construction the morphism \eqref{eq:compmorAOMeqOMe} was shown to be compatible with the Hodge-Tate comparisons on the $q$-crystalline side and the pro-étale side, and so we conclude that there is an almost quasi-isomorphism 
\begin{equation} \label{eq:frisofsfs}
\mu_{\proet}^{-1}Rp_{1*}(q\Omega_{\fX/\fY} \otimes^{\bL}_{A_{\Inf}} A_{\Inf}/[p]_q) \to Rp_{*}(\wt{\Omega}_{\fX / \fY} \otimes_{A_{\Inf}/\xi, \varphi} A_{\Inf}/[p]_q).
\end{equation}
compatible with \eqref{eq:afderNak}.  Finally by \eqref{eq:hodgtatespecmap} the map 
\begin{equation} \label{eq:sefrisofsfs}
R\Gamma_{A_{\Inf}}(\fX/\fY) \otimes^{\bL}_{A_{\Inf}} A_{\Inf}/[p]_q \to Rp_{*}(\wt{\Omega}_{\fX / \fY} \otimes_{A_{\Inf}/\xi, \varphi} A_{\Inf}/[p]_q)
\end{equation}
is an almost quasi-isomorphism (with the twist by $\varphi$ being implicit there). Combining \eqref{eq:frisofsfs}-\eqref{eq:sefrisofsfs}, gives that \eqref{eq:afderNak} is an almost quasi-isomorphism.
\end{proof}

\subsection{Perfectness of $R\Gamma_{A_{\Inf}}(\fX/\fY)$}

In this short section we record (almost) perfectness of $R\Gamma_{A_{\Inf}}(\fX/\fY)$ together with base change (cf. Proposition \ref{prop:perfecAinfcohm} for the precise result). We will rely on the comparison with $q$-crystalline cohomology (cf. Theorem \ref{thm:comparofRGqcruandRGaAin}), however similar results can be obtained (albeit via a longer argument) using only the de Rham specialization of $A\Omega_{\fX/\fY}$ (cf. Theorem \ref{thm:regdeRhamse}). Almost in the next proposition is with respect to $[\fm^{\flat}] \subset A_{\Inf}$.
 
\begin{prop} \label{prop:perfecAinfcohm}
Assume that we have a proper smooth $\Spf(\cO)$-morphism $f \colon \fX \to \fY$ of $p$-adic formal schemes with $\fX$ and $\fY$ flat over $\cO$. Let $U \in Y_{\proet}$ be an affinoid perfectoid object determining the perfectoid space $\Spa(R,R^{+})$. Then 
\begin{enumerate} [(i)]
\item $R\Gamma(U, R\Gamma_{A_{\Inf}}(\fX/\fY))$ is almost quasi-isomorphic to a perfect object in $D(W(R^{\flat+}))$.
\item $R\Gamma_{A_{\Inf}}(\fX/\fY)\lvert_{U}$ is almost quasi-isomorphic to a strictly perfect object in $D(Y_{\proet}/U, W(\wh{\cO}^{+, \flat}_{Y}\lvert_{U}))$.
\item $R\Gamma_{A_{\Inf}}(\fX/\fY)$ is almost quasi-isomorphic to a perfect object in $D(Y_{\proet}, W(\wh{\cO}^{+, \flat}_{Y}))$.
    \item For a morphism of affinoid perfectoids $U' \to U$ (in $Y_{\proet}$) with $U'$ determining the perfectoid space $\Spa(R', R'^{+})$, the natural morphism 
    \[
    R\Gamma(U, R\Gamma_{A_{\Inf}}(\fX/\fY)) \otimes^{\bL}_{W(R^{\flat +})} W(R'^{\flat +}) \to R\Gamma(U', R\Gamma_{A_{\Inf}}(\fX/\fY))
\]
has the cohomology groups of its cone killed by $[\fm^{\flat}]$.
\end{enumerate}
\end{prop}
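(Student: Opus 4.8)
The strategy is to reduce everything to the analogous perfectness statement for $q$-crystalline cohomology (Proposition \ref{prop:perfectnessqcrco}) via the comparison morphism of Theorem \ref{thm:comparofRGqcruandRGaAin}. First I would recall that $\mu_{\proet}^{-1}R\Gamma_{q-\textnormal{CRYS}}(\fX/\fY) \to R\Gamma_{A_{\Inf}}(\fX/\fY)$ has cone whose cohomology sheaves are killed by $[\fm^{\flat}]$, so that after applying $[\fm^{\flat}]$-localization (i.e.\ passing to $D(Y_{\proet}, W(\wh{\cO}^{+,\flat}_Y)^a)$) the two objects become isomorphic. So it suffices to prove the statement with $R\Gamma_{A_{\Inf}}(\fX/\fY)$ replaced by $\mu_{\proet}^{-1}R\Gamma_{q-\textnormal{CRYS}}(\fX/\fY)$, where now I can ask for an \emph{honest} (not merely almost) perfectness.

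For item (i): given an affinoid perfectoid $U = \varprojlim_i \Spa(S_i, S_i^+) \in Y_{\proet}$ determining $\Spa(R, R^+)$ with $R^{\flat+} = S_\infty^\flat$, the functor $u_{\proet}$ sends $U$ to the $q$-PD pair $(\bA_{\Inf}(S_\infty), (\xi)) = (W(S_\infty^\flat), (\xi)) = (W(R^{\flat+}), (\xi))$, which is an object of $q-\textnormal{CRYS}(\fY/A_{\Inf})$. Unwinding the adjunction defining $\mu_{\proet}$, one gets
\[
R\Gamma(U, \mu_{\proet}^{-1}R\Gamma_{q-\textnormal{CRYS}}(\fX/\fY)) \cong R\Gamma((W(R^{\flat+}),(\xi)), R\Gamma_{q-\textnormal{CRYS}}(\fX/\fY)),
\]
and the latter is perfect over $W(R^{\flat+})$ by Proposition \ref{prop:perfectnessqcrco}(i). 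This gives (i). Item (ii) follows by combining (i) with Proposition \ref{prop:perfectnessqcrco}(ii): the quasi-coherence/base-change there gives $R\Gamma_{q-\textnormal{CRYS}}(\fX/\fY)\lvert_{(W(R^{\flat+}),(\xi))} \cong \ul{R\Gamma((W(R^{\flat+}),(\xi)), R\Gamma_{q-\textnormal{CRYS}}(\fX/\fY))} \otimes^{\bL}_{W(R^{\flat+})} \cO_{q-\textnormal{CRYS}}\lvert_{(W(R^{\flat+}),(\xi))}$, and pulling this back along $\mu_{\proet}$ (using $\mu_{\proet}^{-1}\cO_{q-\textnormal{CRYS}} = W(\wh{\cO}^{+,\flat}_Y)$, as observed in the proof of Lemma \ref{lem:pqderiomqcr}) yields a strictly perfect representative of $R\Gamma_{A_{\Inf}}(\fX/\fY)\lvert_U$ up to $[\fm^{\flat}]$. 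Item (iii) is then immediate from (ii) since affinoid perfectoids form a basis of $Y_{\proet}$. Item (iv) follows from (i) together with the base-change compatibility in Proposition \ref{prop:perfectnessqcrco}: a morphism $U' \to U$ induces a morphism of $q$-PD pairs $(W(R^{\flat+}),(\xi)) \to (W(R'^{\flat+}),(\xi))$, and the quasi-coherence of $q$-crystalline cohomology gives that $R\Gamma((W(R^{\flat+}),(\xi)), R\Gamma_{q-\textnormal{CRYS}}(\fX/\fY)) \otimes^{\bL}_{W(R^{\flat+})} W(R'^{\flat+}) \xrightarrow{\sim} R\Gamma((W(R'^{\flat+}),(\xi)), R\Gamma_{q-\textnormal{CRYS}}(\fX/\fY))$; transporting along $\mu_{\proet}$ and using Theorem \ref{thm:comparofRGqcruandRGaAin} at both $U$ and $U'$ introduces only $[\fm^{\flat}]$-torsion in the cone.

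\textbf{The main obstacle.} The bookkeeping point that requires care is the passage between the almost category and honest statements when combining the comparison morphism with tensor products. Specifically, in (iv) I need to know that the cone of $R\Gamma_{A_{\Inf}}(\fX/\fY)\lvert_U \to \mu_{\proet}^{-1}R\Gamma_{q-\textnormal{CRYS}}(\fX/\fY)\lvert_U$ remains $[\fm^{\flat}]$-killed after applying $- \otimes^{\bL}_{W(R^{\flat+})} W(R'^{\flat+})$; this uses that $W(R'^{\flat+})$ is flat-ish enough over $W(R^{\flat+})$, or more robustly that $[\fm^{\flat}]$-killed complexes stay $[\fm^{\flat}]$-killed (possibly after enlarging the power, but $[\fm^{\flat}]$ being idempotent-like, i.e.\ $\fm^{\flat} \cdot \fm^{\flat} = \fm^{\flat}$, resolves this) under arbitrary base change via the Tor spectral sequence. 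A clean way to sidestep this is to phrase everything in $D(Y_{\proet}, W(\wh{\cO}^{+,\flat}_Y)^a)$ from the start — where ``almost perfect'' literally means ``isomorphic in the almost category to a perfect object'' — prove the three perfectness statements and the base change there using the honest $q$-crystalline results, and only at the end read off the $[\fm^{\flat}]$-torsion formulations. I expect no serious difficulty beyond this; the geometric input (properness, Hodge--Tate comparison for prismatic cohomology) is entirely absorbed into Proposition \ref{prop:perfectnessqcrco}.
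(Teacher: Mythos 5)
Your proposal is correct and follows essentially the same route as the paper, which deduces (i)--(iii) directly from the $q$-crystalline perfectness results of Proposition \ref{prop:perfectnessqcrco} combined with the comparison of Theorem \ref{thm:comparofRGqcruandRGaAin}, and obtains (iv) from (ii); your derivation of (iv) from (i) plus the quasi-coherence/base-change property of $q$-crystalline cohomology is the same mechanism spelled out, and your remark that $\fm^{\flat}\cdot\fm^{\flat}=\fm^{\flat}$ takes care of the $[\fm^{\flat}]$-torsion bookkeeping under derived tensor is the right way to handle the only subtle point.
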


\begin{proof}
Parts (i)-(iii) are an immediate consequence of
\begin{enumerate}
    \item the analogous perfectness results of $R\Gamma_{q-\textnormal{CRYS}}(\fX/\fY)$ (cf. Proposition \ref{prop:perfectnessqcrco}(i)-(iii)) and
    \item the comparison of $R\Gamma_{A_{\Inf}}(\fX/\fY)$ with $R\Gamma_{q-\textnormal{CRYS}}(\fX/\fY)$ (cf. Theorem \ref{thm:comparofRGqcruandRGaAin}).
\end{enumerate}
Finally part (iv) follows from part (ii).
\end{proof}

\subsection{Comparison with relative $p$-adic étale cohomology}

Assume that we have a proper smooth $\Spf(\cO)$-morphism $f \colon \fX \to \fY$ of $p$-adic formal schemes locally of finite type and flat over $\cO$. In this section we compare $R\Gamma_{q-\textnormal{CRYS}}(\fX/\fY)$ with $p$-adic étale cohomology (i.e. $Rf_{\eta, \proet*}\wh{\bZ}_p$). This will involve considering the derived $p$-adic completions as $\mu \notin [\fm^{\flat}]$.

\begin{thm} \label{thm:qrysvsetale}
We have an identification
 \[
(\mu_{\proet}^{-1}R\Gamma_{q-\textnormal{CRYS}}(\fX/\fY))^{\wedge} \otimes_{A_{\Inf}}^{\bL} A_{\Inf}[\tfrac{1}{\mu}] \cong Rf_{\eta, \proet*}\wh{\bZ}_p \otimes_{\wh{\bZ}_p}^{\bL} \bA_{\Inf,Y}[\tfrac{1}{\mu}],
\]
where the completion on the LHS is derived $p$-adic.
\end{thm}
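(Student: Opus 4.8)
The plan is to deduce Theorem~\ref{thm:qrysvsetale} by combining the two main comparison results already established, namely Theorem~\ref{thm:comparofRGqcruandRGaAin} (comparing $\mu_{\proet}^{-1}R\Gamma_{q-\textnormal{CRYS}}(\fX/\fY)$ with $R\Gamma_{A_{\Inf}}(\fX/\fY)$ up to $[\fm^{\flat}]$-torsion) and Theorem~\ref{thm:padcohse} (comparing $R\Gamma_{A_{\Inf}}(\fX/\fY)^{\wedge}\otimes^{\bL}A_{\Inf}[\tfrac1\mu]$ with $Rf_{\eta,\proet*}\wh{\bZ}_p\otimes^{\bL}_{\wh{\bZ}_p}\bA_{\Inf,Y}[\tfrac1\mu]$). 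The crucial point that makes this work is that $[\fm^{\flat}]$-torsion becomes irrelevant after inverting $\mu$, since $\mu \in W(\fm^{\flat})$ but $\mu\notin[\fm^{\flat}]$; more precisely, any object whose cohomology sheaves are killed by $W(\fm^{\flat})$ (equivalently, by $[\fm^{\flat}]$ after passing to derived $p$-complete objects, cf. \cite[Lemma 3.17]{SemistabAinfcoh}) dies after $\otimes^{\bL}_{A_{\Inf}}A_{\Inf}[\tfrac1\mu]$, because such torsion modules are killed by a power of $\mu$ — indeed $\mu = [\epsilon]-1$ and $[\epsilon^{1/p^n}]-1$ lies in $[\fm^{\flat}]\cdot(\text{unit})$ in a suitable sense, so a $W(\fm^{\flat})$-torsion module is $\mu^{\infty}$-torsion.

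The key steps, in order, would be as follows. First, apply the derived $p$-adic completion functor $(-)^{\wedge}$ to the morphism \eqref{eq:compmorAOMeqOMe1} of Theorem~\ref{thm:comparofRGqcruandRGaAin}; since derived $p$-completion preserves the property that cohomology sheaves of the cone are killed by $W(\fm^{\flat})$ (the $R^{i}\varprojlim$ of almost-zero modules is almost zero, cf.\ the argument in Lemma~\ref{lem:critkillms}, Step 3), we obtain a morphism $(\mu_{\proet}^{-1}R\Gamma_{q-\textnormal{CRYS}}(\fX/\fY))^{\wedge}\to R\Gamma_{A_{\Inf}}(\fX/\fY)^{\wedge}$ whose cone has $W(\fm^{\flat})$-killed cohomology sheaves. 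Second, tensor this morphism with $A_{\Inf}[\tfrac1\mu]$ over $A_{\Inf}$: since a $W(\fm^{\flat})$-torsion module $M$ satisfies $M[\tfrac1\mu]=0$ (as every element is killed by a power of $\mu$, using that $\mu$ lies in $W(\fm^{\flat})$ and that $M$ is a module over $W(\fm^{\flat})$, so $\mu^{k}M=0$ already for suitable $k$ depending on the element, whence $M\otimes^{\bL}_{A_{\Inf}}A_{\Inf}[\tfrac1\mu]$ is acyclic via the Tor spectral sequence), the cone vanishes after inverting $\mu$, giving
\[
(\mu_{\proet}^{-1}R\Gamma_{q-\textnormal{CRYS}}(\fX/\fY))^{\wedge}\otimes^{\bL}_{A_{\Inf}}A_{\Inf}[\tfrac1\mu]\;\cong\;R\Gamma_{A_{\Inf}}(\fX/\fY)^{\wedge}\otimes^{\bL}_{A_{\Inf}}A_{\Inf}[\tfrac1\mu].
\]
Third, invoke Theorem~\ref{thm:padcohse}, which identifies the right-hand side with $Rf_{\eta,\proet*}\wh{\bZ}_p\otimes^{\bL}_{\wh{\bZ}_p}\bA_{\Inf,Y}[\tfrac1\mu]$. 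Composing the two isomorphisms yields the claim.

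The main obstacle I anticipate is the bookkeeping around derived completions and the order of operations: one must be careful that $\big((\mu_{\proet}^{-1}R\Gamma_{q-\textnormal{CRYS}}(\fX/\fY))^{\wedge}\big)\otimes^{\bL}A_{\Inf}[\tfrac1\mu]$ is computed with the derived $p$-completion taken \emph{before} inverting $\mu$, exactly as in the statement, and that Theorem~\ref{thm:comparofRGqcruandRGaAin} is stated for the \emph{uncompleted} objects, so the completion must be applied by hand and the $W(\fm^{\flat})$-torsion control must be checked to survive it. A secondary technical point is justifying rigorously that $W(\fm^{\flat})$-torsion (or $[\fm^{\flat}]$-torsion in the derived-complete setting) is annihilated by inverting $\mu$ at the level of complexes and their cones — this is where one uses that $R\Gamma_{A_{\Inf}}(\fX/\fY)$ and $R\Gamma_{q-\textnormal{CRYS}}(\fX/\fY)$ are almost derived $p$-complete (Corollary~\ref{cor:jfsdmderhamd} and Lemma~\ref{lem:pqderiomqcr}), so that their $p$-completions are well-behaved and the localization at $\mu$ commutes appropriately with the relevant truncations and Tor spectral sequences. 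None of this is deep, but it requires the same care exercised in the proof of Lemma~\ref{lem:imprlem3.10}; once it is in place the theorem is a two-line composition.
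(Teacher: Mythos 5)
Your proposal is correct and follows essentially the same route as the paper: complete the morphism of Theorem \ref{thm:comparofRGqcruandRGaAin} $p$-adically, upgrade the $[\fm^{\flat}]$-torsion on the cone to $W(\fm^{\flat})$-torsion via \cite[Lemma 3.17]{SemistabAinfcoh}, kill it by inverting $\mu \in W(\fm^{\flat})$, and conclude with Theorem \ref{thm:padcohse}. The only caveat is your side remark that $[\fm^{\flat}]$-torsion is automatically $\mu$-power torsion: this is not true in general and is exactly why the derived $p$-completion and the cited lemma are needed, but since your main argument invokes them correctly this does not affect the proof.
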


\begin{proof}
Denote by $R\Gamma_{A_{\Inf}}(\fX/\fY)^{\wedge}$ the derived $p$-adic completion of $R\Gamma_{A_{\Inf}}(\fX/\fY)$. Then Theorem \ref{thm:comparofRGqcruandRGaAin} and \cite[Lemma 3.17]{SemistabAinfcoh} induces a quasi-isomorphism
\[
(\mu_{\proet}^{-1}R\Gamma_{q-\textnormal{CRYS}}(\fX/\fY))^{\wedge} \otimes_{A_{\Inf}}^{\bL} A_{\Inf}[\tfrac{1}{\mu}] \cong R\Gamma_{A_{\Inf}}(\fX/\fY)^{\wedge} \otimes_{A_{\Inf}}^{\bL} A_{\Inf}[\tfrac{1}{\mu}]
\]
as $\mu \in W(\fm^{\flat})$. The result now follows from Theorem \ref{thm:padcohse}.
\end{proof}

\section{Applications}\label{sec:sec7applica}

\subsection{Relative Hodge-Tate spectral sequence}\label{sec:relHTspseq}

In this section we record a relative Hodge-Tate spectral sequence in Theorem \ref{thm:RelHTSSoincorf} and is independent of the $q$-crystalline section. This only uses the results developed in \S\ref{sec:TheHodhTaspe}. In the context of a discretely valued complete nonarchimedean extension of $\bQ_p$ with perfect residue field, such a sequence already exists by \cite[Corollary 2.2.4]{CaScGenSV} (cf. \cite[Remarque 1.4.7]{AbbGors}). In the context of schemes, results have been obtained in \cite{AbbGors}. It is an interesting question to understand the compatibility of these spectral sequences. Our approach to establishing the relative Hodge-Tate spectral sequence mirrors that of loc.cit. (we both pass through a fiber product of topoi) of which the following table summarises the similarities of the two approaches.
\begin{center}
    \begin{tabular}{ | p{7cm} | p{7cm} |}
    \hline
    \textbf{Our approach} & \textbf{Approach of Abbes-Gros} \\ \hline
    Use of pro-étale site & Use of Faltings' site \\ \hline
    Hodge-Tate specialization of $A\Omega_{\fX/\fY}$ (cf. Theorems \ref{thm:almosishodsa} and \ref{thm:HTspcssd}) & Hodge-Tate comparison of the \emph{structure sheaf} of the relative Faltings' topos (cf. \cite[Théorème 6.6.4]{AbbGors}) \\ \hline
    Base change for fiber product of topoi (cf. Proposition \ref{prop:bcrsmopn}) & Base change for the relative Faltings' site (cf. \cite[Théorème 6.5.31]{AbbGors})  \\
    \hline
    \end{tabular}
\end{center}
Let us mention one difference between the two approaches is that we get rid of the ``junk torsion" (via the décalage functor) while Abbes-Gros do not.

We begin by comparing the sheaf of K\"ahler differentials on the special and generic fibers.

\begin{lem} \label{lem:cohthmforadr}
Suppose $f \colon \fX \to \fY$ is a proper smooth morphism of $p$-adic formal schemes of type (S)(b) over $\cO$. Suppose $\fX$ and $\fY$ are flat over $\Spf(\cO)$ and $X$ and $Y$ are locally of finite type over $\Spa(C, \cO)$. Consider the commutative diagram of ringed topoi
\[
\xymatrix{
 (X, \cO_{X}) \ar@{->}[rr]^{\text{sp}_{\fX}} \ar@{->}[d]^{f_{\eta}} && (\fX, \cO_{\fX}) \ar@{->}[d]^{f_{\Zar}}   \\ 
(Y, \cO_{Y}) \ar@{->}[rr]^{\text{sp}_{\fY}} && (\fY, \cO_{\fY}),
 }
\]
and $\sF$ a locally free $\cO_{\fX}$-module. Then the canonical base change morphism
\[
 L\text{sp}_{\fY}^{*}Rf_{\Zar*}\sF \to Rf_{\eta*}L\text{sp}_{\fX}^{*}\sF
\]
is a quasi-isomorphism.
\end{lem}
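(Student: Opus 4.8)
The statement is a base-change compatibility for the specialization maps, asserting that de Rham-type cohomology of a locally free sheaf commutes with passing from the formal scheme to its adic generic fiber. The plan is to reduce to the affine case on $\fY$, then to a \v{C}ech computation using a finite affine cover of $\fX$ (available since $f$ is proper, hence qcqs), and finally to an affinoid-level statement which is the well-known fact that coherent cohomology of a proper formal scheme over an affinoid base commutes with analytification (via the $p$-adic completion/rigidification functor). Since everything in sight will be reduced to the case where the base $\fY = \Spf(S)$ is affine of the stated type, the adic generic fiber $Y$ is a locally noetherian affinoid adic space, and the noetherian hypotheses on $X$, $Y$ keep all the finiteness arguments legitimate.

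First I would work étale (or even Zariski) locally on $\fY$, so assume $\fY = \Spf(S)$ is affine and $\sF$ is a locally free $\cO_{\fX}$-module. Since $f$ is proper it is in particular qcqs, so we may choose a finite affine open cover $\cU : \fX = \bigcup_{i} \fU_i$ trivializing $\sF$; as $f$ is flat, the associated \v{C}ech complex $\check{\cC}^{\bullet}(\cU, f_{\Zar}, \sF)$ gives an explicit $K$-flat representative for $Rf_{\Zar *}\sF$, exactly as in the setup of \cite[{Tag 01XL}]{stacks-project} used in the proof of Proposition \ref{prop:bcrsmopn}. Pulling back along $\text{sp}_{\fY}$, and noting that $\text{sp}_{\fX}^{-1}\fU_i$ is the adic generic fiber $U_i$ of $\fU_i$, the left-hand side is represented by the complex whose $r$-th term is $\bigoplus_{i_0 \cdots i_r} \text{sp}_{\fY}^* f_{\Zar, i_0 \cdots i_r *}(\cO_{\fX}|_{\fU_{i_0 \cdots i_r}})$. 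On the other hand, taking the analytification of the same cover produces a finite admissible cover of $X$ by affinoid subspaces $U_{i_0 \cdots i_r}$, and since $X \to Y$ is separated (being proper) these remain affinoid; Leray's acyclicity lemma for the cover $\{U_i \to X\}$ together with Tate's acyclicity (vanishing of higher \v{C}ech cohomology of coherent sheaves over an affinoid) shows $Rf_{\eta *}L\text{sp}_{\fX}^*\sF$ is computed by the analogously shaped \v{C}ech complex.

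It therefore suffices to compare the two complexes term by term, i.e.\ to prove the statement when $\fX = \Spf(T)$ is affine over $\fY = \Spf(S)$ and $\sF = \cO_{\fX}$ (the locally free case then follows since a vector bundle on an affine is a direct summand of a free module). In this local situation both sides reduce to the single assertion that $T \wh{\otimes}_S S_{\infty}$-type base change commutes with $p$-adic completion, or more precisely that the $S[\tfrac1p]$-affinoid algebra $\cO_X(X)$ associated to $\Spf(T)$ is $T[\tfrac1p]$ itself and $R^i f_{\eta *}\cO_X = 0$ for $i>0$ over an affinoid base. This is precisely the content of the comparison between coherent cohomology of a proper formal scheme and of its rigid-analytic generic fiber; for $\cO_{\fX}$ it follows from Huber's theory (cf. \cite[Proposition 2.6.1]{HubEtadic} for base change, and the properness argument reducing higher direct images to the affinoid vanishing), together with the fact that $\cO_{\fX}(\fX)[\tfrac1p] = \cO_X(X)$ when $\fY$ is affine and $\fX$ is affine proper --- here one uses that $X$ and $Y$ are locally noetherian so that the relevant affinoids are sheafy and Tate acyclicity applies.

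The main obstacle is the bookkeeping in the \v{C}ech reduction: one must verify that the analytification of an affine open cover of the formal scheme $\fX$ is an \emph{admissible} (i.e.\ a genuine covering in $X_{\et}$, or at least $X_{\Zar}$) cover of the generic fiber $X$, and that all the intersections $\fU_{i_0 \cdots i_r}$ --- which are only quasi-affine in general if $\fX$ is merely separated, but affine if $\fX$ is itself separated and the $\fU_i$ are chosen inside a separating atlas --- have affinoid generic fibers so that Tate acyclicity applies. Since $f$ is proper, hence separated, and $\fY$ is separated (being affine), $\fX$ is separated, so finite intersections of affine opens are affine and this obstacle dissolves; the remaining verification is the standard (but slightly technical) compatibility of analytification with coherent pushforward along a proper map, which I would cite rather than reprove, e.g.\ by reducing modulo $p^n$ to Grothendieck's formal GAGA on the schemes $\fX \otimes \cO/p^n$ and then rigidifying, exactly in the spirit of Corollary \ref{cor:pcomverorbascha}.
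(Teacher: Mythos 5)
There is a genuine gap at the step ``it therefore suffices to compare the two complexes term by term, i.e.\ to prove the statement when $\fX = \Spf(T)$ is affine over $\fY = \Spf(S)$ and $\sF = \cO_{\fX}$''. The base change morphism of the lemma is \emph{false} for an affine (non-proper) morphism at the level of sheaves on $Y$: already for $\fX = \Spf(S\{t\})$ one has $\text{sp}_{\fY}^{*}f_{\Zar*}\cO_{\fX} \cong \cO_Y \otimes_S S\{t\}$ (an \emph{uncompleted} tensor product after sheafification), whereas $f_{\eta*}\cO_X$ assigns to a rational subset $V \subseteq Y$ the relative Tate algebra $\cO_Y(V)\langle t\rangle$, i.e.\ the \emph{completed} tensor product; these disagree on proper rational subdomains. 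Consequently the two \v{C}ech complexes you write down are not termwise isomorphic as complexes of sheaves on $Y$ — their terms are pushforwards from affines/affinoids, which are not coherent over the base and for which analytification does not commute with pushforward. Your computation does correctly identify $R\Gamma(U,-)$ of both sides when $U$ is the generic fiber of a formal affine open of $\fY$ (there $T$ is already $p$-complete, so no completion intervenes and both \v{C}ech complexes are $C^{\bullet}\otimes_S S[\tfrac1p]$), but a quasi-isomorphism of complexes of sheaves cannot be checked on such $U$ alone, since these opens do not form a basis of $Y$, and you give no mechanism to propagate the identification to smaller rational subsets.

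This is exactly where properness has to enter beyond supplying a finite affine cover, and it is the content the paper adds: on the formal side, $Rf_{\Zar*}\sF$ is a \emph{perfect} complex (Tag 0A1H plus derived limits), so $L\text{sp}_{\fY}^{*}Rf_{\Zar*}\sF$ is strictly perfect locally and its sections satisfy base change along rational localizations by Tate acyclicity; on the rigid side, Kiehl's finiteness theorem makes $R^{i}f_{\eta*}\text{sp}_{\fX}^{*}\sF$ coherent, hence likewise compatible with restriction to rational subsets via flat base change $\cO_Y(U)\to\cO_Y(U')$. These two quasi-coherence statements reduce the sheaf-level claim to comparing $R\Gamma(U,-)$ for $U$ the generic fiber of a formal affine, at which point the \v{C}ech comparison you propose (flatness of $S\to S[\tfrac1p]$) finishes the proof. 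To repair your argument you would need to insert precisely these finiteness inputs; the reduction to the affine case as stated cannot work.
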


\begin{proof}

First note that by \cite[{Tag 0A1H}]{stacks-project}, $Rf_{\Zar*}\sF \otimes^{\bL}_{\cO_{\fY}} \cO_{\fY}/p^n$ is a perfect object of $D(\cO_{\fY}/p^n)$ and its formation commutes with base change. Therefore by taking the derived limit we obtain $Rf_{\Zar*}\sF$ itself is a perfect object of $D(\cO_{\fY})$. Therefore $L\text{sp}_{\fY}^{*}Rf_{\Zar*}\sF$ is a perfect object of $D(\cO_{Y})$.

The claim is local for the Zariski topology on $\fY$, so we can assume it is affine, say $\fY = \Spf(S)$.
Let $U \subseteq Y$ be any open affinoid of $Y$ such that $L\text{sp}_{\fY}^{*}Rf_{\Zar*}\sF \lvert_{U}$ is strictly perfect. Then for any rational subset $U' \subseteq U$, by Tate acyclicity
\begin{equation} \label{eq:frisfsifsds}
R\Gamma(U, L\text{sp}_{\fY}^{*}Rf_{\Zar*}\sF) \otimes_{\cO_{Y}(U)}^{\bL} \cO_{Y}(U') \cong R\Gamma(U', L\text{sp}_{\fY}^{*}Rf_{\Zar*}\sF).
\end{equation}
Moreover by Kiehl's finiteness theorem \cite[Theorem 3.3]{Kiehl} 
\begin{equation} \label{eq:rhscohrel}
 (R^{i}f_{\eta*}\text{sp}_{\fX}^{*}\sF)(U) \otimes_{O_{Y}(U)} \cO_{Y}(U') = (R^{i}f_{\eta*}\text{sp}_{\fX}^{*}\sF)(U').
\end{equation}
Thus by Tate acyclicity and flatness of $\cO_{Y}(U')$ over $\cO_{Y}(U)$ (cf. \cite[pg. 530]{Hugfs}), \eqref{eq:rhscohrel} implies 
\begin{equation} \label{eq:seconequsds}
    R\Gamma(U, Rf_{\eta*}L\text{sp}_{\fX}^{*}\sF) \otimes_{\cO_{Y}(U)}^{\bL} \cO_{Y}(U') \cong R\Gamma(U', Rf_{\eta*}L\text{sp}_{\fX}^{*}\sF).
\end{equation}
Therefore \eqref{eq:frisfsifsds} and \eqref{eq:seconequsds} implies that it suffices to check that the induced morphism 
\[
 R\Gamma(U, L\text{sp}_{\fY}^{*}Rf_{\Zar*}\sF) \to R\Gamma(U, Rf_{\eta*}L\text{sp}_{\fX}^{*}\sF)
\]
is a quasi-isomorphism where in addition $U$ comes from some affine open $\fU \subseteq \fY$.

We can assume without loss of generality that $\fU = \fY$ and $Rf_{\Zar*}\sF$ is represented by a strictly perfect complex $\sE^{\bullet}$. In that case $R\Gamma(U, L\text{sp}_{\fY}^{*}Rf_{\Zar*}\sF)$ is represented by the complex $\sE^{\bullet}(\fY) \otimes_S S[\tfrac{1}{p}]$, which in turn is just $R\Gamma(\fY, Rf_{\Zar*}\sF) \otimes_{S}^{\bL} S[\tfrac{1}{p}]$. Choose some finite open affine cover of $\fX$. Then $R\Gamma(\fY, Rf_{\Zar*}\sF)$ is computed via the associated \v{C}ech complex $C^{\bullet}$ and $R\Gamma(U, Rf_{\eta*}L\text{sp}_{\fX}^{*}\sF)$ is computed by $C^{\bullet} \otimes_{S} S[\tfrac{1}{p}]$. Since $S[\tfrac{1}{p}]$ is flat over $S$, the result now follows.
\end{proof}

\begin{thm} \label{thm:RelHTSSoincorf}
Suppose $f \colon \fX \to \fY$ is a proper smooth morphism of $p$-adic formal schemes of type (S)(b) over $\cO$. Assume that $\fX$ and $\fY$ are flat over $\cO$. There is an $E_2$-spectral sequence 
\begin{equation} \label{eq:firsSSthm}
E_2^{i,j} = H^{i}(L\nu_{Y}^{*}Rf_{*}\Omega^{j}_{\fX/\fY}\{-j\}) \implies H^{i+j}(R\Gamma_{A_{\Inf}}(\fX/\fY) \otimes^{\bL}_{A_{\Inf}, \theta \circ \varphi^{-1}} \cO)
\end{equation}
in $D(Y_{\proet}, \cO^a)$. Moreover, if both $\fX$ and $\fY$ are locally of finite type over $\cO$ then upon inverting $p$, \eqref{eq:firsSSthm} takes the shape
\begin{equation} \label{eq:secondSSthsmd}
E_2^{i,j} = H^{i}(Rf_{\eta*}\Omega^{j}_{X/Y} \otimes^{\bL}_{\cO_{Y}} \wh{\cO}_{Y}(-j)) \implies R^{i+j}f_{\eta, \proet *}\wh{\bZ}_p \otimes_{\wh{\bZ}_p} \wh{\cO}_{Y}
\end{equation}
in $D(Y_{\proet}, C)$ (here the $f_{\eta *}$ is pushforward between analytic topologies). In addition, if for any $i,j \geq 0$, $R^{i}f_{\eta*}\Omega^{j}_{X/Y}$ is locally free of finite type over $\cO_{Y}$, then \eqref{eq:secondSSthsmd} takes the shape
\begin{equation} \label{eq:thirdsStfsdm}
E_2^{i,j} = R^{i}f_{\eta*}\Omega^{j}_{X/Y} \otimes_{\cO_{Y}} \wh{\cO}_{Y}(-j) \implies R^{i+j}f_{\eta, \proet *}\wh{\bZ}_p \otimes_{\wh{\bZ}_p} \wh{\cO}_{Y}
\end{equation}
and it degenerates.
\end{thm}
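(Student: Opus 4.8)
\textbf{Proof strategy for Theorem \ref{thm:RelHTSSoincorf}.}

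The plan is to build the three spectral sequences in cascade, starting from the most fundamental one and then specializing. First I would produce \eqref{eq:firsSSthm}. The point of departure is the Hodge-Tate specialization map \eqref{eq:hodgtatespecmap} of Theorem \ref{thm:almosishodsa}, which gives an almost isomorphism $A\Omega_{\fX/\fY} \otimes^{\bL}_{A_{\Inf}, \theta\circ\varphi^{-1}} \cO \xrightarrow{\sim} \wt{\Omega}_{\fX/\fY}$ in $D(D, \cO^a)$. Applying $Rp_*$ and passing to $D(Y_{\proet}, \cO^a)$, the standard spectral sequence associated with the functor $Rp_*$ and the object $\wt{\Omega}_{\fX/\fY}$ reads
\[
E_2^{i,j} = R^i p_* H^j(\wt{\Omega}_{\fX/\fY}) \implies H^{i+j}(Rp_*(A\Omega_{\fX/\fY} \otimes^{\bL}_{A_{\Inf},\theta\circ\varphi^{-1}}\cO)),
\]
working in the almost category throughout. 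By Theorem \ref{thm:HTspcssd} we have $H^j(\wt{\Omega}_{\fX/\fY})^a \cong q^*\Omega^j_{\fX/\fY}\{-j\}^a$, so $E_2^{i,j} \cong (R^i p_* q^* \Omega^j_{\fX/\fY}\{-j\})^a$. Finally I would invoke the base change Corollary \ref{cor:pcomverorbascha} (applied to the locally free sheaf $\Omega^j_{\fX/\fY}$, together with the twist $\{-j\}$), which gives an almost isomorphism $L\nu_Y^* Rf_* \Omega^j_{\fX/\fY}\{-j\} \cong Rp_* Lq^* \Omega^j_{\fX/\fY}\{-j\}$; taking $H^i$ identifies $E_2^{i,j}$ with $H^i(L\nu_Y^* Rf_* \Omega^j_{\fX/\fY}\{-j\})$ in $D(Y_{\proet},\cO^a)$. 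This produces \eqref{eq:firsSSthm}.

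Next I would derive \eqref{eq:secondSSthsmd} from \eqref{eq:firsSSthm} by inverting $p$ under the hypothesis that $\fX$ and $\fY$ are locally of finite type over $\cO$. Inverting $p$ kills all almost-ambiguity (any $\fm$-torsion, hence any $\fm^\flat$-torsion issue, disappears rationally), so the spectral sequence becomes one in $D(Y_{\proet}, C)$ on the nose. On the abutment, Theorem \ref{thm:padcohse} (or rather its ingredient \eqref{eq:hodgtatespecmap} combined with the de Rham-to-Hodge-Tate machinery) identifies $R\Gamma_{A_{\Inf}}(\fX/\fY)\otimes^{\bL}_{A_{\Inf},\theta\circ\varphi^{-1}}\cO$ after inverting $p$ with the Hodge-Tate-twisted pro-étale pushforward; concretely one uses that $\bA_{\Inf,Y}[\tfrac1p]/\xi = \wh{\cO}_Y[\tfrac1p]$ and the comparison of Theorem \ref{thm:padcohse} to rewrite $H^{i+j}$ of the abutment as $R^{i+j}f_{\eta,\proet *}\wh{\bZ}_p \otimes_{\wh{\bZ}_p}\wh{\cO}_Y$. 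On the $E_2$-terms, since $\fX$ and $\fY$ are now of finite type, Lemma \ref{lem:cohthmforadr} (base change between the special and generic fibers for the locally free sheaf $\Omega^j_{\fX/\fY}$) identifies $L\nu_Y^* Rf_* \Omega^j_{\fX/\fY}$ after inverting $p$ with $\nu_Y^* Rf_{\eta *}\Omega^j_{X/Y}$, i.e. with $L\nu_Y^*$ of the pushforward along the analytic topologies; tracking the twist $\{-j\}$, which becomes $\wh{\cO}_Y(-j)$ rationally, gives $E_2^{i,j} = H^i(Rf_{\eta*}\Omega^j_{X/Y}\otimes^{\bL}_{\cO_Y}\wh{\cO}_Y(-j))$. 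This yields \eqref{eq:secondSSthsmd}.

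Finally, \eqref{eq:thirdsStfsdm} and degeneration. Under the extra hypothesis that each $R^i f_{\eta *}\Omega^j_{X/Y}$ is locally free of finite type over $\cO_Y$, the derived tensor product $Rf_{\eta*}\Omega^j_{X/Y}\otimes^{\bL}_{\cO_Y}\wh{\cO}_Y(-j)$ has cohomology concentrated so that $H^i(\cdot) = R^i f_{\eta*}\Omega^j_{X/Y}\otimes_{\cO_Y}\wh{\cO}_Y(-j)$ (flatness of $\wh{\cO}_Y$ over $\nu_Y^{-1}\cO_Y$, which is built into Scholze's formalism, makes the tensor product underived), giving the stated $E_2$-page. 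For degeneration I would imitate the argument of \cite{AbbGors} (cited as the model in the excerpt): one compares the total dimension of the abutment with the sum of the dimensions of the $E_2$-terms. The key is that after base changing to $\wh{\cO}_Y$ the target is a finite projective module of rank equal to $\sum_j \rk_{\cO_Y} R^{i-j}f_{\eta*}\Omega^j_{X/Y}$ --- this is exactly the relative de Rham/Hodge situation, where proper smoothness and the relative Hodge-to-de Rham degeneration (available since we are in characteristic $0$ on the generic fibers) force the dimension count to be exact, leaving no room for nonzero differentials; alternatively one can invoke the splitting of the Hodge-Tate filtration in the geometrically connected fiberwise situation as in \cite{CaScGenSV}. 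The main obstacle I anticipate is bookkeeping the Tate twists consistently through the three reductions (the $\{-j\}$ Breuil-Kisin-Fargues twist, its image $\wh{\cO}_Y(-j)$ after $/\xi$ and inverting $p$, and the interaction with the Frobenius twist $\theta\circ\varphi^{-1}$ versus $\theta$), and making sure the base-change isomorphisms Corollary \ref{cor:pcomverorbascha} and Lemma \ref{lem:cohthmforadr} are applied to the correctly twisted locally free sheaves; the degeneration itself, once the $E_2$-page and abutment are correctly identified, should follow formally from the rank count.
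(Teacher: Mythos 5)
Your construction of \eqref{eq:firsSSthm} and \eqref{eq:secondSSthsmd} is the paper's own argument: the Cartan--Leray spectral sequence for $Rp_*\wt{\Omega}_{\fX/\fY}$ combined with Theorem \ref{thm:almosishodsa}, identification of the $E_2$-terms via Theorem \ref{thm:HTspcssd} and the base change Corollary \ref{cor:pcomverorbascha}, and then the rational yoga through Lemma \ref{lem:cohthmforadr} and Theorem \ref{thm:padcohse}; the twist bookkeeping you flag is harmless because the cokernel of $\cO(1)\hookrightarrow\cO\{1\}$ is killed by $p^{1/(p-1)}$ and so disappears after inverting $p$. Up to that point the proposal is correct and follows the same route.

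The gap is in the degeneration of \eqref{eq:thirdsStfsdm}, on two counts. First, the rank identity you need is between the rank of the abutment $R^{l}f_{\eta,\proet*}\wh{\bZ}_p\otimes_{\wh{\bZ}_p}\wh{\cO}_Y$ and $\sum_{i+j=l}\mathrm{rank}\,R^{i}f_{\eta*}\Omega^{j}_{X/Y}$; relative Hodge-to-de Rham degeneration in characteristic $0$ only compares Hodge and de Rham numbers and says nothing about the \'etale side. What is actually needed is the \emph{fiberwise} comparison over $C$, i.e. the degeneration of the absolute Hodge--Tate spectral sequence \cite[Theorem 13.3(ii)]{BhMorSch}, fed into the relative situation via \cite[Proposition 2.6.1]{HubEtadic} (applied to $R^{i+j}f_{\eta,\et*}(\bZ/p^n)$, using that $Rf_{\eta,\proet*}\wh{\bZ}_p$ is adic lisse by Lemma \ref{lem:procomsisasli}) together with commutation of $R^{i}f_{\eta*}\Omega^{j}_{X/Y}$ with base change, which is where the local freeness hypothesis enters; your alternative suggestion of invoking the fiberwise Hodge--Tate splitting is essentially this ingredient, but the Hodge-to-de Rham claim by itself would not close the argument. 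Second, even granting the rank equality, ``no room for nonzero differentials'' is not automatic: after taking sections on an affinoid perfectoid $U_k$ determining $\Spa(S_k,S_k^{+})$ (using vanishing of higher cohomology, \cite[Lemma 4.10(v)]{SchpHrig}), the terms are finite free $S_k$-modules and $S_k$ is not a field, so a rank count alone does not kill a map of free modules. The paper localizes each differential at every minimal prime $\fp$ of the reduced ring $S_k$, where $S_{k,\fp}$ is a field and the dimension count forces $d_2=0$, and then uses the embedding $S_k\hookrightarrow\prod_{\fp}S_{k,\fp}$ to conclude that the differential itself vanishes; some such localization (or rank-one point) argument is needed to complete your sketch.
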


\begin{rem}
Let $K$ be a discretely valued complete nonarchimedean extension of $\bQ_p$ with perfect residue field and ring of integers $\cO_K$. Then if $f_{\eta}$ comes from base change of a proper smooth morphism of smooth adic spaces over $\Spa(K, \cO_K)$, then by \cite[Theorem 8.8(ii)]{SchpHrig} and base change, for any $i,j \geq 0$, $R^{i}f_{\eta*}\Omega^{j}_{X/Y}$ is locally free of finite type over $\cO_{Y}$.
\end{rem}

\begin{proof}
By Theorem \ref{thm:almosishodsa} and the projection formula (cf. \cite[{Tag 0944}]{stacks-project}) we get a morphism in $D(Y_{\proet}, \cO)$
\[
R\Gamma_{A_{\Inf}}(\fX/\fY) \otimes^{\bL}_{A_{\Inf}, \theta \circ \varphi^{-1}} \cO \to Rp_{*}\wt{\Omega}_{\fX / \fY}
\]
whose image in $D(Y_{\proet}, \cO^a)$ is an almost isomorphism. Therefore we obtain an $E_2$-spectral sequence (the Cartan-Leray spectral sequence)
\[
R^{i}p_{*}H^{j}(\wt{\Omega}_{\fX / \fY}) \implies H^{i+j}(R\Gamma_{A_{\Inf}}(\fX/\fY) \otimes^{\bL}_{A_{\Inf}, \theta \circ \varphi^{-1}} \cO)
\]
in $D(Y_{\proet}, \cO^{a})$. By Theorem \ref{thm:HTspcssd}, we get a morphism $R^{i}p_{*}q^{*}\Omega^{j}_{\fX/\fY}\{-j\} \to R^{i}p_{*}H^{j}(\wt{\Omega}_{\fX / \fY})$ in $D(Y_{\proet}, \cO)$, whose image in $D(Y_{\proet}, \cO^a)$ is an almost isomorphism. By Corollary \ref{cor:pcomverorbascha}, the morphism 
 \[
 H^{i}(L\nu_{Y}^{*}Rf_{*}\Omega^{j}_{\fX/\fY}\{-j\}) \to R^{i}p_{*}q^{*}\Omega^{j}_{\fX/\fY}\{-j\}
 \]
is an almost isomorphism. This proves \eqref{eq:firsSSthm}.

Suppose now both $X$ and $Y$ are locally of finite type over $\Spa(C, \cO)$.  We now invert $p$ and perform some yoga to obtain \eqref{eq:secondSSthsmd}. Indeed 
\begin{align*}
    H^{i}(L\nu_{Y}^{*}Rf_{*}\Omega^{j}_{\fX/\fY}\{-j\}) \otimes_{\cO} C &\overset{(i)}{\cong} H^{i}(L\nu_{Y}^{*}Rf_{*}\Omega^{j}_{\fX/\fY}\{-j\} \otimes_{\cO}^{\bL} C) \\
    &\overset{(ii)}{\cong} H^{i}(Rf_{*}\Omega^{j}_{\fX/\fY} \otimes_{\cO_{\fY_{\et}}}^{\bL} \wh{\cO}_{Y}(-j)) \\
    &\overset{(iii)}{\cong}
    H^{i}(Rf_{\Zar,*}\Omega^{j}_{\fX/\fY} \otimes^{\bL}_{\cO_{\fY}} \wh{\cO}_{Y}(-j)) \\
    &\overset{(iv)}{\cong}
    H^{i}(Rf_{\eta*}\Omega^{j}_{X/Y} \otimes^{\bL}_{\cO_Y} \wh{\cO}_{Y}(-j))
\end{align*}
where (i) follows from localization being flat, (ii) is by definition of $L\nu_{Y}^{*}$ (we have dropped the $\nu_Y^{-1}$) and the fact that the cokernel of $\cO(1) \hookrightarrow \cO \{1\}$ is killed by $p^{1/(p-1)}$, (iii) from the fact that coherent cohomology can be computed in the étale or Zariski site, and (iv) is a consequence of Lemma \ref{lem:cohthmforadr}. The identification 
\[
H^{i+j}(R\Gamma_{A_{\Inf}}(\fX/\fY) \otimes^{\bL}_{A_{\Inf}, \theta \circ \varphi^{-1}} \cO)[\tfrac{1}{p}] \cong R^{i+j}f_{\eta, \proet *}\wh{\bZ}_p \otimes_{\wh{\bZ}_p} \wh{\cO}_{Y}
\]
follows from Theorem \ref{thm:padcohse} (note that $\mu$ is invertible in $A_{\Inf}/\tilde{\xi}$). This proves \eqref{eq:secondSSthsmd} of which \eqref{eq:thirdsStfsdm} is a direct consequence.

We now prove degeneration of \eqref{eq:thirdsStfsdm}. By Lemma \ref{lem:procomsisasli} the complex $Rf_{\eta, \proet*}\wh{\bZ}_p$ is adic lisse and in particular it is locally a perfect complex of $\wh{\bZ}_p$-modules. Thus 
\[
R^{i+j}f_{\eta, \proet *}\wh{\bZ}_p \otimes_{\wh{\bZ}_p} \wh{\cO}_{Y} = R^{i+j}f_{\eta, \proet *}\wh{\bZ}_p \otimes_{\wh{\bZ}_p} \wh{\bZ}_p[\tfrac{1}{p}] \otimes_{\wh{\bZ}_p[\tfrac{1}{p}]} \wh{\cO}_{Y}
\]
is a locally free $\wh{\cO}_Y$-module. Recall that locally free sheaves of finite rank on $Y_{\proet}$ correspond to locally free sheaves of finite rank on the analytic topology on $Y$ by \cite[Lemma 7.3]{SchpHrig}). So the rank (locally) of such objects is determined by the rank at geometric points of rank 1. Fix an integer $m \geq 0$. By \cite[Proposition 2.6.1]{HubEtadic} applied\footnote{It is enough to look at $R^{i+j}f_{\eta, \et*}(\bZ/p^n)$ for each $n$ as $Rf_{\eta, \proet*}\wh{\bZ}_p$ is adic lisse.\label{seconfs}} to $R^{i+j}f_{\eta, \et*}(\bZ/p^n)$ for each $n$, commutation with arbitrary base change for the terms $R^{i}f_{\eta*}\Omega^{j}_{X/Y}$ (by locally freeness assumption) and the degeneration of the Hodge-Tate spectral sequence (cf. \cite[Theorem 13.3(ii)]{BhMorSch}), we get a covering $\{U_{k} \to Y \}_{k}$ in $Y_{\proet}$ such that restricted to $U_k$ each term in the quadrant $i+j \leq m$ of the second page of \eqref{eq:thirdsStfsdm} is free (as well as the abutment $R^{i+j}f_{\eta, \proet *}\wh{\bZ}_p \otimes_{\wh{\bZ}_p} \wh{\cO}_{Y}$) and 
\begin{equation} \label{eq:sumofrankequalfs}
\sum_{i+j = l} \textrm{rank}(R^{i}f_{\eta*}\Omega^{j}_{X/Y} \otimes_{\cO_{Y}} \wh{\cO}_{Y}(-j)\lvert_{U_{k}}) = \textrm{rank}(R^{l}f_{\eta, \proet *}\wh{\bZ}_p \otimes_{\wh{\bZ}_p} \wh{\cO}_{Y}\lvert_{U_{k}})
\end{equation}
for all $k$ and any $ 0 \leq l \leq m$. Since affinoid perfectoids form a basis for $Y_{\proet}$, we can assume further that each $U_k$ is affinoid perfectoid and determines a perfectoid space $\Spa(S_{k}, S_{k}^{+})$. Then by vanishing of higher cohomology (cf. \cite[Lemma 4.10(v)]{SchpHrig}), taking the $U_k$-sections of \eqref{eq:thirdsStfsdm} gives a spectral sequence consisting of finite free $S_k$-modules. It suffices to show that the resulting spectral sequence degenerates. Consider a differential $d_{2}^{i,j} \colon E_{2}^{i,j}(U_k) = S_{k}^{a} \to E_{2}^{i+2, j-1}(U_k) = S_{k}^{b}$ for any $i+j \leq m-1$. Then for each minimal prime ideal $\fp$ of $S_k$, the localization $S_{k, \fp}$ is a field as $S_k$ is reduced (cf. \cite[{Tag 00EU}]{stacks-project}). Thus by \eqref{eq:sumofrankequalfs}, $d_{2}^{i,j}$ localized at $\fp$ is the zero morphism. Thus the induced morphism
\begin{equation} \label{eq:upgrsfmod}
\prod_{\fp \text{ minimal}} (S_{k, \fp})^{a} \to  \prod_{\fp \text{ minimal}} (S_{k, \fp})^{b},
\end{equation}
where the products run over all minimal prime ideals of $S_k$, is the zero morphism. Moreover $S_k \to \prod_{\fp \text{ minimal}} S_{k, \fp}$ is an embedding (cf. \cite[{Tag 00EW}]{stacks-project}). Therefore $d_{2}^{i,j}$ is the zero morphism. This proves degeneration.
\end{proof}

\subsection{Torsion in étale pushforward}

The result in this section makes use of the $q$-crystalline theory from \S\ref{$q$-crystalline sitewed}. Although one could avoid this by using the de Rham specialization for $A\Omega_{\fX/\fY}$ (cf. Theorem \ref{thm:regdeRhamse}), one would still need to prove a perfectness result such as Proposition \ref{prop:perfecAinfcohm}.

\begin{thm} \label{thm:torsionbigerindic}
Suppose $f \colon \fX \to \fY$ is a proper smooth morphism of $p$-adic formal schemes locally of finite type and flat over $\cO$. Fix some $j \geq 0$. Suppose that $R^{i}f_{\dR*}\cO_{\fX}$ is a finite locally free $\cO_{\fY}$-module for all $i \geq j$. Then for all $i \geq j$
\begin{enumerate}
    \item $R^{i}f_{\eta, \proet *}\wh{\bZ}_p$ is $p$-torsion free, 
    \item if $\varprojlim_{i \in I} \Spa(S_{i}, S_{i}^{+})$ is an affinoid perfectoid of $Y_{\proet}$ determining the perfectoid space $\Spa(S_{\infty}[\tfrac{1}{p}], S_{\infty})$, then $H^{i}(R\Gamma((\bA_{\Inf}(S_{\infty}),(\xi)), R\Gamma_{q-\textnormal{CRYS}}(\fX/\fY)))$ is a finite locally free $\bA_{\Inf}(S_{\infty})$-module and
    \item if $\fY$ is in addition smooth and $(E,J)$ is any object in $(\fY/A_{\Inf})_{q-\textnormal{CRYS}}$, then 
    
    $H^{i}(R\Gamma((E,J), R\Gamma_{q-\textnormal{CRYS}}(\fX/\fY)))$ is a finite locally free $E$-module.
\end{enumerate}

\begin{rem} \label{rem:indivaTsjma}
A slight variant of part (2) is also proved in \cite[Lemma 1.38]{MorrTsuj} by another method. The key point in the argument of loc.cit. is to take a judicious base change to a rank 1 perfectoid valuation ring, at which point the result of \cite[Theorem 1.8]{BhMorSch} can be applied. We take a different approach and instead rely on the comparison with relative $p$-adic étale cohomology (cf. Theorem \ref{thm:qrysvsetale}). T. Tsuji also pointed out that (1) immediately follows from the analagous result in \cite[Theorem 14.5(ii)]{BhMorSch} (cf. Remark 14.4 in loc.cit.) together with \cite[Proposition 2.6.1]{HubEtadic} and Lemma \ref{lem:procomsisasli}.
\end{rem}

\end{thm}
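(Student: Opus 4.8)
\textbf{Proof strategy for Theorem \ref{thm:torsionbigerindic}.}
The plan is to bootstrap from the relative $p$-adic étale comparison (Theorem \ref{thm:qrysvsetale}) and the perfectness results (Propositions \ref{prop:perfecAinfcohm} and \ref{prop:perfectnessqcrco}), using the hypothesis on $R^{i}f_{\dR*}\cO_{\fX}$ as a ``bottom layer'' input via the de Rham specialization. First I would prove (2) and then deduce (1) and (3) from it. For (2), fix an affinoid perfectoid $U = \varprojlim_{i} \Spa(S_i, S_i^+) \in Y_{\proet}$ determining $\Spa(S_\infty[\tfrac1p], S_\infty)$, and set $C^{\bullet} := R\Gamma((\bA_{\Inf}(S_{\infty}),(\xi)), R\Gamma_{q-\textnormal{CRYS}}(\fX/\fY))$. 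By Proposition \ref{prop:perfectnessqcrco}(i) this is a perfect complex of $\bA_{\Inf}(S_\infty)$-modules, concentrated in degrees $\geq 0$. The key point is to identify the de Rham specialization $C^{\bullet} \otimes^{\bL}_{\bA_{\Inf}(S_\infty)} \bA_{\Inf}(S_\infty)/\tilde\xi$: combining Theorem \ref{thm:comparofRGqcruandRGaAin} with the Hodge--Tate / de Rham specialization of $A\Omega_{\fX/\fY}$ (Theorems \ref{thm:almosishodsa}, \ref{thm:HTspcssd}, \ref{thm:regdeRhamse}) and Lemma \ref{lem:cohthmforadr}, one sees (up to bounded $[\fm^\flat]$-torsion, which after $U$-sections over an affinoid perfectoid is \emph{genuine} isomorphism by almost purity, cf. \cite[Lemma 4.10(v)]{SchpHrig}) that $H^{i}(C^{\bullet}/\tilde\xi) \otimes_{\cO} S_\infty$ is built out of the $R^{i'}f_{\dR *}\cO_{\fX}$ base-changed along $S \to S_\infty$. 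Since $\bA_{\Inf}(S_\infty)/\tilde\xi = S_\infty$ and $\bA_{\Inf}(S_\infty)$ is $\tilde\xi$-torsion free, the local-freeness of $R^{i}f_{\dR *}\cO_{\fX}$ for $i \geq j$ forces $H^{i}(C^{\bullet}/\tilde\xi)$ to be finite locally free over $S_\infty$ for $i \geq j$; a standard descending-induction argument on the perfect complex $C^\bullet$ (using that the top cohomology of a perfect complex is finitely presented and that $\mathrm{Tor}_1$ vanishing modulo a nonzerodivisor propagates freeness, exactly as in \cite[Lemma 14.1]{BhMorSch}) then shows $H^{i}(C^{\bullet})$ is finite locally free over $\bA_{\Inf}(S_\infty)$ for $i \geq j$. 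This is (2).

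For (1): by Theorem \ref{thm:qrysvsetale} applied to the slice over $U$, we have
\[
(\mu_{\proet}^{-1}R\Gamma_{q-\textnormal{CRYS}}(\fX/\fY))^{\wedge} \otimes_{A_{\Inf}}^{\bL} A_{\Inf}[\tfrac1\mu] \cong Rf_{\eta,\proet*}\wh{\bZ}_p \otimes_{\wh{\bZ}_p}^{\bL} \bA_{\Inf,Y}[\tfrac1\mu],
\]
and on $U$-sections the left side is computed by $C^{\bullet\wedge}[\tfrac1\mu]$, whose cohomology in degrees $\geq j$ is now (by (2), after derived $p$-completion) finite free over $\bA_{\Inf}(S_\infty)^{\wedge}[\tfrac1\mu]$, in particular $p$-torsion free. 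Since $Rf_{\eta,\proet*}\wh{\bZ}_p$ is adic lisse (Lemma \ref{lem:procomsisasli}), by Proposition \ref{prop:lisseimplisadicl} it is Zariski-locally of the form $\wh{\bZ}_p \otimes^{\bL}_{\bZ_p} \ul{M}$ for a perfect $\bZ_p$-complex $M$; the $p$-torsion in $H^{i}(Rf_{\eta,\proet*}\wh{\bZ}_p)$ is then detected after the above base change (using that $\bZ_p \to \bA_{\Inf}(S_\infty)^{\wedge}$ is faithfully flat modulo $p$, or more simply that $\mu$ is a nonzerodivisor modulo $p$ on $\bA_{\Inf}$), whence $R^{i}f_{\eta,\proet*}\wh{\bZ}_p$ is $p$-torsion free for $i \geq j$. (Alternatively, as noted in Remark \ref{rem:indivaTsjma}, this follows directly from \cite[Theorem 14.5(ii)]{BhMorSch} via \cite[Proposition 2.6.1]{HubEtadic}; I would include this as a remark but give the self-contained argument in the text.) For (3), assume $\fY$ is smooth; then by Proposition \ref{prop:perfectnessqcrco}(ii) the complex $R\Gamma_{q-\textnormal{CRYS}}(\fX/\fY)$ restricted to $(E,J)$ is strictly perfect, and it suffices to check that its cohomology is locally free, which can be done after the faithfully flat cover given by the maps $(E,J) \to (\bA_{\Inf}(S_\infty),(\xi))$ — more precisely, one uses the quasi-coherence of $q$-crystalline cohomology (\cite[Theorem 1.8(5)]{Prisms} in the form used in the proof of Proposition \ref{prop:perfectnessqcrco}(ii)) to reduce to the affinoid-perfectoid statement (2), plus the observation that for $\fY$ smooth the rings $E$ in the site admit enough perfectoid covers to detect freeness.

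\textbf{Main obstacle.} The delicate step is the identification of the de Rham specialization in (2): one must pin down $H^{i}(C^{\bullet}/\tilde\xi)$ \emph{genuinely} (not just almost) as the base change of $R^{i}f_{\dR *}\cO_{\fX}$, because the comparison maps in Theorems \ref{thm:comparofRGqcruandRGaAin} and \ref{thm:regdeRhamse} only have cones killed by $[\fm^\flat]$ in general, and the whole point of the argument is that finite-freeness over $\cO$ propagates integrally — any surviving $[\fm^\flat]$-torsion would destroy this. The resolution is that after taking sections over the affinoid perfectoid $U$ one is in the genuinely-isomorphism regime (the higher cohomology of $\wh{\cO}^+$-type sheaves over an affinoid perfectoid vanishes on the nose by \cite[Lemma 4.10(v)]{SchpHrig} / \cite[Theorem 6.5]{SchpHrig}, so the $[\fm^\flat]$-ambiguity collapses), together with the fact that $\bA_{\Inf}(S_\infty)$ is $\tilde\xi$-torsion free (Corollary \ref{cor:comtentorfree} and the construction in \S\ref{ringAinfonRinfty}). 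Making this precise — carefully tracking which comparisons become exact after $U$-sections and chaining them with Lemma \ref{lem:cohthmforadr} to get from $R^{i}f_{\dR*}\cO_{\fX}$ base-changed over $S_\infty$ to $H^i(C^\bullet/\tilde\xi)$ — is the technical heart of the proof; the remaining homological algebra (descending induction on a perfect complex whose reduction mod a nonzerodivisor has free cohomology in a range) is routine and exactly parallel to \cite[\S14]{BhMorSch}.
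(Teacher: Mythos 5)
Your overall architecture (prove (2) first, then deduce (1) from Theorem \ref{thm:qrysvsetale} and (3) by descent) matches the paper, but the proof of (2) has a genuine gap at its central step. The implication you call routine — ``$C^{\bullet}$ perfect over $\bA_{\Inf}(S_{\infty})$, $H^{i}(C^{\bullet}\otimes^{\bL}\bA_{\Inf}(S_{\infty})/\tilde{\xi})$ finite locally free for $i\geq j$, hence $H^{i}(C^{\bullet})$ finite locally free for $i\geq j$'' — is false as a piece of homological algebra: the two-term perfect complex $[\bA_{\Inf}(S_{\infty})\xrightarrow{\tilde{\xi}}\bA_{\Inf}(S_{\infty})]$ has all of its mod-$\tilde{\xi}$ cohomology groups free of rank one, yet its top cohomology $\bA_{\Inf}(S_{\infty})/\tilde{\xi}$ is neither free nor $\tilde{\xi}$-torsion free. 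The long exact sequence only gives $H^{m}(C^{\bullet})/\tilde{\xi}\cong H^{m}(C^{\bullet}/\tilde{\xi})$ in the top degree and says nothing about the $\tilde{\xi}$-torsion of $H^{m}(C^{\bullet})$, so the descending induction never starts. The missing ingredient is a generic-fibre rank input, and this is exactly where the paper uses the étale comparison \emph{inside} the proof of (2), not merely to deduce (1) afterwards: one lifts, via Nakayama, a basis of $H^{m}\otimes_{A_{\Inf},\theta}\cO$ to a generating set $B$ of $H^{m}$, then shows $B$ is linearly independent by passing through Theorem \ref{thm:qrysvsetale} after inverting $\mu$ and $p$, trivializing the adic lisse complex $Rf_{\eta,\proet*}\wh{\bZ}_p$ pro-étale locally, and identifying $|B|$ with $\mathrm{rank}_{\bZ_p}H^{m}_{\et}$ of a rank-$1$ geometric fibre via the $B^{+}_{\dR}$-comparison \cite[Theorem 13.1]{BhMorSch}, its de Rham specialization, and the base-change property of the locally free sheaves $R^{i}f_{\dR*}\cO_{\fX}$; $\mu$- and $p$-torsion freeness of $\bA_{\Inf}(S_{\infty})$ then makes $B$ a basis. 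In your scheme (1) is a consequence of (2), so you cannot quote it to supply this rank count without circularity; some version of this step must be added.

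A secondary problem is how you identify the de Rham specialization of $C^{\bullet}$. Routing it through Theorems \ref{thm:comparofRGqcruandRGaAin} and \ref{thm:regdeRhamse} leaves an ambiguity by $[\fm^{\flat}]$-torsion that does \emph{not} collapse after taking sections over an affinoid perfectoid: the almost-ness there comes from the base-change morphisms of \S 5.5 and from the comparison map itself, not from higher cohomology of $\wh{\cO}^{+}$-type sheaves (cf. the remark after Corollary \ref{cor:jfsdmderhamd}), and local freeness is precisely the kind of integral statement that almost-zero torsion destroys. The paper sidesteps this entirely by staying on the $q$-crystalline/prismatic side: via \eqref{eq:pushforisjusqcr} and the on-the-nose de Rham comparison \cite[Corollary 15.4]{Prisms}, $C^{\bullet}\otimes^{\bL}_{A_{\Inf},\theta}\cO$ is identified with de Rham cohomology of the base-changed family with no almost ambiguity; you should do the same. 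Finally, your sketch of (3) (``enough perfectoid covers to detect freeness'') needs the precise descent mechanism used in the paper: for $\fY$ smooth one produces, by quasisyntomy of $S\to S_{\infty}$ and \cite[Proposition 7.11]{Prisms}, a prism $(E',(\tilde{\xi}))$ receiving $(\bA_{\Inf}(S_{\infty}),(\xi))$ with $E/\tilde{\xi}\to E'/\tilde{\xi}$ $p$-completely faithfully flat, and then descends finite local freeness along $E\to E'$ as in \cite[Lemma A.12]{PrismaticDieudonn}.
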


\begin{proof}
We first show part (2) and we use the notation from \S\ref{$q$-crystalline sitewed}. Let $(E,J)$ be an object in $(\fY/A_{\Inf})_{q-\textnormal{CRYS}}$. Then by the de Rham comparison with $q$-crystalline/prismatic cohomology (cf. \cite[Corollary 15.4]{Prisms}) and \eqref{eq:pushforisjusqcr}
\begin{equation}
\label{eq:psuhfsomf}
R\Gamma_{\dR}((\Spf(E/[p]_{q}) \times_{\fY} \fX)/\Spf(E/[p]_{q}))
 \cong R\Gamma((E,J), R\Gamma_{q-\textnormal{CRYS}}(\fX/\fY)) \otimes^{\bL}_{A_{\Inf}, \theta} \cO.
\end{equation}
Note that by Proposition \ref{prop:perfectnessqcrco}(i), the RHS of \eqref{eq:psuhfsomf} is already ($p$-adically derived) complete. Since $R^{i}f_{\dR*}\cO_{\fX}$ is a finite locally free $\cO_{\fY}$-module for all $i \geq j$ and $Rf_{\dR *}\cO_{\fX}$ is a perfect complex and commutes with arbitrary base change, $R^{i}f_{\dR*}\cO_{\fX}$ commutes with arbitrary base change and so $R^{i}f'_{\dR*}\cO_{\Spf(E/[p]_q) \times_{\fY} \fX}$ is a  finite locally free $\cO_{\Spf(E/[p]_q)}$-module (here $f' \colon \Spf(E/[p]_q) \times_{\fY} \fX \to \Spf(E/[p]_q)$ is obtained from base change) for all $i \geq j$. By affine vanishing of higher cohomology, it follows that $H^{i}$ of the LHS of \eqref{eq:psuhfsomf} is a finite locally free $E/[p]_q$-module for all $i \geq j$. Therefore 
\begin{equation} \label{eq:adikiandas}
H^{i}(R\Gamma((E,J), R\Gamma_{q-\textnormal{CRYS}}(\fX/\fY)) \otimes^{\bL}_{A_{\Inf}, \theta} \cO)
\end{equation}
is a finite locally free $E/[p]_q$-module for all $i \geq j$.

 Let $\varprojlim_{i \in I} \Spa(S_{i}, S_{i}^{+})$ be an affinoid perfectoid of $Y_{\proet}$ determining the perfectoid space $\Spa(S_{\infty}[\tfrac{1}{p}], S_{\infty})$ and suppose $(E,J) = (\bA_{\Inf}(S_{\infty}), (\xi))$. We now proceed by descending induction on $i \geq j$ to prove that 
\[
H^{i}:= H^{i}(R\Gamma((E,J), R\Gamma_{q-\textnormal{CRYS}}(\fX/\fY)))
\]
is a finite locally free $E$-module. Since $R\Gamma((E,J), R\Gamma_{q-\textnormal{CRYS}}(\fX/\fY))$ is a perfect complex in $D(E)$ (cf. Proposition \ref{prop:perfectnessqcrco}(i)), it is quasi-isomorphic to a complex $E^{\bullet}$ of finite projective $E$-modules with $E^{k} = 0$ for $k \geq m+1$ for some $m \geq i$. Then $H^{m}$ is a finite $E$-module and $H^{m} \otimes_{A_{\Inf}, \theta} \cO$ coincides with \eqref{eq:adikiandas} for $i = m$. Assume without loss of generality $H^{m} \otimes_{A_{\Inf}, \theta} \cO$ is a finite free $E/[p]_q$-module and choose a basis $\ol{B}$ (this does not affect the form of $(E,J)$, as it is enough to take a Zariski cover of $\fY$). By Nakayama's lemma, the basis $\ol{B}$ lifts to a generating set $B$ of $H^m$. Moreover by Proposition \ref{prop:perfectnessqcrco}(ii), \cite[Theorem 6.5(ii)]{SchpHrig} (in particular the sentence immediately following \cite[Lemma 5.6]{BhMorSch}) and the equality $\mu_{\proet}^{-1}\cO_{q-\textnormal{CRYS}} = W(\wh{\cO}^{+, \flat}_{Y})$ we get 
\begin{equation} \label{eq:equaltenspfro}
H^{m}[\tfrac{1}{\mu}] = H^{m}(R\Gamma(\varprojlim_{i \in I} \Spa(S_{i}, S_{i}^{+}), (\mu^{-1}_{\proet}R\Gamma_{q-\textnormal{CRYS}}(\fX/\fY))^{\wedge}))[\tfrac{1}{\mu}].
\end{equation}

By Theorem \ref{thm:qrysvsetale}, \eqref{eq:equaltenspfro} implies $H^{m}[\tfrac{1}{\mu}, \tfrac{1}{p}]$ is the $E[\tfrac{1}{\mu}, \tfrac{1}{p}]$-module 
\begin{equation} \label{eq:somkinexp}
(R^{m}f_{\eta, \proet*}\wh{\bZ}_p)(\varprojlim_{i \in I}\Spa(S_{i},S_{i}^{+})) \otimes_{(\wh{\bZ}_p)(\varprojlim_{i \in I}\Spa(S_{i},S_{i}^{+}))} E[\tfrac{1}{\mu}, \tfrac{1}{p}].
\end{equation}
Since $Rf_{\eta, \proet*}\wh{\bZ}_p$ is adic lisse we can trivialize it (locally on $Y_{\proet}$) and after applying base change as in footnote \ref{seconfs}, \eqref{eq:somkinexp} base changed to $\bA_{\Inf}(S_{\infty}')[\tfrac{1}{\mu}, \tfrac{1}{p}]$ (where $\varprojlim_{i' \in I'} \Spa(S'_{i'}, {S'_{i'}}^{+}) \to \varprojlim_{i \in I} \Spa(S_{i}, S_{i}^{+})$ is a pro-étale map of affinoid perfectoids and $\varprojlim_{i' \in I'} \Spa(S'_{i'}, {S'_{i'}}^{+})$ determines the perfectoid space $\Spa(S_{\infty}'[\tfrac{1}{p}], S_{\infty}')$) is a free $\bA_{\Inf}(S_{\infty}')[\tfrac{1}{\mu}, \tfrac{1}{p}]$-module of rank
\begin{equation} \label{eq:somekdsa}
 \textrm{rank}_{\bZ_p}(( H^{m}_{\et}(X \times_{Y} \Spa(K, K^{\circ}), \bZ_p)))  
\end{equation} 
for any rank $1$ geometric point $\Spa(K, K^{\circ})$ of $Y$ which lies above $\varprojlim_{i' \in I'}\Spa(S'_{i'},{S'_{i'}}^{+})$. By the comparison of $B^{+}_{\dR}$-cohomology with étale cohomology (cf. \cite[Theorem 13.1]{BhMorSch}) and the fact that $B^{+}_{\dR}$-cohomology specializes to de Rham cohomology (cf. the expression immediately after \cite[Definition 13.18]{BhMorSch}), \eqref{eq:somekdsa} is equal\footnote{One also uses Lemma \ref{lem:cohthmforadr} to rewrite $R^{i}f'_{\dR*}\cO_{\Spf(E/[p]_q) \times_{\fY} \fX}$ as the de Rham pushforward along the generic fiber.} to $|\ol{B}|$. Now
\begin{equation} \label{eq:valusdasheac}
\varprojlim_{i \in J}\Spa(T_{i},T_{i}^{+}) \mapsto (R^{m}f_{\eta, \proet*}\wh{\bZ}_p)(\varprojlim_{i \in J}\Spa(T_{i},T_{i}^{+})) \otimes_{(\wh{\bZ}_p)(\varprojlim_{i \in J}\Spa(T_{i},T_{i}^{+}))} \bA_{\Inf}(T_{\infty})[\tfrac{1}{\mu}, \tfrac{1}{p}]
\end{equation}
is a sheaf on affinoid perfectoids in $Y_{\proet}$. Since \eqref{eq:somekdsa} equals $|\ol{B}| = |B|$, the generating set $B$ is a basis (pro-étale locally on $\varprojlim_{i \in I}\Spa(S_{i},S_{i}^{+})$) for the sections of \eqref{eq:valusdasheac}. Therefore, it must be a basis for  \eqref{eq:somkinexp}. Hence, as $\bA_{\Inf}(S_{\infty})$ is $\mu$-torsion (and $p$-torsion) free, it is a basis for $H^m$. This proves the base case of the induction. For the inductive step, one repeats the argument noting that the locally finite freeness of $H^{i}$ for $i > i' \geq j$ implies $H^{i'} \otimes_{A_{\Inf}, \theta} \cO$ coincides with \eqref{eq:adikiandas} for $i = i'$. Moreover since $R\Gamma((E,J), R\Gamma_{q-\textnormal{CRYS}}(\fX/\fY))$ is a perfect complex in $D(E)$, the inductive step also shows $H^{i'}$ is a finite $E$-module. This proves (2). 

Part (1) then follows from Theorem \ref{thm:qrysvsetale}: indeed 
\[
(R^{i}f_{\eta, \proet*}\wh{\bZ}_p \otimes_{\wh{\bZ}_p}^{\bL} \bA_{\Inf,Y}[\tfrac{1}{\mu}])(\varprojlim_{i \in I}\Spa(S_{i},S_{i}^{+}))
\]
is a finite locally free $\bA_{\Inf}(S_{\infty})[\tfrac{1}{\mu}]$-module for all $i \geq j$ and so $( R^{i}f_{\eta, \proet*}\wh{\bZ}_p)(\varprojlim_{i \in I}\Spa(S_{i},S_{i}^{+}))$ is $p$-torsion free.

We now show part (3) by descent to the case of part (2). First we show the map $S \to S_{\infty}$ (cf. Remark \ref{rem:neksinmors}) is quasisyntomic (for small enough Zariski neighbourhood $\Spf(S)$ of $\fY$ and a choice of $S_{\infty}$). Since $\fY$ is assumed to be smooth, we can take $S$ to be a small $p$-completely smooth $\cO$-algebra (recall this means that there is an étale map $\Spf(S) \to \Spf(\cO \{t_1^{\pm 1}, \ldots, t_n^{\pm 1} \})$). Above $\Spa(C\{t_1^{\pm 1}, \ldots, t_n^{\pm 1} \}, \cO \{t_1^{\pm 1}, \ldots, t_n^{\pm 1} \})$, we have the affinoid perfectoid covering 
\[
\varprojlim_{m \geq 0} \Spa(C\{t_1^{\pm 1/p^m}, \ldots, t_n^{\pm 1/p^m} \}, \cO \{t_1^{\pm 1/p^m}, \ldots, t_n^{\pm 1/p^m} \}).
\] 
The induced map $\cO \{t_1^{\pm 1}, \ldots, t_n^{\pm 1} \} \to \cO \{t_1^{\pm 1/p^{\infty}}, \ldots, t_n^{\pm 1/p^{\infty}} \}$ is quasisyntomic. The map $S \to S_{\infty}$, induced by ($p$-completed) base change, is again quasisyntomic by \cite[Lemma 4.16(2)]{bhatmorscho2}. Now suppose $\Spf(E/J) \to \fY$ factors through $\Spf(S)$. Then again by loc.cit. the induced map $E/\tilde{\xi} \to S_{\infty} \wh{\otimes}_{S^{(1)}} E/\tilde{\xi}$ is quasisyntomic (the completion is $p$-adic and $S^{(1)} := S \otimes_{A_{\Inf}/\xi, \varphi} A_{\Inf}/\tilde{\xi}$). By \cite[Proposition 7.11]{Prisms}, there exists an object
\[
(E' \to E'/\tilde{\xi} \leftarrow S_{\infty} \wh{\otimes}_{S^{(1)}} E/\tilde{\xi}) \in ((S_{\infty} \wh{\otimes}_{S^{(1)}} E/\tilde{\xi})/E)_{\Prism},
\]
and moreover the map $S_{\infty} \wh{\otimes}_{S^{(1)}} E/\tilde{\xi}) \to E'/\varphi (\xi)$ is $p$-completely faithfully flat. Therefore the composition $E/\tilde{\xi} \to E'/\tilde{\xi}$ is also $p$-completely faithfully flat.
By universality (cf. Lemma 4.7 in loc.cit.), the induced map $S_{\infty} \to E'/\tilde{\xi}$ lifts uniquely to a map $(\bA_{\Inf}(S_{\infty}), \tilde{\xi}) \to (E', \tilde{\xi})$ of prisms. Therefore by part (2)
\begin{equation} \label{eq:onksishir}
H^{i}(R\Gamma((\bA_{\Inf}(S_{\infty}),(\xi)), R\Gamma_{q-\textnormal{CRYS}}(\fX/\fY))) \otimes_{\bA_{\Inf}(S_{\infty})} E'
\end{equation}
is a finite locally free $E'$-module. The expression \eqref{eq:onksishir} coincides with
\[
H^{i}(R\Gamma((E,J), R\Gamma_{q-\textnormal{CRYS}}(\fX/\fY))) \wh{\otimes}_{E} E',
\]
where the completion is $(p, \tilde{\xi})$-adic (to see this, identify $q$-crystalline cohomology with prismatic cohomology \eqref{eq:pushforisjusqcr}-\eqref{eq:globalizationdsfs} and apply \cite[Theorem 1.8(5)]{Prisms}).
By a similar argument as the proof of \cite[Lemma A.12]{PrismaticDieudonn} (in particular using Lemma A.8 of loc.cit.), one concludes that 
\[
H^{i}(R\Gamma((E,J), R\Gamma_{q-\textnormal{CRYS}}(\fX/\fY)))
\]
is a finite locally free $E$-module. In general (in the case when the morphism $\Spf(E/J) \to \fY$ does not necessarily factor through $\Spf(S)$), there is a covering $(E', J') \to (E,J)$ (i.e. $\Spf(E'/J') \to \Spf(E/J)$ is a $p$-completely étale covering), such that 
$H^{i}(R\Gamma((E',J'), R\Gamma_{q-\textnormal{CRYS}}(\fX/\fY)))$ is a finite locally free $E'$-module. One then repeats the argument as the proof of \cite[Lemma A.12]{PrismaticDieudonn}, as before.
\end{proof}

\begin{rem}
In the setting of Theorem \ref{thm:torsionbigerindic}, there is also a Frobenius structure on 
\begin{equation} \label{eq:relqcryrelbkf}
H^{i}(R\Gamma((\bA_{\Inf}(S_{\infty}),(\xi)), R\Gamma_{q-\textnormal{CRYS}}(\fX/\fY)))
\end{equation}
and by the isogeny theorem  (cf. \cite[Theorem 1.8(6)]{Prisms}), the linearization morphism (induced by Frobenius) is an isomorphism after inverting $\tilde{\xi}$. 
Together with part (2) of Theorem \ref{thm:torsionbigerindic}, the expression \eqref{eq:relqcryrelbkf} gives rise to a \emph{relative Breuil-Kisin-Fargues module}. This is made precise in \cite{MorrTsuj}, where various categories (e.g. $\mu$-small generalised representations over $\bA_{\Inf}(S_{\infty})$ for a particular $S_{\infty}$ and relative Breuil-Kisin-Fargues modules with Frobenius structure) are shown to be equivalent. In the language of loc.cit. \eqref{eq:relqcryrelbkf} can be upgraded to a generalised representation over $\bA_{\Inf}(S_{\infty})$ (with the $\bZ_p(1)^{d}$-action\footnote{In this context, one takes a small Zariski neighbourhood $\Spf(S)$ of $Y$ which can be described using $d$-tori-coordinates. The $S$-algebra $S_{\infty}$ is then obtained from $S$ by adjoining all $p$-th roots of the tori-coordinates and this gives the usual $\bZ_p(1)^{d}$-action on  $\bA_{\inf}(S_{\infty})$, which in turn induces a $\bZ_p(1)^{d}$-action on \eqref{eq:relqcryrelbkf}.}  satisfying a certain triviality condition modulo $\mu$) with Frobenius structure.
\end{rem}

\subsection{The relative absolute crystalline comparison}

Let $f \colon \fX \to \fY$ be a smooth $\Spf(\cO)$-morphism of $p$-adic formal schemes of type (S)(b). Assume that $\fX$ and $\fY$ are flat over $\cO$. In this section we prove a relative version of the absolute crystalline comparison isomorphism (cf. \cite[Theorem 12.1]{BhMorSch}). 

As usual let $X := \fX \times_{\Spf(\cO)} \Spec (\cO/p)$ and $Y := \fY \times_{\Spf(\cO)} \Spec(\cO/p)$ be the fibers modulo $p$ of $\fX$ and $\fY$, respectively. In addition let $\textnormal{CRYS}(X/A_{\textnormal{crys}})$ and $\textnormal{CRYS}(Y/A_{\textnormal{crys}})$ be the absolute (big) crystalline sites of $X$ and $Y$, respectively. We denote the corresponding topoi by $(X/A_{\textnormal{crys}})_{\textnormal{CRYS}}$ (resp. $ (Y/A_{\textnormal{crys}})_{\textnormal{CRYS}}$). Let $\cO^{\textnormal{CRYS}}_{X/\bZ_p}$ be the structure sheaf on $ (X/A_{\textnormal{crys}})_{\textnormal{CRYS}}$. The morphism $f$ induces a functor of topoi
\[
f^{\textnormal{CRYS}} \colon (X/A_{\textnormal{crys}})_{\textnormal{CRYS}} \to (Y/A_{\textnormal{crys}})_{\textnormal{CRYS}}.
\]
Let $Y'_{\proet} \subset Y_{\proet}$ be the subsite consisting of affinoid perfectoid objects. By killing $\mu = q - 1$ in the target of $u_{\proet}$ in \S \ref{sec:compwithrelAinf}, we obtain a cocontinuous functor of sites
\begin{align*}
    u_{\proet}^{\textnormal{CRYS}} \colon Y'_{\proet} &\to \textnormal{CRYS}(Y/A_{\textnormal{crys}}) \\
    \varprojlim_{i \in I} \Spa(S_{i}, S_{i}^{+})  &\mapsto (\bA_{\Inf}(S_{\infty})/\mu, (\xi/\mu)) 
\end{align*}
which induces a morphism of topoi
\[
\mu_{\proet}^{\textnormal{CRYS}} \colon Y_{\proet} \to (Y/A_{\textnormal{crys}})_{\textnormal{CRYS}}.
\]
We can now state the absolute crystalline comparison in the current context:

\begin{thm}
Assume that we have a smooth $\Spf(\cO)$-morphism $f \colon \fX \to \fY$ of $p$-adic formal schemes of type (S)(b). Suppose in addition that $\fX$ and $\fY$ are flat over $\cO$. There is a canonical functorial morphism (the completion on the target is derived $p$-adic)
\begin{equation} \label{eq:crysverssht}
(\mu^{\textnormal{CRYS}}_{\proet})^{-1}Rf_{*}^{\textnormal{CRYS}}\cO^{\textnormal{CRYS}}_{X/\bZ_p} \to R\Gamma_{A_{\Inf}}(\fX/\fY) \wh{\otimes}_{A_{\Inf}} A_{\textnormal{crys}}
\end{equation}
in $D(D, W(\wh{\cO}^{+, \flat}_Y))$ compatible with the Frobenius such that the cohomology sheaves of the cone of \eqref{eq:crysverssht} 
 are killed by $[\fm^{\flat}] \subset A_{\Inf}$. 
\end{thm}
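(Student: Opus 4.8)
The plan is to mimic the proof of Theorem \ref{thm:comparofRGqcruandRGaAin}, replacing the $q$-crystalline side with the ($p$-completely) absolute crystalline side and $[p]_q$-reductions with $p$-reductions, while keeping track of the extra coefficient base change along $A_{\Inf} \to A_{\textnormal{crys}}$. First I would construct the morphism \eqref{eq:crysverssht}. The natural candidate is to factor through the $q$-crystalline comparison: base changing Theorem \ref{thm:comparofRGqcruandRGaAin} along $A_{\Inf} \to A_{\textnormal{crys}}$ (using that $A_{\textnormal{crys}} = A_{\Inf}\{\xi/\mu\}^{\wedge}$, so that $[p]_q = \varphi(\xi)$ has image which, together with $\mu$, controls everything), and then identifying $(\mu^{\textnormal{CRYS}}_{\proet})^{-1}Rf^{\textnormal{CRYS}}_{*}\cO^{\textnormal{CRYS}}_{X/\bZ_p}$ with $(\mu_{\proet}^{-1}R\Gamma_{q-\textnormal{CRYS}}(\fX/\fY)) \wh{\otimes}_{A_{\Inf}} A_{\textnormal{crys}}$. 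This last identification is precisely the relative form of \cite[Theorem 16.17 / Theorem 16.14]{Prisms}: $q$-crystalline cohomology base-changed along $A \to A_{\textnormal{crys}}$ computes (absolute) crystalline cohomology of the mod-$p$ fiber; one sheafifies this over $(\fY/A_{\Inf})_{q-\textnormal{CRYS}}$ (using Lemma \ref{lem:morsitqycrsi} and the base change Proposition \ref{prop:bcqcrst}) and then pulls back along $\mu_{\proet}$, checking compatibility of the two cocontinuous functors $u_{\proet}$ and $u_{\proet}^{\textnormal{CRYS}}$ (which is clear since the latter is the former followed by killing $\mu$).

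Once the morphism is in place, the strategy for the cone statement is the same two-step reduction as in Theorem \ref{thm:comparofRGqcruandRGaAin}. First reduce modulo $p$ (instead of $[p]_q$): both sides of \eqref{eq:crysverssht} are $p$-adically derived complete — the left side by Proposition \ref{prop:perfectnessqcrco}(i) combined with the perfectness of crystalline cohomology of a proper smooth mod-$p$ scheme, the right side because $R\Gamma_{A_{\Inf}}(\fX/\fY)$ is almost derived $p$-adically complete (Corollary \ref{cor:jfsdmderhamd}) and $A_{\textnormal{crys}}$ is $p$-adically complete — so by (almost) derived Nakayama (Lemma \ref{lem:almostderNaka} applied to $p$ in place of $[p]_q$) it suffices to treat the statement mod $p$. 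Mod $p$, on the left one invokes the Hodge-Tate/de Rham comparison for crystalline cohomology: $\cO^{\textnormal{CRYS}}_{X/\bZ_p}\otimes^{\bL} \cO$-reduction gives the de Rham complex $\Omega^{\bullet}_{\fX/\fY}$ (this is the crystalline analogue of Corollary \ref{cor:HTforqcrysde}, obtained by sheafifying \cite[Theorem 16.14]{Prisms} or directly the classical crystalline-de Rham comparison), and on the right one has the de Rham specialization $R\Gamma_{A_{\Inf}}(\fX/\fY)\otimes^{\bL}_{A_{\Inf},\theta}\cO$ computing $Rp_{*}q^{*}\Omega^{\bullet}_{\fX/\fY}$ up to almost isomorphism (Theorem \ref{thm:regdeRhamse}, Theorem \ref{thm:almosishodsa}, Theorem \ref{thm:HTspcssd}). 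The two base change morphisms — from the crystalline product of topoi and from $Y_{\proet}\times_{\fY_{\et}}\fX_{\et}$ — both land on $Rp_{*}$ of the pullback of the relative de Rham complex, and both are almost isomorphisms by Corollary \ref{cor:pcomverorbascha} (pro-étale side) and Proposition \ref{prop:bcqcrst} (crystalline side). Chasing the Hodge-Tate gradings, compatibly with the constructed morphism, yields that \eqref{eq:crysverssht} mod $p$ has cone with cohomology killed by $[\fm^{\flat}]$ (equivalently by $\fm$ after $\theta$); lifting back via Nakayama gives the full statement.

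The main obstacle I anticipate is the coefficient base change along $A_{\Inf}\to A_{\textnormal{crys}}$, which is \emph{not} flat, so one cannot naively commute $\wh{\otimes}_{A_{\Inf}}A_{\textnormal{crys}}$ past the pushforward $Rp_{*}$ or past cohomology. The cleanest route is probably to avoid computing $R\Gamma_{A_{\Inf}}(\fX/\fY)\wh{\otimes}_{A_{\Inf}}A_{\textnormal{crys}}$ directly and instead work entirely mod $p$: after reducing mod $p$ the relevant rings become $A_{\textnormal{crys}}/p$ and $\cO/p$-algebras, and the non-flatness is harmless because $A_{\textnormal{crys}}/p$ is (up to the PD-structure) a well-understood $\cO$-algebra and the de Rham complexes involved are bounded with flat terms. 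Concretely one should prove the mod-$p$ comparison at the level of the explicit $\check{\text{C}}$ech–Alexander / Koszul representatives — as in \emph{Step 1–Step 3} of Theorem \ref{thm:AOmegvsqCrys} — where the crystalline de Rham / PD-Koszul complex for $\textnormal{CRYS}(X/A_{\textnormal{crys}})$ is matched term-by-term, up to $[\fm^{\flat}]$-ambiguity, with the $\eta_{\mu}$-modified group-cohomology complex computing $A\Omega_{\fX/\fY}^{\text{psh}}$ tensored with $A_{\textnormal{crys}}$. A secondary technical point is verifying that $(\mu^{\textnormal{CRYS}}_{\proet})^{-1}\cO^{\textnormal{CRYS}}_{X/\bZ_p}$ identifies correctly with $W(\wh{\cO}^{+,\flat}_Y)\otimes A_{\textnormal{crys}}/p$-type sheaves so that the target of \eqref{eq:crysverssht} genuinely lives in $D(D, W(\wh{\cO}^{+,\flat}_Y))$ as claimed, and that the Frobenius-equivariance survives the base change (the crystalline Frobenius is $\varphi_{A_{\textnormal{crys}}}$-semilinear, matching the $\varphi$ on $A\Omega$); these are routine once the $q$-crystalline case is granted, since $A_{\textnormal{crys}}$ is obtained from the $q$-crystalline prism by the explicit recipe of \cite[Proposition 16.XX]{Prisms}.
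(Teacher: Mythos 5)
Your first paragraph is precisely the paper's proof: one introduces the natural morphism of topoi $r \colon (\fY/A_{\Inf})_{q-\textnormal{CRYS}} \to (Y/A_{\textnormal{crys}})_{\textnormal{CRYS}}$ with $\mu_{\proet}^{\textnormal{CRYS}} = r \circ \mu_{\proet}$, identifies $r^{-1}Rf_{*}^{\textnormal{CRYS}}\cO^{\textnormal{CRYS}}_{X/\bZ_p}$ with $R\Gamma_{q-\textnormal{CRYS}}(\fX/\fY)\wh{\otimes}_{A_{\Inf}}A_{\textnormal{crys}}$ via (the relative form of) \cite[Theorem 16.14]{Prisms}, and then invokes Theorem \ref{thm:comparofRGqcruandRGaAin}. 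Where you diverge is the cone statement: the paper stops at this point, because \eqref{eq:crysverssht} is by construction the derived $p$-completed base change along $A_{\Inf}\to A_{\textnormal{crys}}$ of \eqref{eq:compmorAOMeqOMe1}, and the property that the cohomology sheaves of the cone are killed by $[\fm^{\flat}]$ is stable under $-\otimes^{\bL}_{A_{\Inf}}A_{\textnormal{crys}}$ (Tor spectral sequence) and under derived $p$-completion ($R\varprojlim$ of killed terms), so no flatness of $A_{\Inf}\to A_{\textnormal{crys}}$ is ever needed and the worry you raise about commuting the completed tensor past $Rp_{*}$ is moot. Your alternative of rerunning the two-step mod-$p$/Hodge--Tate argument of Theorem \ref{thm:comparofRGqcruandRGaAin} on the crystalline side is plausible but strictly heavier: it would require a crystalline analogue of Lemma \ref{lem:pqderiomqcr} (almost derived $p$-completeness of $(\mu^{\textnormal{CRYS}}_{\proet})^{-1}Rf^{\textnormal{CRYS}}_{*}\cO^{\textnormal{CRYS}}_{X/\bZ_p}$, via a strict perfectness statement as in Proposition \ref{prop:perfectnessqcrco}(ii)) and a base change theorem for a crystalline oriented product $(Y/A_{\textnormal{crys}})_{\textnormal{CRYS}}\ola{\times}_{\fY_{\et}}\fX_{\et}$, none of which the paper develops; since your first step has already reduced everything to the $q$-crystalline theorem, this extra machinery buys nothing here. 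One caveat applying equally to you and to the paper: the cone statement inherited from Theorem \ref{thm:comparofRGqcruandRGaAin} uses its properness hypothesis, which is left implicit in the statement being proved.
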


\begin{proof}
Let $r \colon (\fY/A_{\Inf})_{q-\textnormal{CRYS}} \to (Y/A_{\textnormal{crys}})_{\textnormal{CRYS}}$ be the natural morphism of topoi such that $\mu_{\proet}^{\textnormal{CRYS}} = r \circ \mu_{\proet}$. Then by \cite[Theorem 16.14]{Prisms} $r^{-1}Rf_{*}^{\textnormal{CRYS}}\cO^{\textnormal{CRYS}}_{X/\bZ_p} \cong R\Gamma_{q-\textnormal{CRYS}}(\fX/\fY) \wh{\otimes}_{A_{\Inf}} A_{\textnormal{crys}}$. The result now follows from Theorem \ref{thm:comparofRGqcruandRGaAin}.
\end{proof}

\bibliography{reference.bib}
\bibliographystyle{alpha2}

\end{document}